\documentclass[11pt]{amsart}
\usepackage[colorlinks,citecolor=red,pagebackref,hypertexnames=false]{hyperref}
\usepackage{soul}

\makeatletter
\def\@tocline#1#2#3#4#5#6#7{\relax
  \ifnum #1>\c@tocdepth 
  \else
    \par \addpenalty\@secpenalty\addvspace{#2}%
    \begingroup \hyphenpenalty\@M
    \@ifempty{#4}{%
      \@tempdima\csname r@tocindent\number#1\endcsname\relax
    }{%
      \@tempdima#4\relax
    }%
    \parindent\z@ \leftskip#3\relax \advance\leftskip\@tempdima\relax
    \rightskip\@pnumwidth plus4em \parfillskip-\@pnumwidth
    #5\leavevmode\hskip-\@tempdima
      \ifcase #1
       \or\or \hskip 1em \or \hskip 2em \else \hskip 3em \fi%
      #6\nobreak\relax
    \dotfill\hbox to\@pnumwidth{\@tocpagenum{#7}}\par
    \nobreak
    \endgroup
  \fi}
\makeatother

\usepackage{tikz,amsthm,amsmath,amstext,amssymb,amscd,epsfig,euscript, mathrsfs, dsfont,pspicture,multicol, mathtools,graphpap,graphics,graphicx,times,enumerate,subfig,sidecap,wrapfig,color,bm,comment,todonotes}
\mathtoolsset{showonlyrefs}
\usepackage{ hyperref}
\usepackage{enumerate}
\usepackage{esint}

 \numberwithin{equation}{section}

\def\N{{\mathbb{N}}}

\def\cH{{\mathcal{H}}}
\def\cF{{\mathcal{F}}}

\def\cB{{\mathscr{B}}}
\def\cC{{\mathscr{C}}}
\def\cD{{\mathscr{D}}}

\def\cF{{\mathscr{F}}}
\def\cG{{\mathscr{G}}}

\def\cL{{\mathscr{L}}}
\def\cM{{\mathscr{M}}}
\def\cN{{\mathscr{N}}}

\def\cP{{\mathscr{P}}}
\def\cQ{{\mathscr{Q}}}

\def\cT{{\mathscr{T}}}

\def\ve{\varepsilon}

\def\Rn1{\mathbb{R}^{n+1}}

\def\rn{\mathbb{R}^{n}}
 
\DeclareMathOperator{\diam}{diam}

\newcounter{eps}

\def\Lip{\mathop\mathrm{Lip}} 						

\def\dist{\textup{dist}} 						
\def\supp{\mathop\mathrm{supp}}					

\def\Xint#1{\mathchoice
{\XXint\displaystyle\textstyle{#1}}%
{\XXint\textstyle\scriptstyle{#1}}%
{\XXint\scriptstyle\scriptscriptstyle{#1}}%
{\XXint\scriptscriptstyle\scriptscriptstyle{#1}}%
\!\int}
\def\XXint#1#2#3{{\setbox0=\hbox{$#1{#2#3}{\int}$ }
\vcenter{\hbox{$#2#3$ }}\kern-.58\wd0}}

\def\avint{\Xint-}
\theoremstyle{plain}
\newtheorem{theorem}{Theorem}

\newtheorem{corollary}[theorem]{Corollary}

\newtheorem{lemma}[theorem]{Lemma}

\newtheorem{proposition}[theorem]{Proposition}

\theoremstyle{definition}

\newtheorem{definition}[theorem]{Definition}
\newtheorem{remark}[theorem]{Remark}

\numberwithin{equation}{section}
\numberwithin{theorem}{section}

\newcommand\mg{\sigma}
\newcommand\mgs{\sigma_s}
\newcommand\mgf{\sigma_f}

  \DeclareFontFamily{U}{mathb}{\hyphenchar\font45} 
\DeclareFontShape{U}{mathb}{m}{n}{
      <5> <6> <7> <8> <9> <10> gen * mathb
      <10.95> mathb10 <12> <14.4> <17.28> <20.74> <24.88> mathb12
      }{}
\DeclareSymbolFont{mathb}{U}{mathb}{m}{n}

\DeclareMathSymbol{\toitself}      {3}{mathb}{"FD}  

\def\R{\mathbb{R}}

\newcommand{\BWGL}{\text{BWGL}}

\newcommand{\Top}{\mathrm{Top}}

\newcommand{\Child}{\mathrm{Child}}

\newcommand{\Parent}{\mathrm{Parent}}
\newcommand{\Sibling}{\mathrm{Sibling}}
\newcommand{\bilatbeta}{b\beta}
\newcommand{\unilatbeta}{\beta}
\newcommand{\baupbeta}{Ub\beta}
\newcommand{\cubes}{\cD}
\newcommand{\pdist}{\dist_p}
\newcommand{\pd}{d_p}
\newcommand{\pdiam}{\diam_p}
\newcommand{\verticalplanes}{\cP}
\newcommand{\Hmeasure}{\cH}
\newcommand{\pHmeasure}{\Hmeasure_p}
\newcommand{\surfacemeasure}{\sigma}
\newcommand{\slicemeasure}{\mu}

\newcommand{\adim}{d}
\newcommand{\idim}{n}

\newcommand{\CAR}{C_0}
\newcommand{\Cinner}{c_1}
\newcommand{\Couter}{C_1}
\newcommand{\CCCBP}{C_2}
\newcommand{\Cproj}{c_2}
\newcommand{\CSchulprime}{c_2}
\newcommand{\Cnext}{C_3}
\newcommand{\Cnextlower}{c_3}
\newcommand{\Cnextprime}{C_4}
\newcommand{\Cnextlowerprime}{c_4}
\newcommand{\Cball}{C_5}
\newcommand{\Cind}{c_6}
\newcommand{\Cmintwo}{C_6}
\newcommand{\Cproof}{c_7}

\newcommand{\Nflat}{N_0}
\newcommand{\Nnonflat}{N_1}
\newcommand{\veflat}{\ve_0}
\newcommand{\venonflat}{\ve_1}

\begin{document}

\title[]{{Parabolic Reifenberg Parametrizations and Corona Decompositions, with Applications to Singular Integrals}}
\author{John Hoffman}
\address{Department of Mathematics, University of the Basque Country (EHU), 48940 Leioa, Spain}
\email{johnloche.hoffman@ehu.eus}
\thanks{J.H. was supported in part by grant EUR2025-165042, funded by MICIU/AEI/10.13039/501100011033, and by grant PID2024-157724NB-I00, funded by MICIU/AEI/10.13039/501100011033 and by ERDF/EU}

\author{Matthew Hyde}
\address{Matthew Hyde\\
	Department of Mathematics and Statistics \\
	University of Jyv\"askyl\"a \\
	P.O. Box 35 (MaD) \\
	FI-40014 University of Jyv\"askyl\"a \\
	Finland}
\email{matthew.j.hyde@jyu.fi}
\thanks{M.H. was supported by the Research Council of Finland via the project Quantitative differentiability and rectifiability in metric spaces, grant no.~363800.}

\author[Mihalis Mourgoglou]{Mihalis Mourgoglou}
\address{Departamento de Matem\'aticas, Universidad del Pa\' is Vasco, Barrio Sarriena s/n 48940 Leioa, Spain and\\
Ikerbasque, Basque Foundation for Science, Bilbao, Spain.}
\email{michail.mourgoglou@ehu.eus}
\thanks{M.M. was supported by IKERBASQUE and partially supported by project PID2024-157724NB-I00, funded by MICIU/AEI/10.13039/501100011033/FEDER, EU, and by grants IT-1615-22 and IT1875-26 of the Basque Government.}


\newcommand{\mih}[1]{\marginpar{\color{red} \scriptsize \textbf{Mih:} #1}}
\newcommand{\matt}[1]{\marginpar{\color{blue} \scriptsize \textbf{Matt:} #1}}
\newcommand{\john}[1]{\marginpar{\color{red} \scriptsize \textbf{John:} #1}}

\begin{abstract}

For parabolic Ahlfors regular sets, we show that the bilateral weak geometric lemma (BWGL), bilateral approximation by unions of planes (BAUP), and corona decompositions by $\operatorname{Lip}(1,1/2)$ graphs (whose defining functions need not have half-order time derivatives in BMO) are equivalent. These geometric conditions do not imply the strong geometric lemma. The proof that BAUP implies the BWGL is direct and applies in Euclidean space. A key tool is a parabolic version of Reifenberg's theorem for sets with holes, which we prove here. As an application of these geometric results, we prove that a parabolic Ahlfors $(n+2)$-regular measure $\mu$ on $\mathbb{R}^{d+1}$, $1\leq n\leq d-1$, is parabolic uniformly rectifiable if its support satisfies any of these equivalent conditions and the parabolic Riesz transform is bounded on $L^2(\mu)$.
\end{abstract}

\maketitle

\tableofcontents
\section{Introduction}

	With a focus on the purely geometric aspects of subsets of parabolic space, Mattila \cite{mattila2022parabolic} introduced the notion of parabolic LG-rectifiability and gave a number of equivalent conditions based on the classical theory of Euclidean rectifiability, in particular in terms of approximate tangents and tangent measures. Roughly speaking, a set is \textit{parabolic LG-rectifiable} (of dimension $n$) if it can be covered $\pHmeasure^n$-a.e. by countably many $n$-dimensional Lipschitz graphs with arbitrarily small Lipschitz constants. As part of a broader work to extend Preiss' rectifiability criterion to the parabolic setting, a parabolic version of the \textit{Marstrand-Mattila rectifiability criterion} \cite{marstrand1961hausdorff,mattila1975hausdorff} was established in \cite{merlo2022on}. Under the assumption of positive lower density, this condition asks that for almost every point and all small balls centred at such points, there is a vertical plane which bilaterally well-approximates the set. We highlight that the planes may vary at different scales. \\

	A key point of this paper is to study parabolic \textit{uniform} rectifiability from the purely geometric point of view. We propose that the natural quantitative analogues of LG-rectifiability and the Marstrand-Mattila rectifiability criterion in this setting are \textit{corona decompositions} and the \textit{bilateral weak geometric lemma} (BWGL), respectively, both of which were introduced by David and Semmes in the Euclidean setting \cite{david1993analysis}. The precise definitions are given in Section \ref{s:prelims}. For now, let us give a more informal overview.

Roughly speaking, an Ahlfors regular set admits a corona decomposition if, for every $\eta>0$ and every prescribed approximation error, its dyadic cubes can be partitioned into bad cubes and coherent stopping-time regions. At the locations and scales of each region, the set is well approximated by a parabolic Lipschitz graph with Lipschitz constant at most $\eta$. The bad cubes and the top cubes of the regions satisfy Carleson packing conditions. These conditions imply that the set is covered almost everywhere by the corresponding Lipschitz graphs. Since the decomposition is required for every $\eta>0$, the set is LG-rectifiable in the sense of Mattila. A bilateral corona decomposition also requires the Lipschitz graphs to be well approximated by the set at the same locations and scales.
	
	The BWGL asks, for any given $\ve > 0$, that in \textit{most} balls the set is $\ve$-close (bilaterally) to a \textit{vertical plane} of spatial dimension $n$. As above, $\textit{most}$ is quantified by a Carleson-type condition. Thus, the BWGL requires coarse restrictions on the geometry of the set, not just infinitesimal. Closely related to the BWGL is the \textit{bilateral approximation by unions of planes} (BAUP). It is a  weakening of the BWGL where one asks that, for most balls, the set is bilaterally $\ve$-close to a union of planes (rather than a single plane). \\
	
	Our first main result establishes the equivalence between the above conditions.

	\begin{theorem}\label{t:characterisation}
		Let $n,d$ be integers with $1\leq n\leq d-1$, and let $E \subseteq \R^{d+1}$ be parabolic Ahlfors $(n+2)$-regular. The following are equivalent. 
		\begin{enumerate}
			\item \label{i:CD} $E$ admits a corona decomposition by $\Lip(1,1/2)$-graphs.
			\item \label{i:BCD} $E$ admits a bi-lateral corona decomposition by $\Lip(1,1/2)$-graphs.   
			\item \label{i:BWGL} $E$ satisfies the \emph{BWGL}.
			\item \label{i:BAUP} $E$ satisfies the \emph{BAUP}. 
		\end{enumerate}
	\end{theorem}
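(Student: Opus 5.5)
We will prove the cycle of implications \eqref{i:BCD} $\Rightarrow$ \eqref{i:CD} $\Rightarrow$ \eqref{i:BWGL} $\Rightarrow$ \eqref{i:BAUP} $\Rightarrow$ \eqref{i:BWGL} $\Rightarrow$ \eqref{i:BCD}. The implication \eqref{i:BCD} $\Rightarrow$ \eqref{i:CD} is immediate from the definitions, and \eqref{i:BWGL} $\Rightarrow$ \eqref{i:BAUP} holds because a single vertical plane is a (trivial) union of planes.

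\textbf{\eqref{i:CD} $\Rightarrow$ \eqref{i:BWGL}.} Fix $\ve>0$ and apply the corona decomposition with $\eta \ll \ve$ and approximation error $\ll \ve$. Cubes that are bad, or that lie within a bounded number of generations of a top cube or of the stopping boundary of their tree, are Carleson packed by the packing of tops and bad cubes. For the remaining cube $Q$ in a tree $S$, the set near $Q$ lies within $\ve\,\ell(Q)$ of the $\Lip(1,1/2)$ graph $\Gamma_S$ with constant $\le \eta$. Such a graph is unilaterally $C\eta$-close at every scale to a vertical plane $V$. For the converse direction, that $V$ is close to $E$, we use Ahlfors regularity. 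If some point of $V\cap B_Q$ were far from $E$, then the projection of $E\cap B_Q$ onto $V$ would omit a ball of size $\sim\ve\ell(Q)$. This is excluded for cubes not adjacent to the stopping boundary: the children of $Q$ belong to $S$ and cover the graph densely, or else one runs the standard David--Semmes argument with holes counted by a Carleson estimate. We expect this bilateral upgrade to be routine but fiddly.

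\textbf{\eqref{i:BAUP} $\Rightarrow$ \eqref{i:BWGL}.} This is the direct argument mentioned in the abstract, and it is valid in Euclidean space too. Suppose $E\cap B$ is $\ve$-close to a union $\bigcup P_i$ of vertical planes with at least two distinct planes meeting $B$ substantially. Then at some point in a smaller ball $B'\subset B$ centred near the intersection or "crease", $E$ is $\ve$-close to a union of two transverse planes but not to a single plane. Ahlfors regularity together with the BAUP at the smaller scales forces the BAUP to fail in a definite fraction of nearby balls. The mechanism is that near a crease at a small scale, the union of planes must still approximate, so the crease structure propagates. Along a crease, $E$ looks like two half-planes joined at a codimension-one set, and this is not bilaterally approximable by unions of full planes at scale comparable to the distance to the crease. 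Hence balls where BAUP holds but BWGL fails are dominated by a Carleson family of BAUP-bad balls, via a counting or measure argument on the crease sets.

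\textbf{\eqref{i:BWGL} $\Rightarrow$ \eqref{i:BCD}.} This is the main step. We run a stopping time starting from each top cube $Q_0$ with an approximating plane $V_0$. A cube stays in the tree while it is BWGL-good with parameter $\ve$ and its best plane makes angle $\le \eta$ with $V_0$. Stopping cubes for angle change are controlled by a packing argument on the BWGL-bad cubes, since small changes accumulate only through bad cubes. On each tree we build a $\Lip(1,1/2)$ graph using the parabolic Reifenberg theorem for sets with holes proved in this paper, applied to the family of good planes at the tree's scales. The holes correspond to the stopping cubes, and the control of successive plane displacements (two-sided closeness plus the angle restriction) gives a $C\eta$-Lipschitz parametrization over $V_0$ that is bilaterally close to $E$ at tree scales. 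The top cubes are packed by the standard David--Semmes argument. In that argument, stopping for angle forces a sum of BWGL-bad or $\beta$-type quantities, which is Carleson because the BWGL already supplies a Carleson bad set. We expect this step, particularly verifying the hypotheses of the parabolic Reifenberg theorem with holes, to be the main obstacle.
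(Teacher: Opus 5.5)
Your cycle of implications matches the paper's. However, two of the three substantive steps rest on mechanisms that are false, and a third has a fixable error.

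\textbf{BAUP $\Rightarrow$ BWGL.} The crease argument cannot work. Let $E=P_1\cup P_2$ with $P_1,P_2\in\verticalplanes$ parallel at distance $1$. Then $\baupbeta_E(B)=0$ for every ball, so there are no BAUP-bad balls at all. Yet every ball $B$ centred on $E$ of radius $r\geq1$ has $\bilatbeta_E(B)\geq c/r$, which is $>\ve$ for $1\leq r<c/\ve$. So BWGL-bad balls are not dominated by BAUP-bad balls. Near such balls $E$ need not look like two half-planes meeting at a crease: two full sheets, parallel or crossing, are exact unions of planes at every scale. The bad family is Carleson here only because Ahlfors regularity forbids stacking many sheets, and your sketch has no argument exploiting this. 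That is the missing idea. In the paper, at a ball where BAUP holds but BWGL fails, the approximate mass $\sum\pdiam(R)^{n+2}$ of the flat Schul sub-cubes nearly doubles, see \eqref{e:big-size}. Between such scales, a Reifenberg-flat approximant built from Theorem \ref{t:DT} passes this mass down to the next non-flat cubes with almost no loss. It can do so because its mass in balls is almost exactly that of a plane (Lemma \ref{l:big-projection-RF} and \eqref{e:sigma-locally-upper-reg}); this is the ``approximate topology''. A weighted counting with geometric factors $2/3$ (Lemmas \ref{l:next-when-flat}--\ref{l:Top-control}) and the ADR upper bound then give the Carleson estimate.

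\textbf{BWGL $\Rightarrow$ bilateral corona.} Your claim that angle changes accumulate only through BWGL-bad cubes is false. Consecutive good cubes can each tilt the best plane by $\sim\ve$. For example, take $\{\rho\,e^{i\ve\log(1/|\rho|)}:\rho\neq0\}\cup\{0\}\subset\R^2$, times $\R$ in time. Every ball centred on it is $C\ve$-flat, yet its best planes at the origin rotate by $\eta$ over every $\sim\eta/\ve$ scales, with no bad cube in sight. In Euclidean space, David and Semmes pack angle stops using the strong geometric lemma. As the paper stresses, that lemma does not follow from the BWGL in the parabolic setting. This packing, not the CCBP verification, is the real obstacle. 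The paper handles it in three steps:
\begin{enumerate}
\item It approximates each BWGL-tree by an ADR Reifenberg-flat set (Proposition \ref{p:BWGL-RF-corona}).
\item For such sets it stops trees only when the angle exceeds some $\alpha$ close to $1$ (Proposition \ref{t:corona-RF-large}). These tops pack because large-angle cubes have small projection onto the top plane, whereas Reifenberg-flat cubes have big projections, which forces $\sigma(z(S))\gtrsim\sigma(Q(S))$.
\item It reduces the angles to $\eta$ via Carleson's $L^\infty$ corona construction (Lemma \ref{p:carleson-corona}) applied to the gradient of the large-constant graph. It then transfers the result back to $E$ with Corollary \ref{c:corresponding-stopping-time}.
\end{enumerate}
Note also that Theorem \ref{t:DT} only yields a bi-H\"older parametrisation. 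The Lipschitz graphs come separately from Lemma \ref{p:graph-construction}.

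\textbf{CD $\Rightarrow$ BWGL.} Here the error is smaller. The cubes of $S$ cover $E$, not $\Gamma_S$, so holes of $E$ relative to $\Gamma_S$ are not excluded away from the stopping boundary. For a vertical plane $P$ with a parabolic ball removed, all of $\cubes(Q_0)$ can form one tree with $\Gamma_S=P$. Yet cubes of size $\ell\leq1$ adjacent to the hole have $\bilatbeta_E(B_Q)\gtrsim1$. Such holes must be counted, for example as follows. By connectedness of $\Gamma_S$, a point of $\Gamma_S\cap AB_Q$ at distance $\geq\delta\ell(Q)$ from $E$ yields a piece of $\Gamma_S$ of measure $\sim(\delta\ell(Q))^{n+2}$ all of whose points lie at distance $\sim\delta\ell(Q)$ from $E$. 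These pieces overlap boundedly across all scales. The paper instead cites \cite[Theorem 1.2]{bortz2023corona}.
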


In Euclidean space, the conditions in Theorem \ref{t:characterisation} are equivalent to uniform rectifiability.  In the parabolic setting, the story is more nuanced. Hofmann, Lewis and Nystr\"om introduced parabolic uniform rectifiability in their study of caloric measure and boundary value problems for the heat equation in time-varying domains \cite{hofmann2003existence,hofmann2004caloric}. Their definition is given by the strong geometric lemma.\footnote{{That is, a Carleson measure estimate for the squared parabolic $\beta_2$-numbers, where the approximating planes are parallel to the time axis.}} For parabolic Lipschitz graphs, this condition is equivalent to requiring that the half-order time derivative of the defining function belong to parabolic BMO \cite[Section 2]{hofmann2004caloric}. Such graphs are called regular parabolic Lipschitz graphs. The additional condition on the time derivative is necessary because a parabolic Lipschitz graph need not be parabolic uniformly rectifiable \cite{bortz2022coronizations,lewis1988parabolic,kaufman1980singularity}. In particular, the equivalent conditions in Theorem \ref{t:characterisation} do not imply the strong geometric lemma \cite{bortz2022coronizations}, nor do they imply $L^2$-boundedness of the corresponding parabolic singular integral operators \cite{hoffmanjaye2026}.

Hofmann proved that a parabolic Lipschitz function is regular if and only if the corresponding parabolic commutator is bounded on $L^2$ \cite{hofmann1995characterization}. Bortz, Hoffman, Hofmann, Luna Garc\'ia and Nystr\"om subsequently characterized parabolic uniform rectifiability in codimension one by bilateral corona decompositions using regular parabolic Lipschitz graphs \cite{bortz2022coronizations,bortz2023corona}, and proved boundedness of parabolic singular integrals on such sets \cite[Corollary 4.9]{bortz2022coronizations}. The first author and Jaye proved a converse in every integer codimension under the higher spatial dimension condition (HSDC) \cite{hoffmanjaye2026}. This condition rules out closeness to a canonical family of counterexamples, except on a set of scales and locations satisfying a ``Carleson set'' condition. These counterexamples are measures for which all admissible parabolic singular integrals are bounded, but which are not parabolic uniformly rectifiable \cite[Section 7]{hoffmanjaye2026}. More recently, the first author proved that if an Ahlfors regular measure $\mu$ has support equal to a parabolic Lipschitz graph with sufficiently small Lipschitz constant, then $L^2(\mu)$ boundedness of the parabolic Riesz transform implies parabolic uniform rectifiability \cite{hoffman2026riesz}. We use this result in the proof of Theorem \ref{t:riesztheorem}, after transferring the Riesz transform bounds to graphs with small Lipschitz constant.

There are also results which deduce parabolic uniform rectifiability directly from properties of caloric measure. Bortz, Hofmann, Martell and Nystr\"om proved that a codimension-one parabolic Lipschitz graph is parabolic uniformly rectifiable if caloric measure for the region above the graph satisfies the $A_\infty$ condition with respect to parabolic surface measure \cite[Theorem 1.1]{bortz2025solvabilitygraphs}. They subsequently obtained the same conclusion for boundaries of domains satisfying an interior corkscrew condition, assuming that the boundary is time-symmetric ADR and that caloric measure satisfies weak-$A_\infty$ \cite[Theorem 1.1]{bortz2025solvability}.

	Before moving on to Theorem \ref{t:riesztheorem}, let us make a few remarks regarding Theorem \ref{t:characterisation}. The implications we prove (with the exception of BWGL implies BAUP and bilateral corona decomposition implies corona decomposition, which are obvious) are the following: 
	\begin{align}\label{e:chain}
		\mbox{BAUP} \iff \mbox{BWGL} \implies  \mbox{bi-lateral corona} \implies \mbox{corona} \implies \mbox{BWGL}. 
	\end{align}
	Although the BAUP and the BWGL are known to be equivalent in the Euclidean case \cite{david1993analysis}, the proof of this is rather indirect and passes via several difficult properties related to the theory of uniform rectifiability (WHIP, generalised corona decompositions etc). Since the BAUP is morally very closely related to the BWGL, this is rather unsatisfactory. Here, we provide a more direct proof of this fact which works in both
	the elliptic and parabolic settings. The Euclidean proof that the BWGL implies a corona decomposition passes via a property known as the \textit{strong geometric lemma}. In the parabolic setting, the BWGL does not imply the strong geometric lemma (which is the usual definition of parabolic uniform rectifiability), so we need a different approach. Instead, we adapt and improve upon (in the parabolic setting) the method given in \cite{bate2023uniformly} for constructing corona decompositions from the BWGL. A key tool for this direction is a parabolic version of the Reifenberg parametrisation theorem of David and Toro \cite{david2012reifenberg}. This is a little technical for an introduction, so we leave the statement of this until Section \ref{s:Reif}.  {A parabolic Reifenberg type theorem has also recently been announced in \cite{koirala2026parabolic}. However, as stated, it is not suitable for our purposes.}  Finally, it was shown in \cite{bortz2022coronizations} that a corona decomposition with respect to \textit{regular} parabolic Lipschitz graphs implies the BWGL. The final implication shows that the regularity property of the graphs can be dropped from the hypotheses (as long as you have the decomposition for all Lipschitz constants). 
   \\

As an application of Theorem \ref{t:characterisation}, we make significant progress toward a resolution of the parabolic variant of the David-Semmes problem.  Before describing our results in this direction, we provide a summary of previous results concerning singular integrals and uniform rectifiability in the Euclidean and parabolic setting.  We start with Euclidean results.  In \cite{david1991singular}, the authors proved that, in the presence of the Ahlfors regularity condition, boundedness of a large class of singular integrals implies uniform rectifiability.   In \cite{david1993analysis}, the same authors asked whether $L^2$ boundedness of the Riesz transforms (i.e. for an Ahlfors $n$-regular measure $\mu$ on $\mathbb R^d$, $1 \leq n \leq d-1$,  the singular integrals associated to the kernel $X/|X|^{n+1}$) is sufficient to conclude uniform rectifiability of the underlying measure.  For Ahlfors $1$-regular measures, this was answered affirmatively by Mattila, Melnikov, and Verdera in \cite{mattila1996}.  The case of $(d-1)$-ADR measures on $\mathbb R^{d}$ was answered affirmatively by Nazarov, Tolsa, and Volberg in \cite{nazarov2014uniform}. OpenAI claims a solution to the remaining cases $2\leq n\leq d-2$ of the David--Semmes problem \cite{openai2026riesz}.

We let $\mathcal R = \mathcal R_n$ denote the parabolic Riesz transform associated to the kernel
$$K_n(X,t) = \frac{X}{|t|^{\frac{n+1}{2}+1}} \exp\bigg(-\frac{|X|^2}{4t}\bigg)1_{t>0}.$$
Notice that when $n=d-1$, this kernel is {a nonzero dimensional constant multiple of} the spatial gradient of the fundamental solution of the heat equation on $\mathbb R^{d+1}$. In the context of a $(n+2)$-ADR {measure} $\mu$, {we consider} $\mathcal R_n$ and $K_n$, and often we will suppress the {dimensional subscript} from the notation, only referring to $\mathcal R$ and $K$.

\begin{theorem}[\cite{hoffmanjaye2026}]
    
Suppose $\mu$ is a parabolic $(n+2)$-ADR measure on $\mathbb R^{d+1}$, $0\leq n\leq d-1$, satisfying the $\epsilon$-\textbf{HSDC} for some $\epsilon>0$. If every admissible $(n+2)$-dimensional parabolic Calder\'on--Zygmund singular integral operator, in the sense of \cite[Section~2.5]{hoffmanjaye2026}, is bounded on $L^2(\mu)$, then $\mu$ is parabolic uniformly rectifiable.
\end{theorem}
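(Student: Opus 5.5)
The approach follows the David--Semmes scheme from \cite{david1991singular}, transplanted to parabolic space: first reduce the $L^2(\mu)$ boundedness of all admissible parabolic Calder\'on--Zygmund operators to a geometric statement about $\mu$ near flat configurations, and then use the $\epsilon$-HSDC to rule out the one class of flat configurations that is compatible with bounded singular integrals without being uniformly rectifiable.

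First, use the hypothesis to establish a \emph{bilateral} geometric condition. Boundedness of every admissible operator passes to weak limits of blow-ups of $\mu$ at all locations and scales. The limit measures are parabolic $(n+2)$-ADR and all admissible operators are bounded on them, with uniform constants, and in fact their principal values vanish in a suitable weak sense for many odd kernels. A compactness argument then gives the following: for each $\eta>0$, outside a Carleson family of cubes the measure $\mu$ is $\eta$-close, at that location and scale, to one of these ``symmetric'' limit measures.

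Second, classify the symmetric limits. Testing against a rich enough family of odd parabolic kernels, in the manner of Mattila--Preiss or the flatness arguments of \cite{david1991singular}, should show that the support of a limit measure has a parabolic half-space or plane structure. Either it is a union of vertical $n$-planes, which is flat in the BAUP sense, or it belongs to the canonical non-vertical, time-degenerate family of counterexamples from \cite[Section 7]{hoffmanjaye2026}. The $\epsilon$-HSDC says exactly that closeness to the latter family occurs only on a Carleson set of scales and locations. Discarding those cubes, the remaining good cubes are close to unions of vertical planes, which gives a BAUP-type estimate.

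Third, upgrade this to parabolic uniform rectifiability, meaning the strong geometric lemma. BAUP alone does not suffice, since it is strictly weaker in the parabolic world. The plan is to run the stopping-time construction on the good cubes to obtain graphs with small $\Lip(1,1/2)$ constant approximating $\mu$. On each such graph, boundedness of the parabolic commutator, which is an admissible operator here, should force the half-order time derivative into BMO via Hofmann's characterization \cite{hofmann1995characterization}. Packing these regular graphs then gives a corona decomposition by regular parabolic Lipschitz graphs, which implies uniform rectifiability by \cite{bortz2022coronizations,bortz2023corona}.

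The main obstacle is the second step: proving that blow-up limits with all admissible singular integrals bounded are either vertical-plane-like or lie in the HSDC-excluded family. The first attempt would be to exploit time-translation structure and the spatial dimension count; this is where the ``higher spatial dimension'' condition enters.
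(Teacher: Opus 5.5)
The paper gives no proof of this statement; it is quoted from \cite{hoffmanjaye2026}. So I can only assess your outline on its own terms, using Section~\ref{s:Riesz} as a benchmark, since there the paper carries out the analogue of your third step for the Riesz transform. There are two genuine gaps.

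\textbf{Your second step is a restatement, not an argument.} It asserts what has to be proved: that every blow-up limit produced by your first step is either a union of vertical $n$-planes or lies in the family excluded by the HSDC. Beyond compactness, this exhaustive dichotomy is the whole content of the theorem, and you give no mechanism for it. The mechanism you gesture at (vanishing principal values for odd kernels, Mattila--Preiss-type symmetry) does not transfer verbatim, because the model parabolic kernel $K_n$ is causal rather than odd in $(X,t)$. You would have to say which admissible kernels you test against and show that they force exactly this structure, in particular distinguishing time-degenerate pieces from vertical planes. Your first step also needs more than compactness: a Carleson packing of the cubes where $\mu$ is far from such limits requires a quantitative square-function estimate extracted from the $L^2$ bounds.

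\textbf{Your third step rests on an operator you are not given.} Take a graph of $A\colon P\to P^\perp$ over a vertical plane $P$. Hofmann's commutator \cite{hofmann1995characterization} acts on $L^2(\mathbb{R}^{n+1})$ with a kernel of the form $k(x-y,t-s)\,(A(x,t)-A(y,s))$. In coordinates $(z,z',\tau)\in\mathbb{R}^n\times\mathbb{R}^{d-n}\times\mathbb{R}$ adapted to $P$, this is the restriction to the graph of $(z,z',\tau)\mapsto k(z,\tau)\,z'$ evaluated at $(X,t)-(Y,s)$. That function is infinite on $\{0\}\times(\mathbb{R}^{d-n}\setminus\{0\})\times\{0\}$. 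So it is not an admissible parabolic Calder\'on--Zygmund kernel on $\mathbb{R}^{d+1}$, and its boundedness is not part of your hypothesis.

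Restricting an admissible ambient kernel to the graph reproduces the commutator only to first order in the Lipschitz constant. The remainder is a higher-order nonlinear commutator, and its $L^2$ boundedness is not available a priori for a non-regular $\mathrm{Lip}(1,1/2)$ function. Passing from ambient bounds on a small-constant graph to regularity of that graph is a separate theorem. This is exactly the role of \cite[Theorem~1.1]{hoffman2026riesz} in the paper's proof of Theorem~\ref{t:riesztheorem}.

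You also never transfer the hypothesis from $L^2(\mu)$ to the approximating graphs, which are different sets. In Section~\ref{s:Riesz} this transfer requires:
\begin{enumerate}
\item the \emph{bilateral} corona of Theorem~\ref{t:BWGL-bilat-Corona};
\item the modification $\psi'=\psi+\theta^{15/16}H$ of Lemma~\ref{l:modification}, so that off the contact set the graph point over $(X,t)$ lies at distance comparable to $\theta^{15/16}D_{S,P}(X,t)$ from $E$, which lets one compare $\mathcal{R}_{\mu,\epsilon}f$ on $\Gamma_i'$ with its values on $Q(i)$;
\item regularity of the graph at Whitney scales, coming from $D_{S,P}(|\nabla_X^2\psi|+|\partial_t\psi|)\le\theta$, to handle near-diagonal interactions via \cite{bortz2021sios}.
\end{enumerate}

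Finally, ``run the stopping-time construction'' conceals the paper's main result, Theorem~\ref{t:characterisation} (BAUP $\Rightarrow$ BWGL $\Rightarrow$ bilateral corona by small-constant $\mathrm{Lip}(1,1/2)$ graphs). That result is far from routine, because BWGL does not give the strong geometric lemma in the parabolic setting. It is also proved only for $n\ge 1$, whereas the statement allows $n=0$.
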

In Theorem \ref{t:riesztheorem}, we show that the geometric conditions proved to be equivalent in Theorem \ref{t:characterisation}, in conjunction with $L^2$-boundedness of the parabolic Riesz transform, imply parabolic uniform rectifiability. The conditions of Theorem \ref{t:characterisation} are strictly stronger than the HSDC introduced by the first author and Jaye \cite{hoffmanjaye2026}.

\begin{theorem}\label{t:riesztheorem}
    
Suppose $\mu$ is a parabolic $(n+2)$-ADR measure on $\mathbb R^{d+1}$, $1\leq n\leq d-1$, whose support satisfies BAUP. If the parabolic Riesz transform $\mathcal R$ is bounded on $L^2(\mu)$, then $\mu$ is parabolic uniformly rectifiable.
\end{theorem}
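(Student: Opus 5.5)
By Theorem \ref{t:characterisation}, BAUP for $E=\supp\mu$ is equivalent to $E$ admitting a bilateral corona decomposition by $\Lip(1,1/2)$-graphs with arbitrarily small constant $\eta$. The plan is to take the graphs produced by this decomposition and reduce to the graph result of \cite{hoffman2026riesz}. That result says an ADR measure supported on a parabolic Lipschitz graph with sufficiently small constant, and with $L^2$-bounded Riesz transform, is parabolic uniformly rectifiable. The task is therefore to transfer boundedness of $\mathcal R$ from $\mu$ to the surface measures on the corona graphs, uniformly in the stopping-time region. Afterwards, the regularity of those graphs must be glued back together via the Carleson packing of the tops and bad cubes.

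Concretely, I would fix $\eta$ smaller than the threshold in \cite{hoffman2026riesz} and a small approximation parameter $\ve\ll\eta$, and take the bilateral corona decomposition $\cD=\cB\cup\bigcup_{S}S$. For each stopping region $S$ with top $Q(S)$ and graph $\Gamma_S$, I would build a localized measure
\[
\mu_S=\mu|_{E_S}+\sum_{P\in\min(S)}\sigma_{\Gamma_S}|_{B_P}\cdot(\text{normalization}),
\]
or, more simply, work with $\sigma_{\Gamma_S}$ restricted to a neighbourhood of $Q(S)$, completed to the full graph. The key step is to show that $\mathcal R$ is bounded on $L^2(\sigma_{\Gamma_S})$ with a norm depending only on $\|\mathcal R\|_{L^2(\mu)}$, the ADR constants, $\eta$ and $\ve$. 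I would follow the Euclidean transference scheme of David--Semmes and Tolsa. First compare truncated Riesz transforms of $\sigma_{\Gamma_S}$ and $\mu$ on Whitney-type pieces, using bilateral $\ve$-closeness at all scales in $S$ and the smoothness of the kernel $K$ away from the diagonal. Away from $Q(S)$, the graph is flat, and the error contributions are controlled by the sizes of the stopping cubes. Then apply a $T1$/good-$\lambda$ argument to pass from boundedness on $\mu$ to boundedness on $\sigma_{\Gamma_S}$, using the parabolic Calder\'on--Zygmund theory (Cotlar inequality, boundedness of maximal truncations).

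Once this is done, \cite{hoffman2026riesz} shows that each $\Gamma_S$ is parabolic uniformly rectifiable, uniformly in $S$, in the sense of the strong geometric lemma; equivalently, the defining function is regular with half-derivative BMO norm controlled uniformly. The final step is to verify the strong geometric lemma for $\mu$. I would write $\sum_{Q}\beta_2(Q)^2\ell(Q)^{n+2}$ over cubes in a top cube $R$ as a sum over bad cubes plus a sum over regions. Bad cubes and tops are Carleson, and $\beta\lesssim1$, so they contribute $\lesssim\ell(R)^{n+2}$. Inside $S$, bilateral closeness gives $\beta_{2,\mu}(Q)\lesssim\beta_{2,\Gamma_S}(CQ)+\ve$. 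The $\ve$ term is not summable on its own, and this is the delicate point. I would handle it by using the stopping condition to bound $\dist(x,\Gamma_S)$ by the size of the minimal cube containing $x$, which yields a summable error of $\sum_{P\in\min(S)}\ell(P)^{n+2}$-type. Alternatively, one could invoke the corona characterization of \cite{bortz2022coronizations,bortz2023corona} for codimension one, together with its analogue in higher codimension, with regular graphs. Either route gives the Carleson estimate.

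I expect the main obstacle to be the transference of $L^2$ bounds from $\mu$ to $\sigma_{\Gamma_S}$. The parabolic kernel is not antisymmetric, and it has the one-sided factor $1_{t>0}$, so the standard symmetrization tricks are unavailable. The comparison must therefore use only size and smoothness estimates of $K$ in the parabolic metric, together with the bilateral approximation. This should still work because $\eta$ and $\ve$ are small and the graphs are Lipschitz in the parabolic sense.
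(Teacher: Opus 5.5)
You have the paper's architecture: BAUP $\Rightarrow$ BWGL $\Rightarrow$ bilateral corona, then transfer of the $L^2$ bound from $\mu$ to the corona graphs uniformly in $S$, then \cite[Theorem 1.1]{hoffman2026riesz}, then the corona characterization of \cite{bortz2022coronizations}. Your direct strong-geometric-lemma computation via $\dist_p(x,\Gamma_S)\le\theta d_S(x)$ would also do for the last step. The gap is the transference step. You expect it to follow from size and smoothness of $K$, bilateral closeness and smallness of $\eta$, via a $T1$/good-$\lambda$ argument, and that cannot work.

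\textbf{Small-scale regularity.} Over a Whitney cube $I_i$ of $P_S\setminus\pi(F)$, at scales below $\ell(I_i)$, the set $E$ gives no information about the graph. So the near-diagonal part of $\mathcal R_\sigma$ there must be bounded using the graph alone. A small $\Lip(1,1/2)$ constant gives no such bound. Non-regular graphs with arbitrarily small constant exist, and by \cite{hoffman2026riesz} itself $\mathcal R$ is unbounded on them. This is exactly where the parabolic case departs from the Euclidean Coifman--McIntosh--Meyer situation, and your remark that things ``should still work because $\eta$ and $\ve$ are small'' misses it.

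The paper handles this with the strong form of the decomposition, Theorem \ref{t:BWGL-bilat-Corona}, whose graphs satisfy $D_{S,P_S}\,(|\nabla_X^2\psi_S|+|\partial_t\psi_S|)\le\theta$. Together with \eqref{e:Hests}, this makes each localization $\psi_i'$ a regular $\Lip(1,1/2)$ function with uniform $\beta$-square-function bounds. Then \cite[Theorem 5]{bortz2021sios} bounds the local part, in the proof of Lemma \ref{l:interiorbound}. The corona decomposition you quote from Theorem \ref{t:characterisation} approximates only at scale $\theta\ell(Q)$, so it says nothing about the graph below the stopping cubes.

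\textbf{Separation.} To compare $\mathcal R_\mu f$ at $(X,t)\in\Gamma$ with its values on a nearby cube $Q(i)$, the piece $\int_{E\cap 2B_{Q(i)}}K((X,t)-(Y,s))f(Y,s)\,d\mu(Y,s)$ must be dominated by a maximal function. That requires $\dist_p((X,t),E)\gtrsim\diam_p(I_i)$. The unmodified $\Gamma_S$ may pass through, or arbitrarily close to, $E\setminus F$, and then this is a genuinely singular off-support integral.

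The paper replaces $\psi$ by $\psi'=\psi+\theta^{15/16}H$ with $H$ regular (Lemma \ref{l:modification}). This does three things:
\begin{enumerate}
\item it gives $\dist_p(\Gamma_i',E)\gtrsim\diam_p(I_i)$ (Lemma \ref{l:separation1});
\item it keeps the contact set inside the graph, $F\subseteq\Gamma'$;
\item it makes $\Gamma'$ coincide with the regular graph $\Gamma_H$ far from $Q(S)$.
\end{enumerate}
Only with this separation do Cotlar's inequality plus size and smoothness bounds give $\mathcal R_\mu\colon L^2(\mu)\to L^2(\sigma_s)$ (Lemma \ref{sepbound}). Duality then gives the reverse direction, and \cite{bortz2021sios} handles the pieces on $\Gamma_H$.

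Without the modified graph and the small-scale regularity of the corona graphs, your transference step has no mechanism to control either the graph-to-graph or the graph-to-$E$ interactions.
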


The proof uses the recent theorem of the first author \cite[Theorem 1.1]{hoffman2026riesz}, which shows that $L^2$-boundedness of the parabolic Riesz transform implies parabolic uniform rectifiability for parabolic $(n+2)$-ADR measures whose support is a parabolic Lipschitz graph with sufficiently small Lipschitz constant. We apply this theorem after transferring the Riesz transform bounds to the modified graphs associated with the corona decomposition produced by Theorem \ref{t:characterisation}.

To understand the significance of Theorem \ref{t:riesztheorem}, it is important to recall that, in the parabolic setting, BWGL does not imply uniform rectifiability, as is shown in \cite{bortz2022coronizations}.  Because BWGL is stronger than BAUP, BAUP cannot imply parabolic uniform rectifiability either.  This is in contrast to the Euclidean case, where BAUP and BWGL both imply uniform rectifiability. 

\subsection{Structure of the paper} In Section \ref{s:prelims} we gather all  preliminary definitions and results. In Section \ref{s:Reif} we state and prove the parabolic Reifenberg theorem \textit{\`a la} David-Toro. We prove Theorem \ref{t:characterisation} in Section \ref{s:BWGL-corona} and Section \ref{s:BAUP-BWGL}. Indeed, the main bulk of Section \ref{s:BWGL-corona} is dedicated to the proof that \eqref{i:BWGL} implies \eqref{i:BCD}. At the end of Section \ref{s:BWGL-corona}, we give a short proof to show that \eqref{i:CD} implies \eqref{i:BWGL}. In Section \ref{s:BAUP-BWGL}, we show that \eqref{i:BAUP} implies \eqref{i:BWGL}. These, together with the obvious implications (that \eqref{i:BCD} implies \eqref{i:CD}, and \eqref{i:BWGL} implies \eqref{i:BAUP}) give \eqref{e:chain}. Hence, this completes the proof of Theorem \ref{t:characterisation}. We prove Theorem \ref{t:riesztheorem} in Section \ref{s:Riesz}. \\

 \noindent\textbf{Statement on AI usage:} AI tools (ChatGPT) were used in the editing of the manuscript. The proofs and the arguments in the manuscript were provided by the authors.

\section{Preliminaries}\label{s:prelims}

\subsection{Notation and basic definitions}\label{s:notation}

Fix integers $1 \leq \idim \leq \adim-1$ for the remainder remainder of the paper. Denote points in $\R^{\adim+1}$ by $(X,t) \in \R^\adim \times \R$. Let $|\cdot|$ denote the standard Euclidean norm on $\R^\adim$ and define the parabolic norm on $\R^{\adim+1}$ by
\[\|(X,t)\| \coloneqq |X| + |t|^\frac{1}{2} .\]
For $(X,t),(Y,s) \in \R^{\adim+1},$ define
\[ \pd(X,t,Y,s) = \pd((X,t),(Y,s)) = \|(X,t) - (Y,s)\|.\]
For $(X,t) \in \R^{\adim+1}$ and $r > 0$, let 
\[ B_r(X,t) = \{ (Y,s) \in \R^{\adim+1} \colon |X-Y| \leq r \mbox{ and } |t-s| \leq r^2 \}. \] 
If $E \subseteq \R^{\adim+1},$ define the \textit{parabolic distance to $E$} by $\pdist(X,t,E) = \inf \{\pd(X,t,Y,s) : (Y,s) \in E\}$ and its \textit{parabolic diameter} by $\pdiam(E) = \sup\{\pd(X,t,Y,s) : (X,t),(Y,s) \\ \in E\}.$ 
These sets will be referred to as \textit{balls}, the parameter $r$ is its \textit{radius}.  We define dilates of balls by
$$AB_r(X,t) = B_{Ar}(X,t), \,\,\, A>0.$$
Given $t \in \R,$ let 
 \begin{align}\label{e:E^t}
 	E^t = E \cap (\R^\adim \times \{t\}).
\end{align}

\begin{definition}
Given a pair of non-empty closed sets $E,F \subseteq \R^{\adim+1}$, define the \textit{excess of $E$ over $F$} by 
\begin{align}\label{e:ex}
	 \text{ex}(E,F) = \sup_{y \in E} \pdist(y,F).
\end{align}
By convention, we set $\text{ex}(\emptyset,F) = 0.$ Then, given a ball $B_r(X,t)$, define
\begin{align}\label{e:local-hausdorff-distance}
	d_{B_r(X,t)}(E,F) = \frac{2}{r}\max \left\{\text{ex}(E \cap B_r(X,t),F), \text{ex}(F \cap B_r(X,t),E) \right\}.
\end{align}
Occasionally we may write $d_{X,t,r}(E,F)$ to denote the above quantity, which is sometimes referred to as the \textit{relative Walkup-Wets distance}.
\end{definition}

\begin{definition}
	For $\eta,\delta > 0$ and $E \subseteq \R^{\adim+1}$, define 
	\[ \Hmeasure^\eta_{p,\delta}(E) = \inf \left\{ \sum \pdiam(E_j)^\eta \colon E \subseteq \bigcup_j E_j \mbox{ and } \pdiam(E_j) \leq \delta\right\}.\]
	Then, define the \textit{parabolic Hausdorff measure of dimension} $\eta$ by 
	\[ \pHmeasure^\eta(E) = \lim_{\delta \rightarrow 0} \Hmeasure^\eta_{p,\delta}(E). \]
	Define the \textit{(Euclidean) Hausdorff measure of dimension $\eta$}, $\Hmeasure^\eta$, similarly, by replacing $\pdiam$ in the above definition by the usual Euclidean diameter. 
\end{definition}

\begin{definition}
Let $1 \leq \idim \leq \adim-1$. A set $E \subseteq \R^{\adim+1}$ is called \textit{parabolic Ahlfors $(\idim +2)$-regular} if there exists a constant $A \geq 1$ such that whenever $(X,t) \in E$ and $0 < r < \pdiam(E),$ we have
	\[ A^{-1} r^{\idim+2} \leq \pHmeasure^{\idim+2}(E \cap B_r(X,t)) \leq A r^{\idim+2}.\]
Given a parabolic $(\idim+2)$-regular set $E \subseteq \R^{\adim+1}$, we will often write 
\[ \surfacemeasure \coloneqq \pHmeasure^{\idim+2}|_E.\]
\end{definition}

\subsection{Vertical planes} In this section we collect some useful results for \textit{$t$-independent planes} and their associated projections. Let us start by setting some notation.

\begin{definition}
	An affine plane $V \subseteq \R^{\adim+1}$ is \textit{$t$-independent} if $V$ contains a line in the $t$-direction. Such a plane is also called \textit{vertical}. We let $V^0$ denote the affine plane in $\R^\adim$ such that $V = V^0 \times \R$. We let $\cP_m$ be the collection of vertical planes for which $V^0$ has Euclidean dimension $m$ and $\verticalplanes \coloneqq \verticalplanes_\idim$. Every $V\in\verticalplanes$ has Euclidean dimension $\idim+1$ and parabolic Hausdorff dimension $\idim+2$. Given $V \in \verticalplanes$, we let
	\begin{itemize}
		\item [(i)] $V'$ denote the linear subspace which is parallel to $V$;
		\item [(ii)] $V^\perp$ denote the $(\adim - \idim)$-dimensional linear subspace which is orthogonal to $V'$ (with respect to the Euclidean inner product on $\R^{\adim+1}$);
		\item [(iii)] $\pi_V, \ \pi_V'$ and $\pi_V^\perp$ denote the orthogonal projections onto $V$, $V'$ and $V^\perp$ respectively.
	\end{itemize} 
	Given a further plane $W \in \verticalplanes$, we let 
	\begin{itemize}
		\item [(iv)] $\angle(V,W) \coloneqq d_{B_1(0,0)}(V',W'). $
	\end{itemize}
\end{definition}

\begin{lemma}\label{l:angAB}
	Let $V,W \in \cP$ and $(X,t) \in V$. Let $\ve ,r > 0$ and suppose $d_{B_r(X,t)}(V,W) \leq \ve.$ Then, if $W'' \in \verticalplanes$ is the plane parallel to $V$ such that $(X,t) \in W''$, we have 
	\begin{align}\label{e:d_B2}
		\angle(V,W) = d_{AB_{r}(X,t)}(V,W'') \lesssim \ve
	\end{align}
	for all $A > 0$, with implicit constant which does not depend on $A$. 
\end{lemma}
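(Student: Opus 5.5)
The plan is to reduce everything to the linear subspaces $V'$ and $W'$ using two symmetries of the parabolic distance, and then to control the single translation that relates $W$ to $W''$. I read the statement as saying that $W''$ is the plane parallel to $W$ through $(X,t)$; if it were parallel to $V$ it would equal $V$ and the claim would be trivial.

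\textbf{Step 1: the identity $\angle(V,W)=d_{AB_r(X,t)}(V,W'')$.} Both $V$ and $W''$ contain $(X,t)$, so translating by $-(X,t)$ maps $V$ to $V'$, maps $W''$ to $W'$, and maps $B_{Ar}(X,t)$ to $B_{Ar}(0,0)$. Translations preserve $\pd$, so $d_{AB_r(X,t)}(V,W'')=d_{B_{Ar}(0,0)}(V',W')$.

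Next consider the parabolic dilation $\delta_\lambda(Y,s)=(\lambda Y,\lambda^2 s)$. It satisfies $\pd(\delta_\lambda p,\delta_\lambda q)=\lambda\,\pd(p,q)$ and $\delta_\lambda B_\rho(0,0)=B_{\lambda\rho}(0,0)$. A vertical linear subspace $V^0{}'\times\R$ is invariant under $\delta_\lambda$. Hence each excess in \eqref{e:local-hausdorff-distance} scales by $\lambda$, and this is cancelled by the normalisation $2/r$. Taking $\lambda=(Ar)^{-1}$ gives $d_{B_{Ar}(0,0)}(V',W')=d_{B_1(0,0)}(V',W')=\angle(V,W)$, for every $A>0$.

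\textbf{Step 2: locating $W''$ near $W$.} Since $(X,t)\in V\cap B_r(X,t)$, the hypothesis gives a point $(Y,s)\in W$ with $\pd((X,t),(Y,s))\le \ve r/2$. Set $v=(X-Y,t-s)$, so that $W''=W+v$ and $\|v\|\le \ve r/2$. Then every point of $W$ is within parabolic distance $\ve r/2$ of a point of $W''$, and conversely, because $\pd(p,p+v)=\|v\|$.

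\textbf{Step 3: the bound on a smaller ball.} It suffices to show $d_{B_{r/2}(X,t)}(V,W'')\lesssim\ve$, since Step 1 then transfers this bound to every $A$.
\begin{itemize}
\item For $p\in V\cap B_{r/2}(X,t)$, the hypothesis gives $\pdist(p,W)\le \ve r/2$. By Step 2, $\pdist(p,W'')\le \ve r$.
\item For $q\in W''\cap B_{r/2}(X,t)$, the point $q-v$ lies in $W$. By the triangle inequality for the quasi-norm $\|\cdot\|$ (the sum $|\cdot|+|\cdot|^{1/2}$ is subadditive), $q-v\in B_{r}(X,t)$ once $\ve$ is small, and this is the only range of $\ve$ that matters. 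So $\pdist(q-v,V)\le \ve r/2$, and therefore $\pdist(q,V)\le \ve r$.
\end{itemize}
Multiplying by $2/(r/2)$ gives $d_{B_{r/2}(X,t)}(V,W'')\le 4\ve$. If $\ve$ is large, the claim is trivial because the relevant distances are bounded by a dimensional constant.

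\textbf{Main obstacle.} The only delicate point is Step 1: checking that the relative Walkup--Wets distance between two vertical planes through a common point is independent of both the centre and the radius. This rests on the fact that vertical linear subspaces are invariant under the non-isotropic dilations $\delta_\lambda$. Everything else is translation invariance plus one application of the triangle inequality, with care taken over the ball containment $q-v\in B_r(X,t)$. That containment is why I work on the half ball $B_{r/2}(X,t)$.
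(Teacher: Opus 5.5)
Your proof is correct and follows essentially the same route as the paper. You normalise at $(X,t)$ and use the radius-independence of $d_{B_\rho}$ for vertical planes through the centre to reduce to a half ball; you justify that independence via the dilations $\delta_\lambda$, whereas the paper only asserts it. You then pass from $W$ to $W''$ by a translation of size at most $\ve r/2$, which is stated more precisely than in the paper. Your reading $W''\parallel W$ is the intended one, since the paper's proof sets $W''=W'$.

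One small point concerns the containment $q-v\in B_r(X,t)$. The paper's balls $B_r(X,t)=\{|X-Y|\le r,\ |t-s|\le r^2\}$ are boxes, not $\pd$-balls. The triangle inequality for $\pd$ alone only gives $q-v\in B_{(1+\ve/2)r}(X,t)$. Checking componentwise works instead: the spatial offset is at most $r/2+\ve r/2$ and the temporal offset is at most $r^2/4+\ve^2r^2/4$, so the containment holds for $\ve\le 1$.
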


\begin{proof}
	We can suppose without loss of generality that $(X,t) = (0,0)$ and $r = 1$. In this case it follows that $W'' = W'$ (the linear subspace parallel to $W$). The first equality in \eqref{e:d_B2} follows from the fact that 
	\begin{align}\label{e:scaling}
		d_{B_1(0,0)}(P,T) = d_{B_A(0,0)}(P,T)  \quad  \mbox{ for all } A > 0
	\end{align}
	whenever $P,T \in \verticalplanes$ are linear subspaces. Thus, our goal is to prove the inequality. Using \eqref{e:scaling}, it suffices to prove the inequality in the case that $A = 1/2$. Now, let $B = B_{1/2}(0,0)$ and fix some $(Y,s) \in W' \cap B$.  Because $d_{2B}(V,W) \leq \epsilon$, it must be that $\dist(Y,s,Y',s) \leq 2\epsilon$ for every $(Y',s) \in W$.  Fix $(Y',s) \in W$.  Then, there is some $(Y'',s) \in V$ with $d_p(Y',s,Y'',s) \leq \epsilon$.  So, 
    $\dist_p(Y,s,V) \lesssim \epsilon \approx \epsilon r_B.$  Similarly, we find that $\dist_p(Z,u,W') \lesssim \epsilon r_B$.  Combining these two estimates gives the desired result because $(Y,s)$ and $(Z,\tau)$ were arbitrary.
\end{proof}

\begin{lemma}\label{l:proj}
	Suppose $V,W \in \verticalplanes$ have non-empty intersection and $(X,t) \in V \cap W.$ Let $\ve,r>0$ and suppose that $d_{B_r(X,t)}(V,W) \leq \ve.$ Then, for $(Y,s) \in W$, we have 
	\begin{align}\label{e:proj}
		\|\pi^\perp_W ( \pi_V(Y,s))\| \lesssim \ve |X-Y|.
	\end{align}
\end{lemma}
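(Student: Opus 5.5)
We reduce to linear planes and then to a purely spatial estimate, to which Lemma \ref{l:angAB} applies after rescaling. By translation invariance of everything involved (parabolic distances, $d_B$, orthogonal projections once we interpret $\pi_V,\pi_W^\perp$ relative to the common point), we may assume $(X,t)=(0,0)$. Then $V=V'$ and $W=W'$ are linear, $W''=W$ in Lemma \ref{l:angAB}, and that lemma gives $\angle(V,W)\lesssim\ve$. By \eqref{e:scaling} this means
\[ d_{B_\rho(0,0)}(V,W)\lesssim \ve \quad\text{for every }\rho>0. \]

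Next fix $(Y,s)\in W$ and write $v=(Y,s)$. Since $\pi_W^\perp v=0$ and $\pi_W^\perp$ is linear, we have
\[ \pi_W^\perp(\pi_V v)=\pi_W^\perp(\pi_V v-v). \]
Both planes contain the $t$-axis, so $\pi_V$ acts as the identity on the time coordinate. Hence $\pi_V v - v=(\pi_{V^0}Y-Y,0)$ is a purely spatial vector, with Euclidean length $\dist(Y,V^0)$. The orthogonal projection $\pi_W^\perp$ does not increase Euclidean length, and it sends vectors with zero $t$-component to vectors of the same type, since $V^\perp$ and $W^\perp$ are orthogonal to the $t$-direction. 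On such vectors $\|\cdot\|=|\cdot|$, so
\[ \|\pi_W^\perp(\pi_V v)\|\le \dist(Y,V^0). \]
In particular the time coordinate $s$ plays no role.

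It remains to show $\dist(Y,V^0)\lesssim\ve|Y|$; this is the only step needing care. We may assume $Y\neq 0$. The point $(Y,0)$ lies in $W$, because $W$ is vertical and contains $(Y,s)$, and it lies in $B_{|Y|}(0,0)$. The bound $d_{B_{|Y|}(0,0)}(V,W)\lesssim\ve$ from the first step then gives
\[ \pdist((Y,0),V)\lesssim \ve|Y|. \]
Finally, for a vertical plane the parabolic distance from a point $(Y,0)$ is attained at the same time level: for $(Z,u)\in V$ we have $(Z,0)\in V$, and $|Y-Z|\le|Y-Z|+|u|^{1/2}$. Therefore $\pdist((Y,0),V)=\dist(Y,V^0)$, which yields $\dist(Y,V^0)\lesssim\ve|Y|$ and hence \eqref{e:proj}.

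The main point to check is this use of Lemma \ref{l:angAB} at the scale $|Y|$, which in principle may be much larger than $r$. Lemma \ref{l:angAB} handles this: the implicit constant there is independent of the dilation $A$, and this is what allows us to pass from closeness of the planes in $B_r(X,t)$ to the bound at every scale.
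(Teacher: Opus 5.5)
Your proof is correct. It differs from the paper's only in which half of the bilateral distance you use. The paper sets $(Z,0)=\pi_V(Y,0)\in V$, notes that $\|\pi_W^\perp(Z,0)\|=\|(Z,0)-\pi_W(Z,0)\|$ is the distance from a point of $V$ to $W$, bounds it by the excess of $V$ over $W$ at scale $|Z|$, and finishes with $|Z|\le|Y|$. You instead use $\pi_W^\perp v=0$ to rewrite the quantity as $\pi_W^\perp(\pi_V v-v)$, and bound this by $\dist(Y,V^0)$ using that $\pi_W^\perp$ is a contraction. You then use the excess of $W$ over $V$ at the point $(Y,0)\in W\cap B_{|Y|}(0,0)$. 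So your argument uses only the closeness of $W$ to $V$, while the paper's uses only the closeness of $V$ to $W$; both are equally short.

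One simplification: the detour through Lemma \ref{l:angAB} is unnecessary. After translating, $V$ and $W$ are linear vertical planes, which the parabolic dilations $(Z,u)\mapsto(\lambda Z,\lambda^2u)$ preserve. These dilations map $B_\rho(0,0)$ onto $B_{\lambda\rho}(0,0)$ and scale parabolic distances by $\lambda$. Hence \eqref{e:scaling} gives the exact identity $d_{B_\rho(0,0)}(V,W)=d_{B_r(0,0)}(V,W)\le\ve$ for every $\rho>0$, which is what the paper uses directly. Nothing in your argument depends on the constant from Lemma \ref{l:angAB}.
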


\begin{proof}
	We can suppose without loss of generality that $(X,t) = (0,0)$ and $r = 1$. Furthermore, since $V,W \in \verticalplanes$, we can also suppose that $(Y,s) = (Y,0)$. Let $(Z,0) = \pi_V(Y,0)$. Since $d_{B_1(0,0)}(V,W) \leq \ve$ and $(0,0) \in V \cap W$, it follows that $d_{B_{|Z|}(0,0)}(V,W) \leq \ve$. In particular, since $(Z,0) \in V \cap B_{|Z|}(0,0)$, we get 
	\begin{align}
		\| \pi^\perp_W(\pi_V(Y,0)) \| = \| \pi_W^\perp(Z,0) \|  = \| (Z,0) - \pi_W(Z,0) \| \leq \ve |Z| \leq \ve |Y|,
	\end{align}
	which finishes the proof. 
\end{proof}

\begin{lemma}\label{l:angAB2}
	Let $V,W \in \verticalplanes$, $(X,t) \in V$, $\ve, r > 0$ and suppose that $d_{B_r(X,t)}(V,W) \leq \ve$. Then, for each $(Y,s),(Z,u) \in W$, we have 
	\begin{align}\label{e:proj1}
		\|\pi_V(Y,s) - (Y,s)\| \lesssim \ve |X-Y| + \| \pi^\perp_W(X-Y,0) \| \lesssim \ve(|X-Y| + r)
	\end{align}
	and 
	\begin{align}\label{e:proj2}
		\|\pi_V(Y,s) - \pi_V(Z,u) - (Y,s) + (Z,u)\| \lesssim  \ve |Y-Z| . 
	\end{align}
\end{lemma}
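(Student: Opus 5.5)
The plan is to reduce everything to the fact that vertical planes split as $V = V^0\times\R$, so all projections act only in the spatial variable and the time coordinate plays no role. Translating, we may assume $(X,t)=(0,0)$. Let $W''$ be the plane parallel to $W$ through $(0,0)$, i.e.\ $W''=W'$. By Lemma \ref{l:angAB} (with the roles arranged so that we compare $V'$ and $W'$), we have $\angle(V,W)\lesssim\ve$. By \eqref{e:scaling} this angle bound holds at every scale, and it gives
\[
\|\pi_{V'}(Z,0)-(Z,0)\|\lesssim \ve|Z| \quad\text{for all } (Z,0)\in W'.
\]

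For \eqref{e:proj2}, note that the expression inside the norm equals $\pi_V'(Y-Z,s-u) - (Y-Z,s-u)$, because $\pi_V$ is affine with linear part $\pi_V'$. The vector $(Y-Z,s-u)$ lies in $W'$, and the $t$-direction lies in both $V'$ and $W'$. Hence the time component is fixed by $\pi_V'$, and the expression reduces to $\pi_V'(Y-Z,0)-(Y-Z,0)$ with $(Y-Z,0)\in W'$. The displayed angle estimate then gives the bound $\lesssim\ve|Y-Z|$.

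For \eqref{e:proj1}, decompose $(Y,s)-(X,t)=(Y-X,s-t)$, which need not lie in $W'$, since $(X,t)$ lies in $V$ and not necessarily in $W$. Write
\[
(Y-X,0)=\pi_W'(Y-X,0)+\pi_W^\perp(Y-X,0).
\]
Since $(X,t)\in V$, we have $\pi_V(Y,s)-(Y,s)=\pi_V'(Y-X,s-t)-(Y-X,s-t)$, and again the time part drops out. We then split according to the decomposition above.
\begin{itemize}
\item The $W'$-component contributes at most $\lesssim\ve|\pi_W'(Y-X,0)|\le\ve|X-Y|$, by the angle estimate.
\item The orthogonal component contributes at most $\|\pi_W^\perp(X-Y,0)\|$, since $\mathrm{Id}-\pi_V'$ is a contraction.
\end{itemize}
This gives the first inequality in \eqref{e:proj1}.

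For the second inequality, it remains to show $\|\pi_W^\perp(X-Y,0)\|\lesssim\ve(|X-Y|+r)$. The quantity $\pi_W^\perp(Y-X,0)$ is independent of $Y\in W$ (up to sign), and it is essentially the spatial offset $\dist(X,W)$. The hypothesis $d_{B_r(X,t)}(V,W)\le\ve$ with $(X,t)\in V\cap B_r(X,t)$ gives $\pdist((X,t),W)\le\ve r/2$. Because $W$ is vertical, the nearest point can be taken at the same time level, so $|\pi_W^\perp(X,0)-\pi_W^\perp(Y,0)|\le\ve r$, which is $\lesssim\ve r$.

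The main obstacle is careful bookkeeping: checking that Lemma \ref{l:angAB} applies with base point on $V$ while the relevant linear plane is $W'$. Lemma \ref{l:angAB} is stated for $W''$ parallel to $V$ through $(X,t)$, but $\angle(V,W)=d_{B_1}(V',W')$ is symmetric, so this is fine. One must also confirm that the parabolic norm on time-zero vectors coincides with the Euclidean norm, so that the linear estimates transfer without loss.
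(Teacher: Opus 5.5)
Your proposal is correct and follows essentially the paper's route. You split the displacement from the base point into its $W'$-component, controlled by the angle bound from Lemma \ref{l:angAB}, and the orthogonal offset $\pi_W^\perp(X-Y,0)$; \eqref{e:proj2} then follows because this offset vanishes for differences of points of $W$. Your bound on the offset, taken directly from $\pdist((X,t),W)\le \ve r/2$, justifies the second inequality in \eqref{e:proj1} more cleanly than the paper's appeal to Lemma \ref{l:proj}.
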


\begin{proof}
	Assume, without loss of generality, that $(X,t) = (0,0)$ and $r = 1$. Set $\ve = d_{B_1(0,0)}(V,W)$. Fix to begin with some $(Y,s) \in W$, let $W'$ be the linear subspace parallel to $W$ and let $(Y',s) = \pi_{W'}(Y,s)$. By definition, we have 
	\begin{align}
		\| (Y,s) - (Y',s) \|  = \| \pi_W^{\perp}(-Y,0) \|. 
	\end{align}
	By \eqref{e:d_B2}, we have $d_{B_1(0,0)}(V,W') \lesssim \ve$. Thus, by \eqref{e:proj},  
	\begin{align}
		\| \pi_V(Y,s) - (Y,s) \| &\leq \| \pi_V(Y,s) - \pi_V(Y',s) \| +  \| \pi_V(Y',s) - (Y',s) \| \\
		&\hspace{2em} + \| (Y',s) - (Y,s) \| \\
		&\leq \| \pi_V^\perp(Y',s) \| + 2\| \pi_W^{\perp}(-Y,0) \| \lesssim \ve |Y| + \| \pi_W^{\perp}(-Y,0) \| . 
	\end{align}
	which completes the proof of the left-most inequality in \eqref{e:proj1}. For the second inequality, we note instead that Lemma \ref{l:proj} gives $\| \pi_W^\perp(-Y,0) \| \lesssim \ve |Y|$, which yields the desired estimate after normalization.
	
	For \eqref{e:proj2}, let us fix a further point $(Z,u) \in W$. We note that $(Y-Z,s-u) \in W'$, so that $\| \pi_W^\perp(Z-Y,0) \| = 0$. Thus, by linearity of the projection and \eqref{e:proj1}, the left-hand side of \eqref{e:proj2} is 
	\begin{align}
		\| \pi_V(Y-Z,s-u) - (Y-Z,s-u) \| \lesssim \ve |Y-Z|, 
	\end{align}
	as required. 
\end{proof}

The following topological result will be useful throughout. 

\begin{lemma}\label{l:topology}
	Let $P \in \verticalplanes$ and $B = B_r(Y,s)$ be a ball centered on $P$. Let $0 < \ve <1/4$ and suppose $g \colon P \cap B \to P$ is a continuous map which fixes the $t$-component and satisfies $\pd(g(Y,s),(Y,s)) \leq \ve r$. Then, $g(P \cap B) \supseteq P \cap (1-2\ve)B$. 
\end{lemma}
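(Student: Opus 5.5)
Since $g$ fixes the $t$-component, the statement reduces to a family of purely spatial statements, one for each time slice, and each of those is a standard degree-theory argument. Without loss of generality take $(Y,s) = (0,0)$ and $P = P^0 \times \R$ with $P^0 \cong \R^\idim$ a linear subspace. Then
\[
P \cap B = \{(Z,u) : Z \in P^0,\ |Z| \leq r,\ |u| \leq r^2\}.
\]
Because $g$ fixes the $t$-component, for each fixed $u$ with $|u| \leq r^2$ the map
\[
g_u(Z) := \text{(spatial part of } g(Z,u))
\]
is a continuous map from the closed ball $\bar B^0_r := \{Z \in P^0 : |Z| \leq r\}$ into $P^0$. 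Continuity of $g_u$ follows by restricting $g$ to a slice.

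The hypothesis $\pd(g(Z,u),(Z,u)) \leq \ve r$ reduces, since the time components agree, to $|g_u(Z) - Z| \leq \ve r$. (I read the hypothesis as holding at every point of $P\cap B$, not only at the centre; the notation $g(Y,s)$ presumably denotes a generic point.)

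The target set $P \cap (1-2\ve)B$ consists of the points $(W,u)$ with $W \in P^0$, $|W| \leq (1-2\ve)r$, and $|u| \leq (1-2\ve)^2r^2$. The last condition implies $|u| \leq r^2$. It therefore suffices to show, for every fixed $u$ with $|u|\le r^2$, that $g_u(\bar B^0_r) \supseteq \bar B^0_{(1-2\ve)r}$.

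To prove the slice statement, fix $W$ with $|W| \leq (1-2\ve) r$ and consider the homotopy
\[
H_\lambda(Z) = (1-\lambda)Z + \lambda g_u(Z), \qquad \lambda \in [0,1].
\]
For $|Z| = r$ we have
\[
|H_\lambda(Z) - W| \geq |Z| - |W| - \lambda |g_u(Z) - Z| \geq r - (1-2\ve)r - \ve r = \ve r > 0.
\]
So $W \notin H_\lambda(\partial \bar B^0_r)$ for all $\lambda$. By homotopy invariance of the Brouwer degree, $\deg(g_u, B^0_r, W) = \deg(\mathrm{id}, B^0_r, W) = 1$, since $W$ lies in the open ball. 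Hence $W \in g_u(B^0_r)$, and so $(W,u) \in g(P\cap B)$.

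The same conclusion can be reached without degree theory. Suppose $W \notin g_u(\bar B^0_r)$. Then the map
\[
Z \mapsto \text{the radial retraction onto } \bar B^0_r \text{ of } \; Z + t\,\frac{W - g_u(Z)}{|W - g_u(Z)|}
\]
for a suitable $t$ gives a fixed-point-free self-map of the ball, contradicting Brouwer's fixed point theorem.

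There is no real obstacle here. The only points needing care are the reduction to slices, which uses that $g$ preserves time, and the boundary estimate above. That estimate is where the margin $2\ve$ (only $\ve$ is actually needed spatially) and the restriction $\ve < 1/4$ ensure the target ball is nondegenerate.
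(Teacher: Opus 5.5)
Your proof is correct and follows the same route as the paper: since $g$ preserves the $t$-component, the lemma reduces to a covering statement for a continuous map moving points by at most $\ve r$ on each spatial slice of $P \cap B$, a Euclidean ball of radius $r$ in $P^0$. The only difference is in how the slice statement is obtained: the paper rescales by $(1+\ve)^{-1}$ to turn $g$ into a self-map of each slice and then cites \cite[Lemma 7.3]{bate2020purely}, whereas you prove it directly by homotopy invariance of the Brouwer degree (or your Brouwer fixed-point variant), which makes the rescaling unnecessary.
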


\begin{proof}
	Without loss of generality, we can suppose 
	\begin{align}
		P = \R^{n-1} \times \{0\} \times \dots \times \{0\} \times \R \quad \mbox{ and } \quad B = B_1(0,0). 
	\end{align}
	Define $\lambda \colon \R^{\adim+1} \to \R^{\adim+1}$ by $\lambda(Y,s) = (Y/(1+\ve),s)$ and set $\tilde{g} \coloneqq \lambda \circ g$. Then, $\tilde{g}$ is continuous, fixes the $t$-component, maps $P \cap B$ to $P \cap B$, and satisfies $\pd(\tilde{g}(Y,s) , (Y,s)) \leq  2\ve$ for all $(Y,s) \in P \cap B$. For $t \in [-1,1]$, let $S^t \coloneq (P \cap B)^t$ and let $\tilde{g}_t \colon S^t \to S^t$ be the restriction of $\tilde{g}$ to $S^t$. Then $\tilde{g}_t$ is a continuous map from $S^t$ (a Euclidean ball of radius one) to itself, which distorts distances by at most an additive factor $2\ve$. It now follows (see for example \cite[Lemma 7.3]{bate2020purely}) that
	\begin{align}
		g_t(S_t) \supseteq (1-2\ve) S_t.
	\end{align}
	Since $t \in [-1,1]$ was arbitrary, this completes the proof. 
\end{proof}

\subsection{Christ-David cubes and coronizations} For any given parabolic Ahlfors regular set, we will need a collection of subsets of $E$ which behave like Euclidean dyadic cubes. These are the so-called \textit{Christ-David cubes}. Fix some constants 
\begin{align}
	\Cinner \coloneqq 1/48 \quad \mbox{ and } \Couter \coloneqq 4. 
\end{align}
The following formulation is a consequence of \cite{hytonen2012non}, which holds for more general spaces. 

\begin{lemma}\label{cubes}
	There exists $\rho \in (0,1/1000)$ such that the following holds. Let $E \subseteq \R^{\adim+1}$ be parabolic Ahlfors $(\idim+2)$-regular and $\{E_k\}_{k \in \mathbb{Z}}$ be a sequence of subsets of $E$ such that 
	\begin{align}\label{e:sep-net}
		\begin{cases}
			\pd(X,t,Y,s) \geq \rho^k/4 &\mbox{ for all distinct } (X,t),(Y,s) \in E_k; \\
			\inf_{y \in E_k} \pd(X,t,Z,u) \leq 2\rho^k  &\mbox{ for all }  (Z,u) \in E. 
		\end{cases}
	\end{align}
	Then, for each $k \in \mathbb{Z}$, there is a collection $\cubes_k$ of subsets of $E$ such that:
	\begin{enumerate}
		\item \label{cubes1} If $Q_1,Q_2 \in \cubes = \bigcup_{k}\cubes_k$ and $Q_1 \cap Q_2 \not= \emptyset,$ then $Q_1 \subseteq Q_2$ or $Q_2 \subseteq Q_1.$ 
		\item \label{cubes2} For $Q \in \cubes_k$, let $\ell(Q) \coloneqq \Couter\rho^k.$ Then, there is $(X_Q,t_Q) \in E_k$ such that
		\begin{align*}
			E \cap B_{\Cinner\ell(Q)}(X_Q,t_Q) \subseteq Q \subseteq B_{\ell(Q)}(X_Q,t_Q) \eqqcolon B_Q. 
		\end{align*}
		\item \label{cubes3} For each $k \in \mathbb{Z},$ $E = \bigcup_{Q \in \cubes_k} Q.$
		\item \label{cubes4} For every $\ve > 0$ there is $\tau > 0$ such that for every $Q \in \cubes$, 
		\begin{align}
			\pHmeasure^{\idim+2}(\{(X,t) \in Q \colon \pdist(X,t,E \setminus Q) < \tau \ell(Q)\}) < \ve \pHmeasure^{\idim+2}(Q).
		\end{align}
	\end{enumerate}
\end{lemma}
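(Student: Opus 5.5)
This lemma is a direct instance of the Hytönen--Kairema construction of dyadic cubes in geometrically doubling quasi-metric (here genuinely metric) spaces \cite{hytonen2012non}. The plan is to check the hypotheses, run the construction with the given nets $E_k$, and then translate its conclusions into the normalisations used here. The constants $\Cinner=1/48$ and $\Couter=4$ should come out of the construction with the separation $\rho^k/4$ and covering $2\rho^k$.

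\textbf{Hypotheses.} The parabolic distance $\pd$ is a metric on $\R^{\adim+1}$, since $|t|^{1/2}$ is subadditive, so $E$ with the restricted metric is a metric space. By parabolic Ahlfors regularity, $\sigma=\pHmeasure^{\idim+2}|_E$ is doubling on $E$. Hence $(E,\pd)$ is geometrically doubling, which is all that \cite{hytonen2012non} requires. The sets $E_k$ are maximal-type nets at scale $\rho^k$: they are $\rho^k/4$-separated and $2\rho^k$-dense. Hytönen--Kairema need separation $c_0\delta^k$ and density $C_0\delta^k$ with $12 C_0\delta\le c_0$, where $\delta=\rho$, $c_0=1/4$, $C_0=2$. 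The condition becomes $24\rho\le 1/4$, which holds for $\rho<1/1000$.

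\textbf{Nesting, covering and containment.} The construction first fixes a parent map between points of $E_{k+1}$ and $E_k$, using the density condition. It then defines closed, open and half-open cubes $Q^k_\alpha$ as closures and unions of descendants. The half-open cubes form a partition of $E$ at each generation, which gives (3), and they are nested, which gives (1). The theorem also gives $B(x^k_\alpha, a_0\delta^k)\subseteq Q^k_\alpha\subseteq B(x^k_\alpha, C_1\delta^k)$, with $a_0=(c_0-4C_0\delta)/3\ge 1/12 - O(\rho)$ and $C_1=2C_0=4$. Since our balls $B_r$ are comparable to $\pd$-balls, $\{\pd<r\}\subseteq B_r\subseteq\{\pd\le 2r\}$, these radii give (2). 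The inner radius is $\Cinner\ell(Q)=\frac{4}{48}\rho^k=\rho^k/12$ up to the boundary correction, so it fits inside the $\pd$-ball of radius $a_0\rho^k$; the slack from $\rho$ being tiny absorbs the constant. The outer radius is $\ell(Q)=4\rho^k=C_1\rho^k$, so the outer containment is immediate.

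\textbf{Small boundary property.} Property (4) is the main obstacle, since it is not stated in the basic form of the theorem. I would use the Hytönen--Kairema small boundary estimate in doubling metric measure spaces (their Theorem on ``thin boundaries'', following Christ): for some $\eta>0$,
\[
\sigma(\{x\in Q:\pdist(x,E\setminus Q)\le \tau\ell(Q)\})\lesssim \tau^\eta\sigma(Q).
\]
This estimate uses the doubling measure $\sigma$, which we have from Ahlfors regularity. Given $\ve$, we choose $\tau$ with $C\tau^\eta<\ve$. If one prefers a self-contained route, Christ's argument applies: a point near the boundary of $Q$ remains near the boundary of descendants over about $\log(1/\tau)$ generations. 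At each generation, Ahlfors regularity guarantees that a fixed proportion of the mass lies in a child well inside its parent, and iterating this gives geometric decay.
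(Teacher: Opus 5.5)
Your route is the same as the paper's, which gives nothing beyond the citation of the Hyt\"onen--Kairema construction. Most of your checks are fine: $\pd$ is a metric, $(E,\pd)$ is doubling, and $c_0=1/4$, $C_0=2$, $\delta=\rho$ satisfy $12C_0\delta\le c_0$. Properties (1), (3), the outer inclusion in (2) (via $\{\pd<r\}\subseteq B_r$) and your Christ-type argument for (4) all go through. The latter does not even need nested nets, since the child containing the centre of a cube lies deep inside it.

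\textbf{The gap is the inner inclusion} $E\cap B_{\rho^k/12}(X_Q,t_Q)\subseteq Q$. The sets $B_r$ are boxes, i.e.\ closed balls of $d_\infty(x,y)=\max\{|X-Y|,|t-s|^{1/2}\}$. As you note yourself, $B_r\subseteq\{\pd\le 2r\}$ is the best available: the corners of $B_{\rho^k/12}(z)$ are at $\pd$-distance $\rho^k/6$ from $z$. Hyt\"onen--Kairema only place the $\pd$-ball of radius $c_0\rho^k/3=\rho^k/12$ inside $Q$. This factor $2$ cannot be absorbed by taking $\rho$ small; your remark about slack even points the wrong way, since $a_0\rho^k<\rho^k/12$.

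In fact, with every point of $E_k$ used as a centre, the inclusion is false. Let $E$ be a vertical plane through the origin, $r=\rho^k/12$, $z_1=(0,0)$ and $z_2=(2re_1,r^2)$ with $e_1$ a spatial unit direction of the plane. Then $\pd(z_1,z_2)=3r=\rho^k/4$, so both points lie in some maximal $\rho^k/4$-separated subset $E_k\subseteq E$, which satisfies the net conditions. Yet $w=(re_1,r^2/2)\in E\cap B_r(z_1)\cap B_r(z_2)$ would have to belong to two disjoint cubes of generation $k$.

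There are two ways to repair this.

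\textbf{Lower the inner constant.} From $\{\pd<\rho^k/12\}\subseteq Q$ you get $E\cap B_{c\ell(Q)}(X_Q,t_Q)\subseteq Q$ for every $c<1/96$. This is harmless given the paper's remark that the exact values of $\Cinner$ and $\Couter$ are unimportant.

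\textbf{Thin the nets to keep $1/48$.} The statement does not require every point of $E_k$ to be a centre. Thin $E_k$ to a maximal subset that is $\rho^k/4$-separated for $d_\infty$; it is then $\tfrac94\rho^k$-dense for $d_\infty$, since $d_\infty\le\pd$. Run the construction in $(E,d_\infty)$, whose closed balls are exactly the $B_r$. You must then take the radii from the proof rather than from the statement of the theorem, because $c_0/3$ only yields the open box and $2C_0=9/2>4$. In a metric space, the inner radius can be taken arbitrarily close to $c_0\rho^k/2=\rho^k/8$ for $\rho$ small. Every cube lies within $d_\infty$-distance $C_0\rho^k/(1-\rho)<4\rho^k$ of its centre.
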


\begin{remark}
	The exact constant $\Cinner$ and $\Couter$ are not so important. In fact, we will encounter in our proof collections of subsets which satisfy Lemma \ref{cubes} \eqref{cubes1} -- \eqref{cubes3} with different constants. Thus, given a set $E$, we will call any collection of subsets $\cubes$ of $E$ a \textit{system of Christ-David cubes with constant} $(C,c)$ if it satisfies Lemma \ref{cubes} \eqref{cubes1} -- \eqref{cubes3}  with constants $(C,c)$. 
\end{remark}

\begin{definition}
Given a cube $Q \in \cubes$, let $\Parent(Q)$ denote its \textit{parent}, i.e. the unique cube $R \in \cubes$ satisfying $Q \subseteq R$ and $\ell(Q) = \rho \ell(R),$ and $\text{Child}(Q)$ denotes its \textit{children}, i.e. the cubes $R \in \cubes$ such that $\Parent(R) = Q.$ We denote its \textit{siblings} by $\Sibling(Q) \coloneqq \text{Child}(\Parent(Q)).$  
\end{definition}

\begin{definition}\label{StoppingTime}
	A subset $S \subseteq \cubes$ is called a \textit{stopping-time region} if the following hold.
	\begin{enumerate}
		\item There is a \textit{top cube} $Q(S) \in S$ such that $Q(S)$ contains all cubes in $S$. 
		\item If $Q \in S$ and $Q \subseteq R \subseteq Q(S),$ then $R \in S$.
		\item If $Q \in S$, then all siblings of $Q$ are also in $S$. 
	\end{enumerate}
	The \textit{minimal cubes} of $S$, denoted by $\min(S)$, are those cubes in $S$ whose children are not in $S$. Let $z(S)$ denote the points contained in $Q(S)$ for which we never stopped, i.e. 
	\begin{align}\label{d:z}
		z(S) = Q(S) \setminus \bigcup \{Q : Q \in \min(S) \}.
	\end{align}
\end{definition}

\begin{definition}\label{d:packing-condition}
	A subset $\cB \subseteq \cubes$ is said to satisfy a \textit{Carleson packing condition} if there exists $C  > 1$ such that 
	\begin{align}
		\sum_{\substack{Q \in \cB \\ Q \subseteq Q_0}} \ell(Q)^{\idim+2} \leq C \ell(Q_0)^{\idim+2} , \quad Q_0 \in \cubes.
	\end{align}
\end{definition}

\begin{definition}\label{d:coronization}
	Given a region $S \subseteq \cubes$ satisfing Definition \ref{StoppingTime}, a triple $(\cB,\cG,\cF)$ is said to be a \textit{coronization} of $S$ if $\cB$ and $\cG$ are disjoint collections of cubes which partition $S$, $\cF$ is a collection of disjoint stopping-time regions in $S$ such that 
	\begin{align}
		\cG = \bigcup_{T \in \cF} T ,
	\end{align}
	and $\cB$ and $\{Q(T)\}_{T \in \cF}$ satisfy Carleson packing conditions. 
\end{definition}
\begin{remark}
    We can also define coronizations of a given grid $\mathcal S$ of Christ-David cubes in the same way.
\end{remark}

We will now gather some important properties of Christ-David cubes, stopping-time regions and coronizations. For this, let us fix for the remainder of the subsection a parabolic Ahlfors $(\idim+2)$-regular set and a system of Christ-David cubes $\cubes$ on $E$ with constants $(C_E,c_E)$.

\begin{proposition}\label{p:corresponding-stopping-time}
	Let $F \subseteq \R^{\adim+1}$ be parabolic Ahlfors $(\idim+2)$-regular, $\cubes_F$ a system of Christ-David cubes on $F$ with constants $(C_F,c_F)$, and $(\cB_F,\cG_F,\cF_F)$ a coronization of $\cubes_F$. Suppose $S$ is a stopping-time region in $\cubes$ such that 
	\begin{align}\label{e:close-to-F}
		\pdist(X_Q,t_Q,F) \lesssim \ell(Q) \quad \mbox{ for each } Q \in S.
	\end{align}
	Then, there is a coronization $(\cB_S,\cG_S,\cF_S)$ of $S$ such that the following holds. For any $T \in \cF_S$ there exists a corresponding $T' \in \cF_F$ and for any $Q \in T$ a corresponding $Q' \in T'$ such that 
	\begin{align}\label{e:properties-Q}
		\ell(Q) \sim \ell(Q') \quad \mbox{and} \quad \pdist(Q,Q') \lesssim \ell(Q). 
	\end{align} 
	Moreover, the packing constants for $(\cB_S,\cG_S,\cF_S)$ can be chosen to depend only on the packing constants for $(\cB_F,\cG_F,\cF_F)$.
\end{proposition}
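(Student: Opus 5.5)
We transfer the coronization of $\cubes_F$ to $S$ through a ``nearest cube'' correspondence. First we fix the correspondence. By \eqref{e:close-to-F}, for each $Q \in S$ there is a point $y_Q \in F$ with $\pd((X_Q,t_Q),y_Q) \lesssim \ell(Q)$. Let $k_F(Q)$ be the generation of $\cubes_F$ whose side length satisfies $C_F\rho^{k_F(Q)} \sim \ell(Q)$, and let $Q' \in \cubes_F$ be the unique cube of that generation containing $y_Q$. Then $\ell(Q') \sim \ell(Q)$ and $\pdist(Q,Q') \lesssim \ell(Q)$, which is \eqref{e:properties-Q}. We also record two facts.
\begin{enumerate}
\item[(a)] The map $Q \mapsto Q'$ has bounded multiplicity. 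By Ahlfors regularity of $E$, only boundedly many cubes of $S$ of comparable size lie within distance $\lesssim \ell(Q')$ of $Q'$.
\item[(b)] The map is roughly monotone. If $Q_1 \subseteq Q_2$ are in $S$, then $Q_1'$ is contained in the cube of $\cubes_F$ of generation $k_F(Q_2)$ lying within distance $\lesssim \ell(Q_2)$ of $Q_2'$. In general this cube is not $Q_2'$ itself, but one of a bounded number of its near neighbours.
\end{enumerate}

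Next we define the coronization of $S$. We put $Q$ into $\cB_S$ in two cases:
\begin{enumerate}
\item[(i)] some cube $R \in \cubes_F$ with $\ell(R) \sim \ell(Q')$ and $\pdist(R,Q') \lesssim \ell(Q')$ (a bounded family $\mathcal N(Q)$) lies in $\cB_F$ or is a top cube $Q(T')$, $T' \in \cF_F$;
\item[(ii)] the cubes of $\mathcal N(Q)$ do not all lie in a single region $T' \in \cF_F$.
\end{enumerate}
Every remaining $Q$ has its whole neighbourhood $\mathcal N(Q)$ inside a single $T'=T'(Q)$. We partition these good cubes into maximal stopping-time regions: a region starts at a good cube whose parent is either bad, or good with a different $T'$, or outside $S$, and it contains all descendants that are good with the same $T'$. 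We then add siblings to close the family under the sibling condition. Any sibling whose $T'$ differs, or which is bad, instead becomes the top of its own region or stays in $\cB_S$. This costs only a bounded factor, because siblings have the same parent. Finally, $\cG_S$ is the union of these regions. Coherence of each region (condition (2) of Definition \ref{StoppingTime}) follows from (b): if $Q_1 \subseteq Q_2 \subseteq Q_3$ with $Q_1,Q_3$ good for the same $T'$, then $Q_2'$ sits in a cube between $Q_1'$ and $Q_3'$ up to neighbours, so $Q_2$ is good for $T'$ by coherence of $T'$.

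The main step is the packing estimates. We write the packing of $\cB_S$ as the sum of cases (i) and (ii). Case (i) is controlled by (a) together with the packing of $\cB_F \cup \{Q(T')\}$. Indeed, each bad or top cube $R$ of $F$ is charged by boundedly many $Q$, each with $\ell(Q) \sim \ell(R)$. Moreover, $Q \subseteq Q_0$ forces $R \subseteq$ a bounded enlargement of $Q_0'$, so the resulting sum is $\lesssim \ell(Q_0)^{n+2}$ by Ahlfors regularity of $F$. Case (ii) is the delicate one. Its cubes lie near the ``boundary'' of the regions $T'$: some neighbour of $Q'$ belongs to $T'$ while another does not. Since neighbours of comparable size in the same region must merge at a common ancestor a bounded number of generations up, we claim that every such $Q$ has, within a bounded number of generations above or below, a neighbour that is a top cube or lies in $\min(T')$ adjacent to $\cB_F$ or to another top cube. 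This reduces case (ii) to case (i) up to bounded multiplicity. If this claim fails in general, we would instead enlarge the neighbourhood gradually and use the small-boundary property of Lemma \ref{cubes}\eqref{cubes4} for $F$ to absorb the boundary cubes. This is the step we expect to be the main obstacle.

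The top cubes of $\cF_S$ are handled in the same way. Each top cube has a parent that is bad, outside $S$ (at most one generation per $Q_0$, which is fine), or of a different region. In the last case $Q'$ is near a top cube of $F$ or its neighbourhood meets two regions, which is case (i) or (ii) again. Every constant depends only on the packing constants of $(\cB_F,\cG_F,\cF_F)$ together with the regularity and cube constants.
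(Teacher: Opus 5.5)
\emph{Verdict: the proposal has a genuine gap.} Your architecture matches the paper's proof, which follows \cite[Section I.3.2]{david1993analysis}. Your bad cases (i)--(ii) are the paper's $\cB'$: cubes $A$-close to a cube that is itself $A$-close to $\cB_F$, to a top cube, or to a good cube of a different region. Coherence comes, as in the paper, from the overlap of the neighbourhoods of $Q'$ and $\Parent(Q)'$. The gap is the packing of case (ii). This is the real content of the proposition; it is \cite[Lemma 3.26]{david1993analysis}, which the paper cites rather than reproves. Your reduction of it to case (i) is false.

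To see why, suppose $R_0\in\cB_F$ and, for each child $R_i$ of $R_0$, all descendants of $R_i$ form one region $T_i'$. Any $Q$ whose $Q'$ lies $m$ generations below $R_1$ and is adjacent to $R_2$ is in case (ii). Yet the only bad or top cubes anywhere near it are $R_0,R_1,R_2$, which are $m$ generations up, and $\min(T_i')=\emptyset$. The number of such cubes at depth $m$ grows like the number of depth-$m$ cubes along $\partial R_1$, so no bounded-multiplicity charging exists. The sum converges only because boundary layers of cubes are thin. So your ``fallback'' is not optional; it is the proof.

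Here is what you need. Let $R_1\in T_1'$ and $R_2\in T_2'$, with $T_1'\neq T_2'$, be cubes of $\mathcal N(Q)$ brought to a common generation, and let $R_i^{(j)}$ denote their $j$-th ancestors.
\begin{enumerate}
\item Since regions are disjoint, $R_1^{(j)}\neq R_2^{(j)}$ for as long as both ancestors remain in their regions.
\item Stop at the first $j$ at which one of them is a top cube, or at the generation of $Q_0'$ if that comes first. Call the resulting ancestor $\hat R$.
\item Then $Q'$ lies within $C\ell(Q)$ of both $\hat R$ and $F\setminus\hat R$.
\item Either $\hat R$ is a top cube with $\ell(\hat R)\lesssim\ell(Q_0)$ and $\pdist(\hat R,Q_0')\lesssim\ell(Q_0)$, or $\hat R$ is one of boundedly many same-generation neighbours of $Q_0'$.
\end{enumerate}
You then use the power-type boundary estimate
\[
\pHmeasure^{\idim+2}\bigl(\{x\in\hat R:\pdist(x,F\setminus\hat R)<\lambda\ell(\hat R)\}\bigr)\lesssim\lambda^{\gamma}\ell(\hat R)^{\idim+2},
\]
applied to $\hat R$ and its same-generation neighbours to cover the two-sided layer. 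With bounded multiplicity, the case-(ii) cubes charged to $\hat R$ at depth $m$ contribute $\lesssim\rho^{m\gamma}\ell(\hat R)^{\idim+2}$. Summing over $m$, and then over top cubes using the packing of $\{Q(T')\}$, gives $\lesssim\ell(Q_0)^{\idim+2}$.

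Two smaller points. First, $\cubes_F$ is only assumed to satisfy Lemma \ref{cubes} \eqref{cubes1}--\eqref{cubes3}, so you cannot cite \eqref{cubes4} for it. The power decay does follow from \eqref{cubes1}--\eqref{cubes3} and Ahlfors regularity by the standard iteration: each cube has a fixed fraction of its mass in subcubes far from its complement. Second, your sibling step is ambiguous as written. A region that contains a cube but not its bad sibling violates Definition \ref{StoppingTime}(3). Once the constants defining $\mathcal N$ are large, $\Parent(Q)'\in\mathcal N(Q)$, so a good cube and its good parent automatically share $T'$. It then suffices to add the siblings of bad cubes to $\cB_S$ (a bounded cost) and take connected components of the remaining good cubes.
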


\begin{proof}
	The proof is essentially contained in \cite[Section I.3.2]{david1993analysis}. We outline some of the details here, but direct the reader to aforementioned reference for the full details. Fix some stopping-time region $S$ satisfying \eqref{e:close-to-F}. Let $A$ be a large constant (to be chosen later). We say two set $Q,R$ in $\R^{\adim+1}$ are \textit{$A$-close} if 
	\begin{align}
		A^{-1} \diam(Q) \leq \diam(R) \leq A \diam(Q)
	\end{align}
	and 
	\begin{align}
		\pdist(Q,R) \leq A(\diam(Q) + \diam(R)). 
	\end{align}
	Let $\cB_F^1$ be the collection of cubes $Q$ in $\cubes_F$ which satisfy one of the conditions: 
	\begin{itemize}
		\item [(a)] $Q$ is $A$-close to some cube in $\cB_F;$
		\item [(b)] $Q$ is $A$-close to $Q(S')$ for some $S' \in \cF_F;$
		\item [(c)] $Q \in \cG_F$ and $Q$ is $A$-close to another element of $\cG_F$ which lies in a different stopping-time region. 
	\end{itemize}
	Let $\cB'$ be the cubes in $\cubes$ which are $A$-close to an element of $\cB_F^2$. It is shown in \cite[Lemma 3.26]{david1993analysis} that $\cB_F^1$ satisfies a Carleson packing condition. Thus, the same is true of $\cB'$ by \cite[Lemma 3.27]{david1993analysis} (which says that $A$-closeness does not destroy packing conditions). It is now standard to construct a coronization $(\cB_S,\cG_S,\cF_S)$ of $S$ such that
	\begin{align}\label{e:nested-bad}
		\cB' \cup \min(S) \subseteq \cB_S
	\end{align}
	(see, for example,  \cite[Lemma 3.22]{david1993analysis}). 
	
	Fix some $T \in \cF_S$ and $Q \in T.$ By construction $T \subseteq S$, so there exists a point $(X_Q',t_Q') \in F$ such that 
	\begin{align}
		\pdist(X_Q,t_Q,X_Q',t_Q') \lesssim \ell(Q). 
	\end{align}
	Let $Q'$ be the cube in $\cubes_{F,k}$ such that $(X_Q',t_Q') \in Q',$ where $k \in \mathbb{Z}$ is such that $Q \in \cubes_{E,k}.$ The inequalities in \eqref{e:properties-Q} are then clear, with constant depending on the cube family parameters. Thus, for $A$ large enough, $Q'$ is $A$-close to $Q$. This implies (recalling the definition of $\cB'$) that $Q' \not\in \cB_F^1$, so that $Q'$ and all its neighbours must lie in the same stopping-time region $T' \in \cF_F.$ Moreover, if $\tilde{Q}$ is a parent or child of $Q$, then $\tilde{Q}$ must be associated to the same $T' \in \cF_F.$ This fact implies that \textit{all} cubes in $S$ must be associated to the same $T' \in \cF_F$, as required. 
\end{proof}

\begin{corollary}\label{c:corresponding-stopping-time}
	Suppose $(\cB,\cG,\cF)$ is a coronization of $\cubes$ such that, for each $S \in \cF$, there is a set $F_S$ such that 
	\begin{align}\label{e:corresponding-stopping-time}
		\pdist(X_Q,t_Q,F_S) \lesssim \ell(Q) \quad \mbox{ for each } Q \in S.
	\end{align}
	Furthermore, suppose that each $F_S$ is parabolic Ahlfors $(\idim+2)$-regular with constant $C_0$, $\cubes_S$ is a system of Christ-David cubes on $F_S$ with constants $(\Cinner',\Couter')$, and $(\cB_{F_S},\cG_{F_S},\cF_{F_S})$ is a coronization of $\cubes_S$ with uniform packing constants. Then, there exists a coronization $(\cB',\cG',\cF')$ of $\cubes$ such that,
	\begin{enumerate}
		\item for each $T \in \cF'$ there exists $S \in \cF$ such that $T \subseteq S;$ 
		\item for each $T \in \cF'$ there exists a corresponding $T' \in \cF_{F_S}$ and for each $Q \in T$ a corresponding $Q' \in T'$ such that  
		\begin{align}
			\ell(Q) \sim \ell(Q') \quad \mbox{ and } \quad \pdist(Q,Q') \lesssim \ell(Q).
		\end{align}
	\end{enumerate}
	The packing constants for $(\cB',\cG',\cF')$ can be chosen to depend only on the packing constants for $(\cB,\cG,\cF)$ and the uniform packing constants for $\{(\cB_{F_S},\cG_{F_S},\cF_{F_S})\}_{S \in \cF}$. 
\end{corollary}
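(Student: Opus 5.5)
The plan is to apply Proposition \ref{p:corresponding-stopping-time} separately to each stopping-time region $S \in \cF$, taking $F = F_S$ and the coronization $(\cB_{F_S},\cG_{F_S},\cF_{F_S})$ of $\cubes_S$. Hypothesis \eqref{e:corresponding-stopping-time} is exactly \eqref{e:close-to-F}. The proposition then gives a coronization $(\cB_S',\cG_S',\cF_S')$ of $S$. Its packing constants depend only on those of $(\cB_{F_S},\cG_{F_S},\cF_{F_S})$, together with $C_0$ and the cube constants $(\Cinner',\Couter')$, and all of these are uniform in $S$. Each $T \in \cF_S'$ lies in $S$ and comes with a $T' \in \cF_{F_S}$ and cubes $Q' \in T'$ satisfying \eqref{e:properties-Q}. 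This already gives properties (1) and (2).

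I then define
\[ \cB' = \cB \cup \bigcup_{S\in\cF} \cB_S', \qquad \cF' = \bigcup_{S \in \cF} \cF_S', \qquad \cG' = \bigcup_{T \in \cF'} T. \]
These partition $\cubes$: $\cB$ and the regions $S \in \cF$ partition $\cubes$, and each $S$ is in turn partitioned by $\cB_S'$ and $\cG_S'$. The regions in $\cF'$ are pairwise disjoint because the regions in $\cF$ are. It remains to check the Carleson packing of $\cB'$ and of $\{Q(T)\}_{T\in\cF'}$.

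The packing is the main step. Fix $Q_0 \in \cubes$ and split the regions $S$ meeting $Q_0$ from below into two kinds.

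\emph{Regions with $Q(S) \subseteq Q_0$.} Every cube of $\cB_S'$ or $\{Q(T)\}_{T\in\cF_S'}$ lies in $Q(S)$. By the uniform packing inside $S$, applied with $Q(S)$ as the reference cube, the total contribution of $S$ is $\lesssim \ell(Q(S))^{n+2}$. Summing over such $S$ and using the packing of $\{Q(S)\}_{S\in\cF}$ gives $\lesssim \ell(Q_0)^{n+2}$.

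\emph{Regions with $Q(S) \supsetneq Q_0$.} Here the relevant cubes are those of $S$ contained in $Q_0$. Since the regions are disjoint and $S$ is closed upward to $Q(S)$, at most one such $S$ contains $Q_0$. For that $S$, I need the packing of the proposition's coronization to hold for every reference cube $Q_0 \in S$, not just for $Q_0 = Q(S)$. I expect this is built into the notion of a coronization of $S$; if it is not, I would note that $Q_0 \in S$ and that the construction in \cite[Lemma 3.22]{david1993analysis} yields packing relative to arbitrary cubes. That gives a contribution $\lesssim \ell(Q_0)^{n+2}$.

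Finally, $\cB$ packs by hypothesis. Adding the three bounds gives the packing for $\cB'$ and $\{Q(T)\}_{T\in\cF'}$, with constants depending only on the stated data. The main point of care is this localisation in the second case, where $Q_0$ lies strictly inside $Q(S)$.
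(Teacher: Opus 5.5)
Your proposal is correct and is essentially the paper's argument: apply Proposition \ref{p:corresponding-stopping-time} to each $S\in\cF$, take the unions, and bound each packing sum by two contributions. The first comes from regions with $Q(S)\subseteq Q_0$, controlled via the packing of $\{Q(S)\}_{S\in\cF}$, and the second from the unique region containing $Q_0$ (the paper phrases this as the cases $Q_0\in\cB$ and $Q_0\in S_0$). The localisation you worried about is automatic, because Definition \ref{d:packing-condition} requires a Carleson packing condition relative to every $Q_0\in\cubes$, so it is built into the notion of a coronization of $S$.
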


\begin{proof}
	For each $S \in \cF$, let $(\cB_S,\cG_S,\cF_S)$ be the coronization of $S$ obtained by applying Proposition \ref{p:corresponding-stopping-time}. Then, set 
	\begin{align}
		\cB' = \cB \cup \bigcup_{S \in \cF} \cB_S, \quad  \cG' = \bigcup_{S \in \cF} \cG_S \quad \mbox{and}  \quad \cF' = \bigcup_{S \in \cF} \cF_S. 
	\end{align}
	Properties (1) and (2) are then immediate from Proposition \ref{p:corresponding-stopping-time}. For the packing conditions, fix some $Q_0 \in \cubes.$ If $Q_0 \in \cB$ then, using the fact that $\cB,$ $\cB_S$ and $\{Q(S)\}_{S \in \cF}$ satisfy Carleson packing conditions, we have 
	\begin{align}
		\sum_{\substack{Q \in \cB' \\ Q \subseteq Q_0}} \ell(Q)^{\idim+2} &\leq \sum_{\substack{Q \in \cB \\ Q \subseteq Q_0}} \ell(Q)^{\idim+2} + \sum_{\substack{S \in \cF \\ Q(S) \subseteq Q_0}}\sum_{Q \in \cB_S} \ell(Q)^{\idim+2} \\
		&\lesssim \ell(Q_0)^{\idim+2} + \sum_{\substack{S \in \cF \\ Q(S) \subseteq Q_0}} \ell(Q(S))^{\idim+2} \lesssim \ell(Q_0)^{\idim+2}. 
	\end{align}
	If instead $Q_0 \in S_0 \in \cF,$ then using the above estimate and the fact that $\cB_{S_0}$ satisfies a Carleson packing condition, we have 
	\begin{align}
		\sum_{\substack{Q \in \cB' \\ Q \subseteq Q_0}} \ell(Q)^{\idim+2} &\leq \sum_{\substack{Q \in \cB \\ Q \subseteq Q_0}} \ell(Q)^{\idim+2} + \sum_{\substack{S \in \cF \\ Q(S) \subseteq Q_0}}\sum_{Q \in \cB_S} \ell(Q)^{\idim+2} + \sum_{\substack{Q \in \cB_{S_0} \\ Q \subseteq Q_0}} \ell(Q)^{\idim+2} \\
		&\lesssim \ell(Q_0)^{\idim+2}. 
	\end{align}
	The proof that $\{Q(S')\}_{S' \in \cF'}$ satisfies a Carleson packing condition is similar and we omit the details. 
\end{proof}

The following is essential well-known. We will give a sketch of the proof and refer the reader elsewhere for the specific details within that sketch. 

\begin{lemma}\label{l:smooth}
	Let $n \in \N$ and suppose $\{S_i'\}_{i \in I}$ is a collection of stopping-time regions in $\cubes$ such that $\{Q(S_i')\}_{i \in I} \subseteq \cubes_n$ is a disjoint collection of cubes. For each $i \in I$ there is an extended stopping-time region $S_i \supseteq S_i'$ such that the following conditions hold. Let 
	\begin{align}\label{e:define-S}
		S \coloneqq \bigcup_{i \in I} S_i, \quad \min(S) \coloneqq \bigcup_{i \in I} \min(S_i) \quad \mbox{ and } \quad z(S) \coloneqq \bigcup_{i \in I} z(S_i). 
	\end{align}
	and define $S'$, $\min(S')$, $z(S')$ in the same way, but with respect to $\{S'_i\}_{i \in I}$.  Then, 
	\begin{enumerate}
		\item If $Q,R \in \min(S)$ are such that $10B_Q \cap 10B_R \neq\emptyset$ then $\ell(Q) \sim \ell(R)$. 
		\item If $Q \in \min(S)$ then $z(S) \cap 10B_Q = \emptyset$. 
		\item For each $x \notin z(S)$ there are only finitely many $Q \in S$ such that $x \in B_Q$. 
		\item For every $Q \in S$ there exists $i \in I$ and $Q' \in S_i$
		\begin{align}\label{e:smooth-stat}
			B_Q \subseteq CB_{Q'} \quad \mbox{ and } \quad \ell(Q') \lesssim_\tau  \ell(Q),  
		\end{align}
		where $C = C(\tau)$. 
		\item For every $Q \in \min(S)$, if $R \in \min(S')$ is such that $Q \subseteq R$ then $\ell(Q) < \tau \ell(R)$. Furthermore, for any $\delta > 0$, if $Q \cap \partial_{\delta \ell(R)} R = \emptyset$, then $\ell(Q)  \gtrsim \delta  \tau \ell(R)$. 
	\end{enumerate}
\end{lemma}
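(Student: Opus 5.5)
The plan is the standard ``smoothing'' of stopping-time regions from David--Toro and David--Semmes. Fix the parameter $\tau\in(0,1)$ appearing in (4) and (5). For $x\in E$ define the distance-like function
\begin{align}
	d(x) \coloneqq \inf_{Q \in S'} \big( \pdist(x,B_Q) + \ell(Q) \big),
\end{align}
where $S' = \bigcup_i S_i'$. This function is $1$-Lipschitz with respect to $\pd$ and vanishes exactly on $\overline{z(S')}$. For each $i$, let $S_i$ consist of the cubes $Q\subseteq Q(S_i')$ such that $\ell(Q) \geq \tau\, \inf_{y \in 10B_Q} d(y)$, closed upward inside $Q(S_i')$ and under siblings. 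Concretely, I take $Q\in S_i$ if and only if every ancestor $R$ of $Q$ with $R\subseteq Q(S_i')$, together with all siblings of such $R$, satisfies $\ell(R)\ge \tau \inf_{10B_R} d$. This makes $S_i$ a stopping-time region in the sense of Definition~\ref{StoppingTime}.

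First I will check that $S_i\supseteq S_i'$. If $Q\in S_i'$, then $d\le \pdist(\cdot,B_Q)+\ell(Q)\le \ell(Q)$ on $B_Q$. Hence $\ell(Q)\ge\tau\inf_{10B_Q}d$ once $\tau<1$, and the same holds for ancestors and siblings, which lie in $S_i'$ by the stopping-time axioms.

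Next come the key estimates, via the standard dichotomy: a cube $Q$ is minimal exactly when some child $Q_1$ (or a sibling of $Q_1$) fails the condition. Then $\ell(Q)\approx \tau\, d$ on $10B_Q$, using the Lipschitz property of $d$ across the comparable balls $10B_{Q_1}$ and $10B_Q$, with constants absorbed because $\tau$ is small. Properties (1), (2) and (3) then follow from this.
\begin{itemize}
	\item[(1)] If $10B_Q\cap10B_R\neq\emptyset$, then $\ell(Q)\approx\tau d(x)\approx\ell(R)$ for $x$ in the intersection. This uses $d(x)\gtrsim \ell(Q)/\tau \gg \ell(Q)$, so $d$ varies by a factor of at most $2$ over $10B_Q\cup10B_R$.
	\item[(2)] $d>0$ on $10B_Q$, whereas $d=0$ on $z(S)$. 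One checks $z(S)=\overline{z(S')}\cap\bigcup_i Q(S_i')$ essentially: points where $d>0$ eventually stop.
	\item[(3)] If $x\notin z(S)$, then $d(x)>0$ or $x$ lies in a minimal cube. In either case only cubes with $\ell(Q)\gtrsim \tau d(x)$ (respectively $\ell(Q)\gtrsim$ that minimal size) can contain $x$ in $B_Q$, by the same comparison. This leaves finitely many scales, and bounded overlap at each scale.
\end{itemize}

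For (4), take $Q\in S_i$. Since $\ell(Q)\ge\tau\inf_{10B_Q}d$, there exist $y\in10B_Q$ and $Q'\in S'$ with $\pdist(y,B_{Q'})+\ell(Q')\le 2\ell(Q)/\tau$. Then $\ell(Q')\lesssim_\tau\ell(Q)$ and $B_Q\subseteq CB_{Q'}$ with $C=C(\tau)$. If $\ell(Q')<\ell(Q)$, I replace $Q'$ by its ancestor of size comparable to $\ell(Q)$, which is still in $S'$ as long as it stays below $Q(S_i')$. Otherwise I use $Q(S_i')$ itself, since $Q\subseteq Q(S_i')$.

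For (5), let $Q\in\min(S)$ and $R\in\min(S')$ with $Q\subseteq R$. On $10B_Q$ we have $d\lesssim \ell(R)$, so the minimality relation gives $\ell(Q)\lesssim\tau\ell(R)$. Shrinking $\tau$ by a fixed constant yields the strict inequality $\ell(Q)<\tau\ell(R)$.

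The quantitative lower bound in (5) is the main obstacle. The task is to show that if $Q$ stays $\delta\ell(R)$ away from $\partial R$, then $d\gtrsim\delta\ell(R)$ on $10B_Q$. This requires that cubes of $S'$ strictly smaller than $R$ do not approach points of $R$ that are far from $\partial R$. The reason is that such a cube $Q''$ is either disjoint from $R$ (hence far from interior points of $R$, up to the $10B$ enlargement), or contained in $R$, which is impossible since $R$ is minimal. Cubes of size $\ge\ell(R)$ give $d\ge\ell(R)$ directly. So the plan is to show $d(y)\gtrsim\min(\delta,1)\ell(R)$ for $y\in Q$ with $\pdist(y,E\setminus R)\ge\delta\ell(R)$. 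The issue is that $B_{Q''}$ may stick out of $Q''$, so it can meet $R$ even when $Q''\cap R=\emptyset$. I would handle this by using the definition with $\pdist(x,Q'')$ in place of $\pdist(x,B_{Q''})$, at the cost of constants. With this bound, minimality gives $\ell(Q)\approx\tau d\gtrsim\tau\delta\ell(R)$.
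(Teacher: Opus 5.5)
Your proposal is correct and follows essentially the paper's route (a $1$-Lipschitz distance to $S'$, extension of each $S_i'$ until $\ell(Q)\sim\tau d$, and (1)--(5) read off from this comparability on minimal cubes). Testing a pointwise $d$ on $10B_Q$ and taking minimal cubes to be the last ``large'' ones is only a cosmetic variant of the paper's $d_{S'}(Q)$ and its family $\cM$ of first ``small'' cubes. Your fallback in (5) is not needed, since $\pdist(y,B_{Q''})+\ell(Q'')\geq \pdist(y,Q'')/2$.

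Two caveats. First, $d$ does not vanish exactly on $\overline{z(S')}$; it also vanishes wherever cubes of $S'$ of arbitrarily small size accumulate. You only use $d=0$ on $z(S)$, which is correct. Second, your dichotomy in (3) fails for $x\in E\setminus\bigcup_i Q(S_i)$ with $d(x)=0$, e.g.\ $x\in\overline{Q(S_i)}\setminus Q(S_i)$ when $S_i'$ never stops. There (3) can itself fail, a defect shared by the paper's sketch, so (3) should be restricted to $x\notin E$ or $x\in\bigcup_i Q(S_i)$, where your argument works.
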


\begin{proof}[Sketch of the proof.]
	Let $S' \coloneqq \bigcup_{i \in I} S_i'$. Let $d_{S'} \colon \cubes \to \R$ be a map defined by
	\begin{align}
		d_{S'}(Q) = \inf_{R \in S'} \{ \ell(R) + \dist(Q,R)\}.
	\end{align}
	By \cite{azzam2018analyst}, for any $Q,Q' \in \cubes$ one has  the following Lipschitz-type condition
	\begin{align}
		d_{S'}(Q) \leq 2\ell(Q) + \dist(Q,Q') + 2\ell(Q') + d_{S'}(Q'). 
	\end{align}
	Let $\cM$ be the maximal collection of cubes $Q \in \cubes$ such that $Q \subseteq Q(S_i')$ for some $i \in I$, and 
	\begin{align}\label{e:d_S}
		\ell(Q') < \tau d_{S'}(Q') \quad \mbox{ for all } Q' \in \Sibling(Q). 
	\end{align}
	Let 
	\begin{align}
		S_i \coloneqq \{Q \subseteq Q(S_i') \colon \mbox{there exists } R \in \cM \mbox{ such that } R \subseteq Q\} \supseteq S_i'.  
	\end{align}
	In this way, $S_i$ defines a stopping-time region (this is why \eqref{e:d_S} is stated in terms of siblings). Moreover, we have $\cM = \min(S)$, where $S$ and $\min(S)$ are as in \eqref{e:define-S}. 
	By maximality (and assuming $\tau$ is sufficiently small), one can show (see \cite{azzam2018analyst} for the details), that 
	\begin{align}\label{e:smooth-comp}
		\tau d_{S'}(Q) \sim \ell(Q) \quad \mbox{ for all } Q \in \min(S). 
	\end{align}
	
	To see that Lemma \ref{l:smooth} (1) holds, fix some $Q,R \in \min(S)$ such that $10B_Q \cap 10B_R \neq\emptyset$. By \eqref{e:smooth-comp}, we have 
	\begin{align}
		\tau d_{S'}(Q) &\leq \tau [2\ell(Q) + \dist(Q,Q') + 2\ell(Q') + d_{S'}(Q')] \\
		&\lesssim \tau \ell(Q) + \ell(Q') \lesssim \tau^2 d_{S'}(Q) + \ell(Q'). 
	\end{align}
	Thus, after taking $\tau$ sufficiently small, rearranging the above inequality gives $\tau d_{S'}(Q) \lesssim \ell(Q').$ By \eqref{e:smooth-comp}, this implies $\ell(Q) \lesssim \ell(Q')$. Reversing the roles of $Q$ and $Q'$ gives the reverse inequality. 
	
	For Lemma \ref{l:smooth} (2), it follows from the definition of $z(S)$ that any $x \in z(S)$ is contained in arbitrary small cubes from $S$. If $Q \in \min(S)$ is such that $x \in 10B_Q$, this implies that $d_{S'}(Q) \leq 10\ell(Q)$. Thus, for $\tau$ small enough, \eqref{e:smooth-comp} implies $Q \not\in \min(S)$. 
	
	For Lemma \ref{l:smooth} (3), fix some $x \notin z(S)$. Since $E$ is closed, any point not in $E$ must be contained in finitely many cubes from $S$ since the (ordered) diameters of any infinite sequence of uniformly bounded cubes containing a point must converge to zero. So, let us assume that $x \in E \setminus z(S)$. In this case, it follows from the definition of $z(S)$ that there exists $Q \in \min(S)$ such that $x \in Q$. However, this contradicts Lemma \ref{l:smooth} (2). 
	
	For Lemma \ref{l:smooth} (4), fix some $Q \in S$. By \eqref{e:smooth-comp}, there exists $i \in I$ and $R' \in S_i'$ such that 
	\begin{align}
		\tau 	\ell(R') + \tau \dist(Q,R') \lesssim \ell(Q). 
	\end{align}
	Let $R \in \cubes$ be the maximal ancestor of $R'$ such that $\tau \ell(R) \lesssim \ell(Q)$. By maximality, we have $\tau \ell(R) \sim \ell(Q)$, and since taking ancestors does not increase distances, we still have $\tau \dist(Q,R) \lesssim \ell(Q) \lesssim \tau^{-1} \ell(R)$. These two facts imply \eqref{e:smooth-stat} with constant depending on $\tau$. We finish by showing that $R \in S_i'$. Indeed, let $j \in I$ such that $R \in S_j$. Since $R \subseteq Q(S_j')$, we have by the definition of $d_{S'}$ that $\ell(R) \leq \tau \ell(Q(S_j'))$. Recalling that $\{Q(S_i')\}_{i \in I} \subseteq \cubes_n$, this implies also that $\ell(R) \leq \tau \ell(Q(S_i'))$, which is enough to imply that $R \in S_i'$. 
	
	For Lemma \ref{l:smooth} (5), fix some $Q \in \min(S)$ and $R \in \min(S')$ such that $Q \subseteq R$. For the first part, since $Q \subseteq R$, it follows from \eqref{e:d_S} and the definition of $d_{S'}$, that $\ell(Q) < \tau \ell(R)$. For the second part, fix some $\delta > 0$ and assume that $Q \cap \partial_{\delta \ell(R)}R = \emptyset$. With this assumption, we have $d_{S'}(Q) \geq \delta \ell(R)$. Indeed, if $R$ is the infimiser of $d_{S'}$ then $d_{S'}(Q) = \ell(Q)$, if $R' \in S'$ is the infimiser of $d_{S'}(Q)$, then $d_{S'}(Q) \geq \dist(Q,R') \geq \delta \ell(R)$. We now have $\ell(Q) \gtrsim \delta \tau \ell(R)$ by \eqref{e:smooth-comp}.
\end{proof}

\subsection{\texorpdfstring{Uniform rectifiability coefficients}{beta-numbers and uniform rectifiability}} Here we shall gather some preliminaries on the various uniform rectifiability coefficients  that will be used through the paper. The coefficients $\unilatbeta_E$ and $\bilatbeta_E$ will measure (in distinct ways) how well a set is approximated by a single plane, the coefficient $\baupbeta_E$ will measure how well a set is approximated by a union of planes. Both are defined below.

\begin{definition}
	For a set $E \subseteq \R^{\adim+1}$ and a ball $B = B_r(X,t)$ centered on $E$, let 
	\begin{align}
		\unilatbeta_E(B) &\coloneqq \inf_{P \in \verticalplanes} \unilatbeta_E(B,P) \coloneqq  \inf_{P \in \verticalplanes} \text{ex}(E \cap B,P)/r; \\
		\bilatbeta_E(B) &\coloneqq \inf_{P \in \verticalplanes} \bilatbeta_E(B,P) \coloneqq  \inf_{P \in \verticalplanes} d_B(E,P); \\
			\baupbeta_E(B) &\coloneqq \inf_{U \in \mathscr{U}} \baupbeta_E(B,U) \coloneqq \inf_{U \in \mathscr{U}} d_B(E,U),
	\end{align}
	with $\text{ex}$ as in \eqref{e:ex}, $d_B$ as in \eqref{e:local-hausdorff-distance}, and 
	\begin{align}
		\mathscr{U} \coloneqq \left\{ \bigcup_{i =1}^N P_i \colon N \in \N \mbox{ and } P_i \in \mathscr{P} \right\}
	\end{align}
	i.e., the set of all finite unions of planes. 
\end{definition}

The following monotonicity type property and angle estimate are well-known, see for example \cite{david1993analysis}. 

\begin{lemma}\label{l:monotonicity}
	Let $E \subseteq \R^{\adim+1}$ and suppose $B \subseteq B'$ are balls centered on $E$. Then, 
	\begin{align}
		\bilatbeta_E(B) \leq \left(\frac{r_{B'}}{r_B} \right)\bilatbeta_E(B') \quad \mbox{ and } \quad \baupbeta_E(B) \leq \left(\frac{r_{B'}}{r_B} \right)\baupbeta_E(B').
	\end{align}
\end{lemma}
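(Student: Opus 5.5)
The plan is to compare the two balls through a single competitor. For the first inequality, fix any $P \in \verticalplanes$ and aim to show
\[
d_B(E,P) \leq \frac{r_{B'}}{r_B}\, d_{B'}(E,P).
\]
Taking the infimum over $P$ on both sides then gives $\bilatbeta_E(B) \leq (r_{B'}/r_B)\bilatbeta_E(B')$. The second inequality follows by the identical argument with $P$ replaced by an arbitrary $U \in \mathscr{U}$ and the infimum taken over $\mathscr{U}$. Nothing in the argument uses that $P$ is a plane; it only uses that it is a closed set.

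The key step is monotonicity of the excess in the first argument. Since $B \subseteq B'$, we have $E \cap B \subseteq E \cap B'$ and $P \cap B \subseteq P \cap B'$. A supremum over a smaller set is no larger, so
\[
\text{ex}(E\cap B,P) \leq \text{ex}(E\cap B',P) \quad\mbox{and}\quad \text{ex}(P \cap B, E) \leq \text{ex}(P\cap B',E).
\]
The convention $\text{ex}(\emptyset,F)=0$ handles the case where an intersection is empty. Taking the maximum of the two excesses and multiplying by $2$ gives
\[
\max\{\text{ex}(E\cap B,P),\text{ex}(P\cap B,E)\} \leq \frac{r_{B'}}{2}\, d_{B'}(E,P).
\]
Dividing by $r_B/2$ yields exactly
\[
d_B(E,P) \leq \frac{r_{B'}}{r_B}\, d_{B'}(E,P).
\]

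There is no genuine obstacle here; the only point requiring care is the normalisation. The local distance $d_B$ is defined with the factor $2/r_B$, so passing from $B'$ to $B$ costs exactly the ratio of radii, which produces the factor $r_{B'}/r_B$. The hypothesis that both balls are centred on $E$ is not needed for the inequality itself; it only ensures that the coefficients are the ones appearing elsewhere in the paper.
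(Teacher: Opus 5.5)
Your proof is correct. For a fixed competitor $P$ (or $U\in\mathscr{U}$), both excesses are monotone in the ball, since only the first argument is intersected with $B$ while the target set ($P$ or $E$) is left unrestricted; the $2/r_B$ normalisation then produces exactly the factor $r_{B'}/r_B$ before you take infima. The paper gives no proof and simply cites the statement as well known from David--Semmes, and your direct argument is the standard one behind that citation.
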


\begin{lemma}\label{l:beta-angle}
	Let $E \subseteq \R^{\adim+1}$. Suppose $B = B_r(X,t)$ and $B' = B_R(Y,s)$ are ball centered on $E$ such that $B \subseteq B'$, and $P,P' \in \verticalplanes$ are such that $(X,t) \in P$ and $(Y,s) \in P'.$ Then, 
	\begin{align}
		d_B(P,P') \lesssim \bilatbeta_E(B,P) + \left( \frac{R}{r} \right) \bilatbeta_E(2B',P'). 
	\end{align}
\end{lemma}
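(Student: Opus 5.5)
We prove Lemma \ref{l:beta-angle} by passing through the set $E$: points of $P$ near the center of $B$ are close to $E$, and points of $E$ in $B$ are close to $P'$. Write $\beta = \bilatbeta_E(B,P)$ and $\beta' = \bilatbeta_E(2B',P')$. We may assume both are small, say $\beta,\,(R/r)\beta' \le c$ for a small dimensional constant $c$; otherwise $d_B(P,P')\lesssim 1$ holds trivially. Indeed, both planes pass within $\lesssim r$ of the center of $B$: $P$ contains $(X,t)$, and $(X,t)\in E\cap 2B'$ lies within $\beta' R$ of $P'$.

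\emph{Step 1 (points of $P$ are near $P'$).} Fix $(Z,u)\in P\cap B$. By the definition of $d_B$, there is $(W,v)\in E$ with $\pd((Z,u),(W,v))\le \beta r/2$. Since $B\subseteq B'$ and $\beta$ is small, $(W,v)\in 2B'$. By the definition of $\beta'$,
\begin{align}
\pdist((W,v),P') \le \beta' R,
\end{align}
so $\pdist((Z,u),P')\le \beta r/2 + \beta' R$. This gives
\begin{align}
\text{ex}(P\cap B,P')/r \lesssim \beta + (R/r)\beta'.
\end{align}

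\emph{Step 2 (the reverse excess).} Step 1 only controls $P$ against $P'$ on $B$, but $d_B(P,P')$ also needs $\text{ex}(P'\cap B,P)$. Here points of $P'\cap B$ need not be near $E\cap B$, since $\beta'$ is measured at the coarse scale $R$. Instead I use that both sets are vertical $n$-planes of the same dimension. Step 1 shows that $P\cap B$ lies in the $\delta r$-neighbourhood of $P'$, where
\begin{align}
\delta \lesssim \beta + (R/r)\beta' \le c.
\end{align}
Take the $t=u$ slices of both planes for any $|u-t|\le r^2$. Then an $n$-dimensional Euclidean ball of radius about $r$ in $P^0$ lies within $\delta r$ of the $n$-plane $P'^0$. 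A standard linear algebra fact about affine subspaces of equal dimension then shows that $P'^0\cap B_r(X)$ lies within $C\delta r$ of $P^0$. One way to see this: the orthogonal projection of the ball in $P^0$ onto $P'^0$ is injective and almost isometric, hence by a degree argument covers a slightly smaller ball, in the spirit of Lemma \ref{l:topology}. Enlarging the ball slightly at the start, for instance applying Step 1 on $2B$ with $2B\subseteq 2B'$ and using $(X,t)\in P$, handles the boundary loss. Vertical invariance makes the slice-wise estimate uniform in $u$. Combining the two excesses gives the claimed bound.

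The main obstacle is exactly this asymmetry. $E$ only approximates $P'$ at scale $R$, so near $B$ one cannot directly find points of $E$ close to arbitrary points of $P'$. The equal-dimension argument of Step 2 is what bridges it, and it requires care with the smallness reductions and the slight enlargement of the ball.
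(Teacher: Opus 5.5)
The paper gives no proof of this lemma; it is quoted as well known, with a reference to David--Semmes. Your argument is the standard one: push $P\cap B$ through $E$ to $P'$, then use that both are vertical planes with $n$-dimensional spatial parts to recover the reverse excess. It is correct, but two sentences need repair.

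First, the reduction does not give $d_B(P,P')\lesssim 1$. When $(R/r)\beta'$ is large you only know $\pdist((X,t),P')\le \beta' R$, which may be much larger than $r$, and then $d_B(P,P')$ can genuinely be of size $\beta' R/r$. What you do get is $\text{ex}(P\cap B,P')\le 2r+\beta' R$ and $\text{ex}(P'\cap B,P)\le 2r$ (since $(X,t)\in P$). Hence $d_B(P,P')\lesssim 1+(R/r)\beta'$, which is all the reduction needs.

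Second, Step 1 cannot be run on $2B$. It used $\text{ex}(P\cap B,E)\le \beta r/2$, and $\bilatbeta_E(B,P)$ says nothing about $P\cap 2B$. The enlargement comes instead from affinity. Fix $Z_0\in P'^0$. The distance from $Z\in P^0$ to $P'^0$ is the norm of the affine map $Z\mapsto \pi^\perp_{P'}(Z-Z_0,0)$. So a bound $\delta r$ at the centre $X\in P^0$ and at distance $r$ from it extrapolates to $3\delta r$ on the part of $P^0$ within $2r$ of $X$. Your degree argument then goes through on the doubled disc.

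Simpler still, you can skip the degree argument:
\begin{itemize}
\item Apply Step 1 at $X$ and at $X+re_i$, for an orthonormal basis $(e_i)$ of the direction space of $P^0$. This gives $\|\pi^\perp_{P'}(e_i,0)\|\le 2\delta$.
\item Hence the projections $f_i$ of the $e_i$ onto the direction space of $P'^0$ are nearly orthonormal. Because the dimensions are equal, they form a basis.
\item Write a point of $P'^0$ within $r$ of $X$ as the nearest point of $P'^0$ to $X$ plus $\sum_i c_i f_i$ with $|c|\lesssim r$.
\item Comparing with $X+\sum_i c_i e_i\in P^0$ shows its distance to $P^0$ is $\lesssim \delta r$, with no boundary loss.
\end{itemize}
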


\subsection{Corona decompositions, BWGL and BAUP} Fix some parabolic Ahlfors $(\idim+2)$-regular set $E \subseteq \R^{\adim+1}$. 

\begin{definition}
Suppose $P \in \cP$.  Suppose $\Gamma = \{(X,A(X,t),t): (X,t) \in P\}$ for some $A:P \rightarrow 
P^\perp$ which is Lip$(1,1/2)$ with constant $\eta$, i.e.,
$$\|A(X,t)-A(Y,s)\| \leq \eta\|(X,t)-(Y,s)\|,\,\,\, \forall \  (X,t),(Y,s) \in P.$$
Then we say that $\Gamma \in \Lip (1,1/2,\eta)$, and refer to $\Gamma$ as an ``$n$-dimensional Lip$(1,1/2)$ graph with constant $\eta$."
\end{definition}

\begin{definition}\label{d:corona-decomposition}
	We say $E$ admits a \textit{corona decomposition} if, for each $A \geq 1$, $\eta , \theta > 0,$ and each system of Christ-David cubes $\cubes$ on $E$, there exists a coronization $(\cB,\cG,\cF)$ of $\cubes$ such that the following holds. For each $S \in \cF$ there is an $\idim$-dimensional $\Lip(1,1/2,\eta)$ graph such that 
	\begin{align}\label{e:definition-corona}
		\pdist(X,t,\Gamma_S) \leq \theta \ell(Q)
	\end{align}
	for all $Q \in S$ and $(X,t) \in E \cap AB_Q.$ 
\end{definition}

\begin{definition}\label{d:BWGL}
	We say $E$ satisfies the \textit{bilateral weak geometric lemma} (BWGL) if, for each $A \geq 1$ and $\ve > 0$, the set $\{Q \in \cubes \colon \bilatbeta_E(AB_Q) > \ve\}$
	satisfies a Carleson packing condition, whenever $\cubes$ is a system of Christ-David cubes on $E$. 
\end{definition}

\begin{definition}\label{d:BAUP}
	We say $E$ satisfies the \textit{bilateral approximation by unions of planes} (BAUP) if, for each $A \geq 1$ and $\ve > 0$, the set $\{Q \in \cubes \colon \baupbeta_E(AB_Q) > \ve\}$
	satisfies a Carleson packing condition, whenever $\cubes$ is a system of Christ-David cubes on $E$. 
\end{definition}

\section{Reifenberg parametrizations}\label{s:Reif}

\subsection{Definitions and the main statement}
In this section we discuss a parabolic version of David and Toro's Reifenberg parametrization theorem for sets with holes \cite{david2012reifenberg}. 

\begin{definition}\label{d:rf}
    Let $\ve>0$.  We say that a set $\Sigma \subset \mathbb R^{d+1}$ is $(n,\epsilon)$-Reifenberg flat if, for each $(X,t) \in \Sigma$ and $r \in (0,\diam \Sigma)$, there is a plane $P \in \cP$ such that $d_{X,t,r}(\Sigma,P) \leq \epsilon$.
\end{definition}

Following \cite{david2012reifenberg}, we state the main result of this section in terms of a coherent collection of balls and planes, defined below.

\begin{definition}\label{d:CCBP}
	Let $\ve > 0$ and $P_0 \in \cP.$  Fix some $k_0 \in \mathbb Z$.  For $k \geq k_0$, let $r_k = 10^{-k}$ and let $\{ (X_{j,k},t_{j,k}) \}_{j \in J_k}$ be a collection of points in $\R^{\adim+1}$ such that 
	\begin{align}\label{e:net}
		\pd( X_{i,k},t_{i,k} , X_{j,k},t_{j,k} ) \geq r_k \mbox{ for all } i,j \in J_k, \ i\not=j.
	\end{align}
	Additionally, let $B_{j,k} = B_{r_k}(X_{j,k},t_{j,k})$ and suppose $P_{j,k} \in \cP$ is such that $(X_{j,k},t_{j,k}) \in P_{j,k}.$ The triple $( P_0 , \{B_{j,k}\},\{P_{j,k}\})_{k \geq k_0, \ j \in J_k}$ is said to be an \textit{$\ve$-coherent collection of balls and planes} ($\ve$-CCBP) if it satisfies the following conditions:
	\begin{enumerate}
		\item \label{CCBP1}  If $k \geq k_0$ and $j \in J_k$, then
		\begin{align}\label{e:cinV^2}
			(X_{j,k},t_{j,k}) \in V^2_{k-1}, 
		\end{align}
		where 
		\begin{align}\label{e:V_k}
			V_{k}^\lambda = \bigcup_{j \in J_k} \lambda B_{j,k} .
		\end{align}
		\item \label{CCBP2} If $j \in J_{k_0}$,  then
		\begin{align}\label{e:cdist0}
		 d_{100B_{j,0}}(P_{j,0},P_0) \leq \ve.
		\end{align}
		\item \label{CCBP3} If $k \geq k_0$ and $i,j \in J_k$ are such that $\pd(X_{i,k},t_{i,k},X_{j,k},t_{j,k}) \leq 100 r_k,$ then
		\begin{align}\label{e:c100rk}
			d_{100B_{i,k}}(P_{i,k},P_{j,k}) \leq \ve.
		\end{align}
		\item \label{CCBP4} If $k \geq k_0 $, $i \in J_k$ and $j \in J_{k+1}$ are such that $\pd(X_{i,k},t_{i,k},X_{j,k+1},t_{j,k+1}) \leq 4r_k,$ then
		\begin{align}\label{e:c2r_k}
			d_{20B_{i,k}}(P_{i,k},P_{j,k+1}) \leq \ve.
		\end{align}
	\end{enumerate}
\end{definition}

Before stating the parametrization theorem, we must introduce several mappings and prove some preliminary results concerning $\ve$-CCPBs. For Lemma \ref{l:bounded-overlap} -- \ref{l:uniform} below, let us fix some $\ve>0$, $k_0 \in \mathbb Z$, and some $\ve$-CCBP, $(P_0,\{B_{j,k}\},\{P_{j,k}\})_{k \geq k_0,\,j\in J_k}$. 

\begin{lemma}\label{l:bounded-overlap}
	Let $k \geq k_0.$ Then, the balls $\{10B_{j,k}\}_{j \in J_k}$ have bounded overlap in $\R^{\adim+1}$, meaning there exists a constant $M = M(\adim) \geq 1$ such that
	\[ \sum_{j \in J_k} \mathds{1}_{10B_{j,k}}(X,t) \leq M \mbox{ for all } (X,t) \in \R^{\adim+1}. \]
\end{lemma}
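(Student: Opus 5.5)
The plan is a standard volume-counting argument in the parabolic metric, using only the separation condition \eqref{e:net}. Fix $k \geq k_0$ and a point $(X,t) \in \R^{\adim+1}$, and let
\[ J(X,t) \coloneqq \{ j \in J_k \colon (X,t) \in 10B_{j,k}\}. \]
We must bound $\# J(X,t)$ by a constant depending only on $\adim$. For every $j \in J(X,t)$ we have $|X - X_{j,k}| \leq 10 r_k$ and $|t - t_{j,k}| \leq 100 r_k^2$. Hence all the centres $(X_{j,k},t_{j,k})$, $j \in J(X,t)$, lie in the ball $B_{10r_k}(X,t)$.

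Next we use the separation. By \eqref{e:net}, distinct centres satisfy $\pd \geq r_k$. Since $\|(Y,s)\| = |Y| + |s|^{1/2}$, a point $(Y,s)$ with $|Y| \leq r_k/4$ and $|s| \leq r_k^2/16$ satisfies $\|(Y,s)\| \leq r_k/2$. Thus the small balls $B_{r_k/4}(X_{j,k},t_{j,k})$ are pairwise disjoint: if two of them met, the triangle inequality for $\pd$ would give distance $< r_k$ between their centres.

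Each such small ball is contained in $B_{11 r_k}(X,t)$, since $10 + 1/4 \leq 11$ in the spatial variable and $100 + 1/16 \leq 121$ in the time variable. Comparing Lebesgue measure in $\R^{\adim+1}$, where $|B_r| = c_\adim r^{\adim+2}$ by the parabolic scaling $(Y,s) \mapsto (\lambda Y, \lambda^2 s)$, gives
\[ \# J(X,t)\, c_\adim (r_k/4)^{\adim+2} \leq c_\adim (11 r_k)^{\adim+2}. \]
Therefore $\# J(X,t) \leq 44^{\adim+2} \eqqcolon M(\adim)$, independent of $k$ and $(X,t)$, which is the bounded-overlap statement.

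There is no real obstacle here. The only point needing care is that the balls $B_r$ are parabolic boxes rather than $\pd$-balls. So one must check the two containments directly, comparing $\pd$ and the box shape, rather than quoting a metric doubling property. The parabolic scaling of Lebesgue measure then makes the constant depend only on $\adim$.
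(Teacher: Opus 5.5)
Your proof is correct and is essentially the paper's volume-counting argument (the paper packs the disjoint boxes $\tfrac{1}{5}B_{j,k}$ into $B_{20r_k}(X,t)$), and counting with Lebesgue measure on $\R^{\adim+1}$, which scales like $r^{\adim+2}$, is the right normalization: the paper's appeal to $\pHmeasure^{\idim+2}(B_r)\sim r^{\idim+2}$ should read with $\adim+2$ in place of $\idim+2$. One small nit is that the triangle inequality for $\pd$ only gives $\pd\leq r_k$ between the centres of two meeting $B_{r_k/4}$ boxes, but applying the triangle inequality coordinatewise gives $\pd\leq r_k/2+r_k/(2\sqrt{2})<r_k$, so disjointness and your bound $44^{\adim+2}$ stand.
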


\begin{proof}
Let $(X,t) \in \R^{\adim+1}$ and let $J_k' \subseteq J_k$ be the indices $j \in J_k$ such that $(X,t) \in 10B_{j,k}.$ Then, the balls $\{\tfrac{1}{5}B_{j,k}\}_{j \in J_k'}$ are contained in $B_{20r_k}(X,t)$ and are disjoint by \eqref{e:net}. Since $\pHmeasure^{\idim+2}(B_r(Y,s)) \sim r^{\idim+2}$ for all $(Y,s) \in \R^{\adim+1}$ and $r > 0$, the result follows by a standard volume argument. 
\end{proof}

\begin{lemma}\label{l:theta}
	Let $k \geq k_0$. Then, there exists a collection of functions $\{\theta_{j,k}\}_{j \in J_k}$ from $\R^{\adim+1}$ to $\R$ with $\supp(\theta_{j,k}) \subseteq 10B_{j,k}$ such that $0 \leq \theta_{j,k} \leq 1,$  
	\begin{align}\label{e:=1}
		\sum_{j \in J_k} \theta_{j,k}(X,t) = 1 \mbox{ for all } (X,t) \in V_k^9,
	\end{align}
	\begin{align}\label{e:lip-theta}
		|\theta_{j,k}(X,t) - \theta_{j,k}(Y,s)| \lesssim r_k^{-1} \pd(X,t,Y,s)
	\end{align}
	and, 
	\begin{align}\label{e:deriv-theta}
		|D^m\theta_{j,k}(X,t)| \lesssim_m r_k^{-m} \mbox{ for each } (X,t) \in \R^{\adim+1} \mbox{ and } m \in \{1,2\},
	\end{align}
	where $D^m$ denotes the $m^{\text{th}}$ differential with respect to the spacial variables in the coordinates generated by $P_0$. 
\end{lemma}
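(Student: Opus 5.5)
The plan is the standard partition of unity construction from David--Toro, adapted to the parabolic metric. The one genuinely parabolic point is that we only need smoothness in the spatial variables, plus a Lipschitz bound with respect to $\pd$.

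\textbf{Step 1: bump functions.} Fix a smooth function $\psi\colon \R^\adim\to[0,1]$ with $\psi\equiv1$ on the Euclidean ball of radius $9$ and $\supp\psi$ in the ball of radius $10$. Fix also a smooth $\phi\colon\R\to[0,1]$ with $\phi\equiv1$ on $[-81,81]$ and $\supp\phi\subseteq[-100,100]$. For each $j\in J_k$ set
\[
\tilde\theta_{j,k}(X,t)=\psi\Big(\frac{X-X_{j,k}}{r_k}\Big)\,\phi\Big(\frac{t-t_{j,k}}{r_k^2}\Big).
\]
Then $0\le\tilde\theta_{j,k}\le1$, the support lies in $10B_{j,k}$, and $\tilde\theta_{j,k}\equiv1$ on $9B_{j,k}$.

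Spatial derivatives satisfy $|D^m\tilde\theta_{j,k}|\lesssim_m r_k^{-m}$. Since the spatial coordinates generated by $P_0$ differ from the standard ones by a rotation of $\R^\adim$, this bound is coordinate independent. For the Lipschitz bound \eqref{e:lip-theta}, I would estimate separately:
\begin{itemize}
\item[(a)] the spatial increment is at most $\|D\psi\|_\infty\, r_k^{-1}|X-Y|$;
\item[(b)] the time increment is at most $\min\{1,\|\phi'\|_\infty\, r_k^{-2}|t-s|\}$.
\end{itemize}
For (b), note that $\min\{1,a^2\}\le a$ for $a\ge0$, applied with $a=|t-s|^{1/2}/r_k$. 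This gives $\min\{1,r_k^{-2}|t-s|\}\le r_k^{-1}|t-s|^{1/2}$, so both increments are $\lesssim r_k^{-1}\pd(X,t,Y,s)$.

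\textbf{Step 2: normalisation.} Let $\Theta_k=\sum_{j\in J_k}\tilde\theta_{j,k}$. By Lemma \ref{l:bounded-overlap}, at most $M$ terms are nonzero at any point, so $\Theta_k$ inherits the same bounds with constants multiplied by $M$. Moreover $\Theta_k\ge1$ on $V^9_k$, since every point of $V^9_k$ lies in some $9B_{j,k}$.

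Choose a smooth $h\colon\R\to[1/2,\infty)$ with $h(x)=x$ for $x\ge1$ and $h\ge 1/2$ everywhere. Define
\[
\theta_{j,k}=\tilde\theta_{j,k}/h(\Theta_k).
\]
Then \eqref{e:=1} holds on $V^9_k$, the support is unchanged, and $\theta_{j,k}\ge0$. To get $\theta_{j,k}\le1$: where $\Theta_k\ge1$ we have $\theta_{j,k}\le\tilde\theta_{j,k}/\Theta_k\le1$. Where $\Theta_k<1$ one needs $h(x)\ge x$; choosing $h(x)\ge\max\{x,1/2\}$ with $h(x)=x$ for $x\ge1$ makes $\tilde\theta_{j,k}/h\le\Theta_k/\max\{\Theta_k,1/2\}\le1$.

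\textbf{Step 3: estimates and the main obstacle.} The derivative and Lipschitz bounds for $\theta_{j,k}$ follow from the quotient and chain rules. Since $h\ge1/2$ and $h'$, $h''$ are bounded, each derivative of $1/h(\Theta_k)$ picks up only constant multiples of the corresponding bounds for $\Theta_k$.

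There is no real obstacle here. The only care needed is the time direction in the Lipschitz estimate, which Step 1(b) handles, and the bounded overlap from Lemma \ref{l:bounded-overlap}, which keeps all constants depending only on $\adim$.
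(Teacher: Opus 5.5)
Your proposal is correct and follows the paper's construction. It uses the same product bumps $\theta_1((X-X_{j,k})/r_k)\,\theta_2((t-t_{j,k})/r_k^2)$ and the same normalisation by $\Theta_k$ via bounded overlap, and it controls the time increment through $r_k^{-2}|t-s|\lesssim r_k^{-1}|t-s|^{1/2}$ at the relevant scales. The only cosmetic differences are that you divide by $h(\Theta_k)$ rather than using the paper's $b(\Theta_k)/\Theta_k$ with the cut-off at $\Theta_k<1/4$, and that your inequality $\min\{1,a^2\}\le a$ gives the time-Lipschitz bound globally, which is slightly cleaner than the paper's reduction to points of $10B_{j,k}$.
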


\begin{proof}
	Let $\theta_1$ be a $C^\infty$ bump function on $\R^\adim$ with compact support in $B_\adim(0,10)$ such that $0 \leq \theta_1 \leq 1$ and $\theta_1 \equiv 1$ on $B_\adim(0,9).$ Here, $B_\adim(x,r)$ denotes a Euclidean ball in $\R^\adim$ with centre $x$ and radius $r.$ Then $\theta_1$ satisfies $|D^m \theta_1(X)| \lesssim_m 1$ for all $X \in \R^\adim$ and $m \in \{1,2\}.$ Similarly, let $\theta_2$ be a $C^\infty$ bump function on $\R$ with compact support in $B_1(0,100)$ such that $0 \leq \theta_2\leq 1$ and $\theta_2 \equiv 1$ on $B_1(0,81).$ Note that $\theta_2$ is such that $|\theta_2'(t)| \lesssim 1$ for all $t \in \R$. Then, for $k \geq 0$ and $j \in J_k,$ define 
	\[ \tilde{\theta}_{j,k}(X,t) = \theta_1 \left( \frac{ X - X_{j,k}}{r_k}  \right) \theta_2 \left( \frac{ t - t_{j,k}}{r_k^2} \right). \]
	In this way, 
	\[ \supp(\tilde{\theta}_{j,k}) \subseteq 10B_{j,k} \mbox{ and } \tilde{\theta}_{j,k} \equiv 1 \mbox{ on } 9B_{j,k}. \]  
	Moreover, we have 
	\begin{align}\label{e:Theta} 
		\Theta_k(X,t) \coloneqq \sum_{j \in J_k} \tilde{\theta}_{j,k}(X,t) \geq 1 \mbox{ for all } (X,t) \in V_k^9.
	\end{align} 
	Let $b : \R_+ \rightarrow \R_+$ be a $C^\infty$ function such that $b(s) = 0$ for $s \leq \tfrac{1}{2}$, $b(s) = 1$ for $s \geq 1$ and $|b'(s)|,|b''(s)| \lesssim 1$. Then, let $b_k \coloneqq b \circ \Theta_k$ and define 
	\begin{align} 
		\theta_{j,k} (X,t) = 
		\begin{cases}
			\frac{b_k(X,t)}{\Theta_k(X,t)} \tilde{\theta}_{j,k}(X,t) & \mbox{ if } \Theta_k(X,t) \geq 1/4 \\
			0 & \mbox{ otherwise.}
		\end{cases}		
	\end{align}
	
	It is clear that $\supp(\theta_{j,k}) \subseteq 10B_{j,k}$ and $0 \leq \theta_{j,k} \leq 1$. For \eqref{e:=1}, we note by \eqref{e:Theta} that $\Theta_k(X,t) \geq 1$ whenever $(X,t) \in V_k^9$. In particular, it follows that $b_k(X,t) = 1$, and \eqref{e:=1} follows. The inequality \eqref{e:deriv-theta} follow from direct computation using derivative estimates for $\theta_1, \ \theta_2$ and $b$, the chain and product rules, and Lemma \ref{l:bounded-overlap}. This are tedious so we will not include the details. For \eqref{e:lip-theta}, since $\supp(\theta) \subseteq 10B_{j,k}$, it suffices to check the estimate for some $(X,t) , (Y,s) \in 10B_{j,k}$. In this case, $r_k^{-1}|t-s|^{1/2} \lesssim 1$. Moreover, by direct computation, we have $|\partial_t \theta(Z,u)| \lesssim r_k^{-2}$ for all $(Z,u) \in \R^{\adim + 1}$. These previous two estimates and \eqref{e:deriv-theta} then imply 
	\begin{align}
		|\theta(X,t) - \theta(Y,s)| &\leq |\theta(X,t) - \theta(Y,t) | + | \theta(Y,t) - \theta(Y,s) | \\
		&\lesssim r_k^{-1}|X-Y| + r_k^{-2}|t-s|  \lesssim r_k^{-1}\left[|X-Y| + |t-s|^\frac{1}{2} \right].
	\end{align} 
	\end{proof}

\begin{definition}
	For $k \geq k_0$ and $j \in J_k$, let $\pi_{j,k}$ denote the orthogonal projection onto $P_{j,k}$ and define
	\begin{align}\label{e:sigma_k} 
		\sigma_k(X,t) =  (X,t) + \sum_{j \in J_k} \theta_{j,k}(X,t) [ \pi_{j,k}(X,t) - (X,t) ]. 
	\end{align}
	Then, inductively, define a sequence of functions $\{f_k\}_{k \geq k_0}$ such that $f_k : P_0 \rightarrow \R^{\adim+1}$ by setting
	\begin{align}\label{e:deff}
		f_{k_0}(X,t) = (X,t) \quad \text{and} \quad f_{k+1}= \sigma_k \circ f_k. 
	\end{align}
\end{definition}

\begin{lemma}\label{l:uniform}
	The sequence of functions $\{f_k\}_{k \geq k_0}$ converges uniformly to a continuous function $f$ on $P_0.$  
\end{lemma}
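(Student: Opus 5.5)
The plan is to show that consecutive maps are uniformly close at a geometrically decaying rate, namely
\[
\sup_{(X,t)\in P_0} \pd\bigl(f_{k+1}(X,t),f_k(X,t)\bigr) \lesssim \ve r_k \quad \text{for all } k\ge k_0 ,
\]
with implicit constant depending only on $\adim$. Since $r_k = 10^{-k}$, the series $\sum_k \ve r_k$ converges. The parabolic metric satisfies the triangle inequality, so $\{f_k\}$ is then uniformly Cauchy with respect to $\pd$ and converges uniformly to some $f$. Each $f_k$ is continuous, because $\sigma_k$ is continuous: the $\theta_{j,k}$ are continuous by \eqref{e:lip-theta}, the projections are continuous, and the sum is locally finite by Lemma \ref{l:bounded-overlap}. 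The uniform limit $f$ is therefore continuous as well.

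To get the displayed bound, I will show that for every point $(Y,s) = f_k(X,t)$ with $\sigma_k(Y,s) \neq (Y,s)$ one has
\[
\pd\bigl(\sigma_k(Y,s),(Y,s)\bigr) \lesssim \ve r_k .
\]
By the definition \eqref{e:sigma_k} of $\sigma_k$, only indices $j\in J_k$ with $(Y,s)\in 10B_{j,k}$ contribute, and there are at most $M$ of them. Each projection $\pi_{j,k}$ is orthogonal onto a vertical plane, so it fixes the $t$-coordinate. Hence $\sigma_k(Y,s)-(Y,s)$ is purely spatial, and it suffices to bound the Euclidean quantity $|\pi_{j,k}(Y,s)-(Y,s)| = \dist(Y,P_{j,k}^0)$ by $C\ve r_k$. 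This reduces the claim to the key inductive statement: whenever $(Y,s)=f_k(X,t)$ lies in $10B_{j,k}$, it is within $C\ve r_k$ of $P_{j,k}$.

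I will prove this by induction on $k$, carrying the stronger hypothesis that $f_k(P_0)\cap V^{10}_{k}$ lies within $C\ve r_k$ of $P_{j,k}$ on $10B_{j,k}$.

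The base case $k=k_0$ uses $f_{k_0}=\mathrm{id}$ and $(Y,s)\in P_0$. By \eqref{e:cdist0}, $P_0$ is $\ve r_{k_0}$-close to $P_{j,k_0}$ inside $100B_{j,k_0}$, which gives the claim.

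For the inductive step, take $(Y,s)=f_{k+1}(X,t)\in 10B_{i,k+1}$, and let $(Z,u)=f_k(X,t)$, so that $(Y,s)=\sigma_k(Z,u)$.
\begin{itemize}
\item Suppose $(Z,u)\in V^{10}_k$. By induction $(Z,u)$ is $C\ve r_k$-close to each $P_{j,k}$ with $(Z,u)\in 10B_{j,k}$. These planes are mutually $\ve$-close on $100B$ by \eqref{e:c100rk}, so the convex combination $\sigma_k(Z,u)$ stays within $C'\ve r_k$ of any one of them; this is where Lemma \ref{l:angAB2} controls the projections.
\item One then transfers the estimate from $P_{j,k}$ to $P_{i,k+1}$. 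Here \eqref{e:c2r_k} applies because $\pd(X_{j,k},t_{j,k},X_{i,k+1},t_{i,k+1})\le 4r_k$, which follows since both balls are near $(Y,s)$ and \eqref{e:cinV^2} holds. This yields a bound $\lesssim \ve r_k \sim \ve r_{k+1}$.
\item Suppose instead $(Z,u)\notin V^{9}_k$. Then $\sigma_k$ does not move the point, and one must show $(Z,u)$ cannot lie in $10B_{i,k+1}$. This follows from \eqref{e:cinV^2}: $10B_{i,k+1}\subseteq V^{3}_k$ since $r_{k+1}=r_k/10$.
\end{itemize}

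The main obstacle is exactly this bookkeeping. The error constant $C$ must not grow with $k$. Naively, the error after step $k$ is $C\ve r_k$, and comparing it to the finer scale $r_{k+1}$ inflates it by a factor of 10, which would blow up under iteration.

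To avoid this, the statement I will actually carry is that the distance to $P_{i,k+1}$ is at most $C\ve r_{k+1}$. The projection step is what resets the error: after applying $\sigma_k$, the point lies essentially on $P_{j,k}$, up to $\ve$ times the previous error plus an $\ve r_k$ term coming from plane tilts. The previous error is thus damped by the factor $\ve$ rather than accumulated. The only new error is the tilt between $P_{j,k}$ and $P_{i,k+1}$, and the bounds \eqref{e:c100rk} and \eqref{e:c2r_k} control it by $\lesssim \ve r_k$ on $20B_{j,k}$. This is an absolute multiple of $\ve r_{k+1}$, so choosing $C$ large depending only on $\adim$ closes the induction.
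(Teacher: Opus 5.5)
You are working much harder than needed, and as a result you prove less than the lemma states. Uniform convergence only requires a summable bound on $\sup\|\sigma_k-\mathrm{id}\|$; the rate $\ve r_k$ is irrelevant. Since $(X_{j,k},t_{j,k})\in P_{j,k}$, every $(X,t)\in 10B_{j,k}$ satisfies $\|\pi_{j,k}(X,t)-(X,t)\|\le 20r_k$. Combining this with $0\le\theta_{j,k}\le 1$, $\supp\theta_{j,k}\subseteq 10B_{j,k}$ and Lemma \ref{l:bounded-overlap} gives $\|\sigma_k(X,t)-(X,t)\|\lesssim r_k$ for every $(X,t)\in\R^{\adim+1}$. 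This is the paper's whole proof: no induction, and it holds for an $\ve$-CCBP with arbitrary $\ve>0$, which is how the lemma is stated. Your $\ve r_k$ step bound is essentially Theorem \ref{t:DT}\eqref{DT8}, which the paper obtains only after Proposition \ref{p:DT}. Your induction closes only when $\ve$ is small relative to $M$ and your constant $C$. So, as a proof of Lemma \ref{l:uniform}, it silently adds the hypothesis $\ve<\ve_0$ and does not cover the general case.

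Inside the induction there is also a slip that matters for the constant-closing. By \eqref{e:10}, $\sigma_k$ is the identity only off $V^{10}_k$, not off $V^9_k$. On $V^{10}_k\setminus V^9_k$ one can have $\sum_j\theta_{j,k}<1$, and then $\sigma_k(Z,u)=(1-\sum_j\theta_{j,k})(Z,u)+\sum_j\theta_{j,k}\pi_{j,k}(Z,u)$ keeps a fraction of the old error. The ``damping by $\ve$'' you invoke (as in Lemma \ref{l:sig-pi}) holds only where \eqref{e:=1} holds. Your first bullet therefore gives a constant depending on $C$ there, and your third bullet rests on a false claim. The repair is to use the inductive hypothesis at level $k$: it gives $\|\sigma_k(Z,u)-(Z,u)\|\le MC\ve r_k$. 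Since $10B_{i,k+1}\subseteq V^3_k$ by \eqref{e:cinV^2}, this forces $(Z,u)\in V^4_k$ once $MC\ve\le 1$. Then the reset estimate, with an absolute constant, together with \eqref{e:c100rk} and \eqref{e:c2r_k}, yields $\dist\bigl((Y,s),P_{i,k+1}\bigr)\le C_1\ve r_k$ with $C_1$ absolute, and choosing $C\ge 10C_1$ closes the induction. With this fix, your argument is a valid plane-only derivation of the quantitative step bound for small $\ve$. That is more information than the lemma needs, and it costs the smallness assumption.
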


\begin{proof}
	Let $(X,t) \in \R^{\adim+1}.$ Since $\supp \theta_{j,k} \subseteq 10B_{j,k},$ if $j \in J_k$ is such that $\theta_{j,k}(X,t) \not=0,$ then $(X,t) \in 10B_{j,k}.$ Since $(X_{j,k},t_{j,k}) \in P_{j,k},$ this implies $\|\pi_{j,k}(X,t) - (X,t)\| \leq 20r_k$, which, along with Lemma \ref{l:bounded-overlap}, gives
	\[ \|\sigma_k(X,t) - (X,t)\| \leq \sum_{j \in J_k} |\theta_{j,k}(X,t)|  \|\pi_{j,k}(X,t) - (X,t)\| \lesssim 20r_k. \] 
	It follows that
	\begin{align*}\label{e:uniform}
		\pd( f_k(X,t) , f_{k+1}(X,t) ) \lesssim 20r_k
	\end{align*}
	for all $(X,t) \in \R^{\adim+1}.$ Now, since each of the functions $\sigma_k(X,t)$ is continuous (being a combination of sums and products of continuous functions), the result follows.
\end{proof}

\begin{remark}\label{r:timeslice}
	Since the plane $P_{j,k}$ is $t$-independent, both $(X,t) \in \R^{\adim+1}$ and $\pi_{j,k}(X,t)$ have the same time component $t$. It follows from \eqref{e:sigma_k} that $\sigma_k(X,t)$ has time component $t$ and from \eqref{e:deff} that $f(X,t)$ also has time component $t.$ 
\end{remark}

Now for the statement. 

\begin{theorem}\label{t:DT}
		There is $\ve_0 >0$ such that the following holds. Let $0 < \ve < \ve_0$, fix $k_0 \in \mathbb Z$ and suppose $(P_0,\{B_{j,k}\},\{P_{j,k}\})_{k \geq k_0, \ j \in J_k}$ is an $\ve$-CCBP. Then the map $f: P_0 \rightarrow \R^{\adim+1}$ from Lemma \ref{l:uniform} satisfies the following:  
		\begin{enumerate}
			\item \label{DT1} We have 
			\begin{equation}
				E_\infty \coloneqq \bigcap_{K=k_0+1}^\infty \overline{\bigcup_{k=K}^\infty \{(X_{j,k},t_{j,k})\}_{j \in J_k}} \subseteq \Sigma \coloneqq f(P_0).
			\end{equation}
		
			\item \label{DT2} There exists $0 < \tau = \tau(\ve) < 1$, depending only on $\ve,\adim,\idim$, such that, whenever $(X,t),(Y,t) \in P_0$ and $|X-Y| \leq r_{k_0}$,
			\begin{equation}
				\frac{1}{4}r_{k_0}^{-\tau}|X-Y|^{1+\tau} \leq \|f(X,t)-f(Y,t)\| \leq 10r_{k_0}^{\tau}|X-Y|^{1-\tau}.
			\end{equation}
			\item \label{DT3} We have $\|f(X,t)-(X,t)\| \leq C\ve r_{k_0}$ for all $(X,t) \in P_0$, where $C$ depends only on $\adim,\idim$.
			\item \label{DT4} For $k \geq k_0,$
			\begin{equation}\label{e:10}
				\sigma_k(X,t) = (X,t) \ \text{and} \ D\sigma_k(X,t)=I \ \text{for} \ (X,t) \in \R^{\adim+1} \setminus V^{10}_k.
			\end{equation}
			\item \label{DT5} Let $\Sigma_{k_0} = P_0$ and 
			\begin{equation}
				\Sigma_k = \sigma_k(\Sigma_{k-1}), \,\,\, k >k_0.
			\end{equation}
			For $k \geq k_0$, there is a $\emph{Lip}(1,\tfrac{1}{2},C\ve)$ function $A_{j,k}:P_{j,k} \cap 49B_{j,k} \rightarrow P_{j,k}^\perp$ which is $C^2$ in space such that $\|A_{j,k}(X_{j,k},t_{j,k})\| \ \lesssim \ve r_k$, and if $\Gamma_{j,k}$ is its graph over $P_{j,k}$, then 
			\begin{equation}
				\Sigma_k \cap D(X_{j,k},t_{j,k},P_{j,k},49r_k) = \Gamma_{j,k} \cap D(X_{j,k},t_{j,k},P_{j,k},49r_k)
			\end{equation}
			where
			\begin{equation}
				D(X,t,V,r) = \{z+w : z \in V \cap B_r(X,t), \ w \in V^\perp \cap B_r(0,0)\}.
			\end{equation}
			In particular, 
			\begin{equation}\label{Sigma_kPlane}
				d_{49B_{j,k}}(\Sigma_k , P_{j,k}) \lesssim \ve. 
			\end{equation}
			\item \label{DT6} For $k \geq k_0$ and $(X,t) \in \Sigma_k$, there is a plane $P \in \verticalplanes$ and a $\emph{Lip}(1,\tfrac{1}{2},C\ve)$ function $A:P \rightarrow P^\perp$ which is $C^2$ in space such that, if $\Gamma$ is the graph of $A$ over $P$, then 
			\begin{equation}
				\Sigma_k \cap B_{19r_k}(X,t) = \Gamma \cap B_{19r_k}(X,t). 
			\end{equation}
			\item \label{DT7} The surface $\Sigma = f(P_0)$ is $(C\ve,n)$-Reifenberg flat in the sense that for all $(X,t) \in \Sigma,$ and $0 < r < \infty$ there exists a vertical plane $P \in \verticalplanes$ such that $d_{X,t,r}(\Sigma,P) \lesssim \ve.$
			 
			\item \label{DT8} For all $k \geq k_0$ and $(X,t) \in \Sigma_k$,
			\begin{align}\label{Sigma_ksigma_k}
				\|\sigma_k(X,t) - (X,t)\| \lesssim \ve r_k
			\end{align}
			and moreover,
			\begin{align}\label{Sigma_kSigma}
				\pdist(X,t,\Sigma) \lesssim \ve r_k  \ \mbox{ for } \ (X,t) \in \Sigma_k.
			\end{align}
			\item \label{DT9} Let $k \geq k_0$, suppose $(X,t) \in \Sigma_k \cap V_k^8$ and choose $i \in J_k$ such that $(X,t) \in 10B_{i,k}.$ Then
			\begin{align}
				\|\sigma_k(X,t) - \pi_{i,k}(X,t)\| \lesssim \ve r_k. 
			\end{align}
			\item \label{DT11} Let $(X,t) \in \Sigma$ and $0 < r < \infty.$ Then,
			\begin{align}\label{e:lowerreg}
				\sigma(\Sigma \cap B_r(X,t)) \gtrsim r^{\idim+2}. 
			\end{align}			
		\end{enumerate}
\end{theorem}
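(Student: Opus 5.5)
The plan is to follow the architecture of David and Toro's proof \cite{david2012reifenberg} and reduce everything to time slices wherever possible. By Remark \ref{r:timeslice}, each $\sigma_k$ and $f$ preserve the time coordinate. Since the $P_{j,k}$ are vertical, the projections $\pi_{j,k}$ act only on the spatial variable. Spatial regularity therefore comes from the Euclidean arguments on each slice $\R^\adim\times\{t\}$, and the time dependence enters only through the cutoffs $\theta_{j,k}$, which obey the parabolic Lipschitz bound \eqref{e:lip-theta}.

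The steps, in order, are as follows.

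First, \eqref{DT4} is immediate from $\supp\theta_{j,k}\subseteq 10B_{j,k}$.

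Second, I would prove the local description of $\sigma_k$. Fix $(X,t)\in 10B_{i,k}$. Every $j$ with $\theta_{j,k}(X,t)\neq0$ satisfies $\pd(X_{i,k},t_{i,k},X_{j,k},t_{j,k})\le 20r_k$. Hence \eqref{e:c100rk} and Lemma \ref{l:angAB2} give $\|\pi_{j,k}(X,t)-\pi_{i,k}(X,t)\|\lesssim\ve r_k$ whenever $(X,t)$ is within $O(\ve r_k)$ of $P_{i,k}$. I would write $\sigma_k=\pi_{i,k}+E_{i,k}$ on such points. Using Lemma \ref{l:bounded-overlap}, \eqref{e:deriv-theta} and \eqref{e:lip-theta}, the error $E_{i,k}$ has spatial $C^1$ norm $\lesssim\ve$ and parabolic Lipschitz constant $\lesssim\ve$, in the sense of Lemma \ref{l:angAB2} (linear part controlled by $\ve|X-Y|$).

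Third, I would prove \eqref{DT5}, \eqref{DT6}, \eqref{DT8} and \eqref{DT9} simultaneously by induction on $k$. The inductive hypothesis is that $\Sigma_k$, near each $B_{j,k}$, is a $\Lip(1,1/2,C\ve)$ graph over $P_{j,k}$ that is $C^2$ in space. For the step, I use \eqref{e:c2r_k} to rewrite this as a graph over each $P_{i,k+1}$ with $\pd(X_{i,k+1},t_{i,k+1},X_{j,k},t_{j,k})\le 4r_k$. I then apply $\sigma_{k+1}$, which is a small perturbation of $\pi_{i,k+1}$ by the second step, and check that the image is again a graph. On each time slice this is the Euclidean argument, and Lemma \ref{l:topology} gives surjectivity onto the relevant disc. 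The parabolic Lipschitz bound in $t$ comes from composing graph maps, each of whose time-Lipschitz constants is $\lesssim\ve$ by \eqref{e:lip-theta} and the projection estimates. The key point is that the constant $C\ve$ must not accumulate. As in David--Toro, at scale $k+1$ the graph function is re-centred on the new plane $P_{i,k+1}$, and only the error of one step, $O(\ve)$, is added. Points away from $V^{10}_{k}$ are unchanged by \eqref{DT4}, and there the previous description persists.

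Fourth, I would derive the remaining items. \eqref{DT3} follows by summing \eqref{Sigma_ksigma_k} geometrically, which also gives \eqref{Sigma_kSigma}. For \eqref{DT1}, a point of $E_\infty$ is a limit of centres $(X_{j,k},t_{j,k})$, each of which lies within $O(\ve r_k)$ of $\Sigma_k$ by \eqref{Sigma_kPlane}, and hence near $\Sigma$. For \eqref{DT2}, on a fixed time slice $D\sigma_k$ restricted to the tangent plane of $\Sigma_k$ is $(1+O(\ve))$-bi-Lipschitz. Composing over the roughly $\log_{10}(r_{k_0}/|X-Y|)$ scales between $r_{k_0}$ and $|X-Y|$ gives factors $(1+C\ve)^{\pm N}$, which produce the exponents $1\pm\tau$. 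For \eqref{DT7}, at scale $r\sim r_k$ I combine \eqref{Sigma_kSigma}, \eqref{Sigma_kPlane}, and the density of $\Sigma_k$ in a $C\ve r_k$ neighbourhood of $\Sigma$; large $r$ is handled by $P_0$ via \eqref{DT3}. For \eqref{DT11}, the local graph structure and Reifenberg flatness show that $\Sigma\cap B_r$ contains a parabolic-bi-Lipschitz-ish image of a disc. A cleaner route is to observe that the projection of $\Sigma\cap B_r(X,t)$ onto the approximating plane covers $P\cap B_{r/2}$ by Lemma \ref{l:topology} applied slice by slice. Since projection is $1$-Lipschitz for $\pd$, this gives $\sigma(\Sigma\cap B_r)\gtrsim r^{\idim+2}$.

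I expect the main obstacle to be the third step, specifically controlling the \emph{time} regularity of the graphs uniformly in $k$. In space one has genuine $C^2$ bounds. In time, however, the $\theta_{j,k}$ are only parabolically Lipschitz at scale $r_k$, and $\Sigma_k$ may oscillate in $t$ at every scale. My first attempt would be to prove that the $\Lip(1,1/2)$ seminorm of $A_{j,k}$ in $t$ is bounded by the worst single-step error: the difference of $\sigma_k$ at two times $t,s$ is $\lesssim\ve r_k^{-1}|t-s|^{1/2}\cdot r_k$ for $|t-s|\le r_k^2$. For larger separations I would chain through coarser scales using the previous-step graph, so that only one $O(\ve)$ contribution is added at each re-centring.
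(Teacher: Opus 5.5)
Your architecture matches the paper's (and David--Toro's). You work slice by slice, run an induction on local graph descriptions that alternates a change of plane with an application of $\sigma$, and read off the remaining items. The paper applies $\sigma_k$ over $P_{j,k}$ first and then passes to $P_{j,k+1}$, but the order does not matter. The gap is in the one step that makes the induction close.

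Your second step asserts that on $10B_{i,k}$, near $P_{i,k}$, the error $E_{i,k}=\sigma_k-\pi_{i,k}$ is $C\ve$-Lipschitz. Write $\lambda_k\coloneqq\sum_j\theta_{j,k}$. Then $E_{i,k}=(1-\lambda_k)(\mathrm{id}-\pi_{i,k})+\sum_j\theta_{j,k}(\pi_{j,k}-\pi_{i,k})$, and the first term has spatial derivative of size $1-\lambda_k$. This is not small on the layer $V^{10}_k\setminus V^9_k$. There $\sigma_k$ keeps a fraction $1-\lambda_k$ of the old slope, so a graph of slope $L$ is only mapped to one of slope $L+C\ve$. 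Re-centring on a new plane does not remove slope either. So if each scale merely adds the error of one step, you get $L_k\approx (k-k_0)C\ve$.

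What actually stops accumulation is Lemma \ref{l:sig-pi-lip}. Where $\lambda_k\equiv1$ (on $V^9_k$, by \eqref{e:=1}), $\sigma_k-\pi_{i,k}$ is $C\ve$-Lipschitz in $\pd$ with an absolute $C$. Hence $\sigma_k(\Sigma_k)$ has slope $\le C\ve$ there, whatever the (small) slope of $\Sigma_k$ was. This is why Proposition \ref{p:DT} carries two Lipschitz constants:
\begin{itemize}
\item $C_4$ on $40B_{j,k}$, which depends on $C_1,C_2$ but enters the next step only through smallness conditions on $\ve$ (e.g.\ to invert the auxiliary map $g$);
\item the absolute constant $C_5$ of \eqref{e:Flip1} on $7B_{j,k}$.
\end{itemize}
Since $(X_{j,k+1},t_{j,k+1})\in 2B_{i,k}$, the next domain $49B_{j,k+1}$ lies inside $7B_{i,k}$. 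So $C_1,C_2$ depend only on $C_3,C_5$, and the constants close. Your ordering can be repaired the same way. The missing idea is this nesting of the next-scale domain inside the region where the partition of unity sums to one, with a separate non-uniform constant on the larger ball.

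Two smaller points. First, the obstacle you single out, time regularity, is not one. The bound \eqref{e:lip-theta} holds for all pairs of points; for $|t-s|>r_k^2$ it is trivial since $0\le\theta_{j,k}\le1$. Projections onto vertical planes do not see $t$, and all graph domains have time extent $O(r_k^2)$. So the estimates of Stages 1 and 2 are done directly in $\pd$, and time is handled exactly like space. Chaining through coarser scales is unnecessary and would itself reintroduce accumulation.

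Second, for \eqref{DT11} you must name the continuous, time-preserving, near-identity map to which Lemma \ref{l:topology} is applied. A ``bi-Lipschitz image of a disc'' is not available, since $f$ is only bi-H\"older. The paper builds a fresh CCBP at the scale of the ball (Lemma \ref{l:big-projection-RF}). For $\Sigma=f(P_0)$ itself one can instead take $\pi_P\circ f^{(k)}\circ(\mathrm{id}+A)$ on $P\cap B_{r/2}$. Here $r_k\sim r$, $P$ and $A$ come from \eqref{DT6}, and $f^{(k)}$ is the uniform limit of $\sigma_m\circ\cdots\circ\sigma_k$. This map sends $\Sigma_k$ onto $\Sigma$ and moves its points by $\lesssim\ve r_k$, by \eqref{Sigma_ksigma_k}.
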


{
\begin{remark}
    Rather than just the spacial bi-H\"older estimates (as in Theorem \ref{t:DT} \eqref{DT2}), it is possible to obtain global (i.e., considering both space and time coordinates) bi-H\"older estimates. Since this requires some extra work and they are not needed here, we have not included them. With an additional summability condition on the angles between consecutive planes, it is also possible to obtain global (i.e., considering both space and time coordinates) bi-Lipschitz estimates for the parameterizing map. If this control is obtained from some geometric lemma type condition with respect to some underlying set, the map can be made to be \textit{regular} parabolic bi-Lipschitz. These latter results, along with the global bi-H\"older estimates, are contained in an upcoming paper of Bortz, Gurung, Hidalgo-Palencia and the second author \cite{bortz2026parabolic}.  
\end{remark}}

For the remainder of this section we assume, without loss of generality, that $k_0=0$.

The rest of this section is devoted to the proof of Theorem \ref{t:DT}. Let us begin with some remarks. Theorem \ref{t:DT} \eqref{DT4} is clear from \eqref{e:sigma_k} since $\supp \theta_{j,k} \subseteq 10B_{j,k}$ by Lemma \ref{l:theta}. The proof of Theorem \ref{t:DT} \eqref{DT1} is easily translatable from the proof in \cite{david2012reifenberg}, see the beginning of Chapter 6. Theorem \ref{t:DT} \eqref{DT2} takes place in a single time slice, see Remark \ref{r:timeslice}, and the proof is almost identical to the proof of Theorem 8.1 in \cite{david2012reifenberg}. Because these proof is long but only requires minor notational changes, we decided not to include it. We shall include the proofs of the remaining results which generally still follow the ideas presented in \cite{david2012reifenberg} or \cite{edelen2018effective}. The majority of the section will be taken up by the proof of Proposition \ref{p:DT} below which contains the statement of Theorem \ref{t:DT} \eqref{DT5}. Along the way to showing Proposition \ref{p:DT}, we also prove Theorem \ref{t:DT} \eqref{DT8} and \eqref{DT9}. This is the parabolic analogue of \cite[Proposition 5.1]{david2012reifenberg}. After this, we will prove Theorem \ref{t:DT} \eqref{DT3}, \eqref{DT6}, \eqref{DT7} and \eqref{DT11}. \\

\subsection{The main proposition} 

\begin{proposition}\label{p:DT}
	There are constants $\{C_i\}_{i=1}^7$ such that the following holds for all $k \geq 0$ and $j \in J_k.$ First, there is a $\emph{Lip}(1,1/2)$ function $A_{j,k} : P_{j,k} \cap 49B_{j,k} \rightarrow P_{j,k}^\perp$ which is $C^2$ in space, such that 
	\begin{align}\label{e:Anorm}
		\|A_{j,k}(X_{j,k},t_{j,k})\| \leq C_1\ve r_k,
	\end{align}
	\begin{align}\label{e:Alip}
		\|A_{j,k}(X,t) - A_{j,k}(Y,s)\| \leq C_2 \ve \pd(X,t,Y,s)
	\end{align}
	for all $(X,t),(Y,s) \in P_{j,k} \cap 49B_{j,k}$ and, if $\Gamma_{A_{j,k}}$ is the graph of $A_{j,k}$ over $P_{j,k}$, then 
	\begin{align}\label{e:graphA}
		\Sigma_{k} \cap D(X_{j,k},t_{j,k},P_{j,k},49r_k) = \Gamma_{A_{j,k}} \cap D(X_{j,k},t_{j,k},P_{j,k},49r_k).
	\end{align}
	Next, there is a $\emph{Lip}(1,1/2)$ function $F_{j,k} : P_{j,k} \cap 40B_{j,k} \rightarrow P_{j,k}^\perp$ which is $C^2$ in space, such that
	\begin{align}\label{e:Fnorm}
		\|F_{j,k}(X_{j,k},t_{j,k})\| \leq C_3\ve r_k,
	\end{align}
	\begin{align}\label{e:Flip}
		\|F_{j,k}(X,t) - F_{j,k}(Y,s)\| \leq C_4\ve \pd(X,t,Y,s) 
	\end{align}
	for $(X,t),(Y,s) \in P_{j,k} \cap 40B_{j,k},$
	\begin{align}\label{e:Flip1}
		\|F_{j,k}(X,t) - F_{j,k}(Y,s)\| \leq C_5\ve \pd(X,t,Y,s)
	\end{align}
	for $(X,t),(Y,s) \in P_{j,k} \cap 7B_{j,k}$ and, if $\Gamma_{F_{j,k}}$ is the graph of $F_{j,k}$ over $P_{j,k}$, then 
	\begin{align}\label{e:graphF}
		\Sigma_{k+1} \cap D(X_{j,k},t_{j,k},P_{j,k},40r_k) = \Gamma_{F_{j,k}} \cap D(X_{j,k},t_{j,k},P_{j,k},40r_k).
	\end{align}
\end{proposition}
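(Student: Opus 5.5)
We prove Proposition \ref{p:DT} by induction on $k$, as in \cite[Proposition 5.1]{david2012reifenberg}. The key point is that $\sigma_k$ preserves time slices (Remark \ref{r:timeslice}), so all spatial computations can be done slice by slice. The time regularity of the graph functions is obtained from \eqref{e:lip-theta} together with the fact that the $P_{j,k}$ are vertical. The induction has two steps: the statement for $A_{j,k}$ at level $k$ implies the statement for $F_{j,k}$, and the statement for all $F_{i,k}$ at level $k$ implies the statement for $A_{j,k+1}$. The base case $k=0$ is $\Sigma_0=P_0$. Here \eqref{e:cdist0} and Lemma \ref{l:angAB2} show that $P_0$ is the graph over $P_{j,0}$ of an affine map $P_{j,0}\to P_{j,0}^\perp$. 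This map is $t$-independent, has slope $\lesssim\ve$ by Lemma \ref{l:angAB}, and takes the value $\lesssim \ve r_0$ at the centre, which gives \eqref{e:Anorm}--\eqref{e:graphA}.

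For the step $A_{j,k}\Rightarrow F_{j,k}$, fix $j$ and write points of $\Sigma_k\cap D(X_{j,k},t_{j,k},P_{j,k},49r_k)$ as $p+A_{j,k}(p)$. Every $i$ with $\theta_{i,k}\neq 0$ near $40B_{j,k}$ satisfies $\pd(X_{i,k},t_{i,k},X_{j,k},t_{j,k})\le 100r_k$. Hence \eqref{e:c100rk} applies, and by Lemma \ref{l:angAB2} each $\pi_{i,k}$ is, on this region, an $\ve$-perturbation of $\pi_{j,k}$ in the sense of \eqref{e:proj1}--\eqref{e:proj2}. We decompose
\[
\sigma_k(p+A_{j,k}(p)) = \pi_{j,k}\text{-part} + \pi_{j,k}^\perp\text{-part},
\]
and write the tangential component as $p+\psi(p)$, where $\psi$ is $C^2$ in space with $\|D\psi\|\lesssim\ve$ by \eqref{e:deriv-theta}, \eqref{e:Alip} and \eqref{e:proj2}. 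In time we have $\|\psi(p)-\psi(q)\|\lesssim \ve\,\pd(p,q)$; this uses \eqref{e:lip-theta}, multiplied by the $O(\ve r_k)$ size of $\pi_{i,k}-\pi_{j,k}$, so the $r_k^{-1}$ factor is absorbed. Since the map $p\mapsto p+\psi(p)$ preserves time, Lemma \ref{l:topology} applied slice by slice, together with the spatial inverse function theorem, shows that it is a bijection onto a region containing $P_{j,k}\cap 40B_{j,k}$. Its inverse is Lip$(1,1/2)$ by a standard fixed-point estimate: if $q+\psi(q)=u$ and $q'+\psi(q')=u'$, then $\pd(q,q')\le \pd(u,u')+C\ve\,\pd(q,q')$. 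We then set $F_{j,k}$ to be the normal component composed with this inverse. This yields \eqref{e:Fnorm}, \eqref{e:Flip} and \eqref{e:graphF}. On $7B_{j,k}\subset V_k^8$, the identity $\sum_i\theta_{i,k}=1$ from \eqref{e:=1} shows that $\sigma_k$ is a convex combination of the projections, which gives \eqref{e:Flip1} with the better constant $C_5$ (independent of $C_1,C_2$); this is where Theorem \ref{t:DT}\eqref{DT9} and \eqref{Sigma_ksigma_k} also come out.

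For the step $F\Rightarrow A_{j,k+1}$, take $j\in J_{k+1}$. By \eqref{e:cinV^2} there is $i\in J_k$ with $\pd(X_{i,k},t_{i,k},X_{j,k+1},t_{j,k+1})\le 2r_k$. Then $D(X_{j,k+1},t_{j,k+1},P_{j,k+1},49r_{k+1})$ lies inside $D(X_{i,k},t_{i,k},P_{i,k},7r_k)$, where $\Sigma_{k+1}$ is the graph of $F_{i,k}$ with constant $C_5\ve$. Using \eqref{e:c2r_k}, the plane $P_{j,k+1}$ is $\ve$-close to $P_{i,k}$, so we re-express this graph over $P_{j,k+1}$, again inverting a time-preserving near-identity map. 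This gives $A_{j,k+1}$ with $C_2\lesssim C_5$ and $C_1\lesssim C_5$. The constants therefore close without growth, because $C_5$ is determined only by the CCBP data.

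The main obstacle is exactly this bookkeeping of constants. It must be shown that $C_5$ does not depend on $C_1,C_2$, which is why \eqref{e:Flip1} is stated separately on $7B_{j,k}$ and why \eqref{DT9} is needed. The argument has to use the partition of unity \eqref{e:=1} on $V_k^8$ so that the $O(C_2\ve)$ contribution of $A_{j,k}$ is killed by projection to within $O(\ve)$ of $P_{j,k}$. We also need to confirm that the time direction causes no loss: the only time-dependence enters via $\theta_{i,k}$ and $A_{j,k}$, and \eqref{e:lip-theta} combined with the $O(\ve r_k)$ smallness of the differences of projections is what keeps the parabolic Lipschitz constant of order $\ve$.
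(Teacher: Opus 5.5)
Your proposal is correct and follows the paper's proof essentially step for step. It uses the same two-stage induction: $F_{j,k}$ is obtained by inverting the time-preserving tangential map $h=\pi_{j,k}\circ\sigma_k\circ(I+A_{j,k})$, $A_{j,k+1}$ is obtained by re-expressing the graph of $F_{i,k}$ over $P_{j,k+1}$ through an inverted near-identity map, and \eqref{e:Flip1} comes from $\sum_i\theta_{i,k}=1$ on $V_k^8$, as in Lemmas \ref{l:sig-pi} and \ref{l:sig-pi-lip}.

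Your bound $C_1\lesssim C_5$ tacitly uses that $\|F_{i,k}\|\le C\ve r_k$ on $P_{i,k}\cap 7B_{i,k}$ with $C$ absolute. This follows from Theorem \ref{t:DT}\eqref{DT9} applied with $i=j$, since $\|F_{j,k}(X,t)\|=\pdist(\sigma_k(\tilde X,t),P_{j,k})\le\|\sigma_k(\tilde X,t)-\pi_{j,k}(\tilde X,t)\|$. State this explicitly: choosing $C_3\ge C(1+C_1+C_2)$, as the paper's written proof of \eqref{e:Fnorm} does, together with the Stage 1 requirement $C_1\ge C(1+C_3+C_5)$, would make the choice of constants circular.
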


\begin{remark}
    After we prove Proposition \ref{p:DT}, the only properties we will need are \eqref{e:Anorm}--\eqref{e:graphA}. Properties \eqref{e:Fnorm}--\eqref{e:graphF} are there to assist the proof by induction. It may seem that \eqref{e:Flip1} is redundant in light of \eqref{e:Flip}. The point of \eqref{e:Flip1} is that the estimate on the smaller ball $7B_{j,k}$ can be obtained with a constant $C_5$ which is independent of all other constants. This is critical in order to close the induction.  
\end{remark}

We prove Proposition \ref{p:DT} by induction. Since $\Sigma_0 = P_0$ it follows that \eqref{e:Anorm}-\eqref{e:graphA} hold for $k=0$ by Definition \ref{d:CCBP} \eqref{CCBP2}. This establishes the base case. The induction will be carried out in two stages. \\

\noindent\textbf{Stage 1.} The aim of this stage is to show the following:
\begin{lemma}\label{l:Stage1}
	Let $k \geq 0$ and suppose \eqref{e:Fnorm}-\eqref{e:graphF} hold for $k$, then \eqref{e:Anorm}-\eqref{e:graphA} hold for $k+1.$
\end{lemma}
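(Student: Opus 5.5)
Fix $k\ge 0$ and $i\in J_{k+1}$, and assume \eqref{e:Fnorm}--\eqref{e:graphF} hold at level $k$ for every $j\in J_k$. We need to describe $\Sigma_{k+1}$ near $(X_{i,k+1},t_{i,k+1})$ as a graph over $P_{i,k+1}$. By \eqref{e:cinV^2} there is $j\in J_k$ with $\pd(X_{i,k+1},t_{i,k+1},X_{j,k},t_{j,k})\le 2r_k$. The region $D(X_{i,k+1},t_{i,k+1},P_{i,k+1},49r_{k+1})$ has size about $4.9r_k$, so it lies well inside $D(X_{j,k},t_{j,k},P_{j,k},40r_k)$, where $\Sigma_{k+1}$ is already known by \eqref{e:graphF} to be the graph $\Gamma_{F_{j,k}}$ over $P_{j,k}$. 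The task is therefore to change the base plane from $P_{j,k}$ to $P_{i,k+1}$, which is possible because the two planes are $\ve$-close and the graph is $C\ve$-flat.

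\textbf{Step 1: the two planes are close.} By \eqref{e:c2r_k}, $d_{20B_{j,k}}(P_{j,k},P_{i,k+1})\le\ve$. Lemma \ref{l:angAB} then gives $\angle(P_{j,k},P_{i,k+1})\lesssim\ve$, and Lemma \ref{l:angAB2} gives control of $\pi_{P_{i,k+1}}$ restricted to $P_{j,k}$ by $\ve r_k$ plus $\ve$ times displacement.

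\textbf{Step 2: reparametrize over $P_{i,k+1}$.} Define $h$ on $P_{j,k}\cap 40B_{j,k}$ by
\[
h(Y,s)=\pi_{P_{i,k+1}}\bigl((Y,s)+F_{j,k}(Y,s)\bigr).
\]
Since $\pi_{P_{i,k+1}}$ preserves time, $h$ preserves the $t$-coordinate. Combining \eqref{e:Flip} with \eqref{e:proj2}, $h$ differs from $\pi_{P_{i,k+1}}|_{P_{j,k}}$, which is an affine near-isometry, by a map that is $C\ve$-Lipschitz in the parabolic metric. Hence $h$ is injective and bi-Lipschitz with constants $1\pm C\ve$. Applying Lemma \ref{l:topology} on each time slice, $h(P_{j,k}\cap 40B_{j,k})$ covers $P_{i,k+1}\cap 49B_{i,k+1}$. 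We then set
\[
A_{i,k+1}(Z,u)=\pi^\perp_{P_{i,k+1}}\bigl(h^{-1}(Z,u)+F_{j,k}(h^{-1}(Z,u))\bigr)
\]
(with the appropriate affine normalization), so that $\Gamma_{A_{i,k+1}}=\Gamma_{F_{j,k}}$ on the relevant region.

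\textbf{Step 3: regularity and the required bounds.} Spatial $C^2$ regularity follows from the inverse function theorem applied slicewise, since $F_{j,k}$ is $C^2$ in space and $D_Xh$ is $C\ve$-close to an isometry. The Lipschitz bound \eqref{e:Alip} comes from the bi-Lipschitz property of $h$ together with \eqref{e:Flip}, combined via Lemma \ref{l:angAB2}. The size bound \eqref{e:Anorm} comes from \eqref{e:Fnorm}, \eqref{e:Flip}, \eqref{e:proj1} and $(X_{i,k+1},t_{i,k+1})\in P_{i,k+1}$. Finally, the equality \eqref{e:graphA} on $D(X_{i,k+1},t_{i,k+1},P_{i,k+1},49r_{k+1})$ holds because this region is contained in $D(X_{j,k},\dots,40r_k)$, using the small tilt of the planes, so no other parts of $\Sigma_{k+1}$ enter.

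\textbf{Main obstacle: closing the induction.} The constants $C_1,C_2$ must not grow with $k$. The resulting estimate has the form $C_2\le C_4+C$, where $C$ is an absolute constant; this is consistent provided $C_4$ is chosen in Stage 2 depending only on $C_5$ and absolute constants. The point of \eqref{e:Flip1} is that the $49r_{k+1}$-region around $(X_{i,k+1},t_{i,k+1})$ lies inside $7B_{j,k}$ (since $2r_k+4.9r_k<7r_k$). Therefore only $C_5$ is needed, giving $C_2=C_5+C$, which is independent of $C_4$. Checking this containment carefully, including the $D$-cylinder geometry and the tilt, is the delicate part of the argument.
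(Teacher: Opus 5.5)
Your construction is the paper's, up to where the reparametrizing map lives. In your indexing write $V=P_{j,k}$ and $W=P_{i,k+1}$ (the paper swaps $i$ and $j$). The paper uses $g=\pi_W\circ(\mathrm{id}+F_{j,k})\circ\pi_V$ on $W\cap 39B_{j,k}$, which is your $h$ precomposed with $\pi_V|_W$. Its $A=\pi_W^\perp[\pi_V g^{-1}+F(\pi_V g^{-1})]-\pi_W^\perp(X_0,0)$ is your $A$, because $\pi_V\circ g^{-1}=h^{-1}$. Working on $W$ is what lets Lemma~\ref{l:topology}, which is stated for a self-map of a single plane, apply directly. With $h\colon V\to W$ you have to insert $\pi_V|_W$ anyway, so this difference is cosmetic.

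The step that does not close as written is \eqref{e:Anorm}. You derive it from \eqref{e:Fnorm} and \eqref{e:Flip}, which gives $C_1\gtrsim 1+C_3+C_4$. But Stage 2 takes $C_4\ge C(1+C_1+C_2)$, so this is the same circularity you correctly flag for $C_2$, and your last paragraph resolves it only for $C_2$.

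The fix uses the same containment. Let $Y=h^{-1}(X_{i,k+1},t_{i,k+1})\in V$. The map $h$ fixes time and displaces points by at most $C(1+C_3+C_4)\ve r_k$, and $(X_{i,k+1},t_{i,k+1})\in 2B_{j,k}$. Hence $Y\in 7B_{j,k}$ once $\ve$ is small; $C_4$ enters only through the smallness of $\ve$, not through a constant. Take $(X_{i,k+1},t_{i,k+1})\in W$ as the normalizing point. Since $\pi_W(Y+F_{j,k}(Y))=(X_{i,k+1},t_{i,k+1})$, you get $A_{i,k+1}(X_{i,k+1},t_{i,k+1})=Y+F_{j,k}(Y)-(X_{i,k+1},t_{i,k+1})\in W^\perp$. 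Therefore
\[
\|A_{i,k+1}(X_{i,k+1},t_{i,k+1})\|\le \pdist(Y,W)+\|F_{j,k}(Y)\|\le C\ve r_k+C_3\ve r_k+C_5\ve\,\pd\bigl(Y,(X_{j,k},t_{j,k})\bigr)\le C(1+C_3+C_5)\ve r_k ,
\]
using \eqref{e:Flip1} on $7B_{j,k}$. This is exactly how the paper argues.

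Note also that \eqref{e:Anorm} at level $k+1$ is normalized by $r_{k+1}=r_k/10$, so you need $C_1\ge 10\,C(1+C_3+C_5)$. This choice is consistent only because $C_3$ and $C_5$ are to be chosen in Stage 2 independently of $C_1$ and $C_2$.
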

Let $j \in J_{k+1}.$ Before defining the map $A_{j,k+1}$, we need an auxiliary function $g$ which we now describe. Recalling Definition \ref{d:CCBP} \eqref{CCBP1}, there is $i \in J_k$ such that
\begin{align}\label{e:2B_ik}
	(X_{j,k+1},t_{j,k+1}) \in 2B_{i,k}.
\end{align}
By translating we may assume $(X_{i,k},t_{i,k}) = (0,0)$ so that $P_{i,k}$ is a linear subspace of $\R^{\adim+1}$. To simplify notation we write $V = P_{i,k}$ and $W=P_{j,k+1}$. Let $F = F_{i,k}$ be the map satisfying \eqref{e:Fnorm}-\eqref{e:graphF} for $i \in J_k$. Then, define a map $g \colon  W \cap 39B_{i,k} \rightarrow W$ by setting
\begin{align}\label{e:definitionf}
	 g(X,t) = \pi_W [ \pi_V(X,t) + F(\pi_V(X,t))]. 
\end{align}
Since $F$ is $C^2$ in space, the same is true of $g.$ We have the following.

\begin{lemma}
	For $\ve >0$ small enough, the map $g$ is well-defined and satisfies 
	\begin{align}\label{e:f-norm}
		\| g(X,t) - (X,t) \| \lesssim (1+C_3 + C_4)\ve r_k
	\end{align}
	and 
	\begin{align}\label{e:f-inv}
		 \| g(X,t) - g(Y,s) - (X,t) +(Y,s) \|  \leq (C+C_4)\ve \pd(X,t,Y,s)
	\end{align}
	for all $(X,t),(Y,s) \in W \cap 39B_{i,k}.$ In particular, for $\ve$ small enough, \eqref{e:f-inv} implies $g$ is invertible with $\emph{Lip}(1,1/2)$ inverse $g^{-1}: W \supseteq U \rightarrow W \cap 39B_{i,k}$, where 
	\begin{align}\label{e:def-U}
		U \coloneqq g(W \cap 39B_{i,k}) \supseteq W \cap 38B_{i,k} \supseteq W \cap 49B_{j,k+1}. 
	\end{align}
\end{lemma}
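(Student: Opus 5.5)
We work in the normalized setting fixed above: $(X_{i,k},t_{i,k})=(0,0)$, $V=P_{i,k}$ is linear, $W=P_{j,k+1}$, and $(X_{j,k+1},t_{j,k+1})\in 2B_{i,k}$. The first step is to compare $V$ and $W$. Since $\pd(X_{i,k},t_{i,k},X_{j,k+1},t_{j,k+1})\le 2r_k\le 4r_k$, Definition \ref{d:CCBP} \eqref{CCBP4} gives $d_{20B_{i,k}}(V,W)\le\ve$. Lemma \ref{l:angAB} then gives $\angle(V,W)\lesssim\ve$, and Lemma \ref{l:angAB2} (applied with the roles of the planes chosen appropriately, base point $(0,0)\in V$, $r=20r_k$) gives two bounds. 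First, $\|\pi_V(Y,s)-(Y,s)\|\lesssim\ve r_k$ for $(Y,s)\in W\cap 39B_{i,k}$. Second, the affine-linear parts satisfy $\|\pi_V(Z)-\pi_V(Z')-(Z-Z')\|\lesssim\ve\|Z-Z'\|$ for $Z,Z'\in W$. The symmetric estimates hold for $\pi_W$ applied to points of $V$ near the origin, or more generally to points at distance $O(\ve r_k)$ from $V$.

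\emph{Well-definedness.} Take $(X,t)\in W\cap 39B_{i,k}$. Then $\pi_V(X,t)$ has the same time coordinate and lies within $C\ve r_k$ of $(X,t)$, so for $\ve$ small it lies in $V\cap 40B_{i,k}$, the domain of $F$. Hence $g$ is defined. By Remark \ref{r:timeslice}-type reasoning, $g$ fixes the $t$-component, since $\pi_V$, $F$ (valued in $V^\perp$, which is spatial) and $\pi_W$ all preserve time.

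\emph{Estimate \eqref{e:f-norm}.} Write
\[
g(X,t)-(X,t)=\pi_W[\pi_V(X,t)+F(\pi_V(X,t))]-\pi_W(X,t).
\]
Since $\pi_W$ is a $1$-Lipschitz affine projection, this is bounded by $\|\pi_V(X,t)-(X,t)\|+\|F(\pi_V(X,t))\|$. The first term is $\lesssim\ve r_k$. For the second, use \eqref{e:Fnorm} and \eqref{e:Flip}:
\[
\|F(\pi_V(X,t))\|\le C_3\ve r_k+C_4\ve\cdot 40r_k .
\]
Together these give $(1+C_3+C_4)\ve r_k$ up to constants.

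\emph{Estimate \eqref{e:f-inv}.} Let $Z=(X,t)$ and $Z'=(Y,s)$. Split
\[
g(Z)-g(Z')-(Z-Z')=\pi'_W\bigl[\pi_V Z-\pi_V Z'-(Z-Z')\bigr]+\pi'_W\bigl[F(\pi_V Z)-F(\pi_V Z')\bigr],
\]
using linearity of $\pi_W'$ and $\pi_W'(Z-Z')=Z-Z'$. The first bracket is $\lesssim\ve\|Z-Z'\|$ by \eqref{e:proj2}. The second is at most $C_4\ve\,\pd(\pi_VZ,\pi_VZ')\le C_4\ve(1+C\ve)\pd(Z,Z')$. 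Here one has to take care that the Euclidean bound \eqref{e:proj2} controls the parabolic norm difference. This holds because the time components of $\pi_V Z-Z$ and $\pi_V Z'-Z'$ are zero, so only spatial displacements occur, and $|X-Y|\le\pd(Z,Z')$.

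\emph{Invertibility and \eqref{e:def-U}.} From \eqref{e:f-inv} with $(C+C_4)\ve<1/2$ we get
\[
\tfrac12\pd(Z,Z')\le\pd(g(Z),g(Z'))\le\tfrac32\pd(Z,Z').
\]
Thus $g$ is injective, and its inverse on $U$ is $\mathrm{Lip}(1,1/2)$ with constant $2$. For the inclusion $U\supseteq W\cap 38B_{i,k}$, apply Lemma \ref{l:topology} to $g$ on $W\cap B$, where $B=B_{39r_k}(\pi_W(0,0))$ is recentered on $W$. The center moves by $O(\ve r_k)$ because $\dist_p(0,W)\lesssim\ve r_k$, so $W\cap B\subseteq W\cap 39B_{i,k}$ up to a harmless shrinking of radii, which is absorbed by choosing $\ve$ small. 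Lemma \ref{l:topology} applies since $g$ is continuous, fixes time, and moves points by at most $C'\ve r_k$ with $C'\ve/39<1/4$. It yields $g(W\cap B)\supseteq W\cap(1-2C'\ve/39)B\supseteq W\cap 38B_{i,k}$. Finally, $49B_{j,k+1}=B_{4.9r_k}(X_{j,k+1},t_{j,k+1})\subseteq B_{7r_k}(0,0)\subseteq 38B_{i,k}$, since the center lies in $2B_{i,k}$ (in space $2+4.9<38$, and in time $4+24.01<38^2$).

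\emph{Main obstacle.} The delicate step is the bookkeeping needed to keep the constants in \eqref{e:f-inv} in the form $(C+C_4)\ve$, with $C$ independent of the inductive constants, and to check that $\ve_0$ can be chosen after $C_3$ and $C_4$ are fixed. This is consistent with the induction scheme, where $C_4$ is ultimately controlled through $C_5$.
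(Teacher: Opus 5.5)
Your proposal is correct and follows the paper's proof essentially step for step. Well-definedness and \eqref{e:f-norm} come from \eqref{e:proj1} together with \eqref{e:Fnorm}--\eqref{e:Flip}, \eqref{e:f-inv} comes from the decomposition through $\pi_W'$ with \eqref{e:proj2} and \eqref{e:Flip}, and the inclusions in \eqref{e:def-U} come from Lemma \ref{l:topology} and \eqref{e:2B_ik}; your recentering of the ball on $W$ before invoking Lemma \ref{l:topology} is a detail the paper leaves implicit (one small slip: $\pd((0,0),(X_{j,k+1},t_{j,k+1}))$ is only bounded by $4r_k$, not $2r_k$, which still suffices for Definition \ref{d:CCBP} \eqref{CCBP4}).
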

\begin{proof}
	Once \eqref{e:f-inv} has been established, the invertibility of $g$ is clear. Once \eqref{e:f-norm} has been established, the second to last inclusion in \eqref{e:def-U} follows from Lemma \ref{l:topology} and the final inclusion follows from \eqref{e:2B_ik}. Thus, it only remains to check that $g$ is well-defined and satisfies \eqref{e:f-norm} and \eqref{e:f-inv}. 
	
	Recalling Definition \ref{d:CCBP} \eqref{CCBP4} and the fact that $(X_{j,k+1} ,t_{j,k+1})\in 2B_{i,k}$, we have
	\begin{align}\label{e:dVW}
		d_{20B_{i,k}}(V,W) \leq \ve.
	\end{align}
	Then, if $(X,t) \in W \cap 39B_{i,k}$, \eqref{e:proj1} implies 
	\begin{align}\label{e:proj3}
		\|\pi_V(X,t)-(X,t)\| \lesssim \ve (|X| + 20r_k) \lesssim \ve r_k,
	\end{align}
	so that $\pi_V(X,t) \in 40B_{i,k}$ for $\ve$ small enough. Since $F$ is defined on $40B_{i,k},$ the map $g$ is well-defined. To prove \eqref{e:f-norm} and \eqref{e:f-inv}, we fix some $(X,t),(Y,s) \in W \cap 39B_{i,k}$. Recall the definition of $\pi_W'$ from Section \ref{s:notation}. By \eqref{e:Fnorm}, \eqref{e:Flip}, \eqref{e:definitionf} and \eqref{e:proj3}, we have
	\begin{align*}
			\|g(X,t) - (X,t)\| &= \|\pi_W [ \pi_V(X,t) + F(\pi_V(X,t))] - \pi_W(X,t)\| \\
			&= \|\pi_W' [ \pi_V(X,t) - (X,t) + F(\pi_V(X,t)) ]  | \\
			&\leq \| \pi_V(X,t) - (X,t)\| + \|F(X_{i,k},t_{i,k})\|  \\
			&\hspace{4em} +  \|F(\pi_V(X,t)) - F(X_{i,k},t_{i,k})\|   \\
			&\lesssim (1 + C_3 + C_4)\ve r_k,
	\end{align*}
	which proves \eqref{e:f-norm}. By \eqref{e:proj2} and \eqref{e:Flip}, the left-hand side of \eqref{e:f-inv} is at most
	\begin{align*}
			&\|\pi_{W}'[ \pi_V(X,t) - \pi_V(Y,s) - 
			(X,t) + (Y,s) +  F(\pi_V(X,t)) -  F(\pi_V(Y,s)) ]\| \\
			&\hspace{2em}\leq \| \pi_V(X,t) - \pi_V(Y,s) - 
			(X,t) + (Y,s)\| + \|F(\pi_V(X,t)) -  F(\pi_V(Y,s))\| \\
			&\hspace{2em}\leq (C+ C_4) \ve \pd(X,t,Y,s),
	\end{align*}
	which proves \eqref{e:f-inv}.
\end{proof}

Now that we have defined $g,$ we can define the function $A = A_{j,k+1} \colon U \rightarrow W^\perp$. Observe that since $(0,0) \in V,$ \eqref{e:dVW} implies there exists a point
\begin{align}\label{e:defX_0}
	(X_0,0) \in W \quad \mbox{such that} \quad |X_0| \leq 20\ve r_k.
\end{align}
Then, for $(X,t) \in U$, define 
\begin{align}\label{e:defA}
        A(X,t) = \pi^\perp_W[ \pi_V(g^{-1}(X,t)) + F( \pi_V(g^{-1}(X,t))) ] - \pi_W^\perp(X_0,0).
\end{align}
Since $F$ is $C^2$ is space, the same is true of $A$. It now only remains to check \eqref{e:Anorm}-\eqref{e:graphA}. Recall the definition of $D(X,t,V,r)$ from the statement of Theorem \ref{t:DT} \eqref{DT5}. We start with \eqref{e:graphA}, which is the following.

\begin{lemma}\label{l:graphA}
	Let $\Gamma_A$ denote the graph of $A$ over $U.$ We have
	\[ \Gamma_A \cap D(X_{j,k+1},t_{j,k+1},W,49r_{k+1}) = \Sigma_{k+1} \cap D(X_{j,k+1},t_{j,k+1},W,49r_{k+1}). \]
\end{lemma}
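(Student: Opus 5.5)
The plan rests on one observation: by construction, $\Gamma_A$ is just a reparametrization over $W$ of the piece of $\Gamma_F$ lying over $V\cap 40B_{i,k}$. The lemma will then follow from \eqref{e:graphF} for $(i,k)$ together with containments between the relevant cylinders.

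\textbf{Step 1: identify the points of $\Gamma_A$.} Since $W$ is an affine vertical plane, $\pi_W^\perp(w)$ is the same vector for every $w\in W$. Because $(X_0,0)\in W$, this vector equals $\pi_W^\perp(X_0,0)$. Fix $w\in U$ and set $Z=\pi_V(g^{-1}(w))\in V\cap 40B_{i,k}$ and $y=Z+F(Z)\in\Gamma_F$. By \eqref{e:definitionf}, $\pi_W(y)=g(g^{-1}(w))=w$. By \eqref{e:defA}, $A(w)=\pi_W^\perp(y)-\pi_W^\perp(w)$. Hence
\begin{align}
w+A(w)=\pi_W(y)+\pi_W^\perp(y)-\pi_W^\perp(w)+\pi_W^\perp(w)=y .
\end{align}
Here I read the graph point as $w+A(w)$, with $A$ measured relative to $W$; this is exactly why $\pi_W^\perp(X_0,0)$ was subtracted in \eqref{e:defA}. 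Thus
\begin{align}
\Gamma_A=\{Z+F(Z) : Z=\pi_V(g^{-1}(w)),\ w\in U\}\subseteq\Gamma_F .
\end{align}

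\textbf{Step 2: the inclusion $\Gamma_A\cap D\subseteq\Sigma_{k+1}\cap D$.} Write $D=D(X_{j,k+1},t_{j,k+1},W,49r_{k+1})$. For a point $y=Z+F(Z)$ as in Step 1, the base point satisfies $Z\in V\cap 40B_{i,k}$. By \eqref{e:Fnorm} and \eqref{e:Flip}, $\|F(Z)\|\lesssim(C_3+40C_4)\ve r_k<40r_k$ once $\ve$ is small. Hence $y\in D(0,0,V,40r_k)$, and \eqref{e:graphF} gives $y\in\Sigma_{k+1}$. This inclusion is therefore immediate.

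\textbf{Step 3: the reverse inclusion.} Take $y\in\Sigma_{k+1}\cap D$. First I show $D\subseteq D(0,0,V,40r_k)$:
\begin{itemize}
\item We have $49r_{k+1}=4.9r_k$ and $(X_{j,k+1},t_{j,k+1})\in 2B_{i,k}$ by \eqref{e:2B_ik}.
\item So every point of $D$ lies within about $12r_k$ of the origin.
\item By \eqref{e:dVW} and Lemma \ref{l:angAB2}, such a point has $V$-component in $V\cap 40B_{i,k}$ and $V^\perp$-component of size at most $12r_k+O(\ve r_k)<40r_k$.
\end{itemize}
Then \eqref{e:graphF} gives $y=Z+F(Z)$ for some $Z\in V\cap 40B_{i,k}$. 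Set $x=\pi_W(y)$. Since $y\in D$, we have $x\in W\cap 49B_{j,k+1}\subseteq U$ by \eqref{e:def-U}. Next, I need a preimage $x'\in W\cap 39B_{i,k}$ with $\pi_V(x')=Z$:
\begin{itemize}
\item By \eqref{e:dVW} and Lemma \ref{l:angAB}, $\angle(V,W)\lesssim\ve$, so $\pi_V|_W\colon W\to V$ is an affine bijection.
\item Lemma \ref{l:angAB2} shows it moves points by $\lesssim\ve(|X|+r_k)$.
\item Since $\|Z\|\lesssim 12r_k$, the point $x'=(\pi_V|_W)^{-1}(Z)$ satisfies $\|x'-Z\|\lesssim\ve r_k$, so $x'\in 39B_{i,k}$.
\end{itemize}
By \eqref{e:definitionf}, $g(x')=\pi_W(Z+F(Z))=x$. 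Injectivity of $g$ (from \eqref{e:f-inv}) then gives $g^{-1}(x)=x'$. Step 1 now yields $x+A(x)=y$, so $y\in\Gamma_A\cap D$.

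\textbf{Main obstacle.} The substance is in Step 3: the bookkeeping of radii ($49r_{k+1}$, $2B_{i,k}$, $38$, $39$, $40$), and checking that each $O(\ve r_k)$ error — from the projections, from $\|F\|$, and from $|X_0|\le 20\ve r_k$ — fits inside the margins once $\ve$ is small. One point must be verified explicitly: the preimage $x'$ obtained from the affine inverse of $\pi_V|_W$ lies in the domain $W\cap 39B_{i,k}$ of $g$. Without this, one cannot conclude $g^{-1}(x)=x'$.
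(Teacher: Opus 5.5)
Your proof is correct and follows the paper's argument. Step 1 is exactly the identity \eqref{e:AF}, and Steps 2--3 combine \eqref{e:graphF} with a cylinder containment and a $W$-preimage of the $V$-base point, as the paper does; there is a slip in your displayed chain (as written the middle expression equals $\pi_W(y)+\pi_W^\perp(y)\neq y$, whereas the intended step is $\pi_W(y)-\pi_W^\perp(w)=\pi_W'(y)$, giving $w+A(w)=\pi_W'(y)+\pi_W^\perp(y)=y$), and your remaining differences are minor and valid: you check $D\subseteq D(0,0,V,40r_k)$ by a direct radius count instead of citing \eqref{e:D}, and you invert $\pi_V|_W$ as an affine bijection instead of invoking Lemma \ref{l:topology}, which is if anything slightly cleaner.
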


\begin{proof} We start with the forward inclusion. Let 
	\[(Z,t) \in \Gamma_A \cap D(X_{j,k+1},t_{j,k+1},W,49r_{k+1}).\]
	Since $A$ is defined on $U \supseteq W \cap 49B_{j,k+1}$ (recall this inclusion from \eqref{e:def-U}), there exists $(X,t) \in W \cap 49B_{j,k+1}$ such that $(Z,t) = (X,t) + A(X,t).$ Let $(Y,t) = g^{-1}(X,t)$. Then, by \eqref{e:definitionf}, we have 
	\[(X,t)= g(Y,t) = \pi_W [ \pi_V(Y,t) + F( \pi_V(Y,t))].\]
	Thus, by the definition of $A$, 
	\begin{align}
		\begin{split}\label{e:AF}
			(Z,t) &= (X,t) + A(X,t) \\
				&=	\pi_W [ \pi_V(Y,t) + F( \pi_V(Y,t))] + \pi^\perp_W[ \pi_V(Y,t) + F( \pi_V(Y,t)) ]  - \pi_W^\perp(X_0,0) \\
			&=\pi_W'[ \pi_V(Y,t) + F( \pi_V(Y,t)) ]+ \pi^\perp_W[ \pi_V(Y,t) + F( \pi_V(Y,t)) ]   \\
			&=\pi_V(Y,t) + F( \pi_V(Y,t)),
		\end{split}
	\end{align}
	which implies $(Z,t) \in \Gamma_F$. It follows from \eqref{e:2B_ik} and \eqref{e:dVW} (see \cite{hyde2022d} for a detailed proof in Hilbert space) that
		\begin{align}\label{e:D}
		D(X_{j,k+1},t_{j,k+1},W,49r_{k+1}) \subseteq D(0,0,V,35r_k).
	\end{align}
	Thus, $(Z,t) \in \Gamma_F \cap D(0,0,V,35r_k)$ and \eqref{e:graphF} gives $(Z,t) \in \Sigma_{k+1},$ as required. 
	
	Our goal now is to show the backward inclusion. Fix some
	\[(Z,t) \in \Sigma_{k+1} \cap D(X_{j,k+1},t_{j,k+1},W,49r_{k+1}).\]
	It suffices to show that $(Z,t) \in \Gamma_A.$ By \eqref{e:D} we have $(Z,t) \in D(0,0,V,35r_k),$ which, since $\Sigma_{k+1}$ is the graph of $F$ over $D(0,0,V,40r_k)$, means we can find a point $(Y,t) \in V \cap 35B_{i,k}$ such that 
	\[ (Z,t) = (Y,t) + F(Y,t). \] 
	For $\ve$ small enough, it follows from \eqref{e:proj3} and Lemma \ref{l:topology} that $V \cap 35B_{i,k} \subseteq \pi_V(W \cap 36B_{i,k})$, so we can find a point $(X,t) \in W \cap 36B_{i,k}$ such that $(Y,t) = \pi_V(X,t).$ Then,
	\[ (Z,t) = \pi_V(X,t) + F(\pi_V(X,t)) . \] 
	Let $(Q,t) = g(X,t) \in U.$ By \eqref{e:AF} (replacing $(X,t)$ and $(Y,t)$ with $(Q,t)$ and $(X,t),$ respectively), we have
	\[ (Z,t) = \pi_V(X,t) + F(\pi_V(X,t)) =  (Q,t) + A(Q,t) \in \Gamma_A, \] 
	as required. 
\end{proof}

The two following lemmas finish the proof of Lemma \ref{l:Stage1} and hence Stage 1.

\begin{lemma}
	We have $\|A(X_{j,k+1},t_{j,k+1})\| \leq C_1\ve r_k$.
\end{lemma}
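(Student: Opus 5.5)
We unwind the definition \eqref{e:defA}. Set $(Y,s) := g^{-1}(X_{j,k+1},t_{j,k+1}) \in W \cap 39B_{i,k}$; this is well-defined because $(X_{j,k+1},t_{j,k+1}) \in W \cap 49B_{j,k+1} \subseteq U$ by \eqref{e:def-U}. By the computation \eqref{e:AF}, the point
\[
(Z,s) := (X_{j,k+1},t_{j,k+1}) + A(X_{j,k+1},t_{j,k+1}) = \pi_V(Y,s) + F(\pi_V(Y,s))
\]
lies on $\Gamma_F$. Here $A(X_{j,k+1},t_{j,k+1}) \in W^\perp$, and $(Z,s)$ differs from $(X_{j,k+1},t_{j,k+1})$ only by this $W^\perp$ vector, so
\[
\|A(X_{j,k+1},t_{j,k+1})\| = \|\pi_W^\perp\big((Z,s)-(X_{j,k+1},t_{j,k+1})\big)\| = \pdist(\text{the point }(Z,s), W)
\]
up to the correction by $(X_0,0)$. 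This correction is harmless: $\pi_W^\perp$ of any point of $W$ equals $\pi_W^\perp(X_0,0)$, which is exactly what is subtracted. The task therefore reduces to showing that the point $(Z,s) \in \Gamma_F$ lying directly above the center $(X_{j,k+1},t_{j,k+1})$ in the $W^\perp$ direction is within $C_1\ve r_k$ of $W$.

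To bound this distance, split it using the triangle inequality into two pieces. The first is the distance of $(Z,s)$ from the plane $V$, which is $\|F(\pi_V(Y,s))\|$. The second is the discrepancy between $V$ and $W$ near the relevant point. Both $\pi_V(Y,s)$ and $(Y,s)$ lie in $40B_{i,k}$, so \eqref{e:Fnorm} and \eqref{e:Flip} give
\[
\|F(\pi_V(Y,s))\| \le (C_3 + 40C_4)\ve r_k .
\]
For the second piece, $\|\pi_V(Y,s)-(Y,s)\| \lesssim \ve r_k$ by \eqref{e:proj3}, and $(Y,s) \in W$. Hence
\[
\pdist((Z,s),W) \le \|F(\pi_V(Y,s))\| + \|\pi_V(Y,s)-(Y,s)\| \lesssim (1+C_3+C_4)\ve r_k .
\]
The $\pi_W^\perp$-component of $(Z,s)-(X_{j,k+1},t_{j,k+1})$ is bounded by this distance, since $\pi_W^\perp$ is $1$-Lipschitz and kills $W'$. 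We would also rewrite \eqref{e:defA} at the center as $\pi_W^\perp[(Z,s) - (Y,s)]$, using $\pi_W^\perp(Y,s)=\pi_W^\perp(X_0,0)$, which makes the bound explicit.

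The main obstacle is not the estimate itself but the bookkeeping of constants so that the induction closes. The resulting bound is $C(1+C_3+C_4)\ve r_k$, so one must choose $C_1 \ge C(1+C_3+C_4)$. We must also ensure that later choices of $C_3$ and $C_4$ in Stage 2 do not depend on $C_1$, or depend on it only through terms multiplied by $\ve$, which can be absorbed for $\ve<\ve_0$ small. If a cleaner constant is needed, I would instead use the smaller-ball Lipschitz bound \eqref{e:Flip1} with $C_5$. Since $(X_{j,k+1},t_{j,k+1}) \in 2B_{i,k}$, both $(Y,s)$ and $\pi_V(Y,s)$ lie in $3B_{i,k} \subseteq 7B_{i,k}$ for small $\ve$, by \eqref{e:f-norm}. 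This gives $C_1 = C(1+C_3+C_5)$, which avoids circular dependence on $C_4$.
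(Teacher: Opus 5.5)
Your final variant, which uses \eqref{e:Flip1} on $7B_{i,k}$, is exactly the paper's argument. After replacing $(X_0,0)$ by $(Y,s)\in W$ in \eqref{e:defA}, one gets $\|A(X_{j,k+1},t_{j,k+1})\|\le\|\pi_V(Y,s)-(Y,s)\|+\|F(\pi_V(Y,s))\|\le C(1+C_3+C_5)\ve r_k$, which is your distance-to-$W$ bound.

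What needs correcting is that this variant is not optional. It is not a matter of a ``cleaner constant.'' The route through \eqref{e:Flip} and $C_4$ cannot close the induction. Stage 2 chooses $C_4\ge C(1+C_1+C_2)$, because the Lipschitz bound for $F$ on $40B_{j,k}$ inherits \eqref{e:siglip}. The constant in \eqref{e:siglip} contains $C_1$ at order one, through \eqref{e:before}. So the requirement $C_1\ge C(1+C_3+C_4)$ is unsatisfiable. Your hope that $C_4$ depends on $C_1$ only through terms multiplied by $\ve$ is false.

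What makes $C_5$ admissible is that it comes from Lemma \ref{l:sig-pi-lip}, whose constant depends only on the CCBP conditions. Your caveat that $C_3$ must not depend on $C_1$ is also genuinely needed; the paper asserts this independence. It follows from Lemma \ref{l:sig-pi}. With $(\tilde X,t)$ the point of $\Sigma_k$ over the center, one has $\tilde X\in 10B_{j,k}$ for small $\ve$, and then $\|F(X_{j,k},t_{j,k})\|\le\|\sigma_k(\tilde X,t)-\pi(\tilde X,t)\|\lesssim\ve r_k$ with a universal constant.

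The one place where $C_4$ legitimately enters your argument is in placing $(Y,s)$ inside $3B_{i,k}$ via \eqref{e:f-norm}. There it is multiplied by $\ve$, so it only imposes a smallness condition on $\ve$.
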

	\begin{proof}
	To simplify notation we will write $(X,t) = (X_{j,k+1},t_{j,k+1})$ and we let $(Y,t) = g^{-1}(X,t).$ By \eqref{e:f-norm} and \eqref{e:proj3}, we have
	\[ \|\pi_V(Y,t)-(X,t)\| \leq \|\pi_V(Y,t)-(Y,t)\|+ \|(Y,t)-(X,t)\| \lesssim (1 + C_3 + C_4)\ve r_k.\]
	Since $(X,t) \in 2B_{i,k}$, this implies $\pi_V(Y,t) \in 7B_{i,k}$ so long as $\ve$ is taken small enough. In particular, we can make use of the estimate \eqref{e:Flip1}. Notice also, in the definition of $A$ (see \eqref{e:defA}) we can replace $(X_0,0)$ with any point in $W$ without changing the function. Choosing this point to be $(Y,t) \in W,$ and using \eqref{e:Fnorm}, \eqref{e:Flip1} and \eqref{e:proj3} with the fact that $\pi_V(Y,t) \in 7B_{i,k},$ we have
	\begin{align*}
			\|A(X,t)\| &= \|  \pi^\perp_W[ \pi_V(Y,t) + F( \pi_V(Y,t)) ] - \pi_W^\perp(Y,t) \| \\
			& =\|\pi_W^\perp[\pi_V(Y,t) - (Y,t) + F(\pi_V(Y,t))]\| \\
		&\leq \|\pi_V(Y,t) - (Y,t)\| + \|F( \pi_V(Y,t))\| \\
		&\leq \|\pi_V(Y,t) - (Y,t)\| + \|F(X_{i,k},t_{i,k})\| + \|F(\pi_V(Y,t)) - F(X_{i,k},t_{i,k})\| \\
		&\leq C(1 + C_3 + C_5)\ve r_k.
	\end{align*}
	The required estimate holds as long as we choose
	\[ C(1+C_3 + C_5) \leq C_1. \]
	\end{proof}
	
\begin{lemma}
	We have $\|A(X,t) - A(Y,s)\| \leq C_2 \ve \pd(X,t,Y,s)$ for all $(X,t),(Y,s) \in W \cap 49B_{j,k+1}.$
\end{lemma}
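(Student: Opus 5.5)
The plan is to write $A = \pi_W^\perp \circ h \circ g^{-1} - \pi_W^\perp(X_0,0)$, where
\begin{align}
h(Z,u) \coloneqq \pi_V(Z,u) + F(\pi_V(Z,u)), \qquad (Z,u) \in W \cap 39B_{i,k},
\end{align}
and to estimate the two factors separately. The constant term $\pi_W^\perp(X_0,0)$ cancels in differences. So for $(X,t),(Y,s) \in W\cap 49B_{j,k+1}\subseteq U$ we set $(X',t)=g^{-1}(X,t)$ and $(Y',s)=g^{-1}(Y,s)$. We then have to bound
\begin{align}
\|A(X,t)-A(Y,s)\| = \|\pi_W^\perp[h(X',t)-h(Y',s)]\|.
\end{align}

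\textbf{Step 1: control $g^{-1}$.} By \eqref{e:f-inv},
\begin{align}
\pd(g(X'),g(Y')) \geq (1-(C+C_4)\ve)\,\pd(X',t,Y',s).
\end{align}
Hence $g^{-1}$ is Lip$(1,1/2)$ with constant at most $2$ once $\ve$ is small relative to $C_4$. One point needs care here: $\|\cdot\|$ is not a norm-like quantity in time, so the triangle inequality must be used in the form $\|a+b\|\le\|a\|+\|b\|$. This form does hold for the parabolic quasi-norm $|X|+|t|^{1/2}$, and $g$ fixes time components, so the estimate goes through.

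\textbf{Step 2: bound the $W^\perp$-component of the increment of $h$.} We write
\begin{align}
h(X',t)-h(Y',s) &= \bigl[\pi_V(X',t)-\pi_V(Y',s)-(X',t)+(Y',s)\bigr] + \bigl[(X',t)-(Y',s)\bigr]\\
&\quad + \bigl[F(\pi_V(X',t))-F(\pi_V(Y',s))\bigr].
\end{align}
Applying $\pi_W^\perp$ kills the middle term, since $(X',t),(Y',s)\in W$ and so their difference lies in $W'$. The first term is $\lesssim \ve|X'-Y'|$ by \eqref{e:proj2} together with \eqref{e:dVW}.

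The delicate term is the $F$ term, and this is the main obstacle. Using \eqref{e:Flip} directly gives $C_4\ve$, and $C_4$ has to depend on $C_2$ in Stage 2, so the induction would not close. The remedy is to localize. Since $(X_{j,k+1},t_{j,k+1})\in 2B_{i,k}$ and $49 r_{k+1}=4.9 r_k$, the points $(X,t),(Y,s)$ lie in $7B_{i,k}$ up to $O(\ve r_k)$. By \eqref{e:f-norm} and \eqref{e:proj3}, $\pi_V(X',t)$ and $\pi_V(Y',s)$ lie in $7B_{i,k}$ for $\ve$ small; this containment should be checked carefully, and if it fails one shrinks slightly using the margin $7 > 2 + 4.9$. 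On $7B_{i,k}$ we may then use \eqref{e:Flip1}, with its independent constant $C_5$. Since $\pi_V$ is $1$-Lipschitz for $\pd$, the $F$ term is bounded by $C_5\ve\,\pd(X',t,Y',s)$.

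\textbf{Step 3: combine.} Putting the pieces together gives
\begin{align}
\|A(X,t)-A(Y,s)\| \le (C+C_5)\ve\,\pd(X',t,Y',s) \le 2(C+C_5)\ve\,\pd(X,t,Y,s).
\end{align}
We therefore choose $C_2 \ge 2(C+C_5)$, with $C$ a dimensional constant. This choice is independent of $C_3$, $C_4$ and $C_1$, which is what is needed to close the induction. The only remaining requirement is that $\ve$ be small depending on $C_4$ so that Step 1 holds. That is allowed, because $\ve_0$ may be chosen after all the constants are fixed.
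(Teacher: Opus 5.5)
Your proposal is correct and is essentially the paper's own proof. The paper likewise splits $A(X,t)-A(Y,s)$ into the $\pi_W^\perp$-component of the $\pi_V$-increment, which it bounds by $C\ve|\tilde X-\tilde Y|$ via Lemma \ref{l:proj}; this is equivalent to your use of \eqref{e:proj2} after discarding the $W'$ part. The remaining piece is the $F$-increment: the paper places $\pi_V(g^{-1}(\cdot))$ in $7B_{i,k}$ so that \eqref{e:Flip1} with the independent constant $C_5$ applies, then uses \eqref{e:f-inv} to return from the $g^{-1}$-preimages, giving $C_2 \geq C(1+C_5)$.
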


	\begin{proof}
	Let $(X,t),(Y,s) \in W \cap 49B_{j,k+1}$ and write $(\tilde{X},t) = g^{-1}(X,t)$ and $(\tilde{Y},s) = g^{-1}(Y,s).$ For $\ve$ small enough, \eqref{e:f-norm} implies $(\tilde{X},t),(\tilde{Y},s) \in W \cap 49.5B_{j,k+1}.$ Then, by \eqref{e:2B_ik} and \eqref{e:proj3}, for $\ve$ small enough, we have
	\[\pi_V(\tilde{X},t),\pi_V(\tilde{Y},s) \in V \cap 50B_{j,k+1} \subseteq V \cap 7B_{i,k}.\]
	Now, by Lemma \ref{l:angAB} and \eqref{e:dVW}, we know $d_{B_{i,k}}(V,W') \lesssim \ve,$ with $W'$ as in Section \ref{s:notation}. Then, recalling \eqref{e:defA}, using Lemma \ref{l:proj}, \eqref{e:Flip1}, and \eqref{e:f-inv}, we have
	\begin{align*}
		\|A(X,t) - A(Y,s)\| &\leq \|\pi_W^\perp(\pi_V( \tilde{X} - \tilde{Y},t-s))\| + \|F( \pi_V(\tilde{X},t)) - F( \pi_V( \tilde{Y},s))\| \\
		&\leq C\ve|\tilde{X}-\tilde{Y}| + C_5\ve \pd(\pi_V(\tilde{X},t),\pi_V(\tilde{Y},s))\\
		&\leq (C + C_5)\ve \pd(\tilde{X},t,\tilde{Y},s) \leq C(1+C_5)\ve \pd(X,t,Y,s),
	\end{align*}
	So, if
	\[ C(1+C_5) \leq C_2 \] 
	then \eqref{e:Alip} holds.
\end{proof}

\noindent\textbf{Stage 2.} Our goal here is to show the following.

\begin{lemma} 
	If \eqref{e:Anorm}-\eqref{e:graphA} hold for $k$ then \eqref{e:Fnorm}-\eqref{e:graphF} also hold for $k$.
\end{lemma}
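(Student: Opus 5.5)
We follow the Stage 2 argument of David--Toro \cite[Proposition 5.1]{david2012reifenberg}, adapted time slice by time slice. Fix $k \geq 0$ and $i \in J_k$, translate so that $(X_{i,k},t_{i,k}) = (0,0)$, and write $V = P_{i,k}$. By \eqref{e:graphA}, $\Sigma_k$ coincides near $0$ with the graph $\Gamma_{A_{j,k}}$ over $P_{j,k}$ for every $j$ with $10B_{j,k}$ meeting $40B_{i,k}$. By \eqref{e:c100rk} and Lemma \ref{l:angAB2}, each such graph can be rewritten as a graph over $V$. The change of base is done as in Stage 1, by inverting a map $W\to W$ of the form $\pi_W\circ(\mathrm{id}+A)\circ\pi_V$ via \eqref{e:proj2} and Lemma \ref{l:topology}. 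We conclude that $\Sigma_k \cap D(0,0,V,45r_k)$ is the graph of a map $\tilde A \colon V\cap 45B_{i,k}\to V^\perp$ with $\|\tilde A(0,0)\|\lesssim C_1\ve r_k$ and $\Lip(1,1/2)$ constant $\lesssim (C+C_2)\ve$, $C^2$ in space.

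Next, for $(Y,t)\in V\cap 45B_{i,k}$ we write the point of $\Sigma_{k+1}$ as $\sigma_k((Y,t)+\tilde A(Y,t))$ and project. Define
\[ h(Y,t) = \pi_V\big(\sigma_k((Y,t)+\tilde A(Y,t))\big), \qquad F(h(Y,t)) = \pi_V^\perp\big(\sigma_k((Y,t)+\tilde A(Y,t))\big), \]
so that $\Sigma_{k+1}\cap D(0,0,V,40r_k)$ is the graph of $F = \pi_V^\perp\circ\sigma_k\circ(\mathrm{id}+\tilde A)\circ h^{-1}$. The needed properties of $h$ are as follows.
\begin{itemize}
\item $h$ preserves time, by Remark \ref{r:timeslice}.
\item $h$ is a small perturbation of the identity: $\|h(Y,t) - (Y,t)\|\lesssim \ve r_k$. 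This follows from \eqref{e:sigma_k}, $\sum\theta_{j,k}\le1$, bounded overlap (Lemma \ref{l:bounded-overlap}), and $\|\pi_{j,k}(Z)-Z\|\lesssim \ve r_k$ for $Z\in\Sigma_k\cap10B_{j,k}$ (from \eqref{Sigma_kPlane}).
\item $h - \mathrm{id}$ has $\Lip(1,1/2)$ constant $\lesssim\ve$.
\end{itemize}
Given these, Lemma \ref{l:topology} gives $h(V\cap45B_{i,k})\supseteq V\cap 40B_{i,k}$, so $h^{-1}$ exists and is $C^2$ in space. One must also check that points of $\Sigma_{k+1}$ in the cylinder come only from points of $\Sigma_k$ inside the cylinder of radius $45r_k$. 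This holds because $\sigma_k$ moves points by $\lesssim\ve r_k$ on $\Sigma_k$ (proving \eqref{Sigma_ksigma_k} along the way) and is the identity off $V^{10}_k$ (Theorem \ref{t:DT}\eqref{DT4}). The bound \eqref{e:Fnorm} then follows with $C_3 \approx C(1+C_1)$, and \eqref{e:Flip} follows with $C_4$ depending on $C_2$.

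The main obstacle is \eqref{e:Flip1}: proving that on $7B_{i,k}$ the Lipschitz constant of $F$ is $C_5\ve$ with $C_5$ independent of $C_1,\dots,C_4$, which is what lets the induction close. Here we expand, for $Z=(Y,t)+\tilde A(Y,t)$,
\[ \pi_V^\perp\sigma_k(Z) = \sum_j\theta_{j,k}(Z)\,\pi_V^\perp\pi_{j,k}(Z) + \Big(1-\sum_j\theta_{j,k}(Z)\Big)\pi_V^\perp Z . \]
On $7B_{i,k}\subseteq V^9_k$ the second term vanishes by \eqref{e:=1}. We then differentiate/difference this expression. In the term $\theta_{j,k}(Z)\,\pi_V^\perp\pi_{j,k}(Z)$, a difference in $\pi_V^\perp\pi_{j,k}$ contributes $\lesssim\ve\,\pd$ by Lemma \ref{l:proj} and \eqref{e:c100rk}, with a constant depending only on the CCBP. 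A difference in $\theta_{j,k}$ contributes $r_k^{-1}\pd\cdot\|\pi_V^\perp\pi_{j,k}(Z)-\pi_V^\perp\pi_{i,k}(Z)\|$, after subtracting the $i$-term using $\sum\theta_{j,k}\equiv1$, so that $\sum_j \Delta\theta_{j,k} = 0$. The spatial distance between the planes $P_{j,k}$ and $P_{i,k}$ within $10B_{j,k}$ is $\lesssim\ve r_k$ by \eqref{e:c100rk}, so this term is also $\lesssim\ve\,\pd$. For time differences we use \eqref{e:lip-theta}. None of these constants involve $C_1$ or $C_2$, because the graph $\tilde A$ enters only through $Z$, which we control just by $\|Z - (Y,t)\|\le r_k$. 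This yields $C_5$ depending only on $\adim,\idim$. We then compose with $h^{-1}$, whose Lipschitz constant is at most $1+O(\ve)$, which costs only a factor close to $1$ once $\ve$ is small relative to all $C_i$. This completes Stage 2.
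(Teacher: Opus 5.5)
Your construction of $h$ and $F$, your verification of \eqref{e:graphF}, and your argument for \eqref{e:Flip1} follow the paper's route. For \eqref{e:Flip1} you expand $\pi_V^\perp\sigma_k$, kill the $(1-\sum_j\theta_{j,k})$ term with \eqref{e:=1}, and control the $\theta$-differences through \eqref{e:c100rk}. That is Lemma \ref{l:sig-pi-lip} followed by composition.

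The gap is in \eqref{e:Fnorm}. You obtain it with $C_3\approx C(1+C_1)$ and assert that only $C_5$ must be independent of the other constants. That does not close the induction. In Stage 1, \eqref{e:Anorm} at level $k+1$ is derived from $\|F_{i,k}(X_{i,k},t_{i,k})\|\le C_3\ve r_k$ together with \eqref{e:Flip1}. This forces $C_1\ge C(1+C_3+C_5)$, which is circular when combined with your $C_3\approx C(1+C_1)$. This is why the paper insists that $C_3$ and $C_5$ must both be chosen independently of all other constants.

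The repair uses the tool you already have. \eqref{e:Fnorm} only concerns the centre, so set $Z_0=h^{-1}(0,0)+\tilde A(h^{-1}(0,0))\in\Sigma_k$. For $\ve$ small depending on $C_1,C_2$ we have $Z_0\in 2B_{i,k}\subseteq V^9_k$ and $\sigma_k(Z_0)=(0,0)+F(0,0)$. Hence
\[
\|F(0,0)\|=\pdist(\sigma_k(Z_0),V)\le\|\sigma_k(Z_0)-\pi_{i,k}(Z_0)\|\le\sum_{j\in J_k}\theta_{j,k}(Z_0)\,\|\pi_{j,k}(Z_0)-\pi_{i,k}(Z_0)\|\le C\ve r_k ,
\]
with $C=C(\adim,\idim)$ by \eqref{e:=1}, \eqref{e:c100rk} and Lemma \ref{l:bounded-overlap}; this is the paper's Lemma \ref{l:sig-pi}. (The paper's displayed bound for $\|F\|$ on all of $40B_{j,k}$ carries $C(1+C_1+C_2)$. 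That is harmless away from the centre, but it is not what should determine $C_3$.)

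Two smaller points:
\begin{enumerate}
\item Your first paragraph is unnecessary, since \eqref{e:graphA} with $j=i$ already provides $A_{i,k}$ over $V\cap 49B_{i,k}$.
\item Your third bullet, that $h-\mathrm{id}$ has small $\Lip(1,1/2)$ constant on $V\cap45B_{i,k}$, is asserted without proof. Off $V^9_k$ you cannot use $\sum_j\theta_{j,k}=1$, so you need the graph structure of $\Sigma_k$ as in \eqref{e:siglip}. There the $\theta$-difference term is controlled by \eqref{Sigma_kPlane}, and $\|\pi_{j,k}^\perp(Z-Z')\|$ by Lemma \ref{l:proj} and the Lipschitz bound on $A_{i,k}$. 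Those constants may depend on $C_1,C_2$, which only affects $C_4$ and the smallness of $\ve$.
\end{enumerate}
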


For the rest of this stage we will assume \eqref{e:Anorm}-\eqref{e:graphA} hold for $k$. We start with the following which is Theorem \ref{t:DT} \eqref{DT8}. 

\begin{lemma} 
	Let $(X,t) \in \Sigma_k.$ Then
	\begin{align}\label{e:sig-y}
		\| \sigma_k(X,t) - (X,t) \| \leq C(1 + C_1  + C_2) \ve r_k. 
	\end{align}
\end{lemma}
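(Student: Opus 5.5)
Since $\sum_{i \in J_k}\theta_{i,k} \le 1$ and each $\theta_{i,k}$ is supported in $10B_{i,k}$, we have
\begin{align}
\|\sigma_k(X,t) - (X,t)\| \le \sum_{i \in J_k} \theta_{i,k}(X,t)\,\|\pi_{i,k}(X,t) - (X,t)\| \le \max_{i \colon (X,t)\in 10B_{i,k}} \pdist\big((X,t),P_{i,k}\big).
\end{align}
Here we used that $\pi_{i,k}$ is the orthogonal projection onto the vertical plane $P_{i,k}$. It preserves time, so $\|\pi_{i,k}(X,t)-(X,t)\|$ equals the Euclidean distance from $(X,t)$ to $P_{i,k}$, which is comparable to $\pdist((X,t),P_{i,k})$. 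If no such $i$ exists, then $\sigma_k(X,t)=(X,t)$ and there is nothing to prove. So the plan is to fix $(X,t)\in\Sigma_k\cap 10B_{i,k}$ and show $\pdist((X,t),P_{i,k}) \le C(1+C_1+C_2)\ve r_k$.

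For this we use the induction hypothesis \eqref{e:Anorm}--\eqref{e:graphA} at level $k$ for the index $i$. The key observation is that $10B_{i,k}\subseteq D(X_{i,k},t_{i,k},P_{i,k},49r_k)$. Indeed, a point of $10B_{i,k}$ splits as $z+w$ with $z=\pi_{i,k}(X,t)$ and $w\in P_{i,k}^\perp$. The point $z$ has the same time coordinate and spatial distance at most $10r_k$ from $X_{i,k}$, since projection does not increase distance. Similarly $\|w\|\le 10r_k$. Thus $(X,t)\in\Sigma_k\cap D(\cdots,49r_k)=\Gamma_{A_{i,k}}\cap D(\cdots,49r_k)$, so $(X,t)=(Y,t)+A_{i,k}(Y,t)$ for some $(Y,t)\in P_{i,k}\cap 49B_{i,k}$. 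We expect this step to need the most care: one has to check the cylinder containment carefully with the projection geometry of $P_{i,k}$, and verify that the base point $(Y,t)$ lies in the domain $49B_{i,k}$ of $A_{i,k}$. Given that $(Y,t)=\pi_{i,k}(X,t)$, the latter again follows from the former.

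Then
\begin{align}
\pdist\big((X,t),P_{i,k}\big)\le \|A_{i,k}(Y,t)\| \le \|A_{i,k}(X_{i,k},t_{i,k})\| + \|A_{i,k}(Y,t)-A_{i,k}(X_{i,k},t_{i,k})\| \le C_1\ve r_k + 49\,C_2\ve r_k,
\end{align}
using \eqref{e:Anorm}, \eqref{e:Alip}, and $\pd((Y,t),(X_{i,k},t_{i,k}))\lesssim 49 r_k$. Since the $\theta$'s form a sub-partition of unity (Lemma \ref{l:theta}), no overlap constant from Lemma \ref{l:bounded-overlap} enters. Even if one is used, it only affects the absolute constant $C$. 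This yields \eqref{e:sig-y}, with $C$ independent of $C_1, C_2$. That independence is what is needed later to close the induction.
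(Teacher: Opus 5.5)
Your proof is correct and follows the paper's strategy. You write $(X,t)$ as a point on the graph via \eqref{e:graphA}, bound $\|\pi_{i,k}(X,t)-(X,t)\|$ using \eqref{e:Anorm} and \eqref{e:Alip}, and sum against the $\theta_{i,k}$. The one difference is that the paper fixes a single $j$ with $(X,t)\in 10B_{j,k}$ and reaches the other planes $P_{i,k}$ through the coherence condition Definition \ref{d:CCBP} \eqref{CCBP3}, at an extra cost of $100\ve r_k$, whereas you apply the induction hypothesis at each $i$ with $(X,t)\in 10B_{i,k}$ and so avoid \eqref{CCBP3} entirely. Your explicit check that $10B_{i,k}\subseteq D(X_{i,k},t_{i,k},P_{i,k},49r_k)$ is a detail the paper leaves implicit.
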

\begin{proof}
	If $(X,t) \not\in V_k^{10}$ then by definition $\sigma_k(X,t) = (X,t)$ and \eqref{e:sig-y} is trivial. Assume instead that $(X,t) \in V_k^{10}.$ Let $j \in J_k$ such that $(X,t) \in 10B_{j,k}$ and let $A_{j,k}$ be the map which satisfies \eqref{e:Anorm}-\eqref{e:graphA}. If $i \in J_k$ also satisfies $(X,t) \in 10B_{i,k}$, it follows that $\pd(X_{j,k},t_{j,k},X_{i,k},t_{i,k}) \leq 40r_k$. Thus, by Definition \ref{d:CCBP} \eqref{CCBP3}, we have 
	\begin{align}\label{e:good-angle}
		d_{100B_{j,k}}(P_{j,k},P_{i,k}) \leq \ve.
	\end{align}
	By \eqref{e:graphA}, $(X,t)$ is contained in the graph of $A_{j,k}$ over $P_{j,k} \cap 49B_{j,k}$. This, \eqref{e:Anorm} and \eqref{e:good-angle} imply
	\begin{align}
		\begin{split}\label{e:before}
			\|\pi_{i,k}(X,t) - (X,t)\| & \leq  \pdist( \pi_{j,k}(X,t),P_{i,k}) +  \|  \pi_{j,k}(X,t) - (X,t) \|  \\
			&\leq 100 \ve r_k +  \|A_{j,k}(\pi_{j,k}(X,t))\|  \\
			&\leq  100 \ve r_k + \|A_{j,k}(X_{j,k},t_{j,k})\|  \\
			&\hspace{4em} + \|A_{j,k}(\pi_{j,k}(X,t)) - A_{j,k}(X_{j,k},t_{j,k})\| \\
			&\leq C(1 + C_1 + C_2) \ve r_k
		\end{split}
	\end{align}
	Since the functions $\theta_{j,k}$ are supported on $10B_{j,k}$,which have bounded overlap by Lemma \ref{l:bounded-overlap}, we use this and \eqref{e:before} to get
	\begin{align*}
		\|\sigma_k(X,t)-(X,t)\| \leq \sum_{i \in J_k} |\theta_{i,k}(X,t)| \|\pi_{i,k}(X,t) - (X,t)\| \leq C(1 + C_1 + C_2) \ve r_k, 
	\end{align*} 
	which finishes the proof. 
\end{proof}

Now, fix $j \in J_k$. Our ultimate goal is to define a map $F = F_{j,k}$ such that \eqref{e:Fnorm}-\eqref{e:graphF} hold. By translation, we may assume $(X_{j,k},t_{j,k}) = (0,0).$ Let $P_{j,k}, \ B_{j,k}$ and $A_{j,k}$ be the plane, ball and function satisfy \eqref{e:Anorm}-\eqref{e:graphA}. For brevity, we write 
\[ P=P_{j,k}, \ A = A_{j,k} \mbox{ and } \pi = \pi_{j,k}. \] 
Before we can define $F$, we need to prove some preliminary estimates and define an auxiliary function.

\begin{lemma}
	Let $(X,t),(Y,s) \in \Sigma_k \cap 45B_{j,k}.$ Then,
	\begin{align}\label{e:siglip}
		\|\sigma_k(X,t)  - \sigma_k(Y,s) - (X,t) + (Y,s)\| \leq C(1 + C_1 + C_2)\ve \pd(X,t,Y,s).
	\end{align}
\end{lemma}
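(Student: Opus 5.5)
We expand $\sigma_k$ using \eqref{e:sigma_k}. Write $\psi_i(Z,u) \coloneqq \pi_{i,k}(Z,u) - (Z,u)$, so that
\begin{align}
\sigma_k(X,t) - \sigma_k(Y,s) - (X,t) + (Y,s) = \sum_{i \in J_k} \big[\theta_{i,k}(X,t) - \theta_{i,k}(Y,s)\big]\psi_i(X,t) + \sum_{i\in J_k} \theta_{i,k}(Y,s)\big[\psi_i(X,t) - \psi_i(Y,s)\big].
\end{align}
Only indices $i$ with $(X,t)$ or $(Y,s)$ in $10B_{i,k}$ contribute. By Lemma \ref{l:bounded-overlap} there are boundedly many of them, so it suffices to bound each summand.

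Before estimating, we reduce to the case $\pd(X,t,Y,s) \leq r_k$. If $\pd(X,t,Y,s) > r_k$, the triangle inequality and \eqref{e:sig-y}, applied at both points (which lie in $\Sigma_k$), give a bound $2C(1+C_1+C_2)\ve r_k \leq 2C(1+C_1+C_2)\ve\,\pd(X,t,Y,s)$, and we are done. So assume $\pd(X,t,Y,s)\le r_k$.

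For the first sum, \eqref{e:lip-theta} gives $|\theta_{i,k}(X,t)-\theta_{i,k}(Y,s)| \lesssim r_k^{-1}\pd(X,t,Y,s)$. The difference vanishes unless one of the points lies in $10B_{i,k}$, and then both lie in $11B_{i,k}$. Repeating the argument of \eqref{e:before}, we have $\|\psi_i(X,t)\| \le C(1+C_1+C_2)\ve r_k$. Here we use that $(X,t)\in 45B_{j,k}$ is on the graph of $A_{j,k}$ by \eqref{e:graphA}, together with \eqref{e:Anorm}, \eqref{e:Alip}, and the compatibility \eqref{e:c100rk} between $P_{i,k}$ and $P_{j,k}$. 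The latter holds because $\pd(X_{i,k},t_{i,k},X_{j,k},t_{j,k})\le 56 r_k\le 100r_k$. Multiplying the two bounds controls the first sum by $C(1+C_1+C_2)\ve\,\pd(X,t,Y,s)$.

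For the second sum, linearity of the projection gives $\psi_i(X,t)-\psi_i(Y,s) = -\pi_{i,k}^\perp\big((X,t)-(Y,s)\big)$, so we must bound $\|\pi^\perp_{i,k}(X-Y,t-s)\|$. Since $P_{i,k}$ is vertical, the time difference is killed, and the term equals $\|\pi^\perp_{i,k}(X-Y,0)\|$. We split this quantity through the graph structure. Set $(X',t)=\pi(X,t)$ and $(Y',s)=\pi(Y,s)$ with $\pi=\pi_{j,k}$; these lie in $P\cap 49B_{j,k}$. Then $(X,t)-(Y,s) = \big[(X',t)-(Y',s)\big] + \big[A(X',t)-A(Y',s)\big]$. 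The first bracket is a vector in $P'$, and by Lemma \ref{l:proj} (via Lemma \ref{l:angAB}, since $d_{100B_{j,k}}(P_{j,k},P_{i,k})\le\ve$) its $P_{i,k}^\perp$ component is $\lesssim \ve|X'-Y'|\le\ve\,\pd(X,t,Y,s)$. The second bracket has norm at most $C_2\ve\,\pd((X',t),(Y',s))\le C_2\ve\,\pd(X,t,Y,s)$ by \eqref{e:Alip}. Combining the two sums gives the claim with constant $C(1+C_1+C_2)$.

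The main obstacle is bookkeeping the parabolic norm. The spatial part is straightforward, but the time part of $\pd$ enters only through \eqref{e:lip-theta}, where it is paired with $\|\psi_i\|\lesssim\ve r_k$; this is why the reduction to $\pd\le r_k$ is needed. We also need $\pd(\pi(X,t),\pi(Y,s))\le \pd(X,t,Y,s)$. This holds because the projection fixes time and is Euclidean $1$-Lipschitz in space, which must be checked in the parabolic norm.
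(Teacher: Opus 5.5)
Your proposal is correct and follows the paper's proof essentially step for step. It uses the same add-and-subtract split into a $\theta$-difference sum, bounded by \eqref{e:lip-theta} and the estimate \eqref{e:before}, and a $\pi_{i,k}^\perp$ sum, bounded by writing both points on the graph of $A_{j,k}$ and applying Lemma \ref{l:proj} and \eqref{e:Alip}.

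Two small remarks. First, the reduction to $\pd(X,t,Y,s)\le r_k$ is not needed for the reason you give: \eqref{e:lip-theta} is global, and membership of both points in $45B_{j,k}$ already keeps every contributing centre near $(X_{j,k},t_{j,k})$. Second, your figure $56r_k$ bounds only the spatial separation of the centres; the time coordinate of the product balls adds roughly another $46r_k$, and the paper's own ``$\le 100r_k$'' has the same slack. This slip is harmless, and your reduction would let you sidestep it: when $\pd(X,t,Y,s)\le r_k$, both points lie in $11B_{i,k}$, hence on the graph of $A_{i,k}$ by \eqref{e:graphA} applied to the index $i$, so no comparison with $P_{j,k}$ is needed at all.
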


\begin{proof}
	Adding and subtracting $\sum_{i \in J_k} \theta_{i,k}(Y,s)[\pi_{i,k}(X,t) - (X,t)],$ and applying the triangle inequality, the left-hand side of \eqref{e:siglip} is at most 
	\begin{align*}
		S_1 + S_2 &\coloneqq\sum_{i \in J_k}|\theta_{i,k}(X,t) - \theta_{i,k}(Y,s)|  \|\pi_{i,k}(X,t) - (X,t)\| \\
		&\hspace{4em}+ \sum_{i \in J_k} |\theta_{i,k}(X,t)| \|\pi_{i,k}^\perp((X,t) - (Y,s))\| \\ 
	\end{align*}
	Since $(X,t),(Y,s) \in 45B_{j,k}$ and $(X_{j,k},t_{j,k}) = (0,0),$ if $i \in J_k$ is such that $\theta_{i,k}(X,t) \not=0$ or $\theta_{i,k}(Y,s) \not=0$ then $\|(X_{i,k},t_{i,k})\| \leq 100r_k.$ For such an $i$, Definition \ref{d:CCBP} \eqref{CCBP3} implies
	\begin{align}\label{e:PPik}
		d_{100B_{j,k}}(P,P_{i,k}) \leq \ve
	\end{align}
	so that
	\begin{align}
		\|\pi_{i,k}(X,t) - (X,t)\|  \leq C(1 + C_1 + C_2)\ve r_k
	\end{align}
	by exactly the same estimate as in \eqref{e:before}. Since the balls $10B_{i,k}$ have bounded overlap, using the estimates on $\theta_{j,k}$ from \eqref{e:lip-theta}, it follows that 
	\begin{align}
		S_1 \leq C(1 + C_1 + C_2)\ve \pd(X,t,Y,s).
	\end{align}
	We now estimate $S_2.$ By \eqref{e:graphA}, $\Sigma_k \cap 45B_{j,k}$ is contained in the graph of $A$ over $P \cap 49B_{j,k}$. In particular, $(X,t)= \pi(X,t) + A ( \pi(X,t) ),$ with a similar expression holding for $(Y,t)$. Again, if $\theta_{i,k}(X,t)\not=0$ then \eqref{e:PPik} holds, so that $d_{100B_{j,k}}(P,P_{i,k}') \lesssim \ve$ by Lemma \ref{l:angAB}, where $P_{i,k}'$ is the linear space parallel to $P_{i,k}$. We use these facts, with Lemma \ref{l:proj}, Lemma \ref{l:bounded-overlap}, and \eqref{e:Alip} to get
	\begin{align*}
		S_2 &=  \sum_{i \in J_k} | \theta_{i,k}(X,t) |  \| \pi_{i,k}^\perp\left[ \pi(X,t) + A( \pi(X,t) ) - \pi(Y,s) + A( \pi(Y,s) ) \right] \| \\
		&\leq \sum_{i \in J_k} |\theta_{i,k}(X,t)|\left( \| \pi_{i,k}^\perp [\pi(X-Y,t-s) ] \| + \|A(\pi(X,t)) -  A(\pi(Y,s))\| \right) \\
		&\leq \sum_{i \in J_k} |\theta_{i,k}(X,t)| \left( C\ve|X-Y|+ C_2\ve \pd(\pi(X,t),\pi(Y,s))\right) \\
		&\leq C(1 + C_2)\ve \pd(X,t,Y,s).
	\end{align*}
	This completes the proof. 
\end{proof}

For $(X,t) \in P \cap 41B_{j,k},$ define
\begin{align}\label{d:h}
	 h(X,t) = \pi[\sigma_k((X,t)+A(X,t))]. 
\end{align}

Since $\sigma_k$ and $A$ are both $C^2$ in space, the same is true of $h.$ Let $(X,t),(Y,s) \in P \cap 41B_{j,k}$ and write $(\tilde{X},t) = (X,t) + A(X,t)$ and $(\tilde{Y},s) = (Y,s) + A(Y,s).$ Unravelling the definition and applying \eqref{e:sig-y}, we have
\begin{align}\label{e:h-y}
	\|h(X,t) - (X,t)\|  = \| \pi (\sigma_k(\tilde{X},t)) - \pi(\tilde{X},t)\| + \|A(X,t)\| \leq C(1+C_1 + C_2) \ve r_k
\end{align}
Furthermore, by \eqref{e:Alip} and \eqref{e:siglip},
\begin{align}
	\begin{split}\label{e:liph}
	|\pd(h(X,t),h(Y,s)) - \pd(X,t,Y,s)|&\leq \|\sigma_k(\tilde{X},t) -\sigma_k(\tilde{Y},s) -  (\tilde{X},t) + (\tilde{Y},s) \| \\
    &+\|A(X,t)-A(Y,s)\|\\
	&\leq C(1+C_1+C_2) \ve \|(\tilde{X},t) - (\tilde{Y},s)\|\\
    &+\|A(X,t)-A(Y,s)\|\\
	&\leq C(1+C_1+C_2) \ve \pd(X,t,Y,s)
    \end{split}	
\end{align}
Inequality \eqref{e:liph} implies that $h : P \cap 41B_{j,k} \rightarrow U$ is bijective, where $U = h(P \cap 41B_{j,k}).$ Denote its inverse by $h^{-1}: U \rightarrow P \cap 41B_{j,k}.$ Then \eqref{e:h-y} and \eqref{e:liph} imply $h^{-1}$ is $\text{Lip}(1,1/2,1+C\ve)$ with
\begin{align}\label{e:h}
	\|h^{-1}(X,t) - (X,t)\| \lesssim \ve r_k. 
\end{align}
It now follows from Lemma \ref{l:topology} that 
\begin{align}\label{e:U2}
	U \supset P \cap 40B_{j,k}.
\end{align}
This allows us to define the function $F: P \cap 40B_{j,k} \rightarrow P^\perp$ by setting
\begin{align}\label{e:defF}
	 F(X,t) = \pi^\perp ( \sigma_k ( h^{-1}(X,t) + A(h^{-1}(X,t)))). 
\end{align}

We will now verify \eqref{e:Fnorm}-\eqref{e:graphF} for $F$. Let us start with \eqref{e:graphF}, which is the following.

\begin{lemma}
	Let $\Gamma_F$ be the graph of $F$ over $U$. We have
	\[\Sigma_{k+1} \cap D(0,0,P,40r_k) = \Gamma_F \cap D(0,0,P,40r_k). \] 
\end{lemma}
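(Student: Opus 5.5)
We prove the two inclusions separately, in the same way as Lemma \ref{l:graphA}. The basic identity is that, for $(X,t)\in U$, writing $(W,t)=h^{-1}(X,t)$ and $(\tilde W,t)=(W,t)+A(W,t)\in\Gamma_A$, the definitions \eqref{d:h} and \eqref{e:defF} give
\begin{align}
(X,t)+F(X,t) = \pi[\sigma_k(\tilde W,t)] + \pi^\perp[\sigma_k(\tilde W,t)] = \sigma_k(\tilde W,t),
\end{align}
where we use that $(0,0)\in P$, so $\pi$ is linear and $\pi+\pi^\perp=I$. Thus every point of $\Gamma_F$ over $U$ is of the form $\sigma_k(\tilde W,t)$ with $(\tilde W,t)\in\Gamma_A$ lying over $P\cap 41B_{j,k}$.

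For the forward inclusion $\Gamma_F\cap D(0,0,P,40r_k)\subseteq\Sigma_{k+1}$, take such a point. We have $(W,t)\in P\cap41B_{j,k}$ and $\|A(W,t)\|\lesssim \ve r_k$ by \eqref{e:Anorm} and \eqref{e:Alip}. Hence $(\tilde W,t)\in D(0,0,P,49r_k)$, so by \eqref{e:graphA} it lies in $\Sigma_k$. Therefore $\sigma_k(\tilde W,t)\in\sigma_k(\Sigma_k)=\Sigma_{k+1}$.

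For the reverse inclusion, take $(Z,t)\in\Sigma_{k+1}\cap D(0,0,P,40r_k)$ and write $(Z,t)=\sigma_k(\tilde W,t)$ with $(\tilde W,t)\in\Sigma_k$. The point $\tilde W$ could a priori lie anywhere in $\Sigma_k$, so we must localise it; this is the main obstacle. By \eqref{e:sig-y}, $\|(\tilde W,t)-(Z,t)\|\le C(1+C_1+C_2)\ve r_k$, and since $(Z,t)\in D(0,0,P,40r_k)$, for $\ve$ small (depending on $C_1,C_2$) we get $(\tilde W,t)\in D(0,0,P,41r_k)\subseteq D(0,0,P,49r_k)$. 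Note that $D(0,0,P,r)$ is comparable to $B_r$ up to the box shape, and the $P^\perp$-component of $\tilde W$ is at most $40r_k+C\ve r_k$.

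Now \eqref{e:graphA} gives $(\tilde W,t)=(W,t)+A(W,t)$ with $(W,t)=\pi(\tilde W,t)\in P\cap 41B_{j,k}$. Set $(X,t)=h(W,t)\in U$. By the identity above, $(X,t)+F(X,t)=\sigma_k(\tilde W,t)=(Z,t)$. Finally $(X,t)=\pi(Z,t)\in P\cap40B_{j,k}$ because $(Z,t)\in D(0,0,P,40r_k)$, so $(Z,t)$ lies on $\Gamma_F$ over the domain $P\cap40B_{j,k}$ of $F$, which sits inside $U$ by \eqref{e:U2}.

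The only delicate point is the containment bookkeeping, namely that the $C\ve r_k$ perturbations keep all points inside $41B_{j,k}$ and $49B_{j,k}$. This is ensured by taking $\ve$ small relative to $C_1,C_2$, which is allowed since $C_1,C_2$ are fixed before $\ve_0$.
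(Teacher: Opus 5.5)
Your proposal is correct and follows the paper's proof essentially step for step: you use the same identity $(X,t)+F(X,t)=\sigma_k\bigl(h^{-1}(X,t)+A(h^{-1}(X,t))\bigr)$ coming from \eqref{d:h} and \eqref{e:defF}, the same forward inclusion via \eqref{e:graphA} and $\sigma_k(\Sigma_k)=\Sigma_{k+1}$, and the same reverse inclusion (localise the preimage with \eqref{e:sig-y}, apply \eqref{e:graphA}, then pull back through $h$). You are in fact slightly more careful than the paper, since you check explicitly that $h^{-1}(X,t)+A(h^{-1}(X,t))$ lies in $D(0,0,P,49r_k)$ before invoking \eqref{e:graphA}.
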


\begin{proof}
	Let $(X,t) \in \Gamma_F \cap D(0,0,P,40r_k)$. By \eqref{e:U2} and the fact that $\Gamma_F$ is the graph of $F$ over $U$, there exists $(Y,t) \in P \cap 40B_{j,k}$ such that $(X,t) =  (Y,t) + F(Y,t)$. Set $(Z,t) = \sigma_k(h^{-1}(Y,t) + A(h^{-1}(Y,t)))$. By \eqref{e:h-y} and $(Y,t) \in P \cap 40B_{j,k}$, we have $h^{-1}(Y,t) \in P \cap 41B_{j,k}$. Thus, by \eqref{e:graphA}, $h^{-1}(Y,t) + A(h^{-1}(Y,t)) \in \Sigma_{k}$. It follows that $(Z,t) \in \Sigma_{k+1}$. Then, by \eqref{d:h} and \eqref{e:defF}, 
	\begin{align}
		(X,t) = h(h^{-1}(Y,t)) + F(Y,t) = \pi(Z,t) + \pi^\perp(Z,t) = (Z,t) \in \Sigma_{k+1}.
	\end{align} 
	
	We are left to show the forward inclusion.	Let $(X,t) \in \Sigma_{k+1} \cap D(0,0,P,40r_k)$ and let $(Y,t) \in \Sigma_k$ such that $(X,t) = \sigma_{k}(Y,t).$ By \eqref{e:sig-y}, we have $\pd(X,t,Y,t) \lesssim \ve r_k$, so that $\pi(Y,t) \in P \cap 41B_{j,k}.$ By \eqref{e:graphA}, $\Sigma_k \cap D(0,0,P,41r_k)$ is contained in the graph of $A$ over $P \cap 41B_{j,k}$, hence, $(Y,t) = \pi(Y,t) + A(\pi(Y,t))$ and 
	\[ (X,t) = \sigma_k(Y,t) = \sigma_k( \pi(Y,t) + A(\pi(Y,t))). \] 
	Since $h^{-1} : U \rightarrow 41B_{j,k}$ is bijective there exists $(Z,t) \in U$ such that $h^{-1}(Z,t) = \pi(Y,t).$ Then, by the definitions of $h$ and $F$ (see \eqref{d:h} and \eqref{e:defF}), we have
	\begin{align*} 
		(X,t) &= \sigma_k( h^{-1}(Z,t) + A(h^{-1}(Z,t))) \\
		&= \pi( \sigma_k( h^{-1}(Z,t) + A(h^{-1}(Z,t)))  ) + \pi^\perp ( \sigma_k( h^{-1}(Z,t) + A(h^{-1}(Z,t)))  ) \\
		&= h(h^{-1}(Z,t)) + F(Z,t) = (Z,t) + F(Z,t).
	\end{align*}
	This finishes the proof. 
\end{proof}

We now focus on proving \eqref{e:Fnorm}-\eqref{e:Flip1} for $F.$ Let us start with \eqref{e:Flip}. 
\begin{lemma}
	We have $\|F(X,t) - F(Y,s)\| \leq C_4 \ve \pd(X,t,Y,s)$ for all $(X,t),(Y,s) \in P \cap 40B_{j,k}.$ 
\end{lemma}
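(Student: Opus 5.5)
We need to prove that $F$ is $\mathrm{Lip}(1,1/2)$ with constant $C_4\ve$ on $P\cap 40B_{j,k}$, where $F = \pi^\perp\circ\sigma_k\circ(\mathrm{id}+A)\circ h^{-1}$. The plan is to reduce everything to the near-identity estimates already proved for $\sigma_k$, $A$ and $h^{-1}$. Fix $(X,t),(Y,s)\in P\cap 40B_{j,k}$ and set
\[
(\hat X,t)=h^{-1}(X,t),\qquad (\hat Y,s)=h^{-1}(Y,s).
\]
By \eqref{e:h} these points lie in $P\cap 41B_{j,k}$. Put
\[
(\tilde X,t)=(\hat X,t)+A(\hat X,t),\qquad (\tilde Y,s)=(\hat Y,s)+A(\hat Y,s).
\]
By \eqref{e:graphA} and \eqref{e:Anorm}, these lie in $\Sigma_k\cap 45B_{j,k}$ once $\ve$ is small.

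The key step is to use that $\pi^\perp$ annihilates the tangential directions of $P$. Since $P$ is linear and $\pi^\perp$ is linear, we split
\[
F(X,t)-F(Y,s)=\pi^\perp\big[\sigma_k(\tilde X,t)-\sigma_k(\tilde Y,s)-(\tilde X,t)+(\tilde Y,s)\big]+\pi^\perp\big[(\tilde X,t)-(\tilde Y,s)\big].
\]
The first bracket is bounded by \eqref{e:siglip}, giving at most $C(1+C_1+C_2)\ve\,\pd(\tilde X,t,\tilde Y,s)$. For the second term, note that $\pi^\perp$ kills $(\hat X-\hat Y,t-s)\in P$. It therefore equals $A(\hat X,t)-A(\hat Y,s)$, whose norm is at most $C_2\ve\,\pd(\hat X,t,\hat Y,s)$ by \eqref{e:Alip}.

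It then remains to compare distances. By \eqref{e:Alip} we have $\pd(\tilde X,t,\tilde Y,s)\le(1+C_2\ve)\,\pd(\hat X,t,\hat Y,s)$. Since $h^{-1}$ is $\mathrm{Lip}(1,1/2,1+C\ve)$ (from \eqref{e:liph}), we also get $\pd(\hat X,t,\hat Y,s)\le 2\,\pd(X,t,Y,s)$ for $\ve$ small. Combining these gives
\[
\|F(X,t)-F(Y,s)\|\le C(1+C_1+C_2)\ve\,\pd(X,t,Y,s),
\]
so we choose $C_4\ge C(1+C_1+C_2)$.

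The main point requiring care is the bookkeeping of constants so that the induction closes. The constant $C_4$ here may depend on $C_1$ and $C_2$, which is harmless because Stage 1 only uses $C_4$ through smallness conditions on $\ve$ and through \eqref{e:f-norm}, never in the definitions of $C_1$ and $C_2$. Those are chosen via $C_5$ and $C_3$. So the independent estimate on $7B_{j,k}$ (i.e. \eqref{e:Flip1}), with $C_5$ not depending on $C_1,C_2$, is the delicate part, and it is not needed for this lemma.

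One should also check that all points stay inside the balls where \eqref{e:siglip} and \eqref{e:graphA} apply. This follows from \eqref{e:h}, \eqref{e:Anorm} and \eqref{e:Alip}, which give displacements of order $\ve r_k$, for $\ve$ small in terms of the previously fixed constants.
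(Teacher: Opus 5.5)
Your proposal is correct and follows the paper's proof essentially line by line. You use the same splitting of $F(X,t)-F(Y,s)$ through $\pi^\perp$, bound the first piece by \eqref{e:siglip}, and identify the second piece with $A(h^{-1}(X,t))-A(h^{-1}(Y,s))$; you then compare distances via \eqref{e:Alip} and the Lipschitz bound for $h^{-1}$ coming from \eqref{e:liph}. Your extra checks that the points stay where \eqref{e:siglip} and \eqref{e:graphA} apply, and that $C_4$ enters Stage 1 only through smallness conditions on $\ve$, are accurate details the paper leaves implicit.
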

\begin{proof}
	Let $(X,t),(Y,s) \in P \cap 40B_{j,k}.$ To simplify notation we will write
	\[ (\tilde{X},t) = h^{-1}(X,t) + A(h^{-1}(X,t)), \]
	with $(\tilde{Y},s)$ defined analogously. First, by the definition of $F$ and  \eqref{e:siglip}, we have 
	\begin{align*}
		\|F(X,t) - F(Y,s)\| &= \|\pi^\perp(\sigma_k(\tilde{X},t)) -\pi^\perp(\sigma_k(\tilde{Y},s))\| \\
		&\leq\|\pi^\perp ( \sigma_k(\tilde{X},t) - \sigma_k(\tilde{Y},s)  - (\tilde{X},t) + (\tilde{Y},s))\| \\
		&\hspace{2em} + \|\pi^\perp ( (\tilde{X},t) - (\tilde{Y},s))\| \\
		&\leq C(1+C_1+C_2) \ve \pd(\tilde{X},t,\tilde{Y},s) + \|\pi^\perp ( (\tilde{X},t) - (\tilde{Y},s))\| 
	\end{align*}
We estimate $\pd(\tilde{X},t,\tilde{Y},s)$ using \eqref{e:Alip} and \eqref{e:liph} to get 
\begin{align*}
	\pd(\tilde{X},t,\tilde{Y},s) &= \|(\tilde{X},t) - (\tilde{Y},s)\| \\
	&\leq \|h^{-1}(X,t) - h^{-1}(Y,s)\| + \|A(h^{-1}(X,t)) - A(h^{-1}(Y,s))\| \\
	&\leq (1+C_2 \ve) \|h^{-1}(X,t) - h^{-1}(Y,s)\| \\
	&\leq C(1+C_2\ve) \pd(X,t,Y,s).
\end{align*}
Since $P$ is a linear subspace, $h^{-1}(X,t) \in P$ and $A(h^{-1}(X,t)) \in P^\perp$, it follows that $\pi^\perp(\tilde{X},t) = A(h^{-1}(X,t)).$ Similarly, $\pi^\perp(\tilde{Y},s) = A(h^{-1}(Y,s)).$ Using this with the estimates above, we have  
\begin{align*}
	\|\pi^\perp ( (\tilde{X},t) - (\tilde{Y},s))\| = \|A(h^{-1}(X,t)) - A(h^{-1}(Y,s))\| \leq CC_2\ve \pd(X,t,Y,s).
\end{align*}
Putting everything together gives
	\begin{align*}
		\|F(X,t) - F(Y,s)\| \leq C(1+C_1+C_2)\ve \pd(X,t,Y,s).
	\end{align*}
	If $C_4$ is chosen such that 
	\[ C_4 \geq C(C_1 + C_2 +1) \]
	then \eqref{e:Flip} follows.
\end{proof}

Before we can prove \eqref{e:Fnorm} and \eqref{e:Flip1} we need two preliminary results, the first of which is Theorem \ref{t:DT} \eqref{DT9}.

\begin{lemma}\label{l:sig-pi}
	Let $k \geq 0,$ suppose $(X,t) \in \Sigma_k \cap V_k^8$, and let $i \in J_k$ be such that $(X,t) \in 10B_{i,k}.$ Then,
	\[ \|\sigma_k(X,t) - \pi_{i,k}(X,t)\| \leq  C \ve r_k.\]
\end{lemma}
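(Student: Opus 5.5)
The plan is to use the partition of unity to write $\sigma_k(X,t)-\pi_{i,k}(X,t)$ as a convex combination of differences of projections. Since $(X,t)\in V_k^8\subseteq V_k^9$, \eqref{e:=1} gives $\sum_{l\in J_k}\theta_{l,k}(X,t)=1$. Hence
\begin{align}
\sigma_k(X,t)-\pi_{i,k}(X,t) = \sum_{l \in J_k}\theta_{l,k}(X,t)\,\big[\pi_{l,k}(X,t)-\pi_{i,k}(X,t)\big].
\end{align}
By Lemma \ref{l:bounded-overlap} and $0\le\theta_{l,k}\le1$, it suffices to show $\|\pi_{l,k}(X,t)-\pi_{i,k}(X,t)\|\lesssim \ve r_k$ for every $l$ with $\theta_{l,k}(X,t)\neq0$, i.e.\ with $(X,t)\in 10B_{l,k}$.

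For such $l$, both $(X_{l,k},t_{l,k})$ and $(X_{i,k},t_{i,k})$ lie within $10r_k$ of $(X,t)$, so their distance is at most $20r_k\le 100r_k$. Definition \ref{d:CCBP} \eqref{CCBP3} then gives $d_{100B_{i,k}}(P_{i,k},P_{l,k})\le\ve$. The key step is then a purely geometric estimate: if two vertical planes are $\ve$-close in $100B_{i,k}$ and a point $(X,t)$ lies in $10B_{i,k}$, then their orthogonal projections of $(X,t)$ differ by $\lesssim\ve r_k$.

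To prove this, I would write $\pi_{i,k}(X,t)=:(Z,t)\in P_{i,k}\cap 20B_{i,k}$, using that $(X_{i,k},t_{i,k})\in P_{i,k}$ and that projection is $1$-Lipschitz and time-preserving. The triangle inequality gives
\begin{align}
\|\pi_{l,k}(X,t)-\pi_{i,k}(X,t)\|\le \|\pi_{l,k}(X,t)-\pi_{l,k}(Z,t)\|+\|\pi_{l,k}(Z,t)-(Z,t)\|.
\end{align}
The second term is $\pdist((Z,t),P_{l,k})\le 50\ve r_k$ by the closeness in $100B_{i,k}$. For the first term, note that $(X,t)-(Z,t)=\pi_{i,k}^\perp(X,t)$ is a vector in $P_{i,k}^\perp$ with no time component. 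I would apply $\pi'_{l,k}$ to it and use Lemma \ref{l:angAB}, which gives $\angle(P_{i,k},P_{l,k})\lesssim\ve$: a vector orthogonal to $P'_{i,k}$ has component in $P'_{l,k}$ of size $\lesssim\ve$ times its length. Its length is $\|\pi^\perp_{i,k}(X,t)\|$, which is at most $20r_k$ crudely.

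The step I expect to be the main obstacle is sharpening this to get $\ve r_k$ rather than $r_k$. The crude bound only yields $\ve\cdot 20 r_k$, which is already the claim, so it is in fact sufficient. The more delicate issue is justifying the angle-projection inequality $\|\pi'_{l,k}v\|\lesssim\angle(P_{i,k},P_{l,k})\,|v|$ for $v\perp P'_{i,k}$. This follows by symmetry of $d_{B_1}$ for linear subspaces: for a unit $w\in P'_{l,k}$, $|\langle v,w\rangle|=|\langle v,w-\pi'_{i,k}w\rangle|\le|v|\,\dist(w,P'_{i,k})\lesssim\ve|v|$.

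Alternatively, one can avoid this by using \eqref{e:before}: $\|\pi_{l,k}(X,t)-(X,t)\|$ and $\|\pi_{i,k}(X,t)-(X,t)\|$ are both $\le C(1+C_1+C_2)\ve r_k$ since $(X,t)\in\Sigma_k$, and the triangle inequality concludes. However, that route yields a constant depending on $C_1,C_2$. If the later induction requires a constant $C$ independent of them, as is needed for \eqref{e:Flip1}, I would use the purely geometric argument above, which uses only the CCBP conditions and gives a dimensional constant.
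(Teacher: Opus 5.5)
Your proposal is correct and follows the paper's own proof: use $\sum_{l}\theta_{l,k}(X,t)=1$ on $V_k^9$, apply Definition \ref{d:CCBP} \eqref{CCBP3} to each $l$ with $\theta_{l,k}(X,t)\neq 0$, and finish with bounded overlap. Your geometric justification of $\|\pi_{l,k}(X,t)-\pi_{i,k}(X,t)\|\lesssim \ve r_k$, which the paper asserts without detail, is sound and gives a purely dimensional constant, so you are right to prefer it to the route through \eqref{e:before}; one harmless slip is that, since $10B_{l,k}$ is a parabolic box, the centres are only guaranteed to be within $40r_k$ (not $20r_k$) in $\pd$, which is still below $100r_k$.
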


\begin{proof}
	Since $(X,t) \in \Sigma_k \cap V_k^8$, \eqref{e:=1} implies
	\[\sum_{j \in J_k} \theta_{j,k}(X,t) = 1.\]
	Hence,
	\begin{align*}
		\begin{split}
			\|\sigma_k(X,t) - \pi_{i,k}(X,t)\| &= \| (X,t) + \sum_{j \in J_k} \theta_{j,k}(X,t)[\pi_{j,k}(X,t)-(X,t)] - \pi_{i,k}(X,t) \|  \\
			&= \| \sum_{j \in J_k} \theta_{j,k}(X,t)(\pi_{j,k}(X,t) - \pi_{i,k}(X,t)) \| \\
			&\leq \sum_{j \in J_k} |\theta_{j,k}(X,t)|   \|\pi_{j,k}(X,t) - \pi_{i,k}(X,t)\|. 
		\end{split}
	\end{align*}
	Since $\supp \theta_{j,k} \subseteq 10B_{j,k}$, if $j \in J_k$ is such that $\theta_{j,k}(X,t) \not=0$, then $(X,t) \in 10B_{j,k}$. Since $(X,t) \in 10B_{i,k},$ we have $\pd(X_{j,k},t_{j,k},X_{i,k},t_{i,k}) \leq 40r_k.$ Then, Definition \ref{d:CCBP} \eqref{CCBP3} implies
	\begin{align}\label{e:sig-pi1}
		d_{100B_{i,k}}(P_{i,k},P_{j,k}) \leq \ve
	\end{align}
	so that
	\begin{align}\label{e:sig-pi2} 
		\|\pi_{j,k}(X,t) - \pi_{i,k}(X,t)\| \leq C\ve r_k.
	\end{align}
	Finally, by Lemma \ref{l:bounded-overlap}, we get
	\[ \|\sigma_k(X,t) - \pi_{i,k}(X,t)\| \leq \sum_{j \in J_k} |\theta_{j,k}(X,t)| C\ve r_k \leq C \ve r_k, \]
	as required. 
\end{proof}

\begin{lemma}\label{l:sig-pi-lip}
	Let $k \geq 0$, $(X,t),(Y,s)\in \Sigma_k \cap V_k^8$, and let $i \in J_k$ be such that $(X,t),(Y,s) \in 10B_{i,k}.$ Then, 
	\begin{align}\label{e:sig-pi-lip}
		\|\sigma_k(X,t) - \sigma_k(Y,s) - \pi_{i,k}(X,t) + \pi_{i,k}(Y,s)\| \lesssim \ve \pd(X,t,Y,s).  
	\end{align}
\end{lemma}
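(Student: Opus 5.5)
Since $(X,t),(Y,s)\in\Sigma_k\cap V_k^8$, \eqref{e:=1} gives $\sum_{j}\theta_{j,k}=1$ at both points. We can therefore write, exactly as in the proof of Lemma \ref{l:sig-pi},
\begin{align}
\sigma_k(X,t)-\pi_{i,k}(X,t)=\sum_{j\in J_k}\theta_{j,k}(X,t)\,[\pi_{j,k}(X,t)-\pi_{i,k}(X,t)],
\end{align}
and similarly at $(Y,s)$. Subtracting and adding $\sum_j\theta_{j,k}(Y,s)[\pi_{j,k}(X,t)-\pi_{i,k}(X,t)]$ splits the left-hand side of \eqref{e:sig-pi-lip} into
\begin{align}
T_1&=\sum_{j}\bigl(\theta_{j,k}(X,t)-\theta_{j,k}(Y,s)\bigr)[\pi_{j,k}(X,t)-\pi_{i,k}(X,t)],\\
T_2&=\sum_j\theta_{j,k}(Y,s)\bigl[(\pi_{j,k}-\pi_{i,k})(X,t)-(\pi_{j,k}-\pi_{i,k})(Y,s)\bigr].
\end{align}
Only indices $j$ with $(X,t)$ or $(Y,s)$ in $10B_{j,k}$ contribute. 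For such $j$ we have $\pd(X_{j,k},t_{j,k},X_{i,k},t_{i,k})\le 40r_k$, so Definition \ref{d:CCBP} \eqref{CCBP3} gives $d_{100B_{i,k}}(P_{i,k},P_{j,k})\le\ve$. The number of such $j$ is bounded by Lemma \ref{l:bounded-overlap}.

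For $T_1$, we use \eqref{e:lip-theta} together with \eqref{e:sig-pi2}. The estimate \eqref{e:sig-pi2} applies since $(X,t)\in 10B_{i,k}$ is at distance $\lesssim r_k$ from both planes' base points. This gives $|T_1|\lesssim r_k^{-1}\pd(X,t,Y,s)\cdot\ve r_k=\ve\,\pd(X,t,Y,s)$.

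For $T_2$, we write $\pi_{j,k}=\pi'_{j,k}+c_j$ with $\pi'_{j,k}$ linear, and likewise for $i$. The translation parts cancel in the difference, so the bracket equals $(\pi'_{j,k}-\pi'_{i,k})(X-Y,t-s)$. The $t$-coordinate of this vector is killed, because both projections act identically on the time direction (the planes are vertical). Hence it equals $(\pi'_{j,k}-\pi'_{i,k})(X-Y,0)$. By Lemma \ref{l:angAB}, $\angle(P_{i,k},P_{j,k})\lesssim\ve$, and for linear vertical subspaces at angle $\lesssim\ve$ the operator norm of $\pi'_{j,k}-\pi'_{i,k}$ on spatial vectors is $\lesssim\ve$. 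This is the step where a little care is needed: one has to pass from the Walkup–Wets distance on the unit ball to an operator-norm bound. It follows from the standard fact that $\|\pi'_{P}-\pi'_{Q}\|\lesssim d_{B_1}(P,Q)$ for linear subspaces of equal dimension, which can be applied in the spatial factor $\R^\adim$. Thus $|T_2|\lesssim\ve|X-Y|\le\ve\,\pd(X,t,Y,s)$.

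Summing with bounded overlap gives \eqref{e:sig-pi-lip}. Note that the hypothesis $(X,t),(Y,s)\in\Sigma_k$ is not really used beyond ensuring we are inside $V_k^8$; the main (mild) obstacle is the linear-algebra comparison of projections in $T_2$.
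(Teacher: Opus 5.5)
Your proof is correct and follows the paper's argument. The paper also uses $\sum_j\theta_{j,k}=1$ at both points and adds and subtracts a cross term (you center it at $(X,t)$, the paper at $(Y,s)$, which makes no difference). It then bounds the two pieces by \eqref{e:lip-theta} combined with \eqref{e:sig-pi2}, and by the small angle between $P'_{i,k}$ and $P'_{j,k}$. The only cosmetic difference is that you bound $\|\pi'_{j,k}-\pi'_{i,k}\|$ directly as an operator norm, whereas the paper goes through $\pi'_{i,k}\pi'_{j,k}$ and Lemma \ref{l:angAB2}.
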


\begin{proof}
	Since $(X,t),(Y,s) \in V_k^8,$ we have $\sum_{j \in J_k} \theta_{j,k}(X,t) = 1$ by \eqref{e:=1}, with the analogous statement holding at the point $(Y,s).$ For $j \in J_k$ let $\pi_{j,k}'$ denote the orthogonal projection onto $P_{j,k}'.$ Adding and subtracting
	\[ \sum_{j \in J_k} \theta_{j,k}(X,t) ( \pi_{j,k}(Y,s) - \pi_{i,k}(Y,s))\]
	and applying the triangle inequality, the left-hand side of \eqref{e:sig-pi-lip} is at most  
	\begin{align*}
		&\sum_{j \in J_k} |\theta_{j,k}(X,t)| \| \pi_{j,k}'(X-Y,t-s) - \pi_{i,k}'(X-Y,t-s)\|  \\
		&\hspace{4em} +  \sum_{j \in J_k} |\theta_{j,k}(X,t) - \theta_{j,k}(Y,s)| \|\pi_{j,k}(Y,s) - \pi_{i,k}(Y,s)\| \\
		&= S_1 + S_2.
	\end{align*}
	We start by estimating $S_1.$ For each $j \in J_k$ such that $\theta_{j,k}(X,t) \not= 0,$ we know both \eqref{e:sig-pi1} and \eqref{e:sig-pi2} hold. In particular, using \eqref{e:sig-pi1} along with \eqref{e:d_B2} gives
	\begin{align}\label{e:small-angle}
		d_{B_{100r_k}(0,0)}(P_{i,k}',P_{j,k}') \lesssim \ve
	\end{align}
	Let $S_{1,1} = \| \pi_{j,k}'(X-Y,t-s) - \pi_{i,k}'(X-Y,t-s)\|.$ Then, by \eqref{e:small-angle} and Lemma \ref{l:angAB2},
	\begin{align*}
		S_{1,1} &\leq \|\pi_{j,k}'(X-Y,t-s) - \pi_{i,k}'(\pi_{j,k}'(X-Y,t-s)) \| \\
		&\hspace{4em}+ \|\pi_{i,k}'(\pi_{j,k}'(X-Y,t-s)) - \pi_{i,k}'(X-Y,t-s)\| \\
		&\lesssim \ve |X-Y| \leq \ve \pd(X,t,Y,s).
	\end{align*}
	Then, by Lemma \ref{l:bounded-overlap}, we have
	\[S_1 \lesssim \ve \pd(X,t,Y,s).\] 
	To estimate $S_2,$ we apply \eqref{e:lip-theta} and \eqref{e:sig-pi2} to get
	\begin{align*}
		|\theta_{j,k}(X,t) - \theta_{j,k}(Y,s)|  \|\pi_{j,k}(X,t) - \pi_{i,k}(Y,s)\| &\lesssim r_k^{-1}\pd(X,t,Y,s) \ve r_k,
	\end{align*}
	then Lemma \ref{l:bounded-overlap} implies
	\[S_2 \lesssim \ve \pd(X,t,Y,s).\]	
\end{proof}

Now for \eqref{e:Fnorm} and \eqref{e:Flip1}. We will be able to choose the constants $C_3$ and $C_5$ independent of all other constants $C_i$ and this closes the induction.
\begin{lemma}
	We have $\|F(X,t)\| \leq C_3 \ve r_k$ for all $(X,t) \in P \cap 40B_{j,k}.$ In particular, \eqref{e:Fnorm} holds.
\end{lemma}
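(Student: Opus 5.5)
Fix $(X,t) \in P \cap 40B_{j,k}$ and set $(Y,t) = h^{-1}(X,t) \in P \cap 41B_{j,k}$ and $(\tilde X,t) = (Y,t) + A(Y,t)$. By \eqref{e:graphA}, $(\tilde X,t) \in \Sigma_k$. By \eqref{e:defF}, $F(X,t) = \pi^\perp(\sigma_k(\tilde X,t))$, so it suffices to show $\pdist(\sigma_k(\tilde X,t),P) \le C\ve r_k$ with $C$ depending only on $\adim,\idim$ and the CCBP structure, and not on $C_1,\dots,C_5$. The naive bound would use \eqref{e:sig-y} and \eqref{e:Anorm}, which gives $C(1+C_1+C_2)\ve r_k$. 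That is exactly what we must avoid, since $C_1$ in turn depends on $C_3$. So the plan is to use the structure of $\sigma_k$ directly, as in Lemma \ref{l:sig-pi}, rather than the inductive graph bounds.

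I split into two cases according to whether $(\tilde X,t)$ lies in $V_k^8$.

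\emph{Case 1: $(\tilde X,t) \in V_k^8$.} Choose $i \in J_k$ with $(\tilde X,t) \in 10B_{i,k}$. Lemma \ref{l:sig-pi} gives $\|\sigma_k(\tilde X,t) - \pi_{i,k}(\tilde X,t)\| \le C\ve r_k$ with a universal constant. Its proof uses only \eqref{e:=1}, \eqref{CCBP3} and bounded overlap, so it is free of the $C_i$. Next, $(\tilde X,t)$ lies within $C\ve r_k + 41 r_k$ of $(0,0)$, so $\|(X_{i,k},t_{i,k})\| \le 100 r_k$ once $\ve$ is small; the only $C_i$-dependence here is absorbed by smallness of $\ve$. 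Hence \eqref{CCBP3} gives $d_{100B_{j,k}}(P,P_{i,k}) \le \ve$. Since $\pi_{i,k}(\tilde X,t) \in P_{i,k}$ lies in $100B_{j,k}$, we get $\pdist(\pi_{i,k}(\tilde X,t),P) \le 100\ve r_k$. Combining,
\[
\|F(X,t)\| \le \pdist(\sigma_k(\tilde X,t),P) \le C\ve r_k .
\]

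\emph{Case 2: $(\tilde X,t) \notin V_k^8$.} I expect this case to be the main obstacle, because here $\sigma_k$ is a genuine partial average and Lemma \ref{l:sig-pi} is unavailable. My plan is to avoid invoking the $A$-bounds by estimating $\pi^\perp(\tilde X,t)$ differently. Write
\[
\sigma_k(\tilde X,t) = (\tilde X,t) + \sum_i \theta_{i,k}\,[\pi_{i,k}(\tilde X,t) - (\tilde X,t)],
\]
where only boundedly many $i$ contribute, each with $d_{100B_{j,k}}(P,P_{i,k})\le\ve$. The key point to establish is that $(\tilde X,t)$ itself is within a universal $C\ve r_k$ of $P$ in this region. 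I would get this from the construction history rather than from $C_1$: a point of $\Sigma_k$ outside $V_k^8$ has, by \eqref{DT4}-type reasoning, been fixed by $\sigma_k$. Tracing back through the previous generation, I would show it lies in $\Sigma_{k-1}$ near some $10B_{i',k-1}$ and is within $C\ve r_{k-1}$ of $P_{i',k-1}$ by Lemma \ref{l:sig-pi} applied at level $k-1$. Here \eqref{CCBP1} and \eqref{CCBP4} transfer closeness from $P_{i',k-1}$ to $P$ with universal constants. If the point was never moved since generation $0$, \eqref{CCBP2} handles it directly.

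Once $\pdist((\tilde X,t),P)\le C\ve r_k$ is known universally, each term $\pi_{i,k}(\tilde X,t)$ is also $C\ve r_k$-close to $P$. Then $\sigma_k(\tilde X,t)$, being a convex-type combination with total weight at most one, is within $C\ve r_k$ of $P$ as well. Setting $C_3$ equal to the larger of the two universal constants closes the induction. Evaluating at $(X,t)=(0,0)$ gives \eqref{e:Fnorm}.
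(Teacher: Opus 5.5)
Your argument is correct and takes a different route from the paper, and yours is the sounder one. The paper's proof is exactly the chain you call naive. It bounds $\|(\tilde X,t)-(X,t)\|$ via \eqref{e:Anorm}, \eqref{e:Alip}, \eqref{e:h}, adds \eqref{e:sig-y}, obtains $\|F(X,t)\|\le C(1+C_1+C_2)\ve r_k$, and then sets $C_3\ge C(1+C_1+C_2)$. Since Stage 1 requires $C_1\ge C(1+C_3+C_5)$, this is circular, contrary to the paper's own remark that $C_3$ is independent of the other constants. Your diagnosis is therefore right.

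Your Case 1, which combines Lemma~\ref{l:sig-pi} with condition~\eqref{CCBP3} of Definition~\ref{d:CCBP}, gives a constant depending only on $\adim,\idim$. It covers every $(X,t)\in P\cap 7B_{j,k}$: for such points $(\tilde X,t)$ lies within $C(1+C_1+C_2)\ve r_k$ of $(X,t)$, hence in $8B_{j,k}\subseteq V_k^8$ once $\ve$ is small. In particular it gives \eqref{e:Fnorm} at the centre, which is the only form of the bound that Stage 1 ever uses. So Case 1 alone already closes the induction. The paper's chain is shorter, but its constant depends on $C_1,C_2$; yours delivers the independence the induction actually needs.

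Case 2 is only a plan, and it needs two corrections.
\begin{enumerate}
\item Points of $V_k^{10}\setminus V_k^8$ are not fixed by $\sigma_k$; only points outside $V_k^{10}$ are. You never use this, so it is harmless.
\item A multi-generation trace-back would fail. If the point were last moved at a level $\ell<k-1$, Lemma~\ref{l:sig-pi} would only give closeness $C\ve r_\ell$, and in the ``never moved'' case \eqref{CCBP2} only gives $C\ve r_0$. Both are too weak.
\end{enumerate}

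What saves the plan is that one step always suffices when $k\ge1$:
\begin{itemize}
\item By \eqref{CCBP1}, pick $i\in J_{k-1}$ with $(X_{j,k},t_{j,k})\in 2B_{i,k-1}$. Every point of $41B_{j,k}$ is then within spatial distance $61r_k$ and temporal distance $2081r_k^2$ of $(X_{i,k-1},t_{i,k-1})$. Since $8B_{i,k-1}$ has radii $80r_k$ and $6400r_k^2$, the ball $41B_{j,k}$ and a small neighbourhood of it lie in $8B_{i,k-1}$.
\item By \eqref{e:sig-y} at level $k-1$, the $\sigma_{k-1}$-preimage $W\in\Sigma_{k-1}$ of $(\tilde X,t)$ lies within $C(1+C_1+C_2)\ve r_{k-1}$ of it. Hence $W\in V_{k-1}^8\cap 10B_{i,k-1}$.
\item Lemma~\ref{l:sig-pi} at level $k-1$ then gives $\|(\tilde X,t)-\pi_{i,k-1}(W)\|\le C\ve r_{k-1}$.
\item Since $\pi_{i,k-1}(W)\in P_{i,k-1}\cap 20B_{i,k-1}$, condition \eqref{CCBP4} yields $\pdist((\tilde X,t),P)\le C\ve r_k$ with a universal $C$.
\end{itemize}
For $k=0$, use $\Sigma_0=P_0$ and \eqref{CCBP2} directly.

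With this in hand, your convexity step works at every point. It uses that $\pi^\perp$ is linear, $\theta_{i,k}\ge0$, $\sum_i\theta_{i,k}\le 1$, and each relevant $\pi_{i,k}(\tilde X,t)$ lies within $50\ve r_k$ of $P$ by \eqref{CCBP3}. The case split is then unnecessary, and the full statement on $P\cap 40B_{j,k}$ holds with a $C_3$ depending only on $\adim,\idim$.
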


\begin{proof}
	Let $(X,t) \in P \cap 40B_{j,k}$ and let $(\tilde{X},t) = h^{-1}(X,t) +A(h^{-1}(X,t)).$  By \eqref{e:h}, we have $h^{-1}(X,t) \in P\cap41B_{j,k}.$  So, by \eqref{e:Anorm}, \eqref{e:Alip}, \eqref{e:h} and the triangle inequality, we have 
	\begin{align}
		\begin{split}\label{e:above}
		\|(\tilde{X},t) - (X,t)\| &\leq \|h^{-1}(X,t) - (X,t)\| + \|A(X_{j,k},t_{j,k})\| \\
		&\hspace{4em} + \|A(h^{-1}(X,t)) - A(X_{j,k},t_{j,k})\| \\
		&\leq C(1 + C_1 + C_2)\ve r_k.
		\end{split}
	\end{align}
	Combining this with \eqref{e:sig-y} gives
	\begin{align*}
		\|\sigma_k(\tilde{X},t) - (X,t)| &\leq \|\sigma_k(\tilde{X},t) - (\tilde{X},t)\| + \|(\tilde{X},t) -(X,t)\|  \leq C(1+C_1 + C_2)\ve r_k
	\end{align*}
	so that $\sigma_k(\tilde X,t) \in 41B_{j,k}$ for $\ve$ small enough. By Lemma \ref{l:sig-pi}, this implies
	\begin{align*}
		&\|F(X,t)\| = \pd(\sigma_k(\tilde X,t),P) =  \| \sigma_k(\tilde X,t) - \pi(\sigma_k(\tilde X,t)) \|\\
       = & \|\sigma_k(\tilde X,t) - \pi(\tilde X,t)\| 
       +\|\pi(\tilde X,t) - \pi(X,t)\| + \|\pi(X,t) - \pi(\sigma_k(\tilde X,t))\|\\
        \leq& C(1+C_1+C_2) \ve r_k.
	\end{align*}
	Choosing $C_3$ large so that
    $$C(1+C_1+C_2) \leq C_3$$
    finishes the proof.
\end{proof}

\begin{lemma}
	We have $\|F(X,t) - F(Y,s)\| \leq C_5 \ve \pd(X,t,Y,s)$ for all $(X,t),(Y,s) \in P \cap 7B_{j,k}.$	
\end{lemma}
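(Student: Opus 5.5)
The point is to get a Lipschitz bound for $F$ on $P\cap 7B_{j,k}$ whose constant does not involve $C_1,C_2$ at first order. The previous lemma used \eqref{e:siglip}, which carries $C(1+C_1+C_2)$. Here I will use Lemma \ref{l:sig-pi-lip} instead, whose constant is absolute because it only uses the CCBP angle conditions. The dependence on $C_1,C_2$ should then appear only multiplied by an extra factor of $\ve$, which can be absorbed for $\ve$ small.

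Fix $(X,t),(Y,s)\in P\cap 7B_{j,k}$. Set $(\tilde X,t)=h^{-1}(X,t)+A(h^{-1}(X,t))$ and define $(\tilde Y,s)$ analogously. By \eqref{e:above}, $\|(\tilde X,t)-(X,t)\|\le C(1+C_1+C_2)\ve r_k$, and likewise for $\tilde Y$. So for $\ve$ small both points lie in $\Sigma_k\cap 8B_{j,k}$, hence in $10B_{j,k}\cap V_k^8$, and Lemma \ref{l:sig-pi-lip} applies with $i=j$. Since $\pi^\perp\circ\pi=0$ and $P$ is linear (we normalised $(X_{j,k},t_{j,k})=(0,0)$), we can write
\begin{align}
F(X,t)-F(Y,s)=\pi^\perp\big[\sigma_k(\tilde X,t)-\sigma_k(\tilde Y,s)-\pi(\tilde X,t)+\pi(\tilde Y,s)\big].
\end{align}
Lemma \ref{l:sig-pi-lip} therefore gives $\|F(X,t)-F(Y,s)\|\le C\ve\,\pd(\tilde X,t,\tilde Y,s)$, with $C$ depending only on the CCBP data and dimensions.

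It remains to compare $\pd(\tilde X,t,\tilde Y,s)$ with $\pd(X,t,Y,s)$. As in the proof of \eqref{e:Flip},
\begin{align}
\pd(\tilde X,t,\tilde Y,s)\le (1+C_2\ve)\,\|h^{-1}(X,t)-h^{-1}(Y,s)\|,
\end{align}
and by \eqref{e:liph} the map $h^{-1}$ is $\mathrm{Lip}(1,1/2)$ with constant $(1-C(1+C_1+C_2)\ve)^{-1}\le 2$ once $\ve$ is small depending on $C_1,C_2$. Altogether $\|F(X,t)-F(Y,s)\|\le 4C\ve\,\pd(X,t,Y,s)$, so we may take $C_5=4C$, independent of all the other $C_i$.

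The delicate step is making sure the bound obtained is genuinely free of $C_1,C_2$. Those constants enter only through the localisation (placing $\tilde X,\tilde Y$ in $V_k^8\cap 10B_{j,k}$) and through the factors $1+C_2\ve$ and $(1-C(1+C_1+C_2)\ve)^{-1}$. Each of these is controlled by requiring $\ve<\ve_0(C_1,C_2)$. This is consistent with the order of choices: first $C_5$ and $C_3$, then $C_1,C_2,C_4$, then $\ve_0$. One should also check that Lemma \ref{l:sig-pi-lip} is legitimately applicable, i.e. that its proof only invoked \eqref{e:=1}, \eqref{e:lip-theta}, bounded overlap and \eqref{CCBP3}, none of which involves the inductive constants.
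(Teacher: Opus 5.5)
Your proof is correct and follows the paper's argument. As there, you place $(\tilde X,t),(\tilde Y,s)$ in $8B_{j,k}$ via \eqref{e:above}, insert $-\pi(\tilde X,t)+\pi(\tilde Y,s)$ inside $\pi^\perp$, apply Lemma \ref{l:sig-pi-lip}, and compare $\pd(\tilde X,t,\tilde Y,s)$ with $\pd(X,t,Y,s)$ using \eqref{e:Alip} and \eqref{e:liph}. Your bookkeeping, in which $C_1,C_2$ enter only multiplied by an extra $\ve$ so that $C_5$ is absolute, is cleaner than the paper's final display, which omits the factor $\ve$ on the $C_2$ term.
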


	\begin{proof}	
	Let $(X,t),(Y,s) \in 7B_{j,k}$ and set $(\tilde{X},t) = h^{-1}(X,t) + A(h^{-1}(X,t))$ and $(\tilde{Y},s) = h^{-1}(Y,s) + A(h^{-1}(Y,s)).$ By \eqref{e:above}, we see $(\tilde{X},t),(\tilde{Y},s) \in 8B_{j,k}$ for $\ve$ small enough. By Lemma \ref{l:sig-pi-lip}, \eqref{e:liph}, and \eqref{e:Alip}, we have 
	\begin{align*}
		\|F(X,t) - F(Y,s)\| &= \|\pi^\perp ( \sigma_k(\tilde{X},t) - \sigma_k(\tilde{Y},s) ) \| \\
		&=\|\pi^\perp ( \sigma_k(\tilde{X},t) - \sigma_k(\tilde{Y},s) - \pi(\tilde{X},t) + \pi(\tilde{Y},s) ) \| \\
		&\leq C\ve \pd(\tilde{X},t,\tilde{Y},s) \\
        &\leq C\ve (\|h^{-1}(X,t)-h^{-1}(Y,s)\| +\|A(h^{-1}(X,t))-A(h^{-1}(Y,s)) \|)\\
        &\leq C\ve[ 1 + (1+C_1+C_2)\ve + C_2(1+(1+C_1+C_2)\ve)]\pd(X,t,Y,s) \\
		&=: \tilde C(C_1,C_2,\ve) \pd(X,t,Y,s).\\
	\end{align*} 
	Taking $\ve$ small enough so that $\tilde C(C_1,C_2,\ve) \leq \tfrac{1}{2}$ finishes the proof of \eqref{e:Flip} as long as $C_5$ is taken large enough. 
\end{proof}

\subsection{The remaining items} We now focus on proving the remaining items in Theorem \ref{t:DT}. As a reminder, these are \eqref{DT3}, \eqref{DT6}, \eqref{DT7} and \eqref{DT11}. We start \eqref{DT3} below.

\begin{lemma}
Let $(X,t) \in P_0.$ Then 
\begin{align}\label{e:distf_k}
	\| f_k(X,t) - f(X,t) \| \lesssim \ve r_k
\end{align}
for all $k \geq 0.$ In particular, $\| f(X,t) - (X,t) \| \lesssim \ve.$ 
\end{lemma}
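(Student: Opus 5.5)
The plan is a telescoping argument that uses Theorem \ref{t:DT} \eqref{DT8}, already proved along the way to Proposition \ref{p:DT}. Since $f_{k+1} = \sigma_k \circ f_k$, we have $f_{m+1}(X,t) - f_m(X,t) = \sigma_m(f_m(X,t)) - f_m(X,t)$. By construction $f_m(X,t) \in \Sigma_m$, because $\Sigma_0 = P_0$, $\Sigma_m = \sigma_{m-1}(\Sigma_{m-1})$, and $f_m = \sigma_{m-1}\circ\cdots\circ\sigma_0$ restricted to $P_0$. Hence \eqref{Sigma_ksigma_k} applies at the point $f_m(X,t)$ and gives
\[
\|f_{m+1}(X,t) - f_m(X,t)\| = \|\sigma_m(f_m(X,t)) - f_m(X,t)\| \lesssim \ve r_m .
\]
The implicit constant is $C(1+C_1+C_2)$ from \eqref{e:sig-y}, and it does not depend on $m$.

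Next, fix $k \geq 0$ and $M > k$. Summing over $m = k,\dots,M-1$ and using the triangle inequality for $\|\cdot\|$ gives
\[
\|f_M(X,t) - f_k(X,t)\| \lesssim \ve \sum_{m\ge k} r_m = \ve\sum_{m \geq k} 10^{-m} = \tfrac{10}{9}\,\ve r_k .
\]
By Lemma \ref{l:uniform}, $f_M \to f$ uniformly (in particular pointwise). The parabolic norm is continuous, so letting $M \to \infty$ yields \eqref{e:distf_k}. Taking $k=0$ and recalling $f_0(X,t) = (X,t)$ and $r_0 = 1$ gives $\|f(X,t)-(X,t)\| \lesssim \ve$. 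Undoing the normalization $k_0=0$, this is $\lesssim \ve r_{k_0}$, which is Theorem \ref{t:DT} \eqref{DT3}.

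The only point needing care is that \eqref{Sigma_ksigma_k} was proved under the inductive hypothesis \eqref{e:Anorm}--\eqref{e:graphA} at level $k$. We therefore first complete the induction of Proposition \ref{p:DT} for every $k$, so that the estimate holds at all levels with one uniform constant. We must also check that the parabolic norm obeys the triangle inequality, which it does since both $|X|$ and $|t|^{1/2}$ are subadditive. Beyond this, the argument is a routine geometric series.
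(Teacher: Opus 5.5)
Your proposal is correct and follows the paper's proof exactly: telescope $f_k - f$ using $f_{i+1}=\sigma_i\circ f_i$, bound each increment $\|\sigma_i(f_i(X,t))-f_i(X,t)\|$ by \eqref{e:sig-y} (Theorem \ref{t:DT} \eqref{DT8}), which applies because $f_i(X,t)\in\Sigma_i$, and then sum the geometric series $\sum_{i\ge k} r_i\lesssim r_k$. Your two additions make explicit what the paper leaves implicit: passing to the limit from finite partial sums, and closing the induction of Proposition \ref{p:DT} first so that the constant $C(1+C_1+C_2)$ is uniform in $i$.
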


\begin{proof}
	Let $(X,t) \in P_0$ and $k \geq 0 .$ Then, recalling \eqref{e:deff} and \eqref{e:sig-y}, we have
	\begin{align}
		\| f_k(X,t) - f(X,t) \| &\leq \sum_{i = k}^\infty \| f_i(X,t) - f_{i+1}(X,t) \| = \sum_{i = k}^\infty \| f_i(X,t) - \sigma_i(f_i(X,t)) \| \\
		&\lesssim \ve \sum_{i=k}^\infty r_i \lesssim \ve r_k.
	\end{align}
\end{proof}

\begin{lemma}[{\cite[Lemma 6.1]{david2012reifenberg}}]\label{l:traj}
	Let $(X,t) \in \R^{\adim+1}$ and set $(X_k,t) = f_k(X,t)$ for $k \geq 0 .$ Then, if $(X_k,t) \in \R^{\adim+1}\setminus V_k^{10}$ for some $k \geq 0$ then $(X_\ell,t) = (X_k,t) \in \R^{\adim+1} \setminus V_\ell^{10}$ for all $l \geq k.$ If $(X,t) \in P_0$ and $(X_k,t) \in V_k^{10}$ for some $k \geq 0$ then $(X_\ell,t) \in V_\ell^4$ for all $0 \leq \ell \leq k-1.$ 
\end{lemma}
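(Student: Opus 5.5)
The two assertions of Lemma \ref{l:traj} are handled separately. Throughout, $(X_k,t)=f_k(X,t)$, so that $(X_{k+1},t)=\sigma_k(X_k,t)$ by \eqref{e:deff}.

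\emph{First assertion.} Suppose $(X_k,t)\notin V_k^{10}$. By Theorem \ref{t:DT} \eqref{DT4} (which follows from $\supp\theta_{j,k}\subseteq 10B_{j,k}$), $\sigma_k(X_k,t)=(X_k,t)$, so $(X_{k+1},t)=(X_k,t)$. To iterate, we need $(X_k,t)\notin V_{k+1}^{10}$, i.e. $V_{k+1}^{10}\subseteq V_k^{10}$. Take $j\in J_{k+1}$. By \eqref{e:cinV^2} there is $i\in J_k$ with $(X_{j,k+1},t_{j,k+1})\in 2B_{i,k}$. Since $r_{k+1}=r_k/10$, any point of $10B_{j,k+1}$ lies within spatial distance $2r_k+r_k=3r_k$ and temporal distance $4r_k^2+r_k^2=5r_k^2$ of $(X_{i,k},t_{i,k})$. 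Hence $10B_{j,k+1}\subseteq 3B_{i,k}\subseteq 10B_{i,k}$, because the ball is a product of a spatial ball of radius $r$ and a time interval of half-length $r^2$. This gives $V_{k+1}^{10}\subseteq V_k^{3}\subseteq V_k^{10}$, and induction on $\ell$ finishes the first claim.

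\emph{Second assertion.} Let $(X,t)\in P_0$ and $(X_k,t)\in V_k^{10}$, and fix $\ell\le k-1$. The idea is to track $(X_m,t)$ backwards from $m=k$. Choose $j_k\in J_k$ with $(X_k,t)\in 10B_{j_k,k}$. By \eqref{e:cinV^2}, $(X_{j_k,k},t_{j_k,k})\in 2B_{i,k-1}$ for some $i\in J_{k-1}$. Moreover $\|X_k-X_{k-1}\|\lesssim \ve r_{k-1}$ by \eqref{e:sig-y}, since $(X_{k-1},t)\in\Sigma_{k-1}$ (here we use $(X,t)\in P_0$). Combining these, the spatial distance from $X_{k-1}$ to $X_{i,k-1}$ is at most $r_{k-1}+2r_{k-1}+C\ve r_{k-1}<4r_{k-1}$. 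The temporal distance is at most $100r_k^2+4r_{k-1}^2=5r_{k-1}^2\le16r_{k-1}^2$. So $(X_{k-1},t)\in 4B_{i,k-1}\subseteq V_{k-1}^4$.

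For $\ell<k-1$ we repeat the argument with $k-1$ in place of $k$, using $V^4\subseteq V^{10}$ at each step. We always go back from a point of $V_m^{4}$ (radius $4$ rather than $10$), which gives an even better bound: spatial distance at most $(0.4+2+C\ve)r_{m-1}$ and temporal distance at most $(0.16+4)r_{m-1}^2$. Downward induction then yields $(X_\ell,t)\in V_\ell^4$ for all $0\le\ell\le k-1$.

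\emph{Main obstacle.} The main point is the bookkeeping of the anisotropic ball dilations: the time component scales quadratically, so the containment $10B_{j,k+1}\subseteq 3B_{i,k}$ must be checked separately in the space and time coordinates. The other delicate step is justifying $\|\sigma_m(X_m,t)-(X_m,t)\|\lesssim\ve r_m$. This requires $(X_m,t)\in\Sigma_m$, which holds because $(X,t)\in P_0$. It also requires $\ve$ small, so that $C\ve<1$ absorbs the error.
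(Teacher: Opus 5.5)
Your proposal is correct and follows essentially the standard argument of David--Toro's Lemma 6.1, which the paper cites without giving its own proof. For the first part you use the nesting $V^{10}_{k+1}\subseteq V^{3}_{k}$, obtained from Definition \ref{d:CCBP} \eqref{CCBP1}, together with $\sigma_k=\mathrm{id}$ off $V^{10}_k$. For the second part you track the trajectory backwards using $\|\sigma_m(Y,t)-(Y,t)\|\lesssim \ve r_m$ on $\Sigma_m=f_m(P_0)$, from \eqref{e:sig-y}, and your bookkeeping of the anisotropic dilations in space and time is accurate.
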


\begin{lemma}\label{l:local-graph}
	Let $k \geq 0$ and $(X,t) \in \Sigma_k.$ There is a plane $P \in \verticalplanes$ through $(X,t)$ and a $\emph{Lip}(1,\tfrac{1}{2},C\ve)$ function $A \colon P \rightarrow P^\perp$ which is $C^2$ in space and such that 
	\[ \Sigma_k \cap B_{19r_k}(X,t) = \Gamma \cap B_{19r_k}(X,t)\] 
	where $\Gamma$ denotes the graph of $A$ over $P$. 
\end{lemma}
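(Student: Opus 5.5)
The plan is to split according to whether $(X,t)$ lies near the active balls at scale $k$, as in \cite[Proposition 6.3]{david2012reifenberg}.

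\textbf{Case 1: $(X,t)\in V_k^{11}$.} Choose $j\in J_k$ with $(X,t)\in 11B_{j,k}$. Then $B_{19r_k}(X,t)\subseteq 30B_{j,k}$, and this ball lies well inside $D(X_{j,k},t_{j,k},P_{j,k},49r_k)$. By Proposition \ref{p:DT}, namely \eqref{e:graphA}, $\Sigma_k$ coincides there with the graph $\Gamma_{A_{j,k}}$. The function $A_{j,k}$ is $\Lip(1,1/2,C_2\ve)$, is $C^2$ in space, and is defined on $P_{j,k}\cap 49B_{j,k}$.

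It remains to extend $A_{j,k}$ to all of $P_{j,k}$ without changing it on the part relevant to $B_{19r_k}(X,t)$. I would cut off: multiply $A_{j,k}-A_{j,k}(X_{j,k},t_{j,k})$ by a bump $\psi$ built as in Lemma \ref{l:theta}, with $\psi\equiv1$ on $P_{j,k}\cap 40B_{j,k}$ and support in $45B_{j,k}$, and then add back the constant. Since $\|A_{j,k}-A_{j,k}(X_{j,k},t_{j,k})\|\lesssim\ve r_k$ on the support and $\psi$ satisfies \eqref{e:lip-theta}, the product rule keeps the extension $\Lip(1,1/2,C\ve)$ and $C^2$ in space.

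The statement asks for $P$ to pass through $(X,t)$. I would translate $P_{j,k}$ by the $P_{j,k}^\perp$-component of $(X,t)$ and subtract that constant vector from the extended function. Because the graph is unchanged, $\Gamma\cap B_{19r_k}(X,t)=\Sigma_k\cap B_{19r_k}(X,t)$.

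\textbf{Case 2: $(X,t)\notin V_k^{11}$.} Write $(X,t)=f_k(Y,t)$ with $(Y,t)\in P_0$. I would use Lemma \ref{l:traj} together with \eqref{e:10} to show that $\Sigma_k\cap B_{19r_k}(X,t)$ equals $\Sigma_m\cap B_{19r_k}(X,t)$ for the last index $m<k$ at which the relevant trajectories still meet $V_m^{10}$. More precisely:
\begin{itemize}
\item Points of $\Sigma_k$ near $(X,t)$ come from trajectories that at each scale $\ell\le k-1$ either are frozen outside $V_\ell^{10}$ or lie in $V_\ell^4$.
\item By \eqref{e:10}, if $B_{19r_k}(X,t)$ misses $V_\ell^{10}$ for every $\ell$ between some $m$ and $k-1$, then $\sigma_\ell$ is the identity there. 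Hence $\Sigma_k$ and $\Sigma_m$ agree on the ball.
\end{itemize}

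Now take $m$ to be the largest index $m\le k$ with $B_{19r_k}(X,t)\cap V_m^{10}\neq\emptyset$, or $m=0$ if there is none.
\begin{itemize}
\item If $m=0$ and no such intersection occurs, then $\Sigma_k\cap B_{19r_k}(X,t)=P_0\cap B_{19r_k}(X,t)$, and we take $A\equiv0$.
\item Otherwise, pick $j\in J_m$ with $B_{19r_k}(X,t)\cap 10B_{j,m}\neq\emptyset$. Since $r_k\le r_m$, we get $B_{19r_k}(X,t)\subseteq 29B_{j,m}$ when $m<k$. Apply \eqref{e:graphA} at scale $m$ and extend as in Case 1.
\end{itemize}

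\textbf{Main obstacle.} The delicate point is justifying that $\Sigma_k\cap B=\Sigma_m\cap B$ as sets, including the points of $\Sigma_m\cap B$ that need not have been in $B$ earlier. Since each $\sigma_\ell$ is the identity outside $V_\ell^{10}$ and moves points by at most $C\ve r_\ell$ by \eqref{Sigma_ksigma_k}, points cannot enter $B$ from far away. One still has to check that trajectories entering $B$ at scales between $m$ and $k$ stay fixed. I expect this bookkeeping with Lemma \ref{l:traj}, and the possibly enlarged constants it forces, to be the hardest step. The fix if needed is to use $V^{10}$ versus $V^{11}$ margins, with $\ve$ small.
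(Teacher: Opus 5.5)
Your overall scheme is the same as the paper's: split according to proximity to $V_k$, freeze the far region using \eqref{e:10} and Lemma \ref{l:traj}, then read off a graph from Proposition \ref{p:DT}. Your Case 1 is fine, and your cutoff extension of $A_{j,k}$ to all of $P_{j,k}$ is more explicit than the paper.

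The gap is in the last step of Case 2. You take $m$ to be the \emph{last} index with $B\cap V^{10}_m\neq\emptyset$, where $B=B_{19r_k}(X,t)$, so $\sigma_m$ is not the identity on $B$. Your own reduction needs $B$ to miss $V^{10}_\ell$ for every $\ell$ from $m$ to $k-1$. With this $m$ it therefore only yields $\Sigma_k\cap B=\Sigma_{m+1}\cap B$, where $\Sigma_{m+1}=\sigma_m(\Sigma_m)$. But \eqref{e:graphA} at scale $m$ describes $\Sigma_m$, not $\Sigma_{m+1}$. The map $\sigma_m$ genuinely moves $\Sigma_m$ near $B$: where $\theta_{j,m}\equiv1$ it acts as $\pi_{j,m}$ and flattens the surface onto $P_{j,m}$. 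Hence, for $m<k$, $\Gamma_{A_{j,m}}\cap B$ is in general not $\Sigma_k\cap B$, for instance near a hole with no scale-$(m+1)$ balls nearby.

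The fix is to use the other half of Proposition \ref{p:DT}. For $m<k$ you have $r_k\le r_m/10$, so $B\subseteq 15B_{j,m}\subseteq D(X_{j,m},t_{j,m},P_{j,m},40r_m)$. By \eqref{e:graphF}, $\Sigma_{m+1}$ agrees there with the graph of $F_{j,m}$, which is $\Lip(1,1/2,C_4\ve)$ and $C^2$ in space; then cut off and translate as in Case 1. If $m=k\ge1$, it is convenient to use \eqref{e:graphF} at scale $k-1$ instead. Choosing $i\in J_{k-1}$ with $(X_{j,k},t_{j,k})\in 2B_{i,k-1}$ gives $B\subseteq 49B_{j,k}\subseteq 7B_{i,k-1}$, which leaves room for the cutoff.

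On the bookkeeping you flag: knowing only $B\cap V^{10}_{m+1}=\emptyset$, the displacement bound does not exclude a point of $\Sigma_{m+1}\cap V^{10}_{m+1}$ being pushed into $B$ later. So define $m$ via $V^{11}$ instead; then a single scale suffices. Suppose $B\cap V^{11}_p=\emptyset$ with $p\le k$.
\begin{itemize}
\item $\Sigma_p\cap B\subseteq\Sigma_k$: these points lie outside $V^{10}_p$ and are frozen by the first half of Lemma \ref{l:traj}.
\item Conversely, if $f_k(Y,t)\in B$, then $f_p(Y,t)\notin V^{10}_p$. Otherwise, summing \eqref{Sigma_ksigma_k} over $\ell\ge p$ (time being preserved) would put $f_k(Y,t)$ in $V^{11}_p$. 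So $f_p(Y,t)$ is frozen and $f_k(Y,t)=f_p(Y,t)\in\Sigma_p$.
\end{itemize}
Thus $\Sigma_k\cap B=\Sigma_p\cap B$.

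This is the mechanism behind the paper's proof. There $m$ is chosen minimal with $(X,t)\notin V^{30}_m$, which forces $B\cap V^{11}_m=\emptyset$ and hence $\Sigma_k\cap B=\Sigma_m\cap B$. The argument then works at scale $m-1$, where $(X,t)\in 30B_{j,m-1}$. The paper's text makes the same slip as yours at that point. It cites $A_{j,m-1}$, which describes $\Sigma_{m-1}$; what is needed is $F_{j,m-1}$ on $B\subseteq 32B_{j,m-1}$.
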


\begin{proof}
	Suppose first that $(X,t) \in V_k^{30}$ and let $j \in J_k$ so that $(X,t) \in 30B_{j,k}.$ Then, $B_{19r_k}(X,t) \subseteq 49B_{j,k}$, and the plane $P_{j,k}$ and map $A_{j,k}$ from Proposition \ref{p:DT} (after translating slightly) satisfy the desired properties. 
	
	Suppose then that $(X,t) \in \Sigma_k \setminus V_k^{30}.$ Let $m \geq 0$ be the smallest integer so that $(X,t) \in \Sigma_k \setminus V^{30}_m.$ Then $B_{19r_k}(X,t) \subseteq \Sigma_k \setminus V_m^{11}$ by the triangle inequality, and Lemma \ref{l:traj} gives
	\[ \Sigma_k \cap B_{19r_k}(X,t) = \Sigma_m \cap B_{19r_k}(X,t). \]  
	If $m =0$ then $\Sigma_k = P_0$ and the statement is obvious. Suppose then that $m > 0.$ By minimality, we know $(X,t) \in V_{m-1}^{30}.$ Let $j \in J_{m-1}$ so that $(X,t) \in 30B_{j,m-1}.$ As before $B_{19r_k}(X,t) \subseteq 49B_{j,m-1}$ and the result follows Proposition \ref{p:DT} with $P = P_{j,m-1}$ and $A = A_{j,m-1}$ after translating.   
\end{proof}

\begin{lemma}
	Let $(Z,t) \in \Sigma$ and $0 < r \leq 1.$ Then there is a plane $P  \in \verticalplanes$ through $(Z,t)$ so that 
	\begin{align}\label{e:RF}
		 d_{Z,t,r}(\Sigma,P) \leq C \ve. 
	\end{align}
\end{lemma}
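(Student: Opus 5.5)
The plan is to follow the argument of David--Toro: compare $\Sigma$ with the intermediate surface $\Sigma_k$ at the scale $r_k\sim r$, where $\Sigma_k$ is locally a flat graph. Fix $(Z,t)\in\Sigma$ and $0<r\le 1$, and choose $k\ge 0$ with $r_{k+1}< r\le r_k$, so that $r\sim r_k$ up to a factor $10$. Write $(Z,t)=f(Y,t)$ with $(Y,t)\in P_0$ (time is preserved by Remark \ref{r:timeslice}). Set $(Z_k,t)=f_k(Y,t)\in\Sigma_k$. By \eqref{e:distf_k}, $\|(Z_k,t)-(Z,t)\|\le C\ve r_k$.

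\textbf{Step 1 (local flatness of $\Sigma_k$).} Apply Lemma \ref{l:local-graph} at $(Z_k,t)$. This gives a plane $P'$ through $(Z_k,t)$ and a $\Lip(1,\tfrac12,C\ve)$ graph $\Gamma$ over $P'$ with $\Sigma_k\cap B_{19r_k}(Z_k,t)=\Gamma\cap B_{19r_k}(Z_k,t)$. Since $A(Z_k,t)=0$ after normalising through the base point, the Lipschitz bound gives $\|A(W)\|\le C\ve\, r_k$ on the relevant region. Hence $d_{B_{18r_k}(Z_k,t)}(\Sigma_k,P')\lesssim\ve$. For the direction ``$P'\subset$ near $\Sigma_k$'' I use that a graph over all of $P'\cap B$ hits every fibre. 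Then let $P$ be the translate of $P'$ through $(Z,t)$. Since the two base points are $C\ve r_k$ apart, $d_{B_{17r_k}(Z,t)}(\Sigma_k,P)\lesssim \ve$.

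\textbf{Step 2 (transfer from $\Sigma_k$ to $\Sigma$).} For the excess of $\Sigma$ over $\Sigma_k$, take $(W,s)=f(V,s)\in\Sigma\cap B_r(Z,t)$. Then $f_k(V,s)\in\Sigma_k$ and $\|f_k(V,s)-f(V,s)\|\le C\ve r_k$ by \eqref{e:distf_k}, so $\pdist((W,s),\Sigma_k)\lesssim\ve r_k$. Conversely, every point of $\Sigma_k$ lies within $C\ve r_k$ of $\Sigma$ by \eqref{Sigma_kSigma}. Combining these two excess estimates with Step 1 via the triangle inequality for excess gives
\[
\text{ex}(\Sigma\cap B_r(Z,t),P)\lesssim \ve r_k .
\]
For the reverse excess, take $(W,s)\in P\cap B_r(Z,t)$. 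By Step 1 there is a point of $\Sigma_k$ within $C\ve r_k$, lying in $B_{2r_k}(Z,t)$. By \eqref{Sigma_kSigma} there is then a point of $\Sigma$ within a further $C\ve r_k$. Dividing by $r\ge r_k/10$ gives \eqref{e:RF}.

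\textbf{Main obstacle.} The delicate point is the ball bookkeeping in the excess estimates. The approximating points produced in Step 2 must land inside the balls where Step 1 applies, and the exact radii matter. The bound of Step 1 controls excess only of points inside a ball of radius about $17r_k$. So I must check that points of $\Sigma\cap B_r$ move to points of $\Sigma_k\cap B_{2r_k}(Z,t)$, and similarly in the other direction. This holds because all displacements are $O(\ve r_k)$ and $r\le r_k$, with $\ve$ small. A second point is the direction of \eqref{e:distf_k}: it is stated for points of $P_0$, which is exactly what is needed, since both $\Sigma$ and $\Sigma_k$ are parametrised over $P_0$.
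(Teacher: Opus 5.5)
Your proof is correct and follows essentially the same route as the paper: choose $k$ with $r_k\sim r$, compare $\Sigma$ with $\Sigma_k$ using $\|f_k-f\|\lesssim\ve r_k$ on $P_0$, and use the local graph description of $\Sigma_k$ on $B_{19r_k}$ at the image under $f_k$ of the preimage of $(Z,t)$. The differences are cosmetic. You translate the plane so it actually passes through $(Z,t)$, whereas the paper's proof leaves it through $(X_k,t)$. In the reverse direction you cite $\pdist(\cdot,\Sigma)\lesssim\ve r_k$ on $\Sigma_k$ instead of re-parametrizing by $f_k$, which is the same estimate.
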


\begin{proof}
	Let $k \geq 0$ be such that $10^{-1}r < r_k \leq r.$ Let $(X,t) \in P_0$ be such that $(Z,t) = f(X,t)$ and let $(X_k,t) = f_k(X,t).$ By \eqref{e:distf_k} we have $\pd(Z,t,X_k,t) \leq C \ve r_k$ so that $B_r(Z,t) \subseteq B_{11r_k}(X_k,t)$ for $\ve$ small enough. At this point we let $P \in \verticalplanes$ be the plane through the point $(X_k,t)$ obtained from Lemma \ref{l:local-graph}. We claim $P$ satisfies \eqref{e:RF}. 
	
	Indeed, let $(Q,s) \in \Sigma \cap B_r(Z,t)$. Let $(Y,s) \in P_0$ be such that $(Q,s) = f(Y,s)$ and set $(Y_k,s) = f_k(Y,s).$ By \eqref{e:distf_k} we have $\pd(Q,s,Y_k,s) \leq C \ve r_k.$ Since $(Q,s) \in B_r(Z,t) \subseteq B_{11r_k}(X_k,t)$, this implies $(Y_k,s) \in \Sigma_k \cap B_{12r_k}(X_k,t).$ Since $(X_k,t) \in P$ and $\Sigma_k \cap B_{19r_k}(X_k,t)$ is the graph of a $\text{Lip}(1,\tfrac{1}{2},C\ve)$ function over $P$ by Lemma \ref{l:local-graph}, we have 
	\[\pd(Q,s,P) \leq \pd(Q,s,Y_k,s) + \pd(Y_k,s,P) \leq C \ve r_k.\]
	
	For the other direction, suppose $(Q,s) \in P \cap B_r(Z,t) \subseteq B_{11r_k}(X_k,t)$. By Lemma \ref{l:local-graph} there exists a point $(Y_k,s) \in \Sigma_k$ such that $\pdist(Q,s,Y_k,s) \leq C \ve r_k.$ Let $(Y,s) \in P_0$ be such that $(Y_k,s) = f_k(Y,s)$ and let $(W,s) \coloneqq f(Y,s).$ Then, by \eqref{e:distf_k}, 
	\[ \pd(Q,s,\Sigma) \leq \pd(Q,s,W,s) \leq \pd(Q,s,Y_k,s) + \pd(Y_k,s,W,s) \leq C \ve r_k\]
	as required.
\end{proof}

We finish with Theorem \ref{t:DT} \eqref{DT11}. We shall prove something slightly more general (which will be useful later on), which will immediately gives Theorem \ref{t:DT} \eqref{DT11} (since projections are 1-Lipschitz). 

\begin{lemma}\label{l:big-projection-RF}
	Let $\Sigma' \subseteq \R^{\adim+1}$ be $\ve$-Reifenberg flat. Then, provided $\ve$ is small enough, we have 
	\begin{align}
		\pHmeasure^{\idim+2}(\pi_P(\Sigma' \cap B_r(X,t))) \geq (1-C\ve)r^{\idim+2}
	\end{align}
	for all $(X,t) \in \Sigma'$, $0 < r < \infty$ and $P \in \verticalplanes$ satisfying $\bilatbeta_{\Sigma'}(B_r(X,t),P) \leq 2 \ve$. In particular, 
	\begin{align}
		\pHmeasure^{\idim+2}(\Sigma' \cap B_r(X,t)) \geq (1-C\ve)r^{\idim+2}
	\end{align}
	for all $(X,t) \in \Sigma'$ and $0 < r < \infty$. 
\end{lemma}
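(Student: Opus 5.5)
The strategy is to reduce to the time-slice topological statement of Lemma \ref{l:topology}. We show that $\pi_P(\Sigma'\cap B_r(X,t))$ contains $P\cap (1-C\ve)B_r(\pi_P(X,t))$. The measure bound then follows from $\pHmeasure^{\idim+2}(P\cap B_s)=c\, s^{\idim+2}$, after normalizing the parabolic Hausdorff measure on vertical planes so that the constant matches the claimed one; this normalization is only needed for the precise form $(1-C\ve)$. The second statement follows from the first because $\pi_P$ is $1$-Lipschitz for $\pd$: it fixes time and is a Euclidean orthogonal projection in space. So $\pHmeasure^{\idim+2}(\Sigma'\cap B)\ge \pHmeasure^{\idim+2}(\pi_P(\Sigma'\cap B))$, and for any ball some $P$ with $\bilatbeta_{\Sigma'}(B,P)\le 2\ve$ exists by Reifenberg flatness.

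Since Lemma \ref{l:topology} needs a continuous map $P\cap B\to P$, the plan is to build a continuous approximate inverse of $\pi_P|_{\Sigma'}$. This is done by a Reifenberg-type parametrization of $\Sigma'$ near $B$.

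\begin{enumerate}
\item Take nets $\{(X_{j,k},t_{j,k})\}$ of $\Sigma'\cap 2B$ at scales $r_k=10^{-k}r$, with $k\ge k_0$ chosen so that $r_{k_0}\sim r$.
\item For each net point, choose a plane $P_{j,k}$ through it that is bilaterally $\ve$-close to $\Sigma'$ on $200B_{j,k}$, and set $P_0=P$ (translated through $(X,t)$ if needed).
\item Flatness at all scales, together with Lemma \ref{l:beta-angle}, gives the compatibility conditions of Definition \ref{d:CCBP}. So this is a $C\ve$-CCBP.
\item Apply Theorem \ref{t:DT} to obtain $f\colon P_0\to\R^{\adim+1}$. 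By \eqref{DT3} it satisfies $\|f(Y,s)-(Y,s)\|\le C\ve r$, and it preserves time slices.
\item Show $f(P_0\cap B')\subseteq \Sigma'$ for a suitable ball $B'$ of radius about $r$. Item \eqref{DT1} gives $E_\infty\subseteq\Sigma$, and $E_\infty=\overline{\Sigma'\cap 2B}\cap\ldots$ because the nets are dense at all scales. For the reverse containment, use \eqref{Sigma_kSigma} and the fact that $\Sigma_k$ stays within $C\ve r_k$ of the net points, hence within $C\ve r_k$ of $\Sigma'$ near $B$. Letting $k\to\infty$ and using closedness of $\Sigma'$ gives $f(P_0\cap B')\subseteq\Sigma'$.
\end{enumerate}

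With $f$ in hand, set $g=\pi_P\circ f$ on $P\cap (1-C\ve)B$. This map is continuous, fixes the $t$-component, and moves points by at most $C\ve r$. Lemma \ref{l:topology} then gives
\[
g(P\cap(1-C\ve)B)\supseteq P\cap(1-C'\ve)B,
\]
and since $g(P\cap(1-C\ve)B)\subseteq \pi_P(\Sigma'\cap B)$, the claimed lower bound follows.

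The main obstacle is Step 5. We must arrange that $f$ lands in $\Sigma'$, and not merely near it, on a ball of essentially full radius, because boundary effects there cost only $O(\ve)$ in the radius. The first thing to try is to keep the net only inside $2B$ and use \eqref{DT4} with \eqref{DT8} to control the region where $\Sigma_k$ is close to $\Sigma'$.

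If this becomes delicate, there is a simpler alternative. Argue slice-by-slice with a direct Reifenberg-type construction at the single scale $r$, or use a compactness/limit argument: approximate $\Sigma'\cap B$ by $\Sigma_k$, apply Lemma \ref{l:topology} to $\pi_P\circ f_k$, and pass to the limit using uniform convergence (Lemma \ref{l:uniform}) together with closedness of $\Sigma'$.
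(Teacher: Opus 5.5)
Your proposal is essentially the paper's proof: the same $C\ve$-CCBP built from maximal nets in $\Sigma'\cap 2B$ with $P_0=P$, Theorem \ref{t:DT} to obtain $f$ with $f(P\cap(1-C\ve)B)\subseteq\Sigma'\cap B$, then Lemma \ref{l:topology} applied to $\pi_P\circ f$, and the $1$-Lipschitz property of $\pi_P$ for the second claim. In Step 5, \eqref{DT1} is not needed; the paper closes that step by the mechanism you point toward: if $f(x)$ left $V^{11}_k$ at some finite scale $k$, it would still be within $C\ve r_k$ of $\Sigma'\cap 2B$, hence (by maximality of the scale-$k$ net) inside some $11B_{j,k}$, a contradiction; so $f(x)$ is $C\ve r_k$-close to $\Sigma'$ at every scale and lies in $\Sigma'$ by closedness.
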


\begin{proof}
	Without loss of generality, we can assume 
	\[P = \R^{n} \times \{0\} \times \dots \times \{0\} \times \R \quad \mbox{ and } B_r(X,t) = B_1(0,0).\]
	Then, if $\pi$ denotes the projection onto $P$, it suffices to show 
	\begin{align}
		\pHmeasure^{\idim+2}( \pi(\Sigma' \cap B_1(0,0))) \geq 1-C\ve. 
	\end{align} 
	
	To prove this, we will apply Theorem \ref{t:DT}. For $k \geq 0$, let $\{(X_{j,k},t_{j,k})\}_{j \in J_k}$ be a maximal collection of points in $\Sigma' \cap B_2(0,0)$ satisfying \eqref{e:net} and $\{P_{j,k}\}_{j \in J_k}$ a collection of planes such that $\bilatbeta_{\Sigma'}(100B_{j,k},P_{j,k}) \leq \ve$. It follows from Lemma \ref{l:beta-angle} that $(\R^\idim,\{B_{j,k}\},\{P_{j,k}\})$ defines a $C\ve$-CCBP. Assume that $\ve$ is small enough such that Theorem \ref{t:DT} can be applied to $(\R^{\idim},\{B_{j,k}\},\{P_{j,k}\})$ and let $f \colon \R^{\idim+1} \to \R^{\adim+1}$ be the resulting map. Fix also a constant $C_1 > 1$ such that 
	\begin{align}\label{e:C_1}
		\pd((X,t) , f(X,t)) \leq C_1 \ve 
	\end{align}
	for all $(X,t) \in \R^{\idim}$, which exists by Theorem \ref{t:DT} \eqref{DT3}. 
	
	There exists $C_2 > 1$ such that 
	\begin{align}\label{e:sigma-contains-f}
		{\Sigma'} \cap B_{1}(0,0)\supseteq f(\R^{\idim+1} \cap B_{1-C_2\ve}(0,0))
	\end{align}
	Indeed, suppose towards a contradiction that there exists $(X,t) \in \R^{\idim+1} \cap B_{1-C_2\ve}(0,0)$ such that $f(X,t) \not\in {\Sigma'} \cap B_1(0,0)$, for some $C_2$ to be chosen large enough. By \eqref{e:C_1}, we have $f(X,t) \in B_1(0,0)$. Thus, it must be that $f(X,t) \in (f(\R^{\idim+1}) \setminus {\Sigma'}) \cap B_1(0,0)$. Let $k = k(X,t)$, with $k(X,t)$ as in Lemma \ref{l:locally-sigma-k}. By the argument there, we have 
	\begin{align}
		\pdist(X,t,{\Sigma'} \cap B_{1}(0,0)) \lesssim \ve r_k.
	\end{align}
	Thus, there exists a point $(X',t') \in {\Sigma'} \cap B_1(0,0)$ such that $\pd(X,t,X',t') \lesssim \ve r_k$. Since $\{(X_{j,k},t_{j,k})\}_{j \in J_k}$ is a maximal $r_{k}$-separated net in ${\Sigma'} \cap B_2(0,0)$, there exists $j \in J_{k}$ such that $(X',t') \in B_{j,k}$. However (for $\ve$ small enough), it then follows that $(X,t) \in 11B_{j,k}$, which contradict the maximality of $k$.  
	
	Define a map $\rho \colon \R^{\adim+1} \to \R^{\adim+1}$ by $\rho(X,t) = (X/(1+\Lambda),t)$, $\Lambda := C_2\ve/(1-C_2\ve)$ and set $g \coloneqq \rho \circ \pi \circ f$. Notice that $g$ is H\"older continuous (hence, continuous), preserves the $t$-component and $\pd((X,t) , g(X,t)) \leq C_3\ve$ for some $C_3 > 1$ and all $(X,t) \in \R^{\idim+1} \cap B_1(0,0)$. Thus, by Lemma \ref{l:topology},
	\begin{align}\label{e:g-contains-ball}
		g(\R^{\idim+1} \cap B_{1-C_2\ve}(0,0)) \supseteq \R^{\idim+1} \cap B_{1-C_4\ve}(0,0), 
	\end{align}
	where $C_4 \coloneqq 2(C_2 + C_3)$. To finish the proof, we use \eqref{e:sigma-contains-f} and \eqref{e:g-contains-ball}, along with the fact $\rho$ and $\pi$ are 1-Lipschitz, to get 
	\begin{align}
		\pHmeasure^{\idim+2}(\pi({\Sigma'} \cap B_1(0,0))) &\geq \pHmeasure^{\idim+2}(\pi \circ f(\R^{\idim+1} \cap B_{1-C_2\ve}(0,0)))\\
		&\geq (1-C_2\ve) \pHmeasure^{\idim+2}(g(\R^{\idim+1} \cap B_{1-C_2\ve}(0,0))) \\
		&\geq (1-C_2\ve) \pHmeasure^{\idim+2}(\R^{\idim+1} \cap B_{1-C_4\ve}(0,0)) \geq (1-C\ve)2^{\idim+2},
	\end{align}
	as required. 
\end{proof}

\section{Corona decompositions and the BWGL}\label{s:BWGL-corona}

In this section we will show the following (we also show the converse at the very end of the section).

\begin{theorem}\label{t:BWGL-Corona}
	If $E \subseteq \R^{\adim+1}$ is parabolic Ahlfors $(\idim+2)$-regular and satisfies the $\BWGL$ then it admits a corona decomposition. The packing constants in Definition \ref{d:corona-decomposition} can be chosen to depend only on the $A, \eta, \theta, n $, the regularity constant for $E$, and the packing constants given by the $\BWGL$. 
\end{theorem}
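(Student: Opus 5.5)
We follow the strategy of \cite{bate2023uniformly}, with the parabolic Reifenberg theorem (Theorem \ref{t:DT}) playing the role of the David--Toro parametrization. Fix $A\geq 1$, $\eta,\theta>0$ and a system of Christ--David cubes $\cubes$. Choose $\ve \ll \min\{\eta,\theta\}$, and a large constant $A' \gg A$.

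\textbf{Bad cubes.} Declare $Q$ \emph{bad} if $\bilatbeta_E(A'B_Q)>\ve$. By the BWGL, $\cB$ satisfies a Carleson packing condition.

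\textbf{Stopping-time regions.} Build these on the good cubes. Starting from a good top cube $Q_0$, with a fixed plane $P_{Q_0}$ attaining $\bilatbeta_E(A'B_{Q_0})\leq\ve$, add descendants $Q$ as long as $Q$ and all its siblings are good. We also stop when the accumulated tilt between the best plane $P_Q$ and $P_{Q_0}$ would exceed a threshold $\eta/C$. Here $P_Q$ is chosen so that $\bilatbeta_E(A'B_Q,P_Q)\leq\ve$, and Lemma \ref{l:beta-angle} bounds parent--child angles by $C\ve$. We then smooth the minimal cubes using Lemma \ref{l:smooth}, so that neighbouring minimal cubes have comparable sizes and $z(S)$ stays away from $10B_Q$ for $Q\in\min(S)$.

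\textbf{The Lipschitz graph.} For each region $S$, build an $C\ve$-CCBP with $P_0=P_{Q(S)}$. The balls at scale $r_k\sim\ell(Q)$ are centred at the points $(X_Q,t_Q)$ for $Q\in S$, completed by planes extended trivially (equal to a neighbouring plane) near $\min(S)$. Conditions \eqref{CCBP1}--\eqref{CCBP4} follow from Lemma \ref{l:beta-angle}, the goodness of cubes, and the smoothing property (1) of Lemma \ref{l:smooth}. Theorem \ref{t:DT} yields a Reifenberg flat surface $\Sigma_S$ with $\pdist(x,\Sigma_S)\lesssim \ve\ell(Q)$ for $x\in E\cap AB_Q$, $Q\in S$, and conversely. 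Since the angles are controlled by the tilt threshold, Proposition \ref{p:DT} and Lemma \ref{l:local-graph} show that $\Sigma_S$ is, at all scales, a local $\Lip(1,1/2,C\ve)$ graph over planes within angle $\eta/C$ of $P_0$. One then argues that $\Sigma_S$ is globally a $\Lip(1,1/2,\eta)$ graph over $P_0$: use Remark \ref{r:timeslice} and the spatial bi-Hölder estimate \eqref{DT2} in each time slice to see that $\pi_{P_0}\circ f$ is injective, and use the local graph structure to bound the time oscillation. This gives \eqref{e:definition-corona}, with the bilateral version available if needed.

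\textbf{Packing the top cubes (main obstacle).} Minimal cubes coming from bad children are controlled by the packing of $\cB$. The difficulty is the tilt stopping, since without a strong geometric lemma the angles cannot be summed. Here we use the Ahlfors regularity plus the Reifenberg surface instead. Because $\Sigma_S$ is a $\Lip(1,1/2,\eta)$ graph hugging $E$, Lemma \ref{l:big-projection-RF} and the graph property give a projection estimate $\pHmeasure^{n+2}(\pi_{P_0}(\cdot))$ on each minimal cube. If the plane has tilted by $\gtrsim\eta$ relative to $P_0$ on a large measure of $Q(S)$, then the graph over $P_0$ must have a large slope somewhere. Following \cite{bate2023uniformly}, one instead stops when the tilt is large, and shows that for the surface $\Sigma_S$ a tilt of size $\eta$ on many minimal cubes forces $\pHmeasure^{n+2}(\Sigma_S\cap B_{Q(S)})\geq (1+c\eta^2)\pHmeasure^{n+2}(\pi_{P_0}(\Sigma_S\cap B_{Q(S)}))$. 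Comparing with the upper mass bounds inherited from $E$ by Ahlfors regularity then yields a Carleson packing of the top cubes. I expect this area-excess/tilt argument, adapted to the parabolic non-isotropic geometry, to be the hardest step. Finally, the packing of $\{Q(S)\}$ together with that of $\cB$ gives the coronization, with constants depending only on $A,\eta,\theta,n$, the regularity constant and the BWGL packing constants.
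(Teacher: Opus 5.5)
\textbf{The gap.} The step you flag as the hardest, the Carleson packing of the top cubes created by tilt stopping at the small threshold $\eta/C$, does not go through as proposed. The area-excess comparison cannot close:
\begin{itemize}
\item Ahlfors regularity only gives $\pHmeasure^{n+2}(\Sigma_S\cap B_r)\le C r^{n+2}$, with $C$ in general far larger than $1$.
\item The projected mass is essentially the flat value, by Lemma \ref{l:big-projection-RF}.
\item So an excess factor $(1+c\eta^2)$ is consistent with everything you know, and no contradiction or counting bound follows.
\item The surfaces $\Sigma_S$ differ from region to region, so there is no single additive quantity that the excesses of different regions use up.
\end{itemize}
A Carleson packing needs something like disjoint sets $z(S)$ carrying a fixed proportion of $\sigma(Q(S))$, or an $L^2$-orthogonality estimate. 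Neither follows from a tilt of size $\eta$ plus Ahlfors regularity. This is exactly the obstruction created by the absence of a strong geometric lemma, and your proposal does not supply a replacement.

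\textbf{How the paper handles it.} The paper works in two stages on the Reifenberg flat approximations $\Sigma_{S'}$, where every cube is flat, and transfers the result back to $E$ by Corollary \ref{c:corresponding-stopping-time}.
\begin{itemize}
\item First stage (Proposition \ref{t:corona-RF-large}). It stops only when the angle exceeds a threshold $\alpha$ close to $1$. A child plane nearly orthogonal to $P_{Q(S)}$ forces the projection of the minimal cube onto $P_{Q(S)}$ into a thin neighbourhood of a lower-dimensional plane. Its projected mass is then at most $(c_2/2)\sigma(Q)$, whereas every cube projects with mass at least $c_2\sigma(Q)$ onto its own plane (Lemma \ref{l:RF-BP}). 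Hence $\sigma(z(S))\gtrsim\sigma(Q(S))$ with constants depending only on regularity, and disjointness of the $z(S)$ gives the packing. The output is graphs with a large Lipschitz constant $L$.
\item Second stage (Proposition \ref{t:corona-RF-small}). It applies Carleson's corona construction for bounded functions (Lemma \ref{p:carleson-corona}) to $\nabla\psi_{S'}$, with packing constants independent of the function. The fundamental theorem of calculus relates averages of $\nabla\psi_{S'}$ over projected cubes to the slopes of the planes $P_Q$. This brings the angles down to $\eta$, and Lemma \ref{p:graph-construction} then produces the final graphs.
\end{itemize}

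\textbf{A secondary point.} In your graph step, \eqref{DT2} gives injectivity of $f$ on time slices, not of $\pi_{P_0}\circ f$. The global graph property would instead have to come from a cone estimate $\|\pi_{P_0}^\perp(x-y)\|\lesssim\eta\|x-y\|$ for $x,y\in\Sigma_S$, deduced from the small angles at every scale.
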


In fact, we will show something stronger than Theorem \ref{t:BWGL-Corona}. We will prove a corona-type decomposition in which the graphs bilaterally approximate the set inside the stopping-time regions and the approximations will be stated in a slightly different (and stronger) form than that in Definition \ref{d:corona-decomposition}. To state it, we need some notation. \\

For a system of Christ-David cubes $\cD$ (on some underlying set) and a stopping-time region $S \subseteq \cD,$ define the \textit{distance to the stopping-time region} $d_S \colon \R^{\adim+1} \to \R$ by 
\begin{align}
	d_S(X,t) \coloneqq \inf_{Q \in S} \left[ \ell(Q) + \pdist(X,t,Q) \right]. 
\end{align}
For a plane $P \in \verticalplanes,$ define a similar function $D_{S,P} \colon P \to \R$ by 
\begin{align}
	D_{S,P}(X,t) \coloneqq \inf_{(Y,t) \in \pi_P^{-1}(X,t)} d_S(Y,t) = \inf_{Q \in S} \left[\ell(Q) + \pdist(X,t,\pi_P(Q))  \right]. 
\end{align}
It will be useful to recall from Section \ref{s:notation} (see just below \eqref{e:E^t}) the notation $V_x$ for a plane $V \in \verticalplanes$.

\begin{theorem}\label{t:BWGL-bilat-Corona}
	Let $A > 1$ and $\eta , \theta > 0$. Suppose $E \subseteq \R^{\adim+1}$ is parabolic Ahlfors $(\idim+2)$-regular and satisfies the $\BWGL$, and $\cubes$ is a system of Christ-David cubes on $E$. Then, there is a coronization $(\cB,\cG,\cF)$ of $\cubes$ and, for each $S \in \cF$, a plane $P_S \in \cP$ with $(X_{Q(S)},t_{Q(S)}) \in P_S$ and a $\Lip(1,1/2,\eta)$ map $\psi_S \colon P_S \to (P_S)^\perp$ such that the following conditions hold. {First, 
    \begin{align}
       D_{S,P_{S}}(X,t)\left( |\nabla_X^2 \psi_S(X,t)| + |\partial_t \psi_S(X,t)| \right)\leq \theta  .
    \end{align}}
    Second, if $\Gamma_S$ denotes the graph of $\psi_S$ over $P_S$, then
	\begin{align}\label{e:corona-d}
		\pdist(X,t,\Gamma_S) \leq \theta d_S(X,t)
	\end{align}
	for all $(X,t) \in E \cap AB_{Q(S)}$. Finally, 
	\begin{align}\label{e:corona-D}
		\pdist((X,t) + \psi_S(X,t), E) \leq \theta D_{S,P_S}(X,t) 
	\end{align}
	for all $(X,t) \in P_S \cap AB_{Q(S)}$. The packing constants in Definition \ref{d:coronization} can be chosen to depend only on $A,\eta,\theta,n$, the regularity constant for $E$, and the packing constants given by the $\BWGL$. 
\end{theorem}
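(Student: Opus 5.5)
The plan is to follow the strategy of \cite{bate2023uniformly}, with the parabolic Reifenberg theorem (Theorem \ref{t:DT}) used in place of David--Toro. Fix $\ve \ll \min(\eta,\theta)$ and a large $A' \gg A$. Let $\cB_0 = \{Q \in \cubes \colon \bilatbeta_E(A'B_Q) > \ve\}$; by the BWGL this family is Carleson. For every $Q \notin \cB_0$ pick a plane $P_Q \in \verticalplanes$ through $(X_Q,t_Q)$ with $d_{A'B_Q}(E,P_Q) \le 2\ve$. For each $Q_0 \notin \cB_0$ I would then build a maximal stopping-time region $S'$ with top $Q_0$. A cube is stopped when one of its siblings lies in $\cB_0$, or when its plane makes angle $\angle(P_Q,P_{Q_0}) > \eta/10$ with the top plane.

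The first step is to smooth these regions with Lemma \ref{l:smooth}, producing a region $S$ with controlled minimal cubes. Properties (1) and (2) of that lemma make the minimal cubes locally comparable in size, with none near $z(S)$. On $S$ I form a CCBP in the sense of Definition \ref{d:CCBP}. The centres are nets of cubes in $S$ at scale $r_k$, supplemented on the holes (near minimal cubes) by continuing with constant planes $P_Q$, $Q\in\min(S)$, at all finer scales; this is the point of the sets-with-holes version. Conditions \eqref{CCBP2}--\eqref{CCBP4} follow from Lemma \ref{l:beta-angle}, since all relevant cubes are not in $\cB_0$ and comparable minimal cubes are adjacent. Theorem \ref{t:DT} then gives $\Sigma_S = f(P_{Q_0})$.

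The next step is to show that $\Sigma_S$ is the graph of a $\Lip(1,1/2,\eta)$ map $\psi_S$ over $P_S := P_{Q_0}$. By Theorem \ref{t:DT} \eqref{DT5}, \eqref{DT6}, $\Sigma_S$ is locally a $C\ve$-Lipschitz graph over $P_{j,k}$. Because the stopping keeps every $P_{j,k}$ within angle $\eta/10$ of $P_S$, these local graphs reparametrize over $P_S$ with constant $\lesssim \eta/10 + C\ve$. The spatial bi-Hölder estimate \eqref{DT2} and Remark \ref{r:timeslice} give injectivity of $\pi_{P_S}$ on each time slice, and Lemma \ref{l:topology} gives surjectivity. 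Together these show $\Sigma_S$ is a global graph.

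The estimates follow from the construction. The $C^2$-in-space and $\partial_t$ bounds weighted by $D_{S,P_S}$ come from \eqref{e:deriv-theta}: at a point whose relevant scale is $r_k \sim D_{S,P_S}$, only finitely many $\sigma_\ell$ with $r_\ell \gtrsim r_k$ act, and each contributes $\ve r_\ell^{-1}$ to the second spatial derivative and $\ve r_\ell^{-2}\cdot r_\ell$ in parabolic scaling. Summing the geometric series gives $\lesssim \ve/D_{S,P_S} \le \theta/D_{S,P_S}$. Estimate \eqref{e:corona-d} follows from \eqref{Sigma_kSigma} and \eqref{Sigma_kPlane}: a point of $E$ at distance $\sim d_S$ from $S$ lies within $\ve d_S$ of $P_{j,k}$ at that scale, and hence within $C\ve d_S$ of $\Sigma_S$. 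Estimate \eqref{e:corona-D} is the reverse, using the bilateral $\beta$ together with Theorem \ref{t:DT} \eqref{DT8}.

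The main obstacle is packing the top cubes, i.e.\ the cubes stopped for angle. Here I would use a measure argument. If $\angle(P_Q,P_{Q_0})>\eta/10$ on a large part of $Q_0$, the graph $\Sigma_S$ must tilt, so its area over $P_S$ exceeds its projection by $\gtrsim \eta^2$ times the measure of those cubes. On the other hand, Lemma \ref{l:big-projection-RF} together with Ahlfors regularity and the bilateral approximation near minimal cubes bounds $\pHmeasure^{n+2}(\Sigma_S \cap B_{Q_0})$ by $(1+C\ve)$ times the projected measure, plus error terms from $\cB_0$ cubes. This forces the angle-stopped cubes to cover at most a fraction $1-c$ of $Q_0$ whenever $\ve \ll \eta^2$, and the standard iteration then yields Carleson packing. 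I expect this area-versus-projection comparison to be the delicate part, since the parabolic surface measure of a $\Lip(1,1/2)$ graph is not given by a Jacobian formula in time. I would try to perform the comparison slicewise: on each time slice, compare Euclidean $\Hmeasure^n$ of the slice of $\Sigma_S$ with that of its projection, where a genuine Jacobian is available. Finally, I would take $\cB = \cB_0$ together with the cubes discarded during smoothing, which is Carleson by Corollary \ref{c:corresponding-stopping-time}-type bookkeeping, and conclude.
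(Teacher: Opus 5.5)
Building the Reifenberg surface on each region and reading off the graph estimates is broadly in line with the paper (Proposition~\ref{p:BWGL-RF-corona}, Lemma~\ref{p:graph-construction}). The step you flag as delicate, however, has a genuine gap: the packing of the tops stopped for angle.

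The upper bound you rely on, $\pHmeasure^{\idim+2}(\Sigma_S\cap B_{Q_0})\le(1+C\ve)\,\pHmeasure^{\idim+2}(\pi_{P_S}(\Sigma_S\cap B_{Q_0}))$ up to errors, has no source. Lemma~\ref{l:big-projection-RF} is a \emph{lower} bound on projections. Ahlfors regularity only gives the constant $C_0$, and Hausdorff closeness to $E$ gives no upper control of measure. The area excess of a surface from Theorem~\ref{t:DT} is controlled only through square sums of the tilts between consecutive planes, which the BWGL does not bound. Inside a region where all angles are at most $\eta/10$, all you know is that $\Sigma_S$ is a $C\eta$-Lipschitz graph. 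Its slicewise excess can then be as large as $C\eta^2$ times its projection. That is the same order as the lower bound $c\eta^2\,\pHmeasure^{\idim+2}(\text{stopped cubes})$ you would extract from the tilted cubes, so no fraction $1-c$ comes out.

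Worse, the per-region claim itself is false. Take a time-independent graph whose slope at scale $4^{-k}$ behaves like a random walk with steps $c\ve$, frozen once it reaches $\eta/5$. For example, set $m_{k+1}=m_k+c\ve\sin(2\pi 4^k x_1)\rho(m_k)$ with a cutoff $\rho$, and $\partial_{x_1}\psi=\lim_k m_k$. This graph is $\Lip(1,1/2,\eta/4)$ and every ball has $\bilatbeta\le\ve$ (for $c$ small), so $\cB_0=\emptyset$. Yet almost every point of $Q_0$ eventually lies in a cube tilted by more than $\eta/10$, so a single region can be exhausted by angle stops.

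Packing of the angle-stopped tops must therefore come from an argument across all generations. For instance, one can sum the squared jumps of averages of the gradient of one fixed bounded-gradient function (martingale orthogonality). A fresh graph built separately in each region cannot supply this.

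The paper obtains such a function in stages.
\begin{enumerate}
\item It uses the projection argument only for angles $\alpha$ close to $1$. There a tilted cube's projection onto the top plane is tiny, at most $\tfrac{c_2}{2}$ times its measure, rather than merely smaller by a factor $1-c\eta^2$. So the crude constant $c_2$ of Lemma~\ref{l:RF-BP} suffices (Lemma~\ref{l:big-angle-small-proj}), giving the large-constant graphs of Proposition~\ref{t:corona-RF-large}.
\item It then runs Carleson's corona construction (Lemma~\ref{p:carleson-corona}) on $\nabla\psi_{S'}$ for each such fixed graph to reach angle $\eta$ (Proposition~\ref{t:corona-RF-small}).
\item It transfers these coronizations back to $E$ through the Reifenberg approximants using Corollary~\ref{c:corresponding-stopping-time} (Lemma~\ref{l:final-corona}).
\item Only then does it construct $\psi_S$ by Lemma~\ref{p:graph-construction}.
\end{enumerate}
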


Theorem \ref{t:BWGL-Corona} follows easily from Theorem \ref{t:BWGL-bilat-Corona} since $d_S(X,t) \lesssim A\ell(Q)$ for any $Q \in S \in \cF$ and $(X,t) \in AB_{Q}$. Thus, the goal of this section will be to prove Theorem \ref{t:BWGL-bilat-Corona}. By Lemma \ref{p:graph-construction}, it suffices to construct a coronization for $\cubes$ such that every good cube is flat and the best approximating planes in each stopping-time region make a small angle with the top plane from that region. This coronization is built in stages as follows. In Proposition \ref{p:BWGL-RF-corona}, we show that any set satisfying the BWGL admits a corona decomposition by parabolic Ahlfors regular Reifenberg flat sets i.e., one replaces $\Lip(1,1/2)$ graphs in Definition \ref{d:corona-decomposition} by parabolic Ahlfors regular Reifenberg flat sets. In Proposition \ref{t:corona-RF-large}, we show that parabolic Ahlfors regular Reifenberg flat sets admit (bilateral) corona decompositions by $\Lip(1,1/2)$-graphs with possibly large constant. In Proposition \ref{t:corona-RF-small}, we improve this large constant condition by showing that Reifenberg flat sets admit a coronization in which the best approximating planes in any stopping-time region make small angle with the top plane. We use these facts along with transitivity type property of coronizations (Corollary \ref{c:corresponding-stopping-time}) to push this coronization back to the original set.

\subsection{Approximating BWGL by Reifenberg flat}
In this section, we show that any set satisfying the BWGL admits a corona-type decomposition by Reifenberg flat sets. The precise statement is the following.

\begin{proposition}\label{p:BWGL-RF-corona}
	Let $A,M > 1$ and $\ve > 0.$ Suppose $E \subseteq \R^{\adim+1}$ is parabolic Ahlfors $(\idim+2)$-regular set which satisfies the $\BWGL$, $\cubes$ is a system of Christ-David cubes on $E$, $(\cB,\cG,\cF)$ is a coronization of $\cD$ such that $\bilatbeta_E(MB_Q) < \ve$ for each $Q \in \cG$, and $\{P_Q\}_{Q \in \cG}$ is an assignment of planes such that 
	\begin{align}\label{e:flatness-condition-2}
		(X_Q,t_Q) \in P_Q \quad \mbox{and} \quad \bilatbeta_E(MB_Q,P_Q) \leq 2\ve. 
	\end{align} 
	Then, provided $M/A$ is large enough and $\ve$ is small enough, for each $S \in \cF$ there exists a parabolic Ahlfors $(\idim+2)$-regular $C\ve$-Reifenberg flat set $\Sigma_S \subseteq \R^{\adim+1}$ such that 
	\begin{align}\label{e:dist-to-Sigma_S}
		\pdist(X,t,\Sigma_S) \lesssim \ve M \ell(Q)
	\end{align}
	for all $Q \in S$ and $(X,t) \in E \cap AB_Q$, and 
	\begin{align}
		\pdist(X,t,E) \lesssim \ve M \ell(Q)
	\end{align}  
	for all $Q \in S$ and $(X,t) \in \Sigma_S \cap AB_Q$. The regularity constant for $\Sigma_S$ depends only on $n$ and the regularity constant for $E$. 
\end{proposition}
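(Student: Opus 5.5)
We will build $\Sigma_S$ by applying the parabolic Reifenberg theorem, Theorem \ref{t:DT}, to a coherent collection of balls and planes extracted from the good cubes of $S$, in the spirit of David--Toro's construction for sets with holes. Fix $S\in\cF$ and rescale so that $\ell(Q(S))\sim r_{k_0}$. For each generation $k\geq k_0$, take a maximal $r_k$-separated net $\{(X_{j,k},t_{j,k})\}_{j\in J_k}$ inside the set of centres $(X_Q,t_Q)$ of cubes $Q\in S$ with $\ell(Q)\sim r_k$ (cube scale $\rho^m$ mapped to the nearest $10^{-k}$, after adjusting the constants). Put $P_{j,k}=P_Q$ for the corresponding cube $Q$, and $P_0=P_{Q(S)}$. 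Since $\bilatbeta_E(MB_Q,P_Q)\leq 2\ve$ with $M$ large compared to the dilation constants $100,20$ in Definition \ref{d:CCBP}, Lemma \ref{l:beta-angle} gives conditions \eqref{CCBP2}--\eqref{CCBP4} with $C\ve$. Condition \eqref{CCBP1} follows from the stopping-time structure, since parents of cubes in $S$ lie in $S$ and their centres are close. Below $\min(S)$ no balls are added, so the construction freezes there by Theorem \ref{t:DT}\eqref{DT4}. We then set $\Sigma_S=f(P_0)$.

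Reifenberg flatness with constant $C\ve$ is Theorem \ref{t:DT}\eqref{DT7}, and the lower regularity is \eqref{DT11}. For the approximation estimates we argue as follows. Take $Q\in S$ at scale $r_k$ and $(X,t)\in E\cap AB_Q$. Because $M/A$ is large, $AB_Q\subseteq 100B_{j,k}$ for a nearby net point. There $E$ is $2\ve M r_k$-close to $P_{j,k}$, and by Theorem \ref{t:DT}\eqref{DT5} and \eqref{DT8}, $\Sigma_k$ and hence $\Sigma_S$ are $C\ve r_k$-close to $P_{j,k}$ on $49B_{j,k}$. Chaining these two facts gives $\pdist(X,t,\Sigma_S)\lesssim \ve M\ell(Q)$, and the reverse inclusion is symmetric. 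One point needs care. Bilateral closeness of $E$ to $P_{j,k}$ only gives points of $P_{j,k}$ near points of $E$. To get a point of $\Sigma_S$ we project onto $P_{j,k}$ and use the graph description \eqref{e:graphA} together with \eqref{Sigma_kSigma}. This produces points of $\Sigma_S$ above every point of $P_{j,k}\cap 40B_{j,k}$.

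The main obstacle is upper Ahlfors regularity of $\Sigma_S$ at small scales and near the holes $\min(S)$, where $\Sigma_S$ coincides with pieces of the frozen surfaces $\Sigma_k$. On balls of radius $r\lesssim r_k$ centred near a minimal cube of scale $r_k$, Theorem \ref{t:DT}\eqref{DT6} and Lemma \ref{l:traj} show that $\Sigma_S$ is locally a $\Lip(1,1/2,C\ve)$ graph. Such a graph has measure $\lesssim r^{n+2}$ in these balls. For larger $r$ we first cover $\Sigma_S\cap B_r$ by balls $10B_{j,k}$ of the appropriate generations and use the graph bound on each. Counting the net points via Ahlfors regularity of $E$ then gives $\lesssim r^{n+2}$; the counting uses that the points lie near $E$ and are $r_k$-separated, together with the bounded overlap from Lemma \ref{l:bounded-overlap}. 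We expect that making this multiscale counting precise, and in particular controlling the sum over all generations, is where the work lies. The natural tool is a smoothing step beforehand, replacing $S$ by its extension from Lemma \ref{l:smooth}, which makes neighbouring minimal cubes comparable in size so that each point is covered by boundedly many relevant scales.
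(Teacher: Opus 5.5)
Your CCBP built from nets of cube centres in $S$, the appeal to Theorem \ref{t:DT}, Reifenberg flatness and lower regularity from Theorem \ref{t:DT}\eqref{DT7} and \eqref{DT11}, and the two distance estimates obtained by chaining $E\to P_{j,k}\to\Sigma_{S,k}\to\Sigma_S$ and back all match the paper.

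Two small corrections there. First, $\bilatbeta_E(MB_Q,P_Q)\le 2\ve$ is normalised at scale $M\ell(Q)$, while the CCBP conditions live at scale $r_k$. So Lemma \ref{l:beta-angle} only gives a $\CCCBP\ve$-CCBP with $\CCCBP\sim M$ (Lemma \ref{l:CCBP}), and you need $M\ve\ll\ve_0$. Taking $M$ large lets $MB_R$ contain $100B_{i,k}$, but it costs a factor $M$; it does not keep the constant at $C\ve$. Second, $AB_Q\subseteq 100B_{j,k}$ does not follow from $M/A$ large. That inclusion needs $A$ bounded (the paper takes $A=3$); $M/A$ large is what places these balls inside $MB_R$, where $E$ is close to $P_{j,k}$.

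The genuine gap is the upper Ahlfors regularity of $\Sigma_S$, the one step not read off from Theorem \ref{t:DT}. You describe the difficulty but give no argument that resolves it. Covering $\Sigma_S\cap B_r$ by balls $10B_{j,k}$ from all generations fails: generation $k$ can contribute about $(r/r_k)^{\idim+2}$ balls of measure $\lesssim r_k^{\idim+2}$, i.e.\ $\sim r^{\idim+2}$ per generation, so the sum over $k$ diverges. Lemma \ref{l:bounded-overlap} only controls overlap within one generation.

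Smoothing via Lemma \ref{l:smooth} does not cure this by itself. The added cubes need not lie in $\cG$ and carry no planes, so flatness would have to be transferred through Lemma \ref{l:smooth}(4) at a $\tau$-dependent cost in $M$. You would also still need to attach each frozen point to a single minimal cube of comparable size.

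The missing idea is to give each $x\in\Sigma_S\setminus E$ one freezing scale $k=k(x)$, the largest $k$ with $x\in V^{11}_{k-1}$ (Lemma \ref{l:locally-sigma-k}).
\begin{itemize}
\item Since $x\notin V^{11}_{k}$, and later net points stay within about $2.3r_k$ of generation-$k$ ones by Definition \ref{d:CCBP}\eqref{CCBP1}, the ball $B_{r_k}(x)$ misses every $V^{10}_\ell$ with $\ell\ge k$. Theorem \ref{t:DT}\eqref{DT4} and \eqref{DT6} then make $\Sigma_S\cap B_{r_k}(x)=\Sigma_{S,k}\cap B_{r_k}(x)$ a $\Lip(1,1/2,C\ve)$ graph piece of measure $\lesssim r_k^{\idim+2}$.
\item Since $x\in V^{11}_{k-1}$, you can track $x$ back to $\Sigma_{S,k-1}$ by \eqref{e:distf_k}, then to $P_{j,k-1}$ by \eqref{Sigma_kPlane}, then to $E$ by \eqref{e:close-to-P}. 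This gives $\pdist(x,E)\lesssim M\ve r_{k}$.
\end{itemize}
Now make a Vitali-type selection with each point at its own scale, e.g.\ apply the Vitali lemma to the balls $B_{r_{k(x)}/5}(x)$; the paper phrases this as a maximal family separated at the larger of the two scales. The selected balls $B_{r_{k_i}}(x_i)$ cover $(\Sigma_S\setminus E)\cap B_r$. Because $\pdist(x_i,E)\lesssim M\ve r_{k_i}$, slightly shrunken balls centred at nearest points $x_i'\in E$ are pairwise disjoint. Hence $\sum_i r_{k_i}^{\idim+2}\lesssim\pHmeasure^{\idim+2}(E\cap B_{2r})\lesssim r^{\idim+2}$. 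If $r$ is small compared with some $r_{k_i}$, then $B_r$ already sits inside one graph piece. The part $\Sigma_S\cap E$ is handled by the regularity of $E$ directly. This argument uses only the Ahlfors regularity of $E$, with no smoothing and no information about $\min(S)$.
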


\begin{remark}
	To ease notation, we shall prove Proposition \ref{p:BWGL-RF-corona} only in the case that $A=3$. The general case is identical up to notational changes. 
\end{remark}

Let $M > 1$ (to be chosen sufficiently large) and $\ve > 0$ (to be chosen sufficiently small depending on $M$, see Remark \ref{r:first-constants}). Fix $E \subseteq \R^{\adim+1}$, $\cubes$, $(\cB,\cG,\cF)$ and $\{P_Q\}_{Q \in \cG}$ as in the statement of Proposition \ref{p:BWGL-RF-corona}, and fix some $S \in \cF$. We can assume without loss of generality that $Q(S) \in \cubes_0.$ The main tool for constructing the set $\Sigma_S$ will be Theorem \ref{t:DT}, for which we need to define a CCBP.\\

Let $P_S = P_{Q(S)}.$  Without loss of generality, we suppose that $\ell(Q(S))=1$.  For $k \geq 0,$ let $s(k)$ be such that $\Couter \rho^{s(k)} < r_k \leq \Couter \rho^{s(k)-1},$ with $\Couter$ as in Lemma \ref{cubes}. Let $\mathscr{X}_k = \{(X_{j,k},t_{j,k})\}_{j \in J_k}$ be a maximal net satisfying \eqref{e:net} in 
\begin{align}
	\mathscr{C}_k = \{ (X_Q,t_Q) \colon Q \in S \cap \cubes_{s(k)}\}.
\end{align} 
For $j \in J_k,$ let $B_{j,k} = B_{r_k}(X_{j,k},t_{j,k})$ and $P_{j,k} = P_Q,$ where $Q \in S \cap \cubes_{s(k)}$ is such that $(X_{j,k},t_{j,k}) = (X_Q,t_Q).$ For $M$ large enough, it follows from \eqref{e:flatness-condition-2} that
\begin{align}\label{e:close-to-P}
	\pdist(X,t,P_{j,k}) \lesssim  \ve M r_k \quad \mbox{and} \quad \pdist(Y,s,E) \lesssim \ve M r_k
\end{align}
for all $(X,t) \in E \cap 100B_{j,k}$ and $(Y,s) \in P_{j,k} \cap 100B_{j,k}, \ j \in J_k.$ 

\begin{lemma}\label{l:CCBP}
	For $M$ large enough, the triple $(P_S,\{B_{j,k}\},\{P_{j,k}\})$ defines a $\CCCBP \ve$-CCBP for some constant $\CCCBP \sim M$. 
\end{lemma}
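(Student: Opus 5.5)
We verify the four conditions of Definition \ref{d:CCBP} directly, with $k_0=0$. Everything rests on the two-sided estimate \eqref{e:close-to-P}, together with the observation that a bilateral approximation of the same piece of $E$ by two vertical planes forces the planes to be close. The basic comparison tool is the following. Suppose $B$ is a ball centred on $E$ with radius comparable to $r_k$, and $P,P'$ are vertical planes with
\[
\pdist(X,t,P),\ \pdist(X,t,P') \lesssim \ve M r_k \quad \mbox{for } (X,t)\in E\cap 100B,
\]
and with every point of $P\cap 100B$ and of $P'\cap 100B$ within $\ve M r_k$ of $E$. Then the triangle inequality gives $\pdist(Y,s,P')\lesssim \ve M r_k$ for $(Y,s)\in P\cap 50B$, and symmetrically. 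Hence $d_{20B}(P,P')\lesssim \ve M$. This is Lemma \ref{l:beta-angle}, or a direct argument, with the ratio of radii bounded.

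\textbf{(1) Coherence of centres.} Take $(X_{j,k},t_{j,k})=(X_Q,t_Q)$ with $Q\in S\cap\cubes_{s(k)}$ and $k\ge 1$. By the stopping-time property, some ancestor $R\in S\cap\cubes_{s(k-1)}$ of $Q$ exists. Then $\pd(X_Q,t_Q,X_R,t_R)\le \ell(R)\le r_{k-1}$, since $\Couter\rho^{s(k-1)}<r_{k-1}$. By maximality of the net $\mathscr X_{k-1}$ in $\mathscr C_{k-1}$, the point $(X_R,t_R)$ lies within $r_{k-1}$ of some $(X_{i,k-1},t_{i,k-1})$. So $(X_{j,k},t_{j,k})\in 2B_{i,k-1}$, which is \eqref{e:cinV^2}.

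\textbf{(2)--(4) Compatibility of planes.} For \eqref{e:cdist0}, $J_0$ consists of centres of cubes of generation $s(0)$ inside $Q(S)$. For such a cube $Q$ we have $100B_{j,0}\subseteq MB_{Q(S)}$ once $M$ is large. We therefore apply the comparison with $P=P_{j,0}$ and $P'=P_S$, using \eqref{e:flatness-condition-2} at scale $M\ell(Q(S))$. Since $r_0\sim\ell(Q(S))$, this gives $d_{100B_{j,0}}(P_{j,0},P_0)\lesssim \ve M$. Condition \eqref{e:c100rk} works the same way: if $\pd(X_{i,k},t_{i,k},X_{j,k},t_{j,k})\le 100r_k$, then both planes satisfy \eqref{e:close-to-P} on a common ball of radius $\sim r_k$ centred on $E$. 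This also uses the fact that $MB_Q$ for either cube contains $200B_{i,k}$ when $M\ge 10^4\rho^{-1}$. Condition \eqref{e:c2r_k} is handled likewise, with the scales $r_k$ and $r_{k+1}$ comparable up to the factor $10$.

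In each case the constant is $C\ve M$ with $C$ depending only on $n,d,\rho$ and the Ahlfors constant, so the triple is a $\CCCBP\ve$-CCBP with $\CCCBP\sim M$.

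\textbf{Main obstacle.} The delicate point is the lower half of the bilateral estimate: every point of $P_{j,k}\cap100B_{j,k}$ must be $\ve Mr_k$-close to $E$ on the \emph{whole} ball $100B_{j,k}$, which is larger than $B_Q$. This is exactly why we use $\bilatbeta_E(MB_Q,P_Q)\le2\ve$ with $M$ large, and rescale via Lemma \ref{l:monotonicity}, which costs the factor $M$. We also need to check that the centre of the comparison ball lies on $E$, so that Lemma \ref{l:beta-angle} applies; this holds because all net points are cube centres in $E$.
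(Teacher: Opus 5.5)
Your proposal is correct and follows the paper's proof. Condition (1) comes from the ancestor of $Q$ in $S\cap\cubes_{s(k-1)}$ together with maximality of $\mathscr X_{k-1}$; your single ancestor argument merges the paper's two cases $s(k)=s(k-1)$ and $s(k)=s(k-1)+1$, but note that $(X_Q,t_Q)\in B_R$ only gives $\pd\le 2\ell(R)$, so you should conclude via the cylinder inclusion $B_R\subseteq B_{r_{k-1}+\ell(R)}(X_{i,k-1},t_{i,k-1})\subseteq 2B_{i,k-1}$ as the paper does. Conditions (3)--(4) come from Lemma \ref{l:beta-angle} (or your direct triangle-inequality comparison) on a common ball inside $MB_Q\cap MB_R$, just as in the paper; the only difference is (2), where the paper simply observes $J_0=\{j_0\}$ with $P_{j_0,0}=P_S$, so your comparison argument there is valid but unnecessary.
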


\begin{proof}
	We start with Definition \ref{d:CCBP} \eqref{CCBP1}. Let $k \geq 0$ and $j \in J_k.$ If $s(k) = s(k-1)$ then $\mathscr{C}_k = \mathscr{C}_{k-1},$ so that $(X_{j,k},t_{j,k}) \in \mathscr{C}_{k-1}.$ Since $\mathscr{X}_{k-1}$ is a maximal net in $\mathscr{C}_{k-1}$ satisfying \eqref{e:net}, there exists $i \in J_{k-1}$ such that $(X_{j,k},t_{j,k}) \in 2B_{i,k-1}.$ Suppose instead that $s(k) > s(k-1),$ so that $s(k-1) = s(k) - 1.$ Let $Q \in S \cap \cubes_{s(k)}$ be such that $(X_{j,k},t_{j,k}) = (X_Q,t_Q).$ Then $R \coloneqq \Parent(Q) \in \cubes_{s(k)-1} = \cubes_{s(k-1)}$ so there exists $i \in J_{k-1}$ such that $(X_{R},t_{R}) \in B_{i,k-1}.$ Since 
	\[ \ell(R) = C_1\rho^{s(k)-1} = C_1\rho^{s(k-1)} \leq r_{k-1},\]
	we have 
	\begin{align}
		(X_{j,k},t_{j,k}) \in R &\subseteq B_{R} \subseteq B_{r_{k-1} + \ell(R)}(X_{i,k-1},t_{i,k-1}) \\
		&\subseteq B_{2r_{k-1}}(X_{i,k-1},t_{i,k-1}) = 2B_{i,k-1}. 
	\end{align}
	
	It is easy to check that $J_0 = \{j_0\}$ where $(X_{j_0,0},t_{j_0,0}) = (X_{Q(S)},t_{Q(S)}).$ Definition \ref{d:CCBP} \eqref{CCBP2} is then immediate since $P_{j_0,0} = P_S.$  
	
	We verify Definition \ref{d:CCBP} \eqref{CCBP3} and \eqref{CCBP4} simultaneously. Let $i \in J_k$ and let $j \in J_k$ (resp. $J_{k+1}$) satisfy the hypothesis of Definition \ref{d:CCBP} \eqref{CCBP3} (resp. \eqref{CCBP4}). Let $Q \in S \cap \cubes_{s(k)}$ such that $(X_Q,t_Q) = (X_{i,k},t_{i,k})$ and let $R \in S \cap \cubes_{s(k)}$ (resp. $S \cap \cubes_{s(k+1)}$ such that $(X_R,t_R)  = (X_{j,k},t_{j,k})$ (resp. $(X_{j,k+1},t_{j,k+1})$). Since $\pd(X_Q,t_Q,X_R,t_R) \leq 100r_k,$ and $\ell(R) \sim \ell(Q) \sim r_k,$ we have 
	\begin{align}
		20B_{i,k} \subseteq 100B_{i,k} \subseteq MB_R
	\end{align}
	for $M$ large enough. It now follows from Lemma \ref{l:beta-angle} and the first inequality in \eqref{e:flatness-condition-2} that  
	\[ d_{20B_{i,k}}(P_Q,P_R) \lesssim d_{100B_{i,k}}(P_Q,P_R) \lesssim M \ve, \]
	as required. 
\end{proof}

\begin{remark}\label{r:first-constants}
	For here on we will assume $M$ is large enough such that \eqref{e:close-to-P} and Lemma \ref{l:CCBP} hold, and that $\ve > 0$ is chosen sufficiently small (depending on $M$) such that $\CCCBP\ve < \ve_0$ (with $\CCCBP$ as in Lemma \ref{l:CCBP} and $\ve_0$ as in Theorem \ref{t:DT}) and such that $100C\CCCBP\ve < 1$ for any implicit constant $C$ in the statement of Theorem \ref{t:DT}. 
\end{remark}

 Let $\Sigma_S$ be the set constructed by applying Theorem \ref{t:DT} with $(P_S,\{B_{j,k}\},\{P_{j,k}\})$ and let $\Sigma_{S,k}$ denote the sequence of approximating sets as in Theorem \ref{t:DT} \eqref{DT5}. The fact that $\Sigma_S$ is $C\ve$-Reifenberg flat is immediate from Theorem \ref{t:DT} \eqref{DT7}. Before we can check that $\Sigma_S$ is parabolic Ahlfors $(\idim+2)$-regular, we need the following. 
 
\begin{lemma}\label{l:locally-sigma-k}
 	For $(X,t) \in \Sigma_S \setminus E$, let $k=k(X,t)$ be the largest integer with $(X,t) \in V_{k-1}^{11}$. Then, 
 		\begin{align}\label{e:sigma^N to sigma}
 		\pdist(X,t,E) \lesssim \CCCBP \ve r_{k(X,t)}
 	\end{align}
 	and 
 	\begin{align}\label{e:local-sigma_k}
 		\Sigma _S \cap B_{r_{k(X,t)}}(X,t) = \Sigma_{S,k(X,t)} \cap B_{r_{k(X,t)}}(X,t) . 
 	\end{align}
\end{lemma}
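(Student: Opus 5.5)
The plan is to follow the trajectory argument of \cite[Chapter 6]{david2012reifenberg}, using Lemma \ref{l:traj}. Fix $(X,t) \in \Sigma_S \setminus E$ and write $(X,t) = f(Y,t)$ with $(Y,t) \in P_S$. Set $(Y_m,t) = f_m(Y,t)$ for $m \geq 0$, so that $\|(Y_m,t)-(X,t)\| \lesssim \CCCBP\ve r_m$ by \eqref{e:distf_k}.

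First I will check that $k = k(X,t)$ is well defined and finite. If $(X,t) \in V_{m}^{11}$ for infinitely many $m$, then there are centres $(X_{j,m},t_{j,m}) = (X_Q,t_Q) \in E$ with $\pd(X,t,X_{j,m},t_{j,m}) \leq 11 r_m \to 0$. Since $E$ is closed, this forces $(X,t) \in E$, a contradiction. (The case where $(X,t)$ lies in no $V^{11}_m$ at all is handled the same way with $k=0$, using $\Sigma_{S,0}=P_S$.)

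For \eqref{e:sigma^N to sigma}, pick $j \in J_{k-1}$ with $(X,t) \in 11B_{j,k-1}$. By \eqref{Sigma_kPlane} and the graph description \eqref{e:graphA}, $\Sigma_{S,k-1}$ near $B_{j,k-1}$ is $C\CCCBP\ve r_{k-1}$-close to $P_{j,k-1}$. Combining this with \eqref{Sigma_kSigma}, I will deduce $\pdist(X,t,P_{j,k-1}\cap 100B_{j,k-1}) \lesssim \CCCBP\ve r_{k}$. The second estimate in \eqref{e:close-to-P} then gives $\pdist(X,t,E) \lesssim \ve M r_k \lesssim \CCCBP\ve r_k$, using $\CCCBP \sim M$ and $r_{k-1} = 10 r_k$.

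For \eqref{e:local-sigma_k}, the key claim is that every trajectory passing near $B_{r_k}(X,t)$ has frozen by time $k$. Let $(Z,s) \in P_S$ be such that either $f(Z,s) \in B_{r_k}(X,t)$ or $f_k(Z,s) \in B_{r_k}(X,t)$. In both cases \eqref{e:distf_k} gives $f_k(Z,s) \in B_{2r_k}(X,t)$, provided $\ve$ is small.

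I then need $f_k(Z,s) \notin V_k^{10}$. Suppose otherwise. Then Lemma \ref{l:traj} cannot directly be applied at level $k$, so I will argue that $(X,t)$ itself would be close to $V_m^{10}$ for some $m \geq k$, contradicting the maximality of $k$.
\begin{itemize}
\item If $f_m(Z,s) \in V_m^{10}$ for some $m \geq k$, then by the second part of Lemma \ref{l:traj} it follows that $f_\ell(Z,s) \in V^4_\ell$ for $k \leq \ell < m$, and this holds up to arbitrarily large $\ell$ whenever there are infinitely many such $m$.
\item In particular $f_k(Z,s) \in V_k^{10}$ together with $\pd(f_k(Z,s),(X,t)) \leq 2r_k$ gives $(X,t) \in V_k^{12}$.
\item The margin in the definition of $k$ (it uses $V^{11}$ at level $k-1$, and $11 r_{k-1} = 110 r_k$) is what should absorb this. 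I expect to adjust the comparison by using level $k-1$ against level $k$, i.e.\ to compare $V^{12}_k$ with $V^{11}_{k}$, possibly after a harmless change of constants.
\end{itemize}

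Once the freezing is established, the first part of Lemma \ref{l:traj} (together with Theorem \ref{t:DT} \eqref{DT4}) gives $f(Z,s) = f_k(Z,s)$. This yields both inclusions of \eqref{e:local-sigma_k}.

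The main obstacle is exactly this constant bookkeeping in the last step. One has to show that a point of $\Sigma_{S,k}$ within $2r_k$ of $(X,t)$ cannot lie in $V_k^{10}$ (or in any later $V_m^{10}$) without placing $(X,t)$ in $V_{m-1}^{11}$ for some $m-1 \geq k$. I would first try to prove the cleaner statement $B_{r_k}(X,t) \cap V_m^{10} = \emptyset$ for all $m \geq k$. If this fails because $12 > 11$, I would shrink the radius in \eqref{e:local-sigma_k} to $r_k/2$, or redefine $k$ via $V^{13}$; the later applications only need a comparable radius.
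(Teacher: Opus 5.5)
Your treatment of \eqref{e:sigma^N to sigma} is essentially the paper's. You reach a point of $\Sigma_{S,k-1}\cap 12B_{j,k-1}$ via \eqref{e:distf_k} (the paper iterates Theorem \ref{t:DT} \eqref{DT9} instead), pass to $P_{j,k-1}$ by \eqref{Sigma_kPlane}, and then to $E$ by \eqref{e:close-to-P}. Invoking \eqref{Sigma_kSigma} there goes in the wrong direction, but your point $(Y_{k-1},t)$ already does the job. Checking that $k(X,t)$ is finite via closedness of $E$ is a welcome addition.

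The gap is in \eqref{e:local-sigma_k}: the freezing step $f_k(Z,s)\notin V^{10}_k$ is never proved. Your bullets end with $(X,t)\in V^{12}_k$, which contradicts nothing you know. The fallbacks (radius $r_k/2$, or defining $k$ through $V^{13}$) would prove a different lemma. You are also looking for the margin in the wrong place. The membership $(X,t)\in V^{11}_{k-1}$, with $11r_{k-1}=110r_k$, only feeds the distance estimate. What controls $B_{r_k}(X,t)$ is the exclusion given by maximality, namely $(X,t)\notin V^{11}_m$ for every $m\geq k$.

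This exclusion is what the paper uses. Since $(X,t)\notin V^{11}_k$, for each $j\in J_k$ either $|X-X_{j,k}|>11r_k$ or $|t-t_{j,k}|>121r_k^2$, so $B_{r_k}(X,t)\cap V^{10}_k=\emptyset$. Definition \ref{d:CCBP} \eqref{CCBP1} gives $10B_{j,m+1}\subseteq 3B_{i,m}$ for a suitable $i\in J_m$, hence $V^{10}_m\subseteq V^{10}_k$ for all $m\geq k$. Thus $B_{r_k}(X,t)$ misses every $V^{10}_m$ with $m\geq k$, and Theorem \ref{t:DT} \eqref{DT4} gives the equality.

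In your trajectory language, this settles the case $f_k(Z,s)\in B_{r_k}(X,t)$ at once. For the case $f(Z,s)\in B_{r_k}(X,t)$, the obstruction $12>11$ is an artifact of enlarging to $B_{2r_k}(X,t)$. By \eqref{e:distf_k} and Remark \ref{r:timeslice}, $f_k(Z,s)$ has the same time coordinate as $f(Z,s)$ and lies within $C\CCCBP\ve r_k$ of it in space. So it can enter $V^{10}_k$ only in a layer of width $O(\CCCBP\ve r_k)$ along the boundaries of the boxes $10B_{i,k}$, well away from $V^3_k\supseteq V^{10}_{k+1}$. On that layer $\sigma_k$ is the identity once the bump functions of Lemma \ref{l:theta} vanish near $\partial(10B_{i,k})$, which their construction allows. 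The trajectory is then frozen from level $k$ on by Lemma \ref{l:traj}.

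Finally, your parenthetical about points lying in no $V^{11}_m$ is wrong for \eqref{e:sigma^N to sigma}: such a point lies on $P_S$ and may be far from $E$. The lemma simply presupposes that $k(X,t)$ exists.
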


\begin{proof}
	Fix some $(X,t) \in \Sigma_S \setminus E$ and let $k \coloneqq k(X,t)$. By maximality of $k$, we have $(X,t) \notin V^{11}_k$. Thus, $B_{r_k}(X,t) \subseteq \R^{\adim+1} \setminus V_l^{10}$ for all $\ell \geq k$. Equation \eqref{e:local-sigma_k} then follows from Theorem \ref{t:DT} \eqref{DT4}. 
	
	We now show \eqref{e:sigma^N to sigma}. For this, let $(X_0,t) \in P_S$ such that $(X,t) = f(X_0,t)$ and, for $m \geq 0$, let 
	\begin{align}
		(X_m,t) = f_m(X_0,t) \in \Sigma_{S,m}. 
	\end{align}
	Let $j \in J_{k-1}$ such that $(X,t) \in 11B_{j,k-1}$, which exists by the definition of $k$. Iterating Theorem \ref{t:DT} \eqref{DT9} and summing over a geometric series (recalling also Remark \ref{r:first-constants}), we have 
	\begin{align}
		\pd(X,t,X_{k-1},t) \lesssim \CCCBP \ve r_k,
	\end{align}
	so that $(X_{k-1},t) \in \Sigma_{S,k-1} \cap 12B_{j,k-1}$. Thus, by \eqref{Sigma_kPlane}, there exists a point $(X_{k-1}',t') \in P_{j,k-1} \cap 13B_{j,k-1}$ such that 
	\begin{align}
		\pd(X_{k-1},t,X_{k-1}',t') \lesssim \CCCBP \ve r_k.
	\end{align}
	 Now, applying the second inequality in \eqref{e:close-to-P}, there exists a further point $(X_{k-1}'',t'') \in E$ such that 
	\begin{align}
		\pd(X_{k-1}',t',X_{k-1}'',t'') \lesssim \CCCBP \ve r_k.
	\end{align}
	Inequality \eqref{e:sigma^N to sigma} now follows from the above three estimates and the triangle inequality. 	
\end{proof}
 
 \begin{lemma}
 	The set $\Sigma_S$ is $C\ve$-Reifenberg flat and Ahlfors $(\idim+2)$-regular with regularity constant depending only on $n$ and the regularity constant for $E$, which we shall denote by $C_0$. 
 \end{lemma}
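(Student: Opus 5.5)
Reifenberg flatness is already given by Theorem \ref{t:DT} \eqref{DT7}, with constant $C\CCCBP\ve$. The lower Ahlfors bound $\sigma(\Sigma_S\cap B_r(X,t))\gtrsim r^{\idim+2}$ follows from Lemma \ref{l:big-projection-RF} (equivalently Theorem \ref{t:DT} \eqref{DT11}), since $\Sigma_S$ is $C\ve$-Reifenberg flat with $\ve$ small. So the whole task is the upper bound
\[
\pHmeasure^{\idim+2}(\Sigma_S\cap B_r(X,t))\lesssim r^{\idim+2}
\qquad\text{for } (X,t)\in\Sigma_S,\ r>0 .
\]
I would prove it by splitting $\Sigma_S$ into the part near $E$ and the part away from it, using Lemma \ref{l:locally-sigma-k}.

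First, the piece $\Sigma_S\cap E$ is harmless: $\pHmeasure^{\idim+2}(\Sigma_S\cap E\cap B_r)\le \pHmeasure^{\idim+2}(E\cap B_{2r}(Y,s))\lesssim C_0 r^{\idim+2}$ for a suitable $(Y,s)\in E$. For large $r$ (beyond $\diam$ of the support of the construction), $\Sigma_S$ coincides with $P_S$ outside a bounded set, by Theorem \ref{t:DT} \eqref{DT4}. Hence it suffices to treat $r\lesssim 1$.

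For the piece $\Sigma_S\setminus E$, I would use the stopping scale $k(Z,u)$ from Lemma \ref{l:locally-sigma-k}. For each $(Z,u)\in\Sigma_S\setminus E$ we have $\Sigma_S\cap B_{r_{k}}(Z,u)=\Sigma_{S,k}\cap B_{r_k}(Z,u)$ with $k=k(Z,u)$. By Theorem \ref{t:DT} \eqref{DT6}, this is a $\Lip(1,1/2,C\ve)$ graph over a vertical plane, so $\pHmeasure^{\idim+2}(\Sigma_S\cap B_{r_k/2}(Z,u))\lesssim r_k^{\idim+2}$. Moreover $(Z,u)\in V^{11}_{k-1}$, so there is a center $(X_{j,k-1},t_{j,k-1})=(X_Q,t_Q)$ with $Q\in S\cap\cubes_{s(k-1)}$ and $\pd((Z,u),(X_Q,t_Q))\le 11r_{k-1}$. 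By \eqref{e:sigma^N to sigma}, $\pdist(Z,u,E)\lesssim\CCCBP\ve r_k$, and $\CCCBP\ve\ll1$ by Remark \ref{r:first-constants}.

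Now fix $B=B_r(X,t)$ and cover $(\Sigma_S\setminus E)\cap B$ by the balls $B_{r_{k(Z,u)}/10}(Z,u)$. By Vitali, pick a disjoint subfamily $\{B_i\}$ with radii $\rho_i$, so that the balls $5B_i$ cover. Consider only $\rho_i\lesssim r$; larger stopping scales are handled by the single graph estimate, since then $B$ lies in one graph piece. Each $B_i$ has $\pdist(Z_i,E)\lesssim\ve\rho_i$. Because $\ve\ll1$, the ball $B_i$ contains a ball $B'_i$ of radius $\rho_i/2$ centered on $E$, with $\pHmeasure^{\idim+2}(E\cap B'_i)\gtrsim C_0^{-1}\rho_i^{\idim+2}$. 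The balls $B'_i$ are disjoint and contained in $CB$. Therefore
\[
\pHmeasure^{\idim+2}((\Sigma_S\setminus E)\cap B)\le\sum_i \pHmeasure^{\idim+2}(\Sigma_S\cap 5B_i)\lesssim\sum_i\rho_i^{\idim+2}\lesssim C_0\sum_i\pHmeasure^{\idim+2}(E\cap B_i')\le C_0\,\pHmeasure^{\idim+2}(E\cap CB)\lesssim C_0^2r^{\idim+2}.
\]
The last step needs a point of $E$ near the center of $B$, which is available because every point of $\Sigma_S\cap B$ lies within $\lesssim r$ of $E$.

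The main obstacle is the bound $\pHmeasure^{\idim+2}(\Sigma_S\cap 5B_i)\lesssim\rho_i^{\idim+2}$. The equality with $\Sigma_{S,k}$ is only guaranteed on $B_{r_k}(Z_i)$, and $5\rho_i=r_k/2$ is within that radius, so this works provided the Vitali radii are chosen as $r_k/10$. I would also double-check that the graph description of Theorem \ref{t:DT} \eqref{DT6} at scale $19r_k$ gives the upper measure bound with a constant depending only on $\idim$. This holds since a $\Lip(1,1/2,C\ve)$ graph over a vertical plane has $\pHmeasure^{\idim+2}$ comparable to that of the plane.
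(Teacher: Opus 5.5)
Your proof is correct and is essentially the paper's argument. You split off $\Sigma_S\cap E$, cover $\Sigma_S\setminus E$ by balls at the stopping scale of Lemma \ref{l:locally-sigma-k} on which $\Sigma_S$ is a $\Lip(1,1/2,C\ve)$ graph, and transfer $\sum_i r_{k_i}^{\idim+2}$ to disjoint balls centered on $E$ via Ahlfors regularity of $E$. Your Vitali selection with radii $r_k/10$ (in place of the paper's maximal $\max\{r_{k_i},r_{k_j}\}/2$-separated family) keeps the enlarged balls inside $B_{r_k}$, where $\Sigma_S=\Sigma_{S,k}$. Together with your explicit treatment of large $r$ and large stopping scales, this tidies up steps the paper leaves implicit.

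One small point is missing from both your proof and the paper's. Points of $\Sigma_S$ outside $V^{11}_0$ have no stopping scale and need not be close to $E$. However, they lie on $P_S$ (by Theorem \ref{t:DT} \eqref{DT3}, \eqref{DT4} and Lemma \ref{l:traj}), so their contribution to $\pHmeasure^{\idim+2}(\Sigma_S\cap B_r(X,t))$ is trivially $\lesssim r^{\idim+2}$.
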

 
\begin{proof}
	Reifenberg flatness and lower regularity of $\Sigma_S$ are immediate from Theorem \ref{t:DT} \eqref{DT7} and \eqref{DT11}, respectively. Thus, we only need to check the upper regularity. For this, fix some $(X,t) \in \Sigma_S$ and $r > 0.$ 
	
	Recall the definition of $k(Y,s)$ from Lemma \ref{l:locally-sigma-k} and $\{(X_i,t_i)\}_{i \in I}$ be a maximal collection of points in $(\Sigma_S \setminus E) \cap B_r(X,t)$ such that 
	\begin{align}\label{e:separated}
		\pd(X_i,t_i,X_j,t_j) \geq \max\{r_{k(X_i,t_i)}, r_{k(X_j,t_j)}\}/2 \quad \mbox{ for all distinct } i,j \in I.
	\end{align}
	For brevity, we will write $r_{k_i} = r_{k(X_i,t_i)}$ and $B_i = B_{r_{k_i}}(X_i,t_i).$ By maximality, we have 
	\begin{align}\label{e:ur1}
		(\Sigma_S \setminus E) \cap B_r(X,t)\subseteq \bigcup_{i \in I} (\Sigma_S \cap B_i). 
	\end{align}
	
	For $i \in I$, let $(X_i',t_i')$ be a point in $E$ such that $\pdist(X_i,t_i,E) = \pd(X_i,t_i,X_i',t_i')$ and define $B_i' = B_{r_{k_i}/32}(X_i',t_i').$ We claim that 
	\begin{align}\label{e:ur2}
		B_i' \cap B_j' =\emptyset \mbox{ for } i \not=j.
	\end{align}
	Indeed, suppose towards a contradiction that there exists $(Z,u) \in B_i' \cap B_j'.$ By \eqref{e:sigma^N to sigma}, $\pd(X_i,t_i,\Sigma) \lesssim \CCCBP  \ve r_{k_i}$ for each $i \in I.$ Thus (recalling again Remark \ref{r:first-constants}),
	\begin{align}
		\pd(X_i,t_i,X_j,t_j) &\leq \pd(X_i,t_i,X_i',t_i') + \pd(X_i',t_i',Z,u)  \\
		&\hspace{4em} + \pd(Z,u,X_j',t_j') + \pd(X_j',t_j',X_j,t_j) \\
		&\leq C \ve r_{k_i} + r_{k_i}/16 + r_{k_j}/16 + C\ve r_{k_j} \leq \max\{r_{k_i},r_{k_j}\}/4,
	\end{align}
	which contradicts \eqref{e:separated}.  
	
	By Theorem \ref{t:DT} \eqref{DT6} and \eqref{e:local-sigma_k}, we have that $\Sigma_{S} \cap B_i$ is the graph of some $\text{Lip}(1,1/2,C\ve)$ function. Hence, 
	\begin{align}\label{e:ur3}
		\pHmeasure^{\idim+2}(\Sigma_S \cap B_i) \lesssim r_{k_i}^{\idim+2}.
	\end{align}
	Using \eqref{e:ur1}, \eqref{e:ur2} and \eqref{e:ur3} with the fact that $E$ is parabolic Ahlfors $(\idim+2)$-regular, we have 
	\begin{align}
		\pHmeasure^{\idim+2}((\Sigma_S\setminus
         E) \cap B_r(X,t) ) &\leq \sum_{i \in I} \pHmeasure^{\idim+2}(\Sigma_S \cap B_i) \lesssim \sum_{i \in I} r_{k_i}^{\idim+2} \\
		&\lesssim_{C_0} \sum_{i \in I} \pHmeasure^{\idim+2}(E\cap B_i') \\
		&\leq \pHmeasure^{\idim+2}(E \cap B_{2r}(X,t)) \lesssim_{C_0} r^{\idim+2}.
	\end{align}
	Thus, 
	\begin{align}\pHmeasure^{\idim+2}(\Sigma_S \cap B_r(X,t)) &\leq \pHmeasure^{\idim+2}(\Sigma_S \cap E \cap B_r(X,t)) + \pHmeasure^{\idim+2}((\Sigma_S \setminus E) \cap B_r(X,t) ) \\
	&\lesssim_{C_0} r^{\idim+2}.
\end{align}
as required. 
\end{proof}

The following finishes the proof of Proposition \ref{p:BWGL-RF-corona}. 

\begin{lemma}
	For each $Q \in S$, $(X,t) \in E \cap 3B_Q$ and $(Y,s) \in \Sigma_S \cap 3B_Q$ we have 
	\begin{align}\label{e:proof-of-distances}
		\pdist(X,t,\Sigma_S) \lesssim  \ve M \ell(Q) \quad \mbox{ and } \quad  \pdist(Y,s,E) \lesssim  \ve M \ell(Q). 
	\end{align}
\end{lemma}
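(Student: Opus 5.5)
We prove the two inequalities in \eqref{e:proof-of-distances} separately, both times moving to a scale $r_k \sim \ell(Q)$ and using \eqref{e:close-to-P} together with the flatness of the intermediate surfaces $\Sigma_{S,k}$. Fix $Q \in S$ and let $k \geq 0$ be such that $s(k)$ equals the generation of $Q$. Then $r_k \sim \ell(Q)$; the case where several $k$ share the same $s(k)$ is handled by taking the largest such $k$. Since $(X_Q,t_Q) \in \mathscr{C}_k$ and $\mathscr{X}_k$ is a maximal $r_k$-separated net in $\mathscr{C}_k$, there is $j \in J_k$ with $\pd(X_Q,t_Q,X_{j,k},t_{j,k}) \leq r_k$. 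Because $3B_Q \subseteq B_{4\ell(Q)}(X_Q,t_Q)$ and $\ell(Q) \leq C r_k$ with $C$ depending only on $\rho$ and $\Couter$, we get $3B_Q \subseteq cB_{j,k}$ for some fixed $c$. If necessary, we replace $100$ by a slightly larger dilate in \eqref{e:close-to-P}; this only requires $M$ to be larger, which is allowed.

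\textbf{First inequality.} Let $(X,t) \in E \cap 3B_Q$.
\begin{enumerate}
\item By \eqref{e:close-to-P} there is $(Y,s) \in P_{j,k}$ with $\pd(X,t,Y,s) \lesssim \ve M r_k$, and $(Y,s)$ still lies in $49B_{j,k}$ (enlarging the constants in $49$ if needed).
\item By Theorem \ref{t:DT} \eqref{DT5}, $\Sigma_{S,k}$ coincides with a $\Lip(1,1/2,C\CCCBP\ve)$ graph $\Gamma_{j,k}$ over $P_{j,k}\cap 49B_{j,k}$ inside $D(X_{j,k},t_{j,k},P_{j,k},49r_k)$, with $\|A_{j,k}(X_{j,k},t_{j,k})\| \lesssim \ve r_k$. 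Hence $(Y,s)+A_{j,k}(Y,s) \in \Sigma_{S,k}$ lies within $C\CCCBP\ve r_k$ of $(Y,s)$.
\item By \eqref{Sigma_kSigma}, this point of $\Sigma_{S,k}$ is within $C\CCCBP\ve r_k$ of $\Sigma_S$.
\end{enumerate}
Since $\CCCBP \sim M$, summing the three errors gives $\pdist(X,t,\Sigma_S) \lesssim \ve M \ell(Q)$.

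\textbf{Second inequality.} This is the harder direction, because a point of $\Sigma_S$ near $Q$ need not lie over a ball $B_{j,k}$ at scale $k$ in any obvious way. Let $(Y,s) \in \Sigma_S \cap 3B_Q$, and write $(Y,s) = f(Z,s)$ with $(Z,s) \in P_S$ and $(Y_m,s) = f_m(Z,s)$.
\begin{enumerate}
\item By \eqref{e:distf_k}, $\pd(Y,s,Y_k,s) \lesssim \CCCBP \ve r_k$, so $(Y_k,s) \in \Sigma_{S,k} \cap c'B_{j,k}$.
\item The key step is to show that $\Sigma_{S,k}$ near $B_{j,k}$ is close to $P_{j,k}$. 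By \eqref{Sigma_kPlane}, within $49B_{j,k}$ the set $\Sigma_{S,k}$ is a small Lipschitz graph over $P_{j,k}$, and it contains $(Y_k,s)$ provided $c' < 49$, which holds by the choice above. Hence $\pdist(Y_k,s,P_{j,k}) \lesssim \CCCBP\ve r_k$, and the nearby point of $P_{j,k}$ lies in $50B_{j,k}$.
\item By the second inequality in \eqref{e:close-to-P}, that point of $P_{j,k}$ is within $\ve M r_k$ of $E$.
\end{enumerate}
By the triangle inequality, $\pdist(Y,s,E) \lesssim \ve M \ell(Q)$.

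As an alternative for the second inequality, one can argue through Lemma \ref{l:locally-sigma-k}: for $(Y,s) \notin E$ it gives $\pdist(Y,s,E) \lesssim \CCCBP\ve r_{k(Y,s)}$. This suffices if $r_{k(Y,s)} \lesssim r_k$. If instead $k(Y,s) < k$, then $(Y,s) \notin V_{k(Y,s)}^{11}$. This contradicts the fact that $(Y,s)$ lies within $c r_k$ of $(X_{j,k},t_{j,k}) \in V_{k(Y,s)}^{2}$, which follows from iterating \eqref{e:cinV^2}, once $c$ is small compared with $11$. I would use whichever of the two routes keeps the constant bookkeeping cleaner.
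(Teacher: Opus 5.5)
Your proposal is correct and follows the paper's proof essentially step for step. Both directions pass through $P_{j,k}$ and $\Sigma_{S,k}$ at a scale with $\ell(Q)<r_k\lesssim\ell(Q)$, using \eqref{e:close-to-P}, Theorem \ref{t:DT} \eqref{DT5}, and the comparison between $\Sigma_{S,k}$ and $\Sigma_S$. In fact $\ell(Q)<r_k$ follows directly from the definition of $s(k)$, with no $\rho$-dependence, so $3B_Q\subseteq 5B_{j,k}$ with an absolute dilation, which is what justifies your $c'<49$. Your use of \eqref{e:distf_k} in the first step of the second inequality is the more accurate reference: \eqref{Sigma_kSigma}, which the paper cites there, only bounds the distance from $\Sigma_{S,k}$ to $\Sigma_S$, not the reverse.
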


\begin{proof}
	Fix some $Q \in S$. Let $k \geq 0$ be such that $(X_Q,t_Q) \in \mathscr{C}_k$, let $j \in J_k$ such that $(X_Q,t_Q) \in B_{j,k}$ (which exists by maximality) and let $R \in \cubes_{s(k)}$ be the cube associated to $(X_{j,k},t_{j,k}).$ Since $\ell(Q) < r_k,$ it follows from the triangle inequality that 
	\begin{align}\label{e:ball-containment}
		3B_Q \subseteq 5B_{j,k}.
	\end{align}
	
	To prove the left-most inequality in \eqref{e:proof-of-distances} we proceed as follows. Fix some $(X,t) \in E \cap 3B_Q$. By \eqref{e:close-to-P} and \eqref{e:ball-containment}, there exists $(X',t') \in P_{j,k} \cap 6B_{j,k}$ such that $\pd(X,t,X',t') \lesssim \CCCBP \ve r_k \lesssim \CCCBP \ve \ell(Q).$ By Theorem \ref{t:DT} \eqref{DT5}, since $(X',t') \in P_{j,k} \cap 6B_{j,k}$, there is $(X'',t'') \in \Sigma_{S,k} \cap 7B_{j,k}$ such that $\pd(X',t',X'',t'') \lesssim \CCCBP \ve r_k \lesssim \CCCBP \ve \ell(Q).$ Finally, by Theorem \ref{t:DT} \eqref{DT8}, there exists $(X''',t''') \in \Sigma_S \cap 8B_{j,k}$ such that $\pd(X'',t'',X''',t''') \lesssim \CCCBP \ve r_k \lesssim \ve \ell(Q).$ Combining the previous three estimates with the triangle inequality finishes the proof of the left-most inequality. 
	 
	 The argument for the right-most inequality is similar but in the reverse order. Fix some $(Y,s) \in \Sigma_S \cap 3B_{Q}$. By Theorem \ref{t:DT} \eqref{DT8} and \eqref{e:ball-containment}, there exists $(Y',s') \in \Sigma_{S,k} \cap 6B_{j,k}$ such that $\pd(Y,s,Y',s') \lesssim \CCCBP \ve r_k$. By Theorem \ref{t:DT} \eqref{DT5}, since $(Y',s') \in \Sigma_{S,k} \cap 6B_{j,k}$, there exists $(Y'',s'') \in P_{j,k} \cap 7B_{j,k}$ such that $\pd(Y',s',Y'',s'') \lesssim \CCCBP \ve r_k$. It then follows from \eqref{e:close-to-P} that there exists $(Y''',s''') \in E$ such that $\pd(Y'',s'',Y''',s''') \lesssim \CCCBP \ve r_k$. Combining the previous three estimates with the triangle inequality and the fact that $\CCCBP \sim M$ finishes the proof.  
\end{proof}

\subsection{Approximating Reifenberg flat by Lipschitz graphs with large constant} We show here that Reifenberg flat sets admit corona-type decompositions similar to those stated in Theorem \ref{t:BWGL-bilat-Corona}. The statement will be slightly different than Theorem \ref{t:BWGL-bilat-Corona} in the sense that the Lipschitz is possibly quite large and approximation parameter ($\theta$ in Theorem \ref{t:BWGL-bilat-Corona}) cannot be chosen too small with respect to the Reifenberg parameter. The precise statement is the following. 

\begin{proposition}\label{t:corona-RF-large}
	Let $A,M > 1$ and $\ve,\theta > 0$. Suppose $\Sigma \subseteq \R^{\adim+1}$ is parabolic Ahlfors $(\idim+2)$-regular and $\ve$-Reifenberg flat, $\cubes$ is a system of Christ-David cubes on $\Sigma$ and $\{P_Q\}_{Q \in \cubes}$ is an assignment of $t$-independent planes such that 
	\begin{align}
		(X_Q,t_Q) \in P_Q \quad \mbox{and} \quad \bilatbeta_\Sigma(MB_Q,P_Q) \leq 2 \ve .
	\end{align} 
	Then, provided $M/A$ is sufficiently large and $\ve = \ve(A,M,\theta)$ is sufficiently small, there is a constant $L >1$, a coronization $(\cB,\cG,\cF)$ of $\cubes$ and, for each $S \in \mathscr{F}$, a $\Lip(1,1/2,L)$ map $\psi_S \colon P_{Q(S)} \to (P_{Q(S)})^\perp$ such that the following conditions hold. {First, 
    \begin{align}
       D_{S,P_{Q(S)}}(X,t)\left( |\nabla_X^2 \psi_S(X,t)| + |\partial_t \psi_S(X,t)| \right)\leq \theta  .
    \end{align}}
    Second, if $\Gamma_S$ denotes the graph of $\psi_S$ over $P_{Q(S)}$, then 
	\begin{align}
		\pdist(X,t,\Gamma_S) \leq \theta d_S(X,t)
	\end{align}
	for all $(X,t) \in E \cap AB_{Q(S)}$, and 
	\begin{align}
		\pdist((X,t) + \psi_S(X,t), E) \leq \theta D_{S,P_{Q(S)}}(X,t) 
	\end{align}
	for all $(X,t) \in P_{Q(S)} \cap AB_{Q(S)}$. The packing constants in Definition \ref{d:coronization} can be chosen to depend only on $n$ and the regularity constant for $\Sigma$. 
\end{proposition}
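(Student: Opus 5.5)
The plan is a David--Toro style stopping-time argument, run on $\Sigma$ itself and not on a general set; Reifenberg flatness is what makes it work.

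\textbf{Stopping rule.} Fix a cube $Q_0\in\cubes$ and set $P=P_{Q_0}$. Let $S(Q_0)$ be the maximal stopping-time region with top $Q_0$ obtained by stopping at the first cubes $Q$ (together with their siblings) for which $\angle(P_Q,P)>\alpha$. Here $\alpha$ is a small absolute constant, say $\alpha=1/100$, fixed independently of $\ve$ and $\theta$. By Lemma \ref{l:beta-angle}, consecutive planes differ in angle by $\lesssim M\ve\ll\alpha$. Hence on every cube of $S(Q_0)$ the angle to $P$ is at most $2\alpha$.

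\textbf{Graph construction.} Inside $S(Q_0)$ I would build the CCBP with base plane $P$ exactly as in Lemma \ref{l:CCBP}, using the centres of cubes of $S(Q_0)$ at scale $r_k$. For scales below the stopping cubes, I would freeze the plane $P_Q$ of the minimal cube, after first smoothing $S(Q_0)$ via Lemma \ref{l:smooth} so that neighbouring minimal cubes have comparable size. Apply Theorem \ref{t:DT} and consider the surfaces $\Sigma_k$. By Proposition \ref{p:DT}, each $\Sigma_k$ is locally a small-Lipschitz graph over the $P_{j,k}$. All these planes make angle $\le 2\alpha$ with $P$, so $\Sigma_k$ is globally a graph over $P$ with constant $L\lesssim\alpha+C\ve$, which is bounded. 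The limit $\Gamma_{S(Q_0)}$ of the truncated construction, stopped at scale $\sim d_S$, is then the graph of some $\psi_{S}$.

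\textbf{Estimates.} The derivative bound $D_{S,P}(|\nabla^2_X\psi|+|\partial_t\psi|)\le\theta$ comes from the $C^2$-in-space construction. Near a point where $D_{S,P}\sim r_k$ the surface is a finite composition of the $\sigma_j$, $j\le k$. The bounds on $\theta_{j,k}$ in Lemma \ref{l:theta} give second spatial derivatives $\lesssim\ve r_k^{-1}$ and time derivatives $\lesssim \ve r_k^{-1}$. This is the one place the time-derivative bound is needed: it holds because the plane differences are $\ve$-small while $|\partial_t\theta_{j,k}|\lesssim r_k^{-2}$. Choosing $\ve\ll\theta$ handles it. The bilateral estimates \eqref{e:corona-d}--\eqref{e:corona-D} with $\theta d_S$ follow as in the proof of \eqref{e:proof-of-distances} at scale $r_k\sim d_S(X,t)$. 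The errors are $\lesssim M\ve r_k\le\theta d_S$ once $\ve$ is small, and $M/A$ large covers $AB_{Q(S)}$.

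\textbf{Packing.} Here $\cB=\emptyset$ (or only the finitely many top-scale cubes), and $\cF$ is obtained by restarting the stopping time at every child of every minimal cube. The main obstacle is the Carleson packing of the tops $\{Q(S)\}$, with constants depending only on $n$ and $C_0$. I would argue by measure. By Lemma \ref{l:big-projection-RF}, $\pi_P$ restricted to $\Sigma\cap B_{Q_0}$ has image of nearly full measure. The cubes of $S(Q_0)$ are Lipschitz-graph pieces over $P$, so $\pi_P$ is essentially injective on them, with bounded multiplicity. At a stopping cube the tilt reaches $\alpha$. Then a nearby portion of $\Gamma_S$ of measure $\gtrsim\ell(Q)^{n+2}$ has slope $\gtrsim\alpha$, and this forces a loss of $\gtrsim\alpha^2\ell(Q)^{n+2}$ in the projection compared with the surface measure. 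This is a quadratic area-excess, Pythagoras-type estimate on each time slice. Summing, and using upper Ahlfors regularity of $\Sigma$, gives
\[
\sum_{Q\in\min(S(Q_0))}\ell(Q)^{n+2}\le(1-c)\,\ell(Q_0)^{n+2}
\]
with $c=c(n,C_0)>0$, provided the set $z(S)$ of points where the stopping time never stops has small measure. This per-generation decay gives the Carleson condition by the standard geometric-series argument over generations of tops. If the excess argument fails quantitatively, the fallback is David--Semmes' WHIP-type argument for Reifenberg flat sets: bounded multiplicity of $\pi_P$ on $S$ together with Lemma \ref{l:big-projection-RF} applied at each stopping cube.
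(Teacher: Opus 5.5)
The packing step has a genuine gap, and it comes from your choice of a \emph{small} stopping angle $\alpha=1/100$.

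\textbf{Why the packing argument fails.} At small angle a stopping cube loses nothing at first order under $\pi_P$. Indeed, $Q$ is $\ve$-close to $P_Q$ with $\angle(P_Q,P)\le 2\alpha$, and the topological argument behind Lemma \ref{l:big-projection-RF} still gives $\pHmeasure^{\idim+2}(\pi_P(Q))\gtrsim\surfacemeasure(Q)$. The only effect of the tilt is a second-order area excess of size $\sim\alpha^2\ell(Q)^{\idim+2}$, and an excess estimate is a \emph{lower} bound on surface measure. To turn it into $\sum_{Q\in\min(S)}\ell(Q)^{\idim+2}\le(1-c)\ell(Q_0)^{\idim+2}$, you would need an \emph{upper} bound on the surface measure over $B_{Q_0}$ that exceeds the projected measure by only $o(\alpha^2)\ell(Q_0)^{\idim+2}$. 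Nothing provides one:
\begin{enumerate}
\item upper regularity only gives the constant $C_0$;
\item Lemma \ref{l:big-projection-RF} is a lower bound;
\item the Pythagoras identity holds for the slice measure $\slicemeasure$, which is comparable to $\pHmeasure^{\idim+2}$ only up to constants (Remark \ref{r:slice-measure});
\item on $\Gamma_S$ itself, the slope bound gives total excess $\lesssim\alpha^2\ell(Q_0)^{\idim+2}$. This is the same order as what the stopping cubes consume, so it only reproduces the trivial bound $\sum_{\min(S)}\ell(Q)^{\idim+2}\lesssim\ell(Q_0)^{\idim+2}$.
\end{enumerate}
Your proviso on $z(S)$ is also backwards. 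The sets $z(S)$ are pairwise disjoint, so packing comes from $z(S)$ being \emph{large}, namely $\surfacemeasure(z(S))\gtrsim\surfacemeasure(Q(S))$. If $z(S)$ is small, the minimal cubes essentially tile $Q_0$ and there is no decay to iterate. The WHIP-type fallback fails for the same reason: at small angle there is no projection loss to exploit.

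\textbf{The missing idea.} Take $\alpha$ close to $1$. The statement permits this because it asks only for \emph{some} $L>1$. The paper sets $\cB=\emptyset$, stops on $\angle(P_{R'},P_{Q(S)})>\alpha$ exactly as you do, and takes $\psi_S$ from Lemma \ref{p:graph-construction} with constant $L(\alpha)=4\alpha/\sqrt{1-\alpha^2}$. Then each $Q\in\min(S)$ has a child whose plane is nearly orthogonal to $P_{Q(S)}$. So $\pi_{Q(S)}(Q)$ lies in a thin neighbourhood of a vertical plane of lower spatial dimension, and $\pHmeasure^{\idim+2}(\pi_{Q(S)}(Q))\le(\Cproj/2)\surfacemeasure(Q)$ (Lemma \ref{l:big-angle-small-proj}). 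Meanwhile $\pHmeasure^{\idim+2}(\pi_{Q(S)}(Q(S)))\ge\Cproj\surfacemeasure(Q(S))$ (Lemma \ref{l:RF-BP}). Subtracting gives $\surfacemeasure(z(S))\ge\pHmeasure^{\idim+2}(\pi_{Q(S)}(z(S)))\ge(\Cproj/2)\surfacemeasure(Q(S))$. Disjointness of the $z(S)$ then yields the Carleson packing directly, with constants depending only on $n$ and $C_0$ and no iteration.

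Small angles are only reached afterwards, in Proposition \ref{t:corona-RF-small}, by running Carleson's $L^\infty$ corona construction (Lemma \ref{p:carleson-corona}) on $\nabla\psi_{S'}$. There the bound $\|\nabla\psi_{S'}\|_\infty\lesssim L$ supplies the energy control your area-excess heuristic is missing.

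Separately, getting a \emph{global} graph over $P$ from Theorem \ref{t:DT} needs an injectivity argument for $\pi_P$, such as Lemma \ref{l:invertible}. Local graph structure over nearby planes is not enough.
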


Lemma \ref{p:graph-construction} below will be important for the proof of Proposition \ref{t:corona-RF-large} (and Proposition \ref{t:corona-RF-small} later). It gives a way to construct Lipschitz graphs in stopping-time regions where every cube is flat and the best approximating planes in some stopping-time region make a small angle with the top plane in that region. The first part (assuming only the unilateral condition \eqref{e:unilat-approx}) is proved in \cite[Section 4]{bortz2023corona} in the codimension 1 case, based on the original (Euclidean) argument of David and Semmes \cite{david1991singular}. A version of the second part (assuming the bilateral condition \eqref{e:bilat-approx}) also appears in \cite{bortz2023corona}, however, the conclusion is slightly weaker than what we need here. We note that, in \cite{bortz2023corona}, it is further needed that $\alpha$ be chosen sufficiently small. This is only to utilise the small angle approximation for the sine function, which is then used to show that the Lipschitz constant is comparable to $\alpha.$ The proof still works without this smallest condition, however, it produces instead a Lipschitz graphs with some constant that depends on $\alpha.$ We have included some details in Section \ref{s:graphs} regarding the (minor) adjustments that need to be made in the large angle and general co-dimension case. We prove \eqref{e:close-to-E} in the same section.

 For the statement of Lemma \ref{p:graph-construction}, let us define $L \colon (0,1) \to \R$ by 
\begin{align}\label{d:L}
	L(\alpha) \coloneqq \frac{4\alpha}{\sqrt{1-\alpha^2}}.
\end{align}

\begin{lemma}\label{p:graph-construction}
	Let $A,M > 1$ and $\alpha, \ve \in (0,1).$ Let $\Sigma \subseteq \R^{\adim+2}$ be parabolic Ahlfors $(\idim+2)$-regular, $\cubes$ be a system of Christ-David cubes on $\Sigma$, $S \subseteq \cubes$ a stopping-time region, and suppose $\{P_Q\}_{Q \in S}$ is an assignment of $t$-independent planes such that 
	\begin{align}\label{e:unilat-approx}
		(X_Q,t_Q) \in P_Q, \quad \unilatbeta_\Sigma(MB_Q,P_Q) \leq \ve \quad \mbox{ and } \quad \angle(P_Q,P_{Q(S)}) \leq \alpha
	\end{align}
	for all $Q \in S.$ Then, provided $M/A$ is sufficiently large and $\ve = \ve(M,\alpha)$ is sufficiently small, there exists a $\Lip(1,1/2,L(\alpha))$ map $\psi_S \colon P_{Q(S)} \to (P_{Q(S)})^\perp$ such that the following conditions hold. {First, 
    \begin{align}
       D_{S,P_{Q(S)}}(X,t)\left( |\nabla_X^2 \psi_S(X,t)| + |\partial_t \psi_S(X,t)| \right)\lesssim \ve  .
    \end{align}}
    Second, if $\Gamma_S$ is the graph of $\psi_S$ over $P_{Q(S)}$, then 
	\begin{align}\label{e:E-close-to-gamma}
		\pdist(X,t,\Gamma_S) \lesssim \ve M d_S(X,t)
	\end{align}
	for all $(X,t) \in E \cap AB_{Q(S)}.$ Finally, if 
	\begin{align}\label{e:bilat-approx}
		\bilatbeta_\Sigma(MB_Q,P_Q) \leq \ve 
	\end{align}
	for all $Q \in S$, then 
	\begin{align}\label{e:close-to-E} 
		\pdist((X,t) + \psi_S(X,t),E) \lesssim \ve M D_{S,P_{Q(S)}}(X,t)
	\end{align} 
	for all $(X,t) \in P_{Q(S)} \cap AB_{Q(S)}$. 
\end{lemma}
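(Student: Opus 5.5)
We follow the David--Semmes construction of a Lipschitz graph from a flat stopping-time region, in the parabolic form of \cite[Section 4]{bortz2023corona}, adapted to general codimension and large angles. Normalize so that $Q(S)\in\cubes_0$, $\ell(Q(S))=1$, $P_0\coloneqq P_{Q(S)}$ is linear, and write $\pi=\pi_{P_0}$, $\pi^\perp=\pi_{P_0}^\perp$.

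\emph{Step 1: $\Sigma$ is a graph at the scale of $d_S$.} Since every $Q\in S$ satisfies $\angle(P_Q,P_0)\le\alpha$, and $\Sigma\cap MB_Q$ lies within $\ve M\ell(Q)$ of $P_Q$, we will show the following. If $(X,t),(Y,s)\in\Sigma\cap AB_{Q(S)}$ satisfy $\pd(X,t,Y,s)\ge C(d_S(X,t)+d_S(Y,s))$, then
\[
\|\pi^\perp(X-Y,0)\|\le \tfrac{\alpha}{\sqrt{1-\alpha^2}}(1+C\ve)\,\|\pi(X,t)-\pi(Y,s)\|.
\]
To see this, pick $Q\in S$ with $\ell(Q)\sim\pd(X,t,Y,s)$ near both points. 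Both points lie close to $P_Q$, and $P_Q$ is the graph over $P_0$ of a linear map with slope at most $\alpha/\sqrt{1-\alpha^2}$. This is where $L(\alpha)$ enters: the tangent of the angle replaces the small-angle sine approximation. Only the spatial component can be tilted because the planes are vertical, and the time parts of $\pi$ are trivial. The constant $4$ in $L(\alpha)$ absorbs the parabolic $t$-increments in the Lip$(1,1/2)$ norm.

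\emph{Step 2: Whitney-type extension.} Define $D(p)\coloneqq D_{S,P_0}(p)$ for $p\in P_0$. The set $Z=\{D=0\}$ is closed, and on $Z$ Step 1 defines $\psi_S$ by $\psi_S(\pi(x))=\pi^\perp(x)$ for $x\in z(S)$; this is well defined and $L(\alpha)$-Lipschitz. On $P_0\setminus Z$, take a parabolic Whitney cover by balls $B_i$ with $r_i\sim \tau D$ on $B_i$, and a partition of unity $\{\phi_i\}$ adapted to these balls, smooth in space with $|\nabla_X^m\phi_i|\lesssim r_i^{-m}$ and $|\partial_t\phi_i|\lesssim r_i^{-2}$. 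For each $i$ choose $Q_i\in S$ with $\ell(Q_i)\sim r_i$ and $\pi(Q_i)$ near $B_i$, which is possible by the definition of $D$. The plane $P_{Q_i}$ is the graph of an affine map $a_i:P_0\to P_0^\perp$ that depends only on $X$, and we set $\psi_S=\sum_i\phi_i a_i$. Neighbouring $Q_i,Q_j$ are close, so Lemma~\ref{l:beta-angle} gives $|a_i-a_j|\lesssim\ve M r_i$ on $2B_i$ and $|\nabla a_i-\nabla a_j|\lesssim \ve M$. Since $\sum_i\nabla\phi_i=0$ and $\sum_i\partial_t\phi_i=0$, we can rewrite $\nabla^2_X\psi_S$ and $\partial_t\psi_S$ locally as sums involving $a_i-a_j$. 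This yields $D\,(|\nabla^2_X\psi_S|+|\partial_t\psi_S|)\lesssim\ve$, after shrinking $\ve$ relative to $M$. The Lip$(1,1/2,L(\alpha))$ bound follows in three cases: locally from $|\nabla a_i|\le\alpha/\sqrt{1-\alpha^2}+C\ve$, in time from the $\partial_t$ bound integrated over Whitney scales, and at large separations from Step 1.

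\emph{Step 3: Approximation properties.} For \eqref{e:E-close-to-gamma}, take $x\in\Sigma\cap AB_{Q(S)}$. If $d_S(x)=0$ then $x\in\overline{z(S)}$ lies on $\Gamma_S$. Otherwise pick $Q\in S$ with $\ell(Q)+\pdist(x,Q)\lesssim d_S(x)$. Then $x$ is within $\ve M\ell(Q)$ of $P_Q$, and near $\pi(x)$ the map $\psi_S$ agrees with the affine maps $a_i$ up to $\ve M r_i$, with $r_i\lesssim d_S(x)$ because $D(\pi(x))\le d_S(x)$. For \eqref{e:close-to-E} under \eqref{e:bilat-approx}, take $p\in P_0$ and $Q_i$ as above. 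The point $p+\psi_S(p)$ lies within $\ve M r_i$ of $P_{Q_i}\cap MB_{Q_i}$, and the bilateral bound supplies a point of $\Sigma$ within $\ve M\ell(Q_i)\lesssim\ve M D(p)$ of it.

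We expect the main obstacle to be Step 1 together with the global Lipschitz estimate, because the $L(\alpha)$ bound must hold without the smallness of $\alpha$. The issue is that when two points are separated by many Whitney scales, the planes $P_Q$ encountered along the way may each tilt by up to $\alpha$ in different directions. The natural argument to try first is to use a single cube $Q$ at the scale of the separation: because $\angle(P_Q,P_0)\le\alpha$ is measured directly against the top plane, this should avoid any accumulation of tilts.
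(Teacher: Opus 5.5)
Your overall route is the paper's. Step 1 is a slightly weaker form of Lemma \ref{l:invertible}: one cube at the scale of the separation, plus Lemma \ref{l:planes-make-graphs}. Step 2 is the Whitney extension $\psi=\sum_i A_i\varphi_i$ of Section \ref{s:graphs}, and your argument for \eqref{e:close-to-E} is exactly the paper's.

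One step, however, does not work as justified. You get $|a_i-a_j|\lesssim \ve M r_i$ and $|\nabla a_i-\nabla a_j|\lesssim \ve M$ from Lemma \ref{l:beta-angle}. That lemma is about the bilateral numbers $\bilatbeta$, whereas the Lipschitz bound, the derivative bound and \eqref{e:E-close-to-gamma} must be proved under the unilateral hypothesis \eqref{e:unilat-approx} alone. Knowing only that $\Sigma\cap MB_Q$ lies near both $P_Q$ and $P_{Q'}$ says nothing about the two planes unless $\Sigma$ is genuinely $\idim$-dimensional there. The missing ingredient is Ahlfors regularity. For a ball $B$ of radius $r$ centred on $\Sigma$, the upper bound shows that $\Sigma\cap B$ has $\pHmeasure^{\idim+2}$-measure $\lesssim\lambda r^{\idim+2}$ inside the $\lambda r$-neighbourhood of any vertical plane of spatial dimension $\idim-1$. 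The lower bound gives measure $\gtrsim r^{\idim+2}$. Hence $\Sigma\cap B$ contains $\idim+1$ points whose spatial parts are $\lambda r$-affinely independent. Two vertical $\idim$-planes passing within $\ve M r$ of all of them then agree to within $C_\lambda\ve M r$ on a fixed multiple of $B$. This is what lies behind \eqref{e:small-change-A}, which the paper takes from \cite[Lemma 4.18]{bortz2023corona}.

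Your sketch of \eqref{e:E-close-to-gamma} needs the same cross-scale comparison and skips it. Take $x\in\Sigma$ and $Q\in S$ of size $\sim d_S(x)$ near $x$. Knowing that $\psi_S$ is within $\ve M r_i$ of $a_i$ near $\pi(x)$ helps only if $a_i$ is close to the affine map of $P_Q$ at $\pi(x)$. When $D_{S,P_{Q(S)}}(\pi(x))\ll d_S(x)$, the cube $Q_i$ is much smaller than $Q$. You must first show that $Q_i$ lies within $C d_S(x)$ of $x$ in $\R^{\adim+1}$, not merely in projection, and then compare $P_{Q_i}$ with $P_Q$ by the argument above.

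The first point does follow from your Step 1 applied to $x$ and some $y\in Q_i$. Either its hypothesis fails, in which case $\pd(x,y)\lesssim d_S(x)$, or it bounds $\|\pi^\perp(x-y)\|$ by a multiple of $\|\pi(x)-\pi(y)\|\lesssim D_{S,P_{Q(S)}}(\pi(x))$. The paper's Lemma \ref{l:invertible} only requires $\min\{d_S(x),d_S(y)\}\le N\pd(x,y)$, and with it this is immediate.

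Two smaller repairs are also needed.
\begin{enumerate}
\item $\{D=0\}=\pi(F)$ with $F=\{x\in\Sigma\colon d_S(x)=0\}$, and $F$ can be strictly larger than $\overline{z(S)}$. So $\psi_S$ must be defined through $F$, and your claim that $d_S(x)=0$ forces $x\in\overline{z(S)}$ need not hold.
\item Whitney balls with $r_i\gg\ell(Q(S))$ have no $Q_i\in S$ with $\ell(Q_i)\sim r_i$. As the paper does with its index set, use only balls meeting $2\kappa B_{Q(S)}$ (with $\kappa=A/2$) and allow $Q_i=Q(S)$, so that $\ell(Q_i)\sim r_i$ holds only up to a factor $\kappa$. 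Set $a_i\equiv 0$ elsewhere.
\end{enumerate}
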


We can now commence with the proof of Proposition \ref{t:corona-RF-large}. Fix constants $A,M > 1$ and $\ve,\theta > 0,$ and let $\Sigma,\cubes$ and $\{P_Q\}_{Q \in \cubes}$ be as in the statement of Proposition \ref{t:corona-RF-large}. We will construct a coronization $(\cB(Q_0),\cG(Q_0),\cF(Q_0))$ of $\cubes(Q_0)$ satisfying the conclusions of Proposition \ref{t:corona-RF-large} for each $Q_0 \in \cD$. It is well known (see \cite[Page 38]{david1991singular}) that this \textit{local} coronization for each $Q_0$ is sufficient for the \textit{global} corona decomposition as in Proposition \ref{t:corona-RF-large}. 

Let $\alpha \in (0,1)$ (to be chosen sufficiently close to 1). Let $Q_0 \in \cubes$ and assume without loss of generality that $Q_0 \in \cD_0.$ By Lemma \ref{p:graph-construction}, it suffices to show the following. 

\begin{lemma}\label{l:local-corona}
	There exists a coronization $(\cB(Q_0),\cG(Q_0),\cF(Q_0))$ of $\cubes(Q_0)$ such that 
	\begin{align}\label{e:sufficient-condition}
		\angle(P_Q,P_{Q(S)}) \leq \alpha 
	\end{align}
	for all $Q \in S \in \cF(Q_0)$.
\end{lemma}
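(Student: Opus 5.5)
The construction is a stopping-time argument driven by the angle between planes. Since $\Sigma$ is $\ve$-Reifenberg flat, every cube is good; the only reason to stop is that the plane $P_Q$ has turned by a definite amount relative to the top plane. Fix $Q_0\in\cubes_0$. Let $S_1$ be the maximal stopping-time region with top $Q(S_1)=Q_0$ obtained as follows: a cube $Q\subseteq Q_0$ belongs to $S_1$ if every ancestor $R$ of $Q$ with $R \subseteq Q_0$, together with all siblings of $R$ and $Q$, satisfies $\angle(P_R,P_{Q_0})\le \alpha$. Its minimal cubes are those with a child $Q'$ (or a sibling of such a child) satisfying $\angle(P_{Q'},P_{Q_0})>\alpha$. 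We then restart the construction from each child of each minimal cube, and iterate. We set $\cB(Q_0)=\emptyset$, let $\cG(Q_0)=\cubes(Q_0)$, and let $\cF(Q_0)$ be the resulting family of regions. Property \eqref{e:sufficient-condition} then holds by construction. It remains to prove the Carleson packing of the tops $\{Q(S)\}_{S\in\cF(Q_0)}$.

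The first step is a comparability estimate between neighbouring planes. By Lemma \ref{l:beta-angle}, if $Q'$ is a child or sibling of $Q$, then $\angle(P_{Q'},P_Q)\lesssim M\ve$, since both planes approximate $\Sigma$ bilaterally in comparable balls. Consequently, at a minimal cube $Q$ of $S$ we have $\angle(P_Q,P_{Q(S)}) \ge \alpha - CM\ve \ge \alpha/2$. Thus each new region starts with a plane that has rotated by a definite angle (comparable to $\alpha$, which is close to $1$) away from its parent's top plane.

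The key step, and the main obstacle, is to show that such a large rotation costs measure. Let $S\in\cF(Q_0)$ and let $P=P_{Q(S)}$. We apply Lemma \ref{p:graph-construction} to $S$ with angle parameter $\alpha$ (it only requires $\angle \le \alpha<1$). This produces a $\Lip(1,1/2,L(\alpha))$ graph $\Gamma_S$ over $P$ that, by \eqref{e:E-close-to-gamma} and \eqref{e:close-to-E}, approximates $\Sigma$ bilaterally near every $Q\in S$ at scale $\ell(Q)$. Near a minimal cube $Q$, the plane $P_Q$ is $\ve$-close to $\Sigma$ on $MB_Q$ and makes angle at least $\alpha/2$ with $P$, so the projection $\pi_P$ compresses $\Sigma\cap B_Q$. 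I would try to prove an estimate of the form
\begin{align}
\pHmeasure^{\idim+2}\big(\pi_P(\textstyle\bigcup_{Q\in\min(S)}Q)\big)\le (1-c(\alpha))\,\sigma\big(\textstyle\bigcup_{Q\in\min(S)}Q\big),
\end{align}
and combine it with the lower bound $\pHmeasure^{\idim+2}(\pi_P(\Sigma\cap B_{Q(S)}))\ge (1-C\ve)\ell(Q(S))^{\idim+2}$ from Lemma \ref{l:big-projection-RF}. The intended conclusion is that minimal cubes cannot cover almost all of $Q(S)$: $\sigma(z(S)\cup \text{(non-stopped part)})\gtrsim \sigma(Q(S))$ up to a controlled loss. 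If this projection estimate fails as stated, the alternative is a direct comparison. Reifenberg flatness makes $\Sigma$ locally a small-constant graph over $P_Q$ near each minimal cube (by Theorem \ref{t:DT}\eqref{DT6}-type reasoning), so the measure of $\Sigma\cap B_Q$ is about $\ell(Q)^{\idim+2}$. Its $\pi_P$-image, however, has measure at most about $\cos$-type factor times $\ell(Q)^{\idim+2}$ in the spatial directions. Meanwhile, the graph $\Gamma_S$ over $P$ has total mass controlled by $(1+L(\alpha))$ times the projected area.

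Once a definite fraction of each top cube is not covered by minimal cubes, i.e. $\sigma(Q(S)\setminus\bigcup_{\min(S)}Q)\ge c\,\sigma(Q(S))$, the packing follows by the standard argument. The sets $Q(S)\setminus\bigcup_{\min(S)}Q$ are pairwise disjoint over $S\in\cF(Q_0)$, so summing over $S$ with $Q(S)\subseteq R$ gives
\begin{align}
\sum_{Q(S)\subseteq R}\ell(Q(S))^{\idim+2}\lesssim \sigma(R).
\end{align}
This uses the Ahlfors regularity of $\Sigma$, with constants depending only on $n$ and that regularity constant, once $\alpha$ is fixed near $1$ and $\ve$ is small.
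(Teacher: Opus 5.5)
Your stopping-time construction, the choice $\cB(Q_0)=\emptyset$, $\cG(Q_0)=\cubes(Q_0)$, and the final packing via the disjoint sets $z(S)$ all agree with the paper. However, the step that carries the whole proof is left as something you ``would try'', and the estimate you propose for it is too weak to close the argument.

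The only lower bound available for the top cube is $\pHmeasure^{\idim+2}(\pi_{Q(S)}(Q(S)))\ge c_2\,\sigma(Q(S))$, which is Lemma \ref{l:RF-BP}. Note that Lemma \ref{l:big-projection-RF} must be applied to the inner ball $\Sigma\cap B_{c_1\ell(Q(S))}\subseteq Q(S)$, not to $\Sigma\cap B_{Q(S)}$, because the part of $B_{Q(S)}$ outside $Q(S)$ is not covered by minimal cubes of $S$. The constant $c_2$ one can guarantee is of order $c_1^{\idim+2}/C_0$. This is small when the regularity constant $C_0$ of $\Sigma$ is large, since Reifenberg flatness bounds mass from below but not from above.

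Since $\sigma(\bigcup_{Q\in\min(S)}Q)$ can be comparable to $\sigma(Q(S))$, a bound $\pHmeasure^{\idim+2}(\pi_P(\bigcup_{Q\in\min(S)}Q))\le(1-c(\alpha))\,\sigma(\bigcup_{Q\in\min(S)}Q)$ gives nothing unless $1-c(\alpha)<c_2$. What is actually needed is Lemma \ref{l:big-angle-small-proj}: every $Q\in\min(S)$ satisfies $\pHmeasure^{\idim+2}(\pi_{Q(S)}(Q))\le(c_2/2)\,\sigma(Q)$. In other words, the projection must essentially collapse, not merely be compressed. Then $\sigma(z(S))\ge c_2\sigma(Q(S))-(c_2/2)\sum_{Q\in\min(S)}\sigma(Q)\ge(c_2/2)\sigma(Q(S))$.

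This collapse is why $\alpha$ must be taken close to $1$, depending on $c_2$ and hence on $C_0$. Your fallback ``cos-type factor'' comparison is the right idea, but it only works once that factor is made smaller than such a $C_0$-dependent constant. Your reduction to $\angle(P_Q,P_{Q(S)})\ge\alpha/2$ discards exactly this: an angle bounded below by $\alpha/2$ does not force near-orthogonality, so it only yields compression by a factor bounded away from $0$.

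Work instead with the child $R$ of the minimal cube $Q$ for which $\angle(P_R,P_{Q(S)})>\alpha$. For $\alpha$ near $1$, $\pi_{Q(S)}(P_R)$ lies locally in a $\delta\ell(Q)$-neighbourhood of a vertical plane $L\subset P_{Q(S)}$ of one lower spatial dimension. Since $Q\subseteq CB_R$ lies within $CM\ve\ell(Q)$ of $P_R$, the set $\pi_{Q(S)}(Q)$ is contained in a slab of width $(\delta+CM\ve)\ell(Q)$ about $L$, inside a ball of radius $C\ell(Q)$. That region has measure $\lesssim(\delta+CM\ve)\ell(Q)^{\idim+2}$, which is at most $(c_2/2)\sigma(Q)$ by the lower Ahlfors bound once $\delta$ and $\ve$ are small.

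The detour through Lemma \ref{p:graph-construction} does not help here. First, $L(\alpha)\to\infty$ as $\alpha\to1$. Second, a mass bound for $\Gamma_S$ in terms of its projection bounds the projected area from below, which is the wrong direction for this step.
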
 

Proving Lemma \ref{l:local-corona} is the goal for the remainder of the subsection. We begin by setting 
\[\cB(Q_0) = \emptyset \quad \mbox{and} \quad \cG(Q_0) = \cubes(Q_0).\]
Thus, the only things left to do are to define are the stopping-time regions and prove the packing condition for $\{Q(S)\}_{S \in \cF(Q_0)}$

\begin{definition}\label{d:small-angle}
	For each $Q \in \cubes$, let $S_Q$ be the stopping-time region constructed inductively as follows. First, add $Q$ to $S_Q.$ Then, add sub-cubes $R \subseteq Q$ if $\Parent(R) \in S_Q$ and 
	\begin{align}
		\angle(P_{R'} , P_Q) \leq \alpha
	\end{align}
	for each $R' \in \Sibling(R).$ 
\end{definition}

We construct $\cF(Q_0)$ inductively as follows. 

\begin{definition}
	Let 
	\begin{align}
		\Top_0 = \{Q_0\} \quad \mbox{and} \quad \cF_0(Q_0) = \left\{S_{Q_0}\right\}. 
	\end{align}
	Then, assuming $\Top_k$ and $\cF_k(Q_0)$ have been defined up to some $k \geq 0,$ let 
	\begin{align}
		\Top_{k+1} = \bigcup_{Q \in \Top_k} \bigcup_{T \in \min(S_Q)} \Child(T) \quad \mbox{and} \quad \cF_{k+1}(Q_0) = \left\{S_Q \colon Q \in \Top_{k+1}\right\}.
	\end{align}
	Where $\min(S_Q)$ are the minimal cubes of $S_Q$.  Finally, set  
	\begin{align}
		\cF(Q_0) = \bigcup_{k=0}^\infty \cF_k(Q_0). 
	\end{align}
\end{definition}

It is immediate from Definition \ref{d:small-angle} that the stopping-time regions $\cF(Q_0)$ satisfy \eqref{e:sufficient-condition}. We will now check the Carleson condition for $\{Q(S)\}_{S \in \cF(Q_0)}$. For this we need some preliminary lemmas. The first of these says that each cube has large projection onto its corresponding plane, and is an immediate consequence of Lemma \ref{l:big-projection-RF} and Lemma \ref{cubes} \eqref{cubes2}.

\begin{lemma}\label{l:RF-BP}
	For $\ve > 0$ small enough there is a constant $\Cproj > 0$ such that 
	\begin{align}
		\pHmeasure^{\idim+2}(\pi_Q(Q)) \geq \Cproj \pHmeasure^{\idim+2}(Q)
	\end{align}
	for all $Q \in \cubes$, where $\pi_Q$ denotes the projection onto the plane $P_Q$.
\end{lemma}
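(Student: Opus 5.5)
The plan is to apply Lemma \ref{l:big-projection-RF} to the ball $B_{\Cinner\ell(Q)}(X_Q,t_Q)$, which Lemma \ref{cubes} \eqref{cubes2} places inside $Q$, and then to compare with the upper Ahlfors regularity bound for $Q$. Fix $Q\in\cubes$ and write $r=\Cinner\ell(Q)$ and $B=B_r(X_Q,t_Q)$. By Lemma \ref{cubes} \eqref{cubes2}, $\Sigma\cap B\subseteq Q$. Hence
\[
\pi_Q(Q)\supseteq \pi_Q(\Sigma\cap B).
\]
It therefore suffices to bound $\pHmeasure^{\idim+2}(\pi_Q(\Sigma\cap B))$ from below by a constant multiple of $r^{\idim+2}$.

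To apply Lemma \ref{l:big-projection-RF} we need a plane $P$ with $\bilatbeta_\Sigma(B,P)\le 2\ve'$, where $\ve'$ is the Reifenberg parameter used there, and we want $P=P_Q$. We have $\bilatbeta_\Sigma(MB_Q,P_Q)\le 2\ve$, and $B\subseteq MB_Q$ with both balls centred at $(X_Q,t_Q)$. Lemma \ref{l:monotonicity} (more precisely, the same containment argument for the fixed plane $P_Q$, which is immediate from the definition of $d_B$) then gives
\[
\bilatbeta_\Sigma(B,P_Q)\le \frac{M\ell(Q)}{\Cinner\ell(Q)}\,2\ve = \frac{2M}{\Cinner}\ve .
\]
Since $\Sigma$ is $\ve$-Reifenberg flat, it is also $(M/\Cinner)\ve$-Reifenberg flat. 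So we apply Lemma \ref{l:big-projection-RF} with parameter $\ve'=(M/\Cinner)\ve$, which is admissible once $\ve$ is small depending on $M$. This yields
\[
\pHmeasure^{\idim+2}(\pi_Q(\Sigma\cap B))\ge (1-C\ve')\,r^{\idim+2}\ge \tfrac12\, \Cinner^{\idim+2}\ell(Q)^{\idim+2}.
\]
One point needs checking: the lemma asks for the plane to satisfy the bilateral bound with respect to the ball centred on $\Sigma$, and $(X_Q,t_Q)\in\Sigma$, so the hypothesis holds. If one wants to avoid inflating the Reifenberg parameter, one can rerun the proof of Lemma \ref{l:big-projection-RF}, which only uses the CCBP built from $\Sigma$ near $B$.

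Finally, upper regularity gives $\pHmeasure^{\idim+2}(Q)\le \pHmeasure^{\idim+2}(\Sigma\cap B_Q)\le C_0\ell(Q)^{\idim+2}$. Combining the two bounds,
\[
\pHmeasure^{\idim+2}(\pi_Q(Q))\ge \Cproj\,\pHmeasure^{\idim+2}(Q), \qquad \Cproj = \Cinner^{\idim+2}/(2C_0).
\]
If $\ell(Q)$ exceeds $\pdiam(\Sigma)$, the regularity bound is still fine because $Q\subseteq \Sigma$. The only mildly delicate point is this parameter bookkeeping, namely that the degradation from $\ve$ to $M\ve/\Cinner$ stays below the smallness threshold of Lemma \ref{l:big-projection-RF}. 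This is guaranteed by the standing assumption that $\ve$ is chosen small depending on $M$.
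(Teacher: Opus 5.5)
Your proposal is correct and is the argument the paper intends. The paper states this lemma as an immediate consequence of Lemma \ref{l:big-projection-RF}, applied to the inner ball $B_{\Cinner\ell(Q)}(X_Q,t_Q)$, which lies in $Q$ by Lemma \ref{cubes} \eqref{cubes2}, combined with upper regularity. Your bookkeeping only fills in what the paper leaves implicit: the parameter degrades to $M\ve/\Cinner$, which is harmless because $\ve$ is chosen depending on $M$, and $\Cproj$ depends only on $\Cinner$, $n$ and the regularity constant.
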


The next lemma says that each cube whose best approximating planes makes large angle with the top plane must have small orthogonal projections onto that top plane. 

\begin{lemma}\label{l:big-angle-small-proj} 
	Provided $M$ and $\alpha = \alpha(\Cproj)$ are sufficiently large and $\ve = \ve(M,\Cproj)$ is sufficiently small, we have
	\begin{align}\label{e:big-angle-small-proj}
		\pHmeasure^{\idim+2}(\pi_{Q(S)}(Q)) \leq \frac{\Cproj}{2}\pHmeasure^{\idim+2}(Q), 
	\end{align}
	for each $S \in \cF$ and $Q \in \min(S)$, where $\pi_{Q(S)}$. 
\end{lemma}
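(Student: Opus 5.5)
The plan is to exploit the fact that a minimal cube $Q \in \min(S)$ was stopped because some child $R$ of $Q$ has a sibling $R'$ with $\angle(P_{R'},P_{Q(S)}) > \alpha$. We have to be careful: the stopping condition is on children, so we first transfer the large angle to $Q$ itself. The parent $Q$ and its child $R'$ have comparable size, and $MB_{R'} \subseteq 2MB_Q$ up to constants. Lemma \ref{l:beta-angle} together with $\bilatbeta_\Sigma(MB_Q,P_Q)\le 2\ve$ then gives $\angle(P_Q,P_{R'}) \lesssim \rho^{-1}\ve$. Taking $\ve$ small depending on $M$ and $\alpha$, we get $\angle(P_Q,P_{Q(S)}) \geq \alpha - C\ve \geq \alpha/2$, or better, $\geq \alpha'$ with $\alpha'$ as close to $1$ as desired. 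Here $\alpha$ close to $1$ means the angle is nearly maximal. Since $d_{B_1}(V',W')\le 2$ for linear subspaces, the relevant regime is that $P_Q'$ contains a unit spatial direction $e$ whose projection onto $P_{Q(S)}'$ has length $\lesssim \delta(\alpha)$, with $\delta(\alpha)\to 0$ as $\alpha\to$ the maximal value.

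Next, we control $\pi_{Q(S)}(Q)$ geometrically. By Reifenberg flatness and $\bilatbeta_\Sigma(MB_Q,P_Q)\le 2\ve$, the cube satisfies $Q \subseteq B_Q \cap \Sigma$, which lies in the $C\ve\ell(Q)$-neighbourhood of $P_Q \cap B_Q$. Hence $\pi_{Q(S)}(Q)$ is contained in the $C\ve\ell(Q)$-neighbourhood, in the parabolic metric, of $\pi_{Q(S)}(P_Q \cap B_Q)$. Since projections are linear and fix the time coordinate, $\pi_{Q(S)}(P_Q\cap B_Q)$ is, in each time slice, the linear image of an $n$-dimensional Euclidean ball of radius $\ell(Q)$. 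One of its semi-axes has length $\lesssim \delta(\alpha)\ell(Q)$, coming from the direction $e$, and the others have length $\le \ell(Q)$. Integrating over the $t$-interval of length $\sim \ell(Q)^2$ and fattening by $C\ve\ell(Q)$, the $(n+2)$-dimensional parabolic Hausdorff measure, which is comparable to Lebesgue measure on the vertical plane $P_{Q(S)}$, is at most
\begin{align}
C\big(\delta(\alpha) + \ve\big)\,\ell(Q)^{n+2}.
\end{align}

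Finally, we compare with $\pHmeasure^{\idim+2}(Q)\gtrsim \ell(Q)^{n+2}$, which follows from Ahlfors regularity and Lemma \ref{cubes}(2), with a constant depending on the regularity constant. We choose $\alpha$ so that $C\delta(\alpha)$ is below $\tfrac{\Cproj}{4}$ times that regularity constant, and then $\ve$ small so that $C\ve$ is too. This gives \eqref{e:big-angle-small-proj}.

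The main obstacle is the first step: making precise that a large value of $\angle(P_Q,P_{Q(S)})$, defined via the Walkup--Wets distance $d_{B_1}$, yields a direction of $P_Q'$ that is nearly orthogonal to $P_{Q(S)}'$, so that the projection is degenerate. If $\angle$ only measures ex in one direction, the plan is to use symmetry of $d_B$ and a compactness argument on the Grassmannian of $n$-planes in $\R^d$. Both quantities are continuous there, and $\angle = $ maximal forces some unit vector of $P_Q'$ to lie at distance $1$ from $P_{Q(S)}'$, i.e.\ to be orthogonal to it. This provides the modulus $\delta(\alpha)$.
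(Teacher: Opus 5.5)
Your proposal is correct and follows the paper's proof. The stopping rule gives a child of $Q$ whose plane makes a near-maximal angle with $P_{Q(S)}$. Hence the projection onto $P_{Q(S)}$ of the plane approximating $Q$ lies in a thin slab around a vertical subplane of spatial codimension one, and its intersection with a ball of radius $\sim\ell(Q)$ has measure $\lesssim(\delta(\alpha)+M\ve)\ell(Q)^{\idim+2}$, which the Ahlfors lower bound for $\pHmeasure^{\idim+2}(Q)$ beats. The differences are cosmetic. You transfer the angle to $P_Q$ via Lemma \ref{l:beta-angle}, whereas the paper uses the child's plane $P_R$ directly together with $Q\subseteq (C/2)B_R$. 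Your neighbourhood width should be $CM\ve\ell(Q)$ rather than $C\ve\ell(Q)$, which is harmless since $\ve$ is chosen after $M$.
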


\begin{proof}
	Fix some $S \in \cF(Q_0)$, $Q \in \min(S)$ and let $\delta > 0$ (to be chosen sufficiently small depending on $\Cproj$). By construction, there exists $R \in \Child(Q)$ such that 
	\begin{align}
		\angle(P_R,P_{Q(S)}) \geq \alpha. 
	\end{align}
	Thus, for $\alpha = \alpha(\delta)$ large enough, there is a $(d-1)$-dimensional $t$-independent plane $L \subset P_{Q(S)}$ such that 
	\begin{align}
		\pi_{Q(S)}(P_R) \subseteq \cN_{\delta \ell(Q)}(L),
	\end{align}
	where $\cN_{\delta \ell(Q)}(L)$ denotes the $\delta \ell(Q)$-neighbourhood of $L$. Since $R \in \Child(Q)$, there exists some absolute constant $C > 1$ such that $Q \subseteq (C/2)B_R$. In particular, if $M$ is sufficiently large and $\ve = \ve(M)$ is sufficiently small, for each $(X,t) \in Q$ there exists $(X',t') \in P_R \cap CB_R$ such that $\pd(X,t,X',t') \lesssim \ve M\ell(R) \lesssim \ve M\ell(Q).$ In particular, it follows that 
	\begin{align}
		\pi_{Q(S)}(Q) \subseteq B_{C\ell(R)}(\pi_{Q(S)}(X_R,t_R)) \cap \cN_{(\delta + CM\ve)\ell(Q)}(L). 
	\end{align}
	Then, for $\ve = \ve(M,\Cproj)$ and $\delta = \delta(\Cproj)$ sufficiently small, we have 
	\begin{align}
		\pHmeasure^{\idim+2}\left(B_{C\ell(R)}(\pi_{Q(S)}(X_R,t_R)) \cap \cN_{(\delta + CM\ve)}(L)\right) \lesssim (\delta + CM\ve)^{n-1}\ell(Q)^{\idim+2} \leq  \frac{\Cproj}{2}\surfacemeasure(Q),
	\end{align}
	which finishes the proof. 
\end{proof}

The following completes the proof of Lemma \ref{l:local-corona}. 

\begin{lemma}
	With $M,\alpha$ and $\ve$ as in Lemma \ref{l:big-angle-small-proj}, we have 
	\begin{align}
		\sum_{\substack{S \in \cF(Q_0) \\ Q(S) \subseteq R_0}} \ell(Q(S))^{\idim+2} \lesssim \ell(R_0)^{\idim+2}
	\end{align}
	for all $R_0 \subseteq Q_0.$ 
\end{lemma}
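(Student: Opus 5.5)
The plan is to use the standard ``projection-mass'' counting argument, as in David--Semmes. Fix $R_0 \subseteq Q_0$. Every top cube $Q(S)$ with $S \in \cF(Q_0)$ and $Q(S) \subsetneq R_0$ is a child of some $T \in \min(S')$ for an $S' \in \cF(Q_0)$. So, up to at most one extra region (the one containing $R_0$), we can organise the regions meeting $R_0$ into generations below $R_0$. Because of Ahlfors regularity, $\ell(Q)^{\idim+2} \sim \surfacemeasure(Q)$, and it suffices to show
\begin{align}
\sum_{\substack{S \in \cF(Q_0)\\ Q(S)\subseteq R_0}} \surfacemeasure(Q(S)) \lesssim \surfacemeasure(R_0).
\end{align}

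The key step is a uniform ``mass loss'' estimate for a single region $S$: the set of points of $Q(S)$ that lie in minimal cubes of $S$ is at most a fixed fraction $1-c$ of $\surfacemeasure(Q(S))$. In other words,
\begin{align}
\sum_{Q\in\min(S)} \surfacemeasure(Q) \leq (1-c)\,\surfacemeasure(Q(S)), \qquad \mbox{equivalently} \qquad \surfacemeasure(z(S)) \geq c\,\surfacemeasure(Q(S)).
\end{align}
To prove this I would compare projections onto $P_{Q(S)}$.
\begin{itemize}
\item By Lemma~\ref{l:RF-BP}, $\pHmeasure^{\idim+2}(\pi_{Q(S)}(Q(S))) \geq \Cproj \surfacemeasure(Q(S))$.
\item By Lemma~\ref{l:big-angle-small-proj}, each $Q \in \min(S)$ has $\pHmeasure^{\idim+2}(\pi_{Q(S)}(Q)) \leq \frac{\Cproj}{2}\surfacemeasure(Q)$.
\item Hence $\pHmeasure^{\idim+2}\bigl(\pi_{Q(S)}(\bigcup_{\min(S)} Q)\bigr) \leq \frac{\Cproj}{2}\sum_{\min(S)}\surfacemeasure(Q) \leq \frac{\Cproj}{2}\surfacemeasure(Q(S))$.
\item Therefore the set $\pi_{Q(S)}(z(S))$ carries projected measure at least $\frac{\Cproj}{2}\surfacemeasure(Q(S))$.
\item Since $\pi_{Q(S)}$ is $1$-Lipschitz for the parabolic metric (it is the orthogonal projection onto a vertical plane and fixes $t$), we get $\surfacemeasure(z(S)) \geq \pHmeasure^{\idim+2}(\pi_{Q(S)}(z(S))) \geq \frac{\Cproj}{2}\surfacemeasure(Q(S))$.
\end{itemize}

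With this in hand, the packing bound follows from disjointness. The sets $z(S)$, for distinct $S \in \cF(Q_0)$, are pairwise disjoint: a point of $z(S)$ never leaves $S$, so it lies in no later top cube. Also $z(S) \subseteq Q(S) \subseteq R_0$. Hence
\begin{align}
\sum_{Q(S)\subseteq R_0}\surfacemeasure(Q(S)) \leq \frac{2}{\Cproj}\sum_{Q(S)\subseteq R_0}\surfacemeasure(z(S)) \leq \frac{2}{\Cproj}\surfacemeasure(R_0) \lesssim \ell(R_0)^{\idim+2}.
\end{align}
The regions are genuinely disjoint stopping-time regions, so no separate argument for the extra region containing $R_0$ is needed; it is simply not counted unless its top cube lies in $R_0$.

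The main obstacle I expect is the projection comparison in the mass-loss step, specifically the overlap issue. The projections of distinct minimal cubes may overlap, but we only need an upper bound on the projection of their union, so subadditivity suffices. The real care is needed for the lower bound $\pHmeasure^{\idim+2}(\pi_{Q(S)}(Q(S)))\gtrsim\surfacemeasure(Q(S))$. This relies on Lemma~\ref{l:RF-BP}, applied with the plane $P_{Q(S)}$ attached to the top cube itself, and on Lemma~\ref{cubes}\eqref{cubes2} to pass from balls to cubes. I would also double-check that the parameters are consistent: $\alpha$ must be close enough to $1$, and $\ve$ small enough relative to $M$ and $\Cproj$, that Lemma~\ref{l:big-angle-small-proj} applies. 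That lemma is stated for minimal cubes having a child with large angle, which is exactly how $\min(S_Q)$ arises in Definition~\ref{d:small-angle}, apart from minimal cubes that are simply the smallest available, which do not occur since $\cubes(Q_0)$ is infinite downward.
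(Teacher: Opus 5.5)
Your proposal is correct and follows the paper's proof essentially step for step. Like the paper, you get $\surfacemeasure(z(S)) \gtrsim \Cproj\,\surfacemeasure(Q(S))$ by projecting onto $P_{Q(S)}$: Lemma~\ref{l:RF-BP} gives the lower bound for $Q(S)$, subadditivity together with Lemma~\ref{l:big-angle-small-proj} gives the upper bound for the minimal cubes, and you then sum over the pairwise disjoint sets $z(S) \subseteq R_0$.
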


\begin{proof}
	Fix some $S \in \cF(Q_0)$ for the moment. By Lemma \ref{l:big-angle-small-proj} we have 
	\begin{align}
		\pHmeasure^{\idim+2}(\pi_{Q(S)}(R)) \leq \frac{\Cproj}{2}\surfacemeasure(R)
	\end{align}
	for all $R \in \min(S)$. Recalling the definition of $z(S)$ from \eqref{d:z}, this means that 
	\begin{align}
		\surfacemeasure(z(S)) &\geq \pHmeasure^{\idim+2}(\pi_{Q(S)}(z(S))) \geq \pHmeasure^{\idim+2}(\pi_{Q(S)}(Q(S))) - \pHmeasure^{\idim+2}\left(\pi_{Q(S)}\left( \bigcup_{R \in \min(S)} R \right)\right) \\
		&\geq \Cproj\pHmeasure^{\idim+2}(Q(S)) - \sum_{R \in \min(S)} \pHmeasure^{\idim+2}(\pi_{Q(S)}(R)) \\
		&\geq \Cproj\pHmeasure^{\idim+2}(Q(S))- \frac{\Cproj}{2}\sum_{R \in \min(S)} \pHmeasure^{\idim+2}(R) \gtrsim \Cproj \pHmeasure^{\idim+2}(Q(S)). 
	\end{align}
	Since $\{z(S)\}_{S \in \cF(Q_0)}$ is a collection of mutually disjoint subset of $\Sigma$, we conclude that 
	\begin{align}
		\sum_{\substack{S \in \cF(Q_0) \\ Q(S) \subseteq R_0}} \ell(Q(S))^{\idim+2} \lesssim \sum_{\substack{S \in \cF(Q_0) \\ Q(S) \subseteq R_0}} \pHmeasure^{\idim+2}(z(S)) \leq \pHmeasure^{\idim+2}(R_0) \lesssim \ell(R_0)^{\idim+2},
	\end{align}
	as required. 
\end{proof}

\subsection{Approximating Reifenberg flat by Lipschitz graphs with small constant}

In this subsection, we show how to construct a coronization for Ahlfors regular Reifenberg flat sets in which, for each stopping-time region, the best approximating planes make small angles with the top plane in that region. Lemma \ref{p:graph-construction} would provide us with a graph for each stopping-time in the coronization, however, this statement in terms of planes will be more useful to us in the next subsection.

\begin{proposition}\label{t:corona-RF-small}
	Let $M > 1$ and $\ve,\eta > 0$. Suppose $\Sigma \subseteq \R^{\adim+1}$ is parabolic Ahlfors $(\idim+2)$-regular and $\ve$-Reifenberg flat, $\cubes$ is a system of Christ-David cubes on $\Sigma$ and $\{P_Q\}_{Q \in \cubes}$ is an assignment of planes such that
	\begin{align}
		(X_Q,t_Q) \in P_Q \quad \mbox{ and } \quad \bilatbeta_\Sigma(MB_Q) \leq 2\ve . 
	\end{align} 
	Then, provided $M$ is sufficiently large and $\ve = \ve(M,\eta)$ is sufficiently small, there is a coronization $(\cB,\cG,\cF)$ of $\cubes$ such that
	\begin{align}\label{e:corona-RF-small}
		\angle(P_Q,P_{Q(S)}) \leq \eta 
	\end{align}
	for each $Q \in S \in \cF$. The packing constants in Definition \ref{d:coronization} can be to chosen to depend only on $M,\eta,\adim,\idim$ and the regularity constant for $\Sigma$.
\end{proposition}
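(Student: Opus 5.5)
We will prove Proposition \ref{t:corona-RF-small} by running the large-angle corona of Proposition \ref{t:corona-RF-large} once, and then refining each of its regions with a small-angle stopping time. Fix $\alpha$ close to $1$ as in Lemma \ref{l:big-angle-small-proj}, and let $(\cB_1,\cG_1,\cF_1)$ be the coronization from Lemma \ref{l:local-corona} (globalized). On each $S\in\cF_1$ we have $\angle(P_Q,P_{Q(S)})\le\alpha$, so Lemma \ref{p:graph-construction} gives a $\Lip(1,1/2,L(\alpha))$ graph $\Gamma_S$ over $P_{Q(S)}$. This graph approximates $\Sigma$ bilaterally, as in \eqref{e:E-close-to-gamma} and \eqref{e:close-to-E}, at every location and scale of $S$. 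Since $\Gamma_S$ is parabolic Ahlfors regular with constants depending only on $L(\alpha)$, we can transfer packing estimates from $\Sigma$ to $\Gamma_S$. By Corollary \ref{c:corresponding-stopping-time}, it therefore suffices to build, inside each $S$, a further coronization satisfying \eqref{e:corona-RF-small} whose packing constants are uniform in $S$.

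Inside a fixed $S\in\cF_1$, we run the same stopping-time scheme as in Definition \ref{d:small-angle}, with $\alpha$ replaced by $\eta$. We start at a top cube $T$ and keep children as long as every sibling $R'$ satisfies $\angle(P_{R'},P_T)\le\eta$. At the minimal cubes of each region we restart from the children, and we stop the whole procedure at $\min(S)$. For the packing condition we cannot use a projection argument, since the angles are now small. Instead we use the graph: writing $\Gamma_S$ as the graph of $\psi_S$, the plane $P_Q$ for $Q\in S$ is $C\ve M$-close to the affine approximation of $\Gamma_S$ at scale $\ell(Q)$. A change of angle of size $\eta$ between $P_T$ and some $P_{R'}$ with $R'\subseteq T$ therefore forces the spatial gradient of $\psi_S$ to oscillate by $\gtrsim\eta$ between the scale of $T$ and the scale of $R'$. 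This holds once $\ve\ll\eta/M$.

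The key step, and the main obstacle, is to turn this oscillation into a Carleson estimate. The natural tool is a square-function (Dorronsoro-type) bound for the spatial gradient oscillation of a $\Lip(1,1/2)$ function. Such a bound is not available in general, because $\partial_t^{1/2}\psi_S$ need not be in BMO. We avoid this by working one time slice at a time, where $\psi_S(\cdot,t)$ is an honest Euclidean Lipschitz function with constant $\le L(\alpha)$. On each slice we have the Euclidean estimate
\[
\sum_{Q}\big(\text{osc of }\nabla_X\psi_S \text{ at }Q\big)^2\,\ell(Q)^{n}\lesssim L(\alpha)^2\,|\text{slice of }R_0|,
\]
and we would obtain the needed bound by integrating this in $t$ over an interval of length $\sim\ell(R_0)^2$. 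The difficulty is that the parabolic cubes do not split into spatial cubes uniformly in time. To handle this, we would use that the planes are $t$-independent and that $\Gamma_S$ is $\Lip(1,1/2)$, so the plane $P_Q$ is close to the tangent plane of every slice $\Gamma_S^t$ for $|t-t_Q|\le\ell(Q)^2$. This gives, for each top cube $T$ of a new region, a set of positive $\pHmeasure^{n+2}$-measure of $(X,t)$ in $\pi(T)$ on which the slice-wise gradient oscillates by $\gtrsim\eta$ over the range of scales from $\ell(T)$ down to the scale at which the region stops.

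Summing these oscillations, a region costs at least $\eta^2\ell(T)^{n+2}$ in the integrated square function. Combined with the slice-wise estimate above, this yields
\[
\sum_{T\text{ top},\,T\subseteq R_0}\ell(T)^{n+2}\lesssim\eta^{-2}L(\alpha)^2\,\ell(R_0)^{n+2}.
\]
No bad cubes are needed inside $S$. If the slice-wise square-function approach fails, the fallback is a compactness argument: blow up limits of regions with repeated $\eta$-angle changes and derive a contradiction with Reifenberg flatness and Lemma \ref{l:big-projection-RF}.
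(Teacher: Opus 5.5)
Your architecture matches the paper's. You take the large-angle corona from Proposition \ref{t:corona-RF-large} and a $\Lip(1,1/2,L)$ graph $\Gamma_S$ bilaterally approximating $\Sigma$ in each region. You reduce via Corollary \ref{c:corresponding-stopping-time}, and you note that $\nabla A_Q$ is within $O(\ve M)$ of spatial averages of $\nabla_X\psi_S$ at scale $\ell(Q)$. But the packing step, which you rightly call the crux, does not go through as written.

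\textbf{Where the packing argument fails.} You claim a region ``costs at least $\eta^2\ell(T)^{n+2}$'' by summing slice-wise oscillations over the scales from $\ell(T)$ down to the stopping scale. This does not follow.
\begin{enumerate}
\item A net gradient change of size $\eta$ spread over $N$ consecutive scales contributes only about $\eta^2/N$ to a scale-by-scale square sum at a point. So no square-function bound (Dorronsoro-type or martingale, sliced or not) by itself controls the number of $\eta$-drifts.
\item The lower bound on the oscillation only holds near the big-angle child. The cost must therefore be measured against the mass of the stopping cubes (equivalently, the tops they spawn), not against $\ell(T)^{n+2}$.
\item What does turn a net drift into $L^2$ cost is orthogonality of martingale differences over a whole stopping region:
$\sum_{I\in\text{region}}\|\Delta_I b\|_2^2=\|E_{\mathrm{stop}}b-\avint_{I_{\mathrm{top}}}b\|_2^2\ge\eta^2\,|\bigcup\text{stops}|$.
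This needs the regions to be exact stopping-time regions of one dyadic filtration, with the stopping rule phrased in terms of averages of $b$.
\item Your stopping time lives on the Christ--David cubes of $\Sigma$, whose projections and slices form no such filtration. Your remedy, that $P_Q$ approximates every slice $\Gamma_S^t$ with $|t-t_Q|\le\ell(Q)^2$, supplies only the lower bound on the oscillation, not the orthogonality needed for the upper bound.
\end{enumerate}

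\textbf{The missing idea.} Run the stopping time on the dyadic side and transfer it back, as the paper does.
\begin{enumerate}
\item Lift parabolic dyadic cubes of $P_{Q(S)}$ to $\Gamma_S$.
\item Apply Carleson's corona construction for bounded functions (Lemma \ref{p:carleson-corona}) to $\nabla_X\psi_S$, normalized by $L$. Averages over cubes in each region then stay within $\eta$ of the top average.
\item Pull this coronization back to $\Sigma$ with Corollary \ref{c:corresponding-stopping-time}.
\item Get the angle condition from the fundamental theorem of calculus along spatial lines. Bilateral approximation gives $|A_Q-\psi_S|\lesssim\eta\,\ell(I(Q))$ on $I(Q)=\pi(Q')$, hence $|\nabla A_Q-\avint_{I(Q)}\nabla\psi_S\,d\mu|\lesssim\eta$.
\end{enumerate}
This uses only $\nabla_X\psi_S\in L^\infty$. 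The obstacle you were slicing around ($D_t^{1/2}\psi_S\notin\mathrm{BMO}$) never arises. On product-type parabolic dyadic cubes the slices are spatial dyadic cubes anyway. A repair of your version via a martingale jump inequality plus a careful cube-to-cube transfer may be possible, but it would rebuild this argument with more bookkeeping.

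\textbf{The compactness fallback.} It is not a substitute. The packing bound involves infinitely many scales, while slow rotation by $\eta$ across scales is compatible with Reifenberg flatness and big projections at every single scale. Blow-ups therefore give no contradiction.
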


Before starting the proof of Proposition \ref{t:corona-RF-small}, we need a preliminary result. Lemma \ref{p:carleson-corona} below is a parabolic version of Carleson's corona construction for $L^\infty$ functions (Lemma \ref{p:carleson-corona}). A proof of this for Euclidean functions can be found in \cite[Section IV.2.2]{david1993analysis}. The proof given there is rather general and (up to minor notational changes) can be easily adapted to our setting here. \\

\begin{lemma}\label{p:carleson-corona}
	Let $P \in \verticalplanes$ and let $\Delta$ be a system of dyadic parabolic cubes of $P$. For each $h \in L^\infty(P;P^\perp)$, $\|h\|_\infty \leq 1$, and $\eta > 0$, there is a coronization $(\cB,\cG,\cF)$ of $\Delta$ such that for each $S \in \cF$ and each $I \in S$
	\begin{align}
		\left| \avint_I h \, d\mu -  \avint_{I(S)} h \, d\mu \right| \leq \eta 
	\end{align}
	where $I(S)$ denotes the top cube in $S$ and $\mu$ is the measure defined by 
		\begin{align}
			\slicemeasure(E) = \int_{\R} \int_{\R^\adim} \mathds{1}_E(Y,s) \, d\Hmeasure^{\idim-1}(Y) ds = \int_\R \Hmeasure^{\idim-1}(E^s) \, ds. 
		\end{align}
	The packing constants can be chosen such that they depend on $n$ and $\eta$ but not on $h.$ 
\end{lemma}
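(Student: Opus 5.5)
We follow the classical stopping-time argument of Carleson (as in \cite[Section IV.2.2]{david1993analysis}), which only uses the dyadic structure of $\Delta$ and the doubling of $\mu$. First, $\mu$ on $P$ is comparable to parabolic $(\idim+2)$-dimensional Hausdorff measure on $P$. Each dyadic parabolic cube $I\in\Delta$ has $\mu(I)\sim\ell(I)^{\idim+2}$, and its children partition it into boundedly many pieces of comparable measure. Hence packing conditions may be phrased as $\sum_{I\subseteq I_0}\mu(I)\lesssim\mu(I_0)$. We set $\cB=\emptyset$ and build the regions by stopping. For a top cube $I_0$, let $S(I_0)$ consist of the cubes $I\subseteq I_0$ such that every $J$ with $I\subseteq J\subseteq I_0$, together with all siblings of such $J$, satisfies $|\avint_J h\,d\mu-\avint_{I_0}h\,d\mu|\le\eta$. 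This is a stopping-time region satisfying the required estimate. The children of its minimal cubes become new top cubes, and iterating from a maximal cube (or locally from each $Q_0$, then globalizing as in \cite[Page 38]{david1991singular}) produces $\cF$.

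The key step is the packing estimate for the top cubes, and for this we use an $L^2$ (martingale) argument. Since $h$ takes values in the finite-dimensional space $P^\perp$, we may treat each coordinate separately, at the cost of replacing $\eta$ by $\eta/\sqrt{\adim-\idim}$. So assume $h$ is real valued. Write $m_I=\avint_I h\,d\mu$ and $\Delta_I h=\sum_{J\in\Child(I)}(m_J-m_I)\mathds 1_J$. By orthogonality of martingale differences,
\[
\sum_{I\subseteq I_0}\|\Delta_I h\|_{L^2(\mu)}^2\le\|h\|_{L^2(I_0,\mu)}^2\le\mu(I_0).
\]
Now let $I'$ be a new top cube, a child of a minimal cube $I$ of $S(I_0)$. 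By the stopping rule, some sibling $J$ of $I'$ has $|m_J-m_{I_0}|>\eta$. Two cases are not quite immediate:
\begin{itemize}
\item the oscillation may accumulate along the chain rather than in a single step;
\item the deviating cube may be a sibling $J$, not $I'$ itself.
\end{itemize}
To handle both, consider the stopped function $h_{S}=\sum_{I\in\min(S)}m_{I}\mathds 1_{I}+h\mathds 1_{z(S)}$, refined by the children of the minimal cubes. Every point of $\bigcup_{I'\text{ new top}}I'$ lies in a parent whose children include a cube with mean deviating by more than $\eta$. Doubling then gives
\[
\sum_{I'}\mu(I')\lesssim\mu\Big(\bigcup_{I\in\min(S)} I\Big)\lesssim \eta^{-2}\int_{I_0}|h_{S}^{\ast}-m_{I_0}|^2\,d\mu,
\]
where $h^\ast_S$ is the conditional expectation of $h$ onto the children of the minimal cubes. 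The right-hand side equals $\eta^{-2}\sum_{I\in S}\|\Delta_I h\|^2_{L^2(\mu)}$. The regions are disjoint, so summing over all regions $S$ with $Q(S)\subseteq R_0$ gives
\[
\sum_{S:\,I(S)\subseteq R_0}\mu(I(S))\lesssim \mu(R_0)+\eta^{-2}\sum_{I\subseteq R_0}\|\Delta_Ih\|^2\lesssim(1+\eta^{-2})\mu(R_0).
\]
This is the Carleson packing condition, with constants depending only on $\idim$, $\eta$ and the doubling constant, and independent of $h$.

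The main obstacle is the middle inequality, which converts ``some sibling deviates by more than $\eta$'' into a lower bound on the $L^2$ energy of $h^\ast_S-m_{I_0}$ on a set of measure comparable to $\mu(I')$. The deviating cube $J$ has $\mu(J)\sim\mu(I')$ by doubling, and $h^\ast_S$ is constant equal to $m_J$ on $J$ when $J$ is itself one of the children being conditioned on. We use exactly this to get the bound. Otherwise, we ensure it by defining the stopping so that all siblings are added or removed together, which is why the rule is stated for siblings. The remaining verifications are identical to the Euclidean case.
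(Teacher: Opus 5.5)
Your proposal is correct and is essentially the argument the paper defers to: the Carleson stopping-time construction of \cite[Section IV.2.2]{david1993analysis}, with the packing of the tops coming from orthogonality of the martingale differences $\Delta_I h$. That argument works verbatim for $P^\perp$-valued $h$, so no splitting into coordinates is needed. The case you single out as the main obstacle does not actually arise: under your sibling-closed rule, the deviating cube is always a child of a minimal cube, so $h^\ast_S$ is constant on it. Two small points: the rule should exclude the siblings of $I_0$ itself, and, as you implicitly assumed, the slice measure has to be $\Hmeasure^{\idim}$ rather than $\Hmeasure^{\idim-1}$ for $\mu(I)\sim\ell(I)^{\idim+2}$ to hold.
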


\begin{remark}\label{r:slice-measure}
	The measure $\mu$ is closely related to $\pHmeasure^{\idim+2}$. Indeed, if $E \subseteq \R^{\adim+1}$ then it is known (see for example \cite[Proposition B.1]{bortz2023corona}) that $\slicemeasure(A) \lesssim \surfacemeasure(A)$ for all $A \subseteq E.$ If $E$ is parabolic Ahlfors $(\idim+2)$-regular then the reverse inequality $\sigma(A) \lesssim \mu(A)$ is also true and can be found in the same reference.
\end{remark}

	Now, fix some $M > 1$ and $\ve,\eta > 0,$ and let $\Sigma$ and $\cubes$ be as in the statement of Proposition \ref{t:corona-RF-small}. Since the statement of Proposition \ref{t:corona-RF-small} becomes stronger for smaller $\eta$, we will allow ourselves to choose $\eta$ as small as we need. Moreover, by adjusting the parameters slightly, it suffices to construct a coronization satisfying \eqref{e:corona-RF-small} with right-hand side equal to $C\eta$ for some absolute constant $C > 1.$

	Let $A > 1$ to be chosen sufficiently large. By Proposition \ref{t:corona-RF-large} (assuming $M/A$ is large enough and $\ve = \ve(A,M,\eta)$ is small enough), there is a constant $L > 1$ and a coronization $(\cB',\cG',\cF')$ of $\cubes$ such that the following holds. For each $S' \in \cF'$ there is a plane $P_{S'} \in \verticalplanes$ containing $(X_{Q(S')},t_{Q(S')})$ and a $\Lip(1,1/2,L)$ function $\psi_{S'} \colon P_{S'} \to (P_{S'})^\perp$ such that, if $\Gamma_{S'}$ denotes the graph of $\psi_{S'}$ over $P_{S'}$, then
	\begin{align}\label{e:close-to-gamma-proof}
		\pdist(X,t,\Gamma_{S'}) \leq \eta d_{S'}(X,t)
	\end{align}
	for all $(X,t) \in E \cap AB_{Q(S')}$, and 
	\begin{align}\label{e:gamma-close-to-plane}
		\pdist((X,t) + \psi_{S'}(X,t), E) \leq \eta D_{S',P_{S'}}(X,t)
	\end{align}
	for all $(X,t) \in P_{S'} \cap AB_{Q(S')}$. 
	
	For $(X,t) \in P_{Q(S')}$, let $g_{S'}(X,t) = (X,t) + \psi_{S'}(X,t)$ and let $\Delta_{S'} = \bigcup_k \Delta_{S',k}$ be a decomposition of $P_{Q(S')}$ into parabolic dyadic cubes with \textit{side length} $\ell(I) = \rho^k$ for each $I \in \Delta_{S',k}$. Then, 
	\begin{align}
		\cubes_{S'} = \{g_{S'}(I) \colon I \in \Delta_{S'}\}
	\end{align} 
	defines a system of Christ-David cubes on $\Gamma_{S'}$ satisfying Lemma \ref{cubes} (1)--(3) with constants $(\Cinner',\Couter',\rho)$ for some $\Cinner',\Couter'$ depending only on $L$.  By Lemma \ref{p:carleson-corona}, there exists a coronization $(\cB_{S'},\cG_{S'},\cF_{S'})$ of $\cubes_{S'}$ such that, for any $T \in \cF_{S'}$ and $Q \in T,$
	\begin{align}\label{e:stopping-time-bounded-functions}
		\left| \avint_{\pi_{S'}(Q)} \nabla \varphi_{S'} \, d\mu -  \avint_{\pi_{S'}(Q(T))} \nabla \varphi_{S'} \, d\mu\right| \leq \eta .
	\end{align}
	
	By \eqref{e:close-to-gamma-proof} we have $\pdist(X_Q,t_Q,\Gamma_{S'}) \leq \eta \ell(Q)$ for each $Q \in S' \in \cF'$. Thus, we can apply Corollary \ref{c:corresponding-stopping-time} with the coronizations $(\cB',\cG',\cF')$ and $\{(\cB_{S'},\cG_{S'},\cF_{S'})\}_{S' \in \cF'}$ to obtain a final coronization $(\cB,\cG,\cF)$ of $\cubes$ such that the following holds. For each $S \in \cF$ and $Q \in S$, there exist unique stopping-time regions $S' \in \cF'$, $T \in \cF_{S'}$ and a cube $Q' \in T$ satisfying 
	\begin{align}\label{e:cubes-nearby}
		S \subseteq S', \quad \ell(Q) \sim \ell(Q') \quad \mbox{and} \quad \pdist(Q,Q') \lesssim \ell(Q). 
	\end{align}
	
	Proposition \ref{t:corona-RF-small} follows immediately from the following.
	
	\begin{lemma}
		Let $S \in \cF$ and let $S' \in \cF'$, $T \in \cF_{S'}$ be the corresponding stopping-time regions as above. For each $Q \in S$, let $A_Q \colon P_{Q(S')} \to (P_{Q(S')})^\perp$ denote the affine function whose graph coincides with $P_Q.$ Then, provided $A$ is large enough, 
		\begin{align}\label{e:prelim-small-angle-1}
			| \nabla A_Q - \nabla A_{Q(S)} | \lesssim \eta \quad \mbox{ for each } Q \in S,
		\end{align}
		where $\nabla$ denotes the spacial gradient with respect to the coordinates generated by $P_{Q(S')}$. 
	\end{lemma}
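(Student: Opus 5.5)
The plan is to compare each plane $P_Q$ with the graph $\Gamma_{S'}$ near $Q$. We then use \eqref{e:stopping-time-bounded-functions} to show that the averaged gradient of $\psi_{S'}$ over $\pi_{S'}(Q')$ is within $\eta$ of its average over $\pi_{S'}(Q(T))$. Here we read $\nabla\varphi_{S'}$ in \eqref{e:stopping-time-bounded-functions} as the spatial gradient of $\psi_{S'}$, normalised by $L$ so that $\|h\|_\infty\le 1$ in Lemma \ref{p:carleson-corona}; this normalisation only changes constants.

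\textbf{Step 1: the plane $P_Q$ is close to the local average of $\nabla\psi_{S'}$.} Fix $Q\in S$ and write $I=\pi_{S'}(Q')$, a parabolic dyadic cube in $P_{Q(S')}$ with $\ell(I)\sim\ell(Q)$. The sets $E\cap MB_Q$ and $P_Q$ are bilaterally $2\ve$-close, with $\ve\ll\eta$. By \eqref{e:close-to-gamma-proof} and \eqref{e:gamma-close-to-plane}, applied with $A$ large enough that $CB_Q\subseteq AB_{Q(S')}$, the sets $\Gamma_{S'}$ and $E$ are bilaterally $\eta\,d_{S'}$-close on a ball of radius $\sim\ell(Q)$ around $Q$. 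For $Q\in S\subseteq S'$ we have $d_{S'}\lesssim\ell(Q)$ on that ball, and the same bound holds for $D_{S',P_{S'}}$. Hence, on a fixed dilate $\tilde I$ of $I$,
\[
\sup_{(X,t)\in\tilde I}\big|\psi_{S'}(X,t)-A_Q(X,t)\big|\lesssim(\eta+\ve M)\,\ell(Q).
\]
Since $\psi_{S'}$ is $\Lip(1,1/2)$, it is Lipschitz in $X$ on each time slice. Integrating by parts in $X$ slice by slice against a smooth bump adapted to $I$, and using that the measure $\mu$ of Lemma \ref{p:carleson-corona} is Lebesgue measure in the spatial variable times $dt$, the $L^\infty$ closeness converts into
\[
\Big|\avint_I\nabla\psi_{S'}\,d\mu-\nabla A_Q\Big|\lesssim\eta .
\]
This is the Dorronsoro/Semmes-type step; alternatively one can argue directly via the divergence theorem on the slices $I^t$, since $\nabla A_Q$ is constant and $|\psi_{S'}-A_Q|\lesssim\eta\ell(I)$ on $\partial I^t$. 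In either version, the slice-wise averages become averages over $I$ with respect to $\mu$.

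\textbf{Step 2: combine with the corona of Lemma \ref{p:carleson-corona}.} Apply Step 1 to $Q$ and to $Q(S)$, whose corresponding cubes $Q'$ and $Q(S)'$ both lie in the same region $T$, and write $I_0=\pi_{S'}(Q(S)')$. Then
\[
|\nabla A_Q-\nabla A_{Q(S)}|\le\Big|\nabla A_Q-\avint_I\nabla\psi_{S'}\Big|+\Big|\avint_I\nabla\psi_{S'}-\avint_{\pi(Q(T))}\nabla\psi_{S'}\Big|+\Big|\avint_{\pi(Q(T))}\nabla\psi_{S'}-\avint_{I_0}\nabla\psi_{S'}\Big|+\Big|\avint_{I_0}\nabla\psi_{S'}-\nabla A_{Q(S)}\Big| .
\]
The two outer terms are $\lesssim\eta$ by Step 1. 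The two middle terms are each $\le L\eta$ by \eqref{e:stopping-time-bounded-functions}, because $Q',Q(S)'\in T$; rescaling $\eta$ by $L$ at the outset makes them $\lesssim\eta$. This gives \eqref{e:prelim-small-angle-1}.

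\textbf{Main obstacle.} Step 1 is the main obstacle, on two counts. First, the closeness estimates are expressed in terms of the distance functions $d_{S'}$ and $D_{S',P_{S'}}$, and the bilateral passage $P_Q\to E\to\Gamma_{S'}$ must be carried out over a full neighbourhood of $I$. Second, a parabolic graph has no control on its time derivative, so the gradient comparison must be done purely spatially, slice by slice, which is why we use $\mu$ rather than $\sigma$; Remark \ref{r:slice-measure} keeps the two measures comparable. Choosing $\ve\le\eta/M$ absorbs the $\ve M$ term.
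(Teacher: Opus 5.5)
Your proposal is correct and follows the paper's argument: the paper also bounds $\sup|\psi_{S'}-A_Q|\lesssim\eta\,\ell(Q)$ on $\pi_{S'}(Q')$, converts this into $|\nabla A_Q-\avint_{\pi_{S'}(Q')}\nabla\psi_{S'}\,d\mu|\lesssim\eta$ by the fundamental theorem of calculus in each spatial coordinate, and closes with \eqref{e:stopping-time-bounded-functions} exactly as in your Step 2. Your slice-wise divergence-theorem version is the one that matches this and gives the sharp cube average directly (a fixed smooth bump would only give a smoothed average), while your explicit passage $\Gamma_{S'}\to E\to P_Q$, with its extra $\ve M\ell(Q)$ term, and your normalisation of $\nabla\psi_{S'}$ by $L$ make precise two points the paper leaves implicit.
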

	
	\begin{proof}
		It suffices to check that 
		\begin{align}\label{e:prelim-small-angle}
			\left| \nabla A_Q - \avint_{I(Q)} \nabla \varphi_{S'} \, d\mu \right| \lesssim \eta, 
		\end{align}
		for each $Q \in S$, where $I(Q) = \pi_{S'}(Q')$ (which is a parabolic cubes in $P_{S'}$). Indeed, if this is true, recalling that $Q'$ and $Q(S)'$ belong to the same stopping-time region $T \in \cF_{\Gamma_{S'}}$, it follows from \eqref{e:stopping-time-bounded-functions} and the triangle inequality that 
		\begin{align}
			| \nabla A_Q - \nabla A_{Q(S)} | \lesssim \eta + \left| \avint_{I(Q)} \nabla \varphi_{S'} \, d\mu - \avint_{I(Q(S))} \nabla \varphi_{S'} \, d\mu  \right| \lesssim \eta.
		\end{align}

		We now verify \eqref{e:prelim-small-angle}. Up to translations and rotations, we can assume 
		\[P_{S'} = \R^{\idim-1} \times \{0\} \times \cdots \times \{0\} \times \R\] 
		and we can check a similar inequality with $\nabla$ replaced by $\partial_1$ -- the partial spacial derivative in the first coordinate direction. Fix some $Q \in S$ and write points in $\R^{\adim+1}$ as $(x_1,x',t) \in \R \times \R^{\adim -1} \times \R$. With this identification, decompose $I \coloneqq I(Q)$ as
		\[I = I_1 \times I' \times I_{t} \subset \R \times \R^{\adim -1} \times \R.\] 
		Let $(x_{I,1},x_{I}',t_I)$ denote the centre of $I$. By \eqref{e:cubes-nearby} (for $A$ sufficiently large), we have $Q' \subseteq AB_Q$. Thus, by \eqref{e:gamma-close-to-plane},
		\begin{align}
			\pdist((x_1,x',t)+ \psi_{S'}(x_1,x',t),P_Q) \lesssim \eta \ell(Q) \quad \mbox{ for all } (x_1,x',t) \in I. 
		\end{align}
		In particular, setting $h \coloneqq A_Q - \varphi_{S'}$, we have $|h(x_1,x',t)| \lesssim \eta \ell(Q) \lesssim\eta  \ell(I)$ for all $(x_1,x',t) \in I$. Combining this with the fundamental theorem of calculus, for each $(x',t) \in I' \times I_t$, we have 
		\begin{align}
			\left|\int_{I_1} \partial_1 h(x_1,x',t) \, dx_n \right| = \left| h(x_{I,1}+\ell(I),x',t) - h(x_{I,1}-\ell(I),x',t) \right| \lesssim \eta \ell(I). 
		\end{align}
		Thus,
		\begin{align}
			\left| \partial_1 A_Q - \avint_I \partial_1 \varphi_{S'} \, d\mu \right| \lesssim \ell(I)^{-\idim-2}	\left| \int_{I_t} \int_{I'} \int_{I_1} \partial_1 h \, dx_1 dx' dt \right| \lesssim \eta,
		\end{align}
		as required. 
			\end{proof}

\subsection{Proof of Theorem \ref{t:BWGL-bilat-Corona}} Let $E \subseteq \R^{\adim+1}$ be a parabolic Ahlfors $(\idim+2)$-regular set satisfying the BWGL and $\cubes$ a system of Christ-David cubes on $E.$ Let $A > 1$ and $\eta , \theta > 0.$ As we have done several times, we can assume without loss of generality that $A$ is sufficiently large and $\eta$ is sufficiently small when needed. By adjusting the constants slightly, it suffices to construct a function with Lipschitz constant at most $C\eta$ satisfying \eqref{e:corona-d} and \eqref{e:corona-D} with $\theta$ replaced by $C\theta$ for some $C > 1$. 

Let $M > 1$ (to be chosen sufficiently large, depending on $A$) and $\ve > 0$ (to be chosen sufficiently small depending on $M,\eta,\theta$). Let $(\cB',\cG',\cF')$ be a coronization of $\cubes$ such that $\bilatbeta_E(MB_Q) < \ve$ for each $Q \in \cG'$, and let $\{P_Q\}_{Q \in \cG'}$ be an assignment of planes such that 
\begin{align}\label{e:small-beta-proof}
	(X_Q,t_Q) \in P_Q \quad \mbox{and} \quad \bilatbeta_E(MB_Q,P_Q) \leq 2\ve. 
\end{align}

\begin{lemma}\label{l:final-corona}
	For $M$ large enough and $\ve = \ve(M,\eta)$ sufficiently small, there exists a coronization $(\cB,\cG,\cF)$ of $\cubes$ such that $\angle(P_Q,P_{Q(S)}) \leq \eta$ for each $Q \in S \in\cF$. 
\end{lemma}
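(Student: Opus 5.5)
The plan is to chain the three intermediate results in the order laid out at the start of the section, and then push the final coronization back to $\cubes$ using Corollary \ref{c:corresponding-stopping-time}.

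\textbf{Step 1: from the BWGL to Reifenberg flat sets.} I would start from the coronization $(\cB',\cG',\cF')$ with planes $\{P_Q\}_{Q\in\cG'}$ satisfying \eqref{e:small-beta-proof}. Apply Proposition \ref{p:BWGL-RF-corona}, with $A$ replaced by a large auxiliary constant $A_1$ and $M/A_1$ large. For each $S'\in\cF'$ this gives a parabolic Ahlfors regular, $C\ve$-Reifenberg flat set $\Sigma_{S'}$. On the scales of $S'$ it is $\ve M\ell(Q)$-close to $E$ bilaterally inside $A_1B_Q$, and its regularity constant is uniform in $S'$.

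\textbf{Step 2: a small-angle coronization on each $\Sigma_{S'}$.} Fix a Christ-David system $\cubes_{S'}$ on $\Sigma_{S'}$. For each $Q^*\in\cubes_{S'}$, choose $P_{Q^*}$ through $(X_{Q^*},t_{Q^*})$ with $\bilatbeta_{\Sigma_{S'}}(M'B_{Q^*},P_{Q^*})\le 2C\ve$; this is possible by Reifenberg flatness. Then apply Proposition \ref{t:corona-RF-small} with parameter $\eta$ and a fixed large $M'$, taking $\ve$ small depending on $M,M',\eta$. This produces coronizations $(\cB_{S'},\cG_{S'},\cF_{S'})$ with uniform packing constants and $\angle(P_{Q^*},P_{Q(T')})\le\eta$ for $Q^*\in T'\in\cF_{S'}$.

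\textbf{Step 3: transfer back to $\cubes$.} By \eqref{e:dist-to-Sigma_S}, $\pdist(X_Q,t_Q,\Sigma_{S'})\lesssim \ell(Q)$ for $Q\in S'$, so Corollary \ref{c:corresponding-stopping-time} applies. It yields a coronization $(\cB,\cG,\cF)$ of $\cubes$ in which each $S\in\cF$ lies in some $S'$. Each $Q\in S$ also has a partner $Q^*\in T'\in\cF_{S'}$ with $\ell(Q)\sim\ell(Q^*)$ and $\pdist(Q,Q^*)\lesssim\ell(Q)$, where the implicit constants depend only on the cube parameters, not on $M$.

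It then remains to show that $\angle(P_Q,P_{Q^*})\lesssim M\ve$, and similarly for the top cubes. Granting this, the triangle inequality gives
\[
\angle(P_Q,P_{Q(S)})\le \angle(P_Q,P_{Q^*})+\angle(P_{Q^*},P_{Q(T')})+\angle(P_{Q(T')},P_{Q(S)^*})+\angle(P_{Q(S)^*},P_{Q(S)})\lesssim \eta+M\ve .
\]
There is a subtlety here: $Q(S)^*$ need not be $Q(T')$ itself, but it lies in $T'$, so the middle two terms are each at most $\eta$. Choosing $\ve\le\eta/M$ and rescaling $\eta$ then gives the lemma.

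\textbf{Main obstacle: the angle comparison.} This is the step I expect to be hardest, since it is the only place where the two different approximations must be compared. The idea is to find a ball $B$ of radius $\sim\ell(Q)$, centered near both cubes, inside $MB_Q\cap A_1B_Q$ and inside $M'B_{Q^*}$; this requires $A_1$ and $M$ large relative to the closeness constants. In $B$, $E$ is $2\ve$-close to $P_Q$, and $\Sigma_{S'}$ is $C\ve$-close to $P_{Q^*}$. Moreover $E$ and $\Sigma_{S'}$ are $\ve M\ell(Q)$-close to each other, bilaterally, by Proposition \ref{p:BWGL-RF-corona}. Hence $d_B(P_Q,P_{Q^*})\lesssim M\ve$, and Lemma \ref{l:angAB} converts this into an angle bound. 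The delicate point is arranging the hierarchy of constants ($A_1$, $M$, $M'$, then $\ve$) so that every containment holds and each smallness requirement on $\ve$ is made only after the other constants are fixed.
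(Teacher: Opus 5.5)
Your proposal is correct and follows essentially the same route as the paper. You obtain $\Sigma_{S'}$ from Proposition \ref{p:BWGL-RF-corona}, apply Proposition \ref{t:corona-RF-small} on each $\Sigma_{S'}$, transfer back to $\cubes$ via Corollary \ref{c:corresponding-stopping-time}, and close with the four-term angle triangle inequality, bounding $\angle(P_Q,P_{Q^*})$ by chaining points $P_Q\to E\to\Sigma_{S'}\to P_{Q^*}$ and back. Your explicit bookkeeping of the auxiliary constants ($A_1$, $M'$, and the smallness of $\ve$ chosen last) is somewhat more careful than the paper's, but the argument is the same.
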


\begin{proof}
	Fix some $S \in \cF'$ for the moment. By Proposition \ref{p:BWGL-RF-corona} (assuming $M$ is sufficiently large), there exists a parabolic Ahlfors $(\idim+2)$-regular $C\ve$-Reifenberg flat set $\Sigma_{S'}$ such that 
	\begin{align}\label{e:coronas-are-close}
		\pdist(X,t,\Sigma_{S'}) \lesssim \ve M\ell(Q)
	\end{align}
	for all $Q \in S'$ and $(X,t) \in E \cap AB_Q$ and 
	\begin{align}\label{e:coronas-are-close-converse}
		\pdist(X,t,E) \lesssim \ve M\ell(Q)
	\end{align}
	for all $Q \in S'$ and $(X,t) \in \Sigma_{S'} \cap AB_Q$. Let $\cubes_{S'}$ be a system of Christ-David cubes on $\Sigma_{S'}$ and let $\{V_{Q}\}_{Q \in \cubes_{S'}}$ be an assignment of planes such that 
	\begin{align}\label{e:sigma-is-flat}
		(X_Q,t_Q) \in V_Q \quad \mbox{and} \quad \bilatbeta_{\Sigma_{S'}}(M B_Q,V_Q) \lesssim \ve,
	\end{align}
	which exists since $\Sigma_{S'}$ is $C\ve$-Reifenberg flat. By Proposition \ref{t:corona-RF-small} (assuming $\ve = \ve(M,\eta)$ is sufficiently small), there exists a coronization $(\cB_{S'},\cG_{S'},\cF_{S'})$ of $\cubes_{S'}$ such that 
	\begin{align}\label{e:angle-in-the-stopping-time}
		\angle(V_Q,V_{Q(T)}) \leq \eta/4 \quad \mbox{for all } Q \in T \in \cF_{S'}. 
	\end{align}
	
	By \eqref{e:coronas-are-close}, we can apply Corollary \ref{c:corresponding-stopping-time} to $(\cB,\cG,\cF)$ and $\{(\cB_{S'},\cG_{S'},\cF_{S'})\}_{S' \in \cF'}$, which provides a further coronization $(\cB,\cG,\cF)$ of $\cubes$ such that, for each $S \in \cF$, there are unique stopping-time regions $S' \in \cF'$, $T \in \cF_{S'}$ such that
    $S \subset S'$, and such that 
    for each $Q \in S$ there exists a cube $Q' \in T$ satisfying
	\begin{align}\label{e:comparable}
		\ell(Q) \sim \ell(Q') \quad \mbox{and} \quad \pdist(Q,Q') \lesssim \ell(Q).
	\end{align}
	
	Now, fix some $S \in \cF$ (let $S' \in \cF$, $T \in \cF_{S'}$ be the corresponding stopping-time regions from the preceding paragraph) and let us check that $\angle(P_Q,P_{Q(S)}) \leq \eta$. It suffices to check that 
	\begin{align}\label{e:angles-between-planes}
		\angle(P_Q,V_{Q'}) \leq \eta/4 \quad \mbox{ for each } Q \in S. 
	\end{align}
	Indeed, if \eqref{e:angles-between-planes} holds, combining this with \eqref{e:angle-in-the-stopping-time} (recalling that $Q'$ and $Q(S)'$ lie in the same stopping-time regions $T$), we have 
	\begin{align}
		\angle(P_Q,P_{Q(S)}) &\leq \angle(P_Q,V_{Q'}) + \angle(V_{Q'},V_{Q(T)}) + \angle(V_{Q(T)},V_{Q(S)'}) + \angle(V_{Q(S)'} , P_{Q(S)} ) \\
		&\leq  \eta. 
	\end{align}
	To show \eqref{e:angles-between-planes}, fix some $Q \in S$ and let $(X,t) \in P_Q \cap B_Q$. In the following we shall assume $\ve = \ve(M)$ is sufficiently small so that $\ve M \ll 1$. By \eqref{e:small-beta-proof} there exists $(X',t') \in E \cap 2B_Q$ such that $\pd(X,t,X',t') \lesssim M \ve \ell(Q)$. By \eqref{e:coronas-are-close} and the second property in \eqref{e:comparable}, there exists $(X'',t'') \in \Sigma_{S'} \cap 3B_Q$ such that $\pd(X',t',X'',t'') \lesssim M\ve \ell(Q)$. By \eqref{e:sigma-is-flat} and the second property in \eqref{e:comparable}, there exists $(X''',t''') \in P_{Q'}$ such that $\pd(X',t',X'',t'') \lesssim \ve M \ell(Q)$. Thus, so long as $\ve = \ve(M,\eta)$ is sufficiently small, the previous three estimates imply
	\[\pdist(X,t,P_{Q'}) \lesssim M \ve \ell(Q) \leq \eta/4.\] 
	
	The converse inequality is similar, starting instead from $P_{Q'}$. Indeed, fix some $(Y,s) \in P_{Q'} \cap B_Q$. For $M$ large enough, it follows from \eqref{e:comparable} that $B_Q \subseteq MB_{Q'}$. In particular, by \eqref{e:sigma-is-flat}, there exists a point $(Y',s') \in \Sigma_{S'} \cap 2B_{Q}$ such that $\pd(Y,s,Y',s') \lesssim \ve M\ell(Q)$. By \eqref{e:coronas-are-close-converse}, there exists $(Y'',s'') \in E \cap 3B_Q$ such that $\pd(Y',s',Y'',s'') \lesssim \ve M \ell(Q)$ and finally, by \eqref{e:small-beta-proof}, there exists $(Y''',s''') \in P_Q$ such that $\pd(Y'',s'',Y''',s''') \lesssim \ve M \ell(Q)$. So long as $\ve = \ve(M,\eta)$ is sufficiently small, the previous three estimates imply 
	\begin{align}
		\pd(Y,s,P_{Q'}) \lesssim \ve M \ell(Q) \leq \eta /4. 
	\end{align}
	as required. 
\end{proof}

Now, choose $M/A$ large enough and $\ve = \ve(M,\eta)$ small enough such that Lemma \ref{p:graph-construction} holds with constant $A,M,\ve,\eta$ and Lemma \ref{l:final-corona} holds. Furthermore, we will assume that $\ve < \theta / M$ and that $\eta$ is small enough such that $L(\eta) \sim \eta$. With this choice of constants, it follows from Lemma \ref{p:graph-construction} and Lemma \ref{l:final-corona} that for each $S \in \cF$ there is a plane $P_S$ and $\Lip(1,1/2,C\eta)$ function $\psi_S \colon P_S \to P_{S}^\perp$ satisfying \eqref{e:corona-d} and \eqref{e:corona-D} with constant comparable comparable to $\ve M < \theta$, as required. This finishes the proof of Theorem \ref{t:BWGL-bilat-Corona}.

\subsection{The converse to Theorem \ref{t:BWGL-Corona}} We finish this section by showing the following.

	\begin{lemma}
		Let $E \subseteq \R^{d+1}$ be parabolic Ahlfors $(n+2)$-regular. If $E$ admits a corona decomposition, then it satisfies the \emph{BWGL}. 
	\end{lemma}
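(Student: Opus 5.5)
The plan is to fix $A\ge1$ and $\ve>0$ and show that
\[
\cB_\ve:=\{Q\in\cubes : \bilatbeta_E(AB_Q)>\ve\}
\]
satisfies a Carleson packing condition. Apply Definition \ref{d:corona-decomposition} with the same $A$ (or $2A$), with $\eta\ll\ve$ and with $\theta\ll\ve$, to obtain a coronization $(\cB,\cG,\cF)$. Cubes of $\cB_\ve\cap\cB$ are packed by hypothesis. Since the tops $\{Q(S)\}_{S\in\cF}$ are packed, it suffices to prove, for each $S\in\cF$,
\[
\sum_{Q\in S\cap\cB_\ve}\ell(Q)^{\idim+2}\lesssim_{A,\ve}\ell(Q(S))^{\idim+2}.
\]
Summing this over the $S$ with $Q(S)\subseteq Q_0$, together with the cubes of $S_0\ni Q_0$ that lie below $Q_0$ (handled by the same estimate with $Q(S_0)$ replaced by $Q_0$), then gives the packing condition.

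\textbf{Step 1: unilateral flatness.} Fix $S\in\cF$. Let $P=P_S$ be the plane over which $\Gamma_S$ is the graph of a $\Lip(1,1/2,\eta)$ map $\psi$, and write $\pi=\pi_P$. For $Q\in S$ let $P_Q$ be the plane parallel to $P$ through the point of $\Gamma_S$ above $\pi(X_Q,t_Q)$. The Lipschitz bound gives $d_{2AB_Q}(\Gamma_S,P_Q)\lesssim\eta$. By \eqref{e:definition-corona}, every point of $E\cap AB_Q$ lies within $(\theta+C\eta)\ell(Q)$ of $P_Q$, with $C$ depending on $A$. Hence, if $Q\in\cB_\ve$ and $\eta,\theta$ are small relative to $\ve/A$, the bilateral failure must come from the other side: there is a point $p\in P_Q\cap AB_Q$ with $\pdist(p,E)\ge\ve A\ell(Q)/4$. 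Projecting, we obtain a parabolic ball
\[
H_Q:=P\cap B_{c\ve\ell(Q)}(\pi(p)),\qquad \pdist(y,\pi(E\cap 2AB_Q))\ge c\ve\ell(Q)\ \text{ for all } y\in H_Q,
\]
where we use again that $E\cap 2AB_Q$ is within $o(\ve)\ell(Q)$ of $P_Q$. By construction $\pHmeasure^{\idim+2}(H_Q)\sim\ve^{\idim+2}\ell(Q)^{\idim+2}$.

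\textbf{Step 2: the key counting argument.} For $y\in P$ set $\delta(y):=\pdist(y,\pi(E))$; cutting off far away, it suffices to use $\pi(E\cap 2AB_{Q(S)})$. If $y\in H_Q$, then $\delta(y)\gtrsim\ve\ell(Q)$, or else the nearby points of $\pi(E)$ come from $E$ outside $2AB_Q$. That second alternative is excluded: $y$ lies within $A\ell(Q)$ of $\pi(X_Q,t_Q)$, while points of $E$ outside $2AB_Q$ that are near $P$ project at distance at least $A\ell(Q)/2$. Conversely, $(X_Q,t_Q)\in E$ projects within $A\ell(Q)$ of $y$, so $\delta(y)\le A\ell(Q)$. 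Therefore any fixed $y$ lies in $H_Q$ only for cubes $Q$ with $\ell(Q)\in[\delta(y)/A,\,C\delta(y)/\ve]$, which is $O(\log(A/\ve))$ generations. At each generation, bounded overlap of the balls $AB_Q$ (Ahlfors regularity) allows only $O_A(1)$ cubes. Moreover every $H_Q$ is contained in $P\cap CAB_{Q(S)}$. Consequently
\[
\sum_{Q\in S\cap\cB_\ve}\ell(Q)^{\idim+2}\lesssim\ve^{-\idim-2}\sum_Q\pHmeasure^{\idim+2}(H_Q)\lesssim_{A,\ve}\pHmeasure^{\idim+2}(P\cap CAB_{Q(S)})\lesssim\ell(Q(S))^{\idim+2},
\]
which is the desired estimate.

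\textbf{Main obstacle.} The delicate part is Step 2: one has to ensure that the hole $H_Q$ found at the scale of $Q$ is really a hole in the projection of $E$ at scale $\sim\ve\ell(Q)$. This needs a careful choice of $\theta,\eta$ relative to $\ve/A$, and must use the fact that points of $E$ far from $AB_Q$ cannot project into $H_Q$ because $E\cap CAB_{Q(S)}$ stays near $\Gamma_S$. To have this control on enlarged balls, I would apply the corona decomposition with $A$ replaced by a larger multiple $CA$. The remaining Carleson bookkeeping is routine and mirrors the proof of Corollary \ref{c:corresponding-stopping-time}.
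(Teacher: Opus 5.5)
Your proof is correct and takes a genuinely different route from the paper's.

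\textbf{Comparison with the paper.} The paper's proof is a short reduction to known results. A $\Lip(1,1/2,\eta)$ graph has $\beta<C\eta$ at every scale, and \cite[Lemma 4.10]{bortz2023corona} upgrades this to $\bilatbeta<C\eta$. So the graphs form a class satisfying the BWGL uniformly with zero packing constant, and the transference theorem \cite[Theorem 1.2]{bortz2023corona} (a corona decomposition by such a class implies the BWGL) finishes the argument. You instead give a self-contained argument that uses the graph structure directly. Unilateral closeness to an almost flat graph forces each cube of $\cB_\ve\cap S$ to produce a hole of size $\sim\ve\ell(Q)$ in the projection of $E$ onto $P_S$. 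A point $y\in P_S$ can lie in such holes for only $O(\log(A/\ve))$ generations, and integrating over $P_S$ gives the packing for each region. The paper's route is shorter and applies to corona decompositions by any uniformly BWGL class. Yours avoids both cited results, in particular the unilateral-to-bilateral upgrade for graphs, and gives explicit constants.

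\textbf{The exclusion step needs the multiscale argument.} You correctly flagged this as the delicate point, but the justification you give is not enough as written. Knowing that $E\cap CAB_{Q(S)}$ lies within $\theta\ell(Q(S))$ of $\Gamma_S$ does not stop far-away points of $E$ from projecting into $H_Q$, because $\theta\ell(Q(S))\gg A\ell(Q)$ when $Q$ is small. What does work is to use the corona estimate at the ancestors of $Q$.
\begin{itemize}
\item Let $A'$ be the corona parameter and take $z\in E$ with $D:=\pd(z,(X_Q,t_Q))\gg A\ell(Q)$.
\item By coherence of $S$ there is an ancestor $R\in S$ of $Q$ with $\ell(R)\sim D/A'$ and $z\in A'B_R$, provided $D\lesssim A'\ell(Q(S))$.
\item Then $\pdist(z,\Gamma_S)\le\theta\ell(R)\ll D$. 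Hence $\pd(\pi(z),\pi(X_Q,t_Q))\gtrsim D$, so $\pi(z)\notin H_Q$.
\end{itemize}
This has two consequences for your constants. First, the cut-off in the definition of $\delta$ must stay below the corona scale, for instance $E\cap\tfrac14A'B_{Q(S)}$; your choice $2AB_{Q(S)}$ is compatible with this once $A'\ge 8A$. Second, the same argument shows that any $z$ whose projection lands near $H_Q$ lies in $5AB_Q$. So you need the corona estimate on $5AB_Q$, which again holds with $A'\ge 8A$.

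\textbf{Two small corrections.}
\begin{itemize}
\item The ball in $P_S$ containing all the $H_Q$ should be centred at $\pi(X_{Q(S)},t_{Q(S)})$, since $P_S$ itself need not meet $B_{Q(S)}$.
\item The count per generation holds because the centres of cubes of $S$ lie within $\theta\ell(Q)$ of $\Gamma_S$. Hence they lie within $CA\ell(Q)$ of the point of $\Gamma_S$ above $y$. This, rather than bounded overlap of the balls $AB_Q$ alone, is what bounds the number of cubes.
\end{itemize}
With these details filled in, the argument closes.
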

	
	\begin{proof}
		Let $\ve ,\eta> 0$ and $A > 1$, where $\eta$ is to be chosen sufficiently small depending on $\ve$. Since $E$ admits a corona decomposition, it admits a corona decomposition with approximation parameter $\eta$ by $\mathcal{E}$ -- the collection of Lip$(1,1/2)$ graphs with Lipschitz constant at most $\eta$. Fix some $\Gamma \in \mathcal{E}$ and a system of Christ-David cubes $\cubes_\Gamma$ on $\Gamma$. Then, $\Gamma$ parabolic Ahlfors $(n+2)$-regular with constant depending only on $\eta$. Since $\Gamma$ is a Lipschitz graph with constant at most $\eta$, we have that $\beta_\Gamma(C'AB_Q) < C\eta$ for all $Q \in \cubes_\Gamma$ and any $C' > 1$. By \cite[Lemma 4.10]{bortz2023corona}, if $C'$ is a suitably chosen constant (depending only on $\eta$), we have $b\beta(AB_Q) < C\eta$ for all $Q \in \cubes_\Gamma$. In particular, $\mathcal{E}$ is a class of uniformly Ahlfors regular sets which satisfies the BWGL with parameter $(A,C\eta)$ with uniform packing constants (equal to zero). By choosing $\eta$ small enough (depending on $C$ and $\ve)$, it then follows directly from \cite[Theorem 1.2]{bortz2023corona} that $E$ satisfies the BWGL with constants $(A,\ve)$ with packing constants depending only on $\ve$ and the packing constant in the corona decomposition. Since $\ve$ and $A$ were arbitrary, this finishes the proof. 
	\end{proof}

\section{BAUP implies BWGL}\label{s:BAUP-BWGL}

In this section we prove the following. 

\begin{theorem}\label{BAUPBWGL}
	Suppose $E \subseteq \R^{\adim+1}$ is parabolic Ahlfors $(\idim+2)$-regular, $1 \leq n \leq d-1$. If $E$ satisfies the BAUP then it satisfies the BWGL.
\end{theorem}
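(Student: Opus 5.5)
Fix $A\ge1$ and $\ve>0$. We aim to show that $\cB_{\ve}:=\{Q\in\cubes\colon \bilatbeta_E(AB_Q)>\ve\}$ satisfies a Carleson packing condition. The plan is to exploit the fact that a union of planes that bilaterally approximates $E$ at one scale, but is not close to a single plane, must contain a ``crossing'' or ``gap'' structure. Such a structure is rigid and cannot persist coherently across many scales without forcing a definite amount of measure into bad configurations.

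First apply the BAUP with a much larger dilation $M\gg A$ and a much smaller parameter $\delta\ll\ve$. This lets us discard the Carleson family $\{Q\colon \baupbeta_E(MB_Q)>\delta\}$. For each remaining $Q\in\cB_\ve$ we then have a union $U_Q=\bigcup_i P_i$ with $d_{MB_Q}(E,U_Q)\le\delta$. We may keep only those planes meeting $2AB_Q$, and there are boundedly many of them by Ahlfors regularity and Lemma \ref{l:bounded-overlap}-type volume counting. Since $\bilatbeta_E(AB_Q)>\ve$, at least two of these planes, say $P_1,P_2$, make angle $\gtrsim\ve$ inside $AB_Q$ or are separated by $\gtrsim \ve\ell(Q)$. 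Either way, $E$ contains points near $P_1\setminus \cN_{c\ve\ell(Q)}(P_2)$ and near $P_2$ in $AB_Q$.

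Next is the key geometric step. In both cases (non-parallel planes or parallel but disjoint planes), we find a point $(Y,s)\in E\cap 2AB_Q$ and a radius $r\sim_\ve \ell(Q)$ such that $E\cap B_r(Y,s)$ is $\delta$-close to a \emph{single} plane, say $P_1$, while $E\cap B_{Cr}(Y,s)$ is not. More usefully, we want a \emph{set of positive proportion} $F_Q\subseteq Q$, with $\sigma(F_Q)\gtrsim_\ve\sigma(Q)$, consisting of points $x$ near the ``edge'' where $P_2$ enters. At a scale $r_x\sim_{\ve}\ell(Q)$ the ball $B_{r_x}(x)$ sees two planes at an angle or gap that is \emph{not} small relative to $r_x$, i.e.\ $\bilatbeta_E(B_{r_x}(x))\gtrsim1$.

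The cleanest way to get this is a David--Semmes style argument via Lemma \ref{l:big-projection-RF} (large projections of flat pieces). In the non-parallel case, $P_1\cap P_2$ (or its near-neighbourhood) is a vertical plane of lower dimension. Points of $E$ within distance $\sim\tau\ell(Q)$ of it form a set of measure $\gtrsim\tau^{2}\ell(Q)^{n+2}$: one codimension in space, with time being free because the planes are vertical. Every such point $x$ has a ball of radius $\sim\tau\ell(Q)$ that is far from flat. In the parallel case, we instead use the separation $\gtrsim\ve\ell(Q)$ at scale $\ell(Q)$: every point of $E\cap AB_Q$ near $P_1$ at scale comparable to $\ell(Q)$ sees $P_2$. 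Hence $\bilatbeta_E(A'B_{Q'})\gtrsim\ve$ for all subcubes at a fixed comparable generation. This alone does not give packing, so we also use BAUP at the smaller scales.

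Packing then follows by a double-counting argument. Every $Q\in\cB_\ve$ not already in the discarded BAUP family produces a set of descendant cubes $Q'$ with $\ell(Q')\sim_{\ve,\tau}\ell(Q)$ and total measure $\gtrsim_\ve\sigma(Q)$, for which $\baupbeta_E(M'B_{Q'})$ is small but every approximating union contains two planes at angle $\gtrsim1$ meeting $2B_{Q'}$. These are ``corner cubes''. The main obstacle is to show that corner cubes themselves are Carleson. My plan is the following dichotomy. Take a corner cube $Q'$ together with its children, all at scale $\rho^{m}\ell(Q')$ for a fixed large $m$. Since the union approximates $E$ in $M'B_{Q'}$, most children (by measure) lie away from the crossing locus and are therefore flat. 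The children along the crossing are again corner cubes, but they carry measure $\lesssim\rho^{m}\sigma(Q')$ relative to the codimension-one locus. Iterating, the corner cubes inside any $R_0$ have measure that decays geometrically by generation, which gives $\sum_{Q'\subseteq R_0}\sigma(Q')\lesssim\sigma(R_0)$ whenever corner structures are coherent across scales. When they are not coherent, BAUP fails at an intermediate scale and the cube lands in the discarded family. Finally, $A$-closeness (\cite[Lemma 3.27]{david1993analysis}) transfers the packing of corner cubes back to $\cB_\ve$, since each $Q\in\cB_\ve$ has a corner cube at comparable scale and distance, with multiplicity bounded by a constant depending on $\ve,\tau$. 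The step I expect to be hardest, and most delicate in the parabolic metric, is the geometric decay estimate for corner cubes. In particular, we must ensure that the time direction does not allow the crossing locus to carry full measure; it cannot, because vertical planes intersect in a vertical plane of lower spatial dimension.
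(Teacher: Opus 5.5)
There is a genuine gap, at the step you flag as hardest.

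\textbf{The dichotomy fails.} Your decay estimate controls only non-flat cubes that descend coherently from one crossing. Near a fixed crossing locus, the bad cubes at scale $\rho^j\ell(Q')$ carry measure $\sim\rho^j\sigma(Q')$, which is summable. But $d_{M'B_{Q'}}(E,U_{Q'})\le\delta$ says nothing about $E$ below scale $\delta\ell(Q')$. So inside your ``flat'' children a fresh two-sheet configuration, crossing or parallel, can appear at any smaller scale and location while BAUP holds at every scale. The dichotomy ``coherent, or BAUP fails at an intermediate scale'' is therefore false.

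\textbf{The parallel case is not handled.} You leave it open, and it has no crossing locus, hence no corner cubes. The final $A$-closeness transfer then has nothing to attach to.

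\textbf{A test case.} Write points as $(x,x',t)\in\R^{\idim}\times\R^{\adim-\idim}\times\R$, fix a unit vector $e\in\R^{\adim-\idim}$, and set $V_0=\{x'=0\}$, $V_k=\{x'=2^{-k}e\}$ for $1\le k\le K$, and $E=\bigcup_{k=0}^K V_k$.
\begin{itemize}
\item $E$ is a finite union of vertical planes, so $\baupbeta_E\equiv0$.
\item $E$ is Ahlfors regular with constant $\sim K$.
\item Every ball centred on $V_0$ with radius in $[2^{2-K},1]$ has $\bilatbeta_E\gtrsim1$, so the BWGL packing constant is $\gtrsim K$.
\end{itemize}
Your scheme has no mechanism for this example. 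There are no corners and no BAUP failures, and no step where the packing constant could come from the upper Ahlfors constant. Any proof must use the upper bound in a genuinely multiscale counting argument.

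\textbf{The missing idea is mass doubling.} The paper proceeds in three steps.
\begin{enumerate}
\item Lemma \ref{l:lots-of-flat-children} shows, by compactness on Schul cubes, the following. If $\baupbeta_E(B)$ is tiny and $\bilatbeta_E(B)\sim\alpha$ with $\alpha$ small, then $E\cap B$ is close to two nearly coincident sheets, each filling $B$. Hence there are disjoint $\alpha$-flat subcubes with $\sum\pdiam(R)^{\idim+2}\ge(1+\delta)^{-1}(2-C\alpha)\pdiam(B)^{\idim+2}$. This holds wherever and however the non-flatness arises, which is what replaces your corner geometry.
\item To carry this gain through arbitrarily many flat generations without losing a constant factor per generation, the paper builds a Reifenberg flat surface over each stopping region of $\alpha$-flat cubes via Theorem \ref{t:DT}. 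Its large projections (Lemma \ref{l:big-projection-RF}) give the inequality of Lemma \ref{l:next-when-flat}:
\[\pdiam(Q_1)^{\idim+2}\le C\,\pHmeasure^{\idim+2}\Bigl(E\cap Q_1\setminus\bigcup_{T\in\cN}T\Bigr)+\tfrac23\sum_{T\in\cN}\pdiam(T)^{\idim+2}.\]
\item Weights carrying factors $(2/3)^{N}$, together with the disjointness of the leftover sets in $E$, turn this into a packing bound for the top cubes. This is exactly where upper regularity enters. Each cube with $\bilatbeta>\eta$ has diameter comparable to one of boundedly packed cubes attached to these tops.
\end{enumerate}
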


This result is known in the elliptic case, but is new in the parabolic case. However, even in the elliptic case, the known proofs are rather indirect and pass via several difficult properties related to the theory of uniformly rectifiability (WHIP, generalised corona decompositions etc). Since the BAUP is morally very closely related to the BWGL, this is rather unsatisfactory. Here, we provide a more direct proof of the fact that BAUP implies BWGL in both the elliptic and parabolic settings. 

The argument is inspired by arguments in \cite{azzam2022weak} (which is based on the method developed in \cite{schul2007subsets}), which controls how often the density of a collection of cubes can be larger than one. Assume for the sake of exposition that every cube is good for the BAUP condition. If a cube is bad for the BWGL, there should be at least two distinct planes which well-approximate the set in that cube. It would be nice if this implied that the density (of a dilation) of the cube is larger than one (so that we could directly apply the main result of \cite{azzam2022weak}), however, this is simply not true under the assumption of Ahlfors regularity alone. If the set had some topology, a statement like this could be true. The key thing to note is that, in between the scales which are bad for the BWGL, the cubes are good for the BWGL. Thus, by Theorem \ref{t:DT}, one can approximate these scales by a Reifenberg flat set. This imposes a sort of \textit{approximate topology} on the set, which we can then utilize.

\subsection{Schul's cubes}

Before embarking on the proof of Theorem \ref{BAUPBWGL}, we need to recall Schul's cubes from \cite{schul2007subsets}. These are a collection of subsets of a metric space, which are similar to the cubes constructed in Lemma \ref{cubes}. Both collections have the property that they can be divided into different generations with nesting properties. The main difference is that, while Christ-David cubes are partitioned by cubes at the next generation of roughly the same size, the children of Schul's cubes can vary wildly in size. The important property Schul's cubes do have, is that they are approximately like balls.\\

\begin{lemma}\label{l:Schul-cubes}
	Let $(X,d,\mu)$ be a doubling metric measure space, $\{X_k\}_{k \in \mathbb{Z}}$ be a sequence of subsets of $X$ satisfying \eqref{e:sep-net} and $\CSchulprime \coloneqq 1/36$. For $\rho$ small enough, there is a collection of subsets $\cC \coloneqq \bigcup_{k \in \mathbb{Z}} \cC_k$ of $X$, which we call Schul cubes, such that that the following conditions hold.  
	\begin{enumerate}
		\item If $Q \in \cC_k$ and $R \in \cC_{k'}$ for some $k' \geq k$, and $Q \cap R \neq\emptyset$, then $R \subseteq Q$. 
		\item For each $Q \in \cC_k$ there is a point $x_Q \in X_k$ such that, if $B(Q) \coloneqq B_{\CSchulprime \rho^k}(x_Q)$, then 
		\begin{align}
			B(Q) \subseteq Q \subseteq (1+8\rho)B(Q). 
		\end{align}
		\item For each $k \in \mathbb{Z}$, we have 
		\begin{align}
			\mu\left( X \setminus \bigcup_{m \geq k}\bigcup_{Q \in \cC_m} Q \right) = 0 .
		\end{align}
	\end{enumerate}
	Given a cube $Q \in \cC$ and a subset $\cQ \subseteq \cC$, let 
	\begin{align}\label{e:descendents}
		\cC(Q) \coloneqq \{R \in \cC \colon R \subseteq Q \} \quad \mbox{ and } \quad \cC(\cQ) \coloneqq \bigcup_{Q \in \cQ} \cC(Q). 
	\end{align}
\end{lemma}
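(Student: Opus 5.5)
The plan is to follow Schul's construction \cite{schul2007subsets}: build the cubes generation by generation from the top down, each cube being a ball enlarged by a controlled ``halo'' of smaller balls. Taking $\rho$ small makes the halos summable. Since the property (1) conclusion is only ever needed inside a fixed top cube, I first fix a top generation $k_0$ and do the construction for $k \geq k_0$ (letting $k_0 \to -\infty$ along a diagonal, or working in bounded pieces, handles all $k \in \mathbb{Z}$).

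\emph{Step 1 (selection).} Proceed inductively in $k \ge k_0$. At generation $k$, call $x \in X_k$ \emph{admissible} if the ball $B_{\CSchulprime\rho^k}(x)$ is disjoint from every cube already built at generations $k_0,\dots,k-1$. Only admissible points become centres $x_Q$ of cubes $Q\in\cC_k$.

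\emph{Step 2 (growing a cube).} For an admissible $x$, set $Q^{(0)} = B_{\CSchulprime\rho^k}(x)$. Given $Q^{(j)}$, let $Q^{(j+1)}$ be $Q^{(j)}$ together with all balls $B_{\CSchulprime\rho^{k+j+1}}(y)$, $y \in X_{k+j+1}$, that meet $Q^{(j)}$. Put $Q = \bigcup_j Q^{(j)}$. Each step enlarges the radius by at most $2\CSchulprime\rho^{k+j+1}$, so summing the geometric series gives
\[
Q \subseteq \Bigl(1 + \tfrac{2\rho}{1-\rho}\Bigr) B(Q) \subseteq (1+8\rho)B(Q).
\]
This is property (2); the inclusion $B(Q)\subseteq Q$ is trivial.

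\emph{Step 3 (nesting).} Two cubes of the same generation are disjoint. Their centres are $\rho^k/4$-separated while each cube lies in a ball of radius $(1+8\rho)\rho^k/36$, and $2(1+8\rho)/36 < 1/4$.

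For different generations the subtle point is the following. Suppose $R$ has generation $k'>k$ and meets $Q$ of generation $k$. Then $R$'s centre ball is disjoint from $Q$, by admissibility. Hence $R$ must meet some halo ball added to $Q$ at a stage $j$ with $k+j \le k'$, or else $R$'s own halo meets $Q$.

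I will modify Step 2 so that the halo of $Q$ absorbs entire cubes rather than balls: at stage $j$, add every already-defined ``pre-cube'' of generation $k+j+1$ meeting $Q^{(j)}$. Equivalently, I define all cubes as limits of a single simultaneous process, in which $Q^{(j+1)}$ adds the generation-$(k+j+1)$ pieces touching $Q^{(j)}$. With this change, anything at generation $k'$ touching $Q$ has been swallowed, which gives $R\subseteq Q$. The radius bound is unchanged, because each absorbed piece at generation $m$ has diameter at most $2(1+8\rho)\CSchulprime\rho^m$.

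Checking that this self-referential definition is consistent is the main obstacle. I plan to resolve it by defining the relation ``$R$ is eventually absorbed by $Q$'' via chains of balls with geometrically decreasing radii, and showing it is a tree order. Bounded chain length plus summability gives both the containment and the radius estimate.

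\emph{Step 4 (covering, property (3)).} Fix $k$. Let $N = X\setminus\bigcup_{m\ge k}\bigcup_{Q\in\cC_m}Q$, and take $x \in N$ and $m \ge k$. There is a point $y \in X_m$ with $d(x,y) \le 2\rho^m$. Either $y$ is admissible, so that $B_{\CSchulprime\rho^m}(y)$ is covered, or that ball meets an earlier cube. In the second case, by Step 2 that cube contains a ball of comparable radius near $x$, within distance $\lesssim \rho^{m}/\rho$ (here I use the previous generation's scale to ensure comparable size).

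In either case $B(x, C\rho^{m-1})$ contains a covered ball of radius $\gtrsim \rho^m$. By doubling, $\mu(N\cap B(x,C\rho^{m-1})) \le (1-c)\,\mu(B(x,C\rho^{m-1}))$ for every $m\ge k$, with $c>0$ depending on $\rho$ and the doubling constant. So no point of $N$ is a Lebesgue density point of $N$. The Lebesgue differentiation theorem for doubling measures then gives $\mu(N)=0$.
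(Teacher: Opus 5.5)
Your radius estimate in Step 2 and the same-generation disjointness are fine, but property (1), which is the real content of the lemma, is not established.

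\textbf{Step 1 contradicts (1) and (3).} If $R\in\cC_{k'}$ with $k'>k$ meets $Q\in\cC_k$, then (1) gives $B(R)\subseteq R\subseteq Q$, whereas admissibility of $x_R$ says $B(R)\cap Q=\emptyset$. So (1) would force cubes of different generations to be disjoint. Then every $Q\in\cC_k$, which has $\mu(Q)\geq\mu(B(Q))>0$, misses $\bigcup_{m\geq k+1}\bigcup_{R\in\cC_m}R$, contradicting (3) at level $k+1$.

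\textbf{The repair in Step 3 is not a patch.} Absorbing whole later ``pre-cubes'' gives a different, self-referential construction: which centres are admissible at generation $k'$ depends on the earlier cubes, and these depend, through absorption, on the finer cubes being selected. You name its consistency as the main obstacle and give only a plan. Absorbing the pieces that meet $Q^{(j)}$ is also not enough. Nothing in your rule prevents a generation-$(k+1)$ cube that misses $Q^{(0)}$ from meeting a generation-$(k+2)$ piece absorbed into $Q$ at stage $1$; that cube then meets $Q$ without lying in it. The passage $k_0\to-\infty$ is unspecified as well, since your cubes depend on $k_0$.

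\textbf{Step 4 has a second gap.} When $B_{\CSchulprime\rho^m}(y)$ meets an earlier cube, nothing guarantees that this cube contains a ball of radius $\gtrsim\rho^m$ centred within $C\rho^{m-1}$ of $x$. Near $x$ it may consist only of much finer halo pieces and a sliver of a ball centred far away, and doubling only compares balls with nearby centres.

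\textbf{What the paper does.} It quotes (1) and (2) from \cite{azzam2015hausdorff} and uses that each $z\in X_{m+2}$ is the centre of a cube $Q\in\cC_{m+2}$. Taking $\rho^{m+1}\leq r<\rho^m$ and $d(x,z)<2\rho^{m+2}$, it gets $B(Q)\subseteq B_r(x)$ and $\mu(B(Q))\gtrsim\mu(B_r(x))$. So the union of cubes of generation $\geq k$ has density bounded below at every point, and Lebesgue differentiation gives (3).

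\textbf{A self-contained fix.} Drop admissibility and build upwards from a finest generation $N$. Set $Q^N_{x,N}=B_{\CSchulprime\rho^N}(x)$. For $k<N$, let $Q^N_{x,k}$ be $B_{\CSchulprime\rho^k}(x)$ together with every $Q^N_{y,j}$ ($k<j\leq N$, $y\in X_j$) that meets this ball. Downward induction on $k$ gives the radius bound in (2) when $\rho\leq 3/8$, hence disjointness within each generation. It then gives (1): a finer cube meeting $Q^N_{x,k}$ meets either the ball or an absorbed cube. In the latter case, by induction, it either lies in that cube or contains it and so meets the ball. Moreover $Q^N_{x,k}$ increases with $N$, so $Q_{x,k}=\bigcup_N Q^N_{x,k}$ inherits (1) and (2). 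Every point of $X_k$ is then a centre, which is also what Lemma~\ref{l:subsequence} and the proof of Theorem~\ref{BAUPBWGL} later rely on.
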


\begin{proof}
    Properties (1) and (2) are already shown in \cite{azzam2015hausdorff} and require only that the space $X$ is metrically doubling. To show property (3), let us fix some $k \in \mathbb{Z}$ and denote $E = \bigcup_{m \geq k}\bigcup_{Q \in \cC_k} Q$. Since $\mu$ is doubling, by the Lebesgue differentiation theorem it suffices to show that 
    \begin{align}\label{e:positive-density}
        \limsup_{r \to 0} \frac{\mu(E \cap B_r(x))}{\mu(B_r(x))} > 0 \quad \mbox{ for each } x \in X. 
    \end{align}
    To prove \eqref{e:positive-density}, fix some $x \in X$, $r \in (0,\rho^k)$, and let $m \geq k$ such that $\rho^{m+1} \leq r < \rho^m$. By \eqref{e:sep-net}, there exists $z \in X_{m+2}$ such that $d(x,z) < 2\rho^{m+2}$. If $\rho = \rho(\CSchulprime)$ is small enough, then by our choice of $m$, if $Q \in \cC_{m+2}$ is the cube such that $x_Q = z$, we have 
    \begin{align}
    	B(Q) = B_{\CSchulprime \rho^{m+1}}(z) \subseteq B_{\CSchulprime \rho^{m+2} + 2 \rho^{m+2}}(x) \subseteq B_{(\CSchulprime + 2)\rho r}(x) \subseteq B_r(x)
    \end{align} 
    and $\mu(B(Q)) \geq c\mu(B_r(x))$ for some constant $c > 0$ depending only on the doubling constant. It now follows that 
    \begin{align}
        \frac{\mu(E \cap B_r(x))}{\mu(B_r(x))} \geq \frac{\mu(B(Q))}{\mu(B_r(x))}  \geq c > 0. 
    \end{align}
    Since $r > 0$ was arbitrary, \eqref{e:positive-density} now follows. 
\end{proof}

\begin{lemma}\label{l:subsequence}
    Let $\{E_j\}$ be a sequence of uniformly bounded subsets of a doubling metric space $X$ and $\{\cC(j)\}$ be a corresponding sequence of Schul cubes. There is a subsequence $\{E_{j_n}\}$ converging to some set $E$ in the Hausdorff metric and a system of Schul cubes $\cC$ on $E$ such that the following holds. For every $\delta \in (0,1)$ and $k \in \N$ there is $N \in \N$ such that, if $n \geq N$, then there exists a bijection 
    \begin{align}
        g_{j_n} \colon \bigcup_{0 \leq m \leq k} \cC_m \to \bigcup_{0 \leq m \leq k} \cC_m(j_n)
    \end{align}
    satisfying
    \begin{align}\label{e:balls-are-nest}
        (1+\delta/2)^{-1} B(g_{j_n}(Q)) \subseteq B(Q) \subseteq (1+\delta/2) B(g_{j_n}(Q)).
    \end{align}
    Moreover, if $\rho \leq \rho_1(\delta) \coloneqq \delta/32$, then 
    \begin{align}\label{e:comp-g(Q)} 
        (1+\delta)^{-1}\pdiam(Q) \leq \pdiam(g_{j_n}(Q)) \leq (1+\delta)\pdiam(Q) .
    \end{align}
\end{lemma}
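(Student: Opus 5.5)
The plan is a compactness and diagonal argument. Since the $E_j$ are uniformly bounded in the doubling space $X$, Blaschke's theorem gives a subsequence converging in the Hausdorff metric to a compact set $E$; we may replace each $E_j$ by its closure without affecting anything. The Schul cubes of $\cC(j)$ are determined by the nets $X_k(j)$, the centres $x_Q$, and the nesting/assignment data. So the strategy is to pass to further subsequences until these discrete data converge, build the limiting nets on $E$, and show that the limiting cubes are close to the cubes of $\cC(j_n)$ in the sense of \eqref{e:balls-are-nest}.

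First, for each fixed generation $m$, note that $X_m(j)$ is $\rho^m/4$-separated inside a bounded region of a doubling space, so $\# X_m(j)$ is bounded uniformly in $j$. Pass to a subsequence on which $\# X_m(j)$ is constant and each point converges; the limit points form a set $X_m \subseteq E$. The separation bound $\geq \rho^m/4$ survives in the limit (non-strictly, which is all \eqref{e:sep-net} needs). The covering property $\dist(z,X_m)\le 2\rho^m$ for $z\in E$ follows from Hausdorff convergence up to an arbitrarily small error. To remove that error, I would either enlarge $X_m$ slightly using maximality, or note that the construction in Lemma \ref{l:Schul-cubes} tolerates a constant $2+o(1)$. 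A diagonal argument over $m=0,1,2,\dots$ then gives one subsequence $\{j_n\}$ along which everything converges for every $m$. The combinatorial data of the Schul construction (which centre at generation $m$ is assigned to which cube, and which cubes are nested in which) take finitely many values at each generation, so after a further diagonal extraction they are eventually constant. We then define $\cC$ on $E$ by running the construction of \cite{azzam2015hausdorff} with the nets $X_m$ and this limiting combinatorial data.

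For fixed $k$, the bijection $g_{j_n}$ simply sends the cube with centre $x\in X_m$ to the cube of $\cC_m(j_n)$ whose centre converges to $x$. Bijectivity holds for $n$ large because the counts agree and the labels match. The ball comparison \eqref{e:balls-are-nest} is then immediate: $B(Q)$ and $B(g_{j_n}(Q))$ have the same radius $\CSchulprime\rho^m$ and centres at distance $o(1)$. Only finitely many cubes occur for $m\le k$, so a single $N$ works for all of them once the centres are within $\delta\CSchulprime\rho^k/4$ of each other.

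For \eqref{e:comp-g(Q)}, property (2) of Lemma \ref{l:Schul-cubes} traps each cube between $B(Q)$ and $(1+8\rho)B(Q)$. Provided the diameter of $B(Q)\cap E$ is comparable to its radius, this gives $\pdiam(Q)\in[c\,r,\ (1+8\rho)\cdot 2r]$. Combining this with \eqref{e:balls-are-nest} yields a multiplicative error $(1+\delta/2)(1+8\rho)\le 1+\delta$ when $\rho\le \delta/32$.

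\textbf{Main obstacle.} The difficulty is that the diameter of a cube is not determined by its ball alone. The cube only lies between $B(Q)$ and its dilate, and the actual diameter depends on where $E$ sits inside the annulus. I would therefore show directly that $\pdiam(Q)$ is close to $\pdiam(g_{j_n}(Q))$. The idea is that the limiting combinatorics make $g_{j_n}(Q)$ the union of the descendant cubes corresponding to those of $Q$ down to some deep generation $k'$. Combined with Hausdorff convergence of $E_{j_n}\cap$(ball) and with property (3), this gives Hausdorff closeness of $Q$ and $g_{j_n}(Q)$ up to an error of $\rho^{k'}$, which can be chosen to be $\le \delta\rho^k/32$. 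This step is the one I expect to need the most care.
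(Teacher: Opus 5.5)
Your extraction of convergent nets, the construction of $\cC$ from the limit nets, the matching of cubes by nearby centres and the proof of \eqref{e:balls-are-nest} are essentially what the paper does. The covering condition also passes to the limit exactly, since $\dist(z,X_m(j_n))\to\dist(z,X_m)$, so no enlargement of $X_m$ is needed (it would spoil the bijection).

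The gap is the step you flagged for \eqref{e:comp-g(Q)}. Hausdorff convergence $E_{j_n}\to E$ does not make $E_{j_n}\cap B$ close to $E\cap B$ for balls with (nearly) the same centre and radius, because points can cross the boundary of the ball. The attachment of deeper balls in the Schul construction is just as unstable. So neither your truncated chains nor the cubes $Q$ and $g_{j_n}(Q)$ need be Hausdorff close. Prescribing ``limiting combinatorial data'' does not help either. The Schul cubes of $E$ are determined by $E$ and its nets, so sets built from imposed data would have to be re-checked against (1)--(3) of Lemma \ref{l:Schul-cubes}, and the data say nothing about the point sets $E\cap B(Q)$ anyway.

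In fact \eqref{e:comp-g(Q)} can fail. In $X=\R$ let $r=\CSchulprime\rho^m$, $E_j=[-1,0]\cup[r+1/j,1]$ and $E=[-1,0]\cup[r,1]$; these are uniformly Ahlfors $1$-regular. Give every $E_j$ ($j$ large) the generation-$m$ net $(\rho^m\mathbb{Z}\cap[-1,0])\cup((\rho^m/2+\rho^m\mathbb{Z})\cap[0,1])$. For small $\delta$, \eqref{e:balls-are-nest} forces $0$ to be a generation-$m$ centre of $\cC$, whatever $\cC$ is, and forces $g_{j_n}$ to pair the two cubes centred at $0$. With closed balls, Lemma \ref{l:Schul-cubes} (2) gives $\pdiam(Q)\geq\pdiam(E\cap B_r(0))=2r$. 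On the other hand, a deeper ball is attached only when it meets the set built so far. Balls of radius less than $r$ centred in $[-1,0]$ contain no points of $E_j$ outside $[-1,0]$, and those centred in $[r+1/j,1]$ miss $[-1,0]$. Hence $g_{j_n}(Q)\subseteq[-(1+8\rho)r,0]$ and $\pdiam(g_{j_n}(Q))\leq(1+8\rho)r$, contradicting \eqref{e:comp-g(Q)} for, e.g., $\delta=1/2$. With open balls, use $r-1/j$ in place of $r+1/j$; the two cubes then swap roles.

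The paper's own proof of \eqref{e:comp-g(Q)} is just the sandwich you wrote first, $\pdiam(g_{j_n}(Q))\leq(1+8\rho)\pdiam(B(g_{j_n}(Q)))\leq(1+8\rho)(1+\delta/2)\pdiam(B(Q))\leq(1+\delta)\pdiam(Q)$, and symmetrically. It tacitly equates the diameters of $E\cap B(Q)$ and of $E_{j_n}\cap B(g_{j_n}(Q))$ with the nominal diameter of the ball, i.e.\ it needs the sets to fill these balls. A diameter merely comparable to the radius, as in your version, gives only a constant factor, not $1+\delta$.

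Filling does hold where the lemma is used, in Lemma \ref{l:lots-of-flat-children}. There the limit set is, on $B_1(0)$, a finite union of vertical planes, the relevant cubes are centred on it, and $E_j$ is within $o(1)$ of $E$ on $B_1(0)$. A vertical plane through the centre of a parabolic ball realises the ball's full parabolic diameter. Hence both $\pdiam(Q)$ and $\pdiam(g_{j_n}(Q))$ lie between $1-o(1)$ and $1+8\rho$ times the nominal diameter of $B(Q)$, which equals that of $B(g_{j_n}(Q))$. So the repair is to add such a filling hypothesis, or prove \eqref{e:comp-g(Q)} only in that setting, and then run the sandwich. Looking for a continuity property of the cubes will not work, since it fails in general.
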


\begin{proof}
    For each $j$, let $\{X_k(j)\}_{k}$ be the sequence of points from which $\cC(j)$ is constructed. Since $\{E_j\}$ is uniformly bounded (hence, also the nets), by compactness and a diagonal argument, we can find a sequence $\{j_n\} \subseteq \{j\}$ such that $\{E_{j_n}\}$ converges to some set $E$ and $\{X_k(j_n)\}_n$ converges for each $k$. Let $X_k$ denote the limit of the sequence $\{X_k(j_n)\}_n$. Since each of the $X_k(j)$ satisfy \eqref{e:sep-net} in $E_j$, it is easy to check that $\{X_k\}_k$ satisfies condition \eqref{e:sep-net} in $E$. We let $\cC$ be the system of Schul cubes constructed via Lemma \ref{l:Schul-cubes} with respect to this net. 

    Now, fix some $\delta \in (0,1)$ and $k \in \N$. Since $k$ is finite and $\{X_m(j_n)\}_n$ converges to $X_m$ for each $0 \leq m \leq k$, there exists $N \in \N$ such that 
    \begin{align}\label{e:nets-are-close}
    d_H(X_m,X_m({j_n})) < \delta \CSchulprime \rho^k \leq \delta \CSchulprime \rho^m \quad \mbox{ for all } 0 \leq m \leq k \mbox{ and } n \geq N.
    \end{align}
    
    For $Q \in \cC_m$ with $0 \leq m \leq k$, let $g_{j_n}(Q)$ be \textit{the} cube in $\cC_m(j_n)$ such that 
    \begin{align}\label{e:g_j(Q)}
        |x_Q - x_{g_j(Q)}| < \delta \CSchulprime \rho^m.
    \end{align} 
    The existence of this cube follows from \eqref{e:nets-are-close}, the uniqueness follows from the separation condition \eqref{e:sep-net} since $\CSchulprime < 1/4$. If $Q,R \in \cC_m$ are distinct cubes, then we must have $g_{j_n}(Q) \neq g_{j_n}(R)$ otherwise (by the triangle inequality) we contradict the separation condition $X_m$ i.e., the map $g_j$ is injective. By \eqref{e:nets-are-close}, for each $Q' \in \cC_m(j_n)$ there exists $Q \in \cC_m$ such that $|x_Q - x_{Q'} | < \delta \CSchulprime \rho^m$, thus, the map is surjective. The inclusion \eqref{e:balls-are-nest} follows directly from \eqref{e:g_j(Q)} and the triangle inequality. The second inequality in \eqref{e:comp-g(Q)} follows from Lemma \ref{l:Schul-cubes} (2), \eqref{e:balls-are-nest} and our choice of $\rho$, since 
    \begin{align}
    	\pdiam(g_{j_n}(Q)) &\leq (1+8\rho)\pdiam(B(g_{j_n}(Q)) \leq (1+8\rho)(1+\delta/2)\pdiam(B(Q)) \\
    	&\leq (1+\delta)\pdiam(Q). 
    \end{align}
    The first inequality in \eqref{e:comp-g(Q)} is similar.
\end{proof}

\subsection{Preliminary arguments}\label{s:BAUP-prelim} We start by gather several useful result that will assist us in the proof of Theorem \ref{BAUPBWGL}. We will assume from here on that we are working within parabolic $\R^{\adim +1}.$ Since the distinction between the $X$ and $t$ components will not be very important here, for the sake of brevity, we will typically denote a points $(X,t)$ and $(Y,s)$ instead just by $x$ and $y$ (for example).

We begin with the following which show that, for any $B$ which is sufficiently well approximated by a union of planes, there exists a collection $\cN$ of disjoint Schul cubes contained in $B$ which are flat, relatively large, and satisfy that the sum of $\pdiam(R)^{\idim+2}$ over $R \in \cN$ is at least (up to a small perturbation) $\pdiam(B)^{\idim+2}$. Moreover, if it holds that $\bilatbeta_E(B) \sim \alpha$, then the approximate measure of the cubes in $\cN$ is almost \textit{double} the approximate measure of the top cube. This doubling property be extremely important in our arguments later, where this will be used to distribute mass along these cubes.

\begin{lemma}\label{l:lots-of-flat-children}
	For each $C_0 , M > 1$ and $\alpha , \delta \in (0,1)$ there is $\Nflat > 1$ and $\veflat > 0$ such that the following holds. Suppose $E \subseteq \R^{\adim+1}$ is parabolic Ahlfors  $(\idim+2)$-regular with constant $C_0$ and $\cC$ is a system of Schul cubes on $E$ with $\rho = \rho_1(\delta/(4\idim+8))$ (with $\rho_1(\cdot)$ as in Lemma \ref{l:subsequence}). For every ball $B$ centered on $E$ such that $\baupbeta_E(B) < \ve$ for some $0 < \ve < \veflat$, there exists a disjoint collection of cubes $\cN \subseteq \cC$ contained in $B$ such that 
	\begin{align}\label{e:size}
		\pdiam(R) \geq \rho^{\Nflat} \pdiam(B) \quad \mbox{ and } \quad \bilatbeta_E(MB(R)) < \delta \alpha
	\end{align}
	for all $R \in \cN$, and  
	\begin{align}\label{e:sum}
		\sum_{R \in \cN} \pdiam(R)^{\idim+2} \geq (1+\delta)^{-1} \pdiam(B)^{\idim+2}.
	\end{align}
	Moreover, if $\bilatbeta_E(B) \sim \alpha$, then in fact we have 
	\begin{align}\label{e:big-size}
		\sum_{R \in \cN} \pdiam(R)^{\idim+2} \geq (1+\delta)^{-1}(2-C\alpha) \pdiam(B)^{\idim+2}. 
	\end{align}
\end{lemma}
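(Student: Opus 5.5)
We argue by contradiction and compactness, using Lemma \ref{l:subsequence} to pass to a limit. Fix $C_0,M,\alpha,\delta$ and suppose that no $\Nflat,\veflat$ work. Then there are sets $E_j$ (Ahlfors regular with constant $C_0$), Schul cube systems $\cC(j)$ and balls $B_j$ with $\baupbeta_{E_j}(B_j)<1/j$ for which no admissible collection $\cN$ exists with $\Nflat=j$. After rescaling and translating we may take $B_j=B_1(0,0)$. Applying Lemma \ref{l:subsequence} to $E_j\cap B_2(0,0)$, we get a subsequence converging in Hausdorff distance to a limit $E_\infty$, together with a Schul cube system $\cC$ on the limit and bijections $g_{j_n}$ on the first $k$ generations. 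These bijections preserve the balls $B(Q)$ up to a factor $(1+\delta/2)$ and the diameters up to a factor $(1+\delta)$. The limit is Ahlfors regular (constant $\lesssim C_0$), since Hausdorff limits of uniformly regular sets are regular.

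Next we identify the limit inside the ball. Because $\baupbeta_{E_j}(B_1)\to 0$, the set $E_\infty\cap B_1$ coincides with $U\cap B_1$ for some finite union $U=\bigcup_{i\le N}P_i$ of vertical planes. The number $N$ is bounded in terms of $C_0$ by regularity, so a subsequence of the approximating unions converges. We then choose a finite disjoint family $\cN_\infty\subseteq\cC$ of cubes contained in $(1-\delta')B_1$ with the following properties:
\begin{itemize}
\item each cube lies well inside the interior of a single plane $P_i$, away from $\bigcup_{i'\ne i}P_{i'}$ and from $\partial B_1$;
\item each cube satisfies $\bilatbeta_{E_\infty}(M'B(R))=0$, where $M'=2M$;
\item the total $\sum\pdiam(R)^{n+2}$ exhausts the measure of $U\cap B_1$ up to a factor $(1+\delta/4)$.
\end{itemize}
This is possible by Lemma \ref{l:Schul-cubes}(3): almost every point of $U\cap B_1$ is covered by small cubes, the intersections $P_i\cap P_{i'}$ are null, and by (2) each Schul cube is nearly a ball, so $\pdiam(R)^{n+2}$ is comparable, with ratio close to a fixed normalising constant, to $\pHmeasure^{n+2}$ of $R$ on a plane. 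Since the family is finite, all cubes have generation at most some $k$. Transferring through $g_{j_n}$ yields families $\cN_{j_n}$ of cubes of generation at most $k$, hence with $\pdiam\ge\rho^{k+1}$. Their diameters are within a factor $(1+\delta)$ of the limit ones, and by Lemma \ref{l:monotonicity} together with Hausdorff convergence on $M'B(R)$ we get $\bilatbeta_{E_{j_n}}(MB(g(R)))<\delta\alpha$ for large $n$. Disjointness and containment in $B$ survive for large $n$ because of the margins built into the choice. This gives \eqref{e:size} and \eqref{e:sum} with $\Nflat=k+1$, which is a contradiction.

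For \eqref{e:big-size}, the extra input is that $\bilatbeta_{E_j}(B_j)\sim\alpha$ passes to the limit. It forces $U\cap B_1$ to contain at least two planes $P_1,P_2$ meeting $\tfrac12B_1$ that are not within $c\alpha$ of a single plane. The main obstacle is extracting the factor $2-C\alpha$ from this, since $P_2$ could just graze the ball. We would handle this as follows. Every plane in $U$ must meet $B_1$ essentially at the centre scale, because otherwise discarding it would improve the bilateral fit. Then $\pHmeasure^{n+2}(P_1\cap B_1)$ and $\pHmeasure^{n+2}(P_2\cap B_1)$ are each at least $(1-C\alpha)$ times the full planar mass, and $P_1\cap P_2$ is null. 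Summing the two exhaustions gives $(2-C\alpha)$. The remaining effort is bookkeeping the normalisations between $\pdiam(B)^{n+2}$ and planar mass. We would set this up by taking $\rho=\rho_1(\delta/(4n+8))$, so that the factor $(1+\delta/(4n+8))^{n+2}$ arising from Lemma \ref{l:subsequence} is at most $1+\delta$.
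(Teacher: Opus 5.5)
Your treatment of \eqref{e:size} and \eqref{e:sum} follows the paper's own argument: contradiction, rescaling, Lemma \ref{l:subsequence}, a finite union of vertical planes in the limit, exhaustion by small cubes that see a single plane, then truncation and transfer through $g_{j_n}$. Your normalisation through the fixed planar ratio $\lambda=\pHmeasure^{n+2}(P\cap B_r(x))/\pdiam(B_r(x))^{n+2}$ ($P$ vertical, $x\in P$) is, if anything, cleaner than the paper's.

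The genuine gap is in \eqref{e:big-size}. Your key step is ``every plane in $U$ must meet $B_1$ essentially at the centre scale, because otherwise discarding it would improve the bilateral fit''. This is not a valid argument. In the limit $E_\infty\cap B_1=U\cap B_1$, so no plane carrying points of $E_\infty\cap B_1$ can be discarded. Moreover $\bilatbeta$, a single-plane quantity, says nothing about which planes occur in $U$.

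The step is in fact false, and so is the conclusion. Let $e_1,\dots,e_d$ be the standard basis of $\R^d$, and set $P_0=\R^n\times\{0\}^{d-n}\times\R$, $h=\sqrt{1-\alpha^2}$, $P_1=(he_1+\mathrm{span}\{e_2,\dots,e_{n+1}\})\times\R$, $E=P_0\cup P_1$ and $B=B_1(0,0)$.
\begin{itemize}
\item $E$ is parabolic Ahlfors regular with an absolute constant, and $\baupbeta_E(B)=0$.
\item $P_1\cap B$ lies in the $\alpha$-neighbourhood of $P_0$, so $\bilatbeta_E(B)\le 2\alpha$.
\item Since $P_0\cap B\subseteq E$, any competitor plane must be $O(\bilatbeta_E(B))$-close to $P_0$ on $B$. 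The point $(he_1+\alpha e_{n+1},0)\in E\cap B$ then gives $\bilatbeta_E(B)\gtrsim\alpha$.
\item But $P_1$ is at distance $h\approx 1$ from the centre, so it does not meet $\tfrac12 B$ either, and $\pHmeasure^{n+2}(P_1\cap B)\lesssim\alpha^n$.
\end{itemize}
For every Schul cube $R$ one has $B(R)\subseteq R\subseteq(1+8\rho)B(R)$, with $B(R)$ centred on one of the two planes. Hence $\pdiam(R)^{n+2}\le(1+8\rho)^{n+2}\lambda^{-1}\pHmeasure^{n+2}(R)$. Summing over any disjoint family of cubes contained in $B$,
\[
\sum_{R}\pdiam(R)^{n+2}\le(1+8\rho)^{n+2}(1+C\alpha^n)\,\pdiam(B)^{n+2},
\]
which is incompatible with \eqref{e:big-size} once $\alpha$ and $\delta$ are small.

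The paper's proof has the same defect. It asserts that $\bilatbeta_E(B_1(0))\sim\alpha$ forces two planes of $\mathscr V$ to pass within $C\alpha$ of the origin, which the example refutes. So the ``moreover'' statement needs an extra hypothesis excluding grazing planes, for instance $\bilatbeta_E(2B)\lesssim\alpha$ and $\baupbeta_E(2B)<\ve$.

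Under such a hypothesis your plan goes through:
\begin{itemize}
\item Every plane of the limiting union that meets $B$ lies, on $2B$, in the $C\alpha$-neighbourhood of one plane passing within $C\alpha$ of the centre.
\item Hence it makes angle $\lesssim\alpha$ with that plane, passes within $C\alpha$ of the centre, and carries mass at least $(1-C\alpha)\lambda\pdiam(B)^{n+2}$ in $B$.
\item The lower bound $\bilatbeta_E(B)\gtrsim\alpha$ forces at least two distinct such planes, and their intersection is $\pHmeasure^{n+2}$-null.
\item Exhausting both planes by small single-plane cubes gives the factor $2-C\alpha$. Here $\Nflat$ depends on $\alpha$ through the scale at which the two nearly parallel planes separate.
\end{itemize}
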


\begin{proof}
	We begin by showing \eqref{e:size} and \eqref{e:sum}. A small modification of the argument will give \eqref{e:big-size} under the additional assumption, which we will describe at the end. Let us suppose the first part of the lemma is false. Then, there exist constants $C_0,M,\alpha,\delta$ and for each $j \in \N$, a parabolic Ahlfors $(\idim+2)$-regular set $E_j$ with constant $C_0$, a system of Schul cubes $\cC(j)$ on $E_j$ with $\rho = \rho_1(\delta/(4\idim+4))$, and a ball $B_j$ centered on $E_j$ such that 
	\begin{align}\label{e:decaying-baup-2}
		\baupbeta_{E_j}(B_j) < 1/j
	\end{align}
	and 
	\begin{align}\label{e:comp2-as-1}
		\sum_{R \in \cN} \pdiam(R)^{\idim+2} \leq (1+\delta)^{-1} \pdiam(B)^{\idim+2}
	\end{align}
	for any disjoint collection of cubes $\cN \subseteq \cC(j)$ contained in $B_j$ satisfying
	\begin{align}\label{e:comp2-as-2}
		\pdiam(R) \geq 2^{-j}  \pdiam(B_j) \quad \mbox{ and } \quad \bilatbeta_{E_j}(MB(R)) < \delta\alpha \quad \mbox{ for } R \in \cN.  
	\end{align}
	
	By scaling and translating, we may assume that $B_j = B_1(0)$, and since the statement is local, we can assume that the sequence $\{E_j\}$ is uniformly bounded. By Lemma \ref{l:subsequence}, we can assume that $\{E_j\}$ converges to a set $E$ on which there is a system of Schul cubes satisfying the conclusions of Lemma \ref{l:subsequence} with respect to the sequence $\{\cC(j)\}$.

	Let $\eta > 0$ (to be chosen sufficiently small, depending on $\delta)$. By \eqref{e:decaying-baup-2}, we have that $\baupbeta_E(B_1(0)) = 0$. Thus, there is a collection of $t$-independent planes $\mathscr{V}$ such that $E \cap B_1(0) \supseteq V \cap B_1(0)$ for each $V \in \mathscr{V}$. Let $V_0 \in \mathscr{V}$ be the plane which passes through the origin (which exists since $0 \in E$), so that
	\begin{align}\label{e:large-measure-2}
		\pHmeasure^{\idim+2}(V_0 \cap B_{1-\eta}(0)) \geq (1-\eta)^{\idim+2}2^{\idim+2}. 
	\end{align}
	
	 Suppose initially that $k \geq 0$ is large enough such that $\pdiam(R) < \eta/2$ for all $R \in \cC_m$ with $m \geq k$. Let $\cC_m'$ be the set of cubes in $\cC_m$ which intersect $B_{1-\eta}(0)$, and let $\cQ_k$ be a maximal collection of cubes in $\bigcup_{m \geq k} \cC_m'$. By the triangle inequality, 
	\begin{align}\label{e:contained-in-unit-ball}
		R \subseteq B_{1-\eta/2}(0) \quad \mbox{ for all } R \in \cQ_k ,
	\end{align}
	and by Lemma \ref{l:Schul-cubes} (3),
	\begin{align}\label{e:nothing-missing-2}
		\pHmeasure^{\idim+2}\left(V_0 \cap B_{1-\eta}(0) \setminus \bigcup_{R \in \cQ_k} R  \right) = 0 .
	\end{align}
	Since $V_0$ is upper regular with constant $1$, it follows from \eqref{e:large-measure-2} and \eqref{e:nothing-missing-2} that 
	\begin{align}
		\sum_{R \in \cQ_k} \pdiam(R)^{\idim+2} \geq \sum_{R \in \cQ_k} \pHmeasure^{\idim+2}(V_0 \cap R) \geq (1-\eta)^{\idim+2}2^{\idim+2}. 
	\end{align}
	Thus, there exists $\kappa = \kappa(\eta)$ such that, if $\cQ'_k$ denotes the set of cubes in $R \in \cQ_k$ with $\pdiam(R) \geq \kappa$, then  
	\begin{align}\label{e:large-sum-2}
		\sum_{R \in \cQ'_k} \pdiam(R)^{\idim+2} \geq (1-2\eta)^{\idim+2}2^{\idim+2}.
	\end{align}
	
	For $k \geq 0$ large enough, if $\cB_k$ denotes the collection of cubes in $R \in \cQ_k'$ for which $2MB(R)$ intersects more than one plane from $\mathscr{V}$, then we claim that  
	\begin{align}\label{e:B_m}
		\sum_{R \in \cB_k} \pdiam(R)^{\idim+2} \leq \eta 2^{\idim+2}. 
	\end{align}
	
	Indeed, consider any distinct pair of planes $V,W \in \mathscr{V}$. Since there are only finitely many planes in $\mathscr{V}$, it suffices to prove an estimate similar to \eqref{e:B_m} with $\cB_k$ replaced by $\cL_k$, where
	\begin{align}
		\cL_k \coloneqq \{R \in \cQ'_k \colon 2MB(R) \cap V \neq\emptyset \mbox{ and } 2MB(R) \cap W \neq\emptyset\}.
	\end{align}
	
	If $V \cap W = \emptyset$, we have $\cL_k = \emptyset$ for $k$ large enough (since the maximal diameter must go to zero as $k \to \infty$) and \eqref{e:B_m} follows.  Suppose instead that $L \coloneq V \cap W \neq\emptyset$ and let $\theta > 0$ be the angle between $V$ and $W$. Given $\alpha > 0$, by choosing $k$ large enough (depending on $\theta, \alpha$ and $M$) one can guarantee that $\cL_k$ forms a disjoint collection of cubes centered on $V \cup W$ and contained in $\mathcal{N}_{\alpha}(L) \cap B_1(0)$, where $\mathcal{N}_{\alpha}(L)$ denotes the $\alpha$-neighbourhood of $L$.  Since $L^0$ has dimension at most $n-1$ and $V,W$ are vertical planes in $\verticalplanes$ containing $L$, it follows that 
	\begin{align}
		\pHmeasure^{\idim+2}((V \cup W) \cap \mathcal{N}_{\alpha}(L) \cap B_1(0)) \lesssim \alpha 2^{\idim+2}. 
	\end{align}
	Thus, since $V \cup W$ is parabolic Ahlfors regular $(\idim+2)$-regular and the cubes in $\cL_k$ are contained in $\mathcal{N}_{\alpha}(L) \cap B_1(0)$, we have 
	\begin{align}
		\sum_{R \in \cL_m} \pdiam(R)^{\idim+2} &\lesssim \sum_{R \in \cL_m} \pHmeasure^{\idim+2}((V \cup W) \cap R) \\
		&\leq \pHmeasure^{\idim+2}((V \cup W) \cap \mathcal{N}_{\alpha}(L) \cap B_1(0))  \lesssim \alpha 2^{\idim+2}. 
	\end{align}
	The inequality \eqref{e:B_m} now follows by choosing $\alpha = \alpha(\eta)$ sufficiently small (hence, $k = k(\eta)$ sufficiently large). 
	
	By \eqref{e:large-sum-2} and \eqref{e:B_m}, we have 
	\begin{align}\label{e:large-large}
		\sum_{R \in \cQ_k' \setminus \cB_k} \pdiam(R)^{\idim+2} \geq (1-3\eta)^{\idim+2}2^{\idim+2}. 
	\end{align}
	Also, for any $R \in \cQ'_k  \setminus \cB_k$ (so $2MB(R)$ only intersects one plane from $\mathscr{V}$) we have  
	\begin{align}\label{e:not-B-flat}
		\bilatbeta_E(2MB(R)) = 0. 
	\end{align}
	
	By Lemma \ref{l:subsequence}, for every $j$ large enough there is a one-to-one correspondence between the set of cubes in $\cQ'_k$ and the set of cubes in $\cC(j)$ with $\pdiam(R) \geq \kappa$. Moreover, by \eqref{e:balls-are-nest}, it can be made so that if $R \in \cQ'_k$ and $R' \in \cC(j)$ are corresponding cubes, then 
	\begin{align}\label{e:comp-size}
		\left(1+\frac{\delta}{4\idim+8}\right)^{-1} \pdiam(R') \leq \pdiam(R) \leq \left(1+\frac{\delta}{4\idim+8}\right) \pdiam(R').
	\end{align}
	Our choice of $\rho = \rho_1(\delta/(4\idim+8))$ should hopefully now become clear. With this choice, using the inequalities $(1+x/a)^{a} \leq e^x$ and $e^x \leq 1+2x$ for $x \in (0,1)$, we have 
	\begin{align}\label{e:simple-inequality}
		\left(1 + \frac{\delta}{4\idim+8}\right)^{-\idim-2} \geq (1+\delta/2)^{-1}.
	\end{align}
	
	For such a $j$, let $\cN(j)$ be the set of cubes in $\cC(j)$ corresponding to $\cQ'_k \setminus \mathscr{B}_k$. By \eqref{e:contained-in-unit-ball} and \eqref{e:comp-size}, $\cN(j)$ is a disjoint collection of cubes in $B_1(0)$ with $\pdiam(R) \gtrsim \kappa$ for all $R \in \cN(j)$. Moreover, by \eqref{e:large-large},  \eqref{e:comp-size} and \eqref{e:simple-inequality}, if $\eta$ is sufficiently small (depending on $\delta$)
	\begin{align}
		\sum_{R \in \cN(j)} \pdiam(R)^{\idim+2} \geq (1+\delta/2)^{-1} \sum_{R \in \cQ'_k \setminus \mathscr{B}_k} \pdiam(R)^{\idim+2} \geq (1+\delta)^{-1} 2^{\idim+2}.
	\end{align}
	Furthermore (for $j \in \N$ large enough, again), it follows from \eqref{e:not-B-flat} and the fact that $\{E_j\}$ converges to $E$, that 
	\begin{align}
		\bilatbeta_{E_j}(MB(R)) < \delta \alpha \quad \mbox{ for all } R \in \cN(j). 
	\end{align}
	Thus, we have arrived at a contradiction to \eqref{e:comp2-as-1} and \eqref{e:comp2-as-2}, which completes to the of \eqref{e:size} and \eqref{e:sum}. \\
	
	We finish by describing the modifications necessary to obtain \eqref{e:big-size} under the additional assumption that $\bilatbeta(B) \sim \alpha$. For this, one runs a similar compactness argument with the additional condition that $\bilatbeta_{E_j}(B_1(0)) \sim \alpha$ for each $j$. This translates to the fact that $\bilatbeta_E(B_1(0)) \sim \alpha$, which implies that $\mathscr{V}$ contains at least two distinct planes $V_1,V_2$ which pass within distance $C\alpha$ of the origin. In particular, this implies 
	\begin{align}
		\pHmeasure^{\idim+2}(V_i \cap B_{1-\eta}(0)) \geq (1-\eta - C\alpha)^{\idim+2} 2^{\idim+2}, \quad i \in \{1,2\}.
	\end{align}
	By first removing those cubes which intersect more than one plane (whose mass is small), the cubes $\cQ_k' \subseteq \bigcup_{m \geq k} \cC_m'$ we obtain from the argument above \eqref{e:large-sum-2} satisfy the stronger condition that 
	\begin{align}
		\sum_{R \in \cQ'_k} \pdiam(R)^{\idim+2} \geq 2(1-C\eta - C\alpha)^{\idim+2}2^{\idim+2}.
	\end{align}
	Carrying this improved estimated through the remainder of the argument after \eqref{e:not-B-flat}, one obtains (after minor notational changes) that for $\eta = \eta(\alpha,\delta)$ small enough and $j$ the collection of cubes $\cN(j)$ has 
	\begin{align}
		\sum_{R \in \cN(j)} \pdiam(R)^{\idim+2} \geq (1+\delta)^{-1}(2-C\alpha)2^{\idim+2},
	\end{align}
	which is enough to arrive at a contradiction. 
\end{proof}

By necessity, the constant $\Nflat$ in Lemma \ref{l:lots-of-flat-children} will depend on $\alpha$. It will be useful to have a variant of Lemma \ref{l:lots-of-flat-children} in which we do not require the collection $\cN$ to be flat, but we do require it to have large approximate mass. In this way, we can make the lower bound for the sizes of the cubes independent of $\alpha$. This is the purpose of the following lemma. The proof of Lemma \ref{l:lots-of-children} is similar (and easier) than the proof of Lemma \ref{l:lots-of-flat-children} (one can skip the middle part of the proof which controls the cubes which intersect more than one plane), thus, we shall not include it.

\begin{lemma}\label{l:lots-of-children}
	For each $\CAR > 1$ and $\delta \in (0,1)$, there exists $\Nnonflat > 1$ and $\venonflat > 0$ such that the following holds. Suppose $E \subseteq \R^{\adim+1}$ is Ahlfors $(\idim+2)$-regular with constant $C_0$ and $\cC$ is a system of Schul cubes on $E$ with $\rho = \rho_1(\delta/(4\idim+8))$ (with $\rho_1(\cdot)$ as in Lemma \ref{l:subsequence}). For every ball $B$ centered on $E$ such that $\baupbeta_E(B) < \ve$ for some $0 < \ve < \venonflat$, there exists a disjoint collection of cubes $\cN \subseteq \cC$ contained in $B$ such that 
	\begin{align}
		\pdiam(R) \geq \rho^{\Nnonflat} \pdiam(B) 
	\end{align}
	for all $R \in \cN$, and  
	\begin{align}
		\sum_{R \in \cN} \pdiam(R)^{\idim+2} \geq (1+\delta)^{-1} \pdiam(B)^{\idim+2}.
	\end{align}
\end{lemma}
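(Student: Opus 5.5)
The plan is a compactness argument modelled on the proof of Lemma \ref{l:lots-of-flat-children}, but without the step controlling cubes whose enlargements meet several planes. Suppose the lemma fails for some $\CAR$ and $\delta$. Then for each $j\in\N$ there are:
\begin{itemize}
\item a parabolic Ahlfors $(\idim+2)$-regular set $E_j$ with constant $\CAR$;
\item a system of Schul cubes $\cC(j)$ on $E_j$ with $\rho=\rho_1(\delta/(4\idim+8))$;
\item a ball $B_j$ centred on $E_j$ with $\baupbeta_{E_j}(B_j)<1/j$,
\end{itemize}
such that every disjoint collection $\cN\subseteq\cC(j)$ of cubes contained in $B_j$ with $\pdiam(R)\ge 2^{-j}\pdiam(B_j)$ has $\sum_{R\in\cN}\pdiam(R)^{\idim+2}<(1+\delta)^{-1}\pdiam(B_j)^{\idim+2}$.

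After translating and rescaling we may take $B_j=B_1(0)$, and by localising we may take $\{E_j\}$ uniformly bounded. Lemma \ref{l:subsequence} then lets us pass to a subsequence converging in the Hausdorff metric to a set $E$. It also provides a system of Schul cubes $\cC$ on $E$ in finite-generation bijection with $\cC(j)$, satisfying \eqref{e:balls-are-nest} and \eqref{e:comp-g(Q)} with parameter $\delta/(4\idim+8)$.

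Since $\baupbeta_{E_j}(B_1(0))\to0$, the limit has $\baupbeta_E(B_1(0))=0$. Hence $E\cap B_1(0)$ contains $V_0\cap B_1(0)$ for a vertical plane $V_0$ through $0$, which gives \eqref{e:large-measure-2}. Fix a small $\eta=\eta(\delta)$. Choose $k$ so large that cubes of generation at least $k$ have diameter below $\eta/2$, and let $\cQ_k$ be the maximal cubes among those generations meeting $B_{1-\eta}(0)$. Each $R\in\cQ_k$ then lies in $B_{1-\eta/2}(0)$. By Lemma \ref{l:Schul-cubes} (3), $\cQ_k$ covers $V_0\cap B_{1-\eta}(0)$ up to a null set. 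Since $\pHmeasure^{\idim+2}(V_0\cap R)\le\pdiam(R)^{\idim+2}$, we get $\sum_{R\in\cQ_k}\pdiam(R)^{\idim+2}\ge(1-\eta)^{\idim+2}2^{\idim+2}$.

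This sum is convergent, being bounded by Ahlfors regularity and disjointness. So there is $\kappa=\kappa(\eta)$ for which the subfamily $\cQ_k'$ of cubes with $\pdiam(R)\ge\kappa$ still satisfies \eqref{e:large-sum-2}. The family $\cQ_k'$ is finite, and its generations are bounded by some $m_0$ depending on $\kappa$. This finiteness is what will make $\Nnonflat$ independent of everything except $\delta$ and $\CAR$.

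Now transfer back to $E_j$. For $j$ large, Lemma \ref{l:subsequence} with $k=m_0$ gives cubes $g_j(R)\in\cC(j)$ for $R\in\cQ_k'$, and by \eqref{e:comp-g(Q)} their diameters are comparable to $\pdiam(R)$ up to a factor $(1+\delta/(4\idim+8))$.
\begin{itemize}
\item \emph{Disjointness and containment.} These are the main point to check. The $R\in\cQ_k'$ are disjoint and lie in $B_{1-\eta/2}(0)$, and by \eqref{e:balls-are-nest} together with Lemma \ref{l:Schul-cubes} (2) each $g_j(R)$ lies in $(1+8\rho)(1+\delta/2)B(R)$. Once $\delta$ and $\rho$ are small relative to $\eta$, this places $g_j(R)\subseteq B_1(0)$. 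Disjointness follows from the nesting property Lemma \ref{l:Schul-cubes} (1): cubes of $\cC(j)$ that intersect are nested, and nesting is excluded because the centres and radii are close to those of the disjoint limit cubes.
\item \emph{Size.} We have $\pdiam(g_j(R))\gtrsim\kappa$, which exceeds $2^{-j}\pdiam(B_1(0))$ for $j$ large.
\item \emph{Mass.} Using \eqref{e:simple-inequality} and choosing $\eta$ small depending on $\delta$,
\begin{align}
\sum_{R\in\cQ_k'}\pdiam(g_j(R))^{\idim+2}\ge(1+\delta/2)^{-1}(1-2\eta)^{\idim+2}2^{\idim+2}\ge(1+\delta)^{-1}2^{\idim+2}.
\end{align}
\end{itemize}
The collection $\{g_j(R)\}_{R\in\cQ_k'}$ therefore contradicts the assumed failure.

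To extract $\Nnonflat$ explicitly rather than through $2^{-j}$, it is cleaner to run the contradiction with the lower bound $\rho^{N}$ and $N\to\infty$ alongside $1/j$. The limiting argument produces cubes of diameter at least $\kappa$, where $\kappa$ depends only on $\eta(\delta)$ and the regularity constant, which contradicts the assumption once $\rho^N<\kappa$.
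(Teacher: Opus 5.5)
Your plan is the route the paper intends: it omits this proof and says to rerun the compactness argument of Lemma \ref{l:lots-of-flat-children} without the multi-plane step. As written, though, the final comparison does not close.

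\textbf{The normalisation.} Your contradiction hypothesis is $\sum_R\pdiam(R)^{\idim+2}<(1+\delta)^{-1}\pdiam(B_1(0))^{\idim+2}$. But $\pdiam(B_1(0))=2+\sqrt2$ (take $(e,1)$ and $(-e,-1)$ with $|e|=1$), so ending at $(1+\delta)^{-1}2^{\idim+2}$ contradicts nothing.

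Replacing $2$ by $2+\sqrt2$ in \eqref{e:large-measure-2} does not help, because the isodiametric bound you use, $\pHmeasure^{\idim+2}(A)\le\pdiam(A)^{\idim+2}$, rules it out. Apply that bound to the metric ball $\{|x|+|t|^{1/2}\le\tfrac12\}\subseteq V_0$, which has diameter $1$. Since $\pHmeasure^{\idim+2}$ is a multiple of Lebesgue measure on $V_0$, this gives $\pHmeasure^{\idim+2}(V_0\cap B_1(0))\le c\,\pdiam(B_1(0))^{\idim+2}$ with $c=\tfrac{(\idim+1)(\idim+2)}{2}(2-\sqrt2)^{\idim+2}<\tfrac34$. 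In other words, parabolic balls are not isodiametric extremisers, so any chain through $\pHmeasure^{\idim+2}(V_0\cap R)\le\pdiam(R)^{\idim+2}$ loses a fixed factor that $(1+\delta)^{-1}$ cannot absorb. The paper's \eqref{e:large-measure-2} makes the same identification $\pdiam(B_1(0))=2$, so you inherited the slip.

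\textbf{The fix.} Compare each cube with its own inner and outer balls, which have the same shape as $B$. In the limit, take $R\in\cC_m$ and set $r_R=\CSchulprime\rho^m$. The centre $x_R$ lies on a limiting plane contained in $E$ near $x_R$, so $\pdiam(R)\ge\pdiam(B(R))\ge(2+\sqrt2)r_R$. A vertical plane meets a parabolic ball in the largest set when the ball is centred on it. Hence $R\subseteq(1+8\rho)B(R)$ gives $\pHmeasure^{\idim+2}(V_0\cap R)\le(1+8\rho)^{\idim+2}c_V r_R^{\idim+2}$, where $c_V=\pHmeasure^{\idim+2}(V_0\cap B_1(0))$. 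Summing over your cover of $V_0\cap B_{1-\eta}(0)$ yields
\begin{align}
\sum_R\pdiam(R)^{\idim+2}\ge(1+8\rho)^{-\idim-2}(1-\eta)^{\idim+2}\pdiam(B_1(0))^{\idim+2}.
\end{align}
Since $8\rho=\delta/(16\idim+32)$, the extra factor is harmless. The truncation to $\cQ_k'$ and the transfer via \eqref{e:comp-g(Q)} then go through as you describe.

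\textbf{Disjointness.} Your argument here is too quick. Lemma \ref{l:subsequence} controls only the balls $B(\cdot)$; the cubes themselves are pinned only between $B(Q)$ and $(1+8\rho)B(Q)$. If $g_j(R_2)\subseteq g_j(R_1)$, you can conclude only that the core of $R_2$ lies in $(1+8\rho+o(1))B(R_1)$. That core may sit in the fringe outside $B(R_1)$, which is compatible with $R_1\cap R_2=\emptyset$.

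One repair is to keep only the maximal elements of $\{g_j(R)\}$ and bound what is discarded. The discarded cores lie in annuli $(1+9\rho)B(R_1)\setminus B(R_1)$ around retained cubes. When a single limiting plane meets $2B(R_1)$, such an annulus carries only an $O((\idim+2)\rho)$ fraction of the mass of $B(R_1)$, which is a small multiple of $\delta$. The cubes for which this fails can be removed beforehand using \eqref{e:B_m}. So some form of the multi-plane step you skipped is useful after all.
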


The following is a variant of Lemma \ref{l:lots-of-children}, in which we show how to cover a large portion of a nearby set $F$ with Schul cubes centered on $E$.

\begin{lemma}\label{l:large-coverage-nearby}
	For each $\CAR > 1$ and $\delta > 0$ there exists $\alpha_3 , \ve_3 > 0$ such that the following holds. Suppose $E \subseteq \R^{\adim+1}$ is parabolic Ahlfors $(\idim+2)$-regular with constant $\CAR$ and $\cC$ is a system of Schul cubes on $E$ with $\rho = \rho_1(\delta/(12\idim+24))$. Suppose $B_0$ is a ball centered on $E$ such that $\baupbeta_E(B_0) < \ve$ for some $0 < \ve < \ve_3$, and suppose $F$ is a set such that $d_{B_0}(E,F) < \alpha$ for some $0 < \alpha < \alpha_3$. Furthermore, suppose
	\begin{align}\label{e:coverage-nearby-1}
		\pHmeasure^{\idim+2}(F \cap B_0) \leq (1+\alpha) \pdiam(B_0)^{\idim+2} 
	\end{align}
	and 
	\begin{align}\label{e:coverage-nearby-2}
		\pHmeasure^{\idim+2}(F \cap B) \geq (1+\alpha)^{-1} \pdiam(B)^{\idim+2}
	\end{align}
	for all balls $B$ centered on $F \cap B_0$ with $0 < r_B < r_{B_0}$. Then, there is a disjoint collection of cubes $\{Q_i\}_{i \in I} \subseteq \cC$ with $\pdiam(Q_i) \geq \rho^{\Nnonflat}\pdiam(B_0)$ for all $i \in I$ (with $\Nnonflat$ as in Lemma \ref{l:lots-of-children} i.e., independent of $\alpha$) and 
	\begin{align}
		\pHmeasure^{\idim+2}\left(F \cap B_0 \setminus  \bigcup_{i \in I} Q_i\right) \leq \delta \pdiam(B_0)^{\idim+2}.
	\end{align}
\end{lemma}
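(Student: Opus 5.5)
The plan is to argue by compactness and contradiction, in the same way as the proof of Lemma \ref{l:lots-of-flat-children}, with Lemma \ref{l:lots-of-children} providing the covering cubes.

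\textbf{Setting up the contradiction.} Suppose the statement fails for some $\CAR,\delta$. Then for every $j$ there are:
\begin{itemize}
\item a set $E_j$, parabolic Ahlfors regular with constant $\CAR$, carrying Schul cubes $\cC(j)$ with the prescribed $\rho$;
\item a ball $B_j$, normalised to $B_1(0)$, with $\baupbeta_{E_j}(B_1(0))<1/j$;
\item a set $F_j$ with $d_{B_1(0)}(E_j,F_j)<1/j$ and satisfying \eqref{e:coverage-nearby-1} and \eqref{e:coverage-nearby-2} with $\alpha=1/j$.
\end{itemize}
For each such $j$, every disjoint family of cubes of diameter at least $\rho^{\Nnonflat}\pdiam(B_1(0))$ misses a portion of $F_j\cap B_1(0)$ of measure greater than $\delta\, 2^{\idim+2}$. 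By Lemma \ref{l:subsequence}, pass to a subsequence along which $E_j\to E$ in the Hausdorff metric. Because $d_{B_1(0)}(E_j,F_j)\to 0$, we also get $F_j\cap \overline{B_1(0)}\to E\cap\overline{B_1(0)}$ locally. The limit satisfies $\baupbeta_E(B_1(0))=0$, so $E\cap B_1(0)$ is a finite union of vertical planes $\bigcup_{V\in\mathscr V}V\cap B_1(0)$.

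\textbf{Covering and transfer.} Consider the maximal cubes in $\bigcup_{m\le \Nnonflat}\cC_m$ meeting $B_{1-\eta}(0)$, together with their finitely many counterparts in $\cC(j)$ given by the bijection $g_j$ of Lemma \ref{l:subsequence}. All of these cubes have diameter at least $\rho^{\Nnonflat}\pdiam(B_1(0))$. Using \eqref{e:balls-are-nest}, every cube $Q$ contains the ball $B(Q)$, and the union of the $(1+\delta')^{-1}B(g_j(Q))$ covers all of $E_j\cap B_{1-\eta}(0)$ apart from a small neighbourhood of the cube boundaries. More precisely, the uncovered part consists of points that are near $E$ but outside $\bigcup Q$ at generation $\le \Nnonflat$.

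There is a gap here. Schul cubes are not a partition at each fixed generation; only \emph{a.e.} coverage across all generations holds (Lemma \ref{l:Schul-cubes} (3)). To handle this, choose a finite maximal family $\cQ\subseteq\bigcup_{m\ge k}\cC_m$ whose union covers $E\cap B_{1-\eta}(0)$ up to $\pHmeasure^{\idim+2}$-measure $\eta$. Then restrict to cubes of diameter at least $\kappa(\eta)$, exactly as in \eqref{e:large-sum-2}. The size threshold must be $\rho^{\Nnonflat}$, independent of $\alpha$. This is why I would take $\Nnonflat$ from Lemma \ref{l:lots-of-children} applied with parameter $\delta/3$: it yields a disjoint family in $E$ with total approximate mass at least $(1+\delta/3)^{-1}2^{\idim+2}$ and the required size. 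Transferring this family to $E_j$ through $g_j$ gives disjoint cubes $\{Q_i\}\subseteq\cC(j)$ whose dilates $(1+\delta/6)B(Q_i)$ nearly cover $E_j\cap B_1(0)$. The mass bound then reads $\sum\pdiam(Q_i)^{\idim+2}\ge(1+\delta/2)^{-1}2^{\idim+2}$.

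\textbf{Measure estimate on $F_j$ (main obstacle).} The key step is to pass from the diameter sum to the measure of $F_j$, using two-sided density bounds.
\begin{itemize}
\item Each $Q_i$ contains $B(Q_i)$, and $F_j$ passes within $1/j$ of the centre of $B(Q_i)$. Applying \eqref{e:coverage-nearby-2} to a slightly shrunk ball centred on $F_j$ gives $\pHmeasure^{\idim+2}(F_j\cap Q_i)\ge(1+\delta/2)^{-2}\pdiam(Q_i)^{\idim+2}$. This is where the choice $\rho=\rho_1(\delta/(12\idim+24))$ enters, since it makes $\pdiam(Q_i)$ comparable to $\pdiam(B(Q_i))$ up to the factor $(1+\delta/(12\idim+24))$.
\item Summing over $i$ and comparing with \eqref{e:coverage-nearby-1}, which gives $\pHmeasure^{\idim+2}(F_j\cap B_0)\le(1+1/j)2^{\idim+2}$, we get
\[
\pHmeasure^{\idim+2}\Big(F_j\cap B_0\setminus\bigcup_i Q_i\Big)\le\big[(1+1/j)-(1+\delta/2)^{-3}\big]2^{\idim+2}<\delta\,2^{\idim+2}
\]
for $j$ large.
\end{itemize}
This contradicts the assumption, so the statement holds.

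The delicate points are two. First, ensuring $F_j\cap Q_i$ really captures a ball's worth of mass: $F_j$ is only close to $E_j$, not contained in it, and $Q_i$ is a subset of $E_j$. I would fix this by working with $F_j\cap(1+\delta')B(Q_i)$, or equivalently by letting the cube stand for its ball. Second, the disjointness of the dilated balls, which I would secure by shrinking each ball by a factor $(1-\eta)$ and paying only a $\delta$-small loss.
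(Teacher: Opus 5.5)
Your closing step is the paper's argument. You take points of $F$ within $\eta\CSchulprime\rho^{k(i)}$ of the cube centres, apply \eqref{e:coverage-nearby-2} on slightly shrunk balls $B_i$ around them, sum, and subtract from \eqref{e:coverage-nearby-1}. The compactness scaffolding around it, however, is unnecessary and not justified as written. The paper applies Lemma \ref{l:lots-of-children} with parameter $\delta/3$ directly to $E$ and $B_0$; this is exactly why $\rho=\rho_1(\delta/(12\idim+24))$. Once $\ve_3\le\venonflat(\CAR,\delta/3)$, this immediately gives disjoint cubes of $\cC$ itself with $\pdiam(Q_i)\ge\rho^{\Nnonflat}\pdiam(B_0)$ and $\sum_i\pdiam(Q_i)^{\idim+2}\ge(1+\delta/3)^{-1}\pdiam(B_0)^{\idim+2}$. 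Since $\Nnonflat$ does not depend on $\alpha$, one may then take $\alpha_3\ll\eta\rho^{\Nnonflat}$. Your detour through a limit set and the bijection $g_j$ introduces claims that nothing supports. Lemma \ref{l:subsequence} only compares balls and diameters, so it does not guarantee that images of disjoint cubes of different generations stay disjoint. Likewise, Lemma \ref{l:lots-of-children} says nothing about dilates of the cubes nearly covering $E_j\cap B_1(0)$, and no such statement is needed.

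Second, your final inequality is false as written. We have $(1+1/j)-(1+\delta/2)^{-3}\to1-(1+\delta/2)^{-3}\approx3\delta/2$, which exceeds $\delta$ for small $\delta$, so the losses must be tracked more tightly.
\begin{itemize}
\item Applied directly, Lemma \ref{l:lots-of-children} costs only a factor $1+\delta/3$.
\item With this $\rho$, the cube-to-ball loss $(1+8\rho)^{\idim+2}$ is at most $e^{\delta/48}\le1+\delta/24$.
\item Shrinking and the density hypotheses cost factors tending to $1$ as $\eta,\alpha\to0$.
\end{itemize}
Altogether the uncovered part has measure at most roughly $(\delta/3+\delta/24+o(1))\pdiam(B_0)^{\idim+2}$. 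The paper records this as $[1+\alpha-((1+\alpha)(1+3\delta/4))^{-1}]\pdiam(B_0)^{\idim+2}$.

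Your two caveats are sound.
\begin{itemize}
\item The paper makes the identification you propose. It writes $B_i\subseteq B(Q_i)\subseteq Q_i$ for a ball $B_i$ centred on $F$, i.e.\ it reads each cube as its ambient ball. This is also what the later application with $F=\Sigma$ needs.
\item Disjointness of the sets $F\cap B_i$ follows if you shrink to radius $(1-2\eta)\CSchulprime\rho^{k(i)}$ and take $\alpha\ll\eta\rho^{\Nnonflat}$. A common point would then lie within $\alpha r_{B_0}/2$ of a point of $E\cap B(Q_i)\cap B(Q_{i'})\subseteq Q_i\cap Q_{i'}=\emptyset$.
\end{itemize}
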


\begin{proof}
By Lemma \ref{l:lots-of-children}, so long as $\ve_3 = \ve_3(\CAR,\delta)$ is sufficiently small, there is a collection of disjoint cubes$\{Q_i\}_{i \in I} \subseteq \cC$ contained in $B_0$ such that $\pdiam(Q_i) \geq \rho^{\Nnonflat}\pdiam(B_0)$ and 
\begin{align}\label{e:nearby-1}
	\sum_{i \in I} \pdiam(Q_i)^{\idim+2} \geq (1+\delta/3)^{-1}\pdiam(B_0)^{\idim+2}. 
\end{align}

Let $\eta > 0$ (to be chosen sufficiently small) and, for each $i \in I$, let $k(i) \in \mathbb{Z}$ such that $Q_i \in \cC_{k(i)}$. Since $\pdiam(Q_i) \geq \rho^{\Nnonflat}\pdiam(B_0)$ (with $\Nnonflat$ independent of $\alpha$) and $d_{B_0}(E,F) < \alpha$, by choosing $\alpha_3 = \alpha_3(\eta)$ small enough there exists  a point $x_{Q_i}' \in F$ such that 
\begin{align}
	|x_{Q_i} - x_{Q_i}'| < \eta \CSchulprime \rho^{k(i)}. 
\end{align} 
In particular, we have  
\begin{align}\label{e:nearby-1.5}
	B_i \coloneqq B_{(1-\eta)\CSchulprime \rho^{k(i)}}(x_{Q_i}') \subseteq B_{ \CSchulprime \rho^{k(i)}}(x_{Q_i}) \subseteq Q_i.
\end{align}

For $\eta = \eta(\delta)$ small enough, our choice of $\rho$ implies $\pdiam(Q_i)^{n+2} \leq (1+\delta/3)\pdiam(B_i)^{n+2}$ for each $i \in I$. Thus, by \eqref{e:nearby-1} and \eqref{e:nearby-1.5}, we find that for $\alpha_3 = \alpha_3(\delta)$ sufficiently small, the set $\{B_i\}_{i \in I}$ is a collection of disjoint balls contained in $B_0$ which satisfy 
\begin{align}\label{e:nearby-2}
	\sum_{i \in I} \pdiam(B_i)^{\idim+2} \geq (1+3\delta/4)^{-1}\pdiam(B_0)^{\idim+2}. 
\end{align}
Combing this with \eqref{e:coverage-nearby-2}, we have 
\begin{align}
	(1+3\delta/4)^{-1}\pdiam(B_0)^{\idim+2} \leq (1+\alpha) \sum_{i \in I} \pHmeasure^{\idim+2}(F \cap B_i) = (1+\alpha)\pHmeasure^{\idim+2}\left( F \cap \bigcup_{i \in I} B_i\right) .
\end{align}
Thus, by \eqref{e:coverage-nearby-1} and some simple algebra, we have
\begin{align}
	\pHmeasure^{\idim+2}\left(F \cap B_0 \setminus \bigcup_{i \in I}Q_i \right) &\leq 	\pHmeasure^{\idim+2}\left(F \cap B_0 \setminus \bigcup_{i \in I}B_i \right) \\
	&= \pHmeasure^{\idim+2}(F \cap B_0) - \pHmeasure^{\idim+2}\left( F \cap \bigcup_{i \in I} B_i \right) \\
	&\leq \left[ 1+\alpha - \frac{1}{(1+\alpha)(1+3\delta/4)}   \right] \pdiam(B_0)^{\idim+2} \\
	&\leq \delta \pdiam(B_0)^{\idim+2}
\end{align}
provided $\alpha_3 = \alpha_3(\delta)$ is sufficiently small. 
\end{proof}

\subsection{The main lemma} Fix, for the remainder of this section, a set $E$ which is parabolic Ahlfors $(\idim+2)$-regular with constant $\CAR > 1$ and satisfies the BAUP condition. Theorem \ref{BAUPBWGL} can be deduced from Lemma \ref{l:control-Schul-cubes} below by a variant of the \textit{one-third-trick} (this carried out near the end of the section). 

\begin{lemma}\label{l:control-Schul-cubes}
	If $\rho > 0$ is small enough, then for each $\eta > 0$ there is $C = C(\eta)$ such that, if $\cC$ is a system of Schul cubes on $E$ constructed with parameter $\rho$, we have 
	\begin{align}
		\sum_{\substack{R \in \cC(Q_0) \\ \bilatbeta_E(B(R)) > \eta }}  \pdiam(R)^{\idim+2} \leq C\pdiam(Q_0)^{\idim+2}, \quad Q_0 \in \cC. 
	\end{align}
\end{lemma}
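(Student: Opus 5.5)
The plan is a mass-distribution (Schul/Azzam) argument. A cube that is bad for the BWGL but good for the BAUP carries almost twice its expected mass on flat descendants. Ahlfors regularity caps the total mass inside $Q_0$, so such doublings cannot occur too often.

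\textbf{Reductions.} Fix $\eta>0$ and take $\alpha\sim\eta$ in Lemma \ref{l:lots-of-flat-children}.
\begin{enumerate}
\item Since Schul balls $B(R)$ are comparable to Christ--David balls, the BAUP transfers to $\cC$. The cubes $R$ with $\baupbeta_E(MB(R))\ge\ve$ then satisfy a Carleson packing condition, so they contribute at most $C\pdiam(Q_0)^{\idim+2}$.
\item A standard stopping argument removes these cubes. One restarts below each of them, and the packing of the restart tops costs only another Carleson sum.
\item After this, one may assume every cube under consideration is BAUP-good with parameter $\ve\ll\min\{\veflat,\venonflat,\ve_3\}$.
\end{enumerate}

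\textbf{The tree and the weights.} Build a tree $\cT\subseteq\cC(Q_0)$ inductively.
\begin{itemize}
\item To a node $R$ with $\bilatbeta_E(B(R))>\eta$ (a \emph{branching} node), apply the ``moreover'' part of Lemma \ref{l:lots-of-flat-children} in $B(R)$. This gives disjoint flat children $\cN(R)$ with $\sum_{\cN(R)}\pdiam^{\idim+2}\ge(1+\delta)^{-1}(2-C\alpha)\pdiam(R)^{\idim+2}$.
\item From each flat child, run a stopping-time region $S$ of BWGL-good cubes; all its cubes have $\bilatbeta_E(MB)<\delta\alpha$. Stop at the first descendants that are BWGL-bad.
\item On $S$, the Reifenberg construction used in Proposition \ref{p:BWGL-RF-corona} (via Lemma \ref{l:CCBP} and Theorem \ref{t:DT}) yields a $C\delta\alpha$-Reifenberg flat surface $\Sigma_S$ that is bilaterally close to $E$ at all scales of $S$.
\end{itemize}
The crucial point is that no multiplicative loss accumulates along $S$. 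By Lemma \ref{l:big-projection-RF}, $\Sigma_S$ has lower density $\ge(1-C\delta\alpha)$ at every scale. By Theorem \ref{t:DT}\eqref{DT5}--\eqref{DT6} it is locally a $\Lip(1,1/2,C\delta\alpha)$ graph, which gives the matching upper density $(1+C\delta\alpha)$. These are exactly the hypotheses \eqref{e:coverage-nearby-1}--\eqref{e:coverage-nearby-2} of Lemma \ref{l:large-coverage-nearby} with $F=\Sigma_S$.

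Applying Lemma \ref{l:large-coverage-nearby} once at the stopping scale lets us pass from the top of $S$ directly to the next branching cubes. The mass of $\Sigma_S$ is preserved up to a single factor $(1-\delta)$, rather than a factor $(1+\delta)^{-1}$ at each intermediate scale.

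Assign weights $w(Q_0)=\pdiam(Q_0)^{\idim+2}$. Distribute each node's weight to its tree-children proportionally to $\pdiam^{\idim+2}$, keeping the gain factor at branching nodes. Then every branching cube $R$ receives $w(R)\ge c\,\lambda^{m(R)}\pdiam(R)^{\idim+2}$. Here $m(R)$ is the number of branching ancestors of $R$, and $\lambda=(2-C\alpha)(1-\delta)^2(1+\delta)^{-1}>1$ once $\delta\ll\alpha$.

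\textbf{Counting.} For each $m$, the tree-cubes at branching depth $m$ are disjoint and lie in $Q_0$ (up to a bounded dilate). Ahlfors regularity gives
$$\sum_{R\text{ at depth }m}\pdiam(R)^{\idim+2}\lesssim C_0\,\pdiam(Q_0)^{\idim+2},$$
while by the weight bound the same sum is at least $c\,\lambda^{m}\,w(Q_0)$. Hence the branching depth is bounded by some $m_0(\eta,C_0)$.

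The bad cubes not in the tree lie between tree scales. Flat children have size $\ge\rho^{\Nflat}\pdiam$, and Lemma \ref{l:large-coverage-nearby} uses cubes of size $\ge\rho^{\Nnonflat}\pdiam$, so such a bad cube is within a bounded number of generations of a tree cube, or inside the small uncovered portions. The uncovered portions are of relative mass $\le\delta$, and we restart the argument in them, which sums geometrically. Summing over depths $\le m_0$ and over the restarts gives the claimed bound $C(\eta)\pdiam(Q_0)^{\idim+2}$.

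\textbf{Main obstacle.} The hardest step is the bookkeeping in the middle stage: showing that the passage through a BWGL-good region costs only one factor $(1-\delta)$ independent of its length. This requires checking the Reifenberg surface's two-sided density estimates, and that the cubes produced by Lemma \ref{l:large-coverage-nearby} at the bottom of $S$ are genuinely the next BWGL-bad or boundary cubes. I would try first to make the stopping rule itself require $\bilatbeta_E(MB)<\delta\alpha$ for all cubes of $S$, so that Theorem \ref{t:DT} applies with a uniform parameter.
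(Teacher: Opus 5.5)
Your outline uses the right ingredients: a mass gain at non-flat cubes, Reifenberg surfaces to carry mass through flat regions, and Lemma \ref{l:large-coverage-nearby}. Two steps fail as stated.

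\textbf{The gain at $\eta$-bad cubes is not justified.} You cannot take a gain at every cube with $\bilatbeta_E(B(R))>\eta$. The doubling \eqref{e:big-size} needs $\bilatbeta\sim\alpha$ with $\alpha$ small. In substance it needs $E$ to be $\lesssim\alpha$-flat on a much larger ball, so that every plane of the approximating union passes through the middle of $B(R)$. An $\eta$-bad cube gives only a lower bound. For example, let $E\cap AB(R)$ be a plane through $x_R$ together with a transversal plane that meets $B(R)$ only in a cap near its boundary. Take the cap deep enough that $\bilatbeta_E(B(R))>\eta$, but carrying only $O(\eta^{\idim})$ of the mass. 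This $E$ is BAUP-flat, yet the flat cubes in $B(R)$ have total $\pdiam^{\idim+2}$ barely above $\pdiam(R)^{\idim+2}$. So $\lambda\approx 1$ there, and nothing limits such nodes.

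This is why the paper separates the thresholds $\delta\alpha<\alpha\ll\eta$. Gains are taken only at the cubes $T\in\cF'$ produced by an $\alpha$-transition: $Q'$ is the child of a minimal cube of an $\alpha$-flat stopping region, with $\bilatbeta(MB_{Q'})>\alpha$ but $\bilatbeta(2MB_{Q'})\lesssim\alpha$. There Lemma \ref{l:cF'-not-flat} gives $\bilatbeta(M''B(T))\sim\alpha$, and its proof shows that every plane of the union passes through $\frac12B(T)$. Cubes that are not $\delta\alpha$-flat are handled by Lemma \ref{l:next-when-not-flat} at a cost $(1+C\delta)$, which the next $\cG$-step repays by \eqref{e:properties-of-N}. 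Your scheme gives no role to cubes with $\bilatbeta_E(MB)\ge\delta\alpha$ but $\bilatbeta_E(B)\le\eta$: they end your flat regions but are not branching nodes. In the paper the $\eta$-bad cubes are never gain nodes. They are counted only because each is contained in, and comparable in size to, a cube of some $\cF_Q$ with $Q\in\Top$, by \eqref{e:main-inclusion}.

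\textbf{The counting does not give a packing estimate.} The bound $\sum_{\text{depth }m}\pdiam(R)^{\idim+2}\ge c\lambda^m\pdiam(Q_0)^{\idim+2}$ assumes no mass leaves the tree. In general a non-negligible fraction can leave at each stage: into $z(S)$, where $E$ stays $\delta\alpha$-flat down to all scales; into stopping cubes caused by BAUP failure; and into the uncovered pieces. So a bound on the branching depth does not follow. You only get decay of the weight that survives to depth $m$, and summing the per-level bound $\lesssim\pdiam(Q_0)^{\idim+2}$ over unboundedly many levels diverges.

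A related problem is the upper density of $\Sigma_S$. Theorem \ref{t:DT} \eqref{DT5}--\eqref{DT6} describe $\Sigma_k$, not $\Sigma_S$. The $(1+C\alpha)$ upper density is available only near minimal cubes of a region smoothed by Lemma \ref{l:smooth}, where boundedly many $\sigma_k$ act. The part of $\Sigma_S$ over $z(S)$ must be charged to $E$ instead.

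The paper handles both points with a per-root weighted accounting. All leaked mass goes into $\mathscr{O}_Q=E\cap Q\setminus\bigcup_{T\in\cN_Q}T$, and these sets are pairwise disjoint over $\Top$. Each top cube is treated as its own root, with $\pdiam(Q)^{\idim+2}\le\sum_{R}w_Q(\mathscr{O}_R)$ and $w_Q$ decaying geometrically in tree depth (Lemma \ref{l:control-with-weights}). Interchanging the sums gives $\sum_{Q\in\Top}\pdiam(Q)^{\idim+2}\lesssim\pHmeasure^{\idim+2}(Q_0)$ (Lemma \ref{l:Top-control}). You need this, or an equivalent argument, in place of the depth bound.
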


Since $E$ satisfies the BAUP, so that the cubes which $\baupbeta_E(AB(Q)) > \ve$ satisfy a Carleson packing condition for all $A > 1$ and $\ve > 0$, Lemma \ref{l:control-Schul-cubes} follows easily from Lemma \ref{l:control-Schul-cubes-main} below. 

\begin{lemma}\label{l:control-Schul-cubes-main}
	If $\rho > 0$ small enough, then for each $\eta > 0$ there is $A > 1$, $\ve > 0$ and $C = C(A,\ve,\eta)$ such that the following holds. Suppose $\cC$ is a system of Schul cubes on $E$ constructed with parameter $\rho$. Let $Q_0 \in \cC$ such that $\baupbeta_E(AB(Q_0)) < \ve$ and let $\mathscr{T} \coloneqq \mathscr{T}(Q_0)$ be a maximal collection of cubes $Q \in \cC$ contained in $Q_0$ such that $\baupbeta_E(AB(Q)) > \ve$. Then, 
	\begin{align}
		\sum_{\substack{R \in \cC(Q_0) \setminus \cC(\mathscr{T}) \\ \bilatbeta_E(B(R)) > \eta }}  \pdiam(R)^{\idim+2} \leq C\pdiam(Q_0)^{\idim+2}.
	\end{align}
\end{lemma}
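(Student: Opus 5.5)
We will follow the density-counting method of \cite{azzam2022weak,schul2007subsets}: each BWGL-bad cube lying between $Q_0$ and the stopping family $\mathscr{T}$ should force an almost doubling of an approximate mass, and this can only happen boundedly often on average. Write $\cC' \coloneqq \cC(Q_0)\setminus\cC(\mathscr{T})$. Every $Q\in\cC'$ has $\baupbeta_E(AB(Q))\le\ve$, so Lemmas \ref{l:lots-of-flat-children} and \ref{l:lots-of-children} apply to every ball $AB(Q)$ with $Q\in\cC'$.

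\emph{Step 1 (Reifenberg flat model on flat stretches).} Call $R\in\cC'$ \emph{bad} if $\bilatbeta_E(B(R))>\eta$. Starting from a bad cube $R$, apply Lemma \ref{l:lots-of-flat-children} with $\alpha\sim\eta$ (using monotonicity, Lemma \ref{l:monotonicity}, to arrange $\bilatbeta_E(B)\sim\alpha$ on a suitable ball between $B(R)$ and $AB(R)$). This produces disjoint flat descendants $\cN(R)$, each of size at least $\rho^{\Nflat}\pdiam(R)$, whose total mass satisfies \eqref{e:big-size}. Below each $R'\in\cN(R)$, descend through cubes that remain BWGL-good until the next bad cube or a cube of $\mathscr{T}$ is reached. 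Over this flat stretch, build a CCBP exactly as in Lemma \ref{l:CCBP} and apply Theorem \ref{t:DT}. This gives a Reifenberg flat surface $\Sigma_{R'}$ that is bilaterally close to $E$ at all intermediate scales. By Lemma \ref{l:big-projection-RF} together with the Ahlfors-regular upper bounds, $\Sigma_{R'}$ satisfies \eqref{e:coverage-nearby-1} and \eqref{e:coverage-nearby-2} with a small $\alpha$. This is the approximate topology mentioned in the introduction to the section.

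\emph{Step 2 (mass transport).} Using Lemma \ref{l:large-coverage-nearby} with $F=\Sigma_{R'}$, cover all but a $\delta$-fraction of the mass at each scale of a flat stretch by Schul cubes of comparable size. This shows that along flat stretches the quantity $\sum\pdiam(\cdot)^{\idim+2}$ is essentially preserved, up to factors $(1+\delta)^{\pm1}$, with losses absorbed into a geometric series. We then define weights $w(Q)$ recursively, following \cite{azzam2022weak}. Set $w(Q_0)=\pdiam(Q_0)^{\idim+2}$. Each cube distributes its weight to its chosen descendants proportionally to $\pdiam^{\idim+2}$. Across a bad cube the total is multiplied by at least $(2-C\eta)(1+\delta)^{-1}\ge 3/2$, while across good generations the total loses at most a factor $(1+\delta)^{-1}$, with $\delta$ chosen small relative to $\eta$.

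\emph{Step 3 (counting).} Along any branch, let $N$ be the number of bad ancestors. The accumulated density ratio is then at least $(3/2)^N(1+\delta)^{-\#\text{good gens}}$. Ahlfors regularity of $E$ caps the actual mass inside any ball, so the weight reaching a cube $Q$ is at most $C_0\,\pdiam(Q)^{\idim+2}$ times a bounded factor. Following the argument of \cite{schul2007subsets}, a Fubini-type double counting over branches then bounds $\sum_{R \text{ bad}}\pdiam(R)^{\idim+2}$ by $C(\eta)\pdiam(Q_0)^{\idim+2}$. Cubes not selected in the construction are handled by the $\delta$-loss estimate, and they are comparable to selected ones with bounded multiplicity.

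\emph{Main obstacle.} The hardest step is justifying that the mass gain at bad cubes is not cancelled by the $(1+\delta)$ losses accumulated over long flat stretches. Doing this requires the flat-stretch coverage to be uniform in the length of the stretch. We would try to achieve this by comparing directly with $\Sigma_{R'}$, which is exactly measure-controlled (its measure is at least $(1-C\ve)r^{\idim+2}$ at all scales by Lemma \ref{l:big-projection-RF}), rather than iterating Lemma \ref{l:lots-of-children} scale by scale. With this comparison, each flat stretch costs only a single factor $(1+\delta)$.
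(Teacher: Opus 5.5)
Your architecture (a Reifenberg model on flat stretches, doubling via \eqref{e:big-size}, a Schul-type count) matches the paper's, but you place the mass gain in the wrong cubes.

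Steps 2--3 need the total to be multiplied by at least $3/2$ across \emph{every} cube with $\bilatbeta_E(B(R))>\eta$, and nothing supplies this. Lemma \ref{l:monotonicity} only bounds $\bilatbeta$ of a smaller ball by that of a larger one. On balls between $B(R)$ and $AB(R)$ you know only $\baupbeta\le\ve$. If $E$ is two transversal vertical planes through $x_R$, then $\bilatbeta_E\sim 1$ on all of these balls. When a ball with $\bilatbeta_E\sim\eta$ does exist and is strictly larger than $B(R)$, the cubes it produces need not lie in $R$, so your tree of disjoint descendants breaks.

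Worse, a bad cube can carry no gain at all. Let $V_2$ be parallel to a plane $V_1\ni x_R$ and only clip $B(R)$ near its boundary. Then $\bilatbeta_E(B(R))\gtrsim 1$, while $E\cap B(R)$ has essentially the mass of one plane. So the count ``$N$ bad ancestors gives density $(3/2)^N$'' fails.

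The paper never gains at an $\eta$-bad cube. It uses a second threshold $\alpha=\alpha(M,\delta,\eta)\ll\eta$. From $Q_1\in\cG$ (so $\bilatbeta(MB(Q_1))\le\delta\alpha$) it runs a stopping region that halts when $\bilatbeta(MB_{R'})>\alpha$ or $\baupbeta(A'B_{R'})>\ve$. It doubles only at the cubes $T\in\cF'$ attached to ${\min}_2(S)$. There the $\alpha$-flat parent and the non-flat child give $\bilatbeta(M''B(T))\sim\alpha$. The proof of Lemma \ref{l:cF'-not-flat} also shows that every plane of the approximating union meets $\tfrac12B(T)$, which is the configuration doubling needs.

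From a cube of $\cB$ there is no gain, only the factor $1+C\delta$ of Lemma \ref{l:next-when-not-flat}; decay comes from $N_{Q,1}(R)\ge N_{Q,2}(R)-1$. The $\eta$-bad cubes are counted not by gains but through \eqref{e:main-inclusion}. Each is comparable in size to a cube of some $\cF_Q$ with $Q\in\Top$, and $\sum_{Q\in\Top}\pdiam(Q)^{\idim+2}\lesssim\pHmeasure^{\idim+2}(Q_0)$.

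The second gap is in the flat-stretch comparison. You assert that $\Sigma_{R'}$ satisfies \eqref{e:coverage-nearby-1} by the Ahlfors-regular upper bound. That bound only gives a constant like $C_0$, whereas \eqref{e:coverage-nearby-1} needs $1+\alpha$. This sharp upper constant, paired with the lower bound of Lemma \ref{l:big-projection-RF}, is what makes a whole stretch cost a single factor $1+C\alpha$, and a Reifenberg flat set has no such bound in general.

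In the paper this bound is \eqref{e:sigma-locally-upper-reg}. It holds only near the minimal cubes, and it requires smoothing the stopping region by Lemma \ref{l:smooth}. After smoothing, $\Sigma$ near each minimal cube is, up to boundedly many $(1+C\alpha)$-bi-Lipschitz steps, a finite-stage $\Lip(1,1/2,C\alpha)$ graph from Theorem \ref{t:DT}.

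The comparison must also handle three further parts of $\Sigma$:
the part in $z(S)$;
the part near ${\min}_1(S)$, where BAUP fails, which is charged to $E$-mass inside $\mathscr{S}$;
the part near $\mathcal{B}(S)$, controlled by the small-boundary property.
These give the pairwise disjoint leftover sets $\mathscr{O}_Q$ in \eqref{e:half}. The count then closes via $\pdiam(Q)^{\idim+2}\le\sum_R w_Q(\mathscr{O}_R)$ with geometrically decaying weights. Your Step 3 invokes a density cap and a Fubini argument but never tracks this lost mass, nor the mass reaching cubes of $\mathscr{T}$.
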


We begin the proof of Lemma \ref{l:control-Schul-cubes-main} in earnest. Fix $\eta ,\rho > 0$ (as in the statement of Lemma \ref{l:control-Schul-cubes-main}). We introduce constants $A, M > 1$ and $\alpha, \delta, \ve, \tau > 0$. 

\begin{remark}[Dependencies of the constant]\label{r:constants}
	The constants above will be chosen (suitably large in the case of $A, M$, and suitably small in the case of $\alpha, \delta, \ve , \rho$) with the following dependencies:
	\begin{align}
		\tau = \tau(\delta), \ \rho = \rho(\delta), \ M = M(\rho), \ A = A(M) , \ \alpha = \alpha(M,\delta,\eta) , \ \ve = \ve(A,\alpha,\delta,\eta). 
	\end{align} 
	The constant $\eta$ is given and fixed, and $\delta$ will be chosen independent of all other constants. To begin with, we will assume that these constants are chosen such that the results of Section \ref{s:BAUP-prelim} hold with such constants. We will also assume $\rho$ is small enough such that 
	\begin{align}
		\pdiam(Q)^{\idim+2} \leq (1+\delta)\pdiam(B(Q))^{\idim+2} \quad \mbox{ for all } Q \in \cC. 
	\end{align} 
	We will not specify this dependencies in the statements below, but we will indicate where they are needed in the proofs, other than what we have already mentioned above. 
\end{remark}

Finally, fix a system of Schul cubes $\cC$ on $E$ and let $Q_0 \in \cC$ and $\mathscr{T} = \mathscr{T}(Q_0)$ be as in the statement of Lemma \ref{l:control-Schul-cubes-main}. To simplify notation, we will write $\mathscr{S} \coloneqq \mathscr{C}(\mathscr{T})$ (recall \eqref{e:descendents}) and we will drop the dependency on $E$ in our notation $\bilatbeta$ and $\baupbeta$. Furthermore, let us define 
\begin{align}
	\cG \coloneqq \{Q \in \cC(Q_0) \colon \bilatbeta(MB(Q)) \leq \delta \alpha \} \quad \mbox{ and } \quad \cB \coloneqq \cC(Q_0) \setminus \cG.
\end{align}

Our first goal is, for any given cube in $Q_1 \in \cC \setminus \mathscr{S}$, to find a family $\cN$ of \textit{next} cubes which contain all but boundedly many of the non-flat sub-cubes of $Q_1$ and which has \textit{large total mass} in some suitable sense. This is split into two cases depending on whether $Q_1$ lies in $\cG$ or $\cB$. 

\begin{lemma}\label{l:next-when-not-flat}
	Suppose $Q_1 \in \cC \cap \cB \setminus \mathscr{S}$ satisfies $\baupbeta(AB(Q_1)) \leq \ve$. There are absolute constant $\Cnext, \Cnextlower$ independent of $Q_1$ and a collection of disjoint cubes $\cN \subseteq \cC(Q_1) \setminus \mathscr{S}$ such that the following holds.
	\begin{enumerate}
		\item Every $T \in \cC(Q_1) \setminus \mathscr{S}$ which is not contained in any cube from $\cN$ satisfies $\pdiam(T) \geq \Cnextlower \pdiam(Q_1)$. 
		\item We have 
		\begin{align}
			\pdiam(Q_1)^{\idim+2} \leq \Cnext\pHmeasure^{\idim+2}\left(E \cap Q_1 \setminus \bigcup_{T \in \cN} T  \right) + (1+\Cnext \delta)\sum_{T \in \cN \cap \cG} \pdiam(T)^{\idim+2}. 
		\end{align}
	\end{enumerate} 
\end{lemma}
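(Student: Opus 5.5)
The plan is to apply Lemma \ref{l:lots-of-flat-children} to a slight dilation of the ball associated to $Q_1$, and then to keep only those of the resulting cubes that avoid $\mathscr{S}$. Since $Q_1 \in \cB$ means $\bilatbeta(MB(Q_1)) > \delta\alpha$, the cube $Q_1$ is not flat at scale $M$. We therefore do not aim for the doubling estimate \eqref{e:big-size}; the weaker bound \eqref{e:sum} suffices, because the statement allows the term $\Cnext\pHmeasure^{\idim+2}(E\cap Q_1\setminus\bigcup_{T \in \cN} T)$ to absorb whatever is lost.

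\emph{Step 1.} Let $B = B(Q_1)$, which is centred on $E$ and satisfies $B \subseteq Q_1 \subseteq (1+8\rho)B$. By Lemma \ref{l:monotonicity}, $\baupbeta(B) \leq A\,\baupbeta(AB(Q_1)) \leq A\ve$. Choosing $\ve = \ve(A,\alpha,\delta)$ small enough that $A\ve < \veflat$, Lemma \ref{l:lots-of-flat-children} applies (with parameters $M,\alpha,\delta$). It produces a disjoint family $\cN_0 \subseteq \cC$ of cubes contained in $B \subseteq Q_1$ such that each $R \in \cN_0$ has $\pdiam(R) \geq \rho^{\Nflat}\pdiam(B)$ and $\bilatbeta(MB(R)) < \delta\alpha$, so $R \in \cG$. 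Moreover
\[
\sum_{R\in\cN_0}\pdiam(R)^{\idim+2}\geq (1+\delta)^{-1}\pdiam(B)^{\idim+2}\geq (1+\delta)^{-2}\pdiam(Q_1)^{\idim+2},
\]
where the last inequality uses Remark \ref{r:constants}.

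\emph{Step 2.} Set $\cN := \{R \in \cN_0 : R \notin \mathscr{S}\}$. The cubes $R \in \cN_0 \cap \mathscr{S}$ lie inside some $T' \in \mathscr{T}$, and $T' \subseteq Q_1$ because $Q_1 \notin \mathscr{S}$ and Schul cubes are nested. For property (1), take $T \in \cC(Q_1)\setminus\mathscr{S}$ with $\pdiam(T) < \Cnextlower\pdiam(Q_1)$, where $\Cnextlower \ll \rho^{\Nflat}$. Since every $R \in \cN_0$ has diameter much larger than $T$, the nesting property of Lemma \ref{l:Schul-cubes} forces $T$ either to lie inside some $R \in \cN_0$ or to be disjoint from all of them. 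In the first case $R \notin \mathscr{S}$, since otherwise $T \in \mathscr{S}$, so $T$ is contained in a cube of $\cN$. The second case is the genuine difficulty: $\cN_0$ does not cover $Q_1$. The fix is to enlarge $\cN$ by adding the maximal cubes of $\cC(Q_1)\setminus\mathscr{S}$ with diameter below $\Cnextlower\pdiam(Q_1)$ that are disjoint from $\bigcup_{R \in \cN_0} R$. These extra cubes do not appear in the sum over $\cN\cap\cG$; they only shrink the uncovered set, and the inequality in (2) will be checked after this enlargement. Note that $\cN$ may include bad cubes; (2) only sums over $\cN \cap \cG$.

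\emph{Step 3, the main obstacle.} We must control $\sum_{R\in\cN_0\cap\mathscr{S}}\pdiam(R)^{\idim+2}$ together with the mass of the region removed in Step 2 (the added cubes cover part of $Q_1$ that was previously counted as uncovered). For the added cubes, the first route to try is to bound $\pHmeasure^{\idim+2}(E\cap Q_1\setminus\bigcup_{T\in\cN}T)$ from below by comparing with what $\cN_0$ leaves uncovered. Failing that, the fallback is to redefine the added cubes to be the children of generation roughly $\Nflat$ below $Q_1$. The $\mathscr{S}$-cubes are handled by Ahlfors regularity: each such $R$ lies in some $T'\in\mathscr{T}$ with $T' \subseteq Q_1$, so $T'\cap\bigcup_{T\in\cN}T=\emptyset$ and
\[
\pdiam(R)^{\idim+2}\lesssim_{\CAR}\pHmeasure^{\idim+2}(E\cap R),
\]
which is a subset of $E\cap Q_1\setminus\bigcup_{T\in\cN}T$. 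In the same way, the fraction $1-(1+\delta)^{-2}\lesssim\delta$ of $\pdiam(Q_1)^{\idim+2}$ not accounted for by $\cN_0$ is simply absorbed by enlarging $\Cnext$ in the factor $(1+\Cnext\delta)$. Putting these together gives
\[
\pdiam(Q_1)^{\idim+2}\leq(1+C\delta)\sum_{R\in\cN\cap\cG}\pdiam(R)^{\idim+2}+C\,\pHmeasure^{\idim+2}\Bigl(E\cap Q_1\setminus\bigcup_{T\in\cN}T\Bigr),
\]
with $C$ depending only on $\CAR$ and $\idim$. This is (2). Finally, $\Cnextlower$ depends on $\Nflat$, and hence on $\alpha$; this matches the dependencies in Remark \ref{r:constants} as long as that dependence on $\alpha$ is permitted.
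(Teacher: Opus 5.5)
Your proposal is correct and follows the paper's proof. The paper likewise applies Lemma \ref{l:lots-of-flat-children} to $B(Q_1)$, keeps the resulting cubes outside $\mathscr{S}$, adds the maximal cubes of $\cC(Q_1)\setminus\mathscr{S}$ disjoint from that family, and absorbs the $\mathscr{S}$-cubes of the flat family into the uncovered term by Ahlfors regularity; the only difference is that the paper puts no diameter cap on the added cubes, so (1) follows directly from \eqref{e:size}.

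The worry in Step 3 is unfounded, and the ``first route/fallback'' discussion should be dropped. The added cubes are disjoint from $\bigcup_{R\in\cN_0}R$, so they never remove any $\mathscr{S}$-cube of $\cN_0$ from $E\cap Q_1\setminus\bigcup_{T\in\cN}T$. Your final assembled inequality therefore already holds without any control on them.
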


\begin{proof}
	Let $\cN'$ be the cubes obtained by applying Lemma \ref{l:lots-of-flat-children} to the ball $B(Q_1)$, $\cN'' \coloneqq \cN' \setminus \mathscr{S}$, $\cN'''$ be the maximal set of cubes in $\cC(Q_1) \setminus \mathscr{S}$ which are disjoint from every cube in $\cN'$, and set 
	\begin{align}
		\cN \coloneqq \cN'' \cup \cN'''.
	\end{align}
	
	For Lemma \ref{l:next-when-not-flat} (1), let us fix $T \in \cC(Q_1) \setminus \mathscr{S}$ as in the statement. Since $\cN'''$ is the maximal collection of cubes in $\cC(Q_1) \setminus \mathscr{S}$ which are disjoint from $\cN'$, and $T$ is not contained in any cube from $\cN'''$ (by assumption), it must that $T$ has non-empty intersection with some cube in $T' \in \cN'$. We must have $T' \subseteq T$. Indeed, if $T' \in \cN' \cap \mathscr{S}$, then this follows from the fact that $T \not\in \mathscr{S} (= \mathscr{C}(\mathscr{S}))$. If $T' \in \cN' \setminus \mathscr{S} = \cN''$, then this follows from the fact that $T$ is not contained in any cube from $\cN$. By Lemma \ref{l:lots-of-flat-children}, we know that every cube in $\cN'' \subseteq \cN'$ has diameter comparable to $\pdiam(Q_1)$. Thus, the same must now be true of $T$. 
	
	For Lemma \ref{l:next-when-not-flat} (2), it follows from the definition of $\cN'$ and \eqref{e:size} that $\cN' \subseteq \cG$. By construction, every cube in $\cN' \cap \mathscr{S}$ is disjoint from any cube in $\mathscr{N}$.  Using this and \eqref{e:sum}, we have 
	\begin{align}
		\pdiam(Q_1)^{\idim+2} &\leq (1+\delta)\pdiam(B(Q_1))^{\idim+2} \leq (1+C\delta) \sum_{R \in \cN'} \pdiam(R)^{\idim+2} \\
		&= (1+C\delta)\sum_{R \in \cN' \cap \mathscr{S}} \pdiam(R)^{\idim+2} + (1+C\delta) \sum_{R \in \cN''} \pdiam(R)^{\idim+2}  \\ 
		&\leq C\pHmeasure^{\idim+2}\left(E \cap Q_1 \setminus \bigcup_{T \in \cN} T  \right) + (1+C\delta) \sum_{T \in \cN \cap \cG} \pdiam(T)^{\idim+2}.
	\end{align}
\end{proof}

\begin{lemma}\label{l:next-when-flat}
	Suppose $Q_1 \in \cC \cap \cG \setminus \mathscr{S}$ satisfies $\baupbeta(AB(Q_1)) \leq \ve$. There are absolute constants $\Cnextprime , \Cnextlowerprime$ and collections of disjoint cubes $\cF \subseteq \cC(Q_1) \setminus \mathscr{S}$ and $\cN \subseteq \cC(\cF) \setminus \mathscr{S}$ such that the following holds. 
	\begin{enumerate}
		\item For each $T \in \cC(Q_1) \setminus \mathscr{S}$ such that $\bilatbeta(B(T)) > \eta$, there exists $T' \in \cF$ such that $T \subseteq T'$.
		\item For each $T \in \cC(Q_1) \setminus \mathscr{S}$ such that $\bilatbeta(B(T)) > \eta$, if there does not exist $T'' \in \cN$ such that $T \subseteq T''$, then $\pdiam(T) \geq \Cnextlowerprime\pdiam(T')$, with $T' \in \cF$ as above.
		\item We have 
			\begin{align}\label{e:half}
			\hspace{1em} \pdiam(Q_1)^{\idim+2} \leq \Cnextprime\pHmeasure^{\idim+2}\left(E \cap Q_1 \setminus \bigcup_{T \in \cN} T \right) + \frac{2}{3}\sum_{T \in \cN} \pdiam(T)^{\idim+2}. 
		\end{align} 
	\end{enumerate}
\end{lemma}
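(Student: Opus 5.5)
The plan is to run a stopping-time argument inside $Q_1$ that stays in the flat regime, and then to use the approximate topology supplied by the Reifenberg parametrisation (Theorem \ref{t:DT}) to show that, each time the flat regime is left through a BWGL-bad cube, the BAUP forces roughly doubled mass.

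\textbf{Step 1: the collection $\cF$.} Since $Q_1 \in \cG$, we have $\bilatbeta(MB(Q_1)) \leq \delta\alpha$. Define $\cF$ to be the maximal cubes $T \in \cC(Q_1)\setminus\mathscr{S}$ that are not in $\cG$, i.e.\ the first cubes at which $\bilatbeta(MB(T)) > \delta\alpha$ while all strict ancestors in $\cC(Q_1)$ lie in $\cG$. If $\alpha\delta \ll \eta$ and $M$ is large, then every $T$ with $\bilatbeta(B(T))>\eta$ lies in $\cB$, so it has an ancestor (possibly itself) in $\cF$. This gives (1).

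\textbf{Step 2: a Reifenberg flat surrogate.} The cubes of $\cC(Q_1)$ strictly above $\cF$ are all flat at parameter $\delta\alpha$. As in Proposition \ref{p:BWGL-RF-corona} and Lemma \ref{l:CCBP}, they therefore produce a $C\delta\alpha M$-CCBP, and Theorem \ref{t:DT} yields a Reifenberg flat surface $\Sigma$ close to $E$ at those scales. By Lemma \ref{l:big-projection-RF} and Theorem \ref{t:DT}\eqref{DT5}, the surface $\Sigma$ is locally a small-constant $\Lip(1,1/2)$ graph at every scale down to $\ell(T)$ near each $T\in\cF$. Hence $\pHmeasure^{\idim+2}(\Sigma\cap B)$ is $(1\pm C\alpha)\pdiam(B)^{\idim+2}$ for balls at those scales, and $\Sigma$ is $C\alpha$-close to $E$ in $MB(T)$, say $d_{B(T)}(E,\Sigma)\lesssim\alpha$ for each $T\in\cF$.

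\textbf{Step 3: using that each $T\in\cF$ is bad.} For $T\in\cF$ we have $\bilatbeta(MB(T))\sim\delta\alpha$ up to the factor $\rho^{-1}$ relating parent and child (Lemma \ref{l:monotonicity}), while $\baupbeta(AB(T))\le\ve$ since $T\notin\mathscr{S}$. So the set $E$ near $T$ is close to a union of at least two well-separated planes, one of which is close to $\Sigma$. Apply the strengthened conclusion \eqref{e:big-size} of Lemma \ref{l:lots-of-flat-children} to a suitable ball comparable to $B(T)$, with the relevant $\alpha$ there replaced by $\delta\alpha$-scale. This gives disjoint flat cubes $\cN_T$ of diameter $\ge \Cnextlowerprime\pdiam(T)$ with $\sum_{R\in\cN_T}\pdiam(R)^{\idim+2}\ge(2-C\alpha)(1+\delta)^{-1}\pdiam(T)^{\idim+2}$. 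Set $\cN=\bigcup_{T\in\cF}(\cN_T\setminus\mathscr{S})$ together with the maximal non-$\mathscr{S}$ cubes disjoint from $\cN_T$ in $T$, exactly as in Lemma \ref{l:next-when-not-flat}. Property (2) then follows just as part (1) there, and cubes of $\cN_T\cap\mathscr{S}$ are absorbed into the uncovered-mass term.

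\textbf{Step 4: mass accounting.} Since $\Sigma$ is a flat graph over $B(Q_1)$, its mass is about $\pdiam(Q_1)^{\idim+2}$. Use Lemma \ref{l:large-coverage-nearby} (with $F=\Sigma$, which satisfies \eqref{e:coverage-nearby-1}–\eqref{e:coverage-nearby-2} by Step 2) down the tree of good cubes to see that $\pdiam(Q_1)^{\idim+2}\le(1+C\delta)\sum_{T\in\cF}\pdiam(T)^{\idim+2}+C\,\pHmeasure^{\idim+2}(E\cap Q_1\setminus\bigcup_{\cF}T)$. The subtle point is that the $\cF$-cubes lie on $\Sigma$ and are disjoint, so their projected masses add up to at most that of $\Sigma$. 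Combining this with Step 3 gives
\[
\sum_{T\in\cF}\pdiam(T)^{\idim+2}\le\frac{1+\delta}{2-C\alpha}\sum_{R}\pdiam(R)^{\idim+2},
\]
and hence \eqref{e:half}, since $(1+C\delta)(1+\delta)/(2-C\alpha)<2/3$ for small constants.

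The main obstacle is Step 4. Near a bad cube $T$, only one of the two sheets of $E$ is shadowed by $\Sigma$, and the second sheet must not already have been counted at larger scales. Excluding this requires that the scales above $T$ are $\delta\alpha$-flat, which forces the second sheet to appear only inside roughly $B(T)$. I would make this precise by comparing $E$ with $\Sigma$ via Lemma \ref{l:locally-sigma-k}.
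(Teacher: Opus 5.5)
Your argument breaks at Step 4, and the facts from Steps 1--3 that it relies on are not available.

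(i) You stop on Schul cubes, but a Schul cube's children can be arbitrarily smaller than it. So the bound $\bilatbeta(MB(T))\lesssim\delta\alpha$ for $T\in\cF$ ``up to the factor $\rho^{-1}$'' is unjustified, and nothing in your stopping rule controls the scales between a stopped cube and its (possibly much larger) parent. Consequently the flat cubes above $\cF$ need not form a CCBP as in Lemma \ref{l:CCBP}, and $d_{B(T)}(E,\Sigma)\lesssim\alpha$ at the scale of $T$ is not available.

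(ii) $\Sigma$ does not have density $1\pm C\alpha$ at all scales. Theorem \ref{t:DT}\eqref{DT5} describes $\Sigma_k$, not $\Sigma$, and Reifenberg flatness only gives the lower bound of Lemma \ref{l:big-projection-RF}. An upper bound $(1+C\alpha)\pdiam(B)^{\idim+2}$ holds only for $B\subseteq 10B_R$ with $R$ minimal in a \emph{smoothed} Christ--David region, where only boundedly many further maps $\sigma_j$ act (Lemma \ref{l:properties-of-sigma}, via Lemma \ref{l:smooth}(1)). Hence \eqref{e:coverage-nearby-1} fails for coarse balls, Lemma \ref{l:large-coverage-nearby} cannot be iterated ``down the tree'', and your lower bound for $\sum_{T\in\cF}\pdiam(T)^{\idim+2}$ in terms of $\pdiam(Q_1)^{\idim+2}$ is unproved.

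(iii) Your stated reason also points the wrong way. The cubes of $\cF$ lie in $E$, not on $\Sigma$, and disjointness would bound $\sum_{\cF}$ from above, whereas \eqref{e:half} needs it from below.

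The missing idea is to separate the cubes that stop from the cubes that carry the mass. The paper stops on Christ--David cubes built from the same nets, starting at side $\sim\delta r_{B(Q_1)}$ so that $\Sigma\cap(1-C\delta)B(Q_1)\subseteq z(S)\cup\bigcup_{R\in\min(S)}10B_R$. At that scale Lemma \ref{l:monotonicity} only gives $\bilatbeta\lesssim\delta^{-1}\cdot\delta\alpha=\alpha$. This is why the stopping threshold is $\alpha$ rather than your $\delta\alpha$, and why Lemma \ref{l:lots-of-flat-children}, used at level $\alpha$, returns cubes in $\cG$.

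The paper then uses the lower bound for $\Sigma$ only at the top and upper bounds only near minimal cubes. It covers $\Sigma\cap 10B_R$ by fresh Schul cubes $\cC_R$ (Lemma \ref{l:family-of-cubes}), and charges ${\min}_1(S)$ (BAUP stops) and $\mathcal{B}(S)$ (boundary cubes) to the uncovered $E$-mass and to $C\delta\pdiam(Q_1)^{\idim+2}$.

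Since the cubes of $\cC_R$ are not the stopped cubes, it also needs Lemma \ref{l:cF'-not-flat}. This is a BAUP argument transferring $\bilatbeta(MB_{Q'})>\alpha$ from the stopped child $Q'$ to $\bilatbeta(M''B(T))\sim\alpha$ for $T\in\cC_R$. Your Step 3 replaces this with the heuristic ``two well-separated planes''. But \eqref{e:big-size} needs both sheets to pass through $B(T)$. In the paper this comes from the upper bound $\bilatbeta(2MB)\lesssim\alpha$ on a comparable larger ball together with BAUP (Claims 1--2 in that proof). A lower bound on $\bilatbeta(MB(T))$ alone only says $E$ is non-flat somewhere in $MB(T)$.

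Finally, the ``second sheet'' issue you flag is not the real obstacle. $\pdiam(T)^{\idim+2}$ measures single-sheet mass and is compared against the single-sheet surface $\Sigma$, so nothing is double counted.
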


We will carry out the proof of Lemma \ref{l:next-when-flat} over several lemmas. We can assume, without loss of generality, that $B(Q_1) = B_{\CSchulprime}(0)$. Since $Q_1 \in \cG$, there exists a $d$-plane $P_0$ through the origin such that 
\begin{align}\label{e:flat-at-the-top}
	\bilatbeta(MB(Q_1),P_0) \leq 2\delta \alpha. 
\end{align} 
Let $\{X_n\}_{n \in \N}$ be the sequence of subsets from which $\cC$ is constructed, and let $\cubes$ denote the collection of Christ-David cubes constructed with respect to this same sequence. By translating, we can assume that $0 \in X_n$ for each $n \in \N$. Finally, let \\
\begin{align}
	A' \coloneqq \frac{A}{148}
\end{align}

\begin{remark}\label{r:A'}
	With this choice of $A'$ as above, it is easy to check from the definitions in Lemma \ref{cubes} and Lemma \ref{l:Schul-cubes} that if $Q \in \cubes_k$ and $R \in \cC_k$ are such that $x_Q = x_R$, then $A'B_Q = AB(R)$. 
\end{remark}

    \noindent\textbf{The stopping-time regions.} We define a collection of cubes $S \subseteq \cubes$ as follows. Let $\ell \in \N$ such that 
    \begin{align}\label{e:choice-of-n}
    	\Couter\rho^{\ell+1} \leq \delta r_{B(Q_1)} < \Couter\rho^{\ell}.
    \end{align}
   	Let $\{Q(S_j)\}$ the maximal collection of cubes $R \in \cubes_\ell$ such that $10B_R \subseteq B(Q_1)$. Then, for each $j \in J$, let $S_j'$ be the stopping-time region constructed by first adding $Q(S_j)$ to $S_j'$ then, assuming $R \in S_j'$, adding $\Child(R)$ to $S_j'$ if 
    \begin{align}\label{e:stoppig-time-BAUP}
    	\bilatbeta(MB_{R'}) \leq \alpha \quad \mbox{ and } \quad \baupbeta(A'B_{R'}) \leq \ve \quad \mbox{ for all } R' \in \Child(R). 
    \end{align}
    Let $\{S_j\}_{j \in J}$ be the set of extended stopping-time regions obtained by applying Lemma \ref{l:smooth} with the constant $\tau$ to the collection $\{S_j'\}_{j \in J}$. Finally, set 
    \begin{align}\label{e:S'}
    	S' \coloneqq \bigcup_{j \in J} S_j' \quad \mbox{ and } \quad S \coloneqq \bigcup_{j \in J} S_j.
    \end{align}
    
    \begin{lemma}
    	For each $Q \in S$ we have 
    	\begin{align}\label{e:properties-of-S}
    		\bilatbeta(M'B_Q) \lesssim \alpha \quad \mbox{ and } \quad  \baupbeta(A''B_Q) \lesssim_\delta \ve. 
    	\end{align}
    	for some $M'$ and $A''$, which are constant multiples of $M$ and $A'$, respectively. 
    \end{lemma}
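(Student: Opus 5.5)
The plan is to split into two cases according to whether $Q\in S$ belongs to $S'$ or lies in the extension $S\setminus S'$ produced by Lemma \ref{l:smooth}. In both cases the final step will be the monotonicity estimate Lemma \ref{l:monotonicity}, applied to the inclusion of a dilate of $B_Q$ into a ball $MB_{R}$ or $A'B_{R}$ of comparable radius; this converts a bound on the larger ball into the desired bound on the smaller one, at the cost of the radius ratio.

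\emph{Case $Q\in S'$.} Say $Q\in S_j'$. If $Q=Q(S_j)$, I will use that $10B_{Q}\subseteq B(Q_1)$ and $\ell(Q)\sim\delta r_{B(Q_1)}$ by \eqref{e:choice-of-n}. Then $M'B_Q\subseteq MB(Q_1)$ as long as $M'\le c\,\delta^{-1}$, hmm, this would tie $M'$ to $\delta$. So instead I will bound $\bilatbeta(M'B_Q)$ using \eqref{e:flat-at-the-top}: by Lemma \ref{l:monotonicity},
\[
\bilatbeta(M'B_Q)\le \frac{M r_{B(Q_1)}}{M'\ell(Q)}\cdot 2\delta\alpha\lesssim \alpha,
\]
since $\delta\cdot\delta^{-1}\sim 1$ (with $M'$ a fixed multiple of $M$). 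Similarly, $\baupbeta(A''B_Q)\le (A\,r_{B(Q_1)}/(A''\ell(Q)))\,\baupbeta(AB(Q_1))\lesssim_\delta\ve$, using the hypothesis $\baupbeta(AB(Q_1))\le\ve$. If instead $Q\neq Q(S_j)$, then $Q$ is a child of some cube of $S_j'$, and \eqref{e:stoppig-time-BAUP} applied to $R'=Q$ gives $\bilatbeta(MB_Q)\le\alpha$ and $\baupbeta(A'B_Q)\le\ve$ directly.

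\emph{Case $Q\in S\setminus S'$.} Here I will invoke Lemma \ref{l:smooth}(4), which gives $Q'\in S_i'$ with $B_Q\subseteq CB_{Q'}$ and $\ell(Q')\lesssim_\tau\ell(Q)$, where $\tau=\tau(\delta)$. The ratio $\ell(Q')/\ell(Q)$ is bounded by a constant depending only on $\tau$ (hence on $\delta$), and is not small. To reach the first case without losing in the $\bilatbeta$ bound, I need $M'B_Q\subseteq MB_{Q'}$ with radius ratio $O(1)$. This can fail, because $B_Q$ may sit near the edge of $CB_{Q'}$, at distance up to $C(\tau)\ell(Q')$, while $M'\ell(Q)$ may be much smaller than $C(\tau)\ell(Q')$.

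The fix I would try is to replace $Q'$ by its ancestor $\tilde Q$ of size comparable to $\ell(Q)$ only up to an absolute factor. The tempting choice is the ancestor of $Q$ itself, which lies in $S$ because $S$ is a stopping-time region. That is circular, however, so I would instead use the construction in Lemma \ref{l:smooth} directly: for $Q\in\min(S)$, \eqref{e:smooth-comp} gives a cube $R\in S'$ with $\ell(R)+\dist(Q,R)\lesssim\tau^{-1}\ell(Q)$. The $\baupbeta$ bound is insensitive to this loss, since $\lesssim_\delta$ is allowed, so I will get it from $A''B_Q\subseteq A'B_{R}$ together with the first case.

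For $\bilatbeta$, I will combine the flatness of all nearby cubes of $S'$ (each satisfying \eqref{e:stoppig-time-BAUP}, and lying at distance $\lesssim\tau^{-1}\ell(Q)$) with the choice $M=M(\rho)$ and with $\tau$ chosen in terms of $\delta$ only. This is the \textbf{main obstacle}: making the constant in $\lesssim\alpha$ independent of $\delta$ while the geometry of the extension depends on $\tau(\delta)$. The step I would try first is to arrange $M'$ as a multiple of $M$ large enough that $M'B_Q$ already lies in $MB_R$; this needs $M\gtrsim\tau^{-1}$. That is consistent with the ordering of constants only if $M$ is allowed to depend on $\delta$, which Remark \ref{r:constants} effectively permits, since $M=M(\rho)$ and $\rho=\rho(\delta)$. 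With $\ell(R)\sim\ell(Q)$ up to $\tau$-dependent factors absorbed in this way, Lemma \ref{l:monotonicity} then gives $\bilatbeta(M'B_Q)\lesssim\alpha$.
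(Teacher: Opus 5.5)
\textbf{Verdict.} Your case split and tools are the paper's, but the last step of your third case does not work as written. The paper uses the same three cases: non-top cubes of $S'$ via \eqref{e:stoppig-time-BAUP}; the tops $Q(S_j)$ via \eqref{e:flat-at-the-top}, \eqref{e:choice-of-n} and Lemma \ref{l:monotonicity}; and cubes of $S\setminus S'$ via Lemma \ref{l:smooth}(4) followed by monotonicity.

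\textbf{Top cubes.} Your computation is right. Since $\ell(Q)>\delta r_{B(Q_1)}$, the factor $\delta$ in \eqref{e:flat-at-the-top} cancels the radius ratio. For $\baupbeta$ the ratio is simply paid, which is where $\lesssim_\delta\ve$ comes from. Your aside about needing $M'\le c\,\delta^{-1}$ is spurious, and ``instead'' is misleading: you still need $M'B_Q\subseteq MB(Q_1)$ to apply Lemma \ref{l:monotonicity}. That containment is automatic from $10B_Q\subseteq B(Q_1)$ and $M'\le M$.

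\textbf{Cubes of $S\setminus S'$: the gap.} You claim that taking $M\gtrsim\tau^{-1}$ ``absorbs'' the $\tau$-dependent factors and yields $\bilatbeta(M'B_Q)\lesssim\alpha$ with a $\delta$-independent constant. This is false. With $M'=cM$, the loss in Lemma \ref{l:monotonicity} is
\[
\frac{M\ell(R)}{M'\ell(Q)}=c^{-1}\,\frac{\ell(R)}{\ell(Q)},
\]
which does not involve $M$ at all. Enlarging $M$ only secures the containment $M'B_Q\subseteq MB_R$. That observation of yours is correct, and the paper also tacitly needs $M\gtrsim C(\tau)$ there. But Lemma \ref{l:smooth}(4) only gives $\ell(R)\lesssim_\tau\ell(Q)$; in its proof $R$ is even chosen with $\ell(R)\sim\tau^{-1}\ell(Q)$. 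So what this route proves is $\bilatbeta(M'B_Q)\lesssim_\tau\alpha$, i.e.\ $\lesssim_\delta\alpha$.

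\textbf{Why this suffices.} The paper's proof carries exactly the same hidden dependence: its ``$\ell(R)\sim\ell(Q)$'' holds only with $\tau$-dependent constants. The dependence is harmless. By Remark \ref{r:constants}, $\alpha=\alpha(M,\delta,\eta)$ is chosen after $\tau$ and $\rho$. Hence the CCBP built from these planes is a $C(\delta)\alpha$-CCBP, and Lemmas \ref{l:flat-above-min} and \ref{l:flat-near-floor} go through once $\alpha$ is small depending on $\delta$. So drop the ``main obstacle'' discussion and the absorption claim, and state the $\bilatbeta$ bound with a $\delta$-dependent constant. Your proof then coincides with the paper's.

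\textbf{A smaller fix in the same case.} For all $Q\in S\setminus S'$, not only $Q\in\min(S)$, use the cube furnished by Lemma \ref{l:smooth}(4), which gives $B_Q\subseteq CB_{Q'}$. The possibly much smaller cube from \eqref{e:smooth-comp} does not work here, since $A''B_Q\subseteq A'B_R$ need not hold for it.
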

    
    \begin{proof}
    	If $Q \in S' \setminus \{Q(S_j)\}_{j \in J}$, then \eqref{e:properties-of-S} follows immediately from \eqref{e:stoppig-time-BAUP}. If $Q = Q(S_j)$ for some $j \in J$, then since $B_{Q(S_j)} \subseteq B(Q_1)$, our assumption on $Q_1$ in Lemma \ref{l:next-when-flat} and our choice of $\ell$ from \eqref{e:choice-of-n} imply (with an application of Lemma \ref{l:monotonicity}) \eqref{e:properties-of-S}. We finish with the case that $Q \in S \setminus S'$. By Lemma \ref{l:smooth}, there exists $R \in S'$ such that $B_Q \subseteq CB_R$ and $\ell(R) \sim \ell(Q)$. The first condition implies that $C^{-1}MB_Q \subseteq MB_R$ and $C^{-1}A'B_Q \subseteq A'B_R$. Since $R$ satisfies \eqref{e:properties-of-S} with constants $M$ and $A'$, Lemma \ref{l:monotonicity} implies that $Q$ satisfies \eqref{e:properties-of-S} with constant $M' \coloneqq C^{-1}M$ and $A'' \coloneqq C^{-1}A'$. 
    \end{proof}

    \noindent\textbf{The approximating set.} We will prove Lemma \ref{l:next-when-flat} by first building a Reifenberg flat set which approximates $S$, constructed via Theorem \ref{t:DT}. For this, we will need a CCBP. Given $k \geq 0$, let $s(k)$ to be the integer such that 
    \begin{align}\label{e:s(k)}
    	\Couter \rho^{s(k)} < r_k \leq \Couter \rho^{s(k)-1}
    \end{align}
    (recall that $r_k = 10^{-k}$ and $\Couter$ as in Lemma \ref{cubes}). Let $\mathscr{X}_k = \{x_{j,k}\}_{j \in J_k}$ be a maximal $r_k$-separated net in
    \begin{align}
    	\mathscr{C}_k = \left\{ x_Q \colon Q \in S \cap \cubes_{\max\{\ell, s(k)\}}\right\},
    \end{align} 
    which contains the origin, and let $B_{j,k} = B_{r_k}(x_{j,k})$. If $s(k) < \ell$, we let $P_{j,k} = x_{j,k} + P_0$. Otherwise, we let $P_{j,k} = P_Q$ where $Q \in S \cap \cubes_{s(k)}$ is such that $x_{j,k} = x_Q,$ and $P_Q$ is a plane such that $b\beta(M'B_Q,P_Q) \lesssim \alpha$ (recall that $B_Q = B_{\ell(Q)}(x_Q)$).

    \begin{lemma}
    	The triple $(P_0,\{B_{j,k}\} , \{P_{j,k}\})$ defines a $C\alpha$-CCBP. 
    \end{lemma}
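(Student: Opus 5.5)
We check the four conditions of Definition \ref{d:CCBP} directly, following the proof of Lemma \ref{l:CCBP}, with the extra case $s(k) < \ell$ (where the planes are translates of $P_0$) and the estimate \eqref{e:properties-of-S} in place of \eqref{e:flatness-condition-2}. The key geometric input is that for every $Q \in S$ we have $\bilatbeta(M'B_Q,P_Q) \lesssim \alpha$. For the coarse generations we also need $\bilatbeta(MB(Q_1),P_0) \leq 2\delta\alpha$ from \eqref{e:flat-at-the-top}. Every cube $Q(S_j)$ satisfies $10B_{Q(S_j)} \subseteq B(Q_1)$ and, by \eqref{e:choice-of-n}, has $\ell(Q(S_j)) \sim \delta r_{B(Q_1)}$. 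Hence all centres in $\mathscr{C}_k$ lie in $B(Q_1)$, and for $k$ with $s(k) < \ell$ the ball $100B_{j,k}$ has radius at most $C\delta^{-1}\ell(Q(S_j))$. This is still $\lesssim r_{B(Q_1)}$, so it lies inside $MB(Q_1)$ once $M$ is large.

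\textbf{Condition (1).} For $s(k)=s(k-1)$, or for both indices below $\ell$ (when $\mathscr{C}_k = \mathscr{C}_{k-1}$, both being centres of cubes in $S \cap \cubes_\ell$), we use maximality of the net $\mathscr{X}_{k-1}$ in $\mathscr{C}_{k-1}$. When $s(k)=s(k-1)+1 > \ell$, we pass to the parent cube, which lies in $S$ by the stopping-time property, exactly as in Lemma \ref{l:CCBP}. The transition $s(k-1) < \ell \le s(k)$ behaves the same way since then $s(k)=\ell$ and $\mathscr{C}_k=\mathscr{C}_{k-1}$. The normalisation $0 \in \mathscr{X}_k$ for all $k$ is harmless.

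\textbf{Condition (2).} This is immediate: for $k=0$ we certainly have $s(0) < \ell$ (as $\delta$ is small), so $P_{j,0} = x_{j,0} + P_0$. Since $x_{j,0} \in E \cap B(Q_1)$ and $\bilatbeta(MB(Q_1),P_0) \le 2\delta\alpha$, we get $\pdist(x_{j,0},P_0) \lesssim \delta\alpha M$. So the parallel plane is $C\alpha$-close to $P_0$ in $100B_{j,0}$, after rescaling and using that $r_{B(Q_1)} \sim 1 \sim r_0$.

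\textbf{Conditions (3) and (4).} These are treated simultaneously. If both indices have $s(\cdot) \ge \ell$, the planes are $P_Q, P_R$ with $Q,R \in S$, $\ell(Q)\sim\ell(R)\sim r_k$ and centres within $100r_k$. Then $100B_{i,k} \subseteq M'B_R$ for $M$ large, and Lemma \ref{l:beta-angle} gives $d_{100B_{i,k}}(P_Q,P_R) \lesssim \alpha$. If both are below $\ell$, two translates of $P_0$ through points of $E \cap B(Q_1)$ are each $\lesssim \delta\alpha M r_{B(Q_1)}$ from $P_0$, so in balls of radius $\gtrsim \delta r_{B(Q_1)}$ they are $\lesssim M\alpha$-close. (The factor $M$ is absorbed, or alternatively we use that $\alpha$ is chosen after $M$ and the CCBP constant is allowed to be $C(M)\alpha$, as in Lemma \ref{l:CCBP}.)

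\textbf{Main obstacle: the mixed case.} This is a translate of $P_0$ against $P_R$ with $R \in S \cap \cubes_\ell$ at the transition scale. The plan is to compare both with $E$ on $100B_{i,k}$. This ball sits inside $M'B_R$ (since $r_k \sim \ell(R)$ there) and inside $MB(Q_1)$, so Lemma \ref{l:beta-angle} applied with $B = 100B_{i,k}$ and $B' = MB(Q_1)$ gives an error $\alpha + (r_{B(Q_1)}/r_k)\,\delta\alpha M$. Because $r_k \gtrsim \delta r_{B(Q_1)}$, the ratio $r_{B(Q_1)}/r_k$ is $\lesssim \delta^{-1}$, and the bound becomes $\lesssim M\alpha$. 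This is exactly why \eqref{e:flat-at-the-top} was stated with $\delta\alpha$ rather than $\alpha$.
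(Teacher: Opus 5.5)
Your proposal is correct and follows the paper's proof essentially step for step: condition (1) as in Lemma \ref{l:CCBP}, the scales with $s(k)\ge\ell$ via Lemma \ref{l:beta-angle} and \eqref{e:properties-of-S}, the coarse scales $s(k)<\ell$ via translates of $P_0$ (where the $\delta^{-1}$ from $r_k\gtrsim\delta r_{B(Q_1)}$ cancels the $\delta$ in \eqref{e:flat-at-the-top}), and the transition scale $s(k+1)=\ell$ via Lemma \ref{l:beta-angle} combined with \eqref{e:flat-at-the-top}. Your tracking of the factor $M$ gives a $C(M)\alpha$-CCBP, which is harmless since $\alpha=\alpha(M,\delta,\eta)$ is chosen after $M$, and is more careful than the paper's $C\alpha$. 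In the mixed case, centre the small ball of Lemma \ref{l:beta-angle} on $P_R$ (e.g.\ at $x_R$), or simply compare $P_{i,k}$ and $P_R$ with $E$ on $100B_{i,k}$ directly.
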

    
    \begin{proof}
    	Definition \ref{d:CCBP} \eqref{CCBP1} is proven exactly as in Lemma \ref{l:CCBP}. Definition \ref{d:CCBP} \eqref{CCBP2} follows from \eqref{e:flat-at-the-top} and the definition of $P_{j,0}$ above (note that $s(0) < \ell$). If $k \geq 0$ is such that $s(k) \geq \ell$, then properties \eqref{CCBP3} and \eqref{CCBP4} in Definition \ref{d:CCBP} can be shown to hold with constant $C\alpha$ by a similar argument to the one employed in the proof of Lemma \ref{l:CCBP} (we shall omit the details). Suppose instead that $s(k) < \ell$. For Definition \ref{d:CCBP} \eqref{CCBP3}, it follows from \eqref{e:flat-at-the-top} and \eqref{e:choice-of-n} that for any $i,j \in J_k$ we have 
    	\begin{align}\label{e:top-planes}
    		d_{100B_{i,k}}(P_{i,k} , P_{j,k}) &\lesssim r_k^{-1} d_{B_{100}(0)}(x_{i,k} + P_0 , x_{j,k} + P_0) \\
    		&\lesssim \rho^{-s(k)} \left(\pdist(x_{i,k},P_0) + \pdist(x_{j,k},P_0) \right)  \lesssim \delta^{-1}\delta \alpha = \alpha. 
    	\end{align}
    	For Definition \ref{d:CCBP} \eqref{CCBP4}, let $i \in J_k$ and $j \in J_{k+1}$. If $s(k+1) < \ell$ then \eqref{CCBP4} follows from a similar estimate to \eqref{e:top-planes}. If $s(k+1) = \ell$, let $Q \in \cubes_\ell$ such that $P_{j,k+1} = P_Q$. Then, by Lemma \ref{l:beta-angle} and \eqref{e:flat-at-the-top}, we have 
    	\begin{align}
    		d_{20B_{i,k}}(P_{i,k},P_{j,k+1}) \lesssim r_k^{-1}\bilatbeta(B_{100}(0) , P_{i,k}) + \bilatbeta(MB_Q,P_Q) \lesssim \alpha. 
    	\end{align} 
    \end{proof}
    
    Let $\Sigma$ be the $C\alpha$-Reifenberg flat set constructed by applying Theorem \ref{t:DT} to the above CCBP.
    
    \begin{lemma}\label{l:properties-of-sigma}
    	There is a constant $\Cball$ such that 
    	\begin{align}\label{e:sigma-large-in-ball}
    		\Sigma \cap (1-\Cball\delta)B(Q_1) \subseteq z(S) \cup \bigcup_{R \in \min(S)} 10 B_R 
    	\end{align}
    	and 
    	\begin{align}\label{e:sigma-locally-upper-reg}
    		\pHmeasure^{\idim+2}(\Sigma \cap B) \leq (1+\Cball \alpha)\pdiam(B)^{\idim+2}
    	\end{align}
    	for any ball $B$ with $B \subseteq 10 B_R$ for some $R \in \min(S)$.
    \end{lemma}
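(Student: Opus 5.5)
We prove the two claims separately, both by tracking the trajectory $f_k(x)\to f(x)$ of points of $\Sigma$ through the construction of Theorem \ref{t:DT}. The key tools are Lemma \ref{l:traj}, Theorem \ref{t:DT} \eqref{DT4}, \eqref{DT6} and \eqref{DT8}, and Lemma \ref{l:smooth} (1)--(3).

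For \eqref{e:sigma-large-in-ball}, fix $x\in\Sigma\cap(1-\Cball\delta)B(Q_1)$ with $x\notin z(S)$. Let $k$ be the largest integer with $x\in V^{11}_{k-1}$, as in Lemma \ref{l:locally-sigma-k}; such a $k$ is finite. To see this, note that if $x\in V^{11}_k$ for all $k$, then there are centres $x_{j,k}=x_{Q_k}$ with $Q_k\in S$ and $\ell(Q_k)\sim r_k\to0$, all within $11r_k$ of $x$. By the closedness argument of Lemma \ref{l:smooth} (3), together with $\pdist(x,E)\lesssim\alpha r_k$ from \eqref{Sigma_kSigma} and \eqref{e:close-to-P}, this forces $x\in z(S)$. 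This contradicts our choice of $x$.

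So let $x_{j,k-1}=x_Q$ with $Q\in S\cap\cubes_{s(k-1)}$ (or $\cubes_\ell$ at the top scales) and $x\in 11B_{j,k-1}$. Since $x\notin V^{11}_k$, no cube of $S$ at generation $s(k)$ has centre within $\sim 10r_k$ of $x$. Hence some child of $Q$, or of a nearby cube in $S$ at scale $\sim r_{k-1}$, is not in $S$, and that cube lies in $\min(S)$. Call it $R$, with $\ell(R)\sim r_{k-1}$ and $\pd(x,x_R)\lesssim r_{k-1}$.

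The constant $10$ in $10B_R$ then has to be recovered. Lemma \ref{l:smooth} (1) says neighbouring minimal cubes have comparable size, and $\rho$ is small compared to $1/10$. So after moving down through the $\lesssim\log(1/\rho)$ generations between $r_{k-1}$ and $\ell(R)$ and choosing the minimal cube whose centre is closest to $x$, we get $x\in10B_R$. The case $s(k-1)<\ell$ is excluded by the shrinking $(1-\Cball\delta)$: the top cubes $Q(S_j)$ satisfying $10B_R\subseteq B(Q_1)$ cover $(1-\Cball\delta)B(Q_1)$ at resolution $\delta r_{B(Q_1)}$, by \eqref{e:choice-of-n}. I expect this bookkeeping between the Christ--David scales $\rho^{s}$ and the radii $r_k=10^{-k}$ to be the main nuisance of the first part.

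For \eqref{e:sigma-locally-upper-reg}, let $B\subseteq10B_R$ with $R\in\min(S)$. By Lemma \ref{l:smooth} (2), $z(S)\cap10B_R=\emptyset$. By Lemma \ref{l:smooth} (1), all cubes of $S$ whose balls meet $10B_R$ have size $\gtrsim\ell(R)$. So there is $k$ with $r_k\sim\ell(R)$, with the comparability constant depending on $\tau$ only through (1), such that $10B_R\cap V^{10}_m=\emptyset$ for all $m\ge k+C$. By Theorem \ref{t:DT} \eqref{DT4} and Lemma \ref{l:traj}, as in Lemma \ref{l:locally-sigma-k}, $\Sigma\cap 10B_R=\Sigma_{k+C}\cap10B_R$.

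By Theorem \ref{t:DT} \eqref{DT6}, applied at the finitely many relevant scales, $\Sigma_{k+C}\cap 20B_R$ is a union of $C^2$ pieces. Each piece is locally the graph of a $\Lip(1,1/2,C\alpha)$ function over a vertical plane, on balls of radius $19r_{k+C}$. I would first reduce to balls $B$ with $r_B\lesssim r_{k+C}$. The main obstacle is the remaining case of larger $B$, since a covering argument loses constants. Here I would instead use that $\Sigma\cap 10B_R$ is a single graph. To obtain this, combine \eqref{Sigma_kPlane} across scales with the $C\alpha$ angle control from the CCBP, which gives a single graph with constant $C\alpha$ over one plane $P$ on all of $20B_R$.

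Given this, $\pHmeasure^{\idim+2}$ of a $\Lip(1,1/2,C\alpha)$ graph over $P\cap\pi_P(B)$ is at most $(1+C\alpha)\pHmeasure^{\idim+2}(P\cap\pi_P(B))$. For the slice measure this follows by Remark \ref{r:slice-measure}, together with the area formula on each time slice. Finally, $\pHmeasure^{\idim+2}(P\cap\pi_P(B))\le\pdiam(B)^{\idim+2}$ with the normalisation used in Lemma \ref{l:big-projection-RF}, which gives \eqref{e:sigma-locally-upper-reg}.
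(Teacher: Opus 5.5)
\paragraph{A gap in the first part.} The step that fails is the one meant to produce $R\in\min(S)$ with $x\in 10B_R$.

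From $x\notin V^{11}_k$ you may only conclude that no centre $x_{Q'}$ with $Q'\in S\cap\cD_{s(k)}$ (when $s(k)\ge\ell$) lies within $10r_k$ of $x$. This says nothing about the children of your cube $Q$, for two reasons. They lie one generation below $Q$, which is typically not generation $s(k)$: since $\rho<1/1000$ and $r_k=10^{-k}$, $s(\cdot)$ is constant for several consecutive $k$. Moreover, $x\in 11B_{j,k-1}$ allows $\pd(x,x_Q)$ of order $100r_k$, far outside the excluded region. Your repair does not help either. Lemma \ref{l:smooth} (1) only compares sizes of minimal cubes whose $10$-dilates already meet, and there are no cubes of $S$ below a minimal cube to descend through.

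The missing idea is to locate a Christ--David cube \emph{at} $x$, using the closeness of $\Sigma$ to $E$, and then use the tree structure. You have $\pdist(x,E)\lesssim\alpha r_{k-1}$ by \eqref{e:distf_k}, \eqref{Sigma_kPlane} and the choice of $P_{j,k-1}$. So for $\alpha\ll\rho$ the argument goes as follows.
\begin{itemize}
\item First, $s(k)\ge\ell$. Otherwise a top cube of $\cD_\ell$ with centre within $3\rho^\ell<r_k$ of $x$ would put $x$ in $V^{2}_k$.
\item There is $R'\in\cD_{s(k)}$ with $\pd(x,x_{R'})\le 3\rho^{s(k)}<r_k$, so $x\in B_{R'}$.
\item If $R'\in S$, then $x_{R'}\in\mathscr C_k$ is within $r_k$ of some $x_{i,k}$. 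This gives $x\in 2B_{i,k}\subseteq V^{11}_k$, a contradiction. Hence $R'\notin S$.
\item The ancestor of $R'$ in $\cD_\ell$ is one of the $Q(S_j)$, thanks to the shrinking $(1-\Cball\delta)$; this is exactly the argument for \eqref{e:J_k}.
\item By the sibling property, the smallest ancestor $R_{\min}$ of $R'$ in $S$ lies in $\min(S)$. Then $x\in B_{R'}\subseteq 2B_{R_{\min}}$, so no constant has to be ``recovered''.
\end{itemize}
The paper runs the same tree argument with the deepest generation $k$ such that $x\in(4/\rho)B_R$ for some $R\in S\cap\cD_k$. It gets $R'\in\cD_k$ with $x\in 3B_{R'}$ from the closeness of $\Sigma$ to $E$, uses extremality to show no child of $R'$ is in $S$, and takes the minimal cube between $R'$ and its top ancestor. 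Separately, finiteness of your $k$ needs Lemma \ref{l:smooth} (1)--(2), not just the closedness argument in (3). You only know that $x$ lies in bounded dilates of the balls $B_{Q_k}$, not in the balls themselves.

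\paragraph{The second part.} Your argument is workable but takes a different route from the paper.

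The paper never globalises fine-scale graphs. It picks $k$ with $\ell(R)<r_k\le\rho^{-1}\ell(R)$. Then $x_R$ is $r_k$-close to a net point and $\Sigma_k$ passes within $C\alpha r_k$ of it, so Theorem \ref{t:DT} \eqref{DT6} already says that $\Sigma_k\cap 17B_R$ is a single $\Lip(1,1/2,C\alpha)$ graph. By Lemma \ref{l:smooth} (1), only a bounded number $m$ of further maps $\sigma_k,\dots,\sigma_{k+m-1}$ act on $17B_R$, with $m$ independent of $\alpha$ and $R$. Each is $(1+C\alpha)$-bi-Lipschitz there by \eqref{e:siglip}, so the density bound survives up to a factor $(1+C\alpha)^{m(\idim+2)}$.

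Your claim that $\Sigma\cap 20B_R$ is a graph over one plane $P$ can be made rigorous by verifying the cone condition $\|\pi_P^\perp(y-y')\|\le C\alpha\|\pi_P(y-y')\|$ for $y,y'\in\Sigma\cap 20B_R$. To do so, use flatness at scale $\pd(y,y')$ with planes whose angle to $P$ is $\lesssim_\rho\alpha$, obtained by chaining Definition \ref{d:CCBP} \eqref{CCBP3}--\eqref{CCBP4} over the $O_\rho(1)$ intermediate scales. That argument must actually be carried out; the paper's coarse-scale choice avoids it.

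One correction is needed. Remark \ref{r:slice-measure} only gives comparability of the slice measure with $\pHmeasure^{\idim+2}$ up to constants, which would destroy the factor $1+C\alpha$. Instead, observe that $p\mapsto p+A(p)$ is $(1+C\alpha)$-Lipschitz for $\|\cdot\|$, because $A$ takes values in spatial directions. Hence it increases $\pHmeasure^{\idim+2}$ by at most a factor $(1+C\alpha)^{\idim+2}$.
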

    
    \begin{proof}
    	We will start with \eqref{e:sigma-large-in-ball}, for which we fix some $x \in \Sigma \cap (1-\Cball\delta)B(Q_1)$ with $\Cball$ to be chosen sufficiently large. We can assume that $x \not\in z(S)$ otherwise there is nothing to show. First, we claim that 
    	\begin{align}\label{e:J_k}
    		\mbox{ there exists some } Q \in \cubes_\ell \mbox{ such that } x \in 3B_Q \mbox{ and } 20B_Q \subseteq B(Q_1). 
    	\end{align}
    	Indeed, let $j_0 \in J_0$ such that $x_{j_0,0} = 0$ (which exists by assumption), so that $P_{j_0,0} = P_0$. Recalling that $B(Q_1) \coloneqq B_{\CSchulprime}(0) \subseteq B_{j_0,0}$, it follows from \eqref{Sigma_kPlane} and \eqref{Sigma_kSigma} that $\pdist(x,P_0) \lesssim \alpha$. By choosing $\alpha = \alpha(\delta,M)$ sufficiently small, this and \eqref{e:flat-at-the-top} provides a point $x' \in E \cap (1-\delta/2)B(Q_1)$ such that $|x - x'| \leq \CSchulprime \delta/2$. Let $Q \in \cD_\ell$ such that $|x_Q -x'| < 2 \rho^\ell$. By the previous two inequalities, we have $x \in 3B_Q$. It only remains to show that $20B_Q \subseteq B(Q_1)$ (so that $Q \in S$). Let $y \in 20B_Q$. By \eqref{e:choice-of-n}, and the above inequalities, for $\Cball$ large enough we have 
    	\begin{align}
    		|y| \leq |y-x_Q| + |x_Q - x'| + |x'-x| + |x| \leq C \CSchulprime \delta + (1-\Cball \delta)\CSchulprime  \leq \CSchulprime,
    	\end{align}
    	as required.
    	
    	Let $k \geq \ell$ be the smallest such integer for which there exists $R \in S \cap \cubes_k$ with $x \in (4/\rho)B_R$ (which exists by Lemma \eqref{e:J_k}), and let us fix such a cube. By a similar argument to above ($\Sigma$ is close to a plane from our CCBP at the scale of $R$, which is close to the set $E$), we can find a cube $R'\in \cubes_k$ such that $x \in 3B_{R'}$. Since $3B_{R'} \subseteq \bigcup_{R'' \in \Child(R')} 4\rho^{-1}B_{R''}$, it follows that no child of $R'$ can be in $S$ (otherwise we contradict the minimality of $k$). Let $T \in \cubes_\ell$ such that $R' \subseteq T$. Since $x \in 3B_{R'}$ and no child of $R'$ is in $S$, it is enough to show that $T \in S$ (since then there must exist a cube in $\min(S)$ between $R$ and $T$). We have that $T \in S$ since, for each $y \in 10B_T$, 
    	\begin{align}
    		|y| &\leq |y-x_T| + |x_T - x| + |x - x_Q| + | x_Q | \\
    		&\leq 10\ell(Q) + 3\ell(Q) + 3\ell(Q) + (\CSchulprime-20\ell(Q)) \leq \CSchulprime. 
    	\end{align}

    	To prove \eqref{e:sigma-locally-upper-reg}, let us fix $R \in \min(S)$. Let $k \geq 0$ such that $\ell(R) < r_k \leq \rho^{-1}\ell(R)$ and let $j \in J_k$ such that $x_R \in B_{j,k}$. By construction, $\bilatbeta(B_{j,k},P_{j,k}) \lesssim \alpha$, thus, there exists $z' \in P_{j,k} \cap B_{j,k}$ such that $|x_{j,k} - z'| \lesssim \alpha r_k$. By \eqref{Sigma_kPlane}, there is $z \in \Sigma_k \cap B_{j,k}$ such that $|z-z'| \lesssim \alpha r_k$. By the triangle inequality, 
    	\begin{align}\label{e:equals-graph}
    		\Sigma_k \cap 17B_R \subseteq \Sigma_k \cap B_{19r_k}(z) .
    	\end{align}
    	By \eqref{e:equals-graph} and Theorem \ref{t:DT} \eqref{DT6}, $\Sigma_k$ is upper parabolic Ahlfors regular $(\idim+2)$-regular in $17B_R$ with constant $(1+C\alpha)$ for some $C \geq 1$.  By Lemma \ref{l:smooth} (1), there is an absolute constant $m \geq 0$ (independent of $R$) such that $17B_R \cap V_{k+m'}^{11} = \emptyset$ for all $m' \geq m$. Thus, by the fact that $\sigma_{k+m'}$ is $(1+C\alpha)$-bi-Lipschitz for each $0 \leq m' \leq m$ and the aforementioned fact that holds for each $m' \geq m$, we also have that $\Sigma$ is upper parabolic Ahlfors regular $(\idim+2)$-regular in $10B_R$ with constant $(1+C\alpha)$. This is equivalent to property \eqref{e:sigma-locally-upper-reg}. 
    \end{proof}

    \begin{lemma}\label{l:family-of-cubes}
    	For each $R \in \min(S)$, there exists a collection $\cC_R$ of Schul cubes contained in $10B_R$ such that, for each $T\in \cC_R$
    	\begin{align}\label{e:T-is-large}
    		\pdiam(T) \geq \Cind \pdiam(R), 
    	\end{align}
    	\begin{align}\label{e:T-upper-regular}
    		\pHmeasure^{\idim+2}(\Sigma \cap T) \leq (1+C\alpha)\pdiam(T)^{\idim+2},
    	\end{align}
    	for some constant $\Cind$ independent of $\alpha$, and
    	\begin{align}\label{e:covers-most}
    		\pHmeasure^{\idim+2}\left( \Sigma \cap 10B_R \setminus \bigcup_{T \in \cC_R} T \right) \leq \delta \pdiam(R)^{\idim+2}.
    	\end{align}
    \end{lemma}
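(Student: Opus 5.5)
The plan is to apply Lemma \ref{l:large-coverage-nearby} with $B_0 = 10B_R$ (or a ball slightly larger, centered on $E$), $F = \Sigma$, and the Schul cube system $\cC$ on $E$. The collection $\cC_R$ will be the disjoint family of cubes it produces, intersected with those contained in $10B_R$. So the work is to verify that lemma's hypotheses.

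\textbf{Step 1: BAUP at the scale of $R$.} Since $R \in S$, \eqref{e:properties-of-S} gives $\baupbeta(A''B_R) \lesssim_\delta \ve$. Lemma \ref{l:monotonicity} then gives $\baupbeta(10B_R) \lesssim_\delta \ve$, once $A$ is large compared to $10$. This is below $\ve_3$ for $\ve = \ve(A,\alpha,\delta,\eta)$ small.

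\textbf{Step 2: closeness of $\Sigma$ and $E$ in $10B_R$.} Bound $d_{10B_R}(E,\Sigma) \lesssim \alpha$ by repeating the chain from the proof of \eqref{e:proof-of-distances}, using $\bilatbeta(M'B_R) \lesssim \alpha$. Take $k$ with $r_k \sim \ell(R)$ and a net point $x_{j,k}$ near $x_R$. A point of $E \cap 10B_R$ is within $C\alpha r_k$ of $P_{j,k}$. That point is within $C\alpha r_k$ of $\Sigma_k$ by \eqref{Sigma_kPlane}, and $\Sigma_k$ is within $C\alpha r_k$ of $\Sigma$ by \eqref{Sigma_kSigma}. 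The reverse direction runs the same chain backwards. Choosing $\alpha < \alpha_3(\CAR,\delta')$ then satisfies the hypothesis $d_{B_0}(E,F)<\alpha$.

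\textbf{Step 3: density bounds for $\Sigma$.} The upper bound \eqref{e:coverage-nearby-1} on $\Sigma \cap 10B_R$ is exactly \eqref{e:sigma-locally-upper-reg} applied to $B = 10B_R$, after enlarging slightly or applying it to $9B_R$. For the lower bound \eqref{e:coverage-nearby-2}, use that $\Sigma$ is $C\alpha$-Reifenberg flat, so Lemma \ref{l:big-projection-RF} gives $\pHmeasure^{\idim+2}(\Sigma\cap B) \ge (1-C\alpha)\pdiam(B)^{\idim+2}$ for all balls centered on $\Sigma$. With the lemma's parameter chosen $\sim C\alpha$, Lemma \ref{l:large-coverage-nearby} yields disjoint cubes $Q_i$ with $\pdiam(Q_i) \ge \rho^{\Nnonflat}\pdiam(10B_R)$. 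This gives \eqref{e:T-is-large} with $\Cind \sim \rho^{\Nnonflat}$, which is independent of $\alpha$ because $\Nnonflat$ is. It also gives $\pHmeasure^{\idim+2}(\Sigma\cap 10B_R\setminus\bigcup Q_i) \le \delta \pdiam(10B_R)^{\idim+2} \lesssim \delta\pdiam(R)^{\idim+2}$, which is \eqref{e:covers-most} after renaming $\delta$.

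\textbf{Step 4: upper regularity on each cube.} Each $Q_i$ is contained in $B_0$ and lies inside a ball of radius $(1+8\rho)r_{B(Q_i)}$, by Lemma \ref{l:Schul-cubes} (2). Apply \eqref{e:sigma-locally-upper-reg} to that ball, intersected with $10B_R$; if needed, take $B_0$ to be a slightly smaller concentric ball so the enclosing balls stay inside $10B_R$. Since $\pdiam(B(Q_i)) \le \pdiam(Q_i)$, this gives $\pHmeasure^{\idim+2}(\Sigma\cap Q_i) \le (1+C\alpha)(1+8\rho)^{\idim+2}\pdiam(Q_i)^{\idim+2}$.

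The main obstacle is this last factor. The claimed constant is $(1+C\alpha)$, but the extra $(1+8\rho)^{\idim+2}$ depends on $\rho$, not $\alpha$. The first thing to try is to absorb it by reading \eqref{e:T-upper-regular} as holding up to factors $(1+C\delta)$, which Remark \ref{r:constants} permits since $\rho=\rho(\delta)$. Failing that, I would compare $\pdiam(Q_i)$ with $\pdiam$ of the enclosing ball directly, using that $Q_i$ essentially fills it. A secondary point is to make sure that the cubes selected by Lemma \ref{l:large-coverage-nearby}, which are contained in $B_0$ by the construction of Lemma \ref{l:lots-of-children}, genuinely lie in $10B_R$.
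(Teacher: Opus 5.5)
Your proposal is correct and follows the paper's proof. The paper also applies Lemma \ref{l:large-coverage-nearby} with $B_0=10B_R$ and $F=\Sigma$, checking $\baupbeta(10B_R)\lesssim_{A,\delta}\ve$ by monotonicity, $d_{10B_R}(E,\Sigma)\lesssim\alpha$, \eqref{e:coverage-nearby-1} via \eqref{e:sigma-locally-upper-reg}, and \eqref{e:coverage-nearby-2} via Lemma \ref{l:big-projection-RF}, as in your Steps 1--3. It then simply declares \eqref{e:T-upper-regular} immediate from \eqref{e:sigma-locally-upper-reg} because $T\subseteq 10B_R$.

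The $(1+8\rho)^{\idim+2}$ loss you flag in Step 4 is a detail the paper glosses over. Absorbing it into a factor $1+C\delta$ is harmless: the exact constant in \eqref{e:T-upper-regular} matters only in Lemma \ref{l:min-2} and the proof of Lemma \ref{l:next-when-flat}(3), where $\delta$ is small and the constant need only stay below $2/3$.
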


    \begin{proof}
    	Fix some $R \in \min(S)$. To construct the set $\cC_R$ satisfying \eqref{e:T-is-large} and \eqref{e:covers-most}, it suffices to check the hypotheses of Lemma \ref{l:large-coverage-nearby} for the ball $B_0 = 10B_R$ and the set $F = \Sigma$ (property \eqref{e:T-upper-regular} is immediate from \eqref{e:sigma-locally-upper-reg} due the fact that $T \subseteq 10B_R$). For this, one deduces from Lemma \ref{l:monotonicity} and \eqref{e:stoppig-time-BAUP} that $\baupbeta(10B_R) \lesssim_{A,\delta} \ve$ and from Lemma \ref{l:properties-of-sigma} that $d_{10B_R}(E,\Sigma) \lesssim \alpha$. The inequality \eqref{e:coverage-nearby-1} holds with constant $C\alpha$ by \eqref{e:sigma-locally-upper-reg}, and the inequality \eqref{e:coverage-nearby-2} holds with constant $C\alpha$ by Lemma \ref{l:big-projection-RF}, since $\Sigma$ is $C\alpha$-Reifenberg flat. The lemma now follows by choosing $\alpha = \alpha(\delta)$ and $\ve = \ve(A,\delta)$ sufficiently small.  
    \end{proof}
    
    \noindent\textbf{The set $\mathscr{F}$.} We are now ready to define the family $\cF$, as in the statement of Lemma \ref{l:next-when-flat}. For this, it will be useful to partition the set $\min(S')$ (recall, these are the cubes obtained via the stopping-time procedure before the smoothing). Let 
    \begin{align}
    	{\min}_1(S') &\coloneqq \{R \in \min(S') \colon \mbox{there exists } R' \in \Child(R) \mbox{ such that } \baupbeta(A'B_{R'}) > \ve \}; \\
    	{\min}_2(S') &\coloneqq \min(S') \setminus {\min}_1(S'). 
    \end{align}
   	
   	With this, we can also partition the cubes in $\min(S)$ depending which family its ancestor lies in. We also wish to identify cubes which live \textit{too close} to the boundary of their ancestor. Indeed, for $R \in \min(S)$, let $\mathcal{A}(R) \in \min(S')$ be the cube such that $R \subseteq \mathcal{A}(R)$. Then, set  
   	\begin{align}
   		\mathcal{B}(S) &\coloneqq \{R \in \min(S) \colon \pdist(R,\mathcal{A}(R)^c) < 20\tau \ell(\mathcal{A}(R))\}; \\
   		{\min}_1(S) &\coloneqq \{R \in \min(S) \setminus \mathcal{B}(S) \colon \mathcal{A}(R) \in {\min}_1(S')\}; \\
   		{\min}_2(S) &\coloneqq \{R \in \min(S) \setminus \mathcal{B}(S) \colon \mathcal{A}(R) \in {\min}_2(S')\}.
   	\end{align}
    
    Finally, define $\cF$ to be the maximal collection of cubes in 
    \begin{align}\label{e:cF}
    	\left\{T \in \cC(Q_1) \setminus \mathscr{S} \colon \bilatbeta(B(T)) > \eta \right\} \cup \bigcup_{R \in {\min}_2(S)} \cC_R,
    \end{align}	
    with $\mathscr{C}_R$ as in Lemma \ref{l:family-of-cubes}.
    
    \begin{lemma}\label{l:flat-above-min}
    	Let $T \in \cC_R$ for some $R \in \min_2(S)$, and suppose that $T' \in \cC(Q_1)$ is such that $T \subseteq T'$. Then,  $\bilatbeta(B(T')) < \eta$.  
    \end{lemma}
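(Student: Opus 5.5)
The plan is to show that $B(T')$ is bilaterally well approximated by one of the planes $P_Q$ attached to the stopping-time region $S$, with error $\lesssim \alpha + \delta$. Since $\alpha,\delta$ may be chosen small depending on $\eta$, this gives $\bilatbeta(B(T')) < \eta$. We split into two cases according to the size of $T'$ relative to $R$.

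\emph{Case 1: $\pdiam(T') \lesssim \pdiam(R)$.} Here $T \subseteq 10B_R$, and by \eqref{e:T-is-large} we have $\pdiam(T) \geq \Cind\pdiam(R)$ with $\Cind$ independent of $\alpha$. Hence $B(T') \subseteq C B_R$ with $r_{B(T')} \gtrsim \Cind\,\ell(R)$, where $C$ depends only on $\Cind$ and $\rho$. Because $R \in S$, the first estimate in \eqref{e:properties-of-S} gives $\bilatbeta(M'B_R) \lesssim \alpha$. Lemma \ref{l:monotonicity} then yields $\bilatbeta(B(T')) \lesssim_{\Cind,\rho} \alpha$, provided $M' \geq C$. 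This is allowed because $M = M(\rho)$ may be taken large and $\Cind$ does not depend on $\alpha$.

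\emph{Case 2: $T'$ much larger than $R$.} We use the ancestor $\mathcal A(R) \in {\min}_2(S')$ and the fact that $R \notin \mathcal B(S)$. Since $R$ lies at distance at least $20\tau\ell(\mathcal A(R))$ from $\mathcal A(R)^c$, Lemma \ref{l:smooth} (5) gives $\ell(R) \gtrsim \tau\,\ell(\mathcal A(R))$. So $R$ and $\mathcal A(R)$ have comparable size, with constants depending on $\tau = \tau(\delta)$. We then subdivide according to how $T'$ compares with $\mathcal A(R)$.

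\emph{Subcase 2a: $\pdiam(T') \lesssim \ell(\mathcal A(R))$.} Every cube of $S'$ containing $\mathcal A(R)$ satisfies $\bilatbeta(MB_{R'}) \leq \alpha$ by \eqref{e:stoppig-time-BAUP}. The argument of Case 1 then applies with $\mathcal A(R)$ in place of $R$, giving $\bilatbeta(B(T')) \lesssim_{\tau,\rho} \alpha$, after taking $M = M(\rho)$ large. Here I must make sure that the dependence on $\tau$ is absorbed by choosing $\alpha$ small after $\tau$, and the ordering in Remark \ref{r:constants} permits this.

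\emph{Subcase 2b: $T'$ has size between $\ell(\mathcal A(R))$ and $\ell(Q_1)$.} Take the ancestor $\tilde Q \in S'$ of $\mathcal A(R)$ whose side length is comparable to $\pdiam(T')$. If $T'$ is bigger than every cube of $S'$, instead use $Q_1$ itself and \eqref{e:flat-at-the-top}; this is the regime $s(k) < \ell$, where $\delta r_{B(Q_1)} \sim \ell$-scale and we lose a factor $\delta^{-1}$ against $\delta\alpha$. Since $T' \subseteq Q_1$ meets $\tilde Q$ and has comparable size, we get $B(T') \subseteq C B_{\tilde Q} \subseteq MB_{\tilde Q}$. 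Lemma \ref{l:monotonicity} with \eqref{e:stoppig-time-BAUP} then gives $\bilatbeta(B(T')) \lesssim \alpha$.

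The main obstacle is Subcase 2b near the top scale, and more generally making the constants uniform in the case analysis. One point is that the membership $\mathcal A(R) \in {\min}_2(S')$ only ensures that the BAUP condition did not cause the stop. So the relevant ancestors really lie in $S'$ and satisfy the $\alpha$-flatness condition. The other point is that the comparability constants, depending on $\tau$, $\rho$, $\Cind$ and $\delta^{-1}$, must be absorbed by choosing $M$ large and then $\alpha$ small relative to $\eta$. With $\alpha = \alpha(M,\delta,\eta)$ small enough, all cases give $\bilatbeta(B(T')) < \eta$.
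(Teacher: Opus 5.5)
Your proposal is correct and is essentially the paper's argument. The paper takes the smallest ancestor $Q\in S$ of $R$ with $B(T')\subseteq M'B_Q$, notes that $\pdiam(T')\sim_M\pdiam(Q)$ because $\pdiam(T')\geq\pdiam(T)\geq\Cind\pdiam(R)$, and concludes from Lemma \ref{l:monotonicity} and \eqref{e:properties-of-S} that $\bilatbeta(B(T'))\lesssim_M\alpha<\eta$; your Cases 1, 2a and 2b just unpack this single step, and your separate treatment of the top scales via \eqref{e:flat-at-the-top} spells out a point the paper's one-line proof passes over. The detour through $\mathcal A(R)$ and $R\notin\mathcal B(S)$ is unnecessary, since the cubes of $S$ between $R$ and $\mathcal A(R)$ already satisfy \eqref{e:properties-of-S}; also, Lemma \ref{l:smooth}(5) applied with its parameter equal to $20\tau$ gives $\ell(R)\gtrsim\tau^2\ell(\mathcal A(R))$, not $\tau\,\ell(\mathcal A(R))$, which is harmless since $\alpha$ is chosen after $\tau$.
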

    
    \begin{proof}
    	Suppose, towards a contradiction, that $\bilatbeta(B(T')) \geq \eta$. Let $Q \in S$ be the smallest ancestor of $R$ (possibly itself) such that $B(T') \subseteq M'B_Q$, so that $\pdiam(T') \sim_M \pdiam(Q)$. It then follows from Lemma \ref{l:monotonicity} and \eqref{e:properties-of-S} that $\eta \leq \bilatbeta(B(T'))  \lesssim_M \bilatbeta(M'B_Q) \lesssim \alpha.$ This is a contradiction for $\alpha = \alpha(M,\eta)$ small enough. 
    \end{proof}
    
    \begin{lemma}\label{l:flat-near-floor}
    	Let $T \in \cF$ and suppose that $z(S) \cap T \neq\emptyset$. Then, $\bilatbeta(B(T)) < \eta$.
    \end{lemma}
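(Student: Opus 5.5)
The plan is to mirror the proof of Lemma \ref{l:flat-above-min}: locate a cube of $S$ at a scale comparable to $T$ whose enlarged ball contains $B(T)$, and then transfer the flatness estimate \eqref{e:properties-of-S} to $B(T)$ via Lemma \ref{l:monotonicity}. The point is that a point of $z(S)$ lies in arbitrarily small cubes of $S$, so every scale below the top scale $\ell$ is represented in $S$ near that point.

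First fix $x \in z(S) \cap T$ and split according to which family in \eqref{e:cF} contains $T$. If $T \in \cC_R$ for some $R \in \min_2(S)$, then $T \subseteq 10B_R$. Lemma \ref{l:smooth} (2) gives $z(S) \cap 10B_R = \emptyset$, which contradicts $x \in T$. Hence we may assume $T$ belongs to the first family, i.e. $T \in \cC(Q_1)\setminus\mathscr{S}$. (In that case $\bilatbeta(B(T))>\eta$ would hold by definition, so the lemma amounts to saying this case cannot occur either; we argue by contradiction, assuming $\bilatbeta(B(T)) \geq \eta$.)

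Next, since $x \in z(S)$, the point $x$ lies in cubes $Q \in S$ of every generation $k \geq \ell$. We treat two sub-cases according to the size of $T$.

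\emph{Case $\pdiam(T) \lesssim \ell(Q(S_j))$.} Choose the generation $k$ with $\ell(Q) \sim \pdiam(T)/M'$, scaled so that $B(T) \subseteq M'B_Q$; this is possible because $x \in Q \cap T$ and $\pdiam(B(T)) \sim \pdiam(T)$. Lemma \ref{l:monotonicity} together with \eqref{e:properties-of-S} then gives
\[
\eta \leq \bilatbeta(B(T)) \lesssim_M \bilatbeta(M'B_Q) \lesssim \alpha,
\]
which is a contradiction once $\alpha = \alpha(M,\eta)$ is small.

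\emph{Case $\pdiam(T)$ is much larger than $\ell$-scale cubes.} Here $T \subseteq Q_1$ has $\pdiam(T) \gtrsim \delta\,\pdiam(Q_1)$ by \eqref{e:choice-of-n}. We instead compare with the top flatness \eqref{e:flat-at-the-top}. Since $B(T) \subseteq 2B(Q_1) \subseteq MB(Q_1)$, Lemma \ref{l:monotonicity} yields
\[
\bilatbeta(B(T)) \lesssim \delta^{-1}\,\delta\alpha = \alpha < \eta
\]
for $\alpha$ small. (Alternatively, the cubes $Q(S_j) \in S$ at generation $\ell$ already cover comparable scales.)

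The main obstacle is bookkeeping the scales in the first case. One must ensure that the chosen $Q$ with $B(T) \subseteq M'B_Q$ still belongs to $S$ and not merely to $\cubes$. This holds because $x \in z(S)$ and $S$ is closed under ancestors up to $Q(S_j)$, so every generation from $\ell$ downward containing $x$ lies in $S$. One must also ensure that the constants in \eqref{e:properties-of-S} do not depend on $\alpha$; this is why $\alpha$ is chosen after $M$ in Remark \ref{r:constants}.
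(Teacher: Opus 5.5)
Your proof is correct and follows the paper's route: take a cube $Q\in S$ through $x\in z(S)\cap T$ at a scale comparable to $T$ so that $B(T)\subseteq M'B_Q$ (such a $Q$ lies in $S$ because $x\in z(S)$), then combine Lemma \ref{l:monotonicity} with the flatness of cubes of $S$ and take $\alpha=\alpha(M,\eta)$ small. Your refinements are sound and a little more careful than the paper's one-line argument:
\begin{itemize}
\item You cite \eqref{e:properties-of-S}, which also covers cubes of $S\setminus S'$, rather than \eqref{e:stoppig-time-BAUP}.
\item You handle, via \eqref{e:flat-at-the-top}, the case where $T$ is larger than every cube of $S$ through $x$, which the paper's choice of $Q$ passes over. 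Your bound there should read $\lesssim M\alpha$ rather than $\lesssim\alpha$, which is harmless.
\item Your separate dismissal of the $\cC_R$ family via Lemma \ref{l:smooth} (2) is valid but not needed.
\end{itemize}
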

    
    \begin{proof}
    	Let $x \in z(S) \cap T$ and (since $x$ is contained in arbitrarily small cubes from $S$) let $Q \in S$ such that $x \in Q$ and $\rho \pdiam(Q) \leq \pdiam(T) \leq \pdiam(Q)$. By the triangle inequality, $B(T) \subseteq M'B_Q$. Thus, recalling Lemma \ref{l:monotonicity} and \eqref{e:stoppig-time-BAUP}, we have $\bilatbeta(B(T)) \lesssim_M \alpha$. The lemma now follows so long as $\alpha = \alpha(M,\eta)$ is sufficiently small. 
    \end{proof}

    \begin{lemma}\label{l:z(S)}
    	We have 
    	\begin{align}
    		z(S) \subseteq E \cap Q_1 \setminus \bigcup_{T \in \cF} T.
    	\end{align}
    \end{lemma}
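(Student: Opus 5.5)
The inclusion splits into two parts. First, $z(S) \subseteq E \cap Q_1$. Second, no point of $z(S)$ lies in a cube of $\cF$. Both are read off from the construction of $S$ and $\cF$, using Lemma \ref{l:smooth} (2) and Lemma \ref{l:flat-near-floor}.

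For the first part, I would recall \eqref{e:S'} and \eqref{d:z}, which give $z(S) = \bigcup_{j \in J} z(S_j)$ with $z(S_j) \subseteq Q(S_j) \subseteq E$. The top cubes $Q(S_j) \in \cubes_\ell$ were chosen so that $10B_{Q(S_j)} \subseteq B(Q_1)$. By Lemma \ref{cubes} \eqref{cubes2}, $Q(S_j) \subseteq B_{Q(S_j)}$, and by Lemma \ref{l:Schul-cubes} (2), $B(Q_1) \cap E \subseteq Q_1$. Hence
\begin{align}
	z(S) \subseteq \bigcup_{j \in J} Q(S_j) \subseteq E \cap B(Q_1) \subseteq E \cap Q_1 .
\end{align}

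For the second part, fix $x \in z(S)$ and suppose, towards a contradiction, that $x \in T$ for some $T \in \cF$. By \eqref{e:cF}, $T$ is either a cube in $\cC(Q_1) \setminus \mathscr{S}$ with $\bilatbeta(B(T)) > \eta$, or a cube in $\cC_R$ for some $R \in {\min}_2(S) \subseteq \min(S)$.

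In the second case, Lemma \ref{l:family-of-cubes} gives $T \subseteq 10B_R$. Lemma \ref{l:smooth} (2) gives $z(S) \cap 10B_R = \emptyset$, which contradicts $x \in T$.

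In the first case, $z(S) \cap T \ni x$ is nonempty, so Lemma \ref{l:flat-near-floor} gives $\bilatbeta(B(T)) < \eta$. This contradicts $\bilatbeta(B(T)) > \eta$.

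In both cases $x \notin T$, so $z(S) \cap \bigcup_{T \in \cF} T = \emptyset$. Together with the first part, this gives the lemma.

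No step is a real obstacle. The only care needed is the bookkeeping that every member of the maximal family $\cF$ is of one of the two types in \eqref{e:cF}, and that the top cubes $Q(S_j)$ sit inside $Q_1$.
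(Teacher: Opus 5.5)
Your proposal is correct and follows the paper's proof essentially verbatim: the paper also uses Lemma~\ref{l:flat-near-floor} to rule out the cubes with $\bilatbeta(B(T)) > \eta$, so any $T \in \cF$ meeting $z(S)$ must lie in some $\cC_R$ with $R \in {\min}_2(S)$, and it then gets the contradiction from $T \subseteq 10B_R$ and Lemma~\ref{l:smooth}~(2). Your justification of $z(S) \subseteq E \cap Q_1$, via $10B_{Q(S_j)} \subseteq B(Q_1)$ and $E \cap B(Q_1) \subseteq Q_1$, simply fills in the step the paper dismisses as ``by definition''.
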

    
    \begin{proof}
    	By definition, $z(S) \subseteq E \cap Q_1$. Suppose, towards a contradiction, that there exists $T \in \cF$ such that $z(S) \cap T \neq\emptyset$. By Lemma \ref{l:flat-near-floor}, we have $\bilatbeta(B(T)) < \eta$. Thus, since $\cF$ is a subset of \eqref{e:cF}, there must exists $R \in \min_2(S)$ such that $T \in \cC_R$. However, by construction, $T \subseteq 10B_R$, and by Lemma \ref{l:smooth} (2), $z(S) \cap 10B_R = \emptyset$. This is a contradiction.
    \end{proof}

    For the next lemma we adopt the notation $\partial_\lambda Q = \{x \in Q: \dist(x,\partial Q) <\lambda \}$ for a given Christ-David cube $Q$.
    
    \begin{lemma}
    	We have 
    	\begin{align}\label{e:bad-control}
    		\pHmeasure^{\idim+2}\left( \Sigma \cap \bigcup_{R \in \mathcal{B}(S)} 10B_R \right) \lesssim \delta \pdiam(Q_1)^{\idim+2}. 
    	\end{align}
    \end{lemma}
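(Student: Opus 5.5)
The plan is to reduce the estimate to a boundary-layer (small boundary) estimate for the Christ--David cubes in $\min(S')$, and then pass from $E$ to $\Sigma$ using the local upper regularity of $\Sigma$ from Lemma \ref{l:properties-of-sigma}.

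\textbf{Step 1: localize near boundaries.} Fix $R \in \mathcal{B}(S)$ and write $\mathcal{A} = \mathcal{A}(R) \in \min(S')$. By Lemma \ref{l:smooth} (5), $\ell(R) < \tau \ell(\mathcal{A})$. Since $\pdist(R,\mathcal{A}^c) < 20\tau\ell(\mathcal{A})$, every point of $10B_R$ lies within $C\tau \ell(\mathcal{A})$ of $E\setminus \mathcal{A}$ (and of $\mathcal{A}$). Hence
\begin{align}
	\bigcup_{R \in \mathcal{B}(S)} 10B_R \subseteq \bigcup_{\mathcal{A} \in \min(S')} \mathcal{N}_{C\tau\ell(\mathcal{A})}\bigl(\partial_{C\tau\ell(\mathcal{A})}\mathcal{A}\bigr),
\end{align}
where $\mathcal{N}$ denotes a parabolic neighbourhood and $\partial_\lambda$ is the boundary layer introduced just before the statement.

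\textbf{Step 2: count with a Vitali cover.} Fix $\mathcal{A} \in \min(S')$. Cover the neighbourhood above by balls $10B_R$ with $R \in \mathcal{B}(S)$ and $R \subseteq \mathcal{A}$. These have comparable sizes when they overlap by Lemma \ref{l:smooth} (1), so by bounded overlap and a Vitali selection we may pass to a subfamily $\{R_i\}$ with $\{B_{R_i}\}$ disjoint and $\{50B_{R_i}\}$ still covering. Each $R_i \subseteq 10B_{R_i}$ has $\sigma(E\cap B_{R_i}) \gtrsim \ell(R_i)^{n+2}$ by the Ahlfors regularity of $E$. Moreover $B_{R_i}\cap E$ lies inside the $C\tau\ell(\mathcal{A})$-boundary layer of $\mathcal{A}$, enlarged to include the nearby part of the neighbouring cubes.

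By Lemma \ref{l:smooth} (5) and (1) the scales satisfy $\ell(R_i)\gtrsim \tau\,\dist$, so each $10B_{R_i}$ sits inside a slightly larger ball. On that ball, \eqref{e:sigma-locally-upper-reg} (applied through $10B_{R}$ for the minimal cube containing it, after monotonicity) gives
\begin{align}
	\pHmeasure^{n+2}(\Sigma \cap 50B_{R_i}) \lesssim \ell(R_i)^{n+2} \lesssim \sigma(E\cap B_{R_i}).
\end{align}
Summing over $i$, and then over $\mathcal{A}$ using that the boundary layers of distinct $\mathcal{A}$ overlap boundedly, we get
\begin{align}
	\pHmeasure^{n+2}\Bigl(\Sigma \cap \bigcup_{R\in\mathcal{B}(S)}10B_R\Bigr) \lesssim \sum_{\mathcal{A}\in\min(S')} \sigma\bigl(\{x\in E: \pdist(x,\mathcal{A}) \le C\tau\ell(\mathcal{A}),\ \pdist(x,E\setminus\mathcal{A}) \le C\tau\ell(\mathcal{A})\}\bigr).
\end{align}

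\textbf{Step 3: apply the small boundary property.} The small boundary property Lemma \ref{cubes} \eqref{cubes4} (applied to $\mathcal{A}$ and to its boundedly many neighbours of comparable size), with $\tau=\tau(\delta)$ chosen so that the layer has measure $\le \delta\,\sigma(\mathcal{A})$, bounds the right-hand side by $\delta \sum_{\mathcal{A}} \sigma(\mathcal{A})$. Since the $\mathcal{A}\in\min(S')$ are disjoint and lie in $B(Q_1)$, this is $\lesssim \delta\, \pdiam(Q_1)^{n+2}$, which is the claim.

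\textbf{Main obstacle.} The hardest part is Step 2, specifically the transfer from $\Sigma$ to $E$. Near a boundary point the relevant pieces of $\Sigma$ may come from balls $10B_R$ belonging to neighbouring minimal cubes, so one must check that \eqref{e:sigma-locally-upper-reg} and the comparability of neighbouring scales from Lemma \ref{l:smooth} (1) handle the overlaps correctly. If this proves delicate, I would instead cover each $10B_R$ directly by a single graph piece from Theorem \ref{t:DT} \eqref{DT6}, which gives $\pHmeasure^{n+2}(\Sigma\cap 10B_R)\lesssim \ell(R)^{n+2}$ without invoking \eqref{e:sigma-locally-upper-reg}.
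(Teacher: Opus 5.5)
Your ingredients are the paper's: the local upper regularity \eqref{e:sigma-locally-upper-reg}, Ahlfors regularity of $E$, $\ell(R)<\tau\ell(\mathcal{A}(R))$ from Lemma \ref{l:smooth} (5), and the small boundary property Lemma \ref{cubes} \eqref{cubes4} for the cubes of $\min(S')$. However, Step 2 as written does not go through, for two reasons.

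\begin{enumerate}
\item The estimate \eqref{e:sigma-locally-upper-reg} is only available for balls contained in some $10B_R$ with $R\in\min(S)$. Your Vitali enlargements $50B_{R_i}$ need not lie in any such ball, since the nearby minimal cubes only have size comparable to $\ell(R_i)$. So the bound $\pHmeasure^{\idim+2}(\Sigma\cap 50B_{R_i})\lesssim\ell(R_i)^{\idim+2}$ is not justified.
\item You compare with $\sigma(E\cap B_{R_i})$ rather than $\sigma(R_i)$, so you pick up an \emph{outer} layer outside $\mathcal{A}$. Your control of it through ``neighbours of comparable size'' fails if the neighbours are taken in $\min(S')$. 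The unsmoothed stopping cubes can sit next to much smaller ones, because Lemma \ref{l:smooth} (1) concerns $\min(S)$ only. The claimed bounded overlap of the layers of distinct $\mathcal{A}$ is likewise unjustified.
\end{enumerate}

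Both points could be patched, using same-generation cubes of $\cubes$ adjacent to $\mathcal{A}$ and a version of Lemma \ref{l:smooth} (1) for larger dilates, but this is unnecessary work.

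The observation you are missing is that the cubes of $\min(S)$ are pairwise disjoint, so no covering lemma is needed. Your fallback points the right way, but the bound you want there is exactly \eqref{e:sigma-locally-upper-reg} with $B=10B_R$; Theorem \ref{t:DT} \eqref{DT6} concerns $\Sigma_k$, not $\Sigma$. Subadditivity gives
\begin{align}
\pHmeasure^{\idim+2}\Bigl(\Sigma\cap\bigcup_{R\in\mathcal{B}(S)}10B_R\Bigr)\le\sum_{R\in\mathcal{B}(S)}\pHmeasure^{\idim+2}(\Sigma\cap 10B_R)\lesssim\sum_{R\in\mathcal{B}(S)}\ell(R)^{\idim+2}\lesssim\sum_{R\in\mathcal{B}(S)}\sigma(R),
\end{align}
where the last step uses the inner ball $E\cap B_{\Cinner\ell(R)}(X_R,t_R)\subseteq R$.

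Since $\pdist(R,E\setminus\mathcal{A}(R))<20\tau\ell(\mathcal{A}(R))$ and $\pdiam(R)\lesssim\ell(R)<\tau\ell(\mathcal{A}(R))$, each such $R$ lies in the \emph{inner} layer $\partial_{C\tau\ell(\mathcal{A}(R))}\mathcal{A}(R)$. Disjointness then bounds $\sum_R\sigma(R)$ by $\sum_{Q\in\min(S')}\sigma(\partial_{C\tau\ell(Q)}Q)$. Lemma \ref{cubes} \eqref{cubes4}, applied to $Q$ alone with $\tau=\tau(\delta)$, bounds this by $\delta\sum_Q\sigma(Q)\lesssim\delta\pdiam(Q_1)^{\idim+2}$.

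No neighbouring cubes, outer layers, or Vitali selection enter. This is the paper's proof.
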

    
    \begin{proof}
    	Recall from Lemma \ref{l:smooth} (5) that $\ell(R) < \tau \ell(\mathcal{A}(R))$ for all $R \in \min(S)$. In particular, for each $R \in \mathcal{B}(S)$ we have $R \subseteq \partial_{22\tau \ell(R)} \mathcal{A}(R)$. Choosing $\tau = \tau(\delta)$ small enough, Lemma \ref{cubes} and \eqref{e:sigma-locally-upper-reg} show that the left-hand side of \eqref{e:bad-control} is at most 
    	\begin{align}
    		\sum_{R \in \mathcal{B}(S)} \pHmeasure^{\idim+2}(\Sigma \cap 10B_R) &\lesssim_{C_0} \sum_{R \in \mathcal{B}(S)}  \pHmeasure^{\idim+2}(R) \leq \sum_{Q \in \min(S')} \pHmeasure^{\idim+2}(\partial_{22\tau\ell(Q)}Q)   \\
    		&\lesssim_{C_0} \delta  \pdiam(Q_1)^{\idim+2}.
    	\end{align}
    \end{proof}
    
    \begin{lemma}
    	We have 
    	\begin{align}\label{e:min1}
    		\pHmeasure^{\idim+2}\left(\Sigma \cap \bigcup_{R \in {\min}_1(S)} 10B_R\right) \lesssim \pHmeasure^{\idim+2}\left(E \cap Q_1 \setminus \bigcup_{T \in \cF} T \right).
    	\end{align}
    \end{lemma}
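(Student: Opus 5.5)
The plan is to bound each ball $10B_R$ with $R\in\min_1(S)$ by a piece of $E$ of comparable measure that lies in $Q_1$ but outside every cube of $\cF$. We then sum using bounded overlap.

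First, for $R\in\min_1(S)$ we have $\ell(R)<\tau\ell(\mathcal{A}(R))$ by Lemma \ref{l:smooth} (5). Since $R\notin\mathcal{B}(S)$, $R$ also stays at distance at least $20\tau\ell(\mathcal{A}(R))$ from $\mathcal{A}(R)^c$, and the second part of Lemma \ref{l:smooth} (5) then gives $\ell(R)\gtrsim\tau\ell(\mathcal{A}(R))$. So $\ell(R)\sim_\tau\ell(\mathcal{A}(R))$. Moreover, $\mathcal{A}(R)\in\min_1(S')$ has a child $R'$ with $\baupbeta(A'B_{R'})>\ve$. Let $R''$ be the Schul cube with $x_{R''}=x_{R'}$ in the same generation. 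By Remark \ref{r:A'}, $\baupbeta(AB(R''))>\ve$, so $R''$ (or a cube comparable to it) belongs to $\mathscr{S}$, that is, it lies inside some cube of $\mathscr{T}$. This gives a set $G_R:=E\cap B(R'')\subseteq E\cap Q_1$ with $\pHmeasure^{\idim+2}(G_R)\gtrsim_{C_0}\ell(\mathcal{A}(R))^{\idim+2}\gtrsim_\tau\ell(R)^{\idim+2}$. We need $B(R'')\subseteq Q_1$; this holds because $10B_{Q(S_j)}\subseteq B(Q_1)$.

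Second, we show $G_R$ is disjoint from every $T\in\cF$. By construction $\cF\subseteq\cC(Q_1)\setminus\mathscr{S}$, or its cubes come from some $\cC_{R_2}$ with $R_2\in\min_2(S)$. In the first case, a cube $T\notin\mathscr{S}$ meeting $R''\in\mathscr{S}$ must strictly contain the relevant cube of $\mathscr{T}$ (nesting, Lemma \ref{l:Schul-cubes} (1)). This is the delicate case. I would handle it by replacing $G_R$ with $E\cap\tilde T$ for the cube $\tilde T\in\mathscr{T}$ containing a point of $R''$. Those points are excluded from $\bigcup_{T\in\cF}T$ only if no $\cF$-cube contains $\tilde T$. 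I would then use maximality of $\cF$ together with the condition $\bilatbeta(B(T))>\eta$: any $\cF$-cube containing $\tilde T$ has diameter $\gtrsim\ell(\mathcal{A}(R))$ and meets $B_{\mathcal{A}(R)}$, where $E$ is $\alpha$-flat at comparable scales by \eqref{e:properties-of-S}. Lemma \ref{l:monotonicity} then gives $\bilatbeta\lesssim_M\alpha<\eta$, a contradiction, provided its size is at most $M'\ell(\mathcal{A}(R))$. Larger cubes must be excluded differently, and I expect this scale bookkeeping to be the main obstacle. For the second case, cubes in $\cC_{R_2}$ lie in $10B_{R_2}$ with $\mathcal{A}(R_2)\in\min_2(S')$. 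Since all children of such a cube are BAUP-good, we check, via the comparable-size property Lemma \ref{l:smooth} (1) and the $20\tau$ boundary separation, that $10B_{R_2}$ avoids $B(R'')$.

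Finally, we sum. The upper regularity \eqref{e:sigma-locally-upper-reg} gives $\pHmeasure^{\idim+2}(\Sigma\cap10B_R)\lesssim\ell(R)^{\idim+2}\lesssim_\tau\pHmeasure^{\idim+2}(G_R)$. Each $\mathcal{A}(R)$ has boundedly many associated $R$, since $\ell(R)\sim_\tau\ell(\mathcal{A}(R))$ and the balls have bounded overlap. The sets $G_R$ for distinct $\mathcal{A}(R)$ are disjoint, as they lie in disjoint cubes $\mathcal{A}(R)$. Hence
\[
\pHmeasure^{\idim+2}\Bigl(\Sigma\cap\bigcup_{R\in\min_1(S)}10B_R\Bigr)\lesssim_\tau\sum\pHmeasure^{\idim+2}(G_R)\lesssim\pHmeasure^{\idim+2}\Bigl(E\cap Q_1\setminus\bigcup_{T\in\cF}T\Bigr),
\]
with the implicit constant depending on $\tau=\tau(\delta)$, which is allowed.
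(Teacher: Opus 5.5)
Your overall route is the paper's: take the BAUP-bad child $R'$ of $\mathcal{A}(R)$ and the Schul cube $R''$ with the same centre. Show that $E\cap B(R'')$ avoids $\bigcup_{T\in\cF}T$, and sum. The summation bookkeeping differs slightly, but either version works. The paper sums over $Q\in\min_1(S')$ using $\sum_{\mathcal{A}(R)=Q}\pHmeasure^{\idim+2}(R)\le\pHmeasure^{\idim+2}(Q)$, which avoids your bounded-multiplicity count.

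\textbf{The gap.} It is exactly the step you flag. You are right that nesting alone does not exclude a cube $T$ of the left-most family in \eqref{e:cF} with $T\supsetneq R''$; the paper's proof passes over this in one line (``since $R\in\mathscr S$''). However, you only handle such $T$ with $\pdiam(T)\lesssim M'\ell(\mathcal{A}(R))$ and leave larger $T$ open. So disjointness from $\cF$, which is the substance of the lemma, is not established. Your proposed switch from $G_R$ to $E\cap\tilde T$ would also break the final summation. The cube $\tilde T\in\mathscr T$ need not lie in $\mathcal{A}(R)$ and may be shared by many $R$, so neither the disjointness nor the bounded multiplicity you use survives. It is also unnecessary.

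\textbf{How to close it.} Run your flatness argument along the whole ancestor chain of $\mathcal{A}(R)$, and beyond that use the top cube. Let $T\in\cC(Q_1)$ with $T\supseteq R''$. Then $\pdiam(T)\gtrsim\rho\,\ell(\mathcal{A}(R))$ and $T$ meets $\mathcal{A}(R)$. Every ancestor of $\mathcal{A}(R)$ up to $Q(S_j)$ lies in $S'$ and satisfies \eqref{e:properties-of-S}.
\begin{itemize}
\item If $\pdiam(T)\le\ell(Q(S_j))$, choose such an ancestor $Q^*$ with $\ell(Q^*)\sim_\rho\pdiam(T)$. Then $B(T)\subseteq M'B_{Q^*}$, and Lemma \ref{l:monotonicity} gives $\bilatbeta(B(T))\lesssim_{M,\rho}\alpha$.
\item If $\pdiam(T)>\ell(Q(S_j))$, then \eqref{e:choice-of-n} gives $r_{B(T)}\gtrsim\delta\,r_{B(Q_1)}$, while $B(T)\subseteq MB(Q_1)$. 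So \eqref{e:flat-at-the-top} gives $\bilatbeta(B(T))\lesssim (M/\delta)\cdot\delta\alpha=M\alpha$. The factor $\delta$ in \eqref{e:flat-at-the-top} exactly absorbs the loss coming from the choice of $\ell$.
\end{itemize}
With $\alpha=\alpha(M,\rho,\eta)$ small, as Remark \ref{r:constants} permits, both cases contradict $\bilatbeta(B(T))>\eta$. Hence no cube of the left-most family meets $R''$, and you can keep $G_R=E\cap B(R'')$.

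\textbf{Two smaller points.}
\begin{itemize}
\item Record $B(R'')\subseteq\Cinner B_{R'}$. This gives $G_R\subseteq R'\subseteq\mathcal{A}(R)$, which your disjointness claim in the summation needs.
\item For cubes in $\cC_{R_2}$ with $R_2\in\min_2(S)$, the relevant fact is simply $E\cap 10B_{R_2}\subseteq\mathcal{A}(R_2)$. This follows from $\ell(R_2)<\tau\ell(\mathcal{A}(R_2))$ and $R_2\notin\mathcal{B}(S)$. Combine it with $\mathcal{A}(R_2)\cap\mathcal{A}(R)=\emptyset$, since these are distinct minimal cubes. Lemma \ref{l:smooth} (1) plays no role here.
\end{itemize}
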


    \begin{proof}
    	Fix some $Q \in \min_1(S')$ for the moment. We claim that
    	\begin{align}\label{e:F(Q)}
    		\mbox{there exists $F(Q) \subseteq Q \setminus \bigcup_{T \in \cF} T$ such that $\pHmeasure^{\idim+2}(F(Q)) \gtrsim \pHmeasure^{\idim+2}(Q)$.}
    	\end{align}   
    	
    	We start by defining $F(Q)$. By the definition of $\min_1(S')$, there exists $Q' \in \Child(Q)$ such that $\baupbeta(A'B_{Q'}) > \ve$. Let $k \in \N$ such that $Q' \in \cubes_k$, let $R \in \mathscr{C}_k$ be the Schul cube with the same centre as $Q'$, and let $F(Q) \coloneqq E \cap B(R)$. After some definition chasing (i.e. comparing the constants $\Cinner$ and $\CSchulprime$), it is easy to check that $B(R) \subseteq \Cinner B_{Q'}$. This implies $F(Q) \subseteq Q$. 
    	
    	To see that $F(Q)$ is disjoint from all the cubes in $\cF$ we proceed as follows. Recalling Remark \ref{r:A'}, we have that $\baupbeta(AB(R)) > \ve$, i.e., $R \in \mathscr{S}.$ Since $R \in \mathscr{S}$, it cannot intersect any cube in the left-most set of \eqref{e:cF}. If $R' \in \min_2(S)$, it follows from Lemma \ref{l:smooth} (5), that $\ell(R') < \tau \ell(\mathcal{A}(R'))$. Moreover, since $R' \not\in \mathcal{B}(S)$, we have $\pdist(R',\mathcal{A}(R')) > 20\tau\ell(\mathcal{A}(R'))$. These two fact imply $\pdist(10B_{R'} , \mathcal{A}(R')^c) > 0$ which is enough to show that $R$ does not intersect any $T \in \cC_{R'}$ (since $T \subseteq 10B_{R'}$). 
    	
    	Finally, since by Lemma \ref{l:family-of-cubes} we have $\pdiam(R) \gtrsim \pdiam(Q)$, it follows by Ahlfors regularity that $\pHmeasure^{\idim+2}(E \cap B(R)) \gtrsim \pHmeasure^{\idim+2}(Q)$, which completes the proof of \eqref{e:F(Q)}. 
    	
    	Now, applying \eqref{e:T-upper-regular} and \eqref{e:F(Q)}, the left-hand side of \eqref{e:min1} is at most 
    	\begin{align}
    		\sum_{R \in {\min}_1(S)} \pHmeasure^{\idim+2}(\Sigma \cap 10B_R) &\lesssim_{C_0} \sum_{Q \in \min(S')} \sum_{\substack{R \in {\min}_1(S) \\ \mathcal{A}(R) = Q}} \pHmeasure^{\idim+2}(R) \lesssim_{C_0} \sum_{Q \in \min(S')} \pHmeasure^{\idim+2}(F(Q)) \\
    		&\leq \pHmeasure^{\idim+2}\left( E \cap Q_1 \setminus \bigcup_{T \in \cF} T \right) .
    	\end{align}
    \end{proof}
    
    \begin{lemma}\label{l:min-2}
    	There exists $\Cmintwo > 1$ such that 
    	\begin{align}\label{e:Claim3}
    		\hspace{1em}\pHmeasure^{\idim+2}\left(\Sigma \cap \bigcup_{R \in {\min}_2(S)} 10B_R\right) \leq (1+\Cmintwo\alpha) \sum_{T \in \cF'} \pdiam(T)^{\idim+2} + \Cmintwo\delta \pdiam(Q_1)^{\idim+2},
    	\end{align}
    	where 
    	\begin{align}
    		\cF' = \{T \in \cF \colon \mbox{ there exists } R \in {\min}_2(S) \mbox{ such that } T \in \cC_R\}.
    	\end{align}
    \end{lemma}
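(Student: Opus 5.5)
We split the left-hand side of \eqref{e:Claim3} according to Lemma \ref{l:family-of-cubes}. For each $R \in {\min}_2(S)$ we write
\begin{align}
	\Sigma \cap 10B_R \subseteq \Big( \Sigma \cap 10B_R \setminus \bigcup_{T \in \cC_R} T\Big) \cup \bigcup_{T \in \cC_R} (\Sigma \cap T).
\end{align}
By \eqref{e:covers-most}, the first piece has measure at most $\delta \pdiam(R)^{\idim+2}$. By \eqref{e:T-upper-regular}, each $T \in \cC_R$ contributes at most $(1+C\alpha)\pdiam(T)^{\idim+2}$. Summing over $R \in {\min}_2(S)$ will therefore give
\begin{align}
	\pHmeasure^{\idim+2}\Big(\Sigma \cap \bigcup_{R \in {\min}_2(S)} 10B_R\Big) \leq \delta \sum_{R \in {\min}_2(S)} \pdiam(R)^{\idim+2} + (1+C\alpha)\,\pHmeasure^{\idim+2}\Big(\Sigma \cap \bigcup_{R\in{\min}_2(S)}\bigcup_{T \in \cC_R} T\Big).
\end{align}
The first term is handled by Ahlfors regularity of $E$ together with the bounded overlap of the balls $10B_R$, $R \in \min(S)$. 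This overlap follows from Lemma \ref{l:smooth} (1), since intersecting balls $10B_R$ have comparable radii and the cubes $R$ are disjoint. We then get $\sum_{R} \pdiam(R)^{\idim+2} \lesssim_{C_0} \sum_R \sigma(R) \leq \sigma(Q_1) \lesssim \pdiam(Q_1)^{\idim+2}$, so this term is $\lesssim \delta\pdiam(Q_1)^{\idim+2}$.

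The second term is the main step. We need to replace the union over all $T \in \bigcup_R \cC_R$ by a union over the maximal ones, namely those lying in $\cF'$. By the definition \eqref{e:cF} of $\cF$ as a maximal collection, every $T \in \cC_R$ with $R \in {\min}_2(S)$ is contained in some $T^* \in \cF$. Two cases arise. If $T^*$ comes from some $\cC_{R'}$ with $R' \in {\min}_2(S)$, then $T^* \in \cF'$. Otherwise $T^*$ belongs to the left-most family in \eqref{e:cF}, so $\bilatbeta(B(T^*)) > \eta$. But then Lemma \ref{l:flat-above-min}, applied to $T \subseteq T^*$, gives $\bilatbeta(B(T^*)) < \eta$, a contradiction. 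Hence
\begin{align}
	\bigcup_{R\in{\min}_2(S)}\bigcup_{T \in \cC_R} T \subseteq \bigcup_{T^* \in \cF'} T^*.
\end{align}
Since the cubes of $\cF'$ are disjoint, the second term is at most $(1+C\alpha)\sum_{T^* \in \cF'}\pHmeasure^{\idim+2}(\Sigma\cap T^*)$. Each $T^* \in \cF'$ lies in some $\cC_{R'}$, so \eqref{e:T-upper-regular} applies to it and bounds this by $(1+C\alpha)^2 \sum_{T \in \cF'} \pdiam(T)^{\idim+2}$.

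Combining the two estimates and absorbing $(1+C\alpha)^2 \leq 1+C'\alpha$ yields \eqref{e:Claim3} with $\Cmintwo$ depending only on $C_0$, $\idim$ and the constant in \eqref{e:T-upper-regular}. The point requiring care is the inclusion into $\bigcup \cF'$. It relies on the dichotomy built into \eqref{e:cF} and on Lemma \ref{l:flat-above-min}, which rules out the non-flat branch absorbing any $\cC_R$-cube. We also have to keep the final sum over the disjoint maximal cubes $\cF'$, so that nested cubes are not double counted.
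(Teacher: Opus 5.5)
Your proof is correct and is essentially the paper's argument: the key inclusion $\bigcup_{T\in\cC_R}T\subseteq\bigcup_{T\in\cF'}T$ comes from maximality of $\cF$ and Lemma \ref{l:flat-above-min}. It is followed by \eqref{e:T-upper-regular} on the cubes of $\cF'$, \eqref{e:covers-most} on the remainder, and $\sum_{R\in{\min}_2(S)}\pdiam(R)^{\idim+2}\lesssim\pdiam(Q_1)^{\idim+2}$. The only differences are cosmetic: the paper splits directly along $\bigcup_{T\in\cF'}T$ rather than $\bigcup_{T\in\cC_R}T$, which avoids your harmless extra factor $(1+C\alpha)$, and your bounded-overlap remark is unnecessary since disjointness of the minimal cubes already suffices.
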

    
    \begin{proof}
    	If $R \in \min_2(S)$ and $T \in \cC_R$, then it follows by maximality and Lemma \ref{l:flat-above-min} that there exists some $R' \in \min_2(S)$ and $T' \in \cC_{R'} \cap \cF \subseteq \cF'$ such that $T \subseteq T'$. In particular, 
    	\begin{align}
    		\bigcup_{T \in \cC_R} T\subseteq \bigcup_{T \in \cF'} T \quad \mbox{ for any } \quad R \in {\min}_2(S). 
    	\end{align}
    	Thus, by \eqref{e:T-upper-regular} and \eqref{e:covers-most}, the left-hand side of \eqref{e:Claim3} is at most 
    	\begin{align}
    		&\pHmeasure^{\idim+2}\left(\Sigma \cap \bigcup_{T \in \cF'} T\right) + \pHmeasure^{\idim+2}\left(\Sigma \cap \bigcup_{R \in {\min}_2(S)} 10B_R \setminus \bigcup_{T \in \cF'} T\right) \\
    		&\hspace{3em} \leq (1+C\alpha) \sum_{T \in \cF'} \pdiam(T)^{\idim+2} + \sum_{R \in {\min}_2(S)} \pHmeasure^{\idim+2}\left(\Sigma \cap 10B_R \setminus \bigcup_{T \in \cF'} T\right) \\
    		&\hspace{3em} \leq (1+C\alpha)\sum_{T \in \cF'} \pdiam(T)^{\idim+2} + \delta \sum_{R \in {\min}_2(S)} \pdiam(R)^{\idim+2}\\
    		&\hspace{3em} \leq (1+C\alpha)\sum_{T \in \cF'} \pdiam(T)^{\idim+2} + C\delta \pdiam(Q_1)^{\idim+2}.
    	\end{align}
    \end{proof}
    
    \begin{lemma}\label{l:nearby-non-flat}
    	For each $T \in \cF'$, there is a ball $B$ centered on $E$ such that $T \subseteq KB$, $\pdiam(T) \geq K^{-1}\pdiam(B)$ and
    	\begin{align}\label{e:nearby-not-flat}
    		\bilatbeta(MB) > \alpha, \quad \bilatbeta(2MB) \lesssim \alpha \quad \mbox{ and } \quad \baupbeta(A'B) \lesssim_\delta \ve, 
    	\end{align}
    	for some constant $K$ independent of both $T$ and $\alpha$.
    \end{lemma}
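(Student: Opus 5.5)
The plan is to trace each $T \in \cF'$ back to the minimal cube of $S'$ above it and read off the required ball from the stopping rule \eqref{e:stoppig-time-BAUP}. Fix $T \in \cF'$, so $T \in \cC_R$ for some $R \in \min_2(S)$. Let $Q \coloneqq \mathcal{A}(R) \in \min_2(S')$. By the definition of $\min_2(S')$, every child $Q'$ of $Q$ satisfies $\baupbeta(A'B_{Q'}) \leq \ve$. Since $Q$ is minimal in $S'$, the stopping condition \eqref{e:stoppig-time-BAUP} must fail for some child. Hence there is $Q' \in \Child(Q)$ with $\bilatbeta(MB_{Q'}) > \alpha$.

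We take $B \coloneqq B_{Q'}$, which is centered on $E$ at $x_{Q'}$. The three conditions in \eqref{e:nearby-not-flat} are then checked one at a time.
\begin{itemize}
\item The first holds by the choice of $Q'$.
\item For the third, we have $\baupbeta(A'B) \leq \ve$ directly from $Q \in \min_2(S')$.
\item For the second, note that $Q \in S'$. If $Q$ is not a top cube $Q(S_j)$, then \eqref{e:properties-of-S} (or directly \eqref{e:stoppig-time-BAUP} applied to $Q$ as a child of its parent) gives $\bilatbeta(M'B_Q) \lesssim \alpha$. Since $2MB_{Q'} \subseteq C MB_Q$ with $r_{2MB_{Q'}} \sim \rho\, r_{MB_Q}$, Lemma \ref{l:monotonicity} gives $\bilatbeta(2MB) \lesssim \alpha$, with the $\rho$-dependence absorbed since $M = M(\rho)$ and the ratio of radii is bounded. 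If instead $Q = Q(S_j)$, we use \eqref{e:flat-at-the-top} together with \eqref{e:choice-of-n} and Lemma \ref{l:monotonicity}, exactly as in the proof of \eqref{e:properties-of-S}.
\end{itemize}

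It remains to check the geometric comparisons $T \subseteq KB$ and $\pdiam(T) \geq K^{-1}\pdiam(B)$, with $K$ independent of $\alpha$. Since $T \subseteq 10B_R$ and $R \subseteq Q$, we have $T \subseteq 10 B_Q \subseteq C\rho^{-1}B_{Q'}$. So it suffices that $\ell(R) \gtrsim \ell(Q)$ and $\pdiam(T) \geq \Cind \pdiam(R)$, the latter being \eqref{e:T-is-large}. For the former, $R \notin \mathcal{B}(S)$ means $\pdist(R, \mathcal{A}(R)^c) \geq 20\tau\ell(Q)$. Lemma \ref{l:smooth} (5) with $\delta$ replaced by $20\tau$ then gives $\ell(R) \gtrsim \tau^2 \ell(Q)$. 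Thus $K$ depends only on $\tau$, $\rho$, $\Cind$ and absolute constants. These are $\tau(\delta)$ and $\rho(\delta)$, with $\Cind$ independent of $\alpha$ by Lemma \ref{l:family-of-cubes}, so $K$ is independent of $\alpha$ and of $T$.

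The main obstacle is the $\alpha$-independence of $K$ and, relatedly, applying Lemma \ref{l:smooth} (5) correctly. The lower bound on $\ell(R)$ relative to $\ell(\mathcal{A}(R))$ must come only from the $\tau$-smoothing and the exclusion of $\mathcal{B}(S)$, not from anything involving $\alpha$. A secondary point is ensuring $2MB \subseteq M'' B_Q$ for a multiple $M''$ for which \eqref{e:properties-of-S} is valid. If the constant $M'$ there is too small, we would instead rerun the proof of \eqref{e:properties-of-S} with $M$ enlarged (using $M = M(\rho)$ and $A = A(M)$). The factor-$2$ enlargement and the $\rho$-scale gap only cost constants absorbed into $\lesssim \alpha$.
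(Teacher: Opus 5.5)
Your proposal is correct and follows essentially the same route as the paper. You take $Q=\mathcal{A}(R)\in\min_2(S')$ and choose a child $Q'$ at which the stopping rule \eqref{e:stoppig-time-BAUP} fails; this must happen through $\bilatbeta(MB_{Q'})>\alpha$, since $Q\in\min_2(S')$. You then set $B=B_{Q'}$ and obtain the size comparisons from Lemma \ref{l:smooth} (5), using $R\notin\mathcal{B}(S)$, together with \eqref{e:T-is-large}. Your bookkeeping is slightly more careful than the paper's, which calls the containment constant $K_1$ absolute although it is really of order $\rho^{-1}$; as you note, only independence from $\alpha$ matters.
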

    
    \begin{proof}
    	Fix some $T \in \cF'$. Let $R \in \min_2(S)$ such that $T \in \cC_R$ and let $Q \coloneqq \mathcal{A}(R) \in \min_2(S')$. By the definition of $\min_2(S')$ and an application of Lemma \ref{l:monotonicity} and \eqref{e:properties-of-S}, there exists $Q' \in \Child(Q)$ such that, if $B \coloneqq B_{Q'}$, then $B$ satisfies the estimates in \eqref{e:nearby-not-flat}. Since $T \subseteq 10B_R$, $R \subseteq Q$ and $Q' \in \Child(Q)$, it follows from the triangle inequality that there exists an absolute constant $K_1 > 1$ such that $T \subseteq K_1B$. Since $R \not\in \mathcal{B}(S)$, we have by Lemma \ref{l:smooth} (5) that $\pdiam(R) \gtrsim_\tau \pdiam(Q) \gtrsim \pdiam(Q')$. In particular, recalling from Lemma \ref{l:family-of-cubes} that $\pdiam(T) \geq \Cind \pdiam(R)$, we have $\pdiam(T) \geq \pdiam(B)/K_2$ for some $K_2 \geq 1$ independent of $\alpha$. Taking $K \coloneqq \max\{K_1,K_2\}$ finishes the proof.
    \end{proof}

    \begin{lemma}\label{l:cF'-not-flat}
	For each $T \in \cF'$ we have $\bilatbeta(M''B(T)) \sim \alpha$ and $\baupbeta(A'''B(T)) \lesssim_\delta \ve$ where 
	\begin{align}
		M'' = M/K \quad \mbox{ and } \quad A''' \coloneqq A/K
	\end{align}
	and $K$ is the constant from Lemma \ref{l:nearby-non-flat}.
\end{lemma}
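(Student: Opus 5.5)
The plan is to derive both estimates directly from Lemma \ref{l:nearby-non-flat}, using monotonicity (Lemma \ref{l:monotonicity}) in each direction. Fix $T \in \cF'$ and let $B$ be the ball given by Lemma \ref{l:nearby-non-flat}. Then $T \subseteq KB$ and $\pdiam(T) \geq K^{-1}\pdiam(B)$. By Lemma \ref{l:Schul-cubes} (2) the radius of $B(T)$ is comparable to $\pdiam(T)$, with absolute constants. Hence $r_B \lesssim K r_{B(T)}$, and the centre of $B(T)$ lies in $KB$.

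For the upper bounds, enlarge $B(T)$ to a ball that fits inside $2MB$ and $A'B$. Since $B(T) \subseteq T \subseteq KB$, we get $M''B(T) = (M/K)B(T) \subseteq 2MB$, provided $M$ is large compared to $K$ and the absolute constants. Then Lemma \ref{l:monotonicity} gives
\[
\bilatbeta(M''B(T)) \lesssim \frac{2M r_B}{(M/K) r_{B(T)}}\, \bilatbeta(2MB) \lesssim K^2 \alpha \lesssim \alpha ,
\]
because $K$ is independent of $\alpha$. The same argument with $A'''B(T) \subseteq A'B$ (recall $A' = A/148$), together with the third bound in \eqref{e:nearby-not-flat}, gives $\baupbeta(A'''B(T)) \lesssim_\delta \ve$. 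Any harmless adjustment of $K$ by an absolute factor can be absorbed into the definitions of $M''$ and $A'''$.

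For the lower bound $\bilatbeta(M''B(T)) \gtrsim \alpha$, run monotonicity the other way. Show that $MB \subseteq C M''B(T)$ for a constant $C$ depending only on $K$. This holds because the centres of $B$ and $B(T)$ are within $K r_B$ of each other and $r_B \le K\pdiam(T) \lesssim K r_{B(T)}$. Then
\[
\alpha < \bilatbeta(MB) \lesssim_K \bilatbeta(CM''B(T)) .
\]
One must pass from the dilate $CM''B(T)$ back to $M''B(T)$. The cleanest way is to fold the factor $C$ into the definition of $K$ at the outset: replace $K$ by $CK$, so that $MB \subseteq M''B(T) \subseteq 2MB$. Then both inequalities follow from monotonicity with constants depending only on $K$, giving $\bilatbeta(M''B(T)) \sim \alpha$.

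The main obstacle is this two-sided containment, $MB \subseteq M''B(T) \subseteq 2MB$ up to $K$-dependent constants. The ratio $M/M''$ must be exactly large enough to swallow the offset between the centres of $B$ and $B(T)$ (of size $\lesssim K r_B$), yet small enough to stay within the factor $2$ margin allowed by $\bilatbeta(2MB) \lesssim \alpha$. Since $M$ is chosen large depending on $\rho$, and $K$ is absolute (independent of $\alpha$ and $M$), the offset $K r_B$ is negligible compared to $M r_B$. So the containment should hold after choosing $M'' = M/K$ with $K$ enlarged by an absolute factor. This is routine but requires care with the constants.
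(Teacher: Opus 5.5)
Your upper bounds are fine and match the paper. After enlarging $K$ by an absolute factor, $M''B(T)\subseteq 2MB$ and $A'''B(T)\subseteq A'B$, so Lemma \ref{l:monotonicity} and \eqref{e:nearby-not-flat} give $\bilatbeta(M''B(T))\lesssim_K\alpha$ and $\baupbeta(A'''B(T))\lesssim_\delta\ve$.

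The lower bound, however, does not go through. Lemma \ref{l:monotonicity} moves upper bounds from a larger ball to a smaller one, and lower bounds only from a smaller ball to a larger one. From $MB\subseteq CM''B(T)$ you get $\bilatbeta(CM''B(T))\gtrsim_K\alpha$, which says nothing about the smaller ball $M''B(T)$. Your repair, $MB\subseteq M''B(T)\subseteq 2MB$ with one fixed $M''$, cannot work. It essentially forces $M''r_{B(T)}$ to lie between $Mr_B$ and $2Mr_B$, up to the $O(Kr_B)$ offset of the centres. That pins $r_{B(T)}/r_B$ to within a factor $2$, whereas Lemma \ref{l:nearby-non-flat} only gives $K^{-1}\lesssim r_{B(T)}/r_B\lesssim K$. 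So the obstruction is the ratio of radii, not the centre offset. With $M''=M/K$, the inclusion $MB\subseteq M''B(T)$ would need $r_{B(T)}\gtrsim Kr_B$. Replacing $K$ by $CK$ shrinks $M''B(T)$ rather than enlarging it. Letting the factor depend on $T$, say $M''_T r_{B(T)}=\tfrac32 Mr_B$, does give the two-sided containment, but then you prove non-flatness of a ball comparable to $MB$, not of $M''B(T)$.

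In fact the facts your argument uses ($\bilatbeta(MB)>\alpha$, $\bilatbeta(2MB)\lesssim\alpha$, and the relative position of the balls) cannot give the lower bound when $M''B(T)$ is much smaller than $MB$. A smooth graph that is exactly affine on $M''B(T)$ and bends by a suitable multiple of $\alpha Mr_B$ elsewhere in $MB$ satisfies all of them, yet $\bilatbeta(M''B(T))=0$.

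The missing ingredient is the BAUP bound $\baupbeta(A'B)\lesssim_\delta\ve$ with $\ve\ll\alpha$. It must be used as a rigidity statement, since it is exactly what excludes the bending example, not merely passed down as an upper bound. The paper argues by contradiction:
\begin{enumerate}
\item Suppose $\bilatbeta(M''B(T),P_T)<\kappa\alpha$. On $2MB$, $E$ is close to a finite union $U$ of vertical planes, with $d_{2MB}(E,U)\lesssim_\delta A'\ve$.
\item Every $P\in\mathscr{P}_B$ meeting $(4/3)MB$ satisfies $d_{(5/4)MB}(P,P_B)\lesssim_\delta\alpha+A'\ve$. Since $\alpha$ and $\ve$ are small depending on $K$ and $M$, such a $P$ passes through $\tfrac12B(T)$.
\item Combining the assumed flatness with the bound on $d_{2MB}(E,U)$ gives $d_{(M''/2)B(T)}(P,P_T)\lesssim\kappa\alpha$.
\item Both $P$ and $P_T$ pass near the centre of $B(T)$, and $2MB$ lies in a $K$-dependent dilate of $(M''/2)B(T)$. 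So this upgrades to $d_{2MB}(P,P_T)\lesssim_K\kappa\alpha$, and hence $d_{MB}(E,P_T)\lesssim_K\kappa\alpha+A'\ve$.
\item This contradicts $\bilatbeta(MB)>\alpha$ once $\kappa$ is small and $\ve\ll\alpha$.
\end{enumerate}
This upward propagation of flatness through the union-of-planes structure is what your proposal needs, and monotonicity cannot supply it.
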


\begin{proof}
	Fix some $T \in \cF'$ and let $B$ and $K$ (independent of $\alpha$) be the ball and the constant obtained by applying Lemma \ref{l:nearby-non-flat} to $T$. By translating, we can assume that $x_B = 0$. Since $B(T) \subseteq KB$, it is immediate from Lemma \ref{l:monotonicity}, \eqref{e:properties-Q} and \eqref{e:nearby-not-flat} that $\bilatbeta(M''B(T)) \lesssim \alpha$ and $\baupbeta(A'''B(T)) \lesssim_\delta \ve$. Thus, we need only check that $\bilatbeta(M''B(T)) \gtrsim \alpha$. For this, let $\kappa > 0$ (to be chosen sufficiently small) and suppose towards a contradiction that there exists a $d$-plane $P_T$ such that
	\begin{align}\label{e:flat-ball}
		\bilatbeta(M''B(T),P_T) < \kappa \alpha.
	\end{align}
	By the properties of $B$ (from \eqref{e:nearby-not-flat}), there exists a vertical plane $P_B\in\verticalplanes$ through $x_B$ such that 
	\begin{align}\label{e:flat-in-B}
		d_{2MB}(E,P_B) \lesssim \alpha 
	\end{align}
	and a collection of vertical planes $\mathscr{P}_B \subseteq\verticalplanes$ such that, if $U \coloneqq \bigcup_{P \in \mathscr{P}_B} P$, then 
	\begin{align}\label{e:small-beta-U}
		d_{2MB}(E,U) \leq (A'/M) d_{A'B}(E,U) \lesssim_\delta A'\ve. 
	\end{align} 
	Since $A'=A/148$, we take $A\geq296M$, so that $A'B\supseteq2MB$. Choose $\alpha$ sufficiently small in \eqref{e:flat-in-B}, and then $\ve$ sufficiently small in \eqref{e:small-beta-U}, that $d_{2MB}(E,P_B)\leq1/4$ and $d_{2MB}(E,U)\leq1/4$, as permitted by Remark \ref{r:constants}. A point of $U\cap(3/2)MB$ or $P_B\cap(3/2)MB$ has distance at most $Mr_B/4$ from $E$, whereas its distance from every point outside $2MB$ is at least $Mr_B/2$. Thus its distance to $E\cap2MB$ equals its distance to $E$. Applying the triangle inequality in both directions and the definition of $d_{(3/2)MB}$ gives
	$$d_{(3/2)MB}(U,P_B)\leq\frac{4}{3}(d_{2MB}(E,U)+d_{2MB}(E,P_B)).$$
	Together with \eqref{e:flat-in-B} and \eqref{e:small-beta-U}, this yields 
	\begin{align}\label{e:U-P_B}
		d_{(3/2)MB}(U,P_B) \lesssim_\delta \alpha + A'\ve. 
	\end{align}

	 Fix some $P \in \mathscr{P}_B$ such that $P \cap (4/3)MB \neq\emptyset$. Denote by $P' , P_{B}',P_T' \subseteq \R^{\adim}$ the $\idim$-dimensional planes such that 
	 \[P = P' \times \R, \quad P_B = P_B' \times \R \quad \mbox{and} \quad P_T = P_T' \times \R. \]
	
	\noindent\textbf{Claim 1:} We have 
	\begin{align}\label{e:close-to-P_B}
		d_{(5/4)MB}(P,P_B) \lesssim_{\delta} \alpha + A'\ve. 
	\end{align}
	
	Indeed, that every point $x \in P \cap (5/4)MB$ satisfies $\pdist(x,P_B) \lesssim_{\delta} (\alpha + A'\ve)r_B$ follows immediately from \eqref{e:U-P_B}. To see that every point $y \in P_B \cap (5/4)MB$ satisfies $\pdist(x,P) \lesssim_{\delta} (\alpha + A'\ve)Mr_B$, we proceed as follows. First, since the planes are $t$-independent, it suffices to show prove \eqref{e:close-to-P_B} with $(P,P_B)$ replaced by $(P',P_B')$. For this, fix some $X_0 \in P' \cap (4/3)MB$ (which exists since $P \cap (4M/3)B \neq\emptyset$) and let $\{X_1,\dots,X_{\idim-1}\} \subseteq P' \cap (3/2)MB$ such that the vectors $\{X_i-X_0\}_i$ are mutually orthogonal in and $|X_i - X_0| \sim Mr_B$.  By \eqref{e:U-P_B}, there are corresponding points $Y \coloneqq\{Y_0,\dots,Y_{\idim-1}\} \subseteq P_B'$ such that $|X_i - Y_i| \lesssim_{\delta} (\alpha + A'\ve)Mr_B$. For $\alpha = \alpha(\delta)$ and $\ve = \ve(A',\delta)$ sufficiently small, this set satisfies
	\begin{align}
		\diam(Y) \sim Mr_B \quad \mbox{ and } \quad \min\{\dist(Y_i,\mbox{span} \{Y \setminus \{Y_i\}\})\} /\diam(Y') \sim 1. 
	\end{align}
	Since $\pdist(Y_i,P') \lesssim_{\delta} (\alpha + A'\ve)Mr_B$ for each $i \in \{0,\dots,\idim-1\}$, this give the desired conclusion by, for example, \cite[Lemma 6.4]{azzam2015characterization}. \\
	
	\noindent\textbf{Claim 2:} We have 
	\begin{align}
		P \cap (1/2)B(T) \neq\emptyset. 
	\end{align}
	
	Indeed, assuming $M$ is large enough such that $B(T) \subseteq MB$, it follows from \eqref{e:flat-in-B} that for $\alpha = \alpha(K,M)$ small enough we have 
	\begin{align}
		\pdist(x_{T},P_B) \lesssim \alpha M'r_B \lesssim \alpha KM\pdiam(T) \leq \pdiam(T)/100. 
	\end{align}
	If $y$ is the point in $P_B$ which is closest to $x_T$, then we have $y \in P_B \cap (5/4)MB$ for $M$ sufficiently large. Thus, by \eqref{e:close-to-P_B} and assuming $\alpha = \alpha(\delta)$ and $\ve = \ve(A',\ve)$ are small enough, there exists $z \in P$ such that $|y-z| \leq \pdiam(T)/100$. The claim now follows by the triangle inequality. \\
	
	\noindent\textbf{Claim 3:} We have  
	\begin{align}
		d_{(M''/2)B(T)}(P,P_T) \lesssim \kappa\alpha. 
	\end{align}
	
	Indeed, for $\ve = \ve(\alpha,\kappa,A',K)$ small enough, it follows directly from \eqref{e:flat-ball} and \eqref{e:small-beta-U} that every $y \in P \cap (M''/2)B(T) \subseteq P \cap 2MB$ satisfies $\pdist(y,P_T) \lesssim M'' \kappa \alpha$ . The other direction is similar to the proof of Claim 1. As in that case, we will consider instead the planes $P'$ and $P_T'$. Let $\{X_0,\dots,X_{\idim-1}\} \subseteq P'\cap (3M''/4)B(T)$ such that $\{X_i - X_0\}_i$ are mutual orthogonal and $|X_i - X_0| \sim M''\pdiam(T)$ (such a collection of points exists by Claim 2). By \eqref{e:small-beta-U} and \eqref{e:flat-ball} there exist a collection of corresponding points $Y = \{Y_0,\dots,Y_{\idim-1}\} \subseteq P_T$ with $|X_i - Y_i| \lesssim \kappa\alpha M''\pdiam(T)$ for all $i \in \{0,\dots,\idim-1\}$, such that 
	\begin{align}
		\diam(Y) \sim M'' \diam(T)\quad \mbox{ and } \quad  \min\{\dist(Y_i,\mbox{span}\{ Y \setminus \{Y_i\}\})\} /\diam(Y) \sim 1.
	\end{align}
	The estimate follows again from \cite[Lemma 6.4]{azzam2015characterization}. \\
	
	Since $\pdiam(T) \gtrsim \pdiam(B)$, there exists $\lambda \geq 1$ such that $2MB \subseteq \lambda (M''/2)B(T)$. It is then easy to check that Claim 3 implies 
	\begin{align}\label{e:last-angle}
		d_{2MB}(P,P_T) \lesssim \kappa \alpha.
	\end{align}
	
	\noindent\textbf{Claim 4:} We have 
	\begin{align}
		d_{MB}(E,P_T) \lesssim \kappa\alpha + A'\ve . 
	\end{align}
	
	For the first direction, fix $y \in E \cap MB$. By \eqref{e:small-beta-U}, there exists $P \in \mathscr{P}_B$ and $z \in P \cap (5/4)MB$ such that $|y-z| \lesssim \ve A' r_B$. By \eqref{e:last-angle}, we have $\pdist(z,P_T) \lesssim M\kappa \alpha$. Thus, $\pdist(y,P_T) \lesssim (\kappa \alpha + A'\ve)Mr_B$. For second direction, fix some $x \in P_T \cap MB$. By \eqref{e:U-P_B}, there is some $P' \in \mathscr{P}_B$ such that $P' \cap (4/3)MB \neq\emptyset$. By \eqref{e:last-angle}, there exists $w \in P'$ such that $|x-w| \lesssim M\kappa \alpha r_B$. By \eqref{e:small-beta-U}, there is $v \in E$ such that $|v-w| \lesssim A'\ve r_B$. These estimates imply $\pdist(x,E) \lesssim (\kappa \alpha + A'\ve)Mr_B$. \\

    Claim 4 contradicts the first property in \eqref{e:nearby-not-flat} if $\kappa \ll 1$ and $\ve \ll \alpha$ (depending on $A'$) This finishes the proof of the lemma.
    \end{proof}
    
    \noindent\textbf{The set $\cN$.} We are now ready to define the set $\cN$, after which, we will complete the proof of Lemma \ref{l:next-when-flat}. Define
    \begin{align}
    	\cN \coloneqq \bigcup_{T \in \cF} \cN(T) 
    \end{align}
    as follows. First, set 
    \begin{align}\label{e:N(T)-1}
    	\cN(T) = \{T\} \quad \mbox{ for all } \quad T \in \cF \setminus \cF'. 
    \end{align}
    For $T \in \cF'$, let $\cN'(T)$ be the set of Schul cubes obtained by applying Lemma \ref{l:lots-of-flat-children} to the ball $B(T)$, $\cN''(T) \coloneqq \cN'(T) \setminus \mathscr{S}$, and $\cN'''(T)$ be the maximal collection of Schul cubes contained in $T$ which are disjoint from the cubes in $\cN'(T)$. Then, let 
    \begin{align}
    	\cN(T) \coloneqq \cN''(T) \cup \cN'''(T).
    \end{align}
    
    \begin{proof}[Proof of Lemma \ref{l:next-when-flat}]
    	Recalling that $\cF$ is chosen to be the maximal collection of cube in \eqref{e:cF}, Lemma \ref{l:next-when-flat} (1) follows immediately by maximality. The proof of Lemma \ref{l:next-when-flat} (2) is similar to the proof of Lemma \ref{l:next-when-not-flat} (2). Fix $T \in \cC(Q_1) \setminus \mathscr{S}$ as in the statement, and let $Q \in \cF$ such that $T \subseteq Q$ (which exists by Lemma \ref{l:next-when-flat} (1)). It must be that $Q \in \cF'$ otherwise $Q \in \cN$ by \eqref{e:N(T)-1} (and $Q$ cannot be in $\cN$ by assumption). Since $\cN'''(Q)$ is the maximal collection of cubes in $Q$ which is disjoint from $\cN'(Q)$, and $T$ is not contained in any cube from $\cN'''(Q)$ (by assumption), it must that $T$ has non-empty intersection with some cube $T' \in \cN'(Q)$. Arguing as in the proof of Lemma \ref{l:next-when-not-flat}, we must $T' \subseteq T$. By Lemma \ref{l:lots-of-flat-children}, we know that every cube in $\cN'(Q)$ has diameter comparable to $\pdiam(Q)$. The same must now be true of $T$, which completes the proof of Lemma \ref{l:next-when-flat} (2). 
    	
    	We finish by showing Lemma \ref{l:next-when-flat} (3). First, by Lemma \ref{l:big-projection-RF} (recall that $\Sigma$ is $C\alpha$-Reifenberg flat), \eqref{e:sigma-large-in-ball} and Lemmas \ref{l:z(S)} -- \ref{l:min-2}, we have 
    	\begin{align}
    		(1+C\alpha)^{-1}(1+C\delta)^{-1}\pdiam(Q_1)^{\idim+2} &\leq \pHmeasure^{\idim+2}(\Sigma \cap (1-C\delta)B(Q_1)) \\
    		&\hspace{-1em}\leq \pHmeasure^{\idim+2}(z(S)) + \pHmeasure^{\idim+2}\left( \Sigma \cap \bigcup_{R \in \min(S)} 10B_R\right) \\
    		&\hspace{-1em}\leq C\pHmeasure^{\idim+2}\left(E \cap Q_1 \setminus \bigcup_{T \in \cF'} T \right) \\
    		&+ (1+C\alpha)\sum_{T \in \cF'} \pdiam(T)^{\idim+2} \\\
    		&\hspace{1em}+ C\delta\pdiam(Q_1)^{\idim+2}.
    	\end{align}
    	Choosing both $\delta$ and $\alpha = \alpha(\delta)$ sufficiently small, and observing that by the definition of $\cN$ we have $\bigcup_{T \in \cN} T \subseteq \bigcup_{T \in \cF} T$, after rearranging the above inequality we get 
    	\begin{align}
    		(1+C\delta)^{-1} \pdiam(Q_1)^{\idim+2} &\leq C\pHmeasure^{\idim+2}\left(E \cap Q_1 \setminus \bigcup_{T \in \cN} T \right) \\
    		&\hspace{2em}+ (1+C\alpha)\sum_{T \in \cF'} \pdiam(T)^{\idim+2}. 
    	\end{align}
    	Given $T \in \cF'$, recall that the set $\cN'(T)$ is obtained by applying Lemma \ref{l:lots-of-flat-children} to the ball $B(T)$. By Lemma \ref{l:cF'-not-flat}, we have the correct assumptions on $B(T)$ to ensure that inequality \eqref{e:big-size} from Lemma \ref{l:lots-of-flat-children} holds for $\cN'(T)$. Also, by construction, every cube in $\cN'(T) \cap \mathscr{S}$ is disjoint from any cube in $\mathscr{N}$.
    	Thus, by choosing $\delta$ and $\alpha$ small enough and applying \eqref{e:big-size}, 
    	\begin{align}
    		\pdiam(Q_1)^{\idim+2} &\leq C\pHmeasure^{\idim+2}\left(E \cap Q_1 \setminus \bigcup_{T \in \cN} T \right) + \frac{2}{3}\sum_{T \in \cF'} \sum_{T' \in \cN'(T)} \pdiam(T')^{\idim+2} \\
    		&= C\pHmeasure^{\idim+2}\left(E \cap Q_1 \setminus \bigcup_{T \in \cN} T \right)  + \frac{2}{3} \sum_{T \in \cF'}\sum_{T' \in \cN(T) \cap \mathscr{S}} \pdiam(T')^{\idim+2} \\\
    		&\hspace{3em} + \frac{2}{3}\sum_{T \in \cF'} \sum_{T' \in \cN''(T)} \pdiam(T')^{\idim+2} \\\
    		&\leq C\pHmeasure^{\idim+2}\left(E \cap Q_1 \setminus \bigcup_{T \in \cN} T \right) + \frac{2}{3} \sum_{T' \in \cN} \pdiam(T')^{\idim+2}.
    	\end{align}
    \end{proof}

\noindent\textbf{A collection of top cubes.} With Lemmas \ref{l:next-when-not-flat} and \ref{l:next-when-flat} in hand, we can now focus more directly on the proof of Lemma \ref{l:control-Schul-cubes-main}. For this, we wish to define a collection of cubes $\Top$ whose volume we will control in Lemma \ref{l:Top-control}. We will need some notation. 

Suppose $Q \in \cC(Q_0) \setminus \mathscr{S}$ with $\baupbeta(AQ) \leq \ve$ is given. If $Q \in \cB$, let $\cF_Q \coloneqq \{Q\}$ and let $\cN_Q$ be the cubes obtained by applying Lemma \ref{l:next-when-not-flat} to $Q$. If $Q \in \cG$, let $\cF_Q$ and $\cN_Q$ denote the collections of cubes obtained by applying Lemma \ref{l:next-when-flat} to $Q$. Then, set 
\begin{align}
	\mathscr{O}_Q \coloneqq E \cap Q \setminus \bigcup_{R \in \cN_Q} T. 
\end{align}
Let 
\begin{align}
	\Top_0 = \{Q_0\}.
\end{align}
Then, supposing $\Top_k$ has been defined up to some integer $k \geq 0$, let 
\begin{align}
	\Top_{k+1} \coloneq \bigcup_{Q \in \Top_k} \cN_Q \quad \mbox { and } \quad \Top \coloneq \bigcup_{k \geq 0} \Top_k. 
\end{align}

Recall from Lemmas \ref{l:next-when-not-flat} and \ref{l:next-when-flat} that $\cN_Q \subseteq \cC(Q_0) \setminus \mathscr{S}$ (so that $\baupbeta(AB(R)) \leq \ve$ for all $R \in \cN_Q$). Thus, the above sequence is well-defined. Furthermore, it is immediate from the construction that 
\begin{align}\label{q:O_R}
	\mathscr{O}_Q \cap \mathscr{O}_{Q'}  = \emptyset \mbox{ for each distinct pair } Q,Q' \in \Top. 
\end{align}

\noindent\textbf{A collection of weight for the top cubes.} Suppose $Q \in \Top$ and $R \in \Top \cap \cC(Q)$ are given. We wish to define a weights $w_Q(R)$ and $w_Q(\mathscr{O}_R)$ with respect to $Q$. For this, let $\Top' = \Top \setminus \{Q\}$ and define 
\begin{align}
	\cT_1 &\coloneqq \{T \in \Top' \colon T \in \cN_{T'} \mbox{ for some } T' \in \Top \cap \cG \};\\
	\cT_2 &\coloneqq \{T \in \Top' \cap \cG \colon T \in \cN_{T'} \mbox{ for some } T' \in \Top \cap \cB\}; \\
	\cT_3 &\coloneqq \{T \in \Top' \cap \cB \colon T \in \cN_{T'} \mbox{ for some } T' \in \Top \cap \cB\}. 
\end{align}
Let $Q \eqqcolon T_1 \supseteq \cdots \supseteq T_N \coloneqq R$ be an enumeration of the cubes in $\Top$ between $Q$ and $R$ (inclusive of the endpoints). For $j \in \{1,2,3\}$, let 
\begin{align}
	N_{Q,j}(R) \coloneqq \#\{T_i \colon i \geq 2 \mbox{ and }  T_i \in \cT_j\}.
\end{align}
Then, define 
\begin{align}\label{e:w(R)}
	w_Q(R) \coloneqq \left(\frac{2}{3}\right)^{N_{Q,1}(R) + N_{Q,3}(R)} (1+\Cnext\delta)^{N_{Q,2}(R)}  \pdiam(R)^{\idim+2}
\end{align}
and 
\begin{align}\label{e:w(O)}
	w_Q(\mathscr{O}_R) \coloneqq \Cnextprime\left(\frac{2}{3}\right)^{N_{Q,1}(R) + N_{Q,3}(R)} (1+\Cnext\delta)^{N_{Q,2}(R)}  \pHmeasure^{\idim+2}(\mathscr{O}_R),
\end{align}
with $\Cnext$ and $\Cnextprime$ as in Lemma \ref{l:next-when-not-flat} and Lemma \ref{l:next-when-flat}, respectively. Note that $w_Q(Q) = \pdiam(Q)^{\idim+2}$. 

\begin{lemma}
Given $Q \in \Top_k$ and $R \in \Top_m$ such that $R \subseteq Q$, we have 
\begin{align}\label{e:properties-of-N}
	\hspace{1em} N_{Q,1}(R) + N_{Q,2}(R) + N_{Q,3}(R) = m-k  \quad \mbox{ and } \quad N_{Q,1}(R) \geq N_{Q,2}(R) -1. 
\end{align}
Thus, by choosing $\delta > 0$ sufficiently small,  
\begin{align}\label{e:decay-of-w}
	w_Q(R) \lesssim \left( \frac{3}{4} \right)^{\frac{m-k-1}{3}}\pdiam(R)^{\idim+2} \quad \mbox{ and } \quad w_Q(\mathscr{O}_R) \lesssim \left( \frac{3}{4} \right)^{\frac{m-k-1}{3}}\pHmeasure^{\idim+2}(\mathscr{O}_R).
\end{align}
\end{lemma}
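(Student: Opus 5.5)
The plan is to read everything off the chain $Q = T_1 \supsetneq T_2 \supsetneq \cdots \supsetneq T_N = R$ of cubes of $\Top$ between $Q$ and $R$. First I will check that this chain is exactly one cube per generation. Each $T_{i+1}$ lies in $\cN_{T_i}$. This needs the fact that a cube of $\Top$ has a unique parent in $\Top$: the cubes in each $\cN_{T'}$ are disjoint, and they are contained in $T'$. Distinct cubes of a given $\Top_k$ are disjoint, by induction, since the $\cN$-families of disjoint cubes are disjoint. Hence the chain has length $N = m-k+1$, with $T_i \in \Top_{k+i-1}$.

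Each $T_i$ with $i \geq 2$ is a member of $\cN_{T_{i-1}}$, and it is classified into exactly one of $\cT_1,\cT_2,\cT_3$. The class is determined by whether its parent $T_{i-1}$ lies in $\cG$ (class $\cT_1$) or in $\cB$. In the latter case the class is fixed by whether $T_i$ itself is in $\cG$ (class $\cT_2$) or in $\cB$ (class $\cT_3$). Since $\cG$ and $\cB$ partition $\cC(Q_0)$, the three classes partition $\{T_2,\dots,T_N\}$. This gives $N_{Q,1}(R)+N_{Q,2}(R)+N_{Q,3}(R) = N-1 = m-k$.

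For the second inequality I will use an injection argument. Suppose $T_i \in \cT_2$ with $i<N$. Then $T_i \in \cG$, so $T_{i+1} \in \cN_{T_i}$ has a parent in $\cG$, which puts $T_{i+1} \in \cT_1$. The map $T_i \mapsto T_{i+1}$ is injective. It therefore sends all but at most one element of $\cT_2$, namely possibly $T_N$, into $\cT_1$. This gives $N_{Q,1}(R) \geq N_{Q,2}(R)-1$.

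For \eqref{e:decay-of-w}, write $a = N_{Q,1}+N_{Q,3}$ and $b = N_{Q,2}$. Then $a+b = m-k$ and $b \leq a+1$, so $b \leq (m-k+1)/2$ and $a \geq (m-k-1)/2$. This gives
\begin{align}
	\left(\tfrac{2}{3}\right)^{a}(1+\Cnext\delta)^{b} \leq (1+\Cnext\delta)\left[\tfrac{2}{3}(1+\Cnext\delta)\right]^{a}.
\end{align}
Choosing $\delta$ so small that $\frac{2}{3}(1+\Cnext\delta) \leq \frac{3}{4}$, and noting that $a \geq (m-k-1)/2 \geq (m-k-1)/3$ when $m-k \geq 1$, I obtain the claimed bound $(3/4)^{(m-k-1)/3}$ up to a constant. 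The case $m=k$ is trivial, because then $R=Q$ and $w_Q(Q)=\pdiam(Q)^{\idim+2}$. The same computation applies to $w_Q(\mathscr{O}_R)$, with an extra constant factor $\Cnextprime$.

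The only delicate step is the chain structure: I need uniqueness of the $\Top$-parent and the exact one-step-per-generation count. I will prove this by induction on $k$, using the disjointness properties of the $\cN$-collections from Lemmas \ref{l:next-when-not-flat} and \ref{l:next-when-flat}.
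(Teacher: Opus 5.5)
Your proposal is correct and follows the paper's argument: the chain partition gives the count, the injection $T_i\mapsto T_{i+1}$ from $\cT_2$ into $\cT_1$ gives $N_{Q,1}(R)\geq N_{Q,2}(R)-1$, and the bound $(1+\Cnext\delta)^{N_{Q,2}(R)}\leq(1+\Cnext\delta)^{N_{Q,1}(R)+1}$ absorbs the growth factor into $(3/4)^{N_{Q,1}(R)+N_{Q,3}(R)}$. You check the one-cube-per-generation chain structure, which the paper calls trivial, and your lower bound $N_{Q,1}(R)+N_{Q,3}(R)\geq (m-k-1)/2$ is slightly sharper than the paper's $(m-k-1)/3$, though this changes nothing in the conclusion.
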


\begin{proof}
We start with \eqref{e:properties-of-N}. The first assertion is trivial, so we will consider only the second assertion. For this, one notes that if $T_i \in \cT_2$ (so that $T_i \in \cG$), then it must be that $T_{i+1} \in \cT_1$ since $T_{i+1} \in \cN_{T_i}$. Thus, for each $i \in \{1,\cdots,N-1\}$ such that $T_i \in \cT_2$, there is a corresponding cube in $\cT_1$. This is enough to establish the second assertion (one must subtract 1 to account for the fact that, perhaps, $T_N = R \in \cT_2$). 

We will only give the details for the first inequality in \eqref{e:decay-of-w}, the second inequality follows similarly. First note, by \eqref{e:properties-of-N}, that $2N_{Q,1}(R) + N_{Q,3}(R) \geq m-k-1$. Thus, we either have that $N_{Q,1}(R) \geq (m-k-1)/3$ or $N_{Q,3}(R) \geq (m-k-1)/3$. In either case, the second part of \eqref{e:properties-of-N} implies (choosing $\delta = \delta(\Cnext)$ sufficiently small) that 
\begin{align}
	w_Q(R) &= \left(\frac{2}{3}\right)^{N_{Q,1}(R) + N_{Q,3}(R)} (1+\Cnext\delta)^{N_{Q,2}(R)}  \pdiam(R)^{\idim+2} \\
	&\leq \left(\frac{2}{3}\right)^{N_{Q,1}(R) + N_{Q,3}(R)} (1+\Cnext\delta)^{N_{Q,1}(R)+1}  \pdiam(R)^{\idim+2} \\
	&\lesssim \left( \frac{3}{4} \right)^{N_{Q,1}(R) + N_{Q,3}(R)}\pdiam(R)^{\idim+2} \leq \left( \frac{3}{4} \right)^{\frac{m-k-1}{3}}\pdiam(R)^{\idim+2}. 
\end{align}

\end{proof}

\begin{lemma}\label{l:control-with-weights}
	For each $Q \in \Top$, we have 
	\begin{align}\label{e:infty}
		\pdiam(Q)^{\idim+2} \leq \sum_{R \in \Top \cap \cC(Q)} w_Q(\mathscr{O}_R).
	\end{align}
\end{lemma}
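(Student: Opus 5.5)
The plan is to iterate the one-step inequalities of Lemma \ref{l:next-when-not-flat} (2) and Lemma \ref{l:next-when-flat} (3) down the tree $\Top \cap \cC(Q)$, and to pass to the limit using the geometric decay \eqref{e:decay-of-w}.

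For $Q \in \Top_k$ and $m \geq k$, let $\Top^{(m)}(Q)$ denote the cubes of $\Top_m$ contained in $Q$. I would prove by induction on $m$ that
\begin{align}
	\pdiam(Q)^{\idim+2} \leq \sum_{\substack{R \in \Top \cap \cC(Q) \\ R \in \Top_{j},\ k \leq j < m}} w_Q(\mathscr{O}_R) + \sum_{R \in \Top^{(m)}(Q)} w_Q(R).
\end{align}
The case $m=k$ is trivial, since $w_Q(Q) = \pdiam(Q)^{\idim+2}$.

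For the inductive step, fix $R \in \Top^{(m)}(Q)$ and apply the appropriate one-step lemma to $R$. Multiplying that inequality by the prefactor $(2/3)^{N_{Q,1}(R)+N_{Q,3}(R)}(1+\Cnext\delta)^{N_{Q,2}(R)}$, the term $\pdiam(R)^{\idim+2}$ is replaced by three things: a multiple of $\pHmeasure^{\idim+2}(\mathscr{O}_R)$, which one checks is at most $w_Q(\mathscr{O}_R)$; and a sum over $T \in \cN_R$ of a factor times $\pdiam(T)^{\idim+2}$, where the factor is either $2/3$ (if $R \in \cG$) or $(1+\Cnext\delta)$ on $\cN_R \cap \cG$ (if $R \in \cB$).

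\textbf{Bookkeeping of weights.} Here one has to match the extra factors with the counters.
\begin{itemize}
	\item If $R \in \cG$, then $T \in \cT_1$, so $N_{Q,1}(T) = N_{Q,1}(R)+1$ and the factor $2/3$ is exactly absorbed into $w_Q(T)$.
	\item If $R \in \cB$ and $T \in \cG$, then $T \in \cT_2$ and the factor $(1+\Cnext\delta)$ is absorbed into $w_Q(T)$.
	\item If $R \in \cB$ and $T \in \cB$, then $T \in \cT_3$ and contributes nothing. In this case one may drop $T$ for free, or else one must check that Lemma \ref{l:next-when-not-flat} (2) sums only over $\cN \cap \cG$ while $\Top$ includes all of $\cN_R$. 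The $\cB$-children with weight $(2/3)$ only make the right-hand side larger, so the inequality persists since all terms are nonnegative.
\end{itemize}
The constants $\Cnext$ and $\Cnextprime$ appearing in the $\mathscr{O}_R$ term need to be dominated by the $\Cnextprime$ in \eqref{e:w(O)}. I would arrange this by taking $\Cnextprime \geq \Cnext$ without loss of generality. Summing over $R \in \Top^{(m)}(Q)$, and using that the collections $\cN_R$ are disjoint subcollections of $\Top_{m+1}$, closes the induction.

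\textbf{Passing to the limit.} Finally I would let $m \to \infty$. By \eqref{e:decay-of-w},
\begin{align}
	\sum_{R \in \Top^{(m)}(Q)} w_Q(R) \lesssim (3/4)^{(m-k-1)/3} \sum_{R \in \Top_m,\, R \subseteq Q} \pdiam(R)^{\idim+2}.
\end{align}
The cubes in $\Top_m$ are disjoint, and each contains a ball $B(R)$ with $\pHmeasure^{\idim+2}(E\cap B(R)) \gtrsim \pdiam(R)^{\idim+2}$. By Ahlfors regularity the last sum is therefore $\lesssim \pHmeasure^{\idim+2}(Q) \lesssim \pdiam(Q)^{\idim+2}$, so the remainder tends to zero and \eqref{e:infty} follows.

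The main obstacle I expect is the exact weight bookkeeping, in particular confirming that each application produces precisely the counter increments encoded in $\cT_1,\cT_2,\cT_3$, including the case $R = Q$ where $i \geq 2$ excludes $Q$ itself.
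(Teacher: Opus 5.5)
Your proposal is correct and follows the paper's proof. The paper likewise reduces to the one-step inequality $w_Q(R)\le w_Q(\mathscr{O}_R)+\sum_{R'\in\Top_{m+1},\,R'\subseteq R}w_Q(R')$, obtained from Lemmas \ref{l:next-when-not-flat} and \ref{l:next-when-flat} by matching the factors $2/3$ and $(1+\Cnext\delta)$ to the counters exactly as you do; it then iterates and kills the remainder with \eqref{e:decay-of-w}. Two of your details are left implicit in the paper: taking $\Cnextprime\ge\Cnext$ so that the $\mathscr{O}_R$ term is absorbed in the $\cB$ case (the paper's display there silently drops it), and using disjointness of $\Top_m$ plus Ahlfors regularity to get $\sum_{R\in\Top_m,\,R\subseteq Q}\pdiam(R)^{\idim+2}\lesssim\pdiam(Q)^{\idim+2}$.
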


\begin{proof}
	Fix some $Q \in \Top$ and let $k \geq 0$ such that $Q \in \Top_k$. It suffices to show that 
    \begin{align}\label{e:weight-control-next}
        w_Q(R) \leq w_Q(\mathscr{O}_R) + \sum_{\substack{ R' \in \Top_{m+1} \\ R' \subseteq R}} w_Q(R') 
    \end{align}
    for each $R \in \Top_m \cap \cC(Q)$ and $m \geq k$. Indeed, if \eqref{e:weight-control-next} holds, it follows by induction that 
	\begin{align}\label{e:m_0}
		\pdiam(Q)^{\idim+2} \leq \sum_{m=k}^{m_0} \sum_{\substack{R \in \Top_{m} \\ R \subseteq Q}} w_Q(\mathscr{O}_R) + \sum_{\substack{R \in \Top_{m_0+1} \\ R \subseteq Q}} w_Q(R)
	\end{align}
	for each $m_0 \geq k$. Using \eqref{e:decay-of-w}, the final term in \eqref{e:m_0} is at most a constant multiple of $(3/4)^{(m_0-k-1)/3}\pdiam(Q)^{\idim+2} \to 0$ as $m_0 \to \infty$. Thus, \eqref{e:infty} follows from \eqref{e:m_0} after taking $m_0 \to \infty$. 
	
	Let us now prove \eqref{e:weight-control-next}. Fix some $m \geq k$, $R \in \Top_m \cap \cC(Q)$ and let $C_R$ be the constant such that $w_Q(R) = C_R \pdiam(R)^{\idim+2}$. If $R \in \cG$, it follows from \eqref{e:w(R)} and \eqref{e:w(O)} that $w_Q(T) = (2C_R/3)\pdiam(T)^{\idim+2}$ for each $T \in \Top_{m+1}$ such that $T \subseteq R$ and $w_Q(\mathscr{O}_R) = C_R\Cnextprime \pHmeasure^{\idim+2}(\mathscr{O}_R)$. Thus, by Lemma \ref{l:next-when-flat},  
    \begin{align}
        w_Q(R) &= C_R\pdiam(R)^{\idim+2} \leq C_R\left[\Cnextprime \pHmeasure^{\idim+2}(\mathscr{O}_R) + \frac{2}{3}\sum_{T \in \cN_R} \pdiam(T)^{\idim+2} \right] \\
        & = w_Q(\mathscr{O}_R) + \sum_{\substack{T \in \Top_{m+1} \\ T \subseteq R}} w_Q(T),
    \end{align}
    which proves \eqref{e:weight-control-next} in this case. Suppose instead that $R \in \mathscr{B}$. By \eqref{e:w(R)}, have that $w_Q(T) = C_R(1+\Cnext\delta) \pdiam(T)^{\idim+2}$ for each $T \in \Top_{m+1} \cap \mathscr{G}$ such that $T \subseteq R$. Thus, by Lemma \ref{l:lots-of-flat-children}, 
    \begin{align}
        w_Q(R) &= C_R\pdiam(R)^{\idim+2} \leq C_R(1+\Cnext\delta)\sum_{T \in \cN_R \cap \cG} \pdiam(T)^{\idim+2} \\
        &\leq w_Q(\mathscr{O}_R) + \sum_{\substack{T \in \Top_{m+1} \\ T \subseteq R}} w_Q(T),
    \end{align}
    which finishes the proof of \eqref{e:weight-control-next}.
\end{proof}

\begin{lemma}\label{l:Top-control}
    We have 
        \begin{align}
            \sum_{Q \in \Top} \pdiam(Q)^{\idim+2} \lesssim \pHmeasure^{\idim+2}(Q_0) \lesssim \ell(Q_0)^{\idim+2}.
        \end{align}
\end{lemma}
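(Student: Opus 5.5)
The plan is to sum the inequality of Lemma \ref{l:control-with-weights} over all $Q \in \Top$ and then exchange the order of summation. Concretely,
\begin{align}
	\sum_{Q \in \Top} \pdiam(Q)^{\idim+2} \leq \sum_{Q \in \Top} \sum_{R \in \Top \cap \cC(Q)} w_Q(\mathscr{O}_R) = \sum_{R \in \Top} \sum_{\substack{Q \in \Top \\ R \subseteq Q}} w_Q(\mathscr{O}_R).
\end{align}
For a fixed $R \in \Top_m$, the cubes $Q \in \Top$ containing $R$ form a chain, with at most one element in each generation $\Top_k$ for $0 \leq k \leq m$. This uses the fact that each $\cN_Q$ consists of disjoint subcubes of $Q$, so the cubes of a fixed generation are pairwise disjoint; this should be checked by induction from the construction.

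Next I would apply the second estimate in \eqref{e:decay-of-w}. It gives $w_Q(\mathscr{O}_R) \lesssim (3/4)^{(m-k-1)/3}\pHmeasure^{\idim+2}(\mathscr{O}_R)$ for the unique $Q \in \Top_k$ in this chain. Summing the geometric series over $k \leq m$ yields
\begin{align}
	\sum_{\substack{Q \in \Top \\ R \subseteq Q}} w_Q(\mathscr{O}_R) \lesssim \pHmeasure^{\idim+2}(\mathscr{O}_R),
\end{align}
with implicit constant independent of $m$, since $\sum_{j \geq 0} (3/4)^{(j-1)/3} < \infty$.

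Finally, \eqref{q:O_R} says the sets $\mathscr{O}_R$, $R \in \Top$, are pairwise disjoint, and each lies in $E \cap R \subseteq Q_0$. Hence $\sum_{R \in \Top} \pHmeasure^{\idim+2}(\mathscr{O}_R) \leq \pHmeasure^{\idim+2}(E \cap Q_0)$. By Ahlfors regularity and Lemma \ref{l:Schul-cubes} (2), $Q_0 \subseteq (1+8\rho)B(Q_0)$, so this is $\lesssim \pdiam(Q_0)^{\idim+2} \sim \ell(Q_0)^{\idim+2}$, which gives both inequalities.

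The main point to verify carefully is the chain structure behind the exchange of summation: that for fixed $R$ each generation contributes at most one $Q$, and that the exponents $m-k$ in \eqref{e:decay-of-w} genuinely refer to generation indices along this chain. If the generations could overlap, I would instead index the chain by the enumeration $T_1 \supseteq \cdots \supseteq T_N$ used in the definition of $N_{Q,j}(R)$. The bound \eqref{e:properties-of-N} still gives geometric decay in the position along the chain, which suffices. The remaining steps are bookkeeping.
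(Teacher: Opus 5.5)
Your proposal is correct and follows the paper's argument: sum Lemma~\ref{l:control-with-weights} over $\Top$, exchange the order of summation, use the second bound in \eqref{e:decay-of-w} along the chain of ancestors of each $R$ to get a convergent geometric series, and finish with the disjointness \eqref{q:O_R} and Ahlfors regularity. Your bookkeeping, which sums over $Q \in \Top_k$ with $k \leq m$ for $R \in \Top_m$ and bounds $w_Q(\mathscr{O}_R)$ rather than $w_Q(R)$, is the correct form of the paper's display, where these indices and weights are written slightly inaccurately.
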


\begin{proof}
By applying Lemma \ref{l:control-with-weights}, swapping the order of summation, summing over a geometric series, and using that (recall \eqref{q:O_R}) $\{\mathscr{O}_R\}_{R \in \Top}$ is a collection of disjoint subsets of $E \cap Q_0$, we have 
\begin{align}
	\sum_{Q \in \Top} \pdiam(Q)^{\idim+2} &\leq \sum_{Q \in \Top} \sum_{\substack{R \in \Top \\ R \subseteq Q}} w_Q(\mathscr{O}_R) \lesssim \sum_{R \in \Top} \sum_{\substack{Q \in \Top \\ R \subseteq Q}} w_Q(R) \\
	&=\sum_{m=0}^\infty \sum_{R \in \Top_m} \sum_{k=m}^\infty \sum_{\substack{Q \in \Top_k \\ R \subseteq Q}} w_Q(R) \\
	&\leq \sum_{m=0}^\infty \sum_{R \in \Top_m} \sum_{k=m}^\infty \sum_{\substack{Q \in \Top_k \\ R \subseteq Q}} \left( \frac{3}{4}\right)^\frac{m-k-1}{3}\pHmeasure^{\idim+2}(\mathscr{O}_R) \\
	&\lesssim \sum_{R \in \Top} \pHmeasure^{\idim+2}(\mathscr{O}_R) \leq \pHmeasure^{\idim+2}(E \cap Q_0)\lesssim \ell(Q_0)^{\idim+2}. 
\end{align}
\end{proof}

We now have all the ingredients needed for the proof of Lemma \ref{l:control-Schul-cubes-main}. 

\begin{proof}[Proof of Lemma \ref{l:control-Schul-cubes-main}]
	Let $\Cproof \coloneqq \min\{\Cnextlower,\Cnextlowerprime\}$ and, for each $T \in \cC$, let $\mathscr{R}(T)$ denote the set of $R \in \cC(T)$ such that $\pdiam(R) \geq \Cproof \pdiam(T)$. It suffices to show that
	\begin{align}\label{e:main-inclusion}
		\{R \in \cC(Q_0) \setminus \mathscr{S} \colon \bilatbeta(R) > \eta \} \subseteq \bigcup_{Q \in \Top} \bigcup_{T \in \cF_Q} \mathscr{R}(T). 
	\end{align}
	Indeed, if \eqref{e:main-inclusion} holds, then since $\cF_Q$ is a collection of disjoint sub-cubes of $Q$ and $\sum_{T' \in \mathscr{R}(T)} \pdiam(T')^{\idim+2}\lesssim_{\CAR, \Cproof} \pdiam(T)^{\idim+2}$ for all $T \in \cF_Q$, it follows from Lemma \ref{l:Top-control} that
	\begin{align}
		\sum_{\substack{R \in \cC(Q_0) \setminus \mathscr{S} \\ \bilatbeta(R) > \eta}} \pdiam(R)^{\idim+2} &\leq \sum_{Q \in \Top}  \sum_{T \in \cF_Q} \sum_{T' \in \mathscr{R}(T)} \pdiam(T')^{\idim+2} \\
		&\lesssim_{\CAR , \Cproof} \sum_{Q \in \Top} \pdiam(Q)^{\idim+2} \lesssim \ell(Q_0)^{\idim+2}. 
	\end{align}

	To show \eqref{e:main-inclusion}, fix some $R \in \cC(Q_0) \setminus \mathscr{S}$ such that $\bilatbeta(R) > \eta$. Let $Q$ be the minimal cube in $\Top$ such that $R \subseteq Q$ (such a cube exists since $R \subseteq Q_0$). By minimality and the definition of $\Top$, this means $R$ is not contained in any cube from $\cN_Q$. The inclusion now follows from either Lemma \ref{l:next-when-not-flat} (1) or Lemma \ref{l:next-when-flat} (2). 
\end{proof}

\subsection{Proof of Theorem \ref{BAUPBWGL}}
We now show how to prove Theorem \ref{BAUPBWGL} from Lemma \ref{l:control-Schul-cubes}. For this, we first need a variant of the \textit{one-third trick}. Although not explicitly stated as such, the following is proved in \cite{hytonen2012systems} (see Section 4).  

\begin{lemma}\label{l:one-third}
    For any doubling metric space $(X,d)$, there is a constant $N$ (depending only on the doubling constant) and a collection of finitely many subsets $\{X^n\}_{n=1}^N$ of $X$ such that the following holds. First, for each $n \in \{1,\dots,N\}$, we can write 
    \begin{align}
        X^n = \bigcup_{k \in \mathbb{Z}} X^n_k,
    \end{align}
    where each $X_k^n$ satisfies \eqref{e:sep-net}. Second, for $y \in X$ and $k \in \mathbb{Z}$, there exists $n \in \{1,\dots,N\}$ and $x \in X_k^n$ such that 
    \begin{align}
        d(x,y) \leq \rho^{k+1}. 
    \end{align}
\end{lemma}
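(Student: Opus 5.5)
The plan is to follow the construction of adjacent (shifted) dyadic systems in doubling metric spaces, in the spirit of Hytönen--Kairema. I would first fix, for each $k\in\mathbb{Z}$, a maximal $\rho^{k}$-separated net, and then build $N$ "shifted" families of nets whose points at scale $k$ jointly approximate every point of $X$ to precision $\rho^{k+1}$, that is, one scale finer. The key parameter choice is to work with the nets at scale $k+1$ and group them so that each family, restricted to scale $k$, is still $\rho^k/4$-separated and $2\rho^k$-dense.

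Concretely, let $Y_{k+1}$ be a maximal $\rho^{k+1}$-separated set in $X$, so every $y\in X$ lies within $\rho^{k+1}$ of some point of $Y_{k+1}$. By doubling, any ball of radius $C\rho^{k}$ contains at most $N_0=N_0(\rho)$ points of $Y_{k+1}$. A greedy colouring of the graph on $Y_{k+1}$, with edges between points at distance $<\rho^k/4$, then uses at most $N_1\lesssim N_0$ colours. This gives a splitting $Y_{k+1}=\bigcup_{n\le N_1} Z^n_k$ in which each $Z^n_k$ is $\rho^k/4$-separated.

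Each $Z^n_k$ need not be $2\rho^k$-dense, so I would enlarge it. Extend $Z^n_k$ by adding points of a maximal $\rho^k$-separated set $W_k$ that lie at distance $\ge\rho^k/4$ from $Z^n_k$, and then continue greedily to a maximal $\rho^k/4$-separated set $X^n_k\supseteq Z^n_k$. Maximality gives density: every point of $X$ is within $\rho^k/4\le 2\rho^k$ of $X^n_k$. Hence each $X^n_k$ satisfies \eqref{e:sep-net}. Setting $X^n=\bigcup_k X^n_k$, with $N=N_1$ independent of $k$, gives the first property.

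For the second property, take $y\in X$ and $k\in\mathbb{Z}$, and choose $x\in Y_{k+1}$ with $d(x,y)\le\rho^{k+1}$. Then $x\in Z^n_k\subseteq X^n_k$ for some $n$, which is exactly what is required.

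The main point to check is that $N$ depends only on the doubling constant. It actually also depends on $\rho$, but $\rho$ is fixed by the earlier choices, so the colouring bound is uniform in $k$. If the lemma is meant as in Hytönen's paper, with the nested, scale-coherent structure needed later for building cubes from each $X^n$, then more care is needed. In that case I would instead take a single nested sequence of nets and use the random or shifted centre selection of \cite[Section 4]{hytonen2012systems}, citing it for the coherence across scales, since \eqref{e:sep-net} only asks for separation and density at each scale.
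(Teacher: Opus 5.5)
Your proof is correct and is essentially the labelling construction of \cite[Section 4]{hytonen2012systems}, which the paper cites in place of a proof: split the finer net at scale $k+1$ into boundedly many classes that are $\rho^k/4$-separated, complete each class to a net satisfying \eqref{e:sep-net}, and read off the approximation property from the finer net. Your remark that $N$ also depends on $\rho$ is right and cannot be avoided, since for an Ahlfors regular $X$ a counting argument in a ball of radius $\rho^k$ forces $N$ to grow like a negative power of $\rho$; the phrase ``depending only on the doubling constant'' should be read with $\rho$ already fixed.
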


\begin{proof}[Proof of Theorem \ref{BAUPBWGL}]
	Let $\cubes$ be a system of Christ-David cubes on $E$, and let $M > 1$ and $\alpha > 0$. Our goal is to show that 
	\begin{align}\label{e:final-claim}
		\sum_{\substack{R \in \cubes(R_0)\\ \bilatbeta(MB_R) > \alpha}} \pdiam(R)^{\idim+2} \lesssim_{M,\alpha} \pdiam(R_0)^{\idim+2} , \quad R_0 \in \cubes. 
	\end{align}
	
	Fix some $R_0 \in \cubes$ (our estimates will not depend on this choice). Since $E$ is Ahlfors regular, it is doubling. Thus, we can apply Lemma \ref{l:one-third} with $X = E$. Let $\{X^n\}_{n=1}^N$ be a sequence of subsets of $E$ satisfying the conclusion of Lemma \ref{l:one-third}, where $N$ depends only on $C_0$, and let $\{\cC(n)\}_{n=1}^N$ be the corresponding systems of Schul cubes on $E$ constructed with respect to these sets. 
    
    For every $R \in \cubes(R_0)$, there exists $n = n(R) \in N$ and $Q = Q(R) \in \cC(n)$ such that 
	\begin{align}\label{e:corresponding}
		MB_R \subseteq Q \quad \mbox{ and } \quad M\pdiam(R) \leq \pdiam(Q) \lesssim M\pdiam(R) . 
	\end{align}
    Indeed, let $k \in \mathbb{Z}$ such that $\rho^{k+2} < M\ell(R) \leq \rho^{k+1}$. By Lemma \ref{l:one-third}, there exists $n \in \{1,\dots,N\}$ and $x \in X_k^n$ such that $|x_R - x| \leq \rho^{k+1}$. Let $Q \in \cC_k(n)$ be the Schul cube such that $x_Q = x$. The second inequality in \eqref{e:corresponding} is immediate. Moreover, for $\rho = \rho(\CSchulprime)$ sufficiently small, we also have  
    \begin{align}
        MB_R\subseteq B_{M\ell(R) + \rho^{k+1}}(x) \subseteq B_{2\rho^{k+1}}(x_Q) \subseteq B_{\CSchulprime\rho^k}(x_Q) \subseteq Q. 
    \end{align}

	It follows from \eqref{e:corresponding} and a standard volume argument that, for each $Q \in \bigcup_{n=1}^N \cC(n)$, we have 
	\begin{align}\label{e:bounded-to-one}
		\#\{R \in \cubes \colon Q(R) = Q\} \lesssim_{C_0,M} 1. 
	\end{align}
    Moreover, it follows from Lemma \ref{l:monotonicity} that for any $R \in \cubes(R_0)$ we have 
    \begin{align}\label{e:cube-not-flat}
    	\bilatbeta(MB_R) > \alpha \implies \bilatbeta(B(Q(R))) \gtrsim \alpha. 
    \end{align} 
    
    Given $n \in \{1,\dots,N\}$, let $\{Q_{0,n}^k\}_{k \in K(n)}$ be the maximal collection of cubes $Q \in \cC(n)$ such that $Q = Q(R)$ for some $R \in \cubes(R_0)$. Since $MB_R \subseteq MB_{R_0}$ for every $R \in \cubes(R_0)$, it follows from \eqref{e:corresponding} that there exists $C = C(M)$ such that $Q_{0,n}^k \subseteq CB_{R_0}$ for all $n \in \{1,\dots,N\}$ and $k \in K(n)$. Combining this with \eqref{e:corresponding}, \eqref{e:bounded-to-one}, \eqref{e:cube-not-flat} and Lemma \ref{l:control-Schul-cubes}, the left-hand side of \eqref{e:final-claim} is at most  
    \begin{align}
        \sum_{\substack{R \in \cubes(R_0) \\ \bilatbeta(MB_R) > \alpha}} \pdiam(Q(R))^{\idim+2} &\lesssim_{M,C_0}\sum_{n=1}^N\sum_{k \in K(n)} \sum_{\substack{Q \in \cC(n) \\ Q\ \subseteq Q_{0,n}^k \\ \bilatbeta(B(Q)) \gtrsim \alpha}} \pdiam(Q)^{\idim+2} \\
        &\lesssim_{C_0,\alpha} \sum_{n=1}^N \sum_{k \in K(n)} \pdiam(Q_{0,n}^k)^{\idim+2} \\
        &\lesssim_{C_0} \pHmeasure^{\idim+2}(E \cap CB_{R_0}) \lesssim_{C_0,M} \ell(R_0)^{\idim+2}. 
    \end{align}
\end{proof}

\section{Construction of approximating Lipschitz graphs} \label{s:graphs}

In this section, we describe how to appropriately modify the construction in \cite{bortz2023corona} to prove Lemma \ref{p:graph-construction}. The following will be useful. 

\begin{lemma}\label{l:planes-make-graphs}
	Let $V,W \in \verticalplanes$ and assume that $\angle(V,W) \leq \alpha$. Then, 
	\begin{align}
		\| \pi_V^\perp(X,t) - \pi_V^\perp(Y,s) \| \leq \frac{L(\alpha)}{4} \| \pi_V(X,t) - \pi_V(Y,s) \| 
	\end{align}
	for all $(X,t) ,(Y,s) \in W$, where $L(\alpha) \coloneqq 4\alpha/\sqrt{1-\alpha^2}$. 
\end{lemma}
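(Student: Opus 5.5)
The plan is to reduce to a statement about linear subspaces of $\R^{\adim}$ and then do elementary linear algebra. Since $V,W\in\verticalplanes$, write $V=V^0\times\R$, $W=W^0\times\R$. For $(X,t),(Y,s)\in W$ the difference $(X-Y,t-s)$ lies in $W'$. Both $\pi_V^\perp$ and $\pi_V-\pi_V(0)$ are linear and kill or preserve the time coordinate exactly as the Euclidean projections onto $(V^0)'^\perp$ and $(V^0)'$ do on the spatial part. Thus $\pi_V^\perp(X,t)-\pi_V^\perp(Y,s)=(\pi^\perp_{(V^0)'}(Z),0)$ and $\pi_V(X,t)-\pi_V(Y,s)=(\pi_{(V^0)'}Z,\,t-s)$, where $Z=X-Y\in (W^0)'$. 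Since $\|(\cdot,t-s)\|\geq|\cdot|$, it suffices to prove the spatial estimate
\begin{align}
|\pi^\perp_{V_0} Z|\leq \frac{\alpha}{\sqrt{1-\alpha^2}}\,|\pi_{V_0}Z|\quad\mbox{for all } Z\in W_0,
\end{align}
with $V_0=(V^0)'$ and $W_0=(W^0)'$ linear $\idim$-planes in $\R^\adim$.

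Next, unpack $\angle(V,W)=d_{B_1(0,0)}(V',W')\leq\alpha$. Restricting to the time slice $t=0$, this says that every unit vector, or indeed every vector of length at most $1$, in $W_0$ lies within Euclidean distance $\alpha/2$ of $V_0$. Here the factor $2/r$ in \eqref{e:local-hausdorff-distance} gives $\alpha/2$, and I will simply use the weaker bound $\alpha$. The parabolic distance restricted to equal times is the Euclidean one, so for $Z\in W_0$ with $|Z|=1$ we get $|\pi^\perp_{V_0}Z|=\dist(Z,V_0)\leq\alpha$. By orthogonality, $|\pi_{V_0}Z|^2=1-|\pi^\perp_{V_0}Z|^2\geq1-\alpha^2$. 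Hence $|\pi^\perp_{V_0}Z|\leq \alpha\leq \frac{\alpha}{\sqrt{1-\alpha^2}}|\pi_{V_0}Z|$, and by homogeneity this holds for all $Z\in W_0$. This gives the claim with constant $L(\alpha)/4$.

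The only delicate point is the bookkeeping with the parabolic norm. The left side involves only spatial components, since $\pi_V^\perp$ has zero time component, so its parabolic norm equals its Euclidean length. The right side's parabolic norm dominates the spatial Euclidean length. I will also need to check that the definition of $\angle$ via $B_1(0,0)$ really controls unit spatial vectors of $W_0$, i.e. that $(Z,0)\in W'\cap B_1(0,0)$ when $|Z|\leq1$; this holds because $B_1$ is defined by $|X|\leq1,|t|\leq1$. With these checks in place, the remaining steps are routine.
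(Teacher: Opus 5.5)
Your proposal is correct and is essentially the paper's argument. The paper also reduces to equal times, uses $\angle(V,W)\leq\alpha$ to get $\|\pi_V^\perp(X-Y,0)\|\leq\alpha|X-Y|$, and then rearranges the Pythagorean identity $|X-Y|^2=\|\pi_V^\perp(X-Y,0)\|^2+\|\pi_V(X,0)-\pi_V(Y,0)\|^2$, which is exactly your normalization-and-homogeneity step; your explicit handling of the time component and of the factor $2/r$ (which even gives $\alpha/2$) is a little more careful than the paper's one-line reduction to $t=s=0$.
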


\begin{proof}
	We can assume without loss of generality that $(0,0) \in V$. Moreover, since $V$ and $W$ are $t$-independent, we can assume that $t = s = 0$. Now, fix some $X , Y \in \R^\adim$. Since $\angle(V,W) \leq \alpha$ and $(X-Y,0) \in W'$ (the linear space parallel to $W$), it follows that 
	\begin{align}
		\| \pi_V^\perp(X,0) - \pi^\perp(Y,0) \| = \| \pi_V^\perp(X-Y,0) \| \leq \alpha |X -Y|. 
	\end{align}
	Then, by orthogonality, it follows that 
	\begin{align}
		\| \pi_V^\perp(X,0) - \pi^\perp(Y,0) \|^2 &\leq \alpha^2 |X-Y|^2 \\
		&= \alpha^2 \left( \| \pi_V^\perp(X,0) - \pi^\perp(Y,0) \|^2  +  \| \pi_V(X,0) - \pi(Y,0) \|^2 \right).
	\end{align}
	The lemma follows immediately after rearranging the above inequality. 
\end{proof}

With this out of the way, we can begin to discuss the modification. Assume without loss of generality that 
\begin{align}
	P \coloneqq P_{Q(S)} = \R^\idim \times \{0\}^{\adim-\idim} \times \R. 
\end{align}
Let $\pi$ denote the projection on on $P$ and $\pi^\perp$ denote the projection onto $P^\perp$. We will often identify $P$ with $\R^\idim \times\R$ and write points in $\R^{\adim + 1}$ as 
\[(x,x',t) \in \R^\idim \times \R^{\adim-\idim} \times \R.\]
For $(X,t) = (x,x',t) \in \R^{\adim+1}$, we will denote
\begin{align}
	d(X,t) = d_S(X,t) \quad \mbox{ and } \quad D(x,t) = D_{S,P}(x,0,t).
\end{align}
To better follow the argument in \cite{bortz2023corona}, we introduce constants $K, \kappa , \delta ,\ve'$ defined by the following relationships 
\begin{align}\label{e:constants}
	K = M/8, \ \kappa = A/2 ,  \ \delta = L(\alpha) \mbox{ and } \ve' = M\ve. 
\end{align}
Note that 
\begin{align}\label{e:new-flat-constant}
	\pdist(X,t,P_Q) \leq \ve' \ell(Q) \mbox{ for all } (X,t) \in 8KB_Q, \ Q \in S,
\end{align}
as in \cite[Equation 3.9]{bortz2023corona} (with $\ve'$ playing the role of $\ve$ there). Our assumptions do not imply \cite[Equation 3.10 (A)]{bortz2023corona} with constant $\delta$, however, the important point is that the Lipschitz constants of the maps that we introduce below are at most $2\delta$ under our assumption (as we shall see). Eventually, we will have to assume $K/\kappa$ is sufficiently large and $\ve' = \ve(K,\delta)$ is sufficiently small. This corresponds to the choice that $M/A$ is sufficiently large and $\ve = \ve(\alpha,M)$ is sufficiently small, as needed for Lemma \ref{p:graph-construction}. 

Let 
\begin{align}
	F \coloneqq \{(X,t) \in E \colon d(X,t) = 0\}.
\end{align}
The following lemma is analogous to \cite[Lemma 4.2]{bortz2023corona}

\begin{lemma}\label{l:invertible}
	For every $N \geq 1$, supposing $\ve' = \ve'(N)$ is small enough, we have 
	\begin{align}
		\| \pi^\perp(X,t) - \pi^\perp(Y,s) \| \leq 2\delta \pd(\pi(X,t) , \pi(Y,s))
	\end{align}
	whenever $(X,t) , (Y,s) \in E \cap 16\kappa B_{Q(S)}$ are points which satisfy $\min\{d(X,t) , d(Y,s)\} \leq N \pd(X,t,Y,s)$. 
	In particular, the projection $\pi$ is one-to-one on $F$ and the map defined on $\pi(F)$ by
	\begin{align}
		\hat{\psi}(x,t) = \pi^\perp(X,t), 
	\end{align} 
	where $(X,t) \in F$ is such that $\pi(X,t) = (x,t)$, is well-defined and $\Lip(1,1/2,2\delta)$. 
\end{lemma}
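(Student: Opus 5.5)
The plan is to follow the proof of \cite[Lemma 4.2]{bortz2023corona} and to replace its small-angle step by Lemma \ref{l:planes-make-graphs}. Fix $(X,t),(Y,s) \in E \cap 16\kappa B_{Q(S)}$ with, say, $d(X,t) \leq N\pd(X,t,Y,s)$, and write $r \coloneqq \pd(X,t,Y,s)$. If $r = 0$ there is nothing to prove, so assume $r > 0$. By the definition of $d = d_S$, there is a cube $Q_1 \in S$ with $\ell(Q_1) + \pdist(X,t,Q_1) \leq 2Nr$.

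The first step is to find a single cube $Q \in S$ of size comparable to $r$ whose dilate contains both points. Climb from $Q_1$ through its ancestors, which remain in $S$ as long as they are contained in $Q(S)$. Take the first ancestor $Q$ with $\ell(Q) \geq r$; then $\ell(Q) \sim_N r$. Since $(X,t),(Y,s)$ lie in $B_r$ of a point at distance at most $2Nr$ from $Q_1 \subseteq Q$, both points lie in $C_N B_Q \subseteq 8KB_Q$, provided $K = M/8$ is large depending on $N$.

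There is one exceptional case: no such ancestor exists because $r \gtrsim \ell(Q(S))$. In that case take $Q = Q(S)$. Both points lie in $16\kappa B_{Q(S)} \subseteq 8KB_{Q(S)}$ once $K/\kappa$ is large, and $\ell(Q(S)) \gtrsim_{\kappa} r$ because both points lie in $16\kappa B_{Q(S)}$.

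Now apply \eqref{e:new-flat-constant} to find points $(X',t'),(Y',s') \in P_Q$ with
\begin{align}
\pd(X,t,X',t') + \pd(Y,s,Y',s') \leq 2\ve'\ell(Q) \lesssim_{N,\kappa} \ve' r .
\end{align}
Since the planes are vertical, we may take $t'=t$ and $s'=s$.

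By \eqref{e:unilat-approx}, $\angle(P_Q,P) \leq \alpha$. Lemma \ref{l:planes-make-graphs}, applied with $V = P$ and $W = P_Q$, gives
\begin{align}
\|\pi^\perp(X',t) - \pi^\perp(Y',s)\| \leq \tfrac{\delta}{4}\,\|\pi(X',t)-\pi(Y',s)\|,
\end{align}
recalling $\delta = L(\alpha)$. Transfer this back to $(X,t),(Y,s)$ using that $\pi$ and $\pi^\perp$ are $1$-Lipschitz for $\pd$, since they fix the $t$-coordinate and are Euclidean projections in space. The result is
\begin{align}
\|\pi^\perp(X,t)-\pi^\perp(Y,s)\| \leq \tfrac{\delta}{4}\pd(\pi(X,t),\pi(Y,s)) + C_{N,\kappa}\ve' r .
\end{align}

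The main obstacle is absorbing the error $C_{N,\kappa}\ve' r$, since $r$ is the full parabolic distance and not the projected one. Write
\begin{align}
r \leq \pd(\pi(X,t),\pi(Y,s)) + \|\pi^\perp(X,t)-\pi^\perp(Y,s)\| .
\end{align}
This inequality holds because the spatial part satisfies $|X-Y| \leq |\pi_x| + |\pi^\perp_x|$ and the time parts coincide. If $C_{N,\kappa}\ve' \leq \delta/8$, the $\pi^\perp$ term can be moved to the left-hand side, leaving
\begin{align}
(1-\delta/8)\|\pi^\perp(X,t)-\pi^\perp(Y,s)\| \leq \tfrac{3\delta}{8}\pd(\pi(X,t),\pi(Y,s)),
\end{align}
which gives the claimed bound with constant $2\delta$. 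This is where $\ve' = \ve'(N,\delta,\kappa)$ must be chosen small; the remaining care is just checking that $\alpha<1$ keeps $\delta$ finite.

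For the second statement, points of $F$ have $d = 0$, so any pair in $F \cap 16\kappa B_{Q(S)}$ satisfies the hypothesis with $N = 1$. If $\pi(X,t) = \pi(Y,s)$, the estimate forces $\pi^\perp(X,t) = \pi^\perp(Y,s)$, so $\pi$ is injective on $F$. Consequently $\hat\psi$ is well defined on $\pi(F)$, and the same estimate is precisely the $\Lip(1,1/2,2\delta)$ bound. For points of $F$ outside $16\kappa B_{Q(S)}$, I would either restrict the statement to this ball, as in \cite{bortz2023corona}, or rerun the argument with $\kappa$ enlarged.
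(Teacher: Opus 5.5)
Your proposal follows the paper's proof of Lemma \ref{l:invertible} essentially step for step. You pick a cube of $S$ of size comparable to $\pd(X,t,Y,s)$ by passing to ancestors, with the top cube handling the exceptional case just as the paper's bound $\kappa^{-1}\pd\lesssim\ell(Q)\lesssim N\pd$ does. You then move both points onto $P_Q$ via \eqref{e:new-flat-constant}, apply Lemma \ref{l:planes-make-graphs}, and absorb the error using $\pd(X,t,Y,s)\le \pd(\pi(X,t),\pi(Y,s))+\|\pi^\perp(X,t)-\pi^\perp(Y,s)\|$.

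The one step that fails as written is the absorption.
\begin{itemize}
\item You impose $C_{N,\kappa}\ve'\le\delta/8$ and then divide by $1-\delta/8$. But $\delta=L(\alpha)$ need not be small. Lemma \ref{p:graph-construction} is used in Proposition \ref{t:corona-RF-large} with $\alpha$ close to $1$, where $L(\alpha)$ is large.
\item In that regime the rearrangement gives $\|\pi^\perp(X,t)-\pi^\perp(Y,s)\|\le \tfrac{3\delta}{8-\delta}\,\pd(\pi(X,t),\pi(Y,s))$. This exceeds the bound $2\delta$ once $\delta>13/2$, and gives nothing at all once $\delta\ge 8$.
\item So the issue is not finiteness of $\delta$ but its size. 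You need $C\ve'\le\min\{1,\delta\}/8$, which is what the paper's choice $\ve'\le\min\{1,L(\alpha),L(\alpha)^{-1}\}/(10CN)$ achieves. With it you get $\tfrac78\|\pi^\perp(X,t)-\pi^\perp(Y,s)\|\le\tfrac{3\delta}{8}\pd(\pi(X,t),\pi(Y,s))$ for every $\delta$.
\item Your error constant must also depend on $\delta$, not only on $N,\kappa$. Replacing $\|\pi(X',t)-\pi(Y',s)\|$ by $\|\pi(X,t)-\pi(Y,s)\|$ costs an extra $\tfrac{\delta}{4}\cdot 2\ve'\ell(Q)$.
\end{itemize}

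Your final hedge about points of $F$ outside $16\kappa B_{Q(S)}$ is unnecessary. If $d(X,t)=0$, then $(X,t)$ is at zero distance from cubes of $S$. All of these lie in the closed ball $B_{Q(S)}$, so $F\subseteq B_{Q(S)}\subseteq 16\kappa B_{Q(S)}$. Hence injectivity of $\pi$ and the $\Lip(1,1/2,2\delta)$ bound for $\hat\psi$ hold on all of $F$ with no restriction.
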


\begin{proof}
	Let $N \geq 1$ and let $(X,t) , (Y,s) \in E \cap 16\kappa B_{Q(S)}$ such that $\min\{d(X,t) , d(Y,s)\} \leq N \pd(X,t,Y,s)$. Suppose, without loss of generality, that $d(X,t) \leq N \pd(X,t,Y,s)$. Now, let $Q \in S$ such that 
	\begin{align}
		\pdist(X,t,Q) + \ell(Q) \leq 2N \pd(X,t,Y,s). 
	\end{align}
	By taking an ancestor, if necessary, we can suppose that $\kappa^{-1} \pd(X,t,Y,s) \lesssim \ell(Q) \lesssim N \pd(X,t,Y,s)$. So long as $K/\kappa$ is sufficiently large, the first inequality implies that $(X,t) ,(Y,s) \in 8KB_{Q}$. Thus, recalling \eqref{e:new-flat-constant}, we have 
	\begin{align}
		\pdist(X,t,P_Q) + \pdist(Y,s,P_Q) \leq 2\ve' \ell(Q) \leq C \ve N \pd(X,t,Y,s). 
	\end{align}
	Let $(X',t) = \pi_Q(X,t)$ and $(Y',s) = \pi_Q(Y,s)$. Since $\angle(P_Q,P) \leq \alpha$, it follows from Lemma \ref{l:planes-make-graphs} that 
	\begin{align}
		\| \pi^\perp(X',t) - \pi^\perp(Y',s) \| \leq \frac{L(\alpha)}{2} \| \pi(X,t) -  \pi(Y,s) \|.
	\end{align}
	Then, choosing
	\begin{align}
		\ve' \leq \min\{ 1 , L(\alpha) , L(\alpha)^{-1}  \}/(10CN),
	\end{align}
	it follows from this that 
	\begin{align}
		\| \pi^\perp(X,t) - \pi^\perp(Y,s) \| &\leq C\ve N \pd(X,t,Y,s) + \| \pi^\perp(X',t) - \pi^\perp(Y',s) \| \\
		&\leq C\ve N \pd(X,t,Y,s) + \frac{L(\alpha)}{4} \| \pi(X',t) - \pi(Y',s) \| \\
		&\leq C\ve N \pd(X,t,Y,s) + \frac{L(\alpha)}{4} \big[\| \pi(X,t) - \pi(Y,s) \| \\
		&\hspace{3em} + \ve N \pd(X,t,Y,s) \big] \\
		&\leq \| \pi^\perp(X,t) - \pi^\perp(Y,s) \|/2 + \frac{L(\alpha)}{2} \| \pi(X,t) - \pi(Y,s) \|.
	\end{align}
	Rearranging the above inequality finishes the proof. 
\end{proof}

Following the arguments in \cite{bortz2023corona} verbatim, one then constructs a Whitney-type decomposition $\{I_i\}_{i \in \N}$ of $P \setminus \pi(F)$ such that 
\begin{align}
	\pdiam(I_i) &\sim D(X,t) \quad \mbox{for all } (X,t) \in 10I_i, \ i \in \N; \label{e:I-D} \\
	\pdiam(I_i) &\sim \pdiam(I_j) \quad \mbox{for all } i,j \in \N \mbox{ such that } 10I_i \cap 10I_j \neq\emptyset.  \label{e:I_i-I_j}
\end{align}
Moreover, for each 
\begin{align}\label{lambdaset}
	i \in \Lambda \coloneqq \left\{ i \in \N \colon I_i \cap B_{2\kappa \ell(Q(S))}\left(X_{Q(S)},t_{Q(S)}\right)\right\} \neq \emptyset,
\end{align}
there is a corresponding $Q(i) \in S$ such that 
\begin{align}
	\kappa^{-1} \pdiam(I_i) \lesssim \ell(Q(i)) &\leq 120\pdiam(I_i); \label{e:size-Q(i)} \\
	\pdist(\pi(Q(i))  , I_i) &\leq 120\pdiam(I_i). \label{e:dist-Q(i)}
\end{align}
The cubes $\{I_i\}$ also satisfy (see Lemma 4.6 of \cite{bortz2023corona})
\begin{align}\label{e:10-60}
    10 \diam I_i \leq D(Y,s) \leq 60 \diam I_i, \,\,\, \forall (Y,s) \in 10I_i.
\end{align}
Thus, if $10I_j \cap 10I_i \neq \emptyset$, $\diam I_i \approx \diam I_j$.

For each $i \in \Lambda$, let $A_i \colon P \to P^\perp$ be the affine function whose graph is $P_{Q(i)}$.  For $i \notin \Lambda$, we set $A_i\colon P \to P^\perp$ be the function whose graph is $P_{Q(S)}$. For $i \notin \Lambda$, set $A_i \equiv 0$. Since $\angle(P_Q,P) \leq \alpha$ for each $Q \in S$, it follows from Lemma \ref{l:planes-make-graphs} that 
\begin{align}
	|A_i(X,t) - A_i(Y,s) | \lesssim \delta \pd(X,t,Y,s) 
\end{align}
for all $(X,t) , (Y,s) \in P$ and $i \in \N$. Moreover, following the argument given in the proof of \cite[Lemma 4.18]{bortz2023corona}, we also have 
\begin{align}\label{e:small-change-A}
	|A_i(X,t) - A_j(X,t) | \lesssim \ve' \min\{\ell(I_i),\ell(I_j)\}
\end{align}
whenever $(X,t) \in 100I_i \cap 100I_j$ and $10I_i \cap 10I_j \neq\emptyset.$

Let $\{\varphi_i\}_{i \in \Lambda}$ be a partition of unity adapted to $\{3I_i\}_{i \in \Lambda}$ i.e., each $\varphi_i$ is supported on $3I_i$ and 
\begin{align}
    \sum_{i \in \Lambda} \varphi_i(X,t) = 1 \quad \mbox{ for all} \quad (X,t) \in \bigcup_{i \in \Lambda} 2I_i. 
\end{align}
The function $\psi \coloneqq \psi_S \colon P \to P^\perp$ from Lemma \ref{p:graph-construction} is then defined by 
\begin{align}\label{d:psidef}
	\psi(X,t) \coloneqq
	\begin{cases}
		\hat{\psi}(X,t) & \mbox{ if } (X,t) \in \pi(F); \\
		\sum_{i \in \Lambda} A_i(X,t) \varphi_i(X,t) & \mbox{ otherwise}.
	\end{cases}
\end{align}\\

This ends the construction of the function $\psi$ from Lemma \ref{p:graph-construction}. Given the above properties of $\{I_i\}$, $\{A_i\}$ and $\psi$, the proof of \eqref{e:E-close-to-gamma} proceeds exactly as \cite{bortz2023corona} and we direct the reader there for the details. It only remains to show \eqref{e:close-to-E} under the additional assumption \eqref{e:bilat-approx}. Recalling \eqref{e:constants}, condition \eqref{e:close-to-E} translates to 
\begin{align}\label{e:bilat-approx-a}
	\bilatbeta_E(8KB_Q,P_Q) \lesssim \ve'/K \mbox{ for all } Q \in S,
\end{align}
and it suffices to show the following. 

\begin{lemma}
	For all $(X,t) \in P \cap 2\kappa B_{Q(S)}$ we have 
	\begin{align}
		\pdist((X,t) + \psi(X,t) , E) \lesssim \ve' D(X,t) .
	\end{align}
\end{lemma}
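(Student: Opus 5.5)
We split according to whether $(X,t)\in\pi(F)$ or not. If $(X,t)\in\pi(F)$, then by Lemma \ref{l:invertible} we have $(X,t)+\psi(X,t)=(X,t)+\hat\psi(X,t)$, which is the unique point $(Y,t)\in F\subseteq E$ with $\pi(Y,t)=(X,t)$. Hence the left-hand side vanishes and there is nothing to prove. So the work is all in the case $(X,t)\in P\setminus\pi(F)$, with $(X,t)\in 2\kappa B_{Q(S)}$.

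In that case $(X,t)$ lies in some Whitney cube $I_j$, and since $(X,t)\in 2\kappa B_{Q(S)}$ we have $j\in\Lambda$. Every index $i$ with $\varphi_i(X,t)\neq0$ satisfies $(X,t)\in 3I_i$. Such $i$ meet $I_j$ in the sense $10I_i\cap10I_j\neq\emptyset$, so $\pdiam(I_i)\sim\pdiam(I_j)\sim D(X,t)$ by \eqref{e:I-D}, \eqref{e:I_i-I_j} and \eqref{e:10-60}. Boundedly many $i$ occur, and they all lie in $\Lambda$ after possibly enlarging $2\kappa$ harmlessly. Using $\sum_i\varphi_i=1$ near $(X,t)$ and \eqref{e:small-change-A}, we get
\[
\|\psi(X,t)-A_j(X,t)\|\le\sum_i\varphi_i(X,t)\|A_i(X,t)-A_j(X,t)\|\lesssim \ve'\,\pdiam(I_j)\lesssim\ve' D(X,t).
\]
Thus $(X,t)+\psi(X,t)$ lies within $C\ve' D(X,t)$ of the point $Z:=(X,t)+A_j(X,t)\in P_{Q(j)}$. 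It therefore suffices to show $\pdist(Z,E)\lesssim\ve' D(X,t)$.

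For this we use the bilateral hypothesis \eqref{e:bilat-approx-a} on the cube $Q(j)$. By \eqref{e:size-Q(i)} and \eqref{e:dist-Q(i)}, $\ell(Q(j))\lesssim\pdiam(I_j)$ and $\pdist(\pi(Q(j)),I_j)\lesssim\pdiam(I_j)$. Since $\pdiam(I_j)\lesssim \kappa\,\ell(Q(j))$, we get $\pd(\pi(X_{Q(j)},t_{Q(j)}),(X,t))\lesssim\kappa\,\ell(Q(j))$.

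Next we check that $Z$ lies in $8KB_{Q(j)}$. We compare the point of $P_{Q(j)}$ above $\pi(X_{Q(j)},t_{Q(j)})$ with $(X_{Q(j)},t_{Q(j)})$, using \eqref{e:new-flat-constant}. Then we use that $A_j$ is $\Lip(1,1/2,C\delta)$. Together these give $\pd(Z,(X_{Q(j)},t_{Q(j)}))\lesssim(1+\delta)\kappa\,\ell(Q(j))$, which is less than $8K\ell(Q(j))$ provided $K/\kappa$ is large. Once $Z\in P_{Q(j)}\cap 8KB_{Q(j)}$, \eqref{e:bilat-approx-a} gives a point of $E$ within $C\ve'\,\ell(Q(j))\lesssim\ve'\pdiam(I_j)\lesssim\ve' D(X,t)$ of $Z$. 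Combining this with the previous paragraph via the triangle inequality finishes the proof.

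The main obstacle is this last containment: checking that the chosen point of $P_{Q(j)}$ really sits inside the ball $8KB_{Q(j)}$ where the bilateral estimate applies. This requires the Whitney cube and $Q(j)$ to be at comparable scale and location, up to a factor $\kappa$, with $K/\kappa$ chosen large. A lesser issue is ensuring that every relevant index lies in $\Lambda$ when $(X,t)$ is near the edge of $2\kappa B_{Q(S)}$; for that we apply the estimate on a slightly smaller ball, or slightly enlarge the definition of $\Lambda$.
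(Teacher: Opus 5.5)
Your proof is correct and follows essentially the same route as the paper. You dispose of the case $(X,t)\in\pi(F)$, compare $\psi$ with $A_j$ via the partition of unity and \eqref{e:small-change-A}, place $(X,t)+A_j(X,t)$ in $P_{Q(j)}\cap 8KB_{Q(j)}$ using \eqref{e:size-Q(i)}, \eqref{e:dist-Q(i)} and $K/\kappa$ large, and finish with \eqref{e:bilat-approx-a} and the triangle inequality. Two minor remarks: the $\Lambda$ worry is unnecessary, since $\{\varphi_i\}$ is indexed by $\Lambda$ and sums to $1$ on $\bigcup_{i\in\Lambda}2I_i\ni(X,t)$; and the point of $P_{Q(j)}$ above $\pi(X_{Q(j)},t_{Q(j)})$ is exactly $(X_{Q(j)},t_{Q(j)})$, so \eqref{e:new-flat-constant} is not needed there.
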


\begin{proof}
	Fix some $(X,t) \in P \cap 2\kappa B_{Q(S)}$. If $(X,t) \in \pi(F)$ and $(Y,t) \in F$ is such that $(X,t) \in \pi(Y,t)$, it is clear that $(X,t) + \psi(X,t) = (Y,t) \in E$, and there is nothing to show. Thus, we will suppose instead that $(X,t) \not\in \pi(F)$. Let $i \in \N$ such that $(X,t) \in I_i$. Recalling that each $\varphi_j$ is supported on $3I_j$, it follows from \eqref{e:I_i-I_j} that there at most $C = C(n)$ indices $j \in \N$ such that $\varphi_j(X,t) \neq \emptyset$. By the left-most inequality in \eqref{e:size-Q(i)} and \eqref{e:dist-Q(i)}, we see that $(X,t) + A_i(X,t) \in P_{Q(i)} \cap C\kappa B_{Q(i)} \subseteq P_{Q(i)} \cap 8KB_{Q(i)}$, so long as $K / \kappa$ is sufficiently large. Combining the above observations with \eqref{e:I-D} and \eqref{e:small-change-A}, we first have
	\begin{align}\label{e:upper-bound-to-plane}
		\hspace{0.5em}\pdist((X,t) + \psi(X,t) , P_{Q(i)} \cap 8KB_{Q(i)}) &\leq  \left| A_i(X,t) - \sum_{j \in \N} A_j(X,t) \varphi_j(X,t) \right| \\
		&\leq \sum_{j \in \N} | A_i(X,t) - A_j(X,t) | \\
		&\lesssim \ve' \ell(I_i) \lesssim \ve' D(X,t). 
	\end{align}
	By \eqref{e:bilat-approx-a}, for each point in $(Y,s) \in P_{Q(i)} \cap 8KB_{Q(i)}$ there exists a corresponding point in $E$ whose distance from $(Y,s)$ is at most $\ve' \ell(Q(i)) \lesssim \ve' D(X,t)$. Inequality \eqref{e:close-to-E} now follows from this, \eqref{e:upper-bound-to-plane} and the triangle inequality. 
\end{proof}

\subsection{Modifications of the approximating graphs} In practice we will need the graphs in the corona decompositions to lie sufficiently close \textit{and} sufficiently far away from the set. This is made precise in the following lemma.

\begin{lemma}\label{l:modification}
	Let $A > 1$ and $\eta, \theta > 0$. Suppose $E \subseteq \R^{\adim+1}$ parabolic Ahlfors $(\idim+2)$-regular, $\cubes$ is a system of Christ-David cubes on $E$ and $S \subseteq \cubes$ is a stopping-time region for which there exists a plane $P \in \verticalplanes$ and an $\Lip(1,1/2,\eta)$ function $\psi \colon P \to P^\perp$ satisfying the conclusions of Theorem \ref{t:BWGL-bilat-Corona}. For $\theta = \theta(A)$ sufficiently small, there is a regular $\Lip(1,1/2)$ mapping $H \colon P \to P^\perp$ with $\|D^{1/2} H\|_* \lesssim 1$ such that, if 
	\begin{align}\label{e:psi-prime}
		\psi' \coloneqq \psi + \theta^\frac{15}{16} H
	\end{align}
	and $\Gamma'$ denotes the graph of $\psi'$ over $P$, then $\psi'$ is $\Lip(1,1/2,\eta + C\theta^{\frac{15}{16}})$,	
	\begin{align}\label{e:dist-estimates-1}
		\pdist(X,t,\Gamma') \sim \theta^\frac{15}{16}d_S(X,t) 
	\end{align}
	for all $(X,t) \in E \cap AB_{Q(S)}$ and 
	\begin{align}\label{e:dist-estimates-2}
		\pdist((X,t) + \psi'(X,t) ,E) \sim \theta^\frac{15}{16} D_{S,P}(X,t)
	\end{align}
	for all $(X,t) \in P \cap AB_{Q(S)}$. 

    Moreover, the map $H$ satisfies, for each $m \in \mathbb N$ and every $(X,t)\in P$ with $D(X,t)>0$,
    \begin{align}\label{e:Hests}
        D(X,t)^{2m-1}|\partial_t^mH(X,t)|
        +D(X,t)^{m-1}|\nabla^mH(X,t)| \leq c_{n,m}.
    \end{align}
\end{lemma}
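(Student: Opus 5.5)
The plan is to build $H$ as a smooth, Whitney-scale ``bump'' in the normal direction, so that adding $\theta^{15/16}H$ pushes the graph away from $E$ by an amount comparable to $\theta^{15/16}D$. Concretely, I would reuse the Whitney decomposition $\{I_i\}$ of $P\setminus \pi(F)$ and the partition of unity $\{\varphi_i\}$ from Section \ref{s:graphs} (with $D=D_{S,P}$, using \eqref{e:I-D}, \eqref{e:I_i-I_j} and \eqref{e:10-60}). Fix a unit vector $e\in P^\perp$ and define
\begin{align}
H(X,t) \coloneqq e\sum_{i} \ell(I_i)\,\varphi_i(X,t),
\end{align}
with $H=0$ on $\pi(F)$, so that $H\sim D\, e$ on all of $P$, with $|H|\approx D$. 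Since the $\varphi_i$ can be taken to be products of parabolic bumps with $|\nabla^m\varphi_i|\lesssim \ell(I_i)^{-m}$ and $|\partial_t^m\varphi_i|\lesssim \ell(I_i)^{-2m}$, and only boundedly many overlap with comparable sizes, the bounds \eqref{e:Hests} follow immediately. The same bounds give that $H$ is $\Lip(1,1/2)$ with constant $\lesssim 1$, using that $D$ is $1$-Lipschitz in the parabolic metric to handle pairs of points at different Whitney scales. This yields that $\psi'$ is $\Lip(1,1/2,\eta+C\theta^{15/16})$.

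For the half time derivative, I would bound $\|D^{1/2}H\|_*\lesssim 1$ by the standard criterion for regular Lipschitz functions. The plan is to control the parabolic BMO norm via the Carleson-type estimate for $\partial_t$ on Whitney regions: $|\partial_t H|\lesssim D^{-1}$ together with the Lipschitz bound, applied in the usual Hofmann--Lewis--Nyström argument, which gives $D^{1/2}_t H\in\mathrm{BMO}$ whenever $|\partial_t H|\lesssim 1/D$ and $H$ is $\Lip(1,1/2)$ with $H\lesssim D$. Alternatively, I would verify the square-function characterization $\int\!\!\int |\partial_t (H*\phi_r)|^2 r^3\,dr$ is Carleson, splitting scales $r<D$, where the smoothness bound gives an integrable contribution, and $r>D$, where $|H|\lesssim D$ and the fact that $D$ is Lipschitz dominate. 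I expect this step to be the main obstacle, since it is where regularity must be extracted from purely pointwise Whitney estimates.

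Finally, I would prove the distance estimates \eqref{e:dist-estimates-1} and \eqref{e:dist-estimates-2}. For the upper bounds, note that $\pdist(X,t,\Gamma')\le \pdist(X,t,\Gamma)+\theta^{15/16}\sup|H|$ locally, and by \eqref{e:corona-d}, $\pdist(X,t,\Gamma)\le\theta d_S\ll\theta^{15/16}d_S$; here one compares $D$ at the relevant projected point with $d_S(X,t)$ up to constants. The argument for \eqref{e:dist-estimates-2} is analogous, using \eqref{e:corona-D}. For the lower bounds, I would use that near $(X,t)$ at scale $\sim D$ the set $E$ lies within $\theta D$ of $\Gamma$, by \eqref{e:corona-d} applied at nearby points. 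Then $\Gamma'$ is displaced normally from $\Gamma$ by $\theta^{15/16}H\approx\theta^{15/16}D$, while $\Gamma$ has slope $\le\eta$. Therefore any point of $E$ within distance $c\theta^{15/16}D$ of $\Gamma'$ would be at distance $\gtrsim\theta^{15/16}D-\theta D-\eta\cdot c\theta^{15/16}D$ from $\Gamma$, which is a contradiction once $\theta=\theta(A)$ and $c$ are small. The exponent $15/16$ is used exactly to absorb the $\theta D$ error terms, together with the comparisons $D\approx d_S$ inside $AB_{Q(S)}$, which cost powers of $A$.
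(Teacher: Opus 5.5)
Your proposal is correct. Your proof of \eqref{e:dist-estimates-1}--\eqref{e:dist-estimates-2} is the paper's argument: the triangle inequality, comparison of $\pdist(\cdot,\Gamma)$ with the vertical offset from a graph of small Lipschitz constant, and a contradiction in which $\theta\ll\theta^{15/16}$ absorbs the errors.

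The real difference is how you obtain $H$. The paper constructs nothing. It sets $H=\tilde H v$ with $\tilde H$ from \cite[Lemma 3.24]{bortz2023carleson} applied to $D_{S,P}$, which gives at once $|\tilde H|\sim D_{S,P}$, \eqref{e:Hests}, the Lipschitz bound and $\|D^{1/2}H\|_*\lesssim 1$. It also imports $d_S(Y)\lesssim_A D_{S,P}(\pi(Y))$ for $Y\in E$ from \cite[Equation 4.25]{bortz2023carleson}.

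You build $H=e\sum_i\ell(I_i)\varphi_i$ by hand. Take the partition of unity over all $i$, not only $i\in\Lambda$, with parabolic derivative bounds of all orders. This makes \eqref{e:Hests} and the Lipschitz bound immediate, but regularity is then yours to prove.

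Your splitting of scales works, but at scales $r\gtrsim D(z)$ a pointwise bound is not enough. Bounding $\beta_H(z,r)$, or $r|\partial_t(H*\phi_r)(z)|$, by a constant leaves $\int_{Q_R}\log(R/D)$, which is infinite if $\pi(F)$ has positive measure. (Also, the scale-invariant weight is $r\,dr$, not $r^3\,dr$, when $\phi_r$ lives at parabolic scale $r$.) Instead take the affine function $0$, so that $\beta_H(z,r)^2\lesssim\fint_{Q_r(z)}(D/r)^2$, and apply Fubini. Since $D$ is $1$-Lipschitz, $w\in Q_r(z)$ and $r\gtrsim D(z)$ force $r\gtrsim D(w)$. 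This part is therefore $\lesssim\int_{Q_{CR}}D(w)^2\int_{r\gtrsim D(w)}r^{-3}\,dr\,dw\lesssim R^{n+2}$. For $r\lesssim D(z)$, Taylor's theorem and \eqref{e:Hests} give $\beta_H(z,r)\lesssim r/D(z)$. Together this is the global version of \eqref{e:betabig}--\eqref{e:betasmall}.

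The comparison $d_S\approx D_{S,P}\circ\pi$ on $E\cap AB_{Q(S)}$, which you invoke without proof, follows directly from \eqref{e:corona-d}. If $Q\in S$ and $Z\in Q$ nearly realize $D_{S,P}(\pi(Y))$, then $Y$ and $Z$ both lie within $\theta d_S$ of $\Gamma$. Then \eqref{e:distance-to-graphs} gives $d_S(Y)\le\ell(Q)+\|Y-Z\|\lesssim D_{S,P}(\pi(Y))+\theta d_S(Y)$.

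Your route is self-contained and shows that regularity comes for free from $|H|\lesssim D$ plus Whitney-scale smoothness. The paper's route is shorter because it outsources exactly these points to \cite{bortz2023carleson}.
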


\begin{proof}
	We can assume, without loss of generality, that $P = \R^\idim \times \{0\}^{\adim-\idim} \times \R$. We will denote by $\pi$ the projection onto $P$. Fix a unit vector $v\in P^\perp$. The function $H$ is defined by $H(X,t) \coloneqq \tilde{H}(X,t) v$, where $\tilde{H} \colon P \to \R$ is the function constructed by applying \cite[Lemma 3.24]{bortz2023carleson} to $D_S \coloneqq D_{S,P}$. It is then immediate that $H$ satisfies the properties listed above and, furthermore, 
	\begin{align}
		D_S(X,t) \sim |\tilde{H}(X,t)|
	\end{align}
	for all $(X,t) \in P$. Defining $\psi'$ as in \eqref{e:psi-prime}, it now only remains to check \eqref{e:dist-estimates-1} and \eqref{e:dist-estimates-2}. Before doing this, we note an important fact that, whenever $\varphi \colon P \to P^\perp$ is $\Lip(1,1/2,L)$, if $\Lambda$ denotes its graph and $(Y,s) = (y,y',s) \in \R^{\adim+1}$, then 
	\begin{align}\label{e:distance-to-graphs}
		\pd((Y,s) , (y,\varphi(y,s),s)) \lesssim_L \pdist(Y,s,\Lambda). 
	\end{align}
	
	We start with \eqref{e:dist-estimates-1}. Fix some $(X,t) = (x,x',t) \in E \cap AB_{Q(S)}$. Here, and throughout the remainder of the proof, $\Gamma$ will denote the graph of $\psi$ over $P$. First, by the triangle inequality, \eqref{e:corona-d}, \eqref{e:distance-to-graphs} and the definition of $\psi'$, we have 
	\begin{align}
		\pdist(X,t,\Gamma') &\leq   \pd\left( (X,t), (x,\psi'(x,t),t) \right) \\
		&\leq \pd((X,t) , (x,\psi(x,t),t)) + \pd(  (x,\psi(x,t),t) , (x,\psi'(x,t),t)  ) \\
		&\lesssim \pdist(X,t,\Gamma) + \theta^\frac{15}{16} |\tilde{H}(\pi(X,t))| \lesssim \theta d_S(X,t) +  \theta^\frac{15}{16} D_S(\pi(X,t)) \\
		&\lesssim \theta^{\frac{15}{16}} d_S(X,t). 
	\end{align}
	For the converse, we apply \eqref{e:corona-d}, \eqref{e:distance-to-graphs} twice, and choose $\theta$ small enough to get 
	\begin{align}
		\pdist(X,t,\Gamma') &\gtrsim \pd((X,t) , (x,\psi'(x,t),t)) \\
		&\geq \pd( (x,\psi'(x,t),t) , (x,\psi(x,t),t) ) - \pd((X,t) , (x,\psi(x,t),t)) \\
		&\gtrsim \theta^\frac{15}{16} D_S(\pi(X,t)) - \theta d_S(X,t)  \gtrsim \theta^\frac{15}{16} D_S(\pi(X,t)) .
	\end{align}
	
	Now we prove \eqref{e:dist-estimates-2}. Fix some $(X,t) = (x,0,t) \in P \cap AB_{Q(S)}$. First, by the triangle inequality, \eqref{e:corona-D} and the definition of $\psi'$, 
	\begin{align}
		\pdist( (x,\psi'(x,t),t) , E) &\leq \pd( (x,\psi'(x,t),t) , (x,\psi(x,t),t) ) \\
		&\hspace{2em}+ \pdist((x,\psi(x,t),t) , E) \\
		&\leq \theta^\frac{15}{16} |\tilde{H}(X,t)| + \theta D_S(X,t) \lesssim \theta^\frac{15}{16} D_S(X,t). 
	\end{align}
	For the converse inequality, we note first by \eqref{e:distance-to-graphs} and the definition of $\psi'$ that 
	\begin{align}\label{e:lower-bound-psi-prime}
		\pdist((x,\psi'(x,t),t),\Gamma) \geq c\theta^\frac{15}{16} D_S(X,t) 
	\end{align}
	for some absolute constant $c > 0$. Now, suppose towards a contradiction that there exists some $(Y,s) = (y,y',s) \in E$ such that 
	\[\pdist((Y,s) , (x,\psi'(x,t),t)) \leq \frac{c \theta^\frac{15}{16} }{4}D_S(X,t).\]
	Combining the above inequality with the fact that $D_S$ is 1-Lipschitz, we have $D_S(\pi(Y,s)) \leq 2D_S(\pi(Y,t))$ for $\theta$ sufficiently small. By \cite[Equation 4.25]{bortz2023carleson}, we have $d_S(Y,s) \lesssim_A D_S(\pi(Y,s))$. In particular, it follows that $d_S(Y,s) \lesssim_A D_S(X,t)$. Then, for $\theta = \theta(A)$ small enough, we have by this and \eqref{e:corona-d} that
	\begin{align}
		 \pdist((x,\psi'(x,t),t) , \Gamma) &\leq \pd((x,\psi'(x,t),t) , (Y,s)  )  +  \pdist(Y,s,\Gamma) \\
		 &\leq \frac{c \theta^\frac{15}{16} }{4}D_S(X,t) + C\theta d_S(Y,s) \\
		 &\leq  \frac{c \theta^\frac{15}{16} }{4}D_S(X,t) + C_A\theta D_S(X,t) \leq  \frac{c \theta^\frac{15}{16} }{2}D_S(X,t), 
	\end{align}
	which contradicts \eqref{e:lower-bound-psi-prime}.

    Finally, we note that \eqref{e:Hests} is a direct consequence of Lemma 3.24 of \cite{bortz2023carleson}.
\end{proof}

\section{Application to the Parabolic David-Semmes Problem}\label{s:Riesz}

\begin{definition}[Riesz kernels, Riesz transforms]
    We define the Riesz kernel with homogeneity $n+2$, $1 \leq n \leq d-1$, as
    \begin{align}\label{rieszkernel}
        K_n(X,t) = K(X,t) := \frac{X}{|t|^{\frac{n+1}{2}+1}} \exp\bigg(-\frac{|X|^2}{4t}\bigg)\mathds{1}_{t>0}.
    \end{align}
    Notice that this kernel is vector valued with $d$ spatial components.  We also define the $\epsilon$-truncations of the above kernels
    \begin{align}\label{truncrieszkernel}
    K_{n,\epsilon}(X,t) = K_{\epsilon}(X,t) =\mathds{1}_{\|(X,t)\|>\epsilon} \frac{X}{|t|^{\frac{n+1}{2}+1}} \exp\bigg(-\frac{|X|^2}{4t}\bigg)\mathds{1}_{t>0}.
    \end{align}
    Given a $(n+2)$-ADR measure $\mu$ on $\mathbb R^{d+1}$, we define the $(n+2)$-homogeneous, $\epsilon$-truncated Riesz transform for $f\in L^2(\mu)$ as 
    \begin{align}\label{truncatedtransforms}
        \mathcal R_{\mu,\epsilon}f(X,t) := \int K_{n,\epsilon}((X,t)-(Y,s)) f(Y,s) d\mu(Y,s).
    \end{align}
    We say that $\mathcal R_\mu : L^2(\mu) \rightarrow L^2(\mu)$ if, for each $f \in L^2(\mu)$,
    \begin{align}\label{boundedtransforms}
        \big\|\sup_{\epsilon>0} |\mathcal R_{\mu,\epsilon}f|\big\|_{L^2(\mu)} \lesssim \|f\|_{L^2(\mu)}.
    \end{align}
    Notice that a simple duality argument shows that, if $\mathcal R^-$ is the singular integral operator associated with the kernel $K(-X,-t)$, then
    \begin{align}\label{boundedduals}
      \big\|\sup_{\epsilon>0} |\mathcal R^-_{\mu,\epsilon}f|\big\|_{L^2(\mu)} \lesssim \|f\|_{L^2(\mu)}.  
    \end{align}
\end{definition}

In this section we prove the following theorem, which is just Theorem \ref{t:riesztheorem} restated for the reader's convenience. 
\begin{theorem}\label{t:riesztheorem-restate}
    Suppose $\mu$ is a parabolic $(n+2)$-ADR measure on $\mathbb R^{d+1}$, $1 \leq n \leq d-1$ which satisfies BAUP, and such that $\mathcal R:L^2(\mu) \rightarrow L^2(\mu)$.  Then $\mu$ is parabolic uniformly rectifiable.
\end{theorem}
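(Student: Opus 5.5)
By Theorem \ref{t:characterisation}, the BAUP for $\supp\mu$ gives the BWGL, and then Theorem \ref{t:BWGL-bilat-Corona} gives a bilateral corona decomposition. We take parameters $A$ large and $\eta,\theta$ small, so that for each $S\in\cF$ there is a graph $\Gamma_S$ of a $\Lip(1,1/2,\eta)$ map $\psi_S$ satisfying \eqref{e:corona-d} and \eqref{e:corona-D}. We then apply Lemma \ref{l:modification} to replace $\psi_S$ by $\psi_S'=\psi_S+\theta^{15/16}H$. The new graph $\Gamma_S'$ still has small Lipschitz constant $\eta+C\theta^{15/16}$. It also stays at distance comparable to $\theta^{15/16}d_S$ from $E$, both from above and from below.

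The goal is to transfer $L^2$ boundedness of $\mathcal R$ from $\mu$ to the measure $\sigma_S':=\pHmeasure^{n+2}|_{\Gamma_S'}$, with bounds uniform in $S$. We will then invoke \cite[Theorem 1.1]{hoffman2026riesz}: a small-constant parabolic Lipschitz graph carrying a bounded Riesz transform is parabolic uniformly rectifiable, so $\psi_S'$ is a regular parabolic Lipschitz function with uniform BMO bound on $D^{1/2}_t\psi_S'$. Since $\theta^{15/16}H$ is itself regular, with $\|D^{1/2}H\|_*\lesssim 1$, the function $\psi_S$ is regular as well, with uniform constants.

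The corona decomposition then becomes a bilateral corona decomposition by regular $\Lip(1,1/2)$ graphs with Carleson-packed tops and bad cubes. By \cite{bortz2022coronizations,bortz2023corona} in codimension one, and their general-codimension analogues via the strong geometric lemma, this yields parabolic uniform rectifiability. Equivalently, we sum $\beta_2$-numbers over each stopping region, control them by the $\beta_2$-numbers of $\Gamma_S$, and use the packing of tops.

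The transference is the standard David--Semmes / Tolsa scheme. We decompose $\Gamma_S'\cap AB_{Q(S)}$ into Whitney-type pieces: points projecting onto $\pi(F)$, where $D_{S,P}=0$, and regions of size $\sim D_{S,P}(X,t)$. Each Whitney piece of $\Gamma_S'$ at scale $r$ is matched with a cube $Q\in S\cup\min(S)$ of comparable size, at distance $\lesssim r$, such that $\mu(Q)\sim\sigma_S'(\text{piece})$. This uses the bilateral estimates \eqref{e:dist-estimates-1} and \eqref{e:dist-estimates-2} together with Ahlfors regularity.

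Next we define the averaging operators between $L^2(\mu)$ and $L^2(\sigma_S')$ and compare $\mathcal R_{\sigma_S'}$ with the averaged $\mathcal R_\mu$. The error kernel is controlled by the standard parabolic Calder\'on--Zygmund smoothness of $K_n$, since the pieces are separated from each other by $\gtrsim\theta^{15/16}\times$ their scale (this is exactly why the lower distance bounds in Lemma \ref{l:modification} are needed). The error is then dominated by a positive operator of Whitney/Schur type, which is bounded on $L^2$. The derivative bounds \eqref{e:Hests} ensure that $\Gamma_S'$ is locally flat at each Whitney scale, so $\sigma_S'$ is ADR with constants independent of $S$ and the local Riesz integrals over single pieces are uniformly bounded. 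Outside $AB_{Q(S)}$ we cut off using the size decay of $K_n$.

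The main obstacle is this comparison step. The kernel $K_n$ is not symmetric in time, and the maximal truncations must be handled, so the error analysis has to use both $\mathcal R$ and $\mathcal R^-$ via \eqref{boundedduals}. We also need to check that all constants are uniform in $S$, so that the small-constant hypothesis of \cite{hoffman2026riesz} applies for every $S$ simultaneously.
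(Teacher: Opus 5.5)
Your reduction chain is the paper's: BAUP $\Rightarrow$ BWGL $\Rightarrow$ bilateral corona (Theorem \ref{t:BWGL-bilat-Corona}), then modification by Lemma \ref{l:modification}, then uniform bounds for $\mathcal R$ on $\Gamma_S'$, then \cite[Theorem 1.1]{hoffman2026riesz}, then the coronization results of \cite{bortz2022coronizations}. The transference is organized differently. You use averaging operators and one global comparison with a Schur-type error. The paper instead first proves the mixed bound $\mathcal R_\mu\colon L^2(\mu)\to L^2(\sigma_s)$ (Lemma \ref{sepbound}). This uses that each $\Gamma_i'$ lies at distance $\geq 4c_*\diam(I_i)$ from all of $E$ (Lemma \ref{l:separation1}), together with a comparison against points of $Q(i)$. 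The paper then dualizes (Lemma \ref{l:duality}) and uses the dual bound to control the far part of $\mathcal R_{\sigma_s}$ on each $\Gamma_i'$. For these far-field terms either organization is workable. Note that the separation from Lemma \ref{l:modification} is between pieces of $\Gamma'$ and $E$; adjacent pieces of $\Gamma'$ are contiguous, not separated from each other.

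The gap is the near-diagonal step, which you dispose of in one clause. Local flatness does not by itself give uniform bounds for the Riesz integrals over a single piece and its neighbours, and this is exactly where kernel smoothness gives no gain. In the parabolic setting, flatness or a small Lipschitz constant is not enough: a non-regular $\Lip(1,1/2)$ graph with small constant has unbounded $\mathcal R$, which is why \cite{hoffman2026riesz} has content. What you need is regularity of a localized graph function, followed by a boundedness theorem for singular integrals on regular graphs.

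The paper does this as follows.
\begin{itemize}
\item It localizes, setting $\psi_i'=\eta_i(\psi'-\psi'(x_{I_i},t_{I_i}))$.
\item It uses the Whitney-scale bounds $\|\partial_t\psi_i'\|_\infty+\|\nabla^2\psi_i'\|_\infty\lesssim\ell(I_i)^{-1}$. For the $\psi_S$ part these come from the first conclusion of Theorem \ref{t:BWGL-bilat-Corona}, not from \eqref{e:Hests}, which concerns only $H$.
\item These bounds give $\beta_i((x,t),r)\lesssim\min\{r/\ell(I_i),\ell(I_i)/r\}$, hence a Carleson bound for the $\beta$-square function, so $\psi_i'$ is regular.
\item Finally, \cite[Theorem 5]{bortz2021sios} gives the local bound.
\end{itemize}

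The same tool is needed away from $Q(S)$. There $\Gamma'$ coincides with the unbounded regular graph $\Gamma_H$ of $\theta^{15/16}H$. Size decay of $K_n$ controls its interaction with the near region, but not its interaction with itself. The paper handles that by applying \cite{bortz2021sios} on $\Gamma_H$, using $\|D^{1/2}H\|_*\lesssim 1$. You invoke this property only to deduce that $\psi_S$ is regular, but it is also needed here.
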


For the remainder of this section we fix a measure $\mu$ satisfying the hypotheses of \ref{t:riesztheorem-restate}.  The overall strategy for the proof of Theorem \ref{t:riesztheorem-restate} is as follows.  By Theorem \ref{BAUPBWGL}, we know that $\mu$ satisfies BWGL.  Hence, by Theorem \ref{t:BWGL-Corona}, we also know that $\mu$ admits corona decompositions in terms of Lip-$(1,1/2,\eta)$ graphs, with $\eta$ as small as we like.  Moreover, by Theorem \ref{t:BWGL-bilat-Corona}, we can assume that the corona graphs approximate bilaterally.  Let $\mu_\Gamma$ be the Hausdorff measure restricted to such a graph.  We will prove that $\mathcal R_{\mu_{\Gamma}}:L^2(\mu_{\Gamma}) \rightarrow L^2(\mu_{\Gamma})$, with uniform bounds over all $\Gamma$ in the corona decomposition of $\mu$.  This step crucially uses the bilateral approximation $\mu$ by $\mu_\Gamma$.  By \cite[Theorem 1.1]{hoffman2026riesz}, this implies that the measures $\mu_{\Gamma}$ are uniformly rectifiable with uniform bounds over all $\Gamma$ in consideration.  By Theorem 1.1 and Proposition 2.23 of \cite{bortz2022coronizations}, we will immediately conclude that $\mu$ is itself a parabolic uniformly rectifiable measure.

We now proceed with the proof in full technicality.  Notice that, because $\mu$ is an Ahlfors $(n+2)$-regular measure, $E = \supp(\mu)$ is a $(n+2)$-ADR set in $\mathbb R^{d+1}$.  Thus, we can apply all the theorems previously proved in this paper to $E$.  We endow the measure $\mu$ (or equivalently, the set $E$) with a system of Christ-David cubes $\cD$.  {Choose $A,M>1$ as in the proof of Theorem \ref{t:BWGL-bilat-Corona}, with $\kappa=A/2\geq2$ and $M\geq13\kappa$. Choose $\theta>0$ sufficiently small for Lemma \ref{l:modification}, and then choose $\eta,\ve'>0$ sufficiently small depending on $\theta$, with $\ve=\ve'/M$.}  Because $\mu$ satisfies BAUP, Theorem \ref{t:BWGL-bilat-Corona} guarantees the existence of a coronization $(\cB,\cG,\cF)$ of $\cD$ such that, for each $S \in \cF$ with associated top cube $Q(S)$, there is a vertical plane $P = P_S\in\verticalplanes$ and a Lip-$(1,1/2,\eta)$ function $\psi = \psi_S:P \rightarrow P^\perp$ such that \eqref{e:corona-d} and \eqref{e:corona-D} both hold.  We fix such an $S$, $P$, and $\psi$ for the remainder of this section.  We use Lemma \ref{l:modification} to produce a function $H:P \rightarrow P^\perp$ with $\|D^{1/2} H\|_* \lesssim 1$ such that, if
$$\psi' := \psi + \theta^{15/16}H,$$
then \eqref{e:dist-estimates-1} and \eqref{e:dist-estimates-2} hold.  We define the graphs $\Gamma = \{(X,t) + \psi(X,t) : (X,t) \in P\}$, and $\Gamma' = \{(X,t) + \psi'(X,t) : (X,t) \in P\}.$

We recall some aspects of the construction of $\psi$ from Section \ref{s:graphs}.  The set $E$ has a ``contact region"
$$F \coloneqq \{(X,t) \in E: d(X,t)=0\}.$$
For $(Y,s)\in F$, we have $D(\pi(Y,s))\leq d(Y,s)=0$, so that $H(\pi(Y,s))=0$. This, combined with the fact that $\psi=\hat\psi$ on $\pi(F)$ (recall \eqref{d:psidef} and Lemma \ref{l:invertible}), it follows that \[F\subseteq\Gamma'.\] 
We also have a partition of $P \setminus \pi_P(F)$ is partitioned into ``Whitney cubes" $\{I_i\}$ satisfying \eqref{e:I-D} and \eqref{e:I_i-I_j}, and we introduced the index set
$$\Lambda := \{i \in \mathbb N: I_i \cap B_{2\kappa \ell(Q(S))}(X_{Q(S)},t_{Q(S)}) \neq \emptyset\}.$$
By \eqref{d:psidef},
\begin{align}
    \psi(x,t)=0 \quad\mbox{for all }(x,t)\in P\setminus\Big(\pi(F)\cup\bigcup_{i\in\Lambda}3I_i\Big).
\end{align}
For the arguments of this section, it will also be important to consider the cubes with indices in
$$\Lambda_0 := \{i \in \mathbb N: I_i \cap B_{4\kappa \ell(Q(S))}(X_{Q(S)},t_{Q(S)}) \neq \emptyset\}.$$
For any such index $i$, we define 
\[\Gamma'_{i} = \pi^{-1}(3I_i) \cap \Gamma'.\]
We further define
$$\Gamma'_s := F\cup\bigcup_{i\in\Lambda}\Gamma'_i,$$
$$\Gamma'_{f} := F\cup\bigcup_{i\in\Lambda_0}\Gamma'_i.$$
We let $\sigma$ denote the parabolic Hausdorff measure of homogeneous dimension $(n+2)$ restricted to $\Gamma'$.  We will also use $\mgs$ and $\mgf$ to denote the restriction of $\mg$ to $\Gamma'_s$ and $\Gamma'_f$, respectively.

On $F$, these measures agree with $\pHmeasure^{n+2}|_F$. The ADR lower bound for $\mu$ and the covering definition of $\pHmeasure^{n+2}$ give \begin{align}\label{e:measures-on-F}
\sigma(B)\leq C\mu(B) \mbox{ for every Borel set } B\subseteq F,
\end{align}
where $C$ depends only on the dimensions and the ADR constants of $\mu$.

At points of $F$, we use the ordinary truncated transform $\mathcal R_{\nu,\epsilon}f$, given by the same integral for $\epsilon>0$.

For each $i \in \Lambda$, there is a corresponding cube $Q(i) \in S$ such that \eqref{e:size-Q(i)} and \eqref{e:dist-Q(i)} both hold.  If $i \in \Lambda_0\setminus \Lambda$, we set $Q(i) = Q(S)$. We actually can also set $Q(i)=Q(S)$ if $3I_i \cap \big(B_{2\kappa \diam(Q(S))}(X_{Q(S)},t_{Q(S)})\big)^c \neq \emptyset$, and still retain \eqref{e:size-Q(i)} and \eqref{e:dist-Q(i)}, as the next lemma shows.
\begin{lemma}
    Suppose $i \in \Lambda_0$ is such that $3I_i \cap \big(B_{2\kappa \diam(Q(S))}(X_{Q(S)},t_{Q(S)})\big)^c \neq \emptyset$.  Then    $$\kappa^{-1} \diam_p(I_i) \approx \ell(Q(S))$$
    and 
    $$\dist(I_i,\pi(Q(S))) \leq 60\diam_p (I_i).$$
\end{lemma}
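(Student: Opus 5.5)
The plan is to get both conclusions from the Whitney property \eqref{e:10-60}, namely $10\pdiam(I_i)\leq D(Y,s)\leq 60\pdiam(I_i)$ for $(Y,s)\in 10I_i$. We combine it with two elementary bounds on $D=D_{S,P}$ at points of $3I_i$: an upper bound coming from $Q(S)\in S$, and a lower bound valid far from $B_{Q(S)}$. Write $B^*=B_{2\kappa\diam(Q(S))}(X_{Q(S)},t_{Q(S)})$ and $x_0=\pi(X_{Q(S)},t_{Q(S)})=(X_{Q(S)},t_{Q(S)})$, the last equality holding because $P=P_S$ passes through the center of $Q(S)$.

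\textbf{Upper bound on $\pdiam(I_i)$.} Since $i\in\Lambda_0$, pick $(Z,u)\in I_i\cap B_{4\kappa\ell(Q(S))}(x_0)$. Taking $Q=Q(S)$ in the infimum defining $D$ gives
\begin{align}
D(Z,u)\leq \ell(Q(S))+\pdist((Z,u),\pi(Q(S)))\lesssim \kappa\,\ell(Q(S)).
\end{align}
By \eqref{e:10-60} this yields $\pdiam(I_i)\lesssim\kappa\,\ell(Q(S))$.

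\textbf{Lower bound on $\pdiam(I_i)$.} Pick $(W,v)\in 3I_i\setminus B^*$. For any $Q\in S$ we have $Q\subseteq Q(S)\subseteq B_{Q(S)}$, and $\pi$ is $1$-Lipschitz and fixes $x_0$, so $\pi(Q)\subseteq B_{\ell(Q(S))}(x_0)$. Since $\diam(Q(S))\gtrsim \ell(Q(S))$ (because $Q(S)\supseteq E\cap B_{\Cinner\ell(Q(S))}(x_0)$ and $E$ is Ahlfors regular) and $\kappa\geq 2$, we get
\begin{align}
D(W,v)\geq \pdist((W,v),B_{\ell(Q(S))}(x_0))\gtrsim \kappa\,\ell(Q(S)).
\end{align}
Applying \eqref{e:10-60} at $(W,v)\in 10I_i$ then gives $\pdiam(I_i)\gtrsim\kappa\,\ell(Q(S))$. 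Together with the upper bound, this proves $\kappa^{-1}\pdiam(I_i)\approx\ell(Q(S))$.

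\textbf{Distance bound.} Since $\pi(Q(S))\ni x_0$ and $I_i$ meets $B_{4\kappa\ell(Q(S))}(x_0)$, we have $\dist(I_i,\pi(Q(S)))\leq C\kappa\,\ell(Q(S))$. Combining with the lower bound $\kappa\,\ell(Q(S))\lesssim\pdiam(I_i)$ gives $\dist(I_i,\pi(Q(S)))\lesssim\pdiam(I_i)$. To get the explicit constant $60$, use \eqref{e:10-60} more directly: choose $Q\in S$ nearly attaining $D(Z,u)\leq 60\pdiam(I_i)$ at a point $(Z,u)\in I_i$. Then $\pdist((Z,u),\pi(Q))\leq 60\pdiam(I_i)$, and $\pi(Q)\subseteq\pi(Q(S))$, so $\dist(I_i,\pi(Q(S)))\leq 60\pdiam(I_i)$. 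This argument works for every Whitney cube.

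\textbf{Main obstacle.} The delicate step is the lower bound, because it is where the hypothesis $3I_i\not\subseteq B^*$ is actually used. Two things must be checked there. First, the radius $2\kappa\diam(Q(S))$ must dominate the radius $\ell(Q(S))$ of $B_{Q(S)}$ by a fixed factor; the parabolic ball geometry and the comparability $\diam(Q(S))\sim\ell(Q(S))$ handle this. Second, the projection $\pi(Q(S))$ must lie in a ball of radius $\ell(Q(S))$ about $x_0$, which uses $x_0\in P$.
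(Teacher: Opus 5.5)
Your proof follows the paper's: three applications of \eqref{e:10-60}. First, an upper bound on $D$ at a point of $I_i$, obtained by testing $Q=Q(S)$. Your use of a point of $I_i\cap B_{4\kappa\ell(Q(S))}(X_{Q(S)},t_{Q(S)})$ avoids the extra $\pdiam(I_i)$ term that appears when $D$ is evaluated at the centre of $I_i$. Second, a lower bound on $D$ at a point of $3I_i$ outside $B^*$. Third, the distance bound via $\pdist((Z,u),\pi(Q(S)))\leq D(Z,u)\leq 60\pdiam(I_i)$, exactly as in the paper.

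The one step that does not work as justified is the lower bound. Write $x_0=(X_{Q(S)},t_{Q(S)})$.

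\textbf{The problem.} You enclose $\pi(Q(S))$ in $B_{\ell(Q(S))}(x_0)$, whose points lie within $\pd$-distance $2\ell(Q(S))$ of $x_0$. So you need $2\kappa\diam(Q(S))$ to exceed $2\ell(Q(S))$ by a definite factor. The comparability $\pdiam(Q(S))\geq c\,\ell(Q(S))$ has a constant $c$ that depends on $\Cinner$ and the Ahlfors regularity constant, and $c$ can be far below $1/2$. With only $\kappa\geq 2$, the point $(W,v)$ may then lie inside $B_{\ell(Q(S))}(x_0)$, and your estimate gives nothing. Your ``main obstacle'' paragraph asserts that comparability handles this, but it does not without a lower bound on $\kappa$ in terms of $c$.

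\textbf{The fix, which is what the paper does.} Measure against $\pdiam(Q(S))$ directly. Since $x_0\in Q(S)$, and $\pi$ is $1$-Lipschitz for $\pd$ and fixes $x_0$, every point of $\pi(Q(S))$ lies within $\pd$-distance $\pdiam(Q(S))$ of $x_0$. On the other hand, $\pd((W,v),x_0)>2\kappa\pdiam(Q(S))$. Hence $D(W,v)\geq\pdist((W,v),\pi(Q(S)))\geq(2\kappa-1)\pdiam(Q(S))\geq\kappa\pdiam(Q(S))$ for every $\kappa\geq 1$. The comparability $\pdiam(Q(S))\gtrsim\ell(Q(S))$ is still needed to state the conclusion in terms of $\ell(Q(S))$, but now it enters only as a multiplicative constant at the end. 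Alternatively, you could take $\kappa$ large depending on the regularity constant, which is permitted since $A$ may be enlarged, but that dependence is unnecessary.
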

\begin{proof}
    We first notice that
    $$D(x_i,t_i) \leq \ell(Q(S)) + \dist(x_i,t_i,\pi(Q(S)) \lesssim \kappa \diam_p (Q(S))$$
    Hence, \eqref{e:10-60} tells us that $\ell( Q(S))\gtrsim  \kappa^{-1}\diam_p(I_i)$.  On the other hand, there is some point $(x,t) \in 3I_i \cap (B_{2\kappa \diam Q(S)}(x_{Q(S)},t_{Q(S)}))^c$, and this point clearly satisfies
    $D(x,t) \geq \kappa\diam_p (Q(S))$.  So, another application of \eqref{e:10-60} yields $\diam_p (I_i) \geq (\kappa/60) \diam_p (Q(S))$.

    Let $(x,t) \in I_i$ minimize $\inf_{(y,s) \in I_i}\dist(y,s,\pi(Q(S)))$.  Then, using \eqref{e:10-60} we find that $60 \diam_p(I_i) \geq D(x,t)$.  Then, since $D(x,t) \geq \dist(I_i,Q(S))$, this estimate implies $\dist(I_i,Q(S)) \leq 60 \diam_p(I_i).$
\end{proof}

We adopt the notation 
\[\tilde \Lambda = \{i\in \Lambda_0 : 3I_i \cap B_{2\kappa \diam Q(S)}(X_{Q(S)},t_{Q(S)})^c \neq \emptyset\}.\]  
We assume $Q(S) = Q(i)$ for each $i \in \tilde \Lambda$ for the remainder of this section.  Let $\Gamma_H$ be the graph of $\theta^{15/16}H$.  Recalling the definition of $\psi$ from Section \ref{s:graphs}, in particular that $A_i \equiv 0$ for all $i \not\in \Lambda$, we have $\supp(\psi) \subseteq B_{2\kappa \diam Q(S)}(X_{Q(S)},t_{Q(S)})$. As a consequence, it follows from the above definitions that
\begin{align}
\Gamma' \setminus \pi^{-1}(B_{2\kappa \diam Q(S)}(x_{Q(S)},t_{Q(S)})^c) = \Gamma_H \setminus \pi^{-1}(B_{2\kappa \diam Q(S)}(x_{Q(S)},t_{Q(S)})^c).
\end{align}

\begin{lemma}\label{l:separation1}
    There is a constant $c_*$ such that
    \begin{align}
        \min\left\{\dist_p(\Gamma_i',E) , \dist(Q(i),E \setminus 2B_{Q(i)}) \right\} \geq 4c_*\diam(I_i).
    \end{align}
    for all $i \in \Lambda_0$. 
\end{lemma}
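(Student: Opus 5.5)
The plan is to bound the two quantities in the minimum separately. In each case I want a lower bound that is a fixed multiple of $\pdiam(I_i)$, with constants depending only on $\theta$, $\kappa$, $n$, $d$ and the ADR constants. Then $c_*$ is taken to be one quarter of the smaller of the two constants.

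\emph{The term $\pdist(Q(i),E\setminus 2B_{Q(i)})$.} This one is elementary. Since $Q(i)\subseteq B_{Q(i)}=B_{\ell(Q(i))}(X_{Q(i)},t_{Q(i)})$, any point of $E\setminus 2B_{Q(i)}$ differs from the centre by more than $2\ell(Q(i))$ in space, or by more than $4\ell(Q(i))^2$ in time. The triangle inequality for $\|\cdot\|$ then gives $\pdist(Q(i),E\setminus 2B_{Q(i)})\gtrsim \ell(Q(i))$.
\begin{itemize}
\item For $i\in\Lambda$, \eqref{e:size-Q(i)} gives $\ell(Q(i))\gtrsim \kappa^{-1}\pdiam(I_i)$.
\item For $i\in\Lambda_0\setminus\Lambda$ and for $i\in\tilde\Lambda$, where we set $Q(i)=Q(S)$, the preceding lemma gives the same comparability.
\end{itemize}
In all cases this yields a bound of the form $c(\kappa)\pdiam(I_i)$.

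\emph{The term $\pdist(\Gamma_i',E)$.} Here the tool is \eqref{e:dist-estimates-2}. Every point of $\Gamma_i'$ has the form $(x,t)+\psi'(x,t)$ with $(x,t)\in 3I_i$. For such points,
\[
\pdist\big((x,t)+\psi'(x,t),E\big)\gtrsim \theta^{15/16}D_{S,P}(x,t),
\]
and by \eqref{e:10-60} we have $D_{S,P}(x,t)\geq 10\,\pdiam(I_i)$ on $10I_i\supseteq 3I_i$. Combining these gives $\pdist(\Gamma_i',E)\gtrsim\theta^{15/16}\pdiam(I_i)$.

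\emph{Main obstacle.} The difficulty is that \eqref{e:dist-estimates-2} was only stated for $(x,t)\in P\cap AB_{Q(S)}$, whereas $i\in\Lambda_0$ only says that $I_i$ meets $B_{4\kappa\ell(Q(S))}$, with $\kappa=A/2$. I would handle this in two steps.
\begin{enumerate}
\item Bound the size of $I_i$. Pick a point of $I_i$ in $B_{4\kappa\ell(Q(S))}$. At that point $D_{S,P}\leq \ell(Q(S))+\pdist(\cdot,\pi(Q(S)))\lesssim\kappa\,\ell(Q(S))$, so \eqref{e:10-60} gives $\pdiam(I_i)\lesssim \kappa\,\ell(Q(S))$. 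Hence $3I_i\subseteq C\kappa B_{Q(S)}$ for an absolute constant $C$.
\item Enlarge the region of validity. Apply Theorem \ref{t:BWGL-bilat-Corona} and Lemma \ref{l:modification} with $A$ replaced by $C\kappa$ throughout; this is permitted because $\theta$, $\eta$ and $\ve$ were chosen after $A$, with $\theta=\theta(A)$. Then \eqref{e:dist-estimates-2} holds on all of $3I_i$.
\end{enumerate}
If one prefers not to re-choose constants, there is an alternative for the part of $3I_i$ outside $2\kappa B_{Q(S)}$. There $\psi\equiv 0$, so $\Gamma'$ coincides with $\Gamma_H$, and one can rerun the contradiction argument from the proof of \eqref{e:dist-estimates-2}. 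That argument uses $|\tilde H|\sim D_{S,P}$, the fact that $D_{S,P}$ is 1-Lipschitz, and $d_S(Y,s)\lesssim D_{S,P}(\pi(Y,s))$. On this region every nearby point of $E$ also has $d_S\gtrsim\kappa\,\ell(Q(S))\sim D_{S,P}$, which is what the argument needs.

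Taking the minimum of the two lower bounds gives the claimed inequality with $4c_*=\min\{c(\kappa),\,c\,\theta^{15/16}\}$.
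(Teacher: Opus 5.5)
Your two-part argument is the paper's argument. The second term follows from the triangle inequality and $\ell(Q(i))\gtrsim\kappa^{-1}\pdiam(I_i)$. The first follows from the lower half of \eqref{e:dist-estimates-2} together with $D_{S,P}\sim\pdiam(I_i)$ on $3I_i$.

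The paper applies \eqref{e:dist-estimates-2} on all of $3I_i$ without comment, so the obstacle you flag is real. However, your remedy (2) does not remove it. In Section \ref{s:graphs}, $\kappa$ is tied to $A$ by \eqref{e:constants}, namely $\kappa=A/2$. So $AB_{Q(S)}=B_{2\kappa\ell(Q(S))}(X_{Q(S)},t_{Q(S)})$ is exactly the ball that defines $\Lambda$. Consequently, for every $i\in\Lambda_0\setminus\Lambda$ the cube $I_i$ misses $AB_{Q(S)}$ entirely, for every value of $A$. Rerunning Theorem \ref{t:BWGL-bilat-Corona} and Lemma \ref{l:modification} with a larger $A$ gives a new coronization in which $\kappa$, the Whitney cubes, $\Lambda$ and $\Lambda_0$ are all rescaled, and the same mismatch reappears. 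Shrinking $\Lambda_0$ does not help either, because $\Lambda_0$ must reach beyond $\bigcup_{j\in\Lambda}3I_j$ for the separation used for $II''$.

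So the bound off $AB_{Q(S)}$ has to be proved by hand, which is what your fallback attempts. As written, though, it omits the one input that actually fails there. The contradiction argument applies \eqref{e:corona-d} at the nearby point $(Y,s)\in E$, and that estimate is only asserted on $E\cap AB_{Q(S)}$. What replaces it is flatness of $E$ at the top cube. Since $Q(S)$ is a good cube, $\bilatbeta_E(MB_{Q(S)},P_S)\le 2\ve$ with $M\ge 13\kappa$ (or a larger multiple if needed). Hence $E\cap MB_{Q(S)}$ lies within $\ve'\ell(Q(S))$ of $P$.

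Also, $\psi\equiv 0$ off $2\kappa B_{Q(S)}$ is not guaranteed, since cubes $3I_j$ with $j\in\Lambda$ can protrude from that ball. What does hold, and suffices, is $|\psi|\lesssim(\ve'+\eta\kappa)\ell(Q(S))$ on $C\kappa B_{Q(S)}$. This is because each $A_j$ has slope $\lesssim\eta$ and its graph $P_{Q(j)}$ passes through a point of $E\cap B_{Q(S)}$.

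With $\eta,\ve'\ll\theta^{15/16}$, as chosen at the start of Section \ref{s:Riesz}, you can then argue directly. For $(x,t)\in 3I_i\setminus AB_{Q(S)}$ we have $D_{S,P}(x,t)\sim\kappa\ell(Q(S))$. So the point $(x,t)+\psi'(x,t)$ lies at spatial height at least $\theta^{15/16}|H|-|\psi|\gtrsim\theta^{15/16}\kappa\ell(Q(S))$ above $P$. Points of $E\cap MB_{Q(S)}$ lie within $\ve'\ell(Q(S))$ of $P$, and points of $E\setminus MB_{Q(S)}$ are at distance $\gtrsim\kappa\ell(Q(S))$. Combined with $\pdiam(I_i)\lesssim\kappa\ell(Q(S))$ from your step (1), this gives the required bound on the part of $\Gamma_i'$ that \eqref{e:dist-estimates-2} does not cover.
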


\begin{proof}
    Fix some $i \in \Lambda_0$. We start by estimating $\dist_p(\Gamma_i',E)$ from below. Fix any point in $(X,t) \in \Gamma_i'$. By definition, $\pi_P(X,t) \in 3I_i$. Thus, by \eqref{e:I-D},  $\diam_p(I_i) \sim D_S(\pi_P(X,t))$. It then follows directly from \eqref{e:dist-estimates-2} that $\dist_p(X,t,E) \gtrsim \diam_p(I_i)$. Since $(X,t)$ was arbitrary, this implies 
    \begin{align}
        \dist_p(\Gamma_i',E) \gtrsim \diam_p(I_i).
    \end{align}
    To estimate $\dist(Q(i),E \setminus 2B_{Q(i)})$ from below, fix some $(Y,s) \in Q(i)$. Since $Q(i) \subseteq B_{Q(i)}$, it follows from the triangle inequality and \eqref{e:dist-Q(i)} that  $\dist(Y,s,E \setminus 2B_{Q(i)}) \geq \ell(Q(i)) \sim \diam_p(I_i)$. Since $(Y,s)$ was arbitrary, this implies 
    \begin{align}
        \dist(Q(i),E \setminus 2B_{Q(i)}) \gtrsim \diam_p(I_i). 
    \end{align}
    The previous two display equations imply the desired estimate. 
\end{proof}

The following is immediate from the definition of $c_*$ above and the definition of the truncated kernel. 

\begin{corollary}\label{c:separation-kernels}
    Let $i \in \Lambda_0$ and $0 < \epsilon < c_* \diam_p(I_i)$. Then, for every $(X,t)\in\Gamma_i'$, $(X',t')\in Q(i)$, and $(Y,s)\in E\setminus 2B_{Q(i)}$, we have 
\[
K_\epsilon\bigl((X,t)-(Y,s)\bigr)
=
K\bigl((X,t)-(Y,s)\bigr)
\]
and
\[
K_\epsilon\bigl((X',t')-(Y,s)\bigr)
=
K\bigl((X',t')-(Y,s)\bigr).
\] 
\end{corollary}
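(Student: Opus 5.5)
The plan is to read Corollary \ref{c:separation-kernels} off from Lemma \ref{l:separation1} together with the definition of the truncated kernel \eqref{truncrieszkernel}. By definition,
\[
K_\epsilon(Z,u) = \mathds{1}_{\|(Z,u)\|>\epsilon}\,K(Z,u).
\]
Hence $K_\epsilon(Z,u)=K(Z,u)$ for every $(Z,u)$ with $\|(Z,u)\| > \epsilon$. So in each of the two identities it suffices to show that the parabolic norm of the difference of the two points exceeds $\epsilon$.

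For the first identity, fix $(X,t)\in\Gamma_i'$ and $(Y,s)\in E\setminus 2B_{Q(i)}$. Since $(Y,s)\in E$, Lemma \ref{l:separation1} gives
\[
\|(X,t)-(Y,s)\| = \pd(X,t,Y,s) \geq \pdist_p(\Gamma_i',E) \geq 4c_*\pdiam(I_i).
\]
The hypothesis $\epsilon < c_*\pdiam(I_i)$ then gives $\|(X,t)-(Y,s)\| > 4\epsilon > \epsilon$, so the indicator equals $1$.

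For the second identity, fix $(X',t')\in Q(i)$ and $(Y,s)\in E\setminus 2B_{Q(i)}$. The second quantity in the minimum of Lemma \ref{l:separation1} gives
\[
\|(X',t')-(Y,s)\| \geq \dist(Q(i),E\setminus 2B_{Q(i)}) \geq 4c_*\pdiam(I_i) > \epsilon,
\]
and we conclude the same way.

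The only point needing care is a compatibility check. The distance in the second term of Lemma \ref{l:separation1} is written as $\dist$ rather than $\pdist$, so I would confirm it is the parabolic distance, which is the one used in that lemma's proof via $B_{Q(i)}$ and the parabolic balls. I would also confirm that the norm in \eqref{truncrieszkernel} is the parabolic norm $\|\cdot\|$ from Section \ref{s:notation}. Both are consistent with the paper's conventions. If a Euclidean distance were intended, the comparability of the two metrics on bounded scales would only change $c_*$ by a constant, so the argument still goes through.
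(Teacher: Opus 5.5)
Your proof is correct and is exactly the paper's argument: the paper simply notes that the corollary is immediate from Lemma \ref{l:separation1} together with the indicator in \eqref{truncrieszkernel}. Your compatibility check gives the intended reading, since the proof of Lemma \ref{l:separation1} works with parabolic distances and $\pdiam(I_i)$ throughout, so the Euclidean fallback in your last sentence is not needed.
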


When reading the remainder of this section, the reader should recall that the collection $\{3I_i\}_i$ has the bounded overlap property, and thus $\{\Gamma'_i\}_i$ does as well.

\begin{lemma}\label{sepbound}
$\|\sup_{\epsilon>0}|{\mathcal R}_{\mu,\epsilon} f|\|_{L^2(\mgs)} \lesssim \|f\|_{L^2(\mu)}$ and $\|\sup_{\epsilon>0}|{\mathcal R}_{\mu,\epsilon} f|\|_{L^2(\mgf)} \lesssim \|f\|_{L^2(\mu)}$.
\end{lemma}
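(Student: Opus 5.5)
We reduce the bound on $\mgs$ and $\mgf$ to the assumed $L^2(\mu)$ bound for the maximal truncated Riesz transform, by comparing the transform at points of $\Gamma'_i$ with its values on the nearby cube $Q(i)$. Since $\Gamma'_f=F\cup\bigcup_{i\in\Lambda_0}\Gamma'_i$ and $\Gamma'_s\subseteq\Gamma'_f$, it suffices to treat $\mgf$. We split the left-hand side into the contribution of $F$ and of the pieces $\Gamma'_i$, $i\in\Lambda_0$. On $F$, \eqref{e:measures-on-F} gives
$$\int_F \sup_\epsilon|\mathcal R_{\mu,\epsilon}f|^2\,d\sigma\lesssim\int_F\sup_\epsilon|\mathcal R_{\mu,\epsilon}f|^2\,d\mu\lesssim\|f\|_{L^2(\mu)}^2,$$
directly from the hypothesis \eqref{boundedtransforms}.

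Now fix $i\in\Lambda_0$ and $(X,t)\in\Gamma'_i$. We split $f=f\mathds 1_{2B_{Q(i)}}+f\mathds 1_{E\setminus 2B_{Q(i)}}=:f_1+f_2$.

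\textbf{Local part.} By Lemma \ref{l:separation1}, $\dist_p((X,t),E)\geq 4c_*\diam_p(I_i)$. The $(n+2)$-homogeneity $|K(Z,u)|\lesssim\|(Z,u)\|^{-(n+2)}$ and ADR then give
$$\sup_\epsilon|\mathcal R_{\mu,\epsilon}f_1(X,t)|\lesssim\diam_p(I_i)^{-(n+2)}\int_{2B_{Q(i)}}|f|\,d\mu\lesssim \Big(\avint_{2B_{Q(i)}}|f|^2d\mu\Big)^{1/2},$$
using $\ell(Q(i))\sim\diam_p(I_i)$ from \eqref{e:size-Q(i)}. This needs $\kappa$ fixed; otherwise we use the lemma just before this one together with $\kappa$-dependent constants.

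\textbf{Far part.} For $\epsilon<c_*\diam_p(I_i)$, Corollary \ref{c:separation-kernels} makes $\mathcal R_{\mu,\epsilon}f_2(X,t)=\mathcal R_{\mu}f_2(X,t)$. For $\epsilon\geq c_*\diam_p(I_i)$, the difference between the truncated and untruncated integrals is controlled by the local maximal function. We compare $\mathcal R_{\mu,\epsilon}f_2(X,t)$ with $\mathcal R_{\mu,\epsilon}f_2(X',t')$ for $(X',t')\in Q(i)$. Both points lie within $C\diam_p(I_i)$ of each other, by \eqref{e:dist-Q(i)} and \eqref{e:I-D}. The kernel is smooth away from the diagonal in the parabolic sense, with $|K(Z-W)-K(Z'-W)|\lesssim \|Z-Z'\|\,\|Z-W\|^{-(n+3)}$ when $\|Z-Z'\|\le\frac12\|Z-W\|$, including the $t>0$ cutoff, which is smooth where the Gaussian vanishes to infinite order. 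A standard annular decomposition therefore gives
$$|\mathcal R_{\mu,\epsilon}f_2(X,t)-\mathcal R_{\mu,\epsilon}f_2(X',t')|\lesssim \mathcal M_\mu f(X',t'),$$
where $\mathcal M_\mu$ is the centred Hardy–Littlewood maximal function. Mismatched truncation boundaries between the two points also cost at most $\mathcal M_\mu f$. Also $|\mathcal R_{\mu,\epsilon}f_2(X',t')|\le\sup_\delta|\mathcal R_{\mu,\delta}f(X',t')|+C\mathcal M_\mu f(X',t')$. Averaging over $(X',t')\in Q(i)$ yields
$$\sup_\epsilon|\mathcal R_{\mu,\epsilon}f(X,t)|^2\lesssim \avint_{Q(i)}\big(\sup_\delta|\mathcal R_{\mu,\delta}f|+\mathcal M_\mu f\big)^2d\mu+\avint_{2B_{Q(i)}}|f|^2d\mu.$$

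\textbf{Summation.} Integrating over $\Gamma'_i$ uses $\sigma(\Gamma'_i)\lesssim\diam_p(I_i)^{n+2}\sim\mu(Q(i))$, since $\Gamma'$ is a Lip graph. We then sum over $i$. The main obstacle is the bounded overlap of $\{2B_{Q(i)}\}_{i\in\Lambda_0}$. Indeed, $Q(i)$ is comparable in size and position to $I_i$, the $I_i$ are Whitney cubes with bounded overlap, and $\pi$ restricted to the relevant pieces of $E$ is roughly injective at scale $\ell(Q(i))$ (Lemma \ref{l:invertible}). Hence each $Q\in\cD$ equals $Q(i)$ for boundedly many $i$, and the balls $2B_{Q(i)}$ of comparable scale overlap boundedly. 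The exceptional indices in $\tilde\Lambda$ all share $Q(S)$, but they satisfy $\diam_p(I_i)\approx\kappa\ell(Q(S))$ and are therefore boundedly many. The resulting sum is $\lesssim\|\sup_\delta|\mathcal R_{\mu,\delta}f|\|_{L^2(\mu)}^2+\|\mathcal M_\mu f\|_{L^2(\mu)}^2+\|f\|^2_{L^2(\mu)}\lesssim\|f\|^2_{L^2(\mu)}$, by \eqref{boundedtransforms} and the $L^2$ boundedness of $\mathcal M_\mu$ on the doubling space $(E,\mu)$.
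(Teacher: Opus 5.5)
Your ingredients match the paper's proof: the split of $f$ at $2B_{Q(i)}$, size plus Lemma~\ref{l:separation1} for the local piece, smoothness against points of $Q(i)$ for the far piece, and the bound $|\mathcal R_{\mu,\epsilon}f_2(X',t')|\le\sup_\delta|\mathcal R_{\mu,\delta}f(X',t')|+C\mathcal M_\mu f(X',t')$. Handling $\sup_\epsilon$ pointwise instead of through Cotlar's inequality is a harmless simplification. (Your sentence that for $\epsilon\ge c_*\diam_p(I_i)$ the truncated and untruncated integrals at $(X,t)$ differ by a maximal function is false for $\epsilon\gg\diam_p(I_i)$, but you never use it.)

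The gap is the summation over $i$, which you flag but do not resolve. You show that a fixed cube is $Q(i)$ for boundedly many $i$, and that balls $2B_{Q(i)}$ of comparable scale overlap boundedly. That does not give $\sum_i\mathds 1_{Q(i)}\lesssim1$, because nothing prevents nesting across scales. The implicit step ``$Q(i)$ sits near $I_i$ and Whitney cubes overlap boundedly'' fails. By \eqref{e:10-60}, the dilates $CI_i$ overlap boundedly only for $C$ up to about $10$. But \eqref{e:size-Q(i)}--\eqref{e:dist-Q(i)} only place $\pi(2B_{Q(i)})$ within a few hundred times $\diam_p(I_i)$ of $I_i$.

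This really happens. Take $E=P$, and let $S$ be the corner cubes $Q_k=[0,2^{-k})^n\times[0,4^{-k})$ with their siblings, so every cube of $S$ lies in the closed positive orthant. Let $x$ lie on the opposite diagonal with $\|x\|=r$. Then $D(x)=r$, the Whitney cube through $x$ has diameter $\sim r$, and the near-minimisers for $D(x)$ are tiny cubes at $0$. Their ancestor of size $\sim r$ is $Q_k$, which satisfies \eqref{e:size-Q(i)} and \eqref{e:dist-Q(i)}. So $0\in Q(i)$ for one such $i$ at every scale, and $\sum_i\mathds 1_{Q(i)}(y)\gtrsim\log(1/\|y\|)$ near $0$ in the orthant. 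Consequently the bound your argument produces, $\sum_i\int_{Q(i)}(\sup_\delta|\mathcal R_{\mu,\delta}f|+\mathcal M_\mu f)^2\,d\mu$, is not $\lesssim\|f\|^2_{L^2(\mu)}$: for $f=\mathds 1_{B_\rho(0)}$ it is at least of order $\rho^{n+2}\log(1/\rho)$. The paper's write-up bounds each $i$-term by $\|f\|^2_{L^2(\mu)}$ and is equally silent about the sum, so this step needs an actual argument.

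\emph{Remedy.} Compare with a piece of $E$ lying over $I_i$ rather than with $Q(i)$.
\begin{enumerate}
\item By \eqref{e:corona-D} there is $(Z_i,u_i)\in E$ within $C\theta\diam_p(I_i)$ of $(x_{I_i},t_{I_i})+\psi(x_{I_i},t_{I_i})$. Set $E_i\coloneqq E\cap B_{c\diam_p(I_i)}(Z_i,u_i)$ with $c$ small.
\item Then $\mu(E_i)\gtrsim\diam_p(I_i)^{n+2}\gtrsim\mg(\Gamma'_i)$.
\item Also $\pi(E_i)\subseteq 2I_i$, so $\sum_i\mathds 1_{E_i}\lesssim1$ by the Whitney property.
\item Every point of $E_i$ is within $C\diam_p(I_i)$ of every point of $\Gamma'_i$, since $\psi$ is Lipschitz and $|\psi'-\psi|\lesssim\theta^{15/16}\diam_p(I_i)$ on $3I_i$.
\item Run your pointwise argument for $(X,t)\in\Gamma'_i$ and $(X',t')\in E_i$, cutting $f$ at $B_{C'\diam_p(I_i)}(X',t')$. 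Bound the local piece by $\diam_p(I_i)^{-(n+2)}\int_{B_{C'\diam_p(I_i)}(X',t')}|f|\,d\mu\lesssim\mathcal M_\mu f(X',t')$, using Lemma~\ref{l:separation1}. Do not use $(\avint_{2B_{Q(i)}}|f|^2\,d\mu)^{1/2}$ here: those enlarged balls bring the overlap problem back.
\item This gives $\sup_\epsilon|\mathcal R_{\mu,\epsilon}f(X,t)|\lesssim\sup_\delta|\mathcal R_{\mu,\delta}f(X',t')|+\mathcal M_\mu f(X',t')$. Average over $E_i$, integrate over $\Gamma'_i$, and sum using $\sum_i\mathds 1_{E_i}\lesssim1$.
\end{enumerate}
For the boundedly many indices with $Q(i)=Q(S)$, your original treatment can stay.
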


\begin{proof}
We will only show $\|\sup_{\epsilon>0}|{\mathcal R}_{\mu,\epsilon} f|\|_{L^2(\mgs)} \lesssim \|f\|_{L^2(\mu)}$, and note that the proof of the other bound follows in the exact same manner. By a standard Cotlar inequality (see \cite[Lemmas 6.8 and 6.10]{fassler2021singular}), it is enough to estimate the following quantity uniformly over $\epsilon>0$
\begin{align*}
\sum_{i \in \Lambda} \int_{\Gamma'_{i}}\left|\int K_\epsilon((X,t)-(Y,s))f(Y,s) d\mu(Y,s) \right|^2 d\mg(X,t)\\
+ \int_{F} \left| \int K_\epsilon((X,t)-(Y,s))f(Y,s) d\mu(Y,s) \right|^2 d\mg(X,t).
\end{align*}
On $F$, the inner integral is $\mathcal R_{\mu,\epsilon}f$. Recalling \eqref{e:measures-on-F}, we have $\int_F\sup_{\epsilon>0}|\mathcal R_{\mu,\epsilon}f|^2\,d\sigma\leq C\int_E\sup_{\epsilon>0}|\mathcal R_{\mu,\epsilon}f|^2\,d\mu$. Applying \eqref{boundedtransforms} bounds this by a constant times $\|f\|_{L^2(\mu)}^2$, with the constant depending only on the dimensions, the ADR constants of $\mu$, and the bound in \eqref{boundedtransforms}. For the other terms, we note the following.  Fixing $i$ momentarily, consider, with $\epsilon>0$ fixed, the expression
$$\int_{\Gamma_{i}'}\left|\int K_\epsilon((X,t)-(Y,s))f(Y,s) d\mu(Y,s) \right|^2 d\mg(X,t).$$
This is clearly bounded above by 
\begin{align}
&\fint_{Q(i)} \int_{\Gamma'_{i}} \left| \int [K_\epsilon((X,t)-(Y,s))-K_\epsilon((X',t')-(Y,s))]f(Y,s)d\mu(Y,s) \right|^2 d\mg(X,t)d\mu(X',t') \\
&+\frac{2\mg(\Gamma_i')}{\mu(Q(i))}\int_{Q(i)} \bigg| \int_{E} K_{\epsilon}((X',t')-(Y,s))f(Y,s) d\mu(Y,s)\bigg|^2 d\mu(X',t')\\
&\lesssim\fint_{Q(i)} \int_{\Gamma'_{i}} \left|\int_{E\cap2B_{Q(i)}} [K_\epsilon((X,t)-(Y,s))-K_\epsilon((X',t')-(Y,s))]f(Y,s)d\mu(Y,s) \right|^2 d\mg(X,t)d\mu(X',t') \\
+&\fint_{Q(i)} \int_{\Gamma'_{i}} \left| \int_{E\setminus2B_{Q(i)}} [K_\epsilon((X,t)-(Y,s))-K_\epsilon((X',t')-(Y,s))]f(Y,s)d\mu(Y,s) \right|^2 d\mg(X,t)d\mu(X',t') \\
&+\frac{2\mg(\Gamma_i')}{\mu(Q(i))}\int_{Q(i)} \bigg| \int_{E} K_{\epsilon}((X',t')-(Y,s))f(Y,s) d\mu(Y,s)\bigg|^2 d\mu(X',t')\\
=& I + II + III.
\end{align}

By the Lipschitz bound for $\psi'$ and ADR from $\mu$, we have $\mg(\Gamma_i')\leq C\pdiam(I_i)^{n+2}$ and $\mu(Q(i))\geq C^{-1}\ell(Q(i))^{n+2}$. In particular, recalling that $\ell(Q(i)) \sim \diam_p(I_i)$, we have 
\begin{align}\label{e:sigma-vs-mu}
    \mg(\Gamma_i')/\mu(Q(i))\lesssim 1. 
\end{align}
By \eqref{boundedtransforms}, we have $\int_E|\mathcal R_{\mu,\epsilon}f|^2\,d\mu\leq C\|f\|_{L^2(\mu)}^2$, uniformly in $\epsilon>0$ and $f\in L^2(\mu)$. In particular, 
\begin{align}
    III \lesssim \|f\|_{L^2(\mu)}. 
\end{align}

We will now obtain estimates on terms $I$ and $II$ which are independent of $\epsilon$.  We first use the triangle inequality and \eqref{e:sigma-vs-mu} to estimate
\begin{align}
 I &\lesssim \int_{\Gamma'_{i}} \left|\int_{E\cap2B_{Q(i)}} K_\epsilon((X,t)-(Y,s))f(Y,s) d\mu(Y,s)\right|^2 d\mg(X,t) \\
&+ \int_{Q(i)} \left|\int_{E\cap2B_{Q(i)}}K_\epsilon((X',t')-(Y,s))f(Y,s) d\mu(Y,s) \right|^2 d\mu(X',t').
\end{align}
The second term is clearly bounded on the order of $\|f\|_{L^2(\mu)}^2$ by our assumptions about $\mu$.  To deal with the first term above, we estimate it further as follows:
\begin{align}
\label{term1}&\int_{\Gamma'_{i}} \left|\int_{E\cap2B_{Q(i)}} \mathds{1}_{\|(X,t)-(Y,s)\| \leq \diam (I_i)} K_\epsilon((X,t)-(Y,s)) f(Y,s)d\mu (Y,s) \right|^2 d\mg (X,t)\\
\label{term2}+& \int_{\Gamma'_{i}} \left|\int_{E\cap2B_{Q(i)}} \mathds{1}_{\|(X,t)-(Y,s)\| > \diam (I_i)} K_\epsilon((X,t)-(Y,s)) f(Y,s)d\mu(Y,s) \right|^2 d\mg(X,t).
\end{align}
For the second term above, we use that $\|(X,t)-(Y,s)\| > \diam (I_i)$ and Calder\'on-Zygmund size estimates to bound the inner integral by a maximal function applied to $f$, and hence obtain a bound on the order of $\|f\|_{L^2(\mu)}^2$.
We can bound the first integral in the same manner after an application of Lemma \ref{l:separation1}.

Now we come to term $II$.  We first consider the case where $\epsilon < c_*\ell(I_i)$, with $c_*$ as in Lemma \ref{l:separation1}. It follows from Corollary \ref{c:separation-kernels} that $K_{\epsilon}((X,t)-(Y,s))=K((X,t)-(Y,s))$ and $K_{\epsilon}((X',t')-(Y,s))=K((X',t')-(Y,s))$ for all $(X,t),(X',t'),(Y,s)$ appearing in the expression which defines $II$. So, in this case, we use a Calder\'on-Zygmund smoothness estimate to say
\begin{align*}
&\bigg| \int_{E\setminus2B_{Q(i)}} [K_\epsilon((X,t)-(Y,s))-K_\epsilon((X',t')-(Y,s))]f(Y,s)d\mu(Y,s) \bigg|\\
\lesssim& \int_{E\setminus2B_{Q(i)}} \frac{\ell(I_i)}{\|(X',t')-(Y,s)\|^{n+3}}|f(Y,s)| d\mu(Y,s)\\ 
\lesssim& \mathcal M_{\mu}(f)(X',t'):=\sup_{r>0}\frac{1}{\mu(B_r(X',t'))}\int_{B_r(X',t')}|f(Y,s)|\,d\mu(Y,s).
\end{align*}
For the last inequality, split the integral into the annuli $2^j\ell(Q(i))\leq\|(X',t')-(Y,s)\|<2^{j+1}\ell(Q(i))$, $j\geq0$. The ADR upper bound gives a contribution at most 
\[C2^{-j}\ell(I_i)\mathcal M_{\mu}(f)(X',t')/\ell(Q(i))\]
from the $j$th annulus, where $C$ depends only on the dimensions and the ADR upper bound. Summing over $j$ and using \eqref{e:size-Q(i)} gives the last inequality. Hence, term $II$ is bounded by $C\|f\|_{L^2(\mu)}^2$ in this case.  If $\epsilon>c_*\ell(I_i)$, we set $c'=c_*/2$ and obtain the estimate
\begin{align*}
    &\left| \int_{E\setminus2B_{Q(i)}} [K_\epsilon((X,t)-(Y,s))-K_\epsilon((X',t')-(Y,s))]f(Y,s)d\mu(Y,s)\right|\\
    \leq& \left| \int_{E\setminus2B_{Q(i)}} [K_{c'\ell(I_i)}((X,t)-(Y,s))-K_{c'\ell(I_i)}((X',t')-(Y,s))]f(Y,s)d\mu(Y,s)\right|\\
    +&
    \bigg| \int_{E\setminus2B_{Q(i)}} [K_\epsilon((X,t)-(Y,s))-K_{c'\ell(I_i)}((X,t)-(Y,s))\\
    &\hspace{2em} + K_{c'\ell(I_i)}((X',t')-(Y,s))-K_\epsilon((X',t') -(Y,s))]f(Y,s)d\mu(Y,s)\bigg| \\
    =& II_{inner,1} + II_{inner,2} . 
\end{align*}
The term corresponding to $II_{inner,1}$ is bounded by a constant multiple of $\mathcal{M}_\mu(f)(X',t')$ using the reasoning from the case where $\epsilon<c_*\ell(I_i)$, the $II_{inner,2}$ is bounded pointwise by $C\mathcal M_{\mu}(f)(X',t')$, uniformly in $\epsilon$, by the kernel size and smoothness estimates and Lemma \ref{l:separation1}. Averaging over $Q(i)$ and integrating over $\Gamma_i'$, \eqref{e:sigma-vs-mu} and the $L^2(\mu)$ boundedness of $\mathcal M_\mu$ give a bound of $C\|f\|_{L^2(\mu)}^2$ for this contribution to $II$. 
\end{proof}

\begin{lemma}\label{l:duality}
$\|\sup_{\epsilon>0}| {\mathcal R}_{\mgs,\epsilon}f|\|_{L^2(\mu)} \lesssim \|f\|_{L^2(\mgs)}$ and $\|\sup_{\epsilon>0}| {\mathcal R}_{\mgf,\epsilon}f|\|_{L^2(\mu)} \lesssim \|f\|_{L^2(\mgf)}$.
\end{lemma}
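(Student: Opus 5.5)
The plan is to deduce Lemma \ref{l:duality} from Lemma \ref{sepbound} by duality, using the observation recorded in \eqref{boundedduals}: the kernel of the adjoint of $\mathcal R_{\mu,\epsilon}$ (viewed as an operator from $L^2(\mu)$ to $L^2(\mgs)$) is $K_\epsilon((Y,s)-(X,t))$, i.e. the reflected kernel $K^-$. Lemma \ref{sepbound} was proved only through kernel size and smoothness estimates, Lemma \ref{l:separation1}, Corollary \ref{c:separation-kernels}, the Cotlar inequality, and the $L^2(\mu)$ bounds \eqref{boundedtransforms} and \eqref{boundedduals}. Each of these ingredients is symmetric under $K\mapsto K^-$. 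So the same proof gives
\[\big\|\sup_{\epsilon>0}|\mathcal R^-_{\mu,\epsilon}f|\big\|_{L^2(\mgs)}\lesssim\|f\|_{L^2(\mu)},\]
and likewise with $\mgf$ in place of $\mgs$. The argument needs this bound for both $K$ and $K^-$.

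The next step is the single-truncation bound. Fix $\epsilon>0$, $f\in L^2(\mgs)$ and $g\in L^2(\mu)$. By Fubini,
\[\int \mathcal R_{\mgs,\epsilon}f\,g\,d\mu=\int f\,\mathcal R^-_{\mu,\epsilon}g\,d\mgs .\]
Cauchy--Schwarz and the reflected form of Lemma \ref{sepbound} then give $\|\mathcal R_{\mgs,\epsilon}f\|_{L^2(\mu)}\lesssim\|f\|_{L^2(\mgs)}$, uniformly in $\epsilon$. The same argument works for $\mgf$.

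The main obstacle is upgrading this to the maximal operator $\sup_\epsilon|\mathcal R_{\mgs,\epsilon}f|$, since duality alone only controls single truncations. I would use a Cotlar-type inequality for the cross-measure operator, as in \cite[Lemmas 6.8 and 6.10]{fassler2021singular}. One fixes $(X,t)\in E$ and $\epsilon>0$, and separates the cases $\dist_p(X,t,\Gamma'_s)\geq\epsilon/4$ and $\dist_p(X,t,\Gamma'_s)<\epsilon/4$.

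In the first case, $\mathcal R_{\mgs,\epsilon}f(X,t)$ differs from $\mathcal R_{\mgs,\epsilon'}f(X,t)$, with $\epsilon'\sim\dist_p(X,t,\Gamma'_s)$ or $\epsilon'=\epsilon$, by at most $C\mathcal M_{\mgs}f$ evaluated at a nearby point of $\Gamma'_s$. This uses the size estimate and the upper ADR property of $\mgs$.

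In the second case, let $(Z,u)\in\Gamma'_s$ be a point within $\epsilon/4$ of $(X,t)$. The smoothness estimate gives
\[|\mathcal R_{\mgs,\epsilon}f(X,t)-\mathcal R_{\mgs,\epsilon}f(Z,u)|\lesssim\mathcal M_{\mgs}f(Z,u).\]
The value $\mathcal R_{\mgs,\epsilon}f(Z,u)$ is then controlled by averaging the uniform single-truncation bound over $E\cap B_{\epsilon}(X,t)$. Here one uses that $E$ and $\Gamma'_s$ are mutually close at the relevant scale, which follows from \eqref{e:dist-estimates-1} and \eqref{e:dist-estimates-2}. This produces the term $\mathcal M_\mu(|\mathcal R_{\mgs,\epsilon/2}f|^{s})^{1/s}$ with some $s<2$.

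The maximal functions are bounded on the relevant $L^2$ spaces, so combining the two cases should give the claim. If this route is too delicate, the fallback is to redo the proof of Lemma \ref{sepbound} with the roles of $\mu$ and $\mgs$ interchanged. Then $E$ is decomposed into the pieces $Q(i)$ and one compares against points of $\Gamma'_i$, using the bounded overlap of $\{\Gamma'_i\}$ and Lemma \ref{l:separation1} in the same way.
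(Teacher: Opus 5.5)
Your route is the paper's: its entire proof is the one line ``this follows from Lemma~\ref{sepbound} and duality.'' Your two observations make explicit what that line leaves implicit. First, dualizing $\mathcal R_{\mgs,\epsilon}$ produces $\mathcal R^-_{\mu,\epsilon}$, so the reflected-kernel version of Lemma~\ref{sepbound} is needed (same proof, with \eqref{boundedduals} in place of \eqref{boundedtransforms}). Second, duality only gives bounds uniform in a single truncation, so a Cotlar step is still needed for the supremum.

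In that Cotlar step, a few corrections.
\begin{itemize}
\item You can drop the case split and the detour through $(Z,u)\in\Gamma'_s$, and compare $\mathcal R_{\mgs,\epsilon}f(X,t)$ directly with $\mathcal R_{\mgs,\delta}f(X',t')$ for $(X',t')\in E\cap B_{\epsilon/2}(X,t)$.
\item The truncation $\delta$ must be fixed independently of $\epsilon\geq2\delta$. With $\delta=\epsilon/2$, as you wrote, the supremum over $\epsilon$ does not close.
\item The local piece $\mathcal R_{\mgs,\delta}(f\mathds{1}_{B_\epsilon(X,t)})$ should be controlled through the weak-type $(1,1)$ bound $L^1(\mgs)\to L^{1,\infty}(\mu)$. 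Calder\'on--Zygmund theory extracts this from the uniform $L^2$ bound; an $L^2$ average alone would not suffice.
\item Finally, let $\delta\to0$ by monotone convergence.
\end{itemize}
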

\begin{proof}
    This follows from Lemma \ref{sepbound} and duality.
\end{proof}

\begin{lemma}\label{l:interiorbound}
$\mathcal{R}_{\mgs}:L^2(\mgs) \rightarrow L^2(\mgs)$ and
$\mathcal R_{\mgf}:L^2(\mgf) \rightarrow L^2(\mgf)$.
\end{lemma}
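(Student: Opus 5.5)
The plan is to transfer the bound from $\mu$ to $\mgs$ (and $\mgf$) by a composition argument based on Lemma \ref{sepbound} and Lemma \ref{l:duality}. Fix $f\in L^2(\mgs)$ and $\epsilon>0$, and decompose $\Gamma'_s = F\cup\bigcup_{i\in\Lambda}\Gamma'_i$. For a point $(X,t)\in\Gamma'_i$ we compare $\mathcal R_{\mgs,\epsilon}f(X,t)$ with the averaged quantity
\[\fint_{Q(i)}\mathcal R_{\mgs,\epsilon}f(X',t')\,d\mu(X',t').\]
The averaged term lies in $L^2$ of the right size by Lemma \ref{l:duality}, thanks to \eqref{e:sigma-vs-mu} and the bounded overlap of $\{\Gamma_i'\}$. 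At points of $F$ we use \eqref{e:measures-on-F} together with the fact that $F\subseteq E$, so Lemma \ref{l:duality} controls $\sup_\epsilon|\mathcal R_{\mgs,\epsilon}f|$ in $L^2(\mu|_F)$, hence in $L^2(\sigma|_F)$. By the Cotlar inequality already cited in the proof of Lemma \ref{sepbound}, it suffices to bound the truncations uniformly in $\epsilon$.

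For $(X,t)\in\Gamma'_i$ and $(X',t')\in Q(i)$, split the $\mgs$-integral into $\Gamma'_s\cap CB_{Q(i)}$ and its complement, taking $C$ large.

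\emph{Far part.} Here $\|(Y,s)-(X,t)\|\gtrsim \ell(Q(i))$ and $\pd((X,t),(X',t'))\lesssim\ell(Q(i))$ by \eqref{e:size-Q(i)}--\eqref{e:dist-Q(i)}. The Calder\'on--Zygmund smoothness estimate, summed over dyadic annuli exactly as in term $II$ of Lemma \ref{sepbound}, bounds the difference of the two kernels by the maximal function $\mathcal M_{\mgs}f(X,t)$. That maximal function is bounded on $L^2(\mgs)$ because $\mgs$ is upper $(n+2)$-regular, being a restriction of a Lip$(1,1/2)$ graph measure. Truncation mismatches at scales $\epsilon\sim\ell(I_i)$ are handled as in $II_{inner,2}$.

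\emph{Near part.} On $Q(i)$, the near integral is again a truncated transform against a piece of $\mgs$ supported near $Q(i)$, which is controlled by Lemma \ref{l:duality}. On $\Gamma'_i$, the near part is $\mathcal R_{\mgs,\epsilon}(f\mathds 1_{CB_{Q(i)}})$ evaluated on the small graph piece $\Gamma'\cap CB_{Q(i)}$, and this is the main obstacle. I would first try to bound it by a local $L^2$ estimate for the Riesz transform on a single Lip$(1,1/2,\eta)$ graph at scale $\ell(I_i)$. The relevant input is that on $\Gamma'_i$ and its neighbours, $\psi'=\sum_j A_j\varphi_j+\theta^{15/16}H$ is smooth at scale $\diam(I_i)$ with the derivative bounds \eqref{e:Hests}, and $\nabla^2$ of the $A_j\varphi_j$ is $\lesssim \ve'/\diam(I_i)$. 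So on the Whitney region, the graph is $C^2$ in space and $C^1$ in time at the natural scale, and the Riesz transform localized to a bounded number of neighbouring Whitney pieces is bounded, with constants independent of $i$. This follows from a standard $T1$/smooth-graph estimate: antisymmetry is absent, so one uses the smoothness of $\psi'$ to compare with the flat-plane transform, which is bounded by Fourier analysis. The contributions from Whitney pieces of comparable size are summed using \eqref{e:I_i-I_j} and bounded overlap. Interactions with pieces at far-away scales are absorbed into the far part.

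Summing these bounds over $i\in\Lambda$ and adding the contribution on $F$ gives $\|\sup_\epsilon|\mathcal R_{\mgs,\epsilon}f|\|_{L^2(\mgs)}\lesssim\|f\|_{L^2(\mgs)}$. The identical argument with $\Lambda_0$ in place of $\Lambda$, using the convention $Q(i)=Q(S)$ for $i\in\tilde\Lambda$, gives the bound for $\mgf$.
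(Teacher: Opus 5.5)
Your skeleton matches the paper's. The contact set $F$ is handled by Lemma \ref{l:duality} and \eqref{e:measures-on-F}. On $\Gamma_i'$ you compare with the $\mu$-average of $\mathcal R_{\mgs,\epsilon}f$ over $Q(i)$, and you treat far interactions by Calder\'on--Zygmund smoothness and maximal functions. Non-neighbouring pieces of $\Gamma_s'$ inside $CB_{Q(i)}$, including $F$, lie at distance $\gtrsim\pdiam(I_i)$ from $\Gamma_i'$, so they are size-estimate terms.

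Where you genuinely depart from the paper is the singular local interaction.
\begin{itemize}
\item \emph{Paper's route.} It truncates in the projected variable at $c\ell(I_i)$ with $c$ small, so everything lies over $3I_i$, and passes to the cut-off function $\psi_i'$. It uses \eqref{e:derivatives-psi'} and $\|\psi_i'\|_\infty\lesssim\ell(I_i)$ only to obtain the $\beta$-number bounds \eqref{e:betabig}--\eqref{e:betasmall}. These show $\psi_i'$ is a \emph{regular} $\Lip(1,1/2)$ function with uniform constants, and the paper then quotes \cite[Theorem 5]{bortz2021sios}.
\item \emph{Your route.} You use the smoothness directly. This works provided you compare with the tangent plane at the evaluation point. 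By \eqref{e:derivatives-psi'}, $|\psi'(x,t)-\psi'(y,s)-\nabla_x\psi'(x,t)(x-y)|\lesssim\|(x,t)-(y,s)\|^2/\ell(I_i)$. So the kernel error is $\lesssim\|(x,t)-(y,s)\|^{-n-1}\ell(I_i)^{-1}$, which integrates to a maximal function below scale $\ell(I_i)$.
\end{itemize}

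What remains in your route has kernel $K(z,\nabla_x\psi'(x,t)z,\tau)$, which depends on the output point. Fourier analysis alone therefore does not close the argument; you need a frozen-coefficient step. For instance, use Sobolev embedding in the parameter $a$: each $\partial_a^\beta[K(z,az,\tau)]$ is again parabolically homogeneous of degree $-(n+2)$, smooth off the origin and odd in $z$. Hence its truncations are uniformly bounded on $L^2(\R^{n+1})$.

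A single fixed comparison plane would not suffice. The error would then be small but non-integrable, and a small $\Lip(1,1/2)$ constant alone gives no $L^2$ bound. Indeed, by \cite{hoffman2026riesz}, boundedness of $\mathcal R$ on a graph of small constant already forces parabolic uniform rectifiability.

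Your route is self-contained and exploits that $\Gamma'$ is $C^2$-flat below scale $\ell(I_i)$, which is far more than regularity. The paper's route is shorter and kernel-agnostic, but rests on the regular-graph theorem.

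One bookkeeping point, which the paper also passes over: summing the $Q(i)$-side terms needs more than bounded overlap of $\{\Gamma_i'\}$. Near $F$ there can be Whitney cubes at every scale, at distance comparable to their size from $\pi(F)$. So $\{CB_{Q(i)}\}$ with $C$ large, and possibly $\{Q(i)\}$ itself, need not have bounded overlap. What does hold, by \eqref{e:size-Q(i)}--\eqref{e:dist-Q(i)} and disjointness of the $I_i$, is a Carleson packing condition for $\{Q(i)\}$. The fix:
\begin{itemize}
\item Do not apply Lemma \ref{l:duality} to $f\mathds{1}_{CB_{Q(i)}}$ separately for each $i$.
\item Instead, dominate the near part on $Q(i)$ by $2\sup_\epsilon|\mathcal R_{\mgs,\epsilon}f|+C\sup_{r>0}r^{-n-2}\int_{B_r(\cdot)}|f|\,d\mgs$. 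This is a single function in $L^2(\mu)$; the second term maps $L^2(\mgs)$ to $L^2(\mu)$ because $\mgs$ is upper regular and $\mu$ is ADR.
\item Keep averages rather than averages of squares, and sum $\mu(Q(i))\big(\fint_{Q(i)}h\,d\mu\big)^2$ using the Carleson embedding theorem.
\end{itemize}
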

\begin{proof}
Again, we prove only the bound for $\mgs$, as the bound for $\mgf$ is shown in the exact same manner.  We will fix a truncation parameter $\epsilon$ and achieve bounds which are independent of $\epsilon$.
Fix $f \in L^2(\mgs)$  
and define the following truncation of $\mathcal R$:  
\begin{align*}
\mathcal R_{\mgs,\epsilon, *} f(X,t) = \int_{\Gamma'_{s}} K_\epsilon((X,t)-(Y,s))
\left(\sum_{i \in \Lambda}\mathds{1}_{I_i}(\pi(X,t))\mathds{1}_{\|\pi((X,t)-(Y,s))\| \geq c\ell(I_i)} \right) f(Y,s) d\mgs(Y,s),
\end{align*}
where $0<c\ll 1$ is to be chosen sufficiently small.
At times it will also be convenient to write this truncation with respect to the coordinates on $(x,t)$ on $\R^{n+1}$ (where $X = (x,x^\perp) \in \mathbb R^n \times \mathbb R^{d-n}$, and $K(X,t) = K(x,x^\perp,t)$). For $(y,s) \in \R^{n+1}$, define
\begin{align}
    g(y,s) \coloneqq f(y,\psi'(y,s),s). 
\end{align}
Then, by a change of variable, 
\begin{align*}
\mathcal R_{\mgs,\epsilon, *}  f(X,t) = \int_{\rn \times\R} K_\epsilon(x-y,\psi'(x,t)-\psi'(y,s),t-s)
\left(\sum_{i \in \Lambda}\mathds{1}_{I_i}(x,t)\mathds{1}_{\|(x,t)-(y,s)\| \geq c\ell(I_i)} \right) g(y,s)\\ \cdot\sqrt{1 + |\nabla \psi'(y,s)|^2} dyds.
\end{align*}

We want to estimate
$$\int_{\Gamma_{s}'} |\mathcal R_{\mgs,\epsilon} f(X,t)|^2 d\mgs(X,t).$$
First, we deal with portion of the integral over the contact set $F$. By Lemma \ref{l:duality}, we have 
$\mathcal R_{\mgs}:L^2(\mgs) \rightarrow L^2(\mu)$.   Hence,
$$\int_{F}|\mathcal R_{\mgs,\epsilon}f(X,t)|^2 d\mgs(X,t) \lesssim \int |\mathcal R_{\mgs,\epsilon}f(X,t)|^2 d\mu(X,t) \lesssim \|f\|^2_{L^2(\mgs)}.$$
Now, for each $i \in \Lambda$, we will bound
$$\int_{\Gamma'_{i}}|\mathcal R_{\mgs,\epsilon}f(X,t)|^2 d\mgs(X,t).$$
To do so, we split the integrand
$$|\mathcal R_{\mgs,\epsilon}f(X,t)| \leq |\mathcal R_{\mgs,\epsilon}f(X,t) - \mathcal R_{\mgs,\epsilon,*}f(X,t)| + |\mathcal R_{\mgs,\epsilon,*}f(X,t)| = I + II.$$
We estimate term $II$ first.  For fixed $(X',t') \in Q(i)$ and $(X,t) \in \Gamma'_{i}$, 
\begin{align*}
&|\mathcal R_{\mgs,\epsilon,*}f(X,t) - \mathcal R_{\mgs,\epsilon} f(X',t')|\\
=& \left|\int K((X,t)-(Y,s))\mathds{1}_{\|(x,t)-(y,s)\| \geq c\ell(I_i)} - K_\epsilon((X',t')-(Y,s)))f(Y,s) d\mgs(Y,s) \right| \\
\lesssim &\int_{\substack{\|(x,t)-(y,s)\| \geq c\ell(I_i)\\ \|(X',t')-(Y,s)\|>\epsilon }}
|(K((X',t')-(Y,s)) - K((X,t)-(Y,s)))f(Y,s)| d\sigma(Y,s)\\
+&\frac{1}{\ell(I_i)^{n+1}}\int_{\substack{ \|(X',t')-(Y,s)\|\leq \epsilon }}
|f(Y,s)| d\mgs(Y,s) \\
+&\fint_{\substack{\|(x,t)-(y,s)\|\leq c\ell(I_i)}}
|f(Y,s)| d\mgs(Y,s)
\end{align*} The second and third terms of the right-hand-side of the above inequality are trivially bounded by a maximal function applied to $f$.  The first term above is also bounded on the order of a maximal function applied to $f$ by Calder\'on-Zygmund smoothness estimates.  Thus, we have
\begin{multline*}
    \int_{\Gamma_i'} |\mathcal R_{\sigma_s,\epsilon,*}f(X,t)|^2 d\mgs(X,t) \lesssim \int_{\Gamma_i'}\mathcal M_{\mgs}f(X,t) d\mgs(X,t) + \int_{Q(i)} \mathcal R_{\mgs,\epsilon}f(X',t') d\mu(X',t').
\end{multline*}
The term involving the maximal function is trivially bounded on the order of $\|f\|_{L^2(\mgs)}$, and the other term is bounded by the same quantity by Lemma \ref{l:duality}.

Now we deal with term $I$. {Since $\Gamma'$ is a parabolic Lipschitz graph with constant $L$, replacing the truncation $\|(X,t)-(Y,s)\|>\epsilon$ by $\|(x,t)-(y,s)\|>\epsilon$ changes the integral only where
$$\epsilon/(1+L)<\|(x,t)-(y,s)\|\leq\epsilon.$$
By the size estimate for $K$ and the upper growth bound for $\mgs$, this contribution is bounded by $C\mathcal M_{\mgs}f(X,t)$, where $C$ depends only on the dimensions, $L$, and the kernel size constant. Hence, for almost every $(X,t)\in\Gamma'_{s}$ with $\pi(X,t)\in I_i$,
$$I\leq\Big|\int_{\substack{(Y,s)\in\Gamma'_{s}\\ \epsilon<\|(x,t)-(y,s)\|<c\ell(I_i)}}K((X,t)-(Y,s))f(Y,s)\,d\mgs(Y,s)\Big|+C\mathcal M_{\mgs}f(X,t).$$
The integral on the right is zero when $\epsilon\geq c\ell(I_i)$.} So, consider the case when $\epsilon < c\ell(I_i)$. We will show that 
\begin{align}\label{regatsmall}
\int_{I_i} I \lesssim \|g\|^2_{L^2(3I_i)}. 
\end{align}
Since $\|f\|_{L^2(\Gamma_i')}^2 \sim \|g\|_{L^2(3I_i)}^2$ and the collection $\{3I_i\}_{i \in \Lambda}$ has bounded overlap, summing the above inequality over $i$ gives a bound on the order of $\|f\|^2_{L^2(\mgs)}$.

Let's show that \eqref{regatsmall} holds. Let $\eta_i \colon \R^{n+1} \to [0,1]$ be a smooth cut-off with support in $4I_i$ such that $\eta_i \equiv 1$ on $3I_i$ and such that
\begin{align}
    \|\partial_t \eta_i\|_\infty \lesssim \ell(I_i)^{-2} \quad \mbox{ and } \quad \|\nabla^k \eta_i\|_\infty \lesssim \ell(I_i)^{-k}
\end{align}
for all $i$ and $k \in \{1,2\}$. Here, $\nabla^2$ is understood to be the Hessian of spatial derivatives. We localize $\psi'$ near the cube $I_i$ by setting
$$\psi'_i (x,t) := \eta_i(x,t)\left(\psi'(x,t) -  \psi'(x_{I_i},t_{I_i})\right),$$
where $(x_{I_i},t_{I_i})$ denotes the centre of $I_i$. Using the properties of $\eta_i$ from above and the fact that $\psi'$ is Lipschitz, it is clear that 
\begin{align}\label{e:properties-psi'}
    \supp(\psi_i') \subseteq 4I_i, \quad 0 \leq \psi_i' \lesssim \ell(I_i) \quad \mbox{ and } \quad   \psi_i' \equiv \psi' - \psi'(x_I,t_I) \mbox{ on $3I_i$}. 
\end{align}
Moreover, using the derivative estimates for $\eta_i$ from above, the derivative estimates for $\psi'$ from Lemma  \ref{l:modification}, and the fact that $\|\psi_i'\|_\infty \lesssim \ell(I_i)$, it is easy to check that 
\begin{align}\label{e:derivatives-psi'}
    \| \partial_t\psi_i' \|_\infty + \| \nabla_x^2 \psi_i' \|_\infty \lesssim \ell(I_i)^{-1}. 
\end{align}

We define the following $\beta$-numbers adapted to $\psi'_i$: 
$$\beta_i((x,t),r) \coloneqq \inf_{a} \bigg(\fint_{B'_{r}(x,t)} \bigg(\frac{|\psi'_i(y,s)-a(y)|}{r}\bigg)^2d y d s\bigg )^\frac{1}{2},$$
where $B'_r(x,t) = \{(y,s) \in \mathbb R^{n} \times \mathbb R: \|x-y\| < r,\, |t-s| <r^2  \}$ and the infimum is taken over all affine functions $a\colon\R^n\to\R^{\adim-\idim}$ which are constant in time. Consider the function $a(y) = \psi'_i(x,t) + \nabla \psi'_i(x,t) \cdot (y-x)$, using the derivative estimate \eqref{e:derivatives-psi'} and applying Taylor's Theorem, it follows that
\begin{align}\label{e:betabig}
\beta_{i}((x,t),r)^2 &\lesssim \frac{r^2}{\ell(I_i)^2} \indent \forall r \lesssim \ell(I_i).
\end{align}
Considering the function $a(y) = \psi_i'(x,t)$, it follows from the inequality $\|\psi_i'\| \lesssim \ell(I_i)$ that 
\begin{align}\label{e:betasmall}
\beta_{i}((x,t),r)^2 &\lesssim \frac{\ell(I_i)^2}{r^2} \indent \forall r \gtrsim \ell(I_i).
\end{align}

We now use the estimates \eqref{e:betabig} and \eqref{e:betasmall} to show that each $\psi'_i$ is a regular Lip-$(1,1/2)$ function.  Choose $R>0$ and $(z,\tau) \in \mathbb R^n\times\mathbb R$.  We will show that
$$\int_0^R \int_{B'_R(z,\tau)} \beta_i((x,t),r)^2 \frac{d x d t d r}{r} \lesssim R^{n+2},$$
where the implicit constant is independent of both $R$ and $(z,\tau)$. The fact that boundedness of such square functions implies that the underlying function is regular Lip-$(1,1/2)$ is well-known (see \cite[Section 2]{hofmann2004caloric} and \cite[Lemma 5.40]{bortz2023corona}).  We split the square function as 
\begin{multline}\label{e:psiibeta}
\int_0^R \int_{B'_R(z,\tau)} \beta_i((x,t),r)^2 \frac{d x d t d r}{r} \\= \int_0^{\tilde c\ell(I_i)} \int_{B'_R(z,\tau)} \beta_i((x,t),r)^2 \frac{d x d t d r}{r}
+ \int_{\tilde c\ell(I_i)}^R \int_{B'_R(z,\tau)} \beta_i((x,t),r)^2 \frac{d x d t d r}{r}.
\end{multline}
For the first term we use \eqref{e:betabig}, and for the second term we use \eqref{e:betasmall}, which yield bounds on the order of $R^{n+2}$.  This shows that each $\psi'_i$ is a regular Lip-$(1,1/2)$ function, with uniform bounds on the expression in \eqref{e:psiibeta}. \\

We now continue with the case where $\epsilon < c\ell(I_i)$.   Fix $(x,t) \in I_i$ and make $c$ small enough such that $(y,s) \in 3I_i$ for all points satisfying $\|(x,t)-(y,s)\|\leq c\ell(I_i)$. Recalling \eqref{e:properties-psi'}, for such a point we have  
\begin{align}\label{e:psi-in-3I}
\psi'_i(y,s) = \psi'(y,s) - \psi'(x_{I_i},t_{I_i}).
\end{align}
We also adopt the notation 
$$\tilde{g}(y,s) = g(y,s) \sqrt{1+|\nabla \psi'(y,s)|^2}.$$
By the above choice of $c$ and \eqref{e:psi-in-3I} (since all $(y,s)$ appearing in the following integrals are contained in $3I_i$) we have 
\begin{align*}
&\int_{I_i} \left|\int_{\substack{\epsilon \leq \|(x,t)-(y,s) \| \leq c\ell(I_i)}} K(x-y,\psi'(x,t)-\psi'(y,s),t-s) \tilde g(y,s)dyds
\right|^2 dxdt\\
=&\int_{I_i} \left| \int_{ \epsilon \leq \|(x,t)-(y,s) \| \leq c\ell(I_i)}  K(x-y,\psi'_i(x,t)-\psi'_i(y,s),t-s) \tilde g(y,s) dy ds \right|^2dxdt \\
\leq &\int_{I_i} \left| \int_{ \epsilon \leq \|(x,t)-(y,s) \| \leq c\ell(I_i)}  K(x-y,\psi'_i(x,t)-\psi'_i(y,s),t-s)\big(\mathds{1}_{3I_i}(y,s)\tilde g(y,s)\big)dy \right|^2dxdt.
\end{align*}
By Theorem 5 of \cite{bortz2021sios}, the final expression above is bounded by $C\|g\|^2_{L^2(3I_i)}$. This completes the proof of \eqref{regatsmall} and, hence, the proof of the lemma. 
\end{proof}

Having proved Lemma \ref{l:interiorbound}, we can finally show that $\mathcal R_\sigma:L^2(\sigma) \rightarrow L^2(\sigma)$.  Given $f \in L^2(\sigma)$, we want to estimate 
\begin{align*}
&\int\left|\int K((X,t)-(Y,s))f(Y,s) d\sigma(Y,s) \right|^2 d\sigma(X,t) \\\lesssim 
&\int \left|\int K((X,t)-(Y,s))f(Y,s) d(\sigma-\sigma_f)(Y,s) \right|^2 d(\sigma-\sigma_f)(X,t)\\
+&\int \left|\int K((X,t)-(Y,s))f(Y,s) d\sigma_f(Y,s) \right|^2 d(\sigma-\sigma_f)(X,t)\\
+&\int \left|\int K((X,t)-(Y,s))f(Y,s) d(\sigma-\sigma_f)(Y,s) \right|^2 d\sigma_f(X,t)\\
+&\int \left|\int K((X,t)-(Y,s))f(Y,s) d\sigma_f(Y,s) \right|^2 d\sigma_f(X,t)\\ 
=& I +II +III+IV.
\end{align*}
By Lemma \ref{l:interiorbound}, $IV \lesssim \|f\|_{L^2(\sigma)}^2$. To deal with $I$, we use the fact that $\Gamma'\setminus\Gamma'_f\subseteq\Gamma_H$, where $\Gamma_H$ is the graph of $\theta^{15/16}H$. The function $H$ is regular Lip(1,1/2), so $\Gamma_H$ is a regular graph.  Hence, if $\sigma_H$ denotes the parabolic Hausdorff measure restricted to $\Gamma_H$,  Theorem 5 of \cite{bortz2021sios} implies $\mathcal R_{\sigma_H}: L^2(\sigma_H) \rightarrow L^2(\sigma_H)$. Since $\sigma-\sigma_f=\sigma_H|_{\Gamma'\setminus\Gamma'_f}$, we extend $\mathds{1}_{\Gamma'\setminus\Gamma'_f}f$ by zero to the rest of $\Gamma_H$. Hence,
\begin{align*}
I &\leq \int \left| \int K((X,t)-(Y,s)) \mathds{1}_{\Gamma'\setminus\Gamma'_f}(Y,s)f(Y,s))d\sigma_H(Y,s)\right|^2 d\sigma_H(X,t)\\
 &\leq C(H) \|\mathds{1}_{\Gamma'\setminus\Gamma'_f}f\|_{L^2(\sigma_H)}^2 \leq C(H)\|f\|_{L^2(\sigma)}^2
\end{align*}
In the inequality above, the constant $C(H)$ depends on dimension, $\|H\|_{lip}$, and $\|D^t_{1/2}H\|_*$.  Now, we deal $II$.  we split the integral as follows:
\begin{align*}
II \lesssim& \int \left|\int K((X,t)-(Y,s))f(Y,s) d(\sigma_f-\sigma_s)(Y,s) \right|^2 d(\sigma-\sigma_f)(X,t)\\ 
&+  \int \left|\int K((X,t)-(Y,s))f(Y,s) d\sigma_s(Y,s) \right|^2 d(\sigma-\sigma_f)(X,t)\\
&= II' + II''
\end{align*}
For $II'$, both $\Gamma'\setminus\Gamma'_f$ and $\Gamma'_f\setminus\Gamma'_s$ are subsets of $\Gamma_H$, and $\sigma_f-\sigma_s=\sigma_H|_{\Gamma'_f\setminus\Gamma'_s}$. We extend $\mathds{1}_{\Gamma'_f\setminus\Gamma'_s}f$ by zero to the rest of $\Gamma_H$ and obtain
\begin{align}\label{separationbound}
II' \leq  \int \left|\int K((X,t)-(Y,s))(1_{\Gamma'_f\setminus \Gamma'_s}(Y,s)f(Y,s)) d\sigma_H(Y,s) \right|^2 d\sigma_H(X,t) \lesssim \|f\|_{L^2(\sigma)}^2.
\end{align}
To bound $II''$, we use the separation between $(X,t)$ and $(Y,s)$ in the integral to bound the inner integral by a maximal function.  Then, we clearly can bound the term by $\|f\|^2_{L^2(\sigma)}$.\\

Having obtained our bounds for $II$, we can bound $III$ via a duality argument.  Notice that we can rephrase the bound on term $II$ as $\mathcal R_{\sigma_f}:L^2(\sigma_f) \rightarrow L^2(\sigma - \sigma_{f})$.  Let $\mathcal R^{-}$ be the operator associated to $K(-\cdot)$.  Then all of the arguments in this document apply to $\mathcal R^{-}$ and in particular we have the bound:
\begin{align*}
\mathcal R^{-}_{\sigma_f}:L^2(\sigma_f) \rightarrow L^2(\sigma - \sigma_{f}).
\end{align*}
Choose $f \in L^2(\sigma-\sigma_f)$, $g \in L^2(\Gamma_{\sigma_f})$ with $\|g\|_{ L^2(\sigma_f)}=1$.  We estimate
\begin{align*}
&\left|\int\int K_\epsilon((X,t)-(Z,\tau))f(Z,\tau) d(\sigma-\sigma_f)(Z,\tau)g(X,t)
d\sigma_f(X,t) \right|\\
\leq&  \int\left|\int K_\epsilon((X,t)-(Z,\tau))g(X,t)d\sigma_f(X,t)
\right| |f(Z,\tau)| d(\sigma-\sigma_f)(Z,\tau) \\
\leq&\|\mathcal R^{-}_{\sigma_f}g\|_{L^2(\sigma-\sigma_f)} \|f\|_{L^2(\sigma - \sigma_f)}
\lesssim \|f\|_{L^2(\sigma-\sigma_f)}.
\end{align*}
Which shows that
$$
\mathcal R_{\sigma-\sigma_f}:L^2(\sigma-\sigma_f) \rightarrow L^2(\sigma_f).$$
From this we easily see that term $III$ is bounded by $\|f\|^2_{ L^2(\sigma_f)}$.  

\frenchspacing
\bibliography{Ref}
\bibliographystyle{alpha}

\end{document}